\DeclareMathOperator*{\bigboxtimes}{\scalerel*{\boxtimes}{\sum}}
\newcommand{\Hom}{\mathop{\mathrm{Hom}}\nolimits}
\newcommand{\diag}{\mathop{\mathrm{diag}}\nolimits}
\newcommand{\id}{\mathop{\mathrm{id}}\nolimits}
\newcommand{\sw}{\mathsf{w}}
\newcommand{\cA}{\mathcal{A}}
\newcommand{\cD}{\mathcal{D}}
\newcommand{\cE}{\mathcal{E}}
\newcommand{\cI}{\mathcal{I}}
\newcommand{\cJ}{\mathcal{J}}
\newcommand{\cK}{\mathcal{K}}
\newcommand{\cL}{\mathcal{L}}
\newcommand{\cO}{\mathcal{O}}
\newcommand{\cS}{\mathcal{S}}
\newcommand{\cT}{\mathcal{T}}
\newcommand{\cU}{\mathcal{U}}
\newcommand{\cV}{\mathcal{V}}
\newcommand{\cW}{\mathcal{W}}
\newcommand{\rGL}{\mathrm{GL}}
\newcommand{\rop}{\mathrm{op}}
\newcommand{\rpre}{\mathrm{pre}}
\newcommand{\rfin}{\mathrm{fin}}
\newcommand{\rdom}{\mathrm{dom}}
\newcommand{\rlex}{\mathrm{lex}}
\newcommand{\rconj}{\mathrm{conj}}
\newcommand{\rtriv}{\mathrm{triv}}
\newcommand{\rRe}{\mathrm{Re}}
\newcommand{\rre}{\mathrm{re}}
\newcommand{\rim}{\mathrm{im}}
\newcommand{\rpos}{\mathrm{pos}}
\newcommand{\rneg}{\mathrm{neg}}
\newcommand{\rAd}{\mathrm{Ad}}
\newcommand{\rad}{\mathrm{ad}}
\newcommand{\rB}{\mathrm{B}}
\newcommand{\rC}{\mathrm{C}}
\newcommand{\rG}{\mathrm{G}}
\newcommand{\rI}{\mathrm{I}}
\newcommand{\rM}{\mathrm{M}}
\newcommand{\rN}{\mathrm{N}}
\newcommand{\rP}{\mathrm{P}}
\newcommand{\rR}{\mathrm{R}}
\newcommand{\rS}{\mathrm{S}}
\newcommand{\rT}{\mathrm{T}}
\newcommand{\rU}{\mathrm{U}}
\newcommand{\rW}{\mathrm{W}}
\newcommand{\ra}{\mathrm{a}}
\newcommand{\rb}{\mathrm{b}}
\newcommand{\rc}{\mathrm{c}}
\newcommand{\rd}{\mathrm{d}}
\newcommand{\rf}{\mathrm{f}}
\newcommand{\rg}{\mathrm{g}}
\newcommand{\rrq}{\mathrm{q}}
\newcommand{\rr}{\mathrm{r}}
\newcommand{\rs}{\mathrm{s}}
\newcommand{\rv}{\mathrm{v}}
\newcommand{\mA}{\mathbf{A}}
\newcommand{\mC}{\mathbf{C}}
\newcommand{\mE}{\mathbf{E}}
\newcommand{\mN}{\mathbf{N}}
\newcommand{\mQ}{\mathbf{Q}}
\newcommand{\mR}{\mathbf{R}}
\newcommand{\mW}{\mathbf{W}}
\newcommand{\mX}{\mathbf{X}}
\newcommand{\mY}{\mathbf{Y}}
\newcommand{\mZ}{\mathbf{Z}}
\newcommand{\me}{\mathbf{e}}
\newcommand{\mv}{\mathbf{v}}
\newcommand{\ggl}{\mathfrak{gl}}
\newcommand{\gS}{\mathfrak{S}}
\newcommand{\ga}{\mathfrak{a}}
\newcommand{\g }{\mathfrak{g}}
\newcommand{\gn}{\mathfrak{n}}
\newcommand{\gr}{\mathfrak{r}}
\newcommand{\gp}{\mathfrak{p}}
\newcommand{\gu}{\mathfrak{u}}
\newcommand{\bepsilon}{{\boldsymbol \epsilon}}
\newcommand{\blambda}{{\boldsymbol \lambda}}
\newcommand{\bmu}{{\boldsymbol \mu}}
\newcommand{\be}{{\boldsymbol e}}
\newcommand{\bi}{{\boldsymbol i}}
\newcommand{\bu}{{\boldsymbol u}}
\newtheorem{thm}{Theorem}[section]
\newtheorem{prop}[thm]{Proposition}
\newtheorem{cor}[thm]{Corollary}
\newtheorem{lem}[thm]{Lemma}
\newtheorem*{Main}{Main Result}
\theoremstyle{definition}
\numberwithin{equation}{section}
\newtheorem{dfn}[thm]{Definition}
\newtheorem{rem}[thm]{Remark}
\newtheorem*{rmk}{Remark}
\title[Uniform Integrality of critical values]{Uniform Integrality of critical values of the Rankin--Selberg $L$-function for ${\rm GL}_{n}\times {\rm GL}_{n-1}$}
\author[T.~Hara, T.~Miyazaki and K.~Namikawa]{Takashi Hara, Tadashi Miyazaki and Kenichi Namikawa}
\address{ Department of Mathematics, College of Liberal Arts,  Tsuda University, 2-1-1 Tsuda-machi, Kodaira, Tokyo 187-8577, Japan }
\email{t-hara@tsuda.ac.jp}
\address{ Department of Mathematics, College of Liberal Arts and Sciences,  Kitasato University, 1-15-1 Kitazato, Minami-Ku, Sagamihara, Kanagawa 252-0373, Japan }
\email{miyaza@kitasato-u.ac.jp}
\address{ Department of Mathematics, School of System Design and Technology, Tokyo Denki University, 5 Senju Asahi-cho, Adachi-ku, Tokyo 120-8551, Japan }
\email{namikawa@mail.dendai.ac.jp}
\subjclass[2020]{Primary 11F67,  Secondary 11F75, 11F70}
\keywords{automorphic representations, Rankin--Selberg $L$-functions, integrality of critical values, Gel'fand--Tsetlin basis}
\begin{document}

\begin{abstract}
After introducing the notion of uniform integrality of critical values of the
 Rankin--Selberg $L$-functions for $\mathrm{GL}_{n}\times \mathrm{GL}_{n-1}$,
 we study it when the base field is totally imaginary. For this purpose,
 we adopt specific models with highest weight representations of the general linear groups, construct the Eichler--Shimura classes for $\mathrm{GL}_{n}$ and $\mathrm{GL}_{n-1}$
  in an explicit manner,
 and then evaluate the cohomological cup product of them, by making the best use of Gel'fand--Tsetlin basis.

\end{abstract}

\maketitle

\setcounter{tocdepth}{2}
\tableofcontents

\section{Introduction} \label{sec:Intro}

\subsection{Background}
Study of {\em special values} of automorphic $L$-functions is one of the most interesting and important research topics in number theory. Among these, significant progress is achieved on the {\em rationality problem of critical values of Rankin--Selberg $L$-functions for $\mathrm{GL}_n\times \mathrm{GL}_{n-1}$}. For the case $\mathrm{GL}_2\times \mathrm{GL}_1$ over the rational number field $\mathbf{Q}$, which is the most classical (elliptic modular) case,  
it is initiated by Manin \cite{man72} and Shimura \cite{shi76,shi77}.    
Taking the philosophical guideline based on Deligne's conjecture on critical values of motivic Hasse--Weil $L$-functions \cite[Conjecture 1.8]{del79} into accounts, 
one naturally expects that rationality of critical values should be deduced from suitable {\em cohomological interpretation} of integral expressions of the critical values (so-called the {\em global zeta integrals}) under consideration. 
Indeed, Hida introduces in \cite{hid94} appropriate notion of periods, which are called {\em Hida's canonical periods} at the present, as an analogue of the motivic periods $c^{\pm}(\mathcal{M})$ introduced by Deligne in \cite{del79},
and deduces rationality of critical values of Rankin--Selberg $L$-functions for $\mathrm{GL}_2\times \mathrm{GL}_1$ over general number fields with respect to his canonical periods.
In \cite{rs08}, Raghuram and Shahidi define the {\em Whittaker period} $p^{\rb}(\pi^{(n)},\boldsymbol{\epsilon}^{(n)})$ for a cohomological irreducible cuspidal automorphic representation $\pi^{(n)}$ of $\mathrm{GL}_n$ over a general number field\footnote{The symbol $\boldsymbol{\epsilon}^{(n)}\in \{\pm\}^{r_1}$ denotes certain admissible signatures, where $r_1$ is the number of the real places of the base field. Since we assume that the base field is totally imaginary (thus $r_1=0$) in the present article, we need not take the admissible signatures into accounts, and thus drop the symbol $\boldsymbol{\epsilon}^{(n)}$ from the notation of the Whittaker periods hereafter.}, which generalises the notion of Hida's canonical periods. One of the ultimate purposes of the present article is to generalise Hida's result on rationality of critical values \cite[Theorem 8.1]{hid94}
to the Rankin--Selberg $L$-function for $\mathrm{GL}_n \times \mathrm{GL}_{n-1}$ over a {\em totally imaginary} number field $F$,
with respect to Raghuram and Shahidi's Whittaker periods $p^{\rb}(\pi^{(n)})$ and $p^{\rb}(\pi^{(n-1)})$. Although this attempt is enforced by Raghuram himself, we present a more precise and explicit result than \cite[Theorem 1.1]{rag16}, as we shall see later.

Beyond rationality, {\em $p$-adic nature of critical values} (for a prime number $p$) is also a very significant topic from the arithmetic viewpoint. Such study rewinds to the remarkable discovery of {\em Kummer's congruence} among Bernoulli numbers: for any positive integer $k\in \mN$, if positive even integers $m_1$ and $m_2$ (neither divisible by $p-1$) satisfy $m_1\equiv m_2 \pmod{(p-1)p^{k-1}}$, the congruence
\begin{align} \label{eq:Kummer}
  (1-p^{m_1-1})\dfrac{B_{m_1}}{m_1} \equiv (1-p^{m_2-1})\dfrac{B_{m_2}}{m_2} \pmod{p^{k}}
\end{align}
holds where $B_m$ (for $m\in \mN_0$) denotes the $m$-th Bernoulli number. Since the ratio $-B_m/m$ is regarded as the special value $\zeta(1-m)$ of the Riemann's zeta function $\zeta(s)$ at $s=1-m$ with $m\in 2\mN$, the congruence \eqref{eq:Kummer} clarifies that the set of critical values of $\zeta(s)$ (modified at the $p$-Euler factors) $\{(1-p^{m-1})\zeta(1-m)\mid m\in 2\mN\}$ behaves somewhat {\em $p$-adically continuously} with respect to $m$, and even implies existence of certain {\em $p$-adic continuous} (or even {\em $p$-adic analytic/meromorphic}) functions interpolating (parts of) these values; indeed, as is broadly known, Kubota and Leopoldt have succeeded in construction of so-called {\em $p$-adic Dirichlet $L$-functions} in \cite{kl64}, and many other constructions of them have been developed by Coleman, Fresnel, Iwasawa and so on. Soon after that, the attempt to construct $p$-adic $L$-functions is extended to the Hecke $L$-functions of elliptic modular forms (or the Rankin--Selberg $L$-functions for $\mathrm{GL}_2\times \mathrm{GL}_1$ over the rational number field $\mQ$ in our context) by Manin \cite{man73}, Mazur and Swinnerton-Dyer \cite{msd74}, Amice and V\'elu \cite{am75}, and Vi\v{s}ik \cite{vis76}. We also refer the readers to \cite{mtt86} and \cite{kit94} as comprehensive literature on this topic. Similarly to the Riemann zeta function case, {\em Kummer-type congruences} among critical values (explicated in \cite[Lemma~4.6]{kit94}) play, explicitly or implicitly, crucial roles in the construction of $p$-adic $L$-functions for elliptic modular forms. Therefore if one attempts to construct the $p$-adic $L$-functions for more general automorphic $L$-functions---for instance, for the Rankin--Selberg $L$-functions for $\mathrm{GL}_n\times \mathrm{GL}_{n-1}$---, it seems reasonable to try to verify the Kummer-type congruences among their critical values as the first step.  
Of course, it is nonsense to consider congruences among transcendental complex numbers, and thus the rationality results of critical values such as \cite[Theorem~1.1]{rag16} are crucial, but not sufficient, for one to consider congruence relations among the critical values. Indeed, if he or she wants to consider congruences modulo a power of a prime ideal $\mathfrak{P}$, these critical values (divided by certain periods) are required to be {\em integral at $\mathfrak{P}$}. Further it is preferable that each critical value should be divided by the {\em same} period, for otherwise it would become so difficult to compare the distinct critical values. Because of these kinds of reasons, we here propose the following problem:
\begin{quotation}
for a fixed prime ideal $\mathfrak{P}$ lying above $p$, is each critical value divided by the {\em same} period is {\em simultaneously integral} at $\mathfrak{P}$  (up to a minor multiple)? 
\end{quotation}
If the answer of the problem is affirmative, we say that the critical values are  {\em uniformly $(\mathfrak{P}\text{-})$integral}\footnote{For the Riemann zeta function $\zeta (s)$, the uniform $p$-integrality of critical values is nothing but the following well-known fact (also due to Kummer): for every positive even integer $m$ not divisible by $p-1$, the ratio $B_m/m$ is $p$-integral.}. The second main goal of the present article is to establish precise formulation of uniform integrality of the critical values of the Rankin--Selberg $L$-function for $\mathrm{GL}_n \times \mathrm{GL}_{n-1}$ over a totally imaginary number field $F$, and verify it under certain mild conditions.   For this purpose, we have to normalise the Whittaker period $p^{\mathrm{b}}(\pi^{(n)})$ in a {\em $p$-optimal} way (see Definition~\ref{dfn:whittper} for details).

\subsection{Main result and comparison with previous research}

Now let us quickly introduce our main result in a rough form. We always  assume that the base field $F$ is a {\em totally imaginary} number field. For a natural number $n$ greater than or equal to $2$, let $\pi^{(n)}$ (resp.\ $\pi^{(n-1)}$) be a cohomological irreducible cuspidal automorphic representation of $\mathrm{GL}_n(F_{\mA})$ (resp.\ $\mathrm{GL}_{n-1}(F_{\mA})$). We use the symbol $\blambda=(\lambda_{\sigma,j})_{\sigma\in I_F, 1\leq j\leq n}$ and $\bmu=(\mu_{\sigma,j})_{\sigma\in I_F, 1\leq j\leq n-1}$ for the highest weight associated with $\pi^{(n)}$ and $\pi^{(n-1)}$ respectively (see Section~\ref{subsec:settings} for details), where $I_F$ denotes the set of all the embeddings of $F$ into the complex number field $\mC$. We always assume that there exists an integer $m_0\in \mZ$ such that $\blambda^\vee$ interlaces $\bmu+m_0$; that is, we assume that the inequalities \eqref{eq:interlace_condition} hold for certain $m_0\in \mZ$. Suppose that $\pi^{(n-1)}_v$ is spherical at any finite place $v$ of $F$, and let $\mathfrak{N}$ be an integral ideal of $F$ which is maximal among all ideals $\mathfrak{A}$ such that $(\pi^{(n)})^{\mathcal{K}_{n,1}(\mathfrak{A})}$ contains a nonzero vector; here $\mathcal{K}_{n,1}(\mathfrak{A})$ is the mirahoric subgroup defined as \eqref{eq:mirahoric}. Let $\mathfrak{r}(\pi^{(n)},\pi^{(n-1)})$ be the ring of integers of the composite field $\mQ(\pi^{(n)},\pi^{(n-1)}):=\mQ(\pi^{(n)})\mQ(\pi^{(n-1)})$, where $\mQ(\pi^{(n)})$ (resp.\ $\mQ(\pi^{(n-1)})$) denotes the field of rationality of $\pi^{(n)}$ (resp.\ $\pi^{(n-1)}$) defined as a subfield of $\mC$ (see Section~\ref{subsec:WhittakerPeriods} for details). We fix an isomorphism $\boldsymbol{i}\colon \mC \xrightarrow{\, \sim \,} \mC_p$ between the complex number field $\mC$ and the $p$-adic completion $\mC_p$ of a fixed algebraic closure of the $p$-adic number field $\mQ_p$. Let $\mathfrak{P}_0$ be the prime ideal of $\mathfrak{r}(\pi^{(n)},\pi^{(n-1)})$ induced from the field embedding $\mQ(\pi^{(n)}, \pi^{(n-1)})\subset \mC \xrightarrow{\, \boldsymbol{i}\,} \mC_p$. The localisation of $\mathfrak{r}(\pi^{(n)},\pi^{(n-1)})$ at $\mathfrak{P}_0$ is denoted as $\mathfrak{r}(\pi^{(n)},\pi^{(n-1)})_{(\mathfrak{P}_0)}$. Under these settings, our main result is stated as follows:

\begin{Main}[rough form of Theorem~$\ref{thm:UniformIntegrality}$]
Fix $\varepsilon\in \{\pm\}$ and let $p$ be a prime number satisfying the inequality
\begin{align} \label{eq:pbound}
p>\max\{\lambda_{\sigma,1}-\lambda_{\sigma,n}+n-2 \mid \sigma\in I_F\}. 
\end{align} 
Suppose that the discriminant of $F$ is prime to $\mathfrak{N}$. 
Then, for {\em every} critical point $\frac{1}{2}+m$ 
  $($see Section~$\ref{subsec:settings}$ for the definition of critical points$)$ 
of the Rankin--Selberg $L$-function $L(s,\pi^{(n)}\times \pi^{(n-1)})$, the complex value
\begin{align} \label{eq:ratio_integral}
\mathcal{C}(m,\pi^{(n)} \times \pi^{(n-1)})   \frac{ L(\frac{1}{2} +m, \pi^{(n)} \times \pi^{(n-1)}) }{  p^{\rb}(\pi^{(n)}) p^{\rb} (\pi^{(n-1)})  } 
\end{align}
is indeed contained in $\mathfrak{r}(\pi^{(n)},\pi^{(n-1)})_{(\mathfrak{P}_0)}$, where $\mathcal{C}(m,\pi^{(n)}\times \pi^{(n-1)})$ is an explicit complex constant $($depending on $m)$ and $p^{\rb}(\pi^{(n)})$ $($resp.\ $p^{\rb}(\pi^{(n-1)}))$ denotes the {\em $p$-optimal Whittaker period} of $\pi^{(n)}$ $($resp.\ $\pi^{(n-1)})$ defined as in Definition~$\ref{dfn:whittper}$. Furthermore, for each $\alpha \in \mathrm{Aut}(\mathbf{C})$, we have 
\begin{align} \label{eq:equivariance_integral}
  \alpha \left(  \mathcal{C}(m,\pi^{(n)} \times \pi^{(n-1)}) 
                      \frac{ L(\frac{1}{2} +m, \pi^{(n)} \times \pi^{(n-1)}) }{  p^{\rb}(\pi^{(n)})p^{\rb} (\pi^{(n-1)})  } \right)     =  \mathcal{C}(m, {}^\alpha \pi^{(n)} \times {}^\alpha\pi^{(n-1)}) 
      \frac{ L(\frac{1}{2} +m, {}^\alpha\pi^{(n)} \times {}^\alpha\pi^{(n-1)}) }{  p^{\rb}({}^\alpha\pi^{(n)})p^{\rb} ( {}^\alpha\pi^{(n-1)})  }
\end{align}
where ${}^\alpha \pi^{(n)}$ $($resp. ${}^\alpha \pi^{(n-1)})$ is the $\alpha$-twist of $\pi^{(n)}$ $($resp.\ $\pi^{(n-1)})${\rm ;} see Section~$\ref{subsec:whittvec}   $ for details.
\end{Main}

\begin{rmk}
 The main result above insists that {\em all} the critical values of $L(s, \pi^{(n)}\times \pi^{(n-1)})$ (multiplied with certain constants) are {\em simultaneously} $\mathfrak{P}_0$-integral after they are divided by the {\em same} period $p^{\rm b}(\pi^{(n)})p^{\rm b}(\pi^{(n-1)})$; namely, the critical values of $L(s,\pi^{(n)}\times \pi^{(n-1)})$ are {\em uniformly {\rm (}$\mathfrak{P}_0$-{\rm )}integral} with respect to the product of the $p$-optimal Whittaker periods $p^{\rm b}(\pi^{(n)})p^{\rm b}(\pi^{(n-1)})$. Note that, if $p$ is sufficiently large, the critical values of $L(s,\pi^{(n)}\times \pi^{(n-1)})$ are always uniformly integral for the trivial reason that $L(s,\pi^{(n)}\times \pi^{(n-1)})$ admits only finitely many critical points. One crucial point of our main result is that we succeed in proposing an explicit lower bound \eqref{eq:pbound} of $p$. We think that this bound is quite reasonable; it is determined only by the highest weight associated with $\pi^{(n)}$. Also note that the constant $\mathcal{C}(m,{}^\alpha \pi^{(n)}\times {}^\alpha \pi^{(n-1)})$ appearing in \eqref{eq:ratio_integral} and \eqref{eq:equivariance_integral} is explicitly described as \eqref{eq:constant}, 
     which seems harmless (consisting of terms concerning the discriminant of $F$ and powers of $-1$, $2$ and $\pm \sqrt{-1}$) though it depends on the critical points. 
\end{rmk}

To prove the main result above, we mainly adopt Raghuram's strategy in \cite{rag16}, and develop {\em cohomological interpretation} of the Rankin--Selberg integrals. 
A remarkable milestone of this topic would be Kazhdan, Mazur and Schmidt's work \cite{kms00} on construction of $p$-adic distributions associated to the Rankin--Selberg $L$-functions for $\mathrm{GL}_n \times \mathrm{GL}_{n-1}$ over $\mathbf{Q}$, in which they develop a cohomological method called {\em generalised modular symbol method}. One of the crucial points in their result is to construct appropriate cohomology classes from cuspidal automorphic forms of $\pi^{(n)}$ and $\pi^{(n-1)}$, and express their ``cup product'' as (a sum of) the product of local Rankin--Selberg integrals. 
Non-archimedean local Rankin--Selberg integrals are studied thoroughly by Shintani \cite{shin76}, Jacquet, Piatetski-Shapiro and Shalika \cite{jpss81} and others,  
but computation of {\em archimedean} local Rankin--Selberg integrals at each critical point still remains as a hard problem. 
Kazhdan and Mazur expect  nontriviality of the archimedean local Rankin--Selberg integrals in 1970's (see also \cite[p.98, Question]{kms00}), 
which  has been recently verified by B.~Sun \cite{sun17}.  
Based on the method of \cite{kms00}, Mahnkopf \cite{mah05} and Raghuram \cite{rag10,rag16} have proved that, each critical value is rational after it is divided by the product of the archimedean local Rankin--Selberg integrals (which {\em do depend} on the critical points) and the Whittaker periods.   
We will improve their results and prove that each simple ratio \eqref{eq:ratio_integral} is rational (and even integral) by {\em explicitly evaluating} the archimedean local Rankin--Selberg integrals, effectively utilising {\em Gel'fand--Tsetlin basis}. Note that our result is consistent with Deligne's conjecture \cite{del79}  predicting the behaviour of the (motivic) periods under the Tate twists.

Recently Li, Liu and Sun have  verified in \cite{lls} an explicit relation among the archimedean local Rankin--Selberg integrals at distinct critical points, which implies that each critical value is rational after it is divided by the product of the Whittaker periods (which {\em do not depend} on the critical points). Thus their result is also consistent with Deligne's conjecture under the Tate twists. Furthermore, contrary to our result, their result can be applied to the case where the base field admits real archimedean places. Yet we would like to emphasise here that they do not carry out computation of the archimedean local Rankin--Selberg integrals to the end; they avoid the whole calculation of them by cleverly normalising generators of the $(\mathfrak{g},K)$-cohomology groups (see \cite[Section~6.1]{lls} for details). The authors strongly believe that our explicit computation of the archimedean local Rankin--Selberg integrals, especially determination of the constant $\mathcal{C}(m,{}^\alpha \pi^{(n)}\times {}^\alpha \pi^{(n-1)})$, would be useful when we discuss the $p$-adic nature of the critical values.
  
As we have already mentioned, we verify not only rationality of the critical values but also {\em uniform integrality} of them. To this end, we should equip the cohomology groups under consideration  with nice {\em integral structures}. We here utilise Gel'fand--Tsetlin basis again, which are indispensable ingredients in the present article. One of the greatest advantages to introduce integral structure on the cohomology groups utilising Gel'fand--Tsetlin basis is that the branching rule map for the pair $(\rGL_n,\rGL_{n-1})$, which plays a fundamental role in the cohomological interpretation of the zeta integrals, clearly preserves the integral structure introduced in the present article by construction.

As one of typical applications of thorough study on integral structures of cohomology groups and explicit archimedean zeta calculus,  
 we here mention the recent work of Januszewski \cite{jan} about 
 a construction of $p$-adic Rankin--Selberg $L$-functions for $\mathrm{GL}_n\times \mathrm{GL}_{n-1}$. 
Januszewski constructs there a $p$-adic measure  associated with $\rGL_n\times \rGL_{n-1}$, and gives its interpolation formula in \cite[Theorem A]{jan}. However, his interpolation formula still includes archimedean local Rankin--Selberg integrals depending on each critical points without clarification, similarly to \cite[Theorem 1.1]{rag16}.   
In particular uniform integrality of critical values of Rankin--Selberg $L$-functions for $\mathrm{GL}_n\times \mathrm{GL}_{n-1}$ does not follow from his result.  
According to Coates and Perrin-Riou's general conjecture    
  on the existence of $p$-adic $L$-functions for pure motives (see \cite[Principal Conjecture]{cp89} or \cite[Principal Conjecture]{coa89}; recall that the automorphic $L$-functions of the general linear groups are conjectured by Clozel to be motivic $L$-functions), the effects appearing when one chooses different critical points should be explicitly described in terms of {\em modified Euler factors at archimedean places}.     
But the relation between archimedean local Rankin--Selberg integrals introduced in \cite{jan} and the modified Euler factors at archimedean places explicitly defined as in \cite[Section~2]{cp89} and \cite[Section~1]{coa89} has not been studied yet in general; see also \cite[Remark 6.2]{jan}.   
Because of this reason, the relation among his pointwisely constructed $p$-adic measures with respect to the $p$-adic Tate twists verified in \cite[Theorem 5.4]{jan} 
   do not immediately imply the desired Kummer type congruences of critical values of Rankin--Selberg $L$-functions for $\mathrm{GL}_n\times \mathrm{GL}_{n-1}$. In the present article, due to effective use of Gel'fand-Tsetlin basis, 
we write down in an explicit manner both integral structures on appropriate cohomology groups
   and archimedean Rankin--Selberg integrals under considerations, and thus we are able to deduce much more precise $p$-adic nature on critical values of Rankin--Selberg $L$-functions for $\mathrm{GL}_n\times \mathrm{GL}_{n-1}$, namely the uniform integrality. 
We expect that our sophisticated cohomological interpretations of Rankin--Selberg integrals and their explicit evaluation developed in the present article will be useful for further study of $p$-adic properties of the critical values, such as Kummer type congruences.

Finally, Li, Liu and Sun's result \cite{lls} or ours implies that,  assuming the existence of the conjectural motives $\mathcal{M}[\pi^{(n)}]$ associated with the cohomological irreducible cuspidal automorphic representation $\pi^{(n)}$, 
Deligne's motivic periods $c^\pm( \mathcal{M}[\pi^{(n)}] \otimes \mathcal{M}[\pi^{(n-1)}] )$ should coincide with 
the product of a power of $2\pi\sqrt{-1}$,  the product $p^\mathrm{b}(\pi^{(n)}) p^\mathrm{b}(\pi^{(n-1)})$ of the Whittaker periods 
and the archimedean local Rankin--Selberg integrals up to algebraic multiples. 
This suggests that thorough study of the archimedean local Rankin--Selberg integrals is also required to clarify the expected properties of each Whittaker period $p^{\rb}(\pi^{(n)})$    
in terms of the motivic periods of  $\mathcal{M}[\pi^{(n)}]$. 
See \cite{hn} for the study of this direction.

\subsection{Strategy and contents of the article}

Here let us explain our strategy a little more specifically. We basically adopt the strategy of Raghuram in \cite{rag16}, as we have already noted, but we shall sophisticate his result based on {\em explicit calculation} of the archimedean local Rankin--Selberg integrals (Theorem~\ref{thm:archzetaformula}). What enables us to complete our computations is the notion of {\em rational Gel'fand--Tsetlin basis}. For a totally imaginary number field $F$, a (complex) representation $\widetilde{V}(\blambda^\vee)=\bigotimes_{\sigma \in I_F} V_{\lambda_\sigma^\vee}$ of $\rGL_n(F)$ with highest weight $\blambda^\vee=(\lambda_\sigma^\vee)_{\sigma\in I_F}$ appears as the coefficients of cohomology groups related with $\pi^{(n)}$ 
 (see Appendix~\ref{sec:localsystem} for details on notation). On each $V_{\lambda_\sigma^\vee}$ and a $\rGL_n(\mC)$-invariant hermitian inner product $(\cdot,\cdot)_{\lambda_\sigma^\vee}$, Gel'fand and Tsetlin constructed in \cite{Gelfand_Tsetlin_001} an orthonormal basis $\{\zeta_M\}_{M\in \rG(\lambda_\sigma^\vee)}$
     (see Section~\ref{subsec:GT_basis} for the notation), with respect to which the action of several elements of the Lie algebra $\mathfrak{gl}_{n}$ of $\rGL_n(\mC)$ is described in an explicit manner. In \cite{im}, Ishii and the second-named author introduce a modification $\{\xi_M\}_{M\in \rG(\lambda_\sigma^\vee)}$ of $\{\zeta_M\}_{M\in \rG(\lambda_\sigma^\vee)}$, which we call the rational Gel'fand--Tsetlin basis. Using this, we can explicitly construct a generator $[\pi^{(n)}_\infty]_{\pm}$ of a certain $(\mathfrak{g}_{n\mC},\widetilde{K}_n)$-cohomology group, and furthermore, can evaluate the pairing $\widetilde{\cI}^{(m)}([\pi^{(n)}_\infty]_{\varepsilon},[\pi^{(n-1)}_\infty]_{-\varepsilon})$ (this corresponds to accomplishment of the calculation of the archimedean local Rankin--Selberg integrals explained above; see Theorem~\ref{thm:archzetaformula} for details). 
We can then define an {\em Eichler--Shimura cohomology class} $\delta(\pi^{(n)})$ in the cuspidal cohomology group $H^{{\rm b}_{n,F}}_{\mathrm{cusp}}(Y_{\cK}^{(n)},\widetilde{\cV}(\blambda^\vee))$ by using $[\pi^{(n)}_\infty]_{\pm}$. The Whittaker period is defined as the ratio of $\delta(\pi^{(n)})$ to a certain rational cohomology class $\eta(\pi^{(n)})$ of $H^{{\rm b}_{n,F}}_{\mathrm{cusp}}(Y_{\cK}^{(n)},\widetilde{\cV}(\blambda^\vee))$ in \cite{rs08}. To establish uniform integrality of the critical values, we should modify the Whittaker period into the {\em $p$-optimal} one, and thus we have to equip the cohomology group $H^{{\rm b}_{n,F}}_{\mathrm{cusp}}(Y_{\cK}^{(n)},\widetilde{\cV}(\blambda^\vee))$ with a suitable {\em $p$-integral structure}. One of the significant discovery made in the present article is to find that we can utilise the rational Gel'fand--Tsetlin basis to equip $\widetilde{V}(\blambda^\vee)$ with a natural integral structure (Proposition~\ref{prop:xiM_integral}). More precisely, if $E^*$ is a number field containing the normal closure of $F$ over $\mQ$, we can define an $\cO^*$-integral representation $\widetilde{V}(\blambda^\vee)_{\cO^*}^{(p)}$, which is a {\em self dual lattice} in $\widetilde{V}(\blambda^\vee)_{\cE^*}^{(p)}:=\widetilde{V}(\blambda^\vee)_{\cO^*}^{(p)}\otimes_{\cO^*}\cE^*$ by construction; here $\cE^*$ is the closure of the image of the composite map $E^*\subset \mC\xrightarrow{\, \boldsymbol{i}\,}\mC_p$, and $\cO^*$ denotes the ring of integers of $\cE^*$ (see Appendix~\ref{sec:localsystem} for details). We can construct a $p$-adic local system $\widetilde{\cV}(\blambda^\vee)_{\cO^*}^{(p)}$, which equips $H^{{\rm b}_{n,F}}_{\mathrm{cusp}}(Y_{\cK}^{(n)},\widetilde{\cV}(\blambda^\vee))$ with a suitable $p$-integral structure and enables us to define the $p$-optimal Whittaker period $p^{\rm b}(\pi^{(n)})$ (see Definition~\ref{dfn:whittper} for details). Furthermore the integral structure which we define behaves well under cohomological cup product operations, and thus we can obtain the desired uniform integrality of the critical values (Theorem~\ref{thm:UniformIntegrality}). 

\medskip
Let us briefly explain the organisation of the present article. The content is divided into two parts, and Section~\ref{sec:main_result} is the main part. Most of crucial ideas and constructions, as well as the precise statements of our main results, are contained here, and thus we expect that one can grasp the outline of the present article by only reading Section~\ref{sec:main_result}. After introducing general settings on cohomological automorphic representations in Section~\ref{subsec:settings}, we explain cohomological interpretation of the Rankin--Selberg zeta integrals in Section~\ref{subsec:cupproduct}. Section~\ref{subsec:finite_dimensional} is devoted to preparation of the settings on finite dimensional representations of the general linear group $\rGL_n$, containing the introduction of the rational Gel'fand--Tsetlin basis \eqref{eq:def_Qbasis} and its basic properties. We then construct an explicit generator $[\pi^{(n)}_\infty]_\varepsilon$ of the $(\mathfrak{g}_{n\mC},\widetilde{K}_n)$-cohomology group in Section~\ref{subsec:explicit_generators} utilising the rational Gel'fand--Tsetlin basis, and state the first main theorem (Theorem~\ref{thm:archzetaformula}) which gives the explicit evaluation of $\widetilde{\cI}^{(m)}([\pi^{(n)}_\infty]_{\varepsilon},[\pi^{(n-1)}_\infty]_{-\varepsilon})$. This part is one of the technical hearts of the present article, but the precise proof of Theorem~\ref{thm:archzetaformula} is postponed to Section~\ref{sec:calcoeff}. In Section~\ref{subsec:uniformintegrality}, we first define the $p$-optimal Whittaker periods $p^{\rm b}(\pi^{(n)})$ (Definition~\ref{dfn:whittper}), and then state and prove uniform integrality of the critical values (Theorem~\ref{thm:UniformIntegrality}). We here basically follow the arguments in \cite{rag16}, but the $p$-integral structure of the cohomology groups introduced in Appendix~\ref{subsec:ratint} plays a crucial role.

The other sections propose details and supplements to Section~\ref{sec:main_result}. We introduce our normalisation of Haar measures and several notions concerning Lie algebras in Section~\ref{sec:measures}. Section~\ref{sec:HWrepGLn} gives supplementary explanation to the contents of Section~\ref{subsec:finite_dimensional}. Here we give the proofs to all the propositions which have not been verified in Section~\ref{subsec:finite_dimensional}. In Section~\ref{sec:arch_whittaker}, we give the explicit archimedean zeta integral formula (Proposition~\ref{prop:ArchInt}) for the archimedean Whittaker functions defined in Section~\ref{subsec:C_def_ps}. This section contains many supplementary materials to \cite{im}, and we deduce Proposition~\ref{prop:ArchInt} by using them to relate  our Whittaker functions with those introduced in \cite{im}. Section~\ref{sec:calcoeff} is devoted to the proof of Theorem~\ref{thm:archzetaformula}, which is also a computational heart of the present article. In Appendix~\ref{sec:localsystem}, we summarise rational/integral structure of the finite dimensional representations of $\rGL_n$, the associated local systems, and the rational/integral structure of several cohomology groups.

\subsection{Basic notation}
\label{subsec:notation}

We here introduce several notation which is used throughout the present article. 
The symbols $\mZ$, $\mQ$, $\mR$ and $\mC$ denote 
the ring of rational integers, 
the rational number field, the real number field and the complex number field 
respectively. Let $\mN_0$ (resp.\ $\mN$) be the set of {\em non-negative} integers (resp.\ {\em positive} integers). 
For $z\in \mC$, let $\overline{z}$ and $|z|$ denote
the complex conjugate and the absolute value of $z$, respectively. 
Twice the ordinary Lebesgue measure on $\mC \simeq \mR^2$ is denoted as ${\rm d}_\mC z:=2{\rm d}x\,{\rm d}y$ ($z=x+\sqrt{-1} y$).  
As usual, we define a meromorphic function 
$\Gamma_{\mC}(s)$ of $s$ by 
$\Gamma_\mathbf{C}(s):=2(2\pi)^{-s}\Gamma (s)$,  
where $\Gamma (s)$ is the Gamma function. 

For a prime number $p$, let $\mC_p$ denote the $p$-adic completion of a fixed algebraic closure $\overline{\mQ}$ of $\mQ$. Throughout the present article, we fix an isomorphism of fields $\bi\colon \mC\xrightarrow{\, \sim \,} \mC_p$. 

In  the present article, $F$ always denotes a {\em totally imaginary} number field. The ring of integers of $F$ is denoted by $\gr_F$. 
Let $\Sigma_{F}$ be the set of all places of $F$ and $I_F$ 
the set of all embeddings of $F$ into $\mC$. 
Let $\Sigma_{F,\infty}$ (resp.\ $\Sigma_{F,\rfin}$) 
denote the set of archimedean (resp.\ finite) places of $F$. 
For each $v\in \Sigma_{F,\infty}$, we specify one of the conjugate pair of complex embeddings $F\hookrightarrow \mC$ which determines $v$, and regard $\Sigma_{F,\infty}$ as a subset of $I_F$; in other words, we (non-canonically) identify $I_F$ with $\{v,\bar{v} \mid v\in \Sigma_{F,\infty} \}$ where $\bar{v}$ denotes the complex conjugate of $v$. 
For every $v\in \Sigma_F$, the topological completion of $F$ at $v$ is denoted by $F_v$. 
For $v\in \Sigma_{F,\rfin}$, the ring of integers of $F_v$ is denoted by $\gr_{F,v}$. 
We let $F_{\mA}$ denote the ring of ad\`eles of 
$F$, and use the symbol $F_{\mA,\rfin}$ 
for its subring of finite ad\`eles. 
The archimedean part of $F_{\mA}$ is denoted as 
$F_{\mA,\infty} = F\otimes_{\mQ}\mR =
\prod_{v\in \Sigma_{F,\infty}}F_v$. We use similar notation
for every ad\`elic object.

Let $\psi_{\mathbf{Q}, \varepsilon}\colon  \mathbf{Q} \backslash \mathbf{Q}_{\mathbf{A}} \to \mathbf{C}^\times$ be the standard additive character; 
namely, for each $\varepsilon \in \{ \pm\}$, its archimedean part is defined as $\psi_{\mathbf{Q}, \varepsilon, \infty}(x) = \exp( \varepsilon 2\pi \sqrt{-1} x)$ for $x\in \mathbf{R}$, whereas for each $p\in \Sigma_{\mQ,\mathrm{fin}}$, the $p$-component is normalised so that $\psi_{\mQ,\varepsilon,p}\vert_{\mathbf{Z}_p}=1$ and $\psi_{\mQ,\varepsilon,p}(p^{-r})= \exp(-\varepsilon 2\pi \sqrt{-1} p^{-r})$ ($r\in \mN$) hold. 
Define an additive character $\psi_\varepsilon \colon F\backslash F_\mathbf{A} \to \mathbf{C}^\times$ as $\psi_\varepsilon = \psi_{\mathbf{Q}, \varepsilon} \circ \mathrm{Tr}_{F/\mathbf{Q}}$. As usual, we decompose $\psi_\varepsilon =\bigotimes_{v\in \Sigma_F}\psi_{\varepsilon ,v}$ with 
$\psi_{\varepsilon,v}:=\psi_\varepsilon |_{F_v} $. We set $\psi_{\varepsilon ,\infty}:=\bigotimes_{v\in \Sigma_{F,\infty}}\psi_{\varepsilon,v}$ 
and $\psi_{\varepsilon ,\rfin}:=\bigotimes_{v\in \Sigma_{F,\rfin}}\psi_{\varepsilon,v}$.

For positive integers $n$ and $n'$, let $O_{n,n'}$ and $1_n$ respectively denote the zero matrix of size $n\times n'$ and the unit matrix of size $n$. 
We use the symbol $\gS_n$ for the symmetric group 
of degree $n$, and define the action of $\gS_n$ on $\mC^n$ by 
\begin{align}
\label{eq:def_actSn}
&\sigma z :=(z_{\sigma^{-1}(1)},z_{\sigma^{-1}(2)},
\dots ,z_{\sigma^{-1}(n)})&
\quad \text{for }\sigma \in \gS_n \text{ and }
z =(z_1,z_2,\dots ,z_n)\in \mC^n.
\end{align}
Moreover, we set 
$z+s:=(z_1+s,z_2+s,\dots ,z_n+s)$ for $z =(z_1,z_2,\dots ,z_n)\in \mC^n$ and $s\in \mC$.

The symbol $\rGL_{n/F}$ denotes the general linear group of degree $n$, 
considered as an algebraic group defined over $F$. 
Let $\rB_n$ be the Borel subgroup of $\rGL_{n/F}$ consisting of all 
upper triangular matrices. 
The unipotent radical of $\rB_n$ is denoted as $\rN_n$, which consists of 
all upper triangular matrices with every diagonal entry equal to $1$. 
The symbol  $\rT_n$ denotes the diagonal torus of $\rB_n$, that is, the subgroup of $\rB_n$ consisting of all diagonal matrices. Let $K_n$ and $\widetilde{K}_n$ denote the subgroups of $\rGL_n(F_{\mA,\infty})$ respectively defined as $\displaystyle \prod_{v\in \Sigma_{F,\infty}} {\rm U}(n)$ and $\displaystyle \prod_{v\in \Sigma_{F,\infty}} \mC^\times {\rm U}(n)$, where ${\rm U}(n)$ denotes the unitary group of degree $n$. Note that $K_n$ is a maximal compact subgroup of $\rGL_n(F_{\mA,\infty})$. 

For a Lie algebra $\mathfrak{g}$ defined over $\mR$, 
the complexification of $\mathfrak{g}$ is denoted as $\mathfrak{g}_\mathbf{C}:=\g \otimes_\mR \mC$. 
For a Lie algebra $\mathfrak{g}$ defined over $\mC$, 
the universal enveloping algebra of $\mathfrak{g}$ is denoted as $\cU (\mathfrak{g})$. 
We use the symbol $\mathfrak{gl}_n=\mathrm{Lie}(\rGL_n(\mC))$ to denote the complex general linear Lie algebra of degree $n$. Write the Cartan decomposition of $\mathfrak{gl}_n$ as $\ggl_n=\gu(n)\oplus \gp_n$ where $\mathfrak{u}(n)$ denotes the Lie algebra of $\mathrm{U}(n)$. By putting $\gp_{n\mC}:=\gp_n\otimes_\mR \mC$, we obtain the decomposition of $\ggl_{n\mC}$ as 
\begin{align} \label{eq:CartanDecomp}
\mathfrak{gl}_{n\mC} = \gu(n)_{\mC} \oplus \gp_{n\mC}. 
\end{align} 
Define an isomorphism of Lie algebras ${\rm gl}_n \colon \mathfrak{gl}_{n\mC} \to \mathfrak{gl}_n \oplus \mathfrak{gl}_n$ to be  
\begin{align*}
   {\rm gl}_n ( X\otimes c ) = (cX, c\overline{X}) \qquad (X\in \mathfrak{gl}_n, c\in {\mathbf C}). 
\end{align*}
Let  $E_{i,j}\in \mathfrak{gl}_n$ denotes the matrix whose $(i, j)$-component is $1$ and the other components are $0$. 
Then define elements $E^{\mathfrak{gl}_n}_{i,j}$, $\widetilde{E}^{\mathfrak{gl}_n}_{i,j}$, $E^{\gu (n)}_{i,j}$ and $E^{\gp_n}_{i,j}$ of  $\mathfrak{gl}_{n\mC}$ to be 
\begin{align*}
    E^{\mathfrak{gl}_n}_{i,j} &=   {\rm gl}^{-1}_n( E_{i,j}, 0 ) = \frac{1}{2} \left\{  E_{i,j} \otimes 1 - (\sqrt{-1}E_{i,j}) \otimes \sqrt{-1}  \right\},  &
    \widetilde{E}^{\mathfrak{gl}_n}_{i,j} &=   {\rm gl}^{-1}_n( 0, E_{i,j} ) = \frac{1}{2} \left\{  E_{i,j} \otimes 1 + (\sqrt{-1}E_{i,j}) \otimes \sqrt{-1}  \right\},    \\
  E^{ \mathfrak{u}(n) }_{i,j}  &= E^{\mathfrak{gl}_n}_{i, j} - \widetilde{E}^{\mathfrak{gl}_n}_{j, i},   &
  E^{\gp_n }_{i,j}  &= E^{\mathfrak{gl}_n}_{i, j} + \widetilde{E}^{\mathfrak{gl}_n}_{j, i}.     
\end{align*}
Then $\{E^{\gu (n)}_{i,j}\}_{1\leq i,j\leq n}$ and $\{E^{\gp_n}_{i,j}\}_{1\leq i,j\leq n}$ respectively form 
bases of $\gu (n)_{\mC}$ and $\gp_{n\mC}$.
Let $A_n$ be the subgroup of $\rGL_n(\mC)$ consisting of all the diagonal matrices with positive real entries, 
and $Z_n$ the center of ${\rm GL}_n(\mC)$ which consists of all the scalar matrices. 
Define $\mathfrak{a}_n$ (resp.\ $\mathfrak{z}_{\mathfrak{a}_n}$) 
to be the Lie algebra of $A_n$ (resp.\ $Z_n \cap A_n$).
We also define $\mathfrak{p}^0_{n}$ to be the subspace of $\mathfrak{p}_n$ consisting of all the elements with trace zero, and set $\gp^0_{n\mC}:=\gp^0_{n}\otimes_{\mR}\mC$.
Then  $\mathfrak{z}_{\mathfrak{a}_n\mC}$ is a one dimensional space spanned by $E^{\gp_n}_{1,1} + E^{\gp_n}_{2,2} + \cdots + E^{\gp_n}_{n,n}$, 
and $\gp_{n\mC}^0$ has a basis
\begin{align}  \label{eq:basis_p0}
 \{E^{\gp_n}_{i,i} - E^{\gp_n}_{i+1,i+1} \}_{1\leq i\leq n-1} \cup \{E^{\gp_n}_{j,k}\}_{1\leq j,k\leq n, j\neq k}.
\end{align} 
By this explicit description of the basis of $\mathfrak{z}_{\mathfrak{a}_n\mC}$ and $\mathfrak{p}^0_{n\mC}$, one readily observes that $\mathfrak{p}_{n\mC}$ is decomposed as
\begin{align}\label{eq:decomp_p0}
 \mathfrak{p}_{n\mC}=\mathfrak{z}_{\mathfrak{a}_n\mC} \oplus \mathfrak{p}^0_{n\mC}.
\end{align}
The symbol $\mathfrak{p}^\vee_{n\mC}$  denotes
the dual space of $\mathfrak{p}_{n\mC}$, equipped with   
the dual basis  $\{  E^{\gp_n\vee}_{i, j}  \}_{1\leq i, j \leq n}$    
of $\{  E^{\gp_n}_{i, j}  \}_{1\leq i, j \leq n}$. Similarly $\mathfrak{z}^\vee_{\mathfrak{a}_n{\mathbf C}}$ and 
  $\gp^{0\vee}_{ n{\mathbf C}}$ respectively denote the dual spaces of $\mathfrak{z}_{\mathfrak{a}_n{\mathbf C}}$ and  
  $\mathfrak{p}^{0}_{ n{\mathbf C}}$. In the present article, based on the decomposition (\ref{eq:decomp_p0}), 
we regard $\gp^{0\vee}_{n\mC}$ as a subspace of $\gp^{\vee}_{n\mC}$ 
by extending each $\omega \in \gp^{0\vee}_{n\mC}$ to $\gp_{n\mC}$ so that 
$\omega (X)=0$ for any $X\in \mathfrak{z}_{\mathfrak{a}_n\mC}$.

We define ${\mathfrak g}_n$ (resp.\ ${\mathfrak k}_n$) as the Lie algebra of ${\rm GL}_n(F_{\mathbf{A}, \infty})$ (resp.\ $K_n:=\prod_{v\in \Sigma_{F, \infty}}\mathrm{U}(n)$), which is canonically identified with $\bigoplus_{v\in \Sigma_{F,\infty}} \mathfrak{gl}_n$ (resp.\ $\bigoplus_{v\in \Sigma_{F,\infty}} \mathfrak{u}(n)$).

\section{Main results}
\label{sec:main_result}

\subsection{General settings on cohomological automorphic representations} \label{subsec:settings}

As remarked in Section~\ref{subsec:notation}, the symbol $F$ always denotes a {\em totally imaginary} number field throughout the present article. For each open compact subgroup ${\mathcal K}_n$ of ${\rm GL}_n(F_{{\mathbf A}, {\rm fin}})$, define $Y^{(n)}_{\mathcal{K}}$ to be the double coset space
\begin{align*}
   Y^{(n)}_{\mathcal K} 
     = {\rm GL}_n(F)  \backslash  {\rm GL}_n(F_{\mathbf A})  / \widetilde{K}_n  {\mathcal K}_n.     
\end{align*}
Refer to Section~\ref{subsec:notation} for notation appearing in the definition of $Y^{(n)}_{\mathcal{K}}$ (in particular, recall that $\widetilde{K}_n$ is defined as $\prod_{v\in \Sigma_{F,\infty}} \mC^\times {\rm U}(n)$). Let $\Lambda_n$ be the set of dominant integral weights, defined as  
\begin{align} \label{eq:Lambda_n}
 \Lambda_n:=\{\lambda =(\lambda_1,\lambda_2,\dots ,\lambda_n)\in \mZ^n 
\mid \lambda_1\geq \lambda_2\geq \dots \geq \lambda_n\}. 
\end{align}
For each $\lambda\in \Lambda_n$, let $V_\lambda$ be an irreducible holomorphic finite dimensional representation of $\rGL_n(\mC)$ with highest weight $\lambda$. The dual weight $\lambda^\vee$ of $\lambda=(\lambda_1,\lambda_2,\dotsc,\lambda_n)\in \Lambda_n$ is defined as $\lambda^\vee=(-\lambda_n,-\lambda_{n-1},\dotsc,-\lambda_1)$, which is again an element of $\Lambda_n$. Recall that an irreducible cuspidal automorphic representation $\pi^{(n)}={\bigotimes}'_{v\in \Sigma_F} \pi^{(n)}_v$ of $\rGL_n(F_{\mA})$ is said to be {\em cohomological} (or {\em regular algebraic}) if there exists an open compact subgroup $\mathcal{K}_n$ of $\rGL_n(F_{\mA,\mathrm{fin}})$ and an element $\blambda=(\lambda_\sigma)_{\sigma\in I_F}\in \Lambda_n^{I_F}$ such that the $\pi^{(n)}_\mathrm{fin}$-isotypic component $H^*_{\rm cusp}(Y^{(n)}_{\mathcal{K}}, \widetilde{\mathcal{V}}(\blambda^\vee))[\pi^{(n)}_\mathrm{fin}]$ of the cuspidal cohomology group (as a module over the Hecke algebra $\mathcal{H}_{\cK_n}:=C_c^{\infty}(\rGL_n(F_{\mA,\mathrm{fin}})/\!/\cK_n)$) is not trivial for some degree $\ast$, where $\widetilde{\mathcal{V}}(\blambda^\vee)$ is the local system on $Y^{(n)}_{\mathcal{K}}$ defined by $\widetilde{V}(\blambda^\vee):=\bigotimes_{\sigma\in I_F} V_{\lambda_\sigma^\vee}$, regarded as a representation of $\rGL_n(F)$ (see Section~\ref{subsec:alg_rep} for details). The (dual) weight $\blambda$ of the local system is uniquely determined  by the Langlands parameters of the archimedean part $\pi^{(n)}_{\infty}$ of $\pi^{(n)}$, on which the following {\em purity condition} is imposed:
\begin{quotation}
 there exists an integer $\mathsf{w}\in \mZ$ (called the {\em purity weight} of $\blambda$) such that $\lambda_{\sigma}-\lambda_{\bar{\sigma}}^\vee =(\sw,\sw,\dots ,\sw )$ holds for every $\sigma\in I_F$, where $\bar{\sigma}$ denotes the complex conjugate of $\sigma\in I_F$.
\end{quotation}
In the present article, we call $\blambda$ the highest weight associated with $\pi^{(n)}$ for brevity. 
The symbols $\pi^{(n)}_\infty$ and $\pi^{(n)}_\rfin$ denote the archimedean part ${\bigotimes}_{v\in \Sigma_{F,\infty}} \pi^{(n)}_v$ 
and the non-archimedean part ${\bigotimes}'_{v\in \Sigma_{F,\rfin}} \pi^{(n)}_v$ of $\pi^{(n)}$, respectively. 

For $n>1$, we also consider a cohomological irreducible cuspidal automorphic representation $\pi^{(n-1)}$ of $\rGL_{n-1}(F_{\mA})$, and let $L(s,\pi^{(n)}\times \pi^{(n-1)})=\prod_{v\in \Sigma_F}L_v(s,\pi^{(n)}_v\times \pi^{(n-1)}_v)$ denote the (complete) Rankin--Selberg $L$-function of $\pi^{(n)}$ and $\pi^{(n-1)}$. We say that a half integer $\frac{1}{2}+m$ is a {\em critical point} of $L(s,\pi^{(n)}\times \pi^{(n-1)})$ if neither $L_\infty(s,\pi^{(n)}_\infty \times \pi^{(n-1)}_\infty )=\prod_{v\in \Sigma_{F,\infty}}L_v(s,\pi^{(n)}_v \times \pi^{(n-1)}_v )$ nor $L_\infty(1-s,\pi^{(n),\vee}_\infty \times \pi^{(n-1),\vee}_\infty )$ has a pole at $s=\frac{1}{2}+m$ (the superscript $\vee$ denotes the contragredient representation).  Let $\bmu\in \Lambda_{n-1}$ be the highest weight associated with $\pi^{(n-1)}$, whose purity weight is $\mathsf{w}'$. 
   Throughout of the present article, 
    assume that there exists $m_0\in \mZ$ such that $\blambda^\vee$ {\em interlaces} $\bmu+m_0:=(\mu_\sigma+m_0)_{\sigma\in I_F}$ 
(denoted as $\blambda^\vee \succeq \bmu+m_0$), that is, the inequalities 
\begin{align} \label{eq:interlace_condition} 
-\lambda_{\sigma,n}\geq \mu_{\sigma,1}+m_0\geq -\lambda_{\sigma,n-1}\geq \mu_{\sigma,2}+m_0\geq \cdots \geq -\lambda_{\sigma,2}\geq \mu_{\sigma,n-1}+m_0\geq -\lambda_{\sigma,1} 
\end{align}
hold for every $\sigma\in I_F$. 
Then it is well known that $\frac{1}{2}+m$ is a critical point of $L(s,\pi^{(n)}\times \pi^{(n-1)})$ if and only if $\blambda^\vee \succeq \bmu+m$; 
refer to \cite[Section~2.4.5]{rag16} and \cite[Lemma~6.3]{hn} for details.

\subsection{The cup product pairing and zeta integrals}  \label{subsec:cupproduct}

In this subsection we describe critical values of the Rankin--Selberg $L$-function as the cup product of certain specific cohomology classes. Such a cohomological presentation of critical values goes back to Kazhdan, Mazur and Schmidt's ``generalised modular symbol method'' introduced in \cite{kms00}, which is then further developed by many authors including Mahnkopf \cite{mah05}, Kasten and Schmidt \cite{ks13}, and Raghuram \cite{rag10,rag16}. 

\subsubsection{Cohomological cup product pairing}\label{sec:genmodsymb}

For each open compact subgroup ${\mathcal K}_n$ of ${\rm GL}_n(F_{{\mathbf A}, {\rm fin}})$, define $\mathcal{Y}^{(n)}_{\mathcal K}$ to be 
\begin{align*}
   \mathcal{Y}^{(n)}_{\mathcal K}
     = {\rm GL}_n(F)  \backslash  {\rm GL}_n(F_{\mA})  /    K_n \mathcal{K}_n 
\end{align*}
(recall that $K_n$ is defined as $\prod_{v\in \Sigma_{F,\infty}} {\rm U}(n)$). Then, by definition, there exists a canonical projection $\mathrm{p}_{n}\colon \mathcal{Y}^{(n)}_{\mathcal{K}} \to Y^{(n)}_{\mathcal{K}}$.

Assume that $n>1$ holds. Note that $\rGL_{n-1}$ is regarded as a subgroup of $\rGL_n$ via the diagonal map
\begin{align}
\label{eq:def_iotan}
&\iota_n\colon \mathrm{GL}_{n-1} \to \mathrm{GL}_{n}; g\mapsto 
\begin{pmatrix}
g&O_{n-1,1}\\ 
O_{1,n-1}&1
\end{pmatrix}.
\end{align}
Suppose that $\iota_n^{-1}(\mathcal{K}_n) = \mathcal{K}_{n-1}$ holds. Then the embedding $\iota_n$ also induces a morphism $\mathcal{Y}^{(n-1)}_{\mathcal{K}} \to \mathcal{Y}^{(n)}_{\mathcal{K}}$, for which we use the same symbol $\iota_n$ by abuse of notation.  
Then Borel and Prasad's lemma yields that the composition $j_n:= \mathrm{p}_n \circ \iota_n \colon \mathcal{Y}^{(n-1)}_{\mathcal{K}} \to Y^{(n)}_{\mathcal{K}}$
is a proper map (see \cite[Lemma 2.7]{ash80}); in particular the pullback with respect to $j_n$ preserves the cohomology groups with compact supports.

Now, as in Section~\ref{subsec:settings}, let $\pi^{(n)}$ and $\pi^{(n-1)}$ be cohomological irreducible cuspidal automorphic representations of $\rGL_n(F_{\mA})$ and $\rGL_{n-1}(F_{\mA})$, respectively. Let $\blambda$ and $\bmu$ respectively denote the highest weights associated with $\pi^{(n)}$ and $\pi^{(n-1)}$. 
Assume that $m\in \mZ$ satisfies $\blambda^\vee \succeq \bmu+m$ (in particular, $\frac{1}{2}+m$ is a critical point of $L(s,\pi^{(n)}\times \pi^{(n-1)})$). 
The branching rule for the pair  $(\rGL_n,\rGL_{n-1})$ then implies that $\mathrm{Hom}_{\rGL_{n-1}(F)}(\widetilde{V}(\blambda^\vee),\widetilde{V}(\bmu+m))\neq \{0\}$ holds, or in other words, there exists a nontrivial pairing $[\![\cdot,\cdot]\!]_m\in \mathrm{Hom}_{\rGL_{n-1}(F)}(\widetilde{V}(\blambda^\vee)\otimes_{\mC}\widetilde{V}(\bmu^\vee-m), \mC_{\rtriv} )$ (note that $(\bmu+m)^\vee$ equals $\bmu^\vee-m$), where $\mC_{\rtriv}:=\mC$ is the trivial $\rGL_{n-1}(F)$-module. Let $\mC(m)$ denote a free $\mC$-module of rank one on which $\rGL_{n-1}(F)$ acts via $g \mapsto \prod_{\sigma\in I_F} \det \sigma(g)^m$. Then, by composing with a natural $\rGL_{n-1}(F)$-equivariant isomorphism 
\begin{align*}
 \widetilde{V}(\bmu^\vee)\xrightarrow{\, \sim \,} \widetilde{V}(\bmu^\vee-m)\otimes_\mC \mC(m),
\end{align*} 
we obtain a $\rGL_{n-1}$-equivariant pairing
\begin{align}\label{eq:genpairing}
\widetilde{V}(\blambda^\vee)\otimes_\mC \widetilde{V}(\bmu^\vee) \xrightarrow{\, \sim\,} \widetilde{V}(\blambda^\vee) \otimes_\mC \widetilde{V}(\bmu^\vee-m)\otimes_{\mC}\mC(m) \xrightarrow{\, [\![\cdot,\cdot]\!]_m \otimes \mathrm{id}\,} \mC(m),
\end{align}    
for which we use the same symbol $[\![\cdot,\cdot]\!]_m$.
The cup product of cohomology groups then induces a pairing: 
\begin{align}\label{eq:pairingpre1c}
& H^{\rb_{n,F}}_\mathrm{c}(Y^{(n)}_\mathcal{K}, \widetilde{\mathcal{V}}(\boldsymbol{\lambda}^\vee)) 
    \times H^{\rb_{n-1,F}} (Y^{(n-1)}_\mathcal{K}, \widetilde{\mathcal{V}}(\boldsymbol{\mu}^\vee))   \\
&\quad \xrightarrow{\, (j_n^*,{\rm p}_{n-1}^*)\,}
     H^{\rb_{n,F}}_\mathrm{c}(\mathcal{Y}^{(n-1)}_\mathcal{K}, j^\ast_n \widetilde{\mathcal{V}}(\boldsymbol{\lambda}^\vee)) 
            \times H^{\rb_{n-1,F}} (\mathcal{Y}^{(n-1)}_\mathcal{K}, \mathrm{p}^\ast_{n-1} \widetilde{\mathcal{V}}(\boldsymbol{\mu}^\vee))   \xrightarrow{\, [\![\cdot,\cdot]\!]_m\circ \cup\,} H^{\rb_{n,F}+\mathrm{b}_{n-1,F}}_\mathrm{c}( \mathcal{Y}^{(n-1)}_\mathcal{K},  \widetilde{\mC}(m) ). \nonumber 
\end{align}
Here $\rb_{n,F}:=\frac{1}{4}[F:\mQ]n(n-1)$ (resp.\ $\rb_{n-1,F}$) is the bottom degree of the nontrivial cuspidal cohomology group $H^*_{\mathrm{cusp}}(Y^{(n)}_{\mathcal{K}},\widetilde{\mathcal{V}}(\blambda^\vee))$ (resp.\ $H^*_{\mathrm{cusp}}(Y^{(n-1)}_{\mathcal{K}},\widetilde{\mathcal{V}}(\bmu^\vee))$), and  $\widetilde{\mC}(m)$ denotes a local system on $\mathcal{Y}^{(n-1)}_\mathcal{K}$ determined by $\mC(m)$.
Write $\widetilde{\mC}_\mathrm{triv} = \widetilde{\mC}(0)$, and let $\mathrm{Tw}_m\colon \widetilde{\mC} (m) \longrightarrow \widetilde{\mC}_\mathrm{triv}$ be  a twisting morphism of local systems defined as (\ref{eq:tw_e}) in Section~\ref{sec:locsys}. Composing (\ref{eq:pairingpre1c}) with $\mathrm{Tw}_m$, we obtain 
\begin{align}\label{eq:pairingpre2c}
    H^{\rb_{n,F}}_\mathrm{c}(Y^{(n)}_\mathcal{K}, \widetilde{\mathcal{V}}(\boldsymbol{\lambda}^\vee)) 
    \times H^{\rb_{n-1,F}} (Y^{(n-1)}_\mathcal{K}, \widetilde{\mathcal{V}}(\boldsymbol{\mu}^\vee))   
  \longrightarrow 
  H^{\rb_{n,F}+\mathrm{b}_{n-1,F}}_\mathrm{c}( \mathcal{Y}^{(n-1)}_\mathcal{K},  \widetilde{\mC}_\mathrm{triv}).   
\end{align}
Next, by introducing an order on $\Sigma_{F,\infty}$ arbitrarily, we fix an orientation on 
$\mathrm{GL}_{n-1}(F_{\mathbf{A}, \infty})/ K_{n-1}$  
determined by 
  $\bigwedge_{v\in \Sigma_{F,\infty}}\mathbf{E}^\vee_{\gp_{n-1}} \in \bigwedge_{v\in\Sigma_{F,\infty}}\bigwedge^{(n-1)^2} \mathfrak{p}^\vee_{n-1\mathbf{C}}$  
  (see (\ref{eq:boldE}) in Section~\ref{subsec:diff_form} for the definition of $\mathbf{E}^\vee_{\gp_{n-1}}$). This induces an orientation on $\mathcal{Y}^{(n-1)}_{\mathcal{K}}$, and thus the corresponding fundamental class $[\mathcal{Y}^{(n-1)}_{\mathcal{K}}]$ is defined. 
Then, due to the numerical coincidence $\mathrm{b}_{n,F}+\mathrm{b}_{n-1,F} = \dim \mathcal{Y}^{(n-1)}_\mathcal{K}$,  Poincar\'e duality implies that the cap product with $[\mathcal{Y}^{(n-1)}_{\mathcal{K}}]$ induces an isomorphism $H^{\rb_{n,F}+\mathrm{b}_{n-1,F}}_\mathrm{c}( \mathcal{Y}^{(n-1)}_\mathcal{K},  \widetilde{\mC}_\mathrm{triv}  ) \xrightarrow{\, \sim \,} \mC$. Composing (\ref{eq:pairingpre2c}) with this isomorphism, we finally obtain a pairing
\begin{align}\label{eq:pairingpre3c}
 [\![\cdot,\cdot]\!]_m  \colon  H^{\rb_{n,F}}_\mathrm{c}(Y^{(n)}_\mathcal{K}, \widetilde{\mathcal{V}}(\boldsymbol{\lambda}^\vee)) 
    \times H^{\rb_{n-1,F}} (Y^{(n-1)}_\mathcal{K}, \widetilde{\mathcal{V}}(\boldsymbol{\mu}^\vee))   
    \longrightarrow \mC 
\end{align}
(the same notation is used in (\ref{eq:genpairing}), but no confusion likely occurs). 

Later we will define a local pairing $\langle \cdot,\cdot\rangle_{\lambda,\mu}^{(l)}$. By using it, we will normalise the pairing $[\![\cdot,\cdot]\!]_m$ introduced in  (\ref{eq:genpairing}) as
\begin{align} \label{eq:normalisation}
 [\cdot,\cdot]^{(m)}_{\blambda,\bmu}:=\prod_{v\in \Sigma_{F,\infty}}\langle \cdot,\cdot\rangle^{(m)}_{\lambda_v^\vee,\mu_v^\vee}\langle\cdot,\cdot\rangle^{(m)}_{\lambda_v-\mathsf{w},\mu_v-\mathsf{w'}}.
\end{align}
See (\ref{eq:pairingLambdaMu}) in Section~\ref{subsec:Pairings} and (\ref{eq:global_pairing1}) in Section~\ref{subsec:alg_rep} for precise definitions of $\langle \cdot,\cdot\rangle^{(l)}_{\lambda,\mu}$ and $[\cdot,\cdot]^{(m)}_{\blambda,\bmu}$.

\subsubsection{Relation with zeta integrals} \label{subsec:relationzeta} 

For each $\varepsilon\in \{\pm\}$, let $\cW(\pi^{(n)}, \psi_\varepsilon)$ denote the global Whittaker model of $\pi^{(n)}$, which is decomposed as the restricted tensor product ${\bigotimes}'_{v\in \Sigma_F} \cW(\pi^{(n)}_v,\psi_{\varepsilon,v})$ of the local Whittaker models so that the pure tensor $\bigotimes_{v\in \Sigma_F} w_v$ corresponds to the function $g=(g_v)_{v\in \Sigma_F}\mapsto \prod_{v\in \Sigma_F}w_v(g_v)$ for $g\in \rGL_n(F_{\mA})$. Then 
\begin{align*}
H^{\rb_{n,F}}(\mathfrak{g}_{n\mC},\widetilde{K}_n; \pi^{(n)}\otimes_{\mC} \widetilde{V}(\blambda^\vee))^{\mathcal{K}_n} \cong H^{\rb_{n,F}}(\mathfrak{g}_{n\mC},\widetilde{K}_n; \cW(\pi^{(n)}_\infty, \psi_{\varepsilon,\infty}) \otimes_{\mC} \widetilde{V}(\blambda^\vee) )\otimes \cW(\pi^{(n)}_\mathrm{fin},\psi_{\varepsilon,\mathrm{fin}})^{\mathcal{K}_n} 
\end{align*} 
appears as a unique isotypic component of the cuspidal cohomology group $H^{\rb_{n,F}}_{\mathrm{cusp}}(Y^{(n)}_{\cK},\widetilde{\cV}(\blambda^\vee))$ which is regarded as a  module over the Hecke algebra $\mathcal{H}_{\cK_n}:=C_c^{\infty}(\rGL_n(F_{\mA,\mathrm{fin}})/\!/\cK_n)$ associated to $\cK_n$; see, for example, \cite[p.p.\ 123--124]{clo90}. 
Since the $(\mathfrak{g}_{n\mC},\widetilde{K}_n)$-cohomology group $H^{\rb_{n,F}}(\mathfrak{g}_{n\mC},\widetilde{K}_n; \cW(\pi^{(n)}_\infty,\psi_{\varepsilon,\infty}) \otimes_{\mC} \widetilde{V}(\blambda^\vee) )$ is known to be of dimension one over $\mC$ (refer to \cite[Lemme 3.14]{clo90}), we here temporarily fix its generator $\{\pi^{(n)}_\infty\}_\varepsilon$, 
and consider a homomorphism
\begin{align} \label{eq:Fourier}
  \mathscr{F}_{\{\pi^{(n)}_\infty\}_\varepsilon} \colon \cW(\pi^{(n)}_{\rm fin}, \psi_{\varepsilon,\mathrm{fin}})^{\cK_n}  
    \longrightarrow   H^{\rb_{n,F}}(\mathfrak{g}_{n\mC}, \widetilde{K}_n \, ; \pi^{(n)}\otimes_{\mC} \widetilde{V}(\blambda^\vee))^{\mathcal{K}_n} \hookrightarrow 
         H^{\rb_{n,F}}_{\rm cusp}   (   Y^{(n)}_{\cK},    \widetilde{\cV}(\blambda^\vee)     )
\end{align}
induced by $w_{\mathrm{fin}} \mapsto \{\pi^{(n)}_\infty \}_\varepsilon \otimes w_{\mathrm{fin}}$.
Similarly we consider a map 
\begin{align*}
  \mathscr{F}_{\{\pi^{(n-1)}_\infty\}_{-\varepsilon}} \colon \cW(\pi^{(n-1)}_{\rm fin}, \psi_{-\varepsilon,\mathrm{fin}})^{\cK_{n-1}}  
    \longrightarrow   H^{\rb_{n-1,F}}(\mathfrak{g}_{n-1\mC},\widetilde{K}_{n-1}\,; \pi^{(n-1)}\otimes_{\mC} \widetilde{V}(\bmu^\vee))^{\mathcal{K}_{n-1}} \hookrightarrow 
         H^{\rb_{n-1,F}}_{\rm cusp}   (   Y^{(n-1)}_{\cK},    \widetilde{\cV}(\bmu^\vee)     )
\end{align*}
induced by $w'_{\mathrm{fin}} \mapsto \{\pi^{(n-1)}_\infty \}_{-\varepsilon} \otimes w'_{\mathrm{fin}}$ for a generator 
$\{\pi^{(n-1)}_\infty\}_{-\varepsilon}$ 
 of $H^{\rb_{n-1,F}}(\mathfrak{g}_{n-1\mC},\widetilde{K}_{n-1}; \cW(\pi^{(n-1)}_\infty, \psi_{-\varepsilon,\infty}) \otimes_{\mC} \widetilde{V}(\bmu^\vee) )$.

Since the cuspidal cohomology $H^{\rb_{n,F}}_\mathrm{cusp}(Y^{(n)}_\mathcal{K}, \widetilde{\mathcal{V}}(\boldsymbol{\lambda}^\vee))$ is naturally regarded as a submodule of $H^{\rb_{n,F}}_\mathrm{c}(Y^{(n)}_\mathcal{K}, \widetilde{\mathcal{V}}(\boldsymbol{\lambda}^\vee)) $ (see \cite[5.5~Corollary]{bor81},  \cite[page 123]{clo90} or \cite[Section 3.2.1]{mah05}), we can evaluate the global pairing $[\![\cdot,\cdot]\!]_m$ at the cuspidal cohomology classes $\mathscr{F}_{\{\pi^{(n)}_\infty\}_{\varepsilon}}(w_\mathrm{fin})$ and $\mathscr{F}_{\{\pi^{(n-1)}_\infty\}_{-\varepsilon}}(w'_\mathrm{fin})$, which is interpreted as the integration of the product of certain cuspidal automorphic forms belonging to $\pi^{(n)}$ and $\pi^{(n-1)}$ on $\mathcal{Y}^{(n-1)}_{\cK}$. 
For $v\in \Sigma_{F,\mathrm{fin}}$, 
 define $\mathrm{d}h_v$ to be the Haar measure on  
 $\rGL_{n-1}(F_v)$ normalised so that $\mathrm{vol}(\rGL_{n-1}(\gr_{F,v}),{\rm d}h_v)=1$ holds. 
 We also define the Haar measure    
 $\mathrm{d}x_v$ on $\mathrm{N}_{n-1}(F_v)$ by 
\begin{align*}
&\rd x_v :=\prod_{1\leq i<j\leq n-1}\rd x_{v,i,j}&\text{ with }\quad x_v=(x_{v,i,j})_{1\leq i,j\leq n-1}\in \rN_{n-1}(F_v), 
\end{align*}
where $\rd x_{v,i,j}$ is the self-dual measure on $F_v$ with respect to the fixed additive character $\psi_{-\varepsilon, v}$. 
We take the quotient measure $\mathrm{d}g_v$ on $\rN_{n-1}(F_v)\backslash \rGL_{n-1}(F_v)$ 
 which is characterised by the following identity for each measurable function $f$ on $\rGL_{n-1}(F_v)$: 
 \begin{align*}
 \int_{\rGL_{n-1}(F_v)}  f(h_v) \mathrm{d}h_v
 =\int_{\rN_{n-1}(F_v)\backslash \rGL_{n-1}(F_v)}  
    \left(\int_{\rN_{n-1}(F_v)} f(x_vg_v) \mathrm{d}x_v \right) \mathrm{d}g_v. 
 \end{align*}       
By standard unfolding calculations, it is decomposed into the product of local zeta integrals when both $w_\mathrm{fin}=\bigotimes_{v\in \Sigma_{F,\mathrm{fin}}}w_v$ and $w_{\mathrm{fin}}'=\bigotimes_{v\in \Sigma_{F,\mathrm{fin}}}w_v'$ are pure tensors: 
\begin{align} \label{eq:product_zeta}
\bigl[\!\bigl[ \mathscr{F}_{\{\pi^{(n)}_\infty\}_{\varepsilon}}(w_\mathrm{fin}), \mathscr{F}_{\{\pi^{(n-1)}_\infty\}_{-\varepsilon}}(w'_\mathrm{fin})\bigr]\!\bigr]_{m}
= 
   \widetilde{\mathcal{I}}_\infty^{(m)} (\{\pi^{(n)}_\infty\}_\varepsilon, \{\pi^{(n-1)}_\infty\}_{-\varepsilon})
\prod_{v\in \Sigma_{F, \mathrm{fin}}} 
   Z_v(s, w_v , w'_v  )\vert_{s=\frac{1}{2}+m}.
\end{align}
Here, for $v\in \Sigma_{F,\mathrm{fin}}$,  
\begin{align} \label{eq:local_zeta_int}
 Z_v(s,w_v,w_v')=\int_{\rN_{n-1}(F_v)\backslash \rGL_{n-1}(F_v)} w_v(\iota_n(g_v))w'_v(g_v) \lvert \det g_v\rvert_v^{s-\frac{1}{2}} {\rm d}g_v
\end{align}
is nothing but the local zeta integral introduced by Jacquet, Piatetski-Shapiro and Shalika \cite[p.~387 (2)]{jpss83}. 
Similarly 
\begin{align*}
\widetilde{\cI}_\infty^{(m)}(\{\pi^{(n)}_\infty\}_\varepsilon,\{\pi^{(n-1)}_\infty \}_{-\varepsilon})=\prod_{v\in \Sigma_{F,\infty}}\widetilde{\cI}_v^{(m)}(\{\pi^{(n)}_v\}_\varepsilon,\{\pi^{(n-1)}_v\}_{-\varepsilon}) 
\end{align*}
is deeply related with the archimedean local zeta integral, but is much more complicated because  $\{\pi^{(n)}_\infty\}_\varepsilon$ and $\{\pi^{(n-1)}_\infty\}_{-\varepsilon}$ have awful shapes in general. We shall propose a precise description of $\cI_v^{(m)}(\{\pi^{(n)}_v\}_\varepsilon,\{\pi^{(n-1)}_v\}_{-\varepsilon})$ for $v\in \Sigma_{F,\infty}$ in Section~\ref{subsec:explicit_arch_zeta}.  It is known to be nonzero by the work of B.~Sun \cite{sun17}, but quite uncomputable unless we specify several concerning data. Later we explicate the global pairing $[\cdot,\cdot]_{\blambda,\bmu}^{(m)}$ in Section~\ref{subsec:Pairings}, generators $[\pi^{(n)}_\infty]_\varepsilon$, $[\pi^{(n-1)}_\infty]_{-\varepsilon}$ of the relative Lie algebra cohomology in Section~\ref{subsec:U(n)_inv}, and non-archimedean Whittaker functions $w^\mathrm{ess}_\mathrm{fin}(\pi^{(n)})_{\varepsilon}$ and $w^\mathrm{ess}_\mathrm{fin}(\pi^{(n-1)})_{-\varepsilon}$ in Section~\ref{subsec:whittvec}. To construct $[\pi^{(\cdot)}_\infty]_{\pm \varepsilon}=\bigotimes_{v\in \Sigma_{F,\infty}}[\pi^{(\cdot)}_v]_{\pm \varepsilon}$, precise study of $(\mathfrak{gl}_{n\mC}, \mC^\times \rU(n))$-cohomology groups is indispensable. The next subsection is devoted to preparation for analysis of finite dimensional representations of the general linear groups. In particular, we introduce notion of {\em rational} Gel'fand--Tsetlin basis in Section~\ref{subsec:GT_basis}, which plays a crucial role in the explicit construction of generators $[\pi^{(n)}_\infty]_\varepsilon$ and  $[\pi^{(n-1)}_\infty]_{-\varepsilon}$  in Section~\ref{subsec:U(n)_inv}.

\subsection{Preliminaries on finite dimensional representations of $\rGL_n$} \label{subsec:finite_dimensional}

We prepare several facts from theory of finite dimensional representations of the general linear groups, which will be used in the arguments in Sections~\ref{subsec:explicit_generators} and \ref{subsec:uniformintegrality}. In Section~\ref{subsec:realisation}, we construct a certain realisation of highest weight representations over an integral domain of characteristic $0$. We then introduce the notion of Gel'fand--Tsetlin basis and its modification in Section~\ref{subsec:GT_basis}. Proposition~\ref{prop:xiM_integral} stated here will be used to equip $V_\lambda$ with an appropriate integral structure. The local pairing $\langle \cdot, \cdot\rangle_{\lambda,\mu}^{(l)}$ appearing in the preceding subsection will be defined in Section~\ref{subsec:Pairings}. Section~\ref{subsec:Cartan_comp} contains several notions on Cartan components of the tensor representation $V_\lambda(\mathcal{A})\otimes_\mathcal{A}V_{\lambda'}(\mathcal{A})$. 

\subsubsection{Determinantal realisation of rational representations of $\rGL_n$}
\label{subsec:realisation}

Let $\cA$ be a (commutative) integral domain of characteristic $0$. 
In the present article, we always identify 
the subring of $\cA$ generated by the multiplicative identity 
with $\mZ$. 
In this subsection, we define finite rank representations of 
$\rGL_n(\cA )$, which are 
irreducible holomorphic finite dimensional representations when $\cA =\mC$.

Put $\cI_{n}=\bigsqcup_{k=1}^n\cI_{n, k}$ with 
$\cI_{n, k}:=\{  I \subset \{1,2, \dots, n\} \mid \# I=k\}$, 
where $\# I$ denotes the cardinality of $I$. 
For $1\leq k\leq n,\ I, J \in {\mathcal I}_{n, k}$ and 
a square matrix $g=(g_{i,j})_{1\leq i,j\leq n}$ of size $n$, 
we define  ${\det}_{I, J}(g)$ to be
\begin{align*}
{\det}_{I,J}(g) := \det \left(\begin{array}{cccc}  
g_{i_1, j_1} & g_{i_1, j_2} & \cdots & g_{i_1, j_k} \\ 
g_{i_2, j_1} & g_{i_2, j_2} & \cdots & g_{i_2, j_k} \\ 
\vdots  &  \vdots & \ddots &  \vdots  \\
g_{i_k, j_1} & g_{i_k, j_2} & \cdots & g_{i_k, j_k} \\ 
\end{array}\right),
\end{align*}
where we write 
$I =\{ i_1, i_2, \dots, i_k  \}$ and $J =\{ j_1, j_2, \dots, j_k  \}$ 
with $i_1 < i_2 < \dots < i_k$ and $j_1 < j_2 < \dots < j_k$. 
In our notation, Cauchy--Binet's formula implies that, 
for $1\leq k\leq n,\ I, I'\in \cI_{n,k}$ and square matrices $g, g'$ of size $n$, the equality 
\begin{align}
\label{eq:cauchy_binet}
{\det}_{I,I'}(gg')&=\sum_{J\in \cI_{n,k}}
{\det}_{I,J}(g){\det}_{J,I'}(g')
\end{align}
holds. For $1\leq k\leq n$, $I \in {\mathcal I}_{n, k}$ 
and a square matrix $g$ of size $n$, 
we use the symbol ${\det}_{I}(g)$ to denote ${\det}_{ \{ 1, 2, \dots, k \}, I  } (g)$  for simplicity.

Let $z =(z_{i, j})_{1\leq i,j\leq n}$ be a square matrix in variables $z_{i, j}$ with $1\leq i,j \leq n$. 
Let $\cA [z,(\det z)^{-1} ]$ be the ring of polynomials 
in $z_{i, j}$ ($1\leq i,j \leq n$), $(\det z)^{-1}$ with 
coefficients in $\cA$. 
We regard $\cA [z,(\det z)^{-1} ]$ as a $\rGL_n(\cA )$-module 
via  
\begin{align*}
R(g)f(z) &=f(zg)&
&\text{for\;} g\in \rGL_n(\cA ) \text{\; and\;} f(z)\in \cA [z,(\det z)^{-1} ].  
\end{align*}
Recall that  $\Lambda_n$ denotes the set of dominant integral weights as in (\ref{eq:Lambda_n}).
For $\lambda =(\lambda_1,\lambda_2,\dots ,\lambda_n)\in \Lambda_n$,   
let $\cL (\lambda )$ be the set of 
$l =(l_I)_{I\in \cI_n}\in \mZ^{2^n-1}$ such that 
$l_{\{1,2,\dots ,n\}}=\lambda_n$, $l_I\in \mN_0$ for $I\in \bigsqcup_{k=1}^{n-1}\cI_{n,k}$ and 
\begin{align*}
 \sum_{I \in \cI_{n, k}  }l_I &= \lambda_k - \lambda_{k+1} & \text{for any\; } 1\leq k\leq n-1.
\end{align*} 
We then define $V_{\lambda}(\cA )$ as an $\cA$-submodule of 
$\cA [z,(\det z)^{-1} ]$ spanned by 
\begin{align}
\label{eq:Vlambda_generator}
f_l(z)
&:= \prod_{I \in \cI_{n}}
({\det}_I (z))^{l_I}&   
&\text{for all \;} l =(l_I)_{I\in \cI_n}\in \cL (\lambda ). 
\end{align}
Cauchy--Binet's formula (\ref{eq:cauchy_binet}) implies that 
$V_{\lambda}(\cA )$ is closed under the action $R$ of $\rGL_n(\cA )$, 
and we define $\tau_\lambda$ to be the induced action on $V_\lambda (\cA )$, 
that is, $\tau_\lambda (g)=R(g)|_{V_\lambda (\cA )}$ for $g\in \rGL_n(\cA )$.

Let $\cA'$ be an integral domain such that 
$\cA $ is a subring of $\cA'$. 
By definition, we have  
$V_{\lambda}(\cA )\subset V_{\lambda}(\cA')$ and 
there is an isomorphism 
\[
V_{\lambda}(\cA )\otimes_{\cA}\cA' \ni 
\rv \otimes c\mapsto c\rv  \in V_{\lambda}(\cA')
\]
of $\cA'$-modules. 
Let $\lambda'\in \Lambda_n$. We regard 
$V_{\lambda}(\cA )\otimes_\cA V_{\lambda'}(\cA )$ as 
an $\cA$-submodule of $V_{\lambda}(\cA')\otimes_{\cA'}V_{\lambda'}(\cA')$ 
via the natural embedding $\rv \otimes \rv'\mapsto \rv \otimes \rv'$.

For simplicity, denote the complex representation $V_{\lambda}(\mC )$ by $V_\lambda$. 
Then it is known that $(\tau_\lambda ,V_\lambda )$ is 
an irreducible holomorphic finite dimensional representation of 
$\rGL_n(\mC )$ with highest weight $\lambda$ 
(\textit{cf.} \cite[Section 16.1.5]{Vilenkin_Klimyk_001}). 
Let $h (\lambda)=(h(\lambda)_I)_{I\in \cI_n}$ be the element of $\cL (\lambda )$ 
determined by 
\begin{align}
\label{eq:def_hlambda_gen}
&h(\lambda )_I =\left\{\begin{array}{ll}
\lambda_k-\lambda_{k+1}&\text{if $I=\{1,2,\dots ,k\}$},\\
0&\text{otherwise}
\end{array}\right.&
&\text{for\;} I\in \cI_{n,k} \text{\; with \;} 1\leq k\leq n-1.
\end{align}
Then $f_{h (\lambda)}(z)$ is a highest weight vector in 
$V_\lambda $ of weight $\lambda$, that is, $f_{h(\lambda)}(z)$ satisfies 
\begin{align*}
\tau_{\lambda}(E_{i,i})f_{h (\lambda)}(z)&=\lambda_if_{h (\lambda)}(z)
\qquad \text{for\;} 1\leq i\leq n& \text{\; and\;}
&&\tau_{\lambda}(E_{j,k})f_{h (\lambda)}(z)&=0
\qquad \text{for\; }1\leq j<k\leq n. 
\end{align*}
Here we use the same symbol $\tau_\lambda$ for the associated $\mathfrak{gl}_n$-action by abuse of notation.
We fix a $\rU (n)$-invariant hermitian inner product 
$(\cdot ,\cdot )_\lambda$ on $V_\lambda$ so that 
$(f_{h (\lambda)}(z),f_{h (\lambda)}(z) )_\lambda =1$.

\subsubsection{Gel'fand--Tsetlin basis}
\label{subsec:GT_basis}

In this subsection, we introduce the Gel'fand--Tsetlin basis and prepare several notation. 
We call  
\begin{align*}
&M=({m}_{i,j})_{1\leq i\leq j\leq n}=\left(\begin{array}{c}
{m}_{1,n}\quad {m}_{2,n}\quad \quad  \dots \quad \quad {m}_{n,n}\\
{m}_{1,n-1}\ \ \dots \ \ {m}_{n-1,n-1}\\
\cdots \ \ \cdots \ \ \cdots \\
{m}_{1,2}\ \ {m}_{2,2}\\
{m}_{1,1}
\end{array}\right)&
&(m_{i,j}\in \mZ )
\end{align*}
an integral triangular array of size $n$, and call $m_{i,j}$ 
the $(i,j)$-th entry of $M$. 
For an integral triangular array 
$M=({m}_{i,j})_{1\leq i\leq j\leq n}$, 
we define the dual triangular array 
$M^\vee=({m}_{i,j}^\vee )_{1\leq i\leq j\leq n}$ 
of $M$ by setting ${m}_{i,j}^\vee :=-{m}_{j+1-i,j}$.

Let $\lambda =(\lambda_1,\lambda_2,\dots ,\lambda_n)\in \Lambda_n$. 
Recall that, when  $n>1$ and for $\mu =(\mu_1,\mu_2,\dots ,\mu_{n-1})\in \Lambda_{n-1}$, 
we use the symbol $\lambda \succeq \mu$ to indicate that $\lambda$ interlaces $\mu$, that is, the inequalities
\begin{align*} 
\lambda_1\geq  \mu_1\geq \lambda_2\geq \mu_2
\geq \dots \geq \lambda_{n-1} \geq \mu_{n-1} \geq \lambda_n
\end{align*}
hold. Let $\rG (\lambda)$ be the set of integral triangular arrays 
$M=({m}_{i,j})_{1\leq i\leq j\leq n}$ of size $n$
such that, for $1\leq j\leq n$, each row $m^{(j)}:=(m_{1,j},m_{2,j}, \dots ,m_{j,j})$ is an element of $\Lambda_j$ and satisfies
\begin{align}
\label{eq:GT_cdn}
m^{(n)}&=\lambda & \text{and} &&
m^{(k)}&\preceq m^{(k+1)}\quad \text{for\;} 1\leq k\leq n-1.
\end{align}
We call an element of $\rG (\lambda)$ a Gel'fand--Tsetlin pattern of 
type $\lambda$. 
For $M=({m}_{i,j})_{1\leq i\leq j\leq n}\in \rG (\lambda)$, 
we define the weight 
$\gamma^M=(\gamma^M_1,\gamma^M_2,\dots ,\gamma^M_n)$ of $M$ by 
\begin{align}
\label{eq:def_wt_M}
&\gamma_{j}^M:=\sum_{i=1}^j{m}_{i,j}-\sum_{i=1}^{j-1}{m}_{i,j-1}&
&(1\leq j\leq n).
\end{align}

Gel'fand and Tsetlin \cite{Gelfand_Tsetlin_001} construct 
an orthonormal basis $\{\zeta_M\}_{M\in \rG (\lambda)}$ 
of $V_\lambda$ which satisfies the following formulas on 
the $\ggl_n$-action (see Zhelobenko \cite{Zhelobenko_001} 
for a detailed proof): for $M=(m_{i,j})_{1\leq i\leq j\leq n}\in \rG (\lambda)$, we have
\begin{align}
\label{eq:GT_act_wt}
&\tau_\lambda (E_{k,k})\zeta_{M}
=\gamma_{k}^M \zeta_{M}&
&(1\leq k\leq n),\\
\label{eq:GT_act+}
&\tau_\lambda (E_{j,j+1})\zeta_{M}=
\underset{M+\Delta_{i,j}\in \rG (\lambda)}{\sum_{1\leq i\leq j}}
\ra_{i,j}(M)\zeta_{M+\Delta_{i,j}}&
&(1\leq j\leq n-1),\\
\label{eq:GT_act-}
&\tau_\lambda (E_{j+1,j})\zeta_{M}=
\underset{M+\Delta_{i,j}^\vee \in \rG (\lambda)}{\sum_{1\leq i\leq j}}
\ra_{i,j}(M^\vee)\zeta_{M+\Delta_{i,j}^\vee }&
&(1\leq j\leq n-1),
\end{align}
where 
$\Delta_{i,j}$ is the integral triangular array of size $n$ 
with $1$ at the $(i,j)$-th entry and $0$ at the other entries, and 
\begin{align*}
\ra_{i,j}(M)&:=\left|
\frac{\prod_{h=1}^{j+1}(m_{h,j+1}-m_{i,j}-h+i) \cdot 
\prod_{h=1}^{j-1}(m_{h,j-1}-m_{i,j}-h+i-1)}
{\prod_{1\leq h\leq j,\, h\neq i}
(m_{h,j}-m_{i,j}-h+i)(m_{h,j}-m_{i,j}-h+i-1)}
\right|^{\frac{1}{2}}\!. 
\end{align*}
The basis $\{\zeta_M\}_{M\in \rG (\lambda)}$ is called the 
{\em Gel'fand--Tsetlin basis} of $V_\lambda$. 
By (\ref{eq:GT_act_wt}), for $M\in \rG(\lambda)$, each $\zeta_{M}$ is a weight vector in $V_\lambda $ 
of weight $\gamma^M$. In particular, 
$\zeta_{H(\lambda )}$ is a highest weight vector in $V_\lambda $ of 
weight $\lambda$, where $H(\lambda )$ is a unique element of  
$\rG (\lambda)$ whose weight is  $\lambda $, 
that is, $H(\lambda )=(h_{i,j})_{1\leq i\leq j\leq n}$ with 
$h_{i,j}=\lambda_i$. 
In the present article, 
we fix the orthonormal basis $\{\zeta_M\}_{M\in \rG (\lambda)}$ of 
$V_{\lambda}$
so that $\zeta_{H(\lambda )}=f_{h (\lambda)}(z)$.

As in \cite[Section 2.5]{im}, we set 
\begin{align}
\label{eq:def_Qbasis}
\xi_M&:=\sqrt{\rr (M)}\zeta_M&
 \text{for \;} M=(m_{i,j})_{1\leq i\leq j\leq n}\in \rG (\lambda),
\end{align}
where $\rr (M)$ is the rational constant defined by 
\begin{align}
\label{eq:def_rM}
&\rr (M):=\prod_{1\leq i\leq j<k\leq n}
\frac{(m_{i,k}-m_{j,k-1}-i+j)!(m_{i,k-1}-m_{j+1,k}-i+j)!}
{(m_{i,k-1}-m_{j,k-1}-i+j)!(m_{i,k}-m_{j+1,k}-i+j)!}.
\end{align}
Then $\{\xi_M\}_{M\in \rG (\lambda)}$ is an orthogonal basis 
of $V_\lambda$ satisfying $(\xi_M,\xi_M)_\lambda =\rr (M)$ 
($M\in \rG (\lambda)$), and it is well-suited for the $\cA$-module structure for 
an integral domain $\cA$ of characteristic $0$ with a certain mild condition. 
In fact, for several Gel'fand--Tsetlin patterns $M$ of type $\lambda$, $\xi_{M}$ coincides with one of the generators $f_l(z)$ ($l\in \cL (\lambda)$) introduced in (\ref{eq:Vlambda_generator}); see Lemmas \ref{lem:extremal_vec_explicit} and \ref{lem:xi_Hmulambda_explicit} below for details. Consequently the following proposition holds.

\begin{prop}
\label{prop:xiM_integral}
Retain the notation. 
Let $\cA$ be an integral domain of characteristic $0$. 
Assume that  $(\lambda_1-\lambda_n + n-3)!$ is $($multiplicatively$)$ invertible in $\cA$ if $n\geq 3$. 
Then $\xi_M$ is contained in $V_\lambda (\cA \cap \mQ )$ for $M\in \rG (\lambda )$. 
Moreover, we have 
\[
V_\lambda (\cA )=\bigoplus_{M\in \rG (\lambda )}\cA \,\xi_M. 
\]
\end{prop}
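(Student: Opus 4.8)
The plan is to establish the $\cA$-integrality of $\xi_M$ in two stages: first treat certain ``extremal'' Gel'fand--Tsetlin patterns for which $\xi_M$ can be matched explicitly with one of the monomial generators $f_l(z)$ of $V_\lambda(\cA)$ introduced in \eqref{eq:Vlambda_generator}, and then bootstrap to all $M\in \rG(\lambda)$ by applying suitably normalised Lie algebra operators to reach an arbitrary pattern from an extremal one. For the first stage I would invoke Lemmas~\ref{lem:extremal_vec_explicit} and \ref{lem:xi_Hmulambda_explicit} (stated as forthcoming in the excerpt), which identify $\xi_M$ with a product of minors $f_l(z)$ for the relevant $l\in \cL(\lambda)$; since each $f_l(z)$ lies in $V_\lambda(\cA\cap\mQ)$ by construction and since $\xi_M$ is a $\mQ$-linear combination of weight vectors with rational coefficients (by \eqref{eq:def_Qbasis} and the rationality of $\rr(M)$), this anchors the induction at patterns like $H(\lambda)$ and, after twisting, at the ``bottom'' patterns.

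The second stage is the inductive step: given a pattern $M\in \rG(\lambda)$, I would produce it from an extremal pattern by repeatedly adding or subtracting $\Delta_{i,j}$ in the triangular array, i.e.\ by applying the raising/lowering operators $\tau_\lambda(E_{j,j+1})$ and $\tau_\lambda(E_{j+1,j})$. The formulas \eqref{eq:GT_act+} and \eqref{eq:GT_act-} express $\tau_\lambda(E_{j,j+1})\zeta_M$ as a sum of $\zeta_{M+\Delta_{i,j}}$ with coefficients $\ra_{i,j}(M)$ that are square roots of rational numbers; passing to the $\xi$-basis via \eqref{eq:def_Qbasis} and \eqref{eq:def_rM}, the troublesome square roots should cancel, leaving the transition coefficients between $\xi_M$ and $\xi_{M+\Delta_{i,j}}$ (and their neighbours) as honest rational numbers. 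The crucial point is to show that, after this renormalisation, these rational transition coefficients have denominators dividing $(\lambda_1-\lambda_n+n-3)!$ (for $n\geq 3$; for $n\leq 2$ the operators act integrally on the nose), so that the hypothesis on $\cA$ makes them invertible. One then solves for $\xi_{M+\Delta_{i,j}}$ in terms of $\xi_M$ and the operator applied to it --- this requires the diagonal coefficient $\ra_{i,j}(M)$-analogue to itself be a unit in $\cA$, which is where the factorial bound is consumed --- and concludes $\xi_{M+\Delta_{i,j}}\in V_\lambda(\cA)$ by induction. Iterating over a path in $\rG(\lambda)$ connecting an extremal pattern to $M$ (such paths exist because the Gel'fand--Tsetlin poset is connected by these elementary moves), every $\xi_M$ lands in $V_\lambda(\cA)$.

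Finally, for the direct-sum statement: the $\xi_M$ are a $\mC$-basis of $V_\lambda$ (being nonzero rescalings of the orthonormal Gel'fand--Tsetlin basis), so they are $\cA$-linearly independent and $\bigoplus_M \cA\,\xi_M \subseteq V_\lambda(\cA)$; for the reverse inclusion, I would note that $V_\lambda(\cA)$ is spanned over $\cA$ by the finitely many monomials $f_l(z)$, $l\in\cL(\lambda)$, and express each $f_l(z)$ as an $\cA$-combination of the $\xi_M$ --- this is again a weight-space computation, handled weight by weight, using that within a fixed weight space the change-of-basis matrix between $\{f_l\}$ and $\{\xi_M\}$ is defined over $\mQ$ with denominators controlled by the same factorial, hence invertible over $\cA$. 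Assembling these gives $V_\lambda(\cA)=\bigoplus_{M\in\rG(\lambda)}\cA\,\xi_M$.

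\smallskip

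\emph{Main obstacle.} The delicate part is the bookkeeping in the inductive step: one must verify that passing from the orthonormal basis $\{\zeta_M\}$ to $\{\xi_M\}$ not only removes the square roots from the matrix coefficients $\ra_{i,j}(M)$ but does so \emph{uniformly}, with the resulting rational coefficients having denominators bounded by exactly $(\lambda_1-\lambda_n+n-3)!$ and no worse. This amounts to a careful analysis of the ratios $\rr(M+\Delta_{i,j})/\rr(M)$ against the factors appearing in $\ra_{i,j}(M)^2$ --- the differences $m_{h,k}-m_{i,k}-h+i$ occurring there are all bounded in absolute value by $\lambda_1-\lambda_n+n-2$, which is what produces the stated factorial --- and isolating the ``diagonal'' coefficient that must be inverted. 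I expect that the forthcoming Lemmas~\ref{lem:extremal_vec_explicit} and \ref{lem:xi_Hmulambda_explicit} are designed precisely to make this computation tractable, so the proof in the body likely reduces the whole argument to those two lemmas plus the elementary connectivity of the Gel'fand--Tsetlin poset under the moves $M\mapsto M\pm\Delta_{i,j}$.
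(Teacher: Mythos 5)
Your high-level strategy is the same as the paper's — anchor at the explicitly computed generators $f_{h(\lambda,\mu)}(z)=\xi_{H(\mu)[\lambda]}$ (Lemma~\ref{lem:xi_Hmulambda_explicit}) and then transport to an arbitrary $M$ via Lie algebra operators, with integrality controlled by the factorial hypothesis. But there are two concrete gaps in the mechanism you propose.

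First, the ``solve for $\xi_{M+\Delta_{i,j}}$'' step does not go through with the elementary raising/lowering operators. By (\ref{eq:GT_act+}) and (\ref{eq:GT_act-}), $\tau_\lambda(E_{j,j+1})\zeta_M$ and $\tau_\lambda(E_{j+1,j})\zeta_M$ are \emph{sums} over several patterns $M+\Delta_{i,j}$ (resp.\ $M+\Delta_{i,j}^\vee$), not single terms, and there is no obvious ordering in which this action becomes triangular. So inverting a ``diagonal coefficient'' is not available; you would be left with a linear system whose solvability over $\cA$ is not evident. The paper sidesteps this by replacing the elementary $E_{j\pm 1,j}$ with Molev's composite raising/lowering operators $D_{i,j}\in\cU(\ggl_n)$ and their products $\cD^\pm_M$. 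Proposition~\ref{prop:Molev_diff_op} shows these are engineered to transport the highest weight vector to $\rr_1(M)\,\xi_M$ in a single shot, with the complementary operators $\cD^+_M$ being strictly upper-triangular with respect to the lexicographic order $>_\rlex$ on patterns (equation (\ref{eq:Molev_D+prime})), which is precisely the triangularity your argument needs but cannot extract from $E_{j\pm1,j}$ alone.

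Second, your factorial bound is off by one, and in a way that matters: by estimating the Gel'fand--Tsetlin coefficients $\ra_{i,j}(M)$ on the full $n\times n$ pattern, the differences $m_{h,k}-m_{i,k}-h+i$ you invoke are bounded by $\lambda_1-\lambda_n+n-2$, so your argument would require $(\lambda_1-\lambda_n+n-2)!$ to be invertible, which is strictly stronger than the hypothesis. The paper obtains the stated bound $(\lambda_1-\lambda_n+n-3)!$ precisely because it does not apply the operators to the size-$n$ pattern: it first lands on $\xi_{H(\mu)[\lambda]}$ (with $\widehat{M}\in\rG(\mu)$) and then applies $\iota_n(\cD^-_{\widehat{M}})$, the Molev lowering operator for the \emph{embedded} $\ggl_{n-1}\hookrightarrow\ggl_n$. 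The normalisation constant that must be inverted is then $\rr_1(\widehat{M})$, a ratio of factorials of $\rGL_{n-1}$-quantities bounded by $\mu_1-\mu_{n-1}+n-3\leq\lambda_1-\lambda_n+n-3$. Working one rank down is what gains the crucial factor of one. The same remark applies to the direct-sum statement, where the paper's triangular descent with $\cD^+_{\widehat{M_j}}$ (again via $\iota_n$) uses $\rr_2(\widehat{M_j})^{-1}\in\cA$ rather than anything depending on the size-$n$ pattern.
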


A proof of Proposition~\ref{prop:xiM_integral} is given in 
Section \ref{subsec:Amod_str}. 
Here we remark that the assumption 
``$(\lambda_1-\lambda_n + n-3)!$ is invertible in $\cA$ if $n\geq 3$'' 
cannot be removed from Proposition~\ref{prop:xiM_integral} 
({\it cf.} Remark \ref{rem:xiM_gen_Vlambda}).

Let $\cE_\lambda$ be 
the set of extremal weights of $\tau_\lambda$, that is, 
$\cE_\lambda :=\{\sigma \lambda \mid \sigma \in \gS_n\}$ with 
the $\gS_n$-action 
(\ref{eq:def_actSn}). 
For $\gamma =(\gamma_1,\gamma_2,\dots,\gamma_n) 
\in \cE_{\lambda}$, let $H(\gamma )$ be 
the unique element of $\rG (\lambda)$ whose weight is  
$\gamma$, that is, 
$H(\gamma )=(h_{i,j})_{1\leq i\leq j\leq n}\in \rG (\lambda)$ such that, 
for any $1\leq j\leq n$, 
$(h_{1,j},h_{2,j},\dots ,h_{j,j})$ is the unique element of 
$\Lambda_j\cap 
\{\sigma (\gamma_1,\gamma_2,\dots ,\gamma_j)\mid \sigma \in \gS_j\}$. 
In Section \ref{subsec:C_def_ps}, 
extremal weight vectors $\xi_{H(\gamma ) }$ will be used 
for normalisations of the minimal $\rU (n)$-type vectors of 
a principal series representation of $\rGL_n(\mC )$. 
In Section \ref{subsec:Amod_str}, we prove the following lemma, which asserts that 
$\xi_{H(\gamma ) }$ coincides with one of the generators introduced in 
(\ref{eq:Vlambda_generator}). 
\begin{lem}
\label{lem:extremal_vec_explicit}
Let $\lambda =(\lambda_1,\lambda_2,\dots ,\lambda_n)\in \Lambda_n$. For $\gamma =\sigma \lambda \in \cE_\lambda $ 
with $\sigma \in \gS_n$, we have 
$\xi_{H(\gamma )}=\zeta_{H(\gamma )}=f_{h (\gamma)}(z)$
where $h (\gamma)=(h(\gamma)_I)_{I\in \cI_n}\in \cL (\lambda )$ is 
determined by 
\begin{align*}
&h(\gamma )_I =\left\{\begin{array}{ll}
\lambda_k-\lambda_{k+1}&
\text{if $I=\{\sigma (1),\sigma (2),\dots ,\sigma (k)\}$},\\
0&\text{otherwise}
\end{array}\right.&
&\text{for \;} I\in \cI_{n,k} \text{\; with\; } 1\leq k\leq n-1.
\end{align*}
\end{lem}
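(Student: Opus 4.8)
The plan is to prove, in this order, that $f_{h(\gamma)}(z)$ is a weight vector of weight $\gamma$, hence spans the one-dimensional weight space $V_\lambda[\gamma]$ together with $\zeta_{H(\gamma)}$; that the two vectors are in fact equal, by computing norms and phases; and finally that $\rr(H(\gamma))=1$, so that $\xi_{H(\gamma)}=\zeta_{H(\gamma)}$ by \eqref{eq:def_Qbasis}. First I would record that $h(\gamma)\in\cL(\lambda)$: in each layer $\cI_{n,k}$ with $1\le k\le n-1$ exactly one coordinate $h(\gamma)_I$ is nonzero, namely that for $I=\{\sigma(1),\dots,\sigma(k)\}$, and it equals $\lambda_k-\lambda_{k+1}\ge 0$; hence $f_{h(\gamma)}(z)$ is one of the generators \eqref{eq:Vlambda_generator} and lies in $V_\lambda$. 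Next I would compute its weight: writing $\delta_t=\diag(t_1,\dots,t_n)$, the action $R(\delta_t)$ multiplies the $j$-th column of $z$ by $t_j$, so by multilinearity each minor $\det_{\{1,\dots,k\},\{\sigma(1),\dots,\sigma(k)\}}(z)$ is multiplied by $\prod_{l=1}^{k}t_{\sigma(l)}$ and $\det z$ by $\prod_{j=1}^n t_j$; collecting the exponent of $t_{\sigma(i)}$ and telescoping $\sum_{k\ge i}(\lambda_k-\lambda_{k+1})+\lambda_n=\lambda_i$ shows $\tau_\lambda(E_{i,i})f_{h(\gamma)}(z)=\lambda_{\sigma^{-1}(i)}f_{h(\gamma)}(z)$, i.e.\ $f_{h(\gamma)}(z)$ has weight $\sigma\lambda=\gamma$ in the sense of \eqref{eq:def_actSn} and \eqref{eq:GT_act_wt}. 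Since $\gamma$ lies in the Weyl group orbit of $\lambda$, we have $\dim_\mC V_\lambda[\gamma]=\dim_\mC V_\lambda[\lambda]=1$, and $\zeta_{H(\gamma)}$ spans it, so $f_{h(\gamma)}(z)=c\,\zeta_{H(\gamma)}$ for some $c\in\mC^\times$.

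To see $|c|=1$, let $w_\sigma\in\rU(n)$ be the permutation matrix with $w_\sigma e_b=e_{\sigma(b)}$; then $\tau_\lambda(w_\sigma)$ is unitary for $(\cdot,\cdot)_\lambda$, and $\tau_\lambda(w_\sigma)f_{h(\lambda)}(z)=f_{h(\lambda)}(zw_\sigma)$ equals $\pm f_{h(\gamma)}(z)$ after reordering the permuted columns inside each minor (the sign being a product of signatures of the sorting permutations and a power of $\det w_\sigma=\mathrm{sgn}\,\sigma$). As $\|f_{h(\lambda)}(z)\|_\lambda=1$ by the fixed normalisation $\zeta_{H(\lambda)}=f_{h(\lambda)}(z)$, this yields $\|f_{h(\gamma)}(z)\|_\lambda=1=\|\zeta_{H(\gamma)}\|_\lambda$, hence $|c|=1$. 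To see the phase of $c$ is $+1$, I would connect $\gamma$ to $\lambda$ by successively applying raising operators: whenever $\gamma_k<\gamma_{k+1}$, applying $\tau_\lambda(E_{k,k+1})$ to $\zeta_{H(\gamma)}$ the appropriate number of times produces, by \eqref{eq:GT_act+} and the nonnegativity of the structure constants $\ra_{i,j}$, a positive real multiple of $\zeta_{H(s_k\gamma)}$ once one checks that only patterns of extremal type survive at the relevant corners; on the realisation side $\tau_\lambda(E_{k,k+1})$ acts on the monomials $f_l(z)$ by a differential operator with positive rational coefficients, so it sends $f_{h(\gamma)}(z)$ to a positive rational multiple of $f_{h(s_k\gamma)}(z)$. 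Comparing the two computations shows the proportionality constant between $f_{h(\cdot)}(z)$ and $\zeta_{H(\cdot)}$ remains a positive real along the chain; as it equals $1$ at the dominant end and has modulus $1$, we conclude $c=1$, i.e.\ $\zeta_{H(\gamma)}=f_{h(\gamma)}(z)$.

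Finally, $\rr(H(\gamma))=1$ follows by substituting the entries of $H(\gamma)$ — each row $m^{(j)}$ being the decreasing rearrangement of a sub-multiset of $\{\lambda_1,\dots,\lambda_n\}$ obtained from $m^{(j+1)}$ by deleting one value — into the explicit product \eqref{eq:def_rM} and matching every factorial in the numerator with one in the denominator; then $\xi_{H(\gamma)}=\sqrt{\rr(H(\gamma))}\,\zeta_{H(\gamma)}=\zeta_{H(\gamma)}$, which completes the proof. I expect the main obstacle to be the phase determination: the Gel'fand--Tsetlin normalisation is pinned down only through the nonnegative structure constants of \eqref{eq:GT_act+}--\eqref{eq:GT_act-}, so one must carefully propagate the monomial realisation along the chain of simple reflections and exclude any spurious sign; the identity $\rr(H(\gamma))=1$ is routine but needs care with the index shifts in \eqref{eq:def_rM}.
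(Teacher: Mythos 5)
Your proposal is correct and takes essentially the same approach as the paper's own proof: both deduce $\rr(H(\gamma))=1$ from the explicit factorial cancellation (cf.\ Lemma~\ref{lem:rM_property}), identify $f_{h(\gamma)}(z)$ as a unit-norm weight vector via the unitarity of the permutation matrix and \eqref{eq:actS_gen_Vlambda}, and pin down the sign using the nonnegativity of the structure constants in \eqref{eq:GT_act+}--\eqref{eq:GT_act-} and \eqref{eq:act+_gen_Vlambda}--\eqref{eq:act-_gen_Vlambda}. The only cosmetic difference is directional: you ascend from $\gamma$ to $\lambda$ via raising operators along simple reflections, whereas the paper applies a single word $E(\gamma)$ of lowering operators descending from $\lambda$ to $\gamma$ and then compares the two resulting positive constants.
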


Assume that $n>1$. 
Let $\cA$ be an integral domain of characteristic $0$. 
Assume $\{(\lambda_1-\lambda_n + n-3)!\}^{-1}\in \cA$ if $n\geq 3$. 
We regard $\rGL_{n-1}(\cA )$ 
as a subgroup of $\rGL_n(\cA )$ 
via the embedding 
\begin{align}
\label{eq:embed_Gn-1_Gn}
&\iota_n\colon \rGL_{n-1}(\cA )\to \rGL_n(\cA );
g\mapsto \begin{pmatrix}
g&O_{n-1,1}\\ 
O_{1,n-1}&1
\end{pmatrix}.
\end{align}
We set $\widehat{M}:=(m_{i,j})_{1\leq i\leq j\leq n-1}$ 
for $M=(m_{i,j})_{1\leq i\leq j\leq n}\in \rG (\lambda)$. 
By Proposition~\ref{prop:xiM_integral}, we can take a decomposition 
\begin{align}
\label{eq:decomp_n_n-1}
&V_{\lambda}(\cA )=\bigoplus_{\mu \preceq \lambda }V_{\lambda,\mu}(\cA ),&
&V_{\lambda,\mu}(\cA ):=\bigoplus_{M\in \rG (\lambda ;\mu )}\cA\, \xi_M ,  
\end{align}
where $\rG (\lambda ;\mu )=\{M\in \rG (\lambda)\mid 
\widehat{M}\in \rG (\mu )\}$. 
Take $\mu =(\mu_1,\mu_2,\dots,\mu_{n-1})\in \Lambda_{n-1}$ such that $\mu \preceq \lambda $. 
For $M\in \rG (\mu ) $, let
$M[\lambda ]$ be the element of $\rG (\lambda ;\mu )$ characterised by 
$\widehat{M[\lambda ]}=M$, that is, 
\begin{align}
\label{eq:def_Mlambda}
&M[\lambda ]:=\begin{pmatrix}\lambda \\ M\end{pmatrix}
\in \rG (\lambda ;\mu ). 
\end{align}
By Proposition~\ref{prop:xiM_integral}, we define $\cA$-homomorphisms 
$\rI_\mu^\lambda \colon V_\mu (\cA )\to 
V_\lambda (\cA )$ and 
$\rR_\mu^\lambda \colon V_\lambda (\cA )\to V_\mu (\cA )$
respectively by 
\begin{align}
\label{eq:def_inj_restriction}
\rI_\mu^\lambda (\xi_M)&:= \xi_{M[\lambda ]} &  
\text{for each\; } M\in \rG (\mu ),
\end{align}
and 
\begin{align}
\label{eq:def_proj_restriction}
\rR^{\lambda}_\mu (\xi_M)&:=\left\{\begin{array}{ll}
\xi_{\widehat{M}}&\text{if}\ \widehat{M}\in \rG (\mu ),\\
0&\text{otherwise}
\end{array}\right. & \text{for each\;} M\in \rG (\lambda).
\end{align}
In Section \ref{subsec:Amod_str}, we prove the following lemma, 
which implies that 
(\ref{eq:decomp_n_n-1}) is a decomposition of 
$V_{\lambda}(\cA )$ into $\rGL_{n-1}(\cA )$-submodules 
and $V_{\lambda,\mu}(\cA )\simeq V_{\mu}(\cA )$. 

\begin{lem}
\label{lem:restriction_lambda_mu}
Retain the notation. 
Then $\rI_\mu^\lambda $ and $\rR^\lambda_\mu$ are 
$\rGL_{n-1}(\cA )$-equivariant homomorphisms satisfying 
$\rR^\lambda_\mu \circ \rI_\mu^\lambda =\id_{V_{\mu}(\cA )}$. 
\end{lem}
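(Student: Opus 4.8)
The plan is to verify directly from the definitions (\ref{eq:def_inj_restriction}) and (\ref{eq:def_proj_restriction}) that $\rR^\lambda_\mu\circ \rI^\lambda_\mu=\id_{V_\mu(\cA)}$ and that both maps are $\rGL_{n-1}(\cA)$-equivariant. The first identity is essentially immediate: for $M\in \rG(\mu)$ we have $\rI^\lambda_\mu(\xi_M)=\xi_{M[\lambda]}$ by (\ref{eq:def_inj_restriction}), and since $\widehat{M[\lambda]}=M\in \rG(\mu)$ by construction (\ref{eq:def_Mlambda}), applying (\ref{eq:def_proj_restriction}) gives $\rR^\lambda_\mu(\xi_{M[\lambda]})=\xi_{\widehat{M[\lambda]}}=\xi_M$. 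Since $\{\xi_M\}_{M\in\rG(\mu)}$ is an $\cA$-basis of $V_\mu(\cA)$ by Proposition~\ref{prop:xiM_integral}, this proves $\rR^\lambda_\mu\circ\rI^\lambda_\mu=\id$.

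**Equivariance: reduction to the complex case.** For equivariance, the cleanest route is to first establish it over $\mC$ and then descend. Over $\mC$, the decomposition $V_\lambda=\bigoplus_{\mu\preceq\lambda}V_{\lambda,\mu}$ of (\ref{eq:decomp_n_n-1}) is the classical branching decomposition of $V_\lambda$ restricted to $\rGL_{n-1}(\mC)$, each summand $V_{\lambda,\mu}$ being the isotypic piece isomorphic to $V_\mu$. What I would check is that, under the fixed normalisation $\zeta_{H(\lambda)}=f_{h(\lambda)}(z)$, the map $\xi_M\mapsto \xi_{\widehat M}$ (extended by zero) is exactly the standard $\rGL_{n-1}$-projection onto $V_{\lambda,\mu}$ followed by the canonical iso $V_{\lambda,\mu}\xrightarrow{\sim}V_\mu$. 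This is because the Gel'fand--Tsetlin construction is compatible with the chain $\rGL_1\subset\rGL_2\subset\cdots\subset\rGL_n$: the action of $\ggl_{n-1}\subset\ggl_n$ on $\zeta_M$ via (\ref{eq:GT_act_wt})--(\ref{eq:GT_act-}) only involves entries $m_{i,j}$ with $j\le n-1$ and the coefficients $\ra_{i,j}(M)$ for $j\le n-2$ depend only on $\widehat M$; hence $M\mapsto\widehat M$ intertwines the $\ggl_{n-1}$-actions on $\{\zeta_M:\widehat M\in\rG(\mu)\}$ and on $\{\zeta_{M'}:M'\in\rG(\mu)\}$ up to the scalar factor $\sqrt{\rr(M)/\rr(\widehat M)}$. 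I would then check that this scalar ratio is independent of which $M\in\rG(\lambda;\mu)$ lies above a given $\widehat M$ — more precisely, that $\rr(M)=\rr(\widehat M)\cdot c(\lambda,\mu)$ for a constant $c(\lambda,\mu)$ depending only on $\lambda,\mu$ (inspecting (\ref{eq:def_rM}), the factors indexed by $k=n$ involve only $\lambda=m^{(n)}$ and $\mu=m^{(n-1)}$, while the remaining factors are precisely $\rr(\widehat M)$). Once that is verified, $\xi_M\mapsto\xi_{\widehat M}$ is a nonzero scalar multiple of the canonical projection, hence $\rGL_{n-1}(\mC)$-equivariant; and $\rI^\lambda_\mu$ is equivariant as the (suitably normalised) inverse of this iso on the isotypic piece, or directly because it is a section of an equivariant surjection onto an isotypic component of multiplicity one.

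**Descending to $\cA$.** With equivariance known over $\mC$, the maps $\rI^\lambda_\mu$ and $\rR^\lambda_\mu$ defined over $\cA$ in (\ref{eq:def_inj_restriction})--(\ref{eq:def_proj_restriction}) commute with the $\rGL_{n-1}(\cA)$-action: indeed $V_\mu(\cA)\subset V_\mu(\mC)$ and $V_\lambda(\cA)\subset V_\lambda(\mC)$ compatibly with the group actions and the basis $\{\xi_M\}$, and an $\cA$-linear map between free $\cA$-modules that agrees with a $\rGL_{n-1}(\cA)$-equivariant map after base change to $\mC$ is itself equivariant (equivariance is a collection of $\cA$-linear identities among matrix entries, which hold because they hold in the larger ring $\mC\supseteq\cA$). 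This is where Proposition~\ref{prop:xiM_integral} is essential: it guarantees $\{\xi_M\}$ is an honest $\cA$-basis so that the formulas (\ref{eq:def_inj_restriction}), (\ref{eq:def_proj_restriction}) genuinely define $\cA$-homomorphisms, and that $\rGL_{n-1}(\cA)$ actually preserves $V_\lambda(\cA)$ and $V_\mu(\cA)$.

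**Main obstacle.** The routine identity $\rR^\lambda_\mu\circ\rI^\lambda_\mu=\id$ is trivial; the genuine content is the equivariance, and within it the crucial point is the bookkeeping identity relating $\rr(M)$ to $\rr(\widehat M)$ together with the compatibility of the GT action formulas with the subalgebra $\ggl_{n-1}$. I expect the factorisation $\rr(M)=\rr(\widehat M)\cdot c(\lambda,\mu)$ — isolating in (\ref{eq:def_rM}) exactly those index pairs $(i,j,k)$ with $k=n$ — to be the fiddly step, but it is a direct manipulation of the product formula and carries no conceptual difficulty. An alternative that avoids normalisation constants entirely: check equivariance on a single well-chosen vector (e.g.\ use Lemma~\ref{lem:extremal_vec_explicit} and the explicit form of $\xi_{H(\gamma)}$, or the highest-weight vector $\xi_{H(\lambda)}=f_{h(\lambda)}(z)$) and invoke Schur's lemma / irreducibility of $V_\mu$ to propagate it to all of $V_{\lambda,\mu}$; this trades the combinatorial identity for a representation-theoretic argument and may be the shorter write-up.
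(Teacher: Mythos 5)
Your strategy matches the paper's in its main lines: the identity $\rR^\lambda_\mu \circ \rI^\lambda_\mu = \id$ is immediate from the definitions, and equivariance is first established over $\mC$ using the compatibility of the Gel'fand--Tsetlin formulas with the chain $\mathfrak{gl}_{n-1}\subset\mathfrak{gl}_n$ together with the factorisation $\rr(M[\lambda]) = \rr(H(\mu)[\lambda])\,\rr(M)$. That factorisation is precisely Lemma~\ref{lem:rM_property}~\ref{num:rest_rM_property} in the paper, and the $\mC$-equivariance is exactly Lemma~\ref{lem:C_restriction_lambda_mu}, so you have correctly identified the computational content.

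However, your descent from $\mC$ to $\cA$ has a genuine gap. You assert ``$V_\mu(\cA)\subset V_\mu(\mC)$'' and argue that the equivariance identities hold over $\cA$ ``because they hold in the larger ring $\mC\supseteq\cA$''. But $\cA$ is an \emph{arbitrary} integral domain of characteristic $0$ (subject only to the invertibility hypothesis on a factorial); there is no ring map $\cA\to\mC$ in general, and in particular $\rGL_{n-1}(\cA)$ does not sit inside $\rGL_{n-1}(\mC)$. So the ``base change to $\mC$'' step is not available. The paper sidesteps this by (i) replacing $\cA$ with its quotient field, so that Lemma~\ref{lem:GLnA_generator} applies and $\rGL_{n-1}(\cA)$ is generated by $\rT_{n-1}(\cA)\cup\rGL_{n-1}(\mZ)$; (ii) checking equivariance for $g\in\rT_{n-1}(\cA)$ directly from the explicit weight formula~\eqref{eq:actT_xiM}, which is valid over any $\cA$; and (iii) handling $g\in\rGL_{n-1}(\mZ)$ by observing that the matrix entries of $\tau_\lambda(\iota_n(g))$ in the $\{\xi_M\}$-basis lie in $\cA\cap\mQ$, so that $\mC$-equivariance (Lemma~\ref{lem:C_restriction_lambda_mu}) transfers. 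Your argument could also be repaired by noting that the equivariance conditions are polynomial identities in $g_{ij}$ and $(\det g)^{-1}$ with $\mQ$-coefficients, which hold on the Zariski-dense set $\rGL_{n-1}(\mC)$ and therefore hold identically over any $\mQ$-algebra; but as written, the claim $\cA\subseteq\mC$ is unjustified and the step fails.
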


When $\cA =\mC$, Lemma~\ref{lem:restriction_lambda_mu} implies that 
$\xi_{H(\mu )[\lambda ]}$ is 
a highest weight vector in the $\rGL_{n-1} (\mC)$-module 
$V_{\lambda,\mu}(\mC )$. 
In Section \ref{subsec:prelim_fdrep}, we prove the following lemma, which asserts that 
$\xi_{H(\mu )[\lambda ]}$ coincides with one of the generators 
(\ref{eq:Vlambda_generator}).

\begin{lem}
\label{lem:xi_Hmulambda_explicit}
Retain the notation. Then we have 
$\xi_{H(\mu )[\lambda ]}=f_{h (\lambda,\mu)}(z)$
where $h (\lambda,\mu)
=(h(\lambda,\mu)_{I})_{I\in \cI_n}\in \cL (\lambda )$ is 
determined by 
\begin{align}
\label{eq:def_hlambdamu_gen}
&h(\lambda,\mu)_I:=\left\{\begin{array}{ll}
\lambda_k-\mu_k&
\text{if $I=\{1,2,\dots ,k-1,n\}$},\\
\mu_k-\lambda_{k+1}&
\text{if $I=\{1,2,\dots ,k\}$},\\
0&\text{otherwise}
\end{array}\right.&
&\text{for\;} I\in \cI_{n,k} \text{\; with\;} 1\leq k\leq n-1.
\end{align}
Here we understand $\{1,2, \ldots, k-1, n\} = \{ n \}$ if $k=1$. 
\end{lem}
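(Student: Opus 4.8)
The plan is to recognise $f_{h(\lambda,\mu)}(z)$ as, up to a scalar, the unique $\rGL_{n-1}(\mC)$-highest weight vector spanning the branch $V_{\lambda,\mu}(\mC)\subset V_\lambda$, and then to pin that scalar down to $1$. First I would check that $h(\lambda,\mu)$ from \eqref{eq:def_hlambdamu_gen} lies in $\cL(\lambda)$: since $\mu\preceq\lambda$ the entries $\lambda_k-\mu_k$ and $\mu_k-\lambda_{k+1}$ are non-negative, and for each $k$ the exponents over $\cI_{n,k}$ sum to $(\lambda_k-\mu_k)+(\mu_k-\lambda_{k+1})=\lambda_k-\lambda_{k+1}$, so $f_{h(\lambda,\mu)}(z)$ is a well-defined nonzero vector of $V_\lambda$. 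Next I would verify that it is a $\rGL_{n-1}(\mC)$-highest weight vector of weight $\mu$. For the weight: in the polynomial realisation of $V_\lambda$, the torus element $\mathrm{diag}(t_1,\dots,t_{n-1},1)$ scales $\det_I(z)$ by $\prod_{j\in I,\ j\le n-1}t_j$, and summing the exponents of the factors containing a fixed $j\le n-1$ telescopes to $\mu_j$. For the highest weight property: for $1\le j\le n-2$ the generator $E_{j,j+1}$ acts as the derivation $\sum_i z_{i,j}\,\partial_{z_{i,j+1}}$, and since every index set $I$ occurring with nonzero exponent is an initial segment $\{1,\dots,k\}$ or such a segment with $n$ adjoined, membership of $j+1$ in $I$ forces membership of $j$, so the derivation replaces a column of $\det_I(z)$ by a column already present and yields $0$; hence $\tau_\lambda(E_{j,j+1})f_{h(\lambda,\mu)}(z)=0$.

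Because the branching law for $(\rGL_n,\rGL_{n-1})$ is multiplicity free, the space of $\rGL_{n-1}(\mC)$-highest weight vectors of weight $\mu$ in $V_\lambda$ is one-dimensional, and by Lemma~\ref{lem:restriction_lambda_mu} it is spanned by $\xi_{H(\mu)[\lambda]}$; thus $f_{h(\lambda,\mu)}(z)=c\,\xi_{H(\mu)[\lambda]}$ for some $c\in\mC^\times$, and it remains to show $c=1$. I would run a descending induction whose base case is the extremal weight $\mu=(\lambda_1,\dots,\lambda_{n-1})$: there $H(\mu)[\lambda]=H(\lambda)$ and $h(\lambda,\mu)=h(\lambda)$, so $c=1$ by the normalisation $\xi_{H(\lambda)}=f_{h(\lambda)}(z)$ recorded in Lemma~\ref{lem:extremal_vec_explicit}.

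To lower $\mu_{n-1}$ by one, apply $\tau_\lambda(E_{n,n-1})$. On the polynomial side it acts as $\sum_a z_{a,n}\,\partial_{z_{a,n-1}}$; inspecting \eqref{eq:def_hlambdamu_gen} one sees that it annihilates $\det(z)^{\lambda_n}$ and sends the rest of $f_{h(\lambda,\mu)}(z)$ to $(\mu_{n-1}-\lambda_n)\,f_{h(\lambda,\mu')}(z)$ with $\mu'=(\mu_1,\dots,\mu_{n-2},\mu_{n-1}-1)$ (only the factor $\det_{\{1,\dots,n-1\}}(z)$ is genuinely affected, becoming $\det_{\{1,\dots,n-2,n\}}(z)$). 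On the Gel'fand--Tsetlin side, the rigidity of the special pattern $H(\mu)[\lambda]$ forces $m_{n-1,n-1}$ to be its only lowerable entry, so by \eqref{eq:GT_act-} the image $\tau_\lambda(E_{n,n-1})\xi_{H(\mu)[\lambda]}$ is a single term proportional to $\xi_{H(\mu')[\lambda]}$, and the rescaling $\xi_M=\sqrt{\rr(M)}\,\zeta_M$ is arranged so that the proportionality constant is again $\mu_{n-1}-\lambda_n$ (one checks that the $\ra_{i,j}$-factor and the $\rr$-ratio combine to exactly this). Comparing, $c$ is the same for $\mu$ and for $\mu'$, so one may assume $\mu_{n-1}=\lambda_{n-1}$; the remaining coordinates $\mu_1,\dots,\mu_{n-2}$ are then handled by induction on $n$, identifying $V_{\lambda,\mu}(\mC)$ with $V_\mu$ as $\rGL_{n-1}(\mC)$-modules through $\rI_\mu^\lambda$ (Lemma~\ref{lem:restriction_lambda_mu}) and reducing to the case already proved for $V_\mu$. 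An alternative route is to compute the Hermitian norm $(f_{h(\lambda,\mu)}(z),f_{h(\lambda,\mu)}(z))_\lambda$ and match it with $\rr(H(\mu)[\lambda])$.

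The main obstacle is precisely this normalisation step: in either route it comes down to a combinatorial identity between products of the Gel'fand--Tsetlin coefficients $\ra_{i,j}$ and explicit ratios of factorials, that is, to checking that all the square roots cancel. This cancellation is exactly the property the rational Gel'fand--Tsetlin basis of \cite{im} is engineered to provide, so the bookkeeping, although delicate, is structural rather than ad hoc; everything else---the check $h(\lambda,\mu)\in\cL(\lambda)$, the weight computation, and the vanishing of $\tau_\lambda(E_{j,j+1})f_{h(\lambda,\mu)}(z)$---is a routine manipulation of minors.
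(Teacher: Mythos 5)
Your framework is right and matches the paper up to the key normalisation step: you correctly observe that $f_{h(\lambda,\mu)}(z)$ is a $\rGL_{n-1}(\mC)$-highest weight vector of weight $\mu$, hence a scalar multiple $c\,\xi_{H(\mu)[\lambda]}$ by multiplicity-one branching and Lemma~\ref{lem:restriction_lambda_mu}, and the problem reduces to showing $c=1$. Your computation of $\tau_\lambda(E_{n,n-1})$ on both sides (reducing $\mu_{n-1}$ by one with matching coefficient $\mu_{n-1}-\lambda_n$) is also correct.

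The gap is in the phrase ``the remaining coordinates $\mu_1,\dots,\mu_{n-2}$ are then handled by induction on $n$, identifying $V_{\lambda,\mu}(\mC)$ with $V_\mu$ through $\rI_\mu^\lambda$.'' This step is circular as stated: the map $\rI_\mu^\lambda$ is \emph{defined} by $\xi_M\mapsto \xi_{M[\lambda]}$ in terms of the very Gel'fand--Tsetlin vectors whose polynomial realisation you are trying to pin down, and there is no independent control on how $\rI_\mu^\lambda$ behaves on the generators $f_l(z)$ (indeed $\rI_\mu^\lambda(f_{h(\mu)}(z))=f_{h(\lambda,\mu)}(z)$ is exactly what the lemma gives you \emph{after} the fact). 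Moreover, lowering $\mu_i$ for $i<n-1$ is not accomplished by a single $E_{j+1,j}$: in the pattern $H(\mu)[\lambda]$ the entry $\mu_i$ occurs in every row $j$ with $i\le j\le n-1$, so decreasing $\mu_i$ by one changes $n-i$ entries of the pattern simultaneously, and a one-step lowering operator cannot produce $H(\mu')[\lambda]$. What is needed is a chain that moves whole rows at once.

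That is exactly what the paper does (see Lemma~\ref{lem:act_xiM_E+l} and its use in Lemma~\ref{lem:xi_Hmulambda_gen}): it introduces the interpolating patterns $H(\lambda,\mu;m)$ for $1\le m\le n$, which agree with $\lambda$ on rows $\ge m$ and with $\mu$ on rows $<m$, and connects consecutive ones by a \emph{power} $(E_{m,m+1})^{l_m}$ with $l_m=\sum_{i=1}^m(\lambda_i-\mu_i)$; on both the Gel'fand--Tsetlin side (via Proposition~\ref{prop:VK_formula_E+k}) and the polynomial side (via \eqref{eq:act+_gen_Vlambda}) this produces the same factor $l_m!$, so the unknown constant $c$ is transported unchanged from $H(\mu)[\lambda]=H(\lambda,\mu;n)$ all the way to $H(\lambda)=H(\lambda,\mu;1)$, where the normalisation $\xi_{H(\lambda)}=f_{h(\lambda)}(z)$ forces $c=1$. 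Your one-step $E_{n,n-1}$ argument is a special case of this machinery, but you would have to replace the vague ``induction on $n$'' by this explicit chain of powers of raising (or lowering) operators. Your alternative route via Hermitian norms could also be made to work (it yields $|c|=1$, and positivity of $c$ follows from the non-negativity of the coefficients in \eqref{eq:GT_act-} and \eqref{eq:act-_gen_Vlambda} as in the proof of Lemma~\ref{lem:extremal_vec_explicit}), but again the norm identity $(f_{h(\lambda,\mu)},f_{h(\lambda,\mu)})_\lambda=\rr(H(\mu)[\lambda])$ is not in the paper and would need its own derivation.
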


When $\cA =\mC$, Lemmas \ref{lem:restriction_lambda_mu} and \ref{lem:xi_Hmulambda_explicit} imply that 
$\rI_\mu^\lambda $ and $\rR_\mu^\lambda $ can be characterised by the following equalities 
in terms of the generators (\ref{eq:Vlambda_generator}): 
\begin{align*}
&\rI_\mu^\lambda (f_{h (\mu)}(z))= f_{h (\lambda,\mu)}(z),&
&\rR^{\lambda}_\mu (f_{h (\lambda,\mu)}(z))=f_{h (\mu)}(z).
\end{align*}

\subsubsection{Complex conjugate representations and pairings}
\label{subsec:Pairings}

Let $\lambda =(\lambda_1,\lambda_2,\dots,\lambda_n)\in \Lambda_n$. 
For $M=(m_{i,j})_{1\leq i\leq j\leq n}\in \rG (\lambda)$, we define $\rrq (M)\in \mZ$ by 
\begin{align} \label{eq:def_qM}
&\rrq (M):=\sum_{j=1}^{n-1}(n-j)\gamma^M_j
=\sum_{1\leq i\leq j\leq n-1}  m_{i,j}.
\end{align}

\begin{lem}
\label{lem:Mdual_property}
Retain the notation. 
For $M\in \rG (\lambda )$, 
we have $\rr (M^\vee )=\rr (M)$, 
$\gamma^{M^\vee}=-\gamma^{M}$ and 
$\rrq (M^\vee )=-\rrq (M)$. 
\end{lem}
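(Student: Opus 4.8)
The three assertions in Lemma~\ref{lem:Mdual_property} are all purely combinatorial identities concerning the map $M\mapsto M^\vee$ on Gel'fand--Tsetlin patterns, so the plan is simply to unwind the definitions and verify each one by a direct symmetry argument. Recall that $M^\vee=(m^\vee_{i,j})$ with $m^\vee_{i,j}=-m_{j+1-i,j}$; the key observation, which I would record first, is that the $j$-th row of $M^\vee$ is $(-m_{j,j},-m_{j-1,j},\dots,-m_{1,j})$, i.e.\ the reversal-and-negation of the $j$-th row of $M$. Since reversing and negating a weakly decreasing sequence gives again a weakly decreasing sequence, $M^\vee$ indeed lies in $\rG(\lambda^\vee)$ (not $\rG(\lambda)$, but the statement as written is about the numerical quantities $\rr,\gamma,\rrq$, which make sense for any integral triangular array, so no issue arises); I would note this in passing for clarity.

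For $\gamma^{M^\vee}=-\gamma^M$: from \eqref{eq:def_wt_M}, $\gamma^M_j=\sum_{i=1}^j m_{i,j}-\sum_{i=1}^{j-1}m_{i,j-1}$, and because the substitution $m_{i,j}\mapsto m^\vee_{i,j}=-m_{j+1-i,j}$ merely permutes (and negates) the summands in each of the two sums, we get $\sum_{i=1}^j m^\vee_{i,j}=-\sum_{i=1}^j m_{i,j}$ and likewise for the $(j-1)$-st row; hence $\gamma^{M^\vee}_j=-\gamma^M_j$. Then $\rrq(M^\vee)=-\rrq(M)$ is immediate from the first equality in \eqref{eq:def_qM}, namely $\rrq(M)=\sum_{j=1}^{n-1}(n-j)\gamma^M_j$, by linearity; alternatively one can use the second expression $\rrq(M)=\sum_{1\le i\le j\le n-1}m_{i,j}$, observing that the reversal-and-negation of each row $j\le n-1$ sends this sum to its negative.

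The identity $\rr(M^\vee)=\rr(M)$ is the one requiring a little care, so I expect it to be the main (though still routine) obstacle. Here I would substitute $m_{i,k}\mapsto -m_{k+1-i,k}$ throughout the product \eqref{eq:def_rM} and reindex. In the typical factor indexed by $1\le i\le j<k\le n$, the four arguments $m_{i,k}-m_{j,k-1}-i+j$, $m_{i,k-1}-m_{j+1,k}-i+j$, $m_{i,k-1}-m_{j,k-1}-i+j$, $m_{i,k}-m_{j+1,k}-i+j$ get sent, after the substitution $i\mapsto i'=k+1-i$ (resp.\ $j\mapsto j'=(k-1)+1-j$ for the $(k-1)$-st row entries, $j\mapsto j''=k+1-j$ for the $k$-th row entries) and simplification of the shift terms, to the corresponding arguments of the factor indexed by the ``mirror'' pair. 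Concretely, one checks that $-m_{k+1-i,k}-(-m_{k+1-j,k-1})-i+j = m_{k+1-j,k-1}-m_{k+1-i,k}+j-i$, and setting $i'=k-i$, $j'=k-j$ one sees $1\le j'<i'$ fails — so instead the clean bookkeeping is: the numerator factors of the $M^\vee$-product are, up to reindexing, exactly the denominator factors of the $M$-product taken with the roles of the two rows swapped, and vice versa, with all the $(\pm)(\text{shift})$ terms matching because $-i+j$ and its image under $i\mapsto k+1-i$, $j\mapsto (k\pm1)+1-j$ differ by a constant that is common to numerator and denominator. After carrying out this reindexing one finds the product is unchanged. I would present this as: fix $k$, write out the $k$-th ``layer'' of the product $\prod_{1\le i\le j<k}\frac{\cdots}{\cdots}$, perform the change of variables $(i,j)\mapsto(k+1-j,k-i)$ on the index set $\{(i,j):1\le i\le j\le k-1\}$ (which is a bijection of that set onto itself), and verify termwise that numerator maps to numerator and denominator to denominator. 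Since each of the $n-1$ layers is handled identically, $\rr(M^\vee)=\rr(M)$ follows. $\qed$
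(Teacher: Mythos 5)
Your treatment of $\gamma^{M^\vee}=-\gamma^M$ and $\rrq(M^\vee)=-\rrq(M)$ is fine and matches the paper's (the paper simply says these "follow immediately from definition"). For $\rr(M^\vee)=\rr(M)$, your overall plan — substitute $m^\vee_{i,j}=-m_{j+1-i,j}$ into the product \eqref{eq:def_rM} and reindex layer by layer in $k$ — is exactly the paper's strategy, but the concrete bookkeeping you propose is wrong in a way that would derail the verification.

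First, a small slip in the warm-up computation: for the $(k-1)$-st row the duality gives $m^\vee_{j,k-1}=-m_{(k-1)+1-j,\,k-1}=-m_{k-j,k-1}$, not $-m_{k+1-j,k-1}$ as you wrote. More seriously, the reindexing $(i,j)\mapsto(k+1-j,\,k-i)$ you propose is \emph{not} a bijection of $\{(i,j):1\le i\le j\le k-1\}$ onto itself: taking $i=j$ sends $(i,i)$ to $(k+1-i,\,k-i)$, whose first coordinate exceeds the second, so the image point lies outside the index set. (It also hits $i'=k$ for $(i,j)=(1,1)$, which is out of range.) With this substitution, the termwise matching you describe in the final sentence would simply fail. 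The correct change of variables, which the paper uses, is $(i,j)\mapsto(k-j,\,k-i)$; under it $1\le i\le j\le k-1$ is preserved, and the two numerator factors of the $M^\vee$-layer at $(i,j)$ equal the two numerator factors of the $M$-layer at $(k-j,k-i)$ (in swapped order), and likewise denominator to denominator — not numerator to denominator as your "clean bookkeeping" remark suggests. Once you replace your substitution by $(i,j)\mapsto(k-j,k-i)$ and check the four arguments (noting that the shift $-i+j$ is sent to $-(k-j)+(k-i)=-i+j$, so it is literally unchanged), the termwise cancellation does go through and the proof closes.
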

\begin{proof}
The identities about $\gamma^{M}$ and $\rrq (M)$ 
follow immediately from definition. 
Let $M=(m_{i,j})_{1\leq i\leq j\leq n}\in \rG (\lambda )$. 
By the definition of $\rr(M)$ (\ref{eq:def_rM}) and $M^\vee =(m_{i,j}^\vee)_{1\leq i\leq j\leq n}$ 
with $m_{i,j}^\vee =-m_{j+1-i,j}$, we have 
\begin{align*}
\rr (M^\vee )
&=\prod_{1\leq i\leq j<k\leq n}
\frac{(-m_{k+1-i,k}+m_{k-j,k-1}-i+j)!
(-m_{k-i,k-1}+m_{k-j,k}-i+j)!}
{(-m_{k-i,k-1}+m_{k-j,k-1}-i+j)!
(-m_{k+1-i,k}+m_{k-j,k}-i+j)!}=\rr (M).
\end{align*}
Here the second equality follows from the replacement 
$(i,j,k)\to (k-j,k-i,k)$. 
\end{proof}

For the representation $(\tau_\lambda ,V_\lambda )$ of $\rGL_n(\mC )$, 
we define the complex conjugate representation 
$(\tau_\lambda^\rconj ,V_\lambda^\rconj )$ of $\tau_\lambda$ by 
\begin{align} \label{eq:comp_rep}
&V_\lambda^\rconj :=V_\lambda ,&
&\tau_\lambda^\rconj (g):=\tau_\lambda (\overline{g})\qquad 
(g\in \rGL_n(\mC )).&
\end{align}
We define a $\mC$-linear map 
$\rI_\lambda^{\rconj}\colon V_{\lambda}\to 
V_{\lambda^{\!\vee}}^{\rconj}$ by 
\begin{align}\label{eq:conjI}
    \rI^\mathrm{conj}_{\lambda} \colon V_{\lambda} \longrightarrow V^\mathrm{conj}_{\lambda^\vee}\, ; \xi_M \mapsto (-1)^{\rrq(M)}\xi_{M^\vee}. 
\end{align}

\begin{lem}
\label{lem:hom_dual_conj}Retain the notation. 
Then $\rI_\lambda^{\rconj}$ is a $\rU (n)$-equivariant isomorphism. 
\end{lem}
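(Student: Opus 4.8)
The plan is to treat bijectivity and $\rU(n)$-equivariance separately. For bijectivity, I would first observe that the duality $M\mapsto M^\vee$ on integral triangular arrays is an involution that restricts to a bijection $\rG(\lambda)\xrightarrow{\sim}\rG(\lambda^\vee)$: negating and reversing each row preserves membership in $\Lambda_j$, sends the top row $\lambda$ to $\lambda^\vee$, and is compatible with every interlacing inequality in \eqref{eq:GT_cdn}. Since $\{\xi_M\}_{M\in\rG(\lambda)}$ and $\{\xi_N\}_{N\in\rG(\lambda^\vee)}$ are $\mC$-bases of $V_\lambda$ and $V_{\lambda^\vee}$ (Proposition~\ref{prop:xiM_integral} with $\cA=\mC$), the defining formula \eqref{eq:conjI} exhibits $\rI_\lambda^{\rconj}$ as a signed permutation of a basis, hence as a $\mC$-linear isomorphism. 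Using $\rr(M^\vee)=\rr(M)$ from Lemma~\ref{lem:Mdual_property} together with \eqref{eq:def_Qbasis}, I would then record the equivalent description $\rI_\lambda^{\rconj}(\zeta_M)=(-1)^{\rrq(M)}\zeta_{M^\vee}$ on the orthonormal Gel'fand--Tsetlin basis, which makes the equivariance computation cleaner.

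For equivariance I would pass to the Lie algebra. Since $\rU(n)$ is connected it suffices to intertwine the $\gu(n)$-actions, and after $\mC$-linear extension the $\ggl_n=\gu(n)_{\mC}$-actions; as the set of $Z\in\ggl_n$ for which the intertwining identity holds for all vectors is a Lie subalgebra, it is enough to test the generators $E_{k,k}$ ($1\le k\le n$) and $E_{j,j+1},E_{j+1,j}$ ($1\le j\le n-1$). The key input is that the $\ggl_n$-action on a conjugate module $V_\mu^{\rconj}$ is $Z\mapsto-\tau_\mu(Z^{\top})$, obtained by differentiating $u\mapsto\tau_\mu(\overline u)$ and using $\overline X=-X^{\top}$ for $X\in\gu(n)$. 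Consequently I must verify, for $v=\zeta_M$, the three identities $\rI_\lambda^{\rconj}(\tau_\lambda(E_{k,k})v)=-\tau_{\lambda^\vee}(E_{k,k})\rI_\lambda^{\rconj}(v)$, $\rI_\lambda^{\rconj}(\tau_\lambda(E_{j,j+1})v)=-\tau_{\lambda^\vee}(E_{j+1,j})\rI_\lambda^{\rconj}(v)$, and $\rI_\lambda^{\rconj}(\tau_\lambda(E_{j+1,j})v)=-\tau_{\lambda^\vee}(E_{j,j+1})\rI_\lambda^{\rconj}(v)$. The diagonal case is immediate from \eqref{eq:GT_act_wt} and $\gamma^{M^\vee}=-\gamma^M$ (Lemma~\ref{lem:Mdual_property}). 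For the two off-diagonal cases I would expand both sides via \eqref{eq:GT_act+} and \eqref{eq:GT_act-} (applying the latter to the pattern $M^\vee$ of type $\lambda^\vee$ and using $(M^\vee)^\vee=M$), and then match them using the elementary array identities $(M+\Delta_{i,j})^\vee=M^\vee+\Delta_{i,j}^\vee$, $\Delta_{i,j}^\vee=-\Delta_{j+1-i,j}$ and $(\Delta_{i,j}^\vee)^\vee=\Delta_{i,j}$, together with the fact that changing a single $(i,j)$-entry with $j\le n-1$ by $\pm1$ changes $\rrq$ by $\pm1$ and hence flips $(-1)^{\rrq}$. This sign flip is precisely what converts a raising operator on one side into a lowering operator on the other.

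The one delicate point, and the step I expect to be the main obstacle, is this last matching for $E_{j,j+1}$ and $E_{j+1,j}$: one must confirm that the two index sets coincide (true because $M\mapsto M^\vee$ is an additive bijection $\rG(\lambda)\to\rG(\lambda^\vee)$), that the coefficients $\ra_{i,j}$ align after the substitution $N=M^\vee$ (they do, since $\ra_{i,j}(M)$ emerges on both sides), and that the cumulative signs coming from $(-1)^{\rrq}$ are tracked correctly. Everything else---bijectivity, the reduction to generators, and the diagonal case---is routine. Once the conjugate-module action $-\tau_\mu(Z^{\top})$ is pinned down, the whole argument rests on Lemma~\ref{lem:Mdual_property} and the combinatorics of Gel'fand--Tsetlin patterns.
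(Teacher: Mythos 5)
Your proof is correct and takes essentially the same approach as the paper's: record that $\tau_{\lambda^\vee}^{\rconj}(E_{i,j}^{\gu(n)})=-\tau_{\lambda^\vee}(E_{j,i})$ (your $Z\mapsto-\tau(Z^\top)$), rewrite $\rI_\lambda^\rconj$ on the orthonormal Gel'fand--Tsetlin basis via Lemma~\ref{lem:Mdual_property}, and verify the intertwining on the generators using \eqref{eq:GT_act_wt}--\eqref{eq:GT_act-}. The detailed bookkeeping you supply---$(M+\Delta_{i,j})^\vee=M^\vee+\Delta_{i,j}^\vee$, $\Delta_{i,j}^\vee=-\Delta_{j+1-i,j}$, and the $\pm1$ shift of $\rrq$---is exactly what the paper's terse ``the assertion follows from'' is compressing, and it does close correctly.
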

\begin{proof}
For $1\leq i,j\leq n$, we have 
$\tau_{\lambda }(E_{i,j}^{\gu (n)})
=\tau_{\lambda }(E_{i,j})$ and $\tau_{\lambda}^{\rconj}(E_{i,j}^{\gu (n)})
=-\tau_{\lambda}(E_{j,i})$. 
Therefore, the assertion follows from 
(\ref{eq:GT_act_wt}), (\ref{eq:GT_act+}) and (\ref{eq:GT_act-}), 
since we have 
$\rI_\lambda^{\rconj}(\zeta_M)=(-1)^{\rrq (M)}\zeta_{M^\vee}$ 
$(M\in \rG (\lambda ))$ by Lemma~\ref{lem:Mdual_property}. 
\end{proof}

\begin{rem}
\label{rem:cpx_conj_rep}
In \cite[Section 2.6]{im}, we give another realisation 
$(\overline{\tau_\lambda} ,\overline{V_\lambda} )$ of 
the complex conjugate representation of $\tau_\lambda$. 
By the formulas (\ref{eq:GT_act_wt}), (\ref{eq:GT_act+}) and 
(\ref{eq:GT_act-}), we note that 
$V_\lambda^\rconj \simeq \overline{V_\lambda}$ as 
$\rGL_n(\mC)$-modules via the correspondence 
\begin{align}
&\sum_{M\in \rG (\lambda )}c_M\xi_M\leftrightarrow 
\sum_{M\in \rG (\lambda )}c_M\,\overline{\xi_M}&
&\left(\text{or equivalently, }\ 
\sum_{M\in \rG (\lambda )}c_M\zeta_M\leftrightarrow 
\sum_{M\in \rG (\lambda )}c_M\,\overline{\zeta_M}\quad 
\right)&
&\text{with $c_M\in \mC$}.
\end{align}
\end{rem}

Let $\cA$ be an integral domain of characteristic $0$. 
For $l\in \mZ$,    
the symbol $\lambda +l$ denotes $(\lambda_1+l,\lambda_2+l,\dots,\lambda_n+l)$, 
and $\lambda -l$ denotes $\lambda +(-l)$. 
We define an $\cA$-isomorphism $\rI^{\det}_{\lambda ,l}\colon V_\lambda (\cA )
\to V_{\lambda -l} (\cA )$ by $\rI^{\det}_{\lambda ,l}(\rv)=(\det z)^{-l}\rv$ for  $\rv\in V_\lambda (\cA )$. 
By definition, we have 
\begin{align}
\label{eq:detl_shift}
\rI^{\det}_{\lambda ,l}(\tau_{\lambda }(g)\rv) &=
(\det g)^l\tau_{\lambda -l}(g)\rI^{\det}_{\lambda ,l}(\rv)&
\text{for\; } g\in \rGL_n(\cA) \text{\; and\;} \rv\in V_{\lambda}(\cA ).
\end{align}
For $M=(m_{i,j})_{1\leq i\leq j\leq n}\in \rG (\lambda)$, 
the symbol $M+l$ denotes the integral triangular array $(m_{i,j}+l)_{1\leq i\leq j\leq n}$ 
in $\rG (\lambda +l)$, 
and $M-l$ denotes $M+(-l)$. 
In Section \ref{subsec:Amod_str}, we prove the following lemma. 

\begin{lem}
\label{lem:detl_shift}
Retain the notation. Assume $\{(\lambda_1-\lambda_n + n-3)!\}^{-1}\in \cA$ if $n\geq 3$. 
For $M\in \rG (\lambda )$, we have 
$\rI^{\det}_{\lambda ,l}(\xi_M)=\xi_{M-l}$. 
Moreover, if $\cA =\mC $, we have $\rI^{\det}_{\lambda ,l}(\zeta_M)=\zeta_{M-l}$ for $M\in \rG (\lambda )$, and 
$\rI^{\det}_{\lambda ,l}$ preserves the inner products{\rm ;} that is, $(\rI^{\det}_{\lambda,l}(\rv),\rI^{\det}_{\lambda,l}(\rv'))_{\lambda-l}=(\rv,\rv')_{\lambda}$ holds for every $\rv,\rv'\in V_{\lambda}$. 
\end{lem}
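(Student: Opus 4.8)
The plan is to reduce everything to the case $\cA=\mC$ and then run an induction on $n$ organised around the inclusions $\rI_\mu^\lambda$ of \eqref{eq:def_inj_restriction}. First I would record the elementary fact that $\rr$ is invariant under the shift $M\mapsto M-l$: each entry of $M$ enters the product \eqref{eq:def_rM} only through differences $m_{a,b}-m_{c,d}$, so $\rr(M-l)=\rr(M)$ for all $M\in\rG(\lambda)$ and all $l\in\mZ$. Hence $\xi_{M-l}=\sqrt{\rr(M)}\,\zeta_{M-l}$, so over $\mC$ the identity $\rI^{\det}_{\lambda,l}(\xi_M)=\xi_{M-l}$ is equivalent to $\rI^{\det}_{\lambda,l}(\zeta_M)=\zeta_{M-l}$, and (as we see at the end) it will force the hermitian products to agree as well. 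Moreover $(\lambda-l)_1-(\lambda-l)_n=\lambda_1-\lambda_n$, so the hypothesis on $(\lambda_1-\lambda_n+n-3)!$ holds for $\lambda-l$ whenever it holds for $\lambda$; combined with Proposition~\ref{prop:xiM_integral}, once $\rI^{\det}_{\lambda,l}(\xi_M)=\xi_{M-l}$ is proved in $V_{\lambda-l}(\mC)$ it automatically holds in $V_{\lambda-l}(\cA)$, since $\xi_M\in V_\lambda(\cA)$, $\xi_{M-l}\in V_{\lambda-l}(\cA)$ and $\rI^{\det}_{\lambda,l}$ is defined over $\cA$. So it suffices to work over $\mC$.

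The heart of the proof is the commutativity of the square
\[
  \rI^{\det}_{\lambda,l}\circ\rI_\mu^\lambda \;=\; \rI_{\mu-l}^{\lambda-l}\circ\rI^{\det}_{\mu,l}\colon V_\mu(\mC)\longrightarrow V_{\lambda-l}(\mC)\qquad(\mu\preceq\lambda).
\]
To prove it I would first check that both composites $\Phi$ obey the same twisted equivariance, $\Phi(\tau_\mu(g)\rv)=(\det g)^l\,\tau_{\lambda-l}(\iota_n(g))\Phi(\rv)$ for $g\in\rGL_{n-1}(\mC)$, which follows from \eqref{eq:detl_shift} (applied to $\lambda$ and to $\mu$), the $\rGL_{n-1}(\mC)$-equivariance of $\rI_\mu^\lambda$ and $\rI_{\mu-l}^{\lambda-l}$ from Lemma~\ref{lem:restriction_lambda_mu}, and $\det\iota_n(g)=\det g$. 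Since $V_\mu(\mC)$ is irreducible as a $\rGL_{n-1}(\mC)$-module, hence spanned by the translates $\tau_\mu(g)f_{h(\mu)}(z)$ of its highest weight vector $f_{h(\mu)}(z)=\xi_{H(\mu)}$ (Lemma~\ref{lem:extremal_vec_explicit}), it is enough to check that the two composites agree on $f_{h(\mu)}(z)$. By Lemma~\ref{lem:xi_Hmulambda_explicit} one has $\rI_\mu^\lambda(f_{h(\mu)}(z))=f_{h(\lambda,\mu)}(z)$, and likewise with $(\lambda,\mu)$ replaced by $(\lambda-l,\mu-l)$, so the check reduces to the monomial identities $(\det z)^{-l}f_{h(\lambda,\mu)}(z)=f_{h(\lambda-l,\mu-l)}(z)$ and $(\det z)^{-l}f_{h(\mu)}(z)=f_{h(\mu-l)}(z)$. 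Both are immediate from \eqref{eq:Vlambda_generator} and \eqref{eq:def_hlambdamu_gen}: multiplication by $(\det z)^{-l}$ only alters the exponent attached to the single factor $\det_{\{1,\dots,n\}}(z)$, moving it from $\lambda_n$ to $\lambda_n-l=(\lambda-l)_n$, whereas every other exponent occurring in \eqref{eq:def_hlambdamu_gen} is visibly the same for $(\lambda,\mu)$ and for $(\lambda-l,\mu-l)$. Thus both composites send $f_{h(\mu)}(z)$ to $f_{h(\lambda-l,\mu-l)}(z)$, establishing commutativity.

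With the square available I would prove $\rI^{\det}_{\lambda,l}(\xi_M)=\xi_{M-l}$ over $\mC$ by induction on $n$. For $n=1$, $V_\lambda(\mC)=\mC z_{1,1}^{\lambda_1}$ with $\xi_{(\lambda_1)}=z_{1,1}^{\lambda_1}$, so $\rI^{\det}_{\lambda,l}(z_{1,1}^{\lambda_1})=z_{1,1}^{\lambda_1-l}=\xi_{(\lambda_1)-l}$. For $n\geq2$, put $\mu:=m^{(n-1)}$, the $(n-1)$-st row of $M$; then $M=\widehat{M}[\lambda]$ with $\widehat{M}\in\rG(\mu)$, so $\xi_M=\rI_\mu^\lambda(\xi_{\widehat{M}})$ by \eqref{eq:def_inj_restriction}, and
\[
  \rI^{\det}_{\lambda,l}(\xi_M)=\rI^{\det}_{\lambda,l}\bigl(\rI_\mu^\lambda(\xi_{\widehat{M}})\bigr)=\rI_{\mu-l}^{\lambda-l}\bigl(\rI^{\det}_{\mu,l}(\xi_{\widehat{M}})\bigr)=\rI_{\mu-l}^{\lambda-l}(\xi_{\widehat{M}-l})=\xi_{(\widehat{M}-l)[\lambda-l]}=\xi_{M-l},
\]
using the square, the inductive hypothesis for $\mu\in\Lambda_{n-1}$, \eqref{eq:def_inj_restriction} for $\lambda-l$ and $\mu-l$, and $(\widehat{M}-l)[\lambda-l]=M-l$. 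By the first paragraph this gives $\rI^{\det}_{\lambda,l}(\zeta_M)=\zeta_{M-l}$ over $\mC$ and $\rI^{\det}_{\lambda,l}(\xi_M)=\xi_{M-l}$ over an arbitrary $\cA$. Finally, for the inner products, write $\rv=\sum_{M\in\rG(\lambda)}c_M\xi_M$ and $\rv'=\sum_{M\in\rG(\lambda)}c'_M\xi_M$; then $\rI^{\det}_{\lambda,l}(\rv)=\sum_M c_M\xi_{M-l}$ and $\rI^{\det}_{\lambda,l}(\rv')=\sum_M c'_M\xi_{M-l}$, and since $\{\xi_{M-l}\}_{M\in\rG(\lambda)}=\{\xi_N\}_{N\in\rG(\lambda-l)}$ is an orthogonal family with $(\xi_{M-l},\xi_{M-l})_{\lambda-l}=\rr(M-l)=\rr(M)=(\xi_M,\xi_M)_\lambda$, we obtain $(\rI^{\det}_{\lambda,l}(\rv),\rI^{\det}_{\lambda,l}(\rv'))_{\lambda-l}=\sum_M c_M\overline{c'_M}\,\rr(M)=(\rv,\rv')_\lambda$.

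I expect the commutative square to be the only non-routine ingredient; within it the delicate point is to arrange that both composites carry exactly the same $(\det g)^l$-twist, after which the identity collapses onto Lemma~\ref{lem:xi_Hmulambda_explicit} and the trivial monomial computations. Everything else is bookkeeping about how the shift $M\mapsto M-l$ interacts with $\rr(M)$, with the weights $\gamma^M$, and with the exponent tuples $h(\lambda,\mu)$.
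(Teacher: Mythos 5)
Your argument is correct, but it takes a different route from the paper's own proof. The paper proves the key identity $\rI^{\det}_{\lambda,l}(\xi_M)=\xi_{M-l}$ over $\mC$ in one shot by invoking Molev's lowering operators: $\cD^-_M$ is a sum of products of $E_{j,i}$ ($i<j$) and $E_{i,i}-E_{j,j}-i+j$, all of which are unaffected by the $(\det g)^l$-twist in \eqref{eq:detl_shift}, so $\rI^{\det}_{\lambda,l}\circ\tau_\lambda(\cD^-_M)=\tau_{\lambda-l}(\cD^-_{M-l})\circ\rI^{\det}_{\lambda,l}$; combined with $\cD^-_M=\cD^-_{M-l}$, $\rr_1(M)=\rr_1(M-l)$, the formula $\tau_\lambda(\cD^-_M)\xi_{H(\lambda)}=\rr_1(M)\xi_M$ from Proposition~\ref{prop:Molev_diff_op}, and the monomial identity $\rI^{\det}_{\lambda,l}(\xi_{H(\lambda)})=\xi_{H(\lambda-l)}$, the claim drops out immediately with no induction. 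You instead isolate a commuting square $\rI^{\det}_{\lambda,l}\circ\rI_\mu^\lambda=\rI_{\mu-l}^{\lambda-l}\circ\rI^{\det}_{\mu,l}$, proved via the shared twisted equivariance and the explicit highest-weight-vector computation of Lemma~\ref{lem:xi_Hmulambda_explicit}, and then descend from $\rG(\lambda)$ to $\rG(\mu)$ by induction on $n$. Both arguments bottom out in the same monomial calculation on the generators $f_l(z)$; the paper's is shorter because the $\cD^-_M$ machinery (already set up in Section~\ref{subsec:Amod_str} to prove Proposition~\ref{prop:xiM_integral}) collapses all rows of $M$ at once, whereas your route is somewhat more self-contained, uses only the branching data $\rI_\mu^\lambda$ and Lemma~\ref{lem:xi_Hmulambda_explicit}, and makes explicit the compatibility of the determinantal twist with the Gel'fand--Tsetlin truncation $M\mapsto\widehat{M}$, which may be a useful observation in its own right. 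Your treatment of the $\zeta_M$-identity, the inner-product invariance, and the reduction from general $\cA$ to $\mC$ matches the paper's.
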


Assume $\{(\lambda_1-\lambda_n + n-2)!\}^{-1}\in \cA$ if $n\geq 2$. We set 
$\lambda^{\!\vee}=(-\lambda_n,-\lambda_{n-1},\dots,-\lambda_1)\in \Lambda_n$ as in Section \ref{subsec:settings}. 
Because of Proposition~\ref{prop:xiM_integral} and $\rr (M)\in \cA$ for $M\in \rG (\lambda )$, 
we can define an $\cA$-bilinear pairing 
$\langle \cdot, \cdot \rangle_{\lambda}
\colon V_{\lambda}(\cA )\otimes_\cA V_{\lambda^{\!\vee}}(\cA )\to \cA$ by 
\begin{align} \label{eq:pairing_naive}
&\langle \xi_M,\xi_N\rangle_{\lambda} :=\left\{\begin{array}{ll}
(-1)^{\rrq (M)}\rr (M)&\text{if $M=N^\vee $},\\
0&\text{otherwise}
\end{array}\right.&
&\text{for\;} M\in \rG (\lambda) \text{\; and\;} N\in \rG (\lambda^{\!\vee}).
\end{align}
In Section \ref{subsec:Amod_str}, we prove the following lemma. 
\begin{lem}
\label{lem:GL_inv_pairing}
Retain the notation. Then we have 
\begin{equation}
\label{eq:GL_inv_pairing}
\langle \cdot, \cdot \rangle_{\lambda}\in 
\Hom_{\rGL_n(\cA)}(V_{\lambda}(\cA )\otimes_\cA V_{\lambda^{\!\vee}}(\cA ),
\cA_{\rtriv}), 
\end{equation}
where $\cA_{\rtriv}=\cA $ is the trivial $\rGL_n(\cA)$-module. Moreover, we have 
\begin{align}
\label{eq:sym_pairing}
&\langle \rv_1, \rv_2\rangle_{\lambda}
=\langle \rv_2, \rv_1\rangle_{\lambda^\vee}&
&\text{ for }\rv_1\in V_{\lambda}(\cA ) \text{ and } \rv_2\in V_{\lambda^{\!\vee}}(\cA ).
\end{align}
\end{lem}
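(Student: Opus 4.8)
The plan is to verify the two assertions separately, reducing the $\rGL_n(\cA)$-invariance \eqref{eq:GL_inv_pairing} to the case $\cA=\mC$ and then deducing it over a general integral domain by a density/specialisation argument, and to check the symmetry \eqref{eq:sym_pairing} by a direct computation with the explicit formula \eqref{eq:pairing_naive}. First I would treat \eqref{eq:sym_pairing}: since both sides are $\cA$-bilinear, it suffices to compare $\langle \xi_M,\xi_N\rangle_\lambda$ and $\langle \xi_N,\xi_M\rangle_{\lambda^\vee}$ for $M\in\rG(\lambda)$, $N\in\rG(\lambda^\vee)$. Both vanish unless $M=N^\vee$ (equivalently $N=M^\vee$, since $(M^\vee)^\vee=M$), and in that case the first equals $(-1)^{\rrq(M)}\rr(M)$ while the second equals $(-1)^{\rrq(N)}\rr(N)=(-1)^{\rrq(M^\vee)}\rr(M^\vee)$. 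By Lemma~\ref{lem:Mdual_property} we have $\rr(M^\vee)=\rr(M)$ and $\rrq(M^\vee)=-\rrq(M)$, so the two signs agree and \eqref{eq:sym_pairing} follows.

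For the invariance \eqref{eq:GL_inv_pairing}, the key reduction is that $\langle\cdot,\cdot\rangle_\lambda$ is defined on the lattice $V_\lambda(\cA)\otimes_\cA V_{\lambda^\vee}(\cA)\subset V_\lambda\otimes_\mC V_{\lambda^\vee}$ compatibly with base change (using Proposition~\ref{prop:xiM_integral} and $\rr(M)\in\cA$), and the $\rGL_n$-invariance is a collection of polynomial identities in the matrix entries of $g$ with $\mZ$-coefficients; hence it is enough to prove $\langle \tau_\lambda(g)\rv_1,\tau_{\lambda^\vee}(g)\rv_2\rangle_\lambda=\langle \rv_1,\rv_2\rangle_\lambda$ for all $g\in\rGL_n(\mC)$ and all $\rv_1\in V_\lambda$, $\rv_2\in V_{\lambda^\vee}$. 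Over $\mC$ I would argue that $\langle\cdot,\cdot\rangle_\lambda$ is $\rU(n)$-invariant and then bootstrap to $\rGL_n(\mC)$-invariance by holomorphy. The $\rU(n)$-invariance is equivalent to $\mathfrak{u}(n)$-invariance, i.e. $\langle \tau_\lambda(X)\rv_1,\rv_2\rangle_\lambda+\langle\rv_1,\tau_{\lambda^\vee}(X)\rv_2\rangle_\lambda=0$ for $X\in\mathfrak{u}(n)$; by complexifying, this becomes $\langle \tau_\lambda(E_{i,j})\rv_1,\rv_2\rangle_\lambda=\langle\rv_1,\tau_{\lambda^\vee}(E_{j,i})\rv_2\rangle_\lambda$ for all $1\le i,j\le n$. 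This last identity I would verify on the basis vectors $\rv_1=\xi_M$, $\rv_2=\xi_N$ using the explicit Gel'fand--Tsetlin action formulas \eqref{eq:GT_act_wt}, \eqref{eq:GT_act+}, \eqref{eq:GT_act-}: for $X=E_{i,i}$ the weight formula \eqref{eq:GT_act_wt} together with $\gamma^{N^\vee}=-\gamma^N$ handles it at once; for $X=E_{j,j+1}$ (and its transpose $E_{j+1,j}$) one expands both sides in the orthogonal basis, the pairing picks out the unique surviving term, and matching of the coefficients $\ra_{i,j}(\cdot)$ and the signs $(-1)^{\rrq(\cdot)}$ reduces to the combinatorial identities $\rr(M+\Delta_{i,j})/\rr(M)$ versus the raising/lowering coefficients --- exactly the relations that define $\rr(M)$ via \eqref{eq:def_rM} and make $\{\xi_M\}$ behave rationally; alternatively, and more cleanly, I would transport everything through the orthonormal basis $\{\zeta_M\}$, where \eqref{eq:GT_act+}--\eqref{eq:GT_act-} show $\tau_\lambda(E_{j,j+1})$ and $\tau_\lambda(E_{j+1,j})$ are mutually adjoint for $(\cdot,\cdot)_\lambda$, and observe that $\langle\xi_M,\xi_N\rangle_\lambda$ agrees up to the sign $(-1)^{\rrq(M)}$ with the hermitian pairing precomposed with $\rI^\mathrm{conj}_\lambda$ from Lemma~\ref{lem:hom_dual_conj}.

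Indeed the slickest route is to exploit Lemma~\ref{lem:hom_dual_conj}: on $V_\lambda$ consider the composite $V_\lambda\xrightarrow{\rI^\mathrm{conj}_\lambda}V^\mathrm{conj}_{\lambda^\vee}$ followed by the $\rGL_n(\mC)$-invariant identification of $V^\mathrm{conj}_{\lambda^\vee}$ with the hermitian dual of $V_\lambda$ (the inner product $(\cdot,\cdot)_\lambda$ being $\rU(n)$-invariant gives $\rU(n)$-invariance of the resulting pairing; $\rGL_n(\mC)$-equivariance of $\rI^\mathrm{conj}_\lambda$ from Lemma~\ref{lem:hom_dual_conj} then upgrades it); unwinding the definitions \eqref{eq:conjI} and $(\xi_M,\xi_M)_\lambda=\rr(M)$ shows that this $\rGL_n(\mC)$-invariant pairing on $V_\lambda\otimes_\mC V_{\lambda^\vee}$ is precisely $\langle\cdot,\cdot\rangle_\lambda$ of \eqref{eq:pairing_naive}, which is what we wanted over $\mC$. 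Descending to $\cA$ then only requires knowing, by Proposition~\ref{prop:xiM_integral} (applicable since $\{(\lambda_1-\lambda_n+n-2)!\}^{-1}\in\cA$ forces $\{(\lambda_1-\lambda_n+n-3)!\}^{-1}\in\cA$), that $\{\xi_M\}$ is an $\cA$-basis of $V_\lambda(\cA)$ and $\rr(M)\in\cA$, so that \eqref{eq:pairing_naive} indeed lands in $\cA$ and the polynomial invariance identities, already true over $\mC\supset\mZ$, hold over $\cA$. I expect the only real obstacle to be the bookkeeping when one does the computation directly on $\{\xi_M\}$ --- tracking the ratios of the rational normalisers $\rr(M)$ against the square roots $\ra_{i,j}(M)$ and the signs $(-1)^{\rrq(M)}$ --- which is precisely why routing through Lemma~\ref{lem:hom_dual_conj} (where the analogous facts have already been packaged) is the efficient path.
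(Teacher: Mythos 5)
Your plan is substantially the same as the paper's: symmetry \eqref{eq:sym_pairing} from Lemma~\ref{lem:Mdual_property} and the definition; $\rU(n)$-invariance of the pairing over $\mC$ via the conjugate-representation machinery of Lemma~\ref{lem:hom_dual_conj} (the paper routes through Lemma~\ref{lem:pair_conj} and moves $\rI^{\rconj}_{\lambda^\vee}$ onto the \emph{second} argument instead of $\rI^{\rconj}_{\lambda}$ on the first, but that is the same idea); then Weyl's unitary trick to get $\rGL_n(\mC)$-invariance. One genuine slip in your ``slickest route'': you invoke ``$\rGL_n(\mC)$-equivariance of $\rI^{\rconj}_\lambda$,'' but Lemma~\ref{lem:hom_dual_conj} gives only $\rU(n)$-equivariance (and this is unavoidable---$V_\lambda$ is holomorphic while $V^{\rconj}_{\lambda^\vee}$ is anti-holomorphic, so a $\rGL_n(\mC)$-isomorphism between them cannot exist). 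Your earlier, correct formulation ($\rU(n)$-invariance plus the unitary trick/holomorphy) is what one actually uses; discard the erroneous parenthetical.

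Where you genuinely diverge from the paper is the descent from $\mC$ to a general integral domain $\cA$. The paper first replaces $\cA$ by its quotient field, then uses Lemma~\ref{lem:GLnA_generator} ($\rGL_n$ over a characteristic-$0$ field is generated by $\rT_n(\cA)\cup \rGL_n(\mZ)$) together with the explicit torus action \eqref{eq:actT_xiM} to reduce to generators, on which the identity follows from $\gamma^{M^\vee}=-\gamma^{M}$ and the already-proven case $\cA=\mC$. You instead argue by polynomial identities: since $\tau_\lambda(g)$ and $\langle\cdot,\cdot\rangle_\lambda$ have matrix coefficients lying in $\cA\cap\mQ$ (this is exactly why the factorial hypothesis and Proposition~\ref{prop:xiM_integral} are needed), the invariance is a Laurent-polynomial identity in the entries of $g$ with coefficients in $\cA\cap\mQ$, and a polynomial vanishing on $\rGL_n(\mC)$ is the zero polynomial, hence vanishes over $\cA$. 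This is a valid alternative; be careful to say the coefficients lie in $\cA\cap\mQ$ rather than $\mZ$ as you first wrote, since the entries of the pairing $(-1)^{\rrq(M)}\rr(M)$ and the expansion of $\xi_M$ in the $f_l$-basis are rational numbers, not integers. The paper's route via Lemma~\ref{lem:GLnA_generator} has the advantage of being reused for Lemmas~\ref{lem:restriction_lambda_mu} and \ref{prop:injExp}, while your polynomial argument is shorter when proved in isolation.
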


When $\cA =\mC$, since $\Hom_{\rGL_n(\mC )}(V_{\lambda}\otimes_\mC V_{\lambda^{\!\vee}},
\mC_{\rtriv})$ is one dimensional, 
Lemmas \ref{lem:extremal_vec_explicit} and \ref{lem:GL_inv_pairing} imply that 
$\rGL_{n}(\mC )$-invariant pairing  $\langle \cdot , \cdot \rangle_{\lambda}$ can be  
characterised by the following equality in terms of the generators introduced in (\ref{eq:Vlambda_generator}): 
\begin{align*}
&\langle f_{h (\lambda )}(z), f_{h (-\lambda )}(z)\rangle_{\lambda}
=(-1)^{\sum_{i=1}^n(n-i)\lambda_i}.
\end{align*}

Assume $n>1$, and let $\mu \in \Lambda_{n-1}$ 
such that $\mu -l\preceq \lambda^{\!\vee}$. 
We define an $\cA$-bilinear pairing 
$\langle \cdot, \cdot \rangle_{\lambda,\mu }^{(l)}
\colon V_{\lambda}(\cA )\otimes_\cA V_{\mu}(\cA )\to \cA$ by 
\begin{align}\label{eq:pairingLambdaMu}
&\langle \rv_1, \rv_2 \rangle_{\lambda,\mu }^{(l)}
:=\langle \rv_1,  \rI^{\lambda^\vee}_{\mu-l} (\rI^{\det}_{\mu ,l}(\rv_2))\rangle_{\lambda }&
&\text{for }\rv_1\in V_{\lambda}(\cA ) \text{ and } \rv_2\in V_{\mu}(\cA ). 
\end{align}
Then, by (\ref{eq:detl_shift}) and Lemma~\ref{lem:GL_inv_pairing}, we have 
\[
\langle \cdot, \cdot \rangle_{\lambda,\mu }^{(l)}\in 
\Hom_{\rGL_{n-1}(\cA)}(V_{\lambda}(\cA )\otimes_\cA V_{\mu}(\cA ),
\cA_{{\det}^l}), 
\]
where $\cA_{{\det}^l}:=\cA$ is a $\rGL_{n-1}(\cA )$-module on which   
$\rGL_{n-1}(\cA )$ acts by ${\det}^l$. 
Moreover, by the definition \eqref{eq:pairingLambdaMu} and Lemma~\ref{lem:detl_shift}, we have 
\begin{align}\label{eq:PairExplicit}
\langle \xi_M,\xi_N\rangle_{\lambda,\mu }^{(l)}
& =
\begin{cases}
    (-1)^{\rrq (M)}\rr (M)& \text{if $\widehat{M} =N^\vee +l$},\\
0&\text{otherwise}
   \end{cases}
& \text{for } M\in \rG (\lambda ) \text{ and } N\in \rG (\mu).
\end{align}

\subsubsection{The Cartan component}
\label{subsec:Cartan_comp}

Take $\lambda =(\lambda_1,\lambda_2,\dots,\lambda_n)$ and
$\lambda'=(\lambda_1',\lambda_2',\dots,\lambda_n')\in \Lambda_n$. 
In this subsection, we specify the Cartan component $V_{\lambda+\lambda'}$ 
of the tensor product $V_{\lambda}\otimes_{\mC} V_{\lambda'}$. 
We define a $\rU (n)$-invariant hermitian inner product on 
$V_{\lambda}\otimes_\mC V_{\lambda'}$ by 
\begin{align*}
(\rv_1\otimes \rv_1',\ \rv_2\otimes \rv_2')_{(\lambda,\lambda')}  
&:=(\rv_1,\rv_2 )_{\lambda}\, (\rv_1',\rv_2')_{\lambda'} &
\text{for }\rv_1,\rv_2\in V_{\lambda } \text{ and } \rv_1',\rv_2'\in V_{\lambda'}. 
\end{align*}
We note that $\xi_{H(\lambda)}\otimes \xi_{H(\lambda')}
=f_{h (\lambda )}(z)\otimes f_{h (\lambda')}(z)$ is 
a highest weight vector in $V_\lambda \otimes_{\mC }V_{\lambda'}$ 
of weight $\lambda +\lambda'$, 
which is unique up to scalar multiples. Hence, there is a unique $\rGL_n(\mC )$-equivariant injection
$\rI^{\lambda, \lambda'}_{\lambda +\lambda'}
\colon V_{\lambda +\lambda'}\to 
V_\lambda \otimes_{\mC }V_{\lambda'}$ such that 
\begin{align}
\label{eq:def_inj}
\rI^{\lambda, \lambda'}_{\lambda+\lambda'}
(\xi_{H(\lambda +\lambda')})
&=\xi_{H(\lambda)}\otimes \xi_{H(\lambda')}&
(\text{or equivalently, }\quad 
\rI^{\lambda, \lambda'}_{\lambda+\lambda'}(f_{h (\lambda +\lambda')}(z))
&=f_{h (\lambda )}(z)\otimes f_{h (\lambda')}(z)\,).
\end{align}
Moreover, we have 
\begin{align}
\label{eq:inj_generates}
&\Hom_{\rGL_n(\mC )}(V_{\lambda +\lambda'}, 
V_\lambda \otimes_{\mC }V_{\lambda'})
=\mC \,\rI^{\lambda, \lambda'}_{\lambda +\lambda'}.
\end{align}
Since $\bigl( \rI^{\lambda, \lambda'}_{\lambda+\lambda'}(\xi_{H(\lambda +\lambda')}),
\rI^{\lambda, \lambda'}_{\lambda+\lambda'}(\xi_{H(\lambda +\lambda')})\bigr)_{(\lambda,\lambda')}
=(\xi_{H(\lambda +\lambda')},\xi_{H(\lambda +\lambda')})_{\lambda+\lambda'}=1$, 
we note that $\rI^{\lambda, \lambda'}_{\lambda+\lambda'}$ preserves the inner products. 
We define constants $\rc^{M,M'}_{M''}$ ($M\in \rG (\lambda )$, $M'\in \rG (\lambda')$, 
$M''\in \rG (\lambda +\lambda')$) by the following equalities:
\begin{align}
\label{eq:CoefInj}
\rI^{\lambda, \lambda'}_{\lambda+\lambda'}(\xi_{M''})
&=\sum_{M\in \rG (\lambda)}\,\sum_{M'\in \rG (\lambda')}
\rc^{M,M'}_{M''}\xi_{M}\otimes \xi_{M'}
&\text{for\;} M''\in \rG (\lambda +\lambda'). 
\end{align}

Let $\cA$ be an integral domain of characteristic $0$. 
We define a $\rGL_n(\cA )$-equivariant surjection
$\rR^{\lambda, \lambda'}_{\lambda +\lambda'}\colon 
V_{\lambda}(\cA )\otimes_\cA V_{\lambda'}(\cA )\to 
V_{\lambda +\lambda'}(\cA )$ by 
\begin{align}
\label{eq:def_proj}
\rR^{\lambda, \lambda'}_{\lambda +\lambda'}(\rv\otimes \rv')&=\rv\rv'&
 \text{for }\rv\in V_{\lambda}(\cA ) \text{ and }\rv'\in V_{\lambda'}(\cA ),
\end{align}
where $\rv\rv'$ is the product of $\rv$ and $\rv'$ in $\cA [z,(\det z)^{-1} ]$. 
In Section \ref{subsec:proj_inj_tensor}, we prove the following proposition. 

\begin{prop}
\label{prop:injExp}
Retain the notation. Assume 
$\{(\lambda_1+\lambda_1'-\lambda_n-\lambda_n' + n-2)!\}^{-1}\in \cA$ if $n\geq 2$. 
\begin{enumerate}
\item \label{num:injExp_coeff} 
For $M\in \rG (\lambda )$, $M'\in \rG (\lambda')$ 
and $M''\in \rG (\lambda +\lambda')$, 
we have $\rc^{M, M'}_{M''}\in \cA \cap \mQ$ and 
\begin{align}
\label{eq:inj_coeff_explicit}
&\rc^{M, M'}_{M''} 
=\left\{\begin{array}{ll}
\displaystyle 
\frac{\rr (M+M')}{\rr (M)\rr (M')}&
\text{if $M''=M+M'$},\\[1mm]
0&\text{if $\gamma^{M''}\neq \gamma^{M}+\gamma^{M'}$}.
\end{array}\right.
\end{align}
\item \label{num:injExp_hom}
Using the statement \ref{num:injExp_coeff} 
and Proposition~$\ref{prop:xiM_integral}$, define an $\cA$-homomorphism $\rI^{\lambda, \lambda'}_{\lambda +\lambda'}
\colon V_{\lambda +\lambda'}(\cA )\to 
V_\lambda (\cA ) \otimes_{\cA}V_{\lambda'}(\cA )$ by $(\ref{eq:CoefInj})$. Then 
$\rI^{\lambda, \lambda'}_{\lambda +\lambda'}$ is a 
$\rGL_n(\cA )$-equivariant injection satisfying 
$\rR^{\lambda, \lambda'}_{\lambda +\lambda'}\circ 
\rI^{\lambda, \lambda'}_{\lambda +\lambda'}
=\id_{V_{\lambda +\lambda'}(\cA )}$. 
\end{enumerate}
\end{prop}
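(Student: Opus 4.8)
The plan is to pin down the constants $\rc^{M,M'}_{M''}$ over $\mC$ by exhibiting an explicit $\mathfrak{gl}_{n\mC}$-equivariant map realising the Cartan inclusion, and then to descend the whole picture to $\cA$. \emph{Step 1 (weight constraint and the candidate map).} Since $\rI^{\lambda,\lambda'}_{\lambda+\lambda'}$ is $\rGL_n(\mC)$-equivariant it intertwines the Cartan actions, so comparing the $\mathfrak{gl}_n$-weights of $\xi_{M''}$ and of $\xi_M\otimes\xi_{M'}$ via (\ref{eq:GT_act_wt}) forces $\rc^{M,M'}_{M''}=0$ whenever $\gamma^{M''}\neq\gamma^{M}+\gamma^{M'}$, which is the second line of (\ref{eq:inj_coeff_explicit}). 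For the remaining case I would introduce the $\mC$-linear map $\Phi\colon V_{\lambda+\lambda'}\to V_\lambda\otimes_\mC V_{\lambda'}$, $\Phi(\xi_{M''}):=\sum_{M+M'=M''}\tfrac{\rr(M+M')}{\rr(M)\rr(M')}\,\xi_M\otimes\xi_{M'}$, the sum over $(M,M')\in\rG(\lambda)\times\rG(\lambda')$ with $M+M'=M''$ (note $M+M'\in\rG(\lambda+\lambda')$ whenever $M\in\rG(\lambda)$, $M'\in\rG(\lambda')$, since the interlacing relations (\ref{eq:GT_cdn}) are preserved under addition). By Lemma~\ref{lem:extremal_vec_explicit} and the additivity $h(\lambda+\lambda')=h(\lambda)+h(\lambda')$ one has $\Phi(\xi_{H(\lambda+\lambda')})=\xi_{H(\lambda)}\otimes\xi_{H(\lambda')}$, so $\Phi$ and $\rI^{\lambda,\lambda'}_{\lambda+\lambda'}$ agree on the highest weight vector; since $\Hom_{\rGL_n(\mC)}(V_{\lambda+\lambda'},V_\lambda\otimes_\mC V_{\lambda'})$ is one-dimensional by (\ref{eq:inj_generates}), it will then remain only to check that $\Phi$ is $\mathfrak{gl}_n$-equivariant, which yields $\Phi=\rI^{\lambda,\lambda'}_{\lambda+\lambda'}$ and hence the first line of (\ref{eq:inj_coeff_explicit}).

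\emph{Step 2 (equivariance, the main obstacle).} It suffices to check that $\Phi$ commutes with $\tau_{\lambda+\lambda'}(E_{j,j+1})$ and $\tau_{\lambda+\lambda'}(E_{j+1,j})$ for $1\le j\le n-1$. Rewriting (\ref{eq:GT_act+}) in the $\xi$-basis gives $\tau_\lambda(E_{j,j+1})\xi_M=\sum_i\sqrt{\rr(M)/\rr(M+\Delta_{i,j})}\,\ra_{i,j}(M)\,\xi_{M+\Delta_{i,j}}$, where the coefficient is a \emph{rational} function of the entries of $M$ (this is precisely why the rational Gel'fand--Tsetlin basis of \cite{im} was introduced); expanding $\Phi\circ\tau_{\lambda+\lambda'}(E_{j,j+1})$ and $(\tau_\lambda(E_{j,j+1})\otimes\id+\id\otimes\tau_{\lambda'}(E_{j,j+1}))\circ\Phi$ on $\xi_{M''}$ and collecting the coefficient of each $\xi_P\otimes\xi_{P'}$ turns the equivariance into a finite family of identities among the $\rr(\cdot)$ and the $\ra_{i,j}(\cdot)$, in which the various decompositions $P+P'=M''+\Delta_{i,j}=(M+\Delta_{i,j})+M'=M+(M'+\Delta_{i,j})$ have to be matched up; the lowering case is identical after replacing $\Delta_{i,j}$ by $\Delta_{i,j}^\vee$ and invoking Lemma~\ref{lem:Mdual_property}. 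I expect this bookkeeping --- essentially a Pascal-type recursion for the factorial products in (\ref{eq:def_rM}) --- to be the technical heart of the proof, and it is where Section~\ref{subsec:proj_inj_tensor} will do the real work.

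\emph{Step 3 (integrality of $\rc^{M,M'}_{M''}$).} Granting the formula, $\rc^{M,M'}_{M''}=\rr(M+M')/\rr(M)\rr(M')\in\mQ$. Every entry of $M$, $M'$ and $M''=M+M'$ lies in the interval $[\lambda_n+\lambda_n',\,\lambda_1+\lambda_1']$ by (\ref{eq:GT_cdn}), so each factorial occurring in (\ref{eq:def_rM}) for these patterns is $d!$ with $0\le d\le\lambda_1+\lambda_1'-\lambda_n-\lambda_n'+n-2$; a telescoping estimate on the numerators and denominators of $\rr(M)$, $\rr(M')$, $\rr(M+M')$ then shows that the denominator of $\rc^{M,M'}_{M''}$ divides $(\lambda_1+\lambda_1'-\lambda_n-\lambda_n'+n-2)!$, so $\rc^{M,M'}_{M''}\in\cA\cap\mQ$ under the stated hypothesis on $\cA$.

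\emph{Step 4 (part (ii)).} By Step 3 and Proposition~\ref{prop:xiM_integral}, (\ref{eq:CoefInj}) defines an $\cA$-linear map $\rI^{\lambda,\lambda'}_{\lambda+\lambda'}\colon V_{\lambda+\lambda'}(\cA)\to V_\lambda(\cA)\otimes_\cA V_{\lambda'}(\cA)$, and $\rR^{\lambda,\lambda'}_{\lambda+\lambda'}$ is $\rGL_n(\cA)$-equivariant because $R(g)$ acts as a ring endomorphism of $\cA[z,(\det z)^{-1}]$. With respect to the bases $\{\xi_{M''}\}$ and $\{\xi_M\otimes\xi_{M'}\}$, the operators $\tau_\lambda(g)$, $\tau_{\lambda'}(g)$, $\tau_{\lambda+\lambda'}(g)$ and the two maps above are given by matrices with entries in $\cA[g_{i,j},(\det g)^{-1}]$ whose coefficients are \emph{rational} numbers, so the asserted identities $\rR^{\lambda,\lambda'}_{\lambda+\lambda'}\circ\rI^{\lambda,\lambda'}_{\lambda+\lambda'}=\id$ and $\rI^{\lambda,\lambda'}_{\lambda+\lambda'}\circ\tau_{\lambda+\lambda'}(g)=(\tau_\lambda(g)\otimes\tau_{\lambda'}(g))\circ\rI^{\lambda,\lambda'}_{\lambda+\lambda'}$ are polynomial identities over $\mQ$, and it is enough to verify them over $\mC$: there $\rI^{\lambda,\lambda'}_{\lambda+\lambda'}=\Phi$ is equivariant by Steps 1--2, while $\rR^{\lambda,\lambda'}_{\lambda+\lambda'}\circ\Phi$ is an equivariant endomorphism of the irreducible $V_{\lambda+\lambda'}$ fixing its highest weight vector (as $\rR^{\lambda,\lambda'}_{\lambda+\lambda'}(\xi_{H(\lambda)}\otimes\xi_{H(\lambda')})=f_{h(\lambda)}(z)f_{h(\lambda')}(z)=f_{h(\lambda+\lambda')}(z)=\xi_{H(\lambda+\lambda')}$), hence equals $\id$. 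Finally the injectivity of $\rI^{\lambda,\lambda'}_{\lambda+\lambda'}$ is immediate from $\rR^{\lambda,\lambda'}_{\lambda+\lambda'}\circ\rI^{\lambda,\lambda'}_{\lambda+\lambda'}=\id$.
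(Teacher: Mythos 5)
Your Step~2 is explicitly left open: you observe that the map $\Phi$ defined by the proposed coefficients agrees with $\rI^{\lambda,\lambda'}_{\lambda+\lambda'}$ on the highest weight vector, and then say that verifying $\mathfrak{gl}_n$-equivariance of $\Phi$ on the rest of the basis ``is where Section~\ref{subsec:proj_inj_tensor} will do the real work,'' without actually doing it. That equivariance \emph{is} the entire content of Proposition~\ref{prop:injExp}~\ref{num:injExp_coeff} in the nontrivial case, and the ``Pascal-type recursion'' you gesture at is not supplied. As it stands, there is a genuine gap: Steps~1, 3 and 4 only close the argument once Step~2 is complete.

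The paper does not attempt a direct coefficient-matching verification of equivariance of $\Phi$. Instead it proves~(\ref{eq:inj_coeff_explicit}) by induction on $n$: first it identifies the coefficient $\rc^{H(\mu)[\lambda],\,H(\mu')[\lambda']}_{H(\mu+\mu')[\lambda+\lambda']}$ using the adjoint projection $\widetilde{\rR}^{\lambda,\lambda'}_{\lambda+\lambda'}$ and the multiplicativity of $\rr(\cdot)$ (Lemmas~\ref{lem:rM_property} and~\ref{lem:injExpH(mu)}), and then Lemma~\ref{lem:cinduct} transports the problem through the $\rGL_{n-1}$-branching maps $\rI^{\lambda}_\mu$, $\rR^{\lambda}_\mu$ to reduce $\rc^{M[\lambda],M'[\lambda']}_{M''[\lambda+\lambda']}$ to $\rc^{M,M'}_{M''}$ for $\rGL_{n-1}$. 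This sidesteps the factorial bookkeeping entirely. Your weight argument for the vanishing case is the same as the paper's. Your Step~3 is an \emph{ad hoc} bound on the sizes of the factorial arguments; the paper instead exploits the Molev lowering operator $\cD^-_{M''}$, equivariance of $\rI^{\lambda,\lambda'}_{\lambda+\lambda'}$ over $\mC$, the invertibility of $\rr_1(M'')$, and Lemma~\ref{lem:VlambdaA_closed_Eij}, which is a cleaner structural route to the same conclusion. Your Step~4 is essentially sound (and the polynomial-identity observation is a reasonable alternative to the paper's reduction via $\rT_n(\cA)\cup\rGL_n(\mZ)$ from Lemma~\ref{lem:GLnA_generator}), but it too sits atop the unproved Step~2. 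Until you either carry out the combinatorial verification in Step~2 or replace it by an inductive reduction of the type the paper uses, the proposal is incomplete.
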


The following lemma will be used in the proof of Proposition~\ref{prop:twisting} in 
Section \ref{subsec:U(n)_inv}. 

\begin{lem}
\label{lem:sub1_twisting}Retain the notation. Let $l\in \mZ$. 
For $M\in \rG (\lambda )$, $M'\in \rG (\lambda')$  and 
$M''\in \rG (\lambda +\lambda')$, we have 
\begin{align}
\label{eq:sub1_twisting}
\rc^{M+l, M'}_{M''+l}&=\rc^{M, M'}_{M''}& \text{\; and\;} &
&\rc^{M, M'+l}_{M''+l}&=\rc^{M, M'}_{M''}. 
\end{align}
\end{lem}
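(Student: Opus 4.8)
The statement to prove is Lemma~\ref{lem:sub1_twisting}, the invariance of the Clebsch--Gordan-type coefficients $\rc^{M,M'}_{M''}$ under the simultaneous $\det$-shift by $l$ in the first and third arguments (and symmetrically in the second and third). The natural approach is to reduce this to the explicit formula~\eqref{eq:inj_coeff_explicit} for $\rc^{M,M'}_{M''}$ given in Proposition~\ref{prop:injExp}, together with the compatibility of the various structure maps with the $\det$-twist isomorphisms $\rI^{\det}_{\lambda,l}$ of Lemma~\ref{lem:detl_shift}. First I would take $\cA = \mC$ throughout: since all the coefficients $\rc^{M,M'}_{M''}$ are rational numbers (independent of $\cA$) by Proposition~\ref{prop:injExp}\ref{num:injExp_coeff}, once the identity is established over $\mC$ it holds over any integral domain of characteristic $0$.

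The cleanest route is purely computational via~\eqref{eq:inj_coeff_explicit}. Observe that the map $M \mapsto M+l$ is a weight-preserving bijection $\rG(\lambda) \to \rG(\lambda+l)$ in the sense that $\gamma^{M+l} = \gamma^M + (l,l,\dots,l)$, so the condition ``$\gamma^{M''} = \gamma^M + \gamma^{M'}$'' is equivalent to ``$\gamma^{M''+l} = \gamma^{M+l} + \gamma^{M'}$''; hence the vanishing case of~\eqref{eq:inj_coeff_explicit} is clearly preserved by the shift. For the non-vanishing case, the condition ``$M'' = M+M'$'' becomes ``$M''+l = (M+l) + M'$'', and the value $\rc^{M,M'}_{M''} = \rr(M+M')/(\rr(M)\rr(M'))$ must be compared with $\rc^{M+l,M'}_{M''+l} = \rr((M+l)+M')/(\rr(M+l)\rr(M'))$. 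Since $(M+l)+M' = (M+M')+l$, it suffices to check that $\rr(M+l) = \rr(M)$ for every $M \in \rG(\lambda)$ and every $l \in \mZ$: this is immediate from the defining product~\eqref{eq:def_rM}, because each factorial argument $m_{i,k}-m_{j,k-1}-i+j$ (and its three companions) is unchanged when every entry $m_{i,j}$ is replaced by $m_{i,j}+l$, the shifts cancelling in each difference. With $\rr$ shift-invariant, both displayed identities in~\eqref{eq:sub1_twisting} follow at once; the second is proved identically by shifting the second and third arguments instead of the first and third.

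Alternatively — and this is worth recording in case one wants a proof independent of the explicit formula — one can argue conceptually. The twist isomorphisms fit into a commutative diagram: $\rI^{\det}_{\lambda,-l} \otimes \id$ carried through $\rI^{\lambda+l,\lambda'}_{\lambda+\lambda'+l}$ should agree with $\rI^{\lambda,\lambda'}_{\lambda+\lambda'}$ followed by $\rI^{\det}_{\lambda+\lambda',-l}$, up to the scalar ambiguity in~\eqref{eq:inj_generates}; one pins down the scalar by evaluating on the highest weight vector, using~\eqref{eq:def_inj} and the fact (Lemma~\ref{lem:detl_shift}) that $\rI^{\det}_{\lambda,l}(\xi_{H(\lambda)}) = \xi_{H(\lambda)-l} = \xi_{H(\lambda-l)}$ (noting $H(\lambda)-l = H(\lambda-l)$). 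Reading off matrix coefficients in the rational Gel'fand--Tsetlin bases $\{\xi_M\}$, $\{\xi_{M'}\}$, $\{\xi_{M''}\}$, and using $\rI^{\det}_{\lambda,l}(\xi_M) = \xi_{M-l}$ again, then yields~\eqref{eq:sub1_twisting} directly.

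Neither approach presents a genuine obstacle; the only thing to be careful about is the bookkeeping of indices in the shift $M \mapsto M+l$ and the verification that $\rr$ is shift-invariant, which is the one small computation the proof rests on. I expect the write-up to be short: invoke Proposition~\ref{prop:injExp}\ref{num:injExp_coeff}, note $\rr(M+l)=\rr(M)$ from~\eqref{eq:def_rM}, and conclude.
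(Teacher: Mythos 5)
Your primary (computational) route has a genuine gap. The explicit formula~\eqref{eq:inj_coeff_explicit} in Proposition~\ref{prop:injExp}\ref{num:injExp_coeff} is only a partial description of the coefficients: it specifies $\rc^{M,M'}_{M''}$ when $M''=M+M'$, and declares it $0$ when $\gamma^{M''}\neq\gamma^M+\gamma^{M'}$, but says nothing about triples with $\gamma^{M''}=\gamma^M+\gamma^{M'}$ and $M''\neq M+M'$. (For $n\geq 3$ such triples exist, since a weight of $V_{\lambda+\lambda'}$ can have multiplicity greater than one; the remark following Proposition~\ref{prop:injExp} emphasises that only \emph{several particular} Clebsch--Gordan coefficients are captured.) You do check shift-invariance of $\rr$, which is correct, and it does handle the two listed cases; but the lemma is asserted for \emph{all} triples, and the formula gives you nothing to compare in the third case. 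So the claim that this is ``the cleanest route'' is not right — it does not close.

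The ``alternative'' you sketch is in fact exactly the paper's proof: define the twisted injector $\rI_l := (\rI^{\det}_{\lambda+l,\,l}\otimes\id_{V_{\lambda'}})\circ\rI^{\lambda+l,\lambda'}_{\lambda+\lambda'+l}\circ\rI^{\det}_{\lambda+\lambda',\,-l}$, note from~\eqref{eq:inj_generates} that it differs from $\rI^{\lambda,\lambda'}_{\lambda+\lambda'}$ by a scalar, pin the scalar to $1$ by evaluating on the highest weight vector (using~\eqref{eq:def_inj} and Lemma~\ref{lem:detl_shift}), and then read off the matrix coefficients in the rational Gel'fand--Tsetlin basis using $\rI^{\det}_{\lambda,l}(\xi_M)=\xi_{M-l}$. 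That argument works uniformly across \emph{all} triples because it never touches the explicit values, and gives \eqref{eq:pf_sub1_twisting} directly. Your sketch of it has the indices slightly scrambled — you write $\rI^{\det}_{\lambda,-l}\otimes\id$ where you need $\rI^{\det}_{\lambda+l,l}\otimes\id$, and the phrase ``$\rI^{\lambda,\lambda'}_{\lambda+\lambda'}$ followed by $\rI^{\det}_{\lambda+\lambda',-l}$'' does not type-check (the latter must be applied \emph{before} the injector, on $V_{\lambda+\lambda'}$) — but the idea is correct. You should promote this to the main proof and fix the bookkeeping.
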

\begin{proof}
We set $\rI_l:=(\rI^{\det}_{\lambda +l,l}\otimes \id_{V_{\lambda'}})\circ 
\rI^{\lambda +l, \lambda'}_{\lambda+\lambda'+l}\circ \rI^{\det}_{\lambda +\lambda',-l}
\in \Hom_{\rGL_n(\mC )}(V_{\lambda +\lambda'}, V_\lambda \otimes_{\mC }V_{\lambda'})$. Then we have 
\begin{align}
\label{eq:pf_sub1_twisting}
&\rI_l(\xi_{M''})
=\sum_{M\in \rG (\lambda)}\,\sum_{M'\in \rG (\lambda')}
\rc^{M+l,M'}_{M''+l}\xi_{M}\otimes \xi_{M'}&
&\text{for \;} M''\in \rG (\lambda +\lambda'). 
\end{align}
By (\ref{eq:def_inj}), (\ref{eq:inj_generates}) and $\rI_l(\xi_{H(\lambda +\lambda')})
=\xi_{H(\lambda)}\otimes \xi_{H(\lambda')}$, we have $\rI_l=\rI^{\lambda , \lambda'}_{\lambda+\lambda'}$. 
Therefore, comparing (\ref{eq:pf_sub1_twisting}) with 
(\ref{eq:CoefInj}), we obtain the first equality of (\ref{eq:sub1_twisting}). 
Similarly, we also have the second equality of (\ref{eq:sub1_twisting}). 
\end{proof}

\begin{rem}
Retain the notation. We define the Clebsch--Gordan coefficients $\rC^{M,M'}_{M''}$ ($M\in \rG (\lambda )$, $M'\in \rG (\lambda')$, 
$M''\in \rG (\lambda +\lambda')$) for the Cartan component $V_{\lambda+\lambda'}$ of $V_{\lambda}\otimes_{\mC} V_{\lambda'}$ by 
\begin{align*}
&\rI^{\lambda, \lambda'}_{\lambda+\lambda'}(\zeta_{M''})
=\sum_{M\in \rG (\lambda)}\,\sum_{M'\in \rG (\lambda')}
\rC^{M,M'}_{M''}\zeta_{M}\otimes \zeta_{M'}&
&\text{for \;} M''\in \rG (\lambda +\lambda'). 
\end{align*}
Alex, Kalus, Huckleberry and Delft \cite{akhd11} give  a numerical algorithm for the explicit calculation 
of the Clebsch--Gordan coefficients, and 
it provides a list of values of all the Clebsch--Gordan coefficients for each irreducible components of 
$V_{\lambda}\otimes_{\mC} V_{\lambda'}$ if we assign concrete dominant weights to $\lambda$ and $\lambda'$.     
However, it is still hard to obtain algebraic expressions of all the Clebsch--Gordan coefficients 
with abstract dominant weights $\lambda$ and $\lambda'$, even only for 
the Cartan component of $V_{\lambda}\otimes_{\mC} V_{\lambda'}$. 
Proposition~\ref{prop:injExp} \ref{num:injExp_coeff} implies that 
several particular Clebsch--Gordan coefficients have simple expressions
\begin{align*}
&\rC^{M, M'}_{M''}=
\sqrt{\frac{\rr (M)\rr (M')}{\rr (M'')}}\rc^{M, M'}_{M''} 
=\left\{\begin{array}{ll}
\displaystyle 
\sqrt{\frac{\rr (M+M')}{\rr (M)\rr (M')}}&
\text{if $M''=M+M'$},\\[1mm]
0&\text{if $\gamma^{M''}\neq \gamma^{M}+\gamma^{M'}$}
\end{array}\right.
\end{align*}
for $M\in \rG (\lambda )$, $M'\in \rG (\lambda')$ 
and $M''\in \rG (\lambda +\lambda')$. 
\end{rem}

\subsection{Explicit generator of the $(\mathfrak{g}_{n\mC}, \widetilde{K}_n)$-cohomology and evaluation of archimedean local zeta integral}
\label{subsec:explicit_generators}

In this subsection, we construct a generator $[\pi^{(n)}_\infty]_\varepsilon$ of
$H^{\rb_{n,F}}(\mathfrak{g}_{n\mC},\widetilde{K}_n\, ; \cW(\pi^{(n)}_\infty,\psi_{\varepsilon,\infty})\otimes_{\mC} \widetilde{V}(\blambda^\vee))$, and then state the explicit evaluation formula of the archimedean local zeta integral of the explicit generators $[\pi^{(n)}_\infty]_{\varepsilon}$ and $[\pi^{(n-1)}_\infty]_{-\varepsilon}$ (Theorem~\ref{thm:archzetaformula}, one of the main results of the present article). Recall that one may regard $\widetilde{V}(\blambda^\vee)$ as the exterior tensor representation $\bigboxtimes_{v\in \Sigma_{F,\infty}} (V_{\lambda_v^\vee}\otimes V_{\lambda_v-\mathsf{w}}^\mathrm{conj})$ (see Section~\ref{subsec:alg_rep} for details). The $(\mathfrak{g}_{n\mC},\widetilde{K}_n)$-cohomology is thus described as 
\begin{align*}
 H^{\rb_{n,F}}(\mathfrak{g}_{n\mC},\widetilde{K}_n\,; \cW(\pi^{(n)}_\infty,\psi_{\varepsilon,\infty})\otimes_{\mC} \widetilde{V}(\blambda^\vee)) & \cong \bigotimes_{v\in \Sigma_{F,\infty}} H^{\rb_n}(\mathfrak{gl}_{n\mC}, \mC^\times {\rm U}(n)\,; \cW(\pi^{(n)}_v, \psi_{\varepsilon,v})\otimes_{\mC} V_{\lambda_v^\vee}\otimes_{\mC} V^\mathrm{conj}_{\lambda_v-\mathsf{w}})
\end{align*}
for $\rb_n:=\frac{1}{2}n(n-1)$, due to the K\"unneth formula for relative Lie algebra cohomology (see \cite[Theorem~I.1.3]{bw80}). Therefore it suffices to construct an element $[\pi^{(n)}_v]_{\varepsilon}$ for each  $v\in \Sigma_{F,\infty}$ and put $[\pi^{(n)}_\infty]_\varepsilon :=\bigotimes_{v\in \Sigma_{F,\infty}}[\pi^{(n)}_v]_\varepsilon$. 

Take an archimedean place $v\in \Sigma_{F,\infty}$. 
Since the $(\mathfrak{gl}_{n\mC},\mC^\times {\rm U}(n))$-cohomology group under consideration is described as 
\begin{align}  
 H^{\rb_n}(\mathfrak{gl}_{n\mC}, &\mC^\times {\rm U}(n)\,; \cW(\pi^{(n)}_v,\psi_{\varepsilon,v})\otimes_{\mC} V_{\lambda_v^\vee}\otimes_{\mC}V^{\mathrm{conj}}_{\lambda_v-\mathsf{w}})   \label{eq:U(n)inv} \\
&\cong \left( \cW(\pi^{(n)}_v, \psi_{\varepsilon,v})\otimes_{\mC} {\bigwedge}^{\rb_n} \left(\mathfrak{gl}_{n\mC}/\mathrm{Lie}(\mC^\times {\rm U}(n))_{\mC}\right)^\vee \otimes_{\mC} V_{\lambda_v^\vee}\otimes_{\mC} V^{\mathrm{conj}}_{\lambda_v-\mathsf{w}}\right)^{{\rm U}(n)} \nonumber \\
&\cong \left( \cW(\pi^{(n)}_v, \psi_{\varepsilon,v})\otimes_{\mC} {\bigwedge}^{\rb_n} \gp^{0\vee}_{n\mC} \otimes_{\mC} V_{\lambda_v^\vee}\otimes_{\mC} V^\mathrm{conj}_{\lambda_v-\mathsf{w}}\right)^{{\rm U}(n)} \nonumber
\end{align}
due to \cite[Section~I.5]{bw80} (see also the footnote (b) of \cite[p.~209]{ks13}), it suffices to construct a ${\rm U}(n)$-invariant element of $\cW(\pi^{(n)}_v, \psi_{\varepsilon,v})\otimes_{\mC} {\bigwedge}^{\rb_n} \gp^{0\vee}_{n\mC} \otimes_{\mC} V_{\lambda_v^\vee}\otimes_{\mC} V^{\mathrm{conj}}_{\lambda_v-\mathsf{w}}$; the second isomorphy follows from the decompositions (\ref{eq:CartanDecomp}), (\ref{eq:decomp_p0}) and the fact that $\mC^\times {\rm U}(n)$ is decomposed as the direct product $(Z_n\cap A_n)\times \rU (n)$, which induces $\mathrm{Lie}(\mC^\times {\rm U}(n))=\mathfrak{z}_{\mathfrak{a}_n}\oplus \mathfrak{u}(n)$. 
According to the dictionary between the infinitesimal character of $\pi_\infty^{(n)}$ and the (dual of) highest weight $\blambda$ of $\widetilde{V}(\blambda^\vee)$ proposed in \cite[p.~113]{clo90}, we observe that $\pi^{(n)}_v$ is isomorphic to the subspace of ${\rm U}(n)$-finite vectors of the principal series representation 
$\pi_{{\rm B}_n,d_v,(\sw /2,\sw /2,\dots ,\sw /2)}$ defined in Section~\ref{subsec:C_def_ps} if we set $d_v:=2\lambda_v+2\rho_n-\mathsf{w}$ for $\rho_n=(\frac{n+1}{2}-i)_{1\leq i\leq n}$. 
After introducing archimedean local Whittaker functions $\mW_{d_v,(\sw /2,\sw /2,\dots ,\sw /2)}(\rv )$ in Section~\ref{subsec:C_def_ps}, we construct $[\pi^{(n)}_v]_\varepsilon =[\pi_{{\rm B}_n,d_v,(\sw /2,\sw /2,\dots ,\sw /2)}]_\varepsilon $ in Section~\ref{subsec:U(n)_inv}. 
In Section~\ref{subsec:explicit_arch_zeta}, the explicit formula of $\widetilde{\cI}^{(m)}_v([\pi^{(n)}_v]_\varepsilon,[\pi^{(n-1)}_v]_{-\varepsilon})$ is introduced for $m\in \mZ$ satisfying the interlacing conditions $\lambda_v^\vee \succeq \mu_v +m$ and $\lambda_v -\sw  \succeq \mu_v^\vee +\sw'+m$ (equivalently, $\lambda_{\bar{v}}^\vee \succeq \mu_{\bar{v}} +m$). We here only explain how to deduce the formula from the preceding work of Ishii and the second-named author \cite{im}, and postpone the hard calculation of the constant $c^{(m)}_{\lambda_v,\mathsf{w},\mu_v,\mathsf{w}'}$ to Section~\ref{sec:calcoeff}.

We always concentrate on an archimedean place $v \in \Sigma_{F, \infty}$ throughout this subsection, 
and the calculation made here is purely local. Thus we drop $v$ from subscript in many cases. 
In particular, for $\varepsilon \in \{\pm \}$, the standard additive character $\psi_{\varepsilon,v}$ of $F_v=\mC$ will be denoted simply by $\psi_{\varepsilon }$; that is, $\psi_{\varepsilon}(z)$ is defined as $\psi_{\varepsilon }(z):=\exp (\varepsilon 2\pi \sqrt{-1} (z +\overline{z}))$ for $z\in\mC$.

\subsubsection{Principal series representations of $\rGL_n(\mC )$ and explicit archimedean Whittaker functions}
\label{subsec:C_def_ps}

We here introduce principal series representations of $\rGL_n(\mC )$, and our normalisation of Whittaker functions for them. 
The normalisation influences the definition of the Whittaker period, and it is important in the present article. 

Let $d =(d_1,d_2,\dots, d_n)\in \mZ^n$ and 
$\nu =(\nu_1,\nu_2,\dots ,\nu_n)\in \mC^n$. 
We define a character $\chi_{d,\nu}$ of $\rT_n(\mC )$ by 
\begin{align}
\label{eq:def_chi_dnu}
\chi_{d,\nu}(a)
&:=\prod_{i=1}^n
\left(\frac{a_i}{|a_i|}\right)^{\!d_i}|a_i|^{2\nu_i}&
&\text{for \;} a=\diag (a_1,a_2,\dots ,a_n)\in \rT_n(\mC ).
\end{align}
Put $\rho_n=(\rho_{n,1},\rho_{n,2},\dots ,\rho_{n,n})\in \mQ^n$ with 
$\rho_{n,i}:=\tfrac{n+1}{2}-i$ for $1\leq i\leq n$. 
Let $C^\infty (\rGL_n(\mC))$ be the space of $C^\infty$-functions on $\rGL_n(\mC)$. 
We define a (smooth) principal series representation 
$(\pi_{\rB_n,d,\nu },I^\infty_{\rB_n} (d,\nu ))$ of $\rGL_n(\mC )$ by 
\begin{align}
\label{eq:def_ps_rep}
I^\infty_{\rB_n} (d,\nu ) :=\{f\in C^\infty (\rGL_n(\mC))
\mid f(xag)=\chi_{d, \nu +\rho_n}(a)f(g)\quad 
\text{for \;}x\in \rN_n(\mC ),\ a\in \rT_n(\mC) \text{\; and\; } g\in \rGL_n(\mC )\} 
\end{align}
and $(\pi_{\rB_n,d,\nu }(g)f)(h)=f(hg)$ for $g,h\in \rGL_n(\mC )$ and
$f\in I^\infty_{\rB_n} (d,\nu )$. 
We equip $I^\infty_{\rB_n} (d,\nu ) $ with the usual Fr\'{e}chet topology. 

Let $d^{\rdom} =(d^{\rdom}_1,d^{\rdom}_2,\dots ,d^{\rdom}_n)$ be 
the unique element of $\Lambda_n\cap \{\sigma d \mid \sigma \in \gS_n\}$. 
By (\ref{eq:GT_act_wt}) and 
the Frobenius reciprocity law \cite[Theorem 1.14]{Knapp_002}, 
we know that 
$\tau_{d^{\rdom}}|_{\rU (n)}$ is the minimal $\rU (n)$-type of 
$\pi_{\rB_n,d,\nu}$, and the space 
$\Hom_{\rU (n)}(V_{d^{\rdom}} ,I^\infty_{\rB_n} (d,\nu ))$ 
is one dimensional. 
In view of Lemma~\ref{lem:extremal_vec_explicit}, 
we fix a $\rU (n)$-embedding 
$\rf_{\rB_n,d,\nu}\colon V_{d^{\rdom}} \to I^\infty_{\rB_n} (d,\nu )$ 
so that $\rf_{\rB_n,d,\nu}(\xi_{H(d )})(1_n)=1$, that is, 
for $\rv\in V_{d^{\rdom}}$, the function $\rf_{\rB_n,d,\nu}(\rv)$ on 
$\rGL_n (\mC )$ is determined by 
\begin{align}
\label{eqn:def_minKtype1}
\rf_{\rB_n,d,\nu}(\rv)(xak)&=\chi^{A_n}_{\nu +\rho_n}(a)
(\tau_{d^{\rdom}} (k)\rv,\xi_{H(d )})_{d^{\rdom}}&
&\text{for \;}x\in \rN_n(\mC),\ a\in A_n \text{\; and \;} k\in \rU (n)
\end{align}
via the Iwasawa decomposition $\rGL_n(\mC)=\rN_n(\mC)A_n\rU (n)$, where $\chi^{A_n}_\nu$ is defined as
\begin{align}
\label{eq:def_chi_A}
\chi^{A_n}_{\nu}(a)
:=\prod_{i=1}^na_i^{2\nu_i}
\end{align}
for $a=\diag (a_1,a_2,\dots ,a_n)\in A_n$ (recall that $A_n$ is defined as the subgroup of diagonal matrices with positive real entries in Section~\ref{subsec:notation}).

For $\varepsilon \in \{\pm \}$, 
let $\psi_{\varepsilon ,\rN_n}$ be a character of $\rN_n(\mC)$ defined by 
\begin{align}
\label{eq:def_psiNn}
\psi_{\varepsilon ,\rN_n}(x)&:=\psi_\varepsilon (x_{1,2}+x_{2,3}+\dots  +x_{n-1,n})&
&(x=(x_{i,j})_{1\leq i,j\leq n}\in \rN_n(\mC)).
\end{align}
When $n=1$, we understand that $\psi_{\varepsilon,\rN_1}$ 
is the trivial character of $\rN_1(\mC)=\{1\}$. 
A $\psi_\varepsilon $-form on $I^\infty_{\rB_n} (d,\nu )$ is 
a continuous $\mC$-linear form 
$\cT \colon I^\infty_{\rB_n} (d,\nu ) \to \mC$ satisfying 
\begin{align*}
\cT (\pi_{\rB_n,d,\nu}(x)f)&=\psi_{\varepsilon ,\rN_n} (x)\cT (f)&
&\text{for\; }x\in \rN_n(\mC) \text{\; and\;} f\in I^\infty_{\rB_n} (d,\nu ).
\end{align*}
Kostant \cite{Kostant_001} shows that 
a $\psi_\varepsilon $-form on $I^\infty_{\rB_n} (d,\nu )$ is uniquely determined up to scalar multiples. 
If the inequalities $\rRe (\nu_1)>\rRe (\nu_2)>\dots >\rRe (\nu_n)$ hold, 
we define the Jacquet integral 
$\cJ_{\varepsilon}\colon I^\infty_{\rB_n} (d,\nu )\to \mC$ 
by the convergent integral 
\begin{align} \label{eq:Jacquet_int}
\cJ_{\varepsilon }(f)&:=
\int_{\rN_n(\mC )}f(w_nx)\psi_{-\varepsilon,\rN_n}(x)\,\rd x&
 \text{for }f\in I^\infty_{\rB_n} (d,\nu ),
\end{align}
where $w_n$ is the anti-diagonal matrix of size $n$ with $1$ at 
all anti-diagonal entries, and 
$\rd x =\prod_{1\leq i<j\leq n}\rd_{\mC}x_{i,j}$ is the Haar measure on 
$\rN_n(\mC )$ with 
$x=(x_{i,j})_{1\leq i,j\leq n}\in \rN_n(\mC )$. 
When $n=1$, we understand that $\cJ_{\varepsilon }(f)=f(1)$ 
for $f\in I^\infty_{\rB_n} (d,\nu )$. 
The Jacquet integral $\cJ_{\varepsilon}\colon 
I^\infty_{\rB_n} (d,\nu )\to \mC$ is extended to whole $\nu \in \mC^n$ as 
a $\psi_\varepsilon $-form on $I^\infty_{\rB_n} (d,\nu )$ 
by the holomorphic continuation with the standard sections 
(\textit{cf.} Section \ref{subsec:whittaker}). 
We set 
\begin{align} \label{eq:def_Whittaker_model}
\cW (\pi_{\rB_n,d,\nu},\psi_\varepsilon )
&:=\{\rW_{\varepsilon}(f)\mid f\in I^\infty_{\rB_n} (d,\nu )\}
\end{align}
with $\rW_{\varepsilon}(f)(g):=\cJ_{\varepsilon}(\pi_{\rB_n,d,\nu}(g)f)$ 
for $f\in I^\infty_{\rB_n} (d,\nu )$ and $g\in \rGL_n(\mC)$. 
When $\pi_{\rB_n,d,\nu}$ is irreducible, 
$\cW (\pi_{\rB_n,d,\nu},\psi_\varepsilon )$ is a Whittaker model of 
$\pi_{\rB_n,d,\nu}$, and for any $\sigma \in \gS_n$, we have 
\begin{align}
\label{eq:Sinv_whittaker}
&\cW (\pi_{\rB_n,d,\nu},\psi_\varepsilon )
=\cW (\pi_{\rB_n,\sigma d ,\sigma \nu },\psi_\varepsilon )
\end{align}
because there exists a $\rGL_n(\mC)$-isomorphism 
$I^\infty_{\rB_n} (d,\nu )\simeq 
I^\infty_{\rB_n} (\sigma d,\sigma \nu )$ 
(\textit{cf.} \cite[Corollary 2.8]{Speh_Vogan_001}). 
Hence, if $\pi_{\rB_n,d,\nu}$ is irreducible, for any $\sigma \in \gS_n$, 
there is a constant $c_\sigma$ 
such that $\rW_{\varepsilon} (\rf_{\rB_n,d,\nu}(\rv))=c_\sigma 
\rW_{\varepsilon} (\rf_{\rB_n,\sigma d ,\sigma \nu }(\rv))$ ($\rv \in V_{d^{\rdom}}$). 
Let us introduce a normalisation of the minimal $\rU (n)$-type Whittaker function, 
which is invariant under the action of $\gS_n$. 
For $\rv \in V_{d^{\rdom}}$, we define the normalised Whittaker function 
$\mW^{(\varepsilon)}_{d, \nu} (\rv) $ to be  
\begin{align}
\label{eq:def_mW}
\mW^{(\varepsilon)}_{d, \nu} (\rv)
&:=(-1)^{\sum^{n}_{i=1}(i-1)d^{\rdom}_i}
(\varepsilon \sqrt{-1})^{\sum^{n}_{i=1}(i-1)d_i}
\Gamma_n(\nu;d )
\rW_{\varepsilon} (\rf_{\rB_n,d,\nu}(\rv)),
\end{align}
where 
\begin{align} \label{eq:def_gamma}
\Gamma_n(\nu;d ):=\prod_{1\leq i<j\leq n}
\Gamma \bigl(\nu_i-\nu_j+1+\tfrac{|d_i-d_j|}{2}\bigr).
\end{align}
In Section \ref{sec:arch_whittaker}, we prove the following proposition. 
\begin{prop}
\label{prop:arch_Wh}
Retain the notation. Let $g\in \rGL_n(\mC)$ and $\rv\in V_{d^{\rdom}}$. 
Then $\mW^{(\varepsilon)}_{d, \nu} (\rv)(g)$ is 
an entire function in $\nu$ having the following symmetry$:$
\begin{align}
\label{eq:Sinv_Whittaker}
\mW^{(\varepsilon)}_{d, \nu} (\rv)(g)
&=\mW^{(\varepsilon)}_{\sigma d , \sigma \nu} (\rv)(g) &
\text{for each \;}\sigma \in \gS_n. 
\end{align}
Furthermore, for each $\nu_0\in \mC^n$ such that $\pi_{\rB_n,d,\nu_0}$ is irreducible, 
the meromorphic function $\Gamma_n(\nu;d )$ in $\nu$ 
is holomorphic at $\nu =\nu_0$. 
\end{prop}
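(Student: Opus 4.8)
Here is how I would prove Proposition~\ref{prop:arch_Wh}.

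The plan is to deduce the first two assertions from the explicit evaluation of the Jacquet integral carried out by Ishii and the second-named author in \cite{im}, and to settle the third assertion by an elementary analysis of the poles of $\Gamma_n(\nu;d)$. The bulk of the work (to be done in Section~\ref{sec:arch_whittaker}) is to set up a precise dictionary between the objects used here --- the principal series $\pi_{\rB_n,d,\nu}$, the additive character $\psi_\varepsilon$, the $\rU(n)$-embedding $\rf_{\rB_n,d,\nu}$ normalised by $\rf_{\rB_n,d,\nu}(\xi_{H(d)})(1_n)=1$, and the rational Gel'fand--Tsetlin basis $\{\xi_M\}$ --- and the corresponding data used in \cite{im}. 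The outcome of this comparison is an explicit identity relating $\rW_\varepsilon(\rf_{\rB_n,d,\nu}(\rv))(g)$ to the Whittaker function computed in \cite{im}, in which the archimedean $\Gamma$-factors that can produce poles in $\nu$ are exactly those collected into $\Gamma_n(\nu;d)$. Substituting this identity into the definition \eqref{eq:def_mW} will then show that $\mW^{(\varepsilon)}_{d,\nu}(\rv)(g)$ coincides, up to a nonzero constant independent of $\nu$, with the normalised Whittaker function of \cite{im}, whose explicit shape (a finite sum over Gel'fand--Tsetlin patterns of iterated Mellin--Barnes integrals) is manifestly holomorphic in $\nu$ for every fixed $g$ and $\rv$. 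This yields the first (entirety) assertion.

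For the symmetry \eqref{eq:Sinv_Whittaker}, I would start from \eqref{eq:Sinv_whittaker} together with the uniqueness up to scalars of the $\psi_\varepsilon$-form (Kostant \cite{Kostant_001}), which furnishes, at each $\nu$ for which $\pi_{\rB_n,d,\nu}$ is irreducible, a constant $c_\sigma=c_\sigma(d,\nu)$ with $\rW_\varepsilon(\rf_{\rB_n,d,\nu}(\rv))=c_\sigma\,\rW_\varepsilon(\rf_{\rB_n,\sigma d,\sigma\nu}(\rv))$ for all $\rv\in V_{d^{\rdom}}$ (note $(\sigma d)^{\rdom}=d^{\rdom}$). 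The point is then to check that $c_\sigma$ equals the ratio of the normalising factors of \eqref{eq:def_mW} for $(\sigma d,\sigma\nu)$ and for $(d,\nu)$; equivalently, that the normalised Whittaker function of \cite{im} is invariant under the simultaneous action of $\gS_n$ on $(d,\nu)$, which is visible from its Mellin--Barnes representation. It suffices to treat a simple transposition $\sigma=s_i$ and reduce the evaluation of $c_{s_i}$ to the rank-one intertwining integral on the $\rGL_2(\mC)$-block indexed by $\{i,i+1\}$, whose normalising factor is precisely $\Gamma\!\bigl(\nu_i-\nu_{i+1}+1+\tfrac{|d_i-d_{i+1}|}{2}\bigr)^{\pm1}$ times a power of $\varepsilon\sqrt{-1}$ --- exactly what is changed when one passes from $\nu$ to $s_i\nu$ in $\Gamma_n(\nu;d)$ and in the sign factors of \eqref{eq:def_mW}. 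Since both $\mW^{(\varepsilon)}_{d,\nu}(\rv)(g)$ and $\mW^{(\varepsilon)}_{\sigma d,\sigma\nu}(\rv)(g)$ are entire in $\nu$ by the first assertion and they agree on the nonempty open subset where $\pi_{\rB_n,d,\nu}$ is irreducible, they agree identically.

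The last assertion I would prove by contraposition. Suppose $\Gamma_n(\nu;d)$ has a pole at $\nu_0$; then some factor $\Gamma\!\bigl(\nu_{0,i}-\nu_{0,j}+1+\tfrac{|d_i-d_j|}{2}\bigr)$ with $i<j$ has a pole, i.e.\ $\nu_{0,i}-\nu_{0,j}+\tfrac{|d_i-d_j|}{2}\in\mZ$ and is $\le-1$. Writing the character $z\mapsto(z/|z|)^{d_i-d_j}|z|^{2(\nu_{0,i}-\nu_{0,j})}$ of $\mC^\times$ as $z\mapsto z^{P}\overline{z}^{\,Q}$ with $P=(\nu_{0,i}-\nu_{0,j})+\tfrac{d_i-d_j}{2}$ and $Q=(\nu_{0,i}-\nu_{0,j})-\tfrac{d_i-d_j}{2}$, so that $P-Q=d_i-d_j\in\mZ$, a one-line case distinction on the sign of $d_i-d_j$ shows $P,Q\in\mZ$ with $P,Q\le-1$; hence the $\rGL_2(\mC)$-principal series attached to the $\{i,j\}$-block admits a finite-dimensional constituent and is therefore reducible, whence $\pi_{\rB_n,d,\nu_0}$ is reducible because irreducibility of a principal series of $\rGL_n(\mC)$ reduces to irreducibility of its rank-two sub-blocks (classical for $\rGL_n$; see e.g.\ \cite{Speh_Vogan_001}). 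This proves the contrapositive, so $\Gamma_n(\nu;d)$ is holomorphic at every $\nu_0$ for which $\pi_{\rB_n,d,\nu_0}$ is irreducible.

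The hard part will be the dictionary of the first paragraph: matching all the normalisation conventions --- the additive character $\psi_\varepsilon$ versus the one in \cite{im}, the rescaling $\xi_M=\sqrt{\rr(M)}\,\zeta_M$ between the rational and the orthonormal Gel'fand--Tsetlin bases, the Iwasawa conventions, and the choice of standard sections used for the meromorphic continuation of $\cJ_\varepsilon$. Once this is in place, entirety and the $\gS_n$-symmetry are inherited from \cite{im}, and the third assertion is elementary.
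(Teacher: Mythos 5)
Your plan for the \emph{third} assertion is a genuinely different route from the paper's, and it is correct. The paper argues analytically: since $\pi_{\rB_n,d,\nu_0}$ is irreducible, the Jacquet integral is a \emph{nonzero} continuous linear form, so $\rW_\varepsilon(\rf_{\rB_n,d,\nu_0}(\rv))(g)\neq 0$ for some $g,\rv$; the entireness of $\mW^{(\varepsilon)}_{d,\nu}(\rv)(g)$ then forces $\Gamma_n(\nu;d)$ to be holomorphic at $\nu_0$. You instead argue by contraposition and explicit representation theory: a pole of $\Gamma_n(\nu;d)$ at $\nu_0$ means $P=(\nu_{0,i}-\nu_{0,j})+\tfrac{d_i-d_j}{2}$ and $Q=(\nu_{0,i}-\nu_{0,j})-\tfrac{d_i-d_j}{2}$ both lie in $\mZ_{\le -1}$ for some $i<j$, which is the standard criterion forcing reducibility of the induced representation via its $\rGL_2(\mC)$-block. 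Your argument is cleaner in that it does not use the (still-to-be-proved) entireness of $\mW$, while the paper's is a one-line consequence of entireness plus nonvanishing of $\cJ_\varepsilon$. Either works.

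For the \emph{first two} assertions, however, the proposal is not yet a proof, and the route you sketch is substantially different from, and less self-contained than, the paper's. You propose to (i) establish a normalisation ``dictionary'' with \cite{im}, and then (ii) read off entireness and $\gS_n$-symmetry from an explicit Mellin--Barnes representation of the Whittaker function. But (i) is exactly what you yourself flag as ``the hard part'', and the claim in (ii) that the explicit formula is ``manifestly holomorphic'' and that symmetry is ``visible from its Mellin--Barnes representation'' is asserted, not established; $\gS_n$-invariance of such Mellin--Barnes integrals typically requires a nontrivial contour argument, and \cite{im} does not supply entireness or symmetry in the form you need. There is also a circularity: for the symmetry you propose to reduce to rank-one intertwining operators and then evaluate the local constant $c_{s_i}$ from a Mellin--Barnes representation, which is again the ``dictionary'' work you deferred. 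The paper avoids explicit formulas altogether: it passes to the ``opposite'' realisation $I^\infty_{\rB_n^\rop}(d,\nu)$, establishes a Godement-section recursion (Corollary~\ref{cor:god_explicit_1}) expressing $\mW^{(\varepsilon),\rop}_{d,\nu}$ as a convergent integral against $\mW^{(\varepsilon),\rop}_{\widehat{d},\widehat{\nu}}$, proves entireness by induction on $n$ using Jacquet's majorisation, and proves symmetry by combining three ingredients: the recursion (giving invariance under $\{\sigma:\sigma(n)=n\}$), a duality involution $\vartheta_n'$ (Lemma~\ref{lem:dual_Wh}, giving invariance under $\{\sigma:\sigma(1)=1\}$), an explicit $n=2$ computation, and the fact that for $n\ge 3$ the subgroups $\{\sigma:\sigma(1)=1\}$ and $\{\sigma:\sigma(n)=n\}$ generate $\gS_n$. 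If you want to pursue your route, you would need to carry out the intertwining-operator computation in full (or cite a source that actually proves symmetry and entireness of the normalised minimal-$K$-type Whittaker function for general $d$); as written, these two assertions have a real gap.
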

For the spherical case $d =(0,0,\dots ,0)$, it is known that 
the symmetry (\ref{eq:Sinv_Whittaker}) follows from the original work 
of Jacquet \cite[Th\'eor\`eme (8.6)]{Jacquet_002}, and 
is found in the literature (see, for example, \cite[Section 2.1]{bhm20} or \cite[Section 8]{gmw}). 
In Section \ref{sec:arch_whittaker}, we prove the symmetry (\ref{eq:Sinv_Whittaker}) 
for general $d$ by another method based on the Godement sections, and 
rewrite the explicit result \cite[Theorem 2.7]{im} for the archimedean local zeta integrals 
in terms of the normalised Whittaker functions $\mW^{(\varepsilon)}_{d, \nu} (\rv) $ (see Proposition~\ref{prop:ArchInt}). 
Here we remark that our method can be applied to prove the similar result for the $\rGL_n(\mR )$-case 
although we only deal with the $\rGL_n (\mC)$-case in the present article. 
We also remark that the same kinds of symmetry at the newform $\rU (n)$-type is claimed 
in \cite[Remark 9.5]{hum} without proof. 

The following lemma will be used in the proof of Proposition~\ref{prop:twisting} in 
Section \ref{subsec:U(n)_inv}. 
\begin{lem}
\label{lem:sub2_twisting}Retain the notation. Let $l\in \mZ$, $s \in \mC$ and 
$\rv \in V_{d^\rdom}$. Set 
$\displaystyle \chi_{l,s}(a):=(a/|a|)^{l}|a|^{2s}$ for $a\in \rT_1(\mC)=\mC^\times$. 
Then we have 
\begin{align*}
\chi_{l,s}(\det g)\mW^{(\varepsilon)}_{d, \nu}(\rv)(g)&=(\varepsilon \sqrt{-1})^{-\rb_nl}
\mW^{(\varepsilon)}_{d+l, \nu +s }(\rI^{\det}_{d^\rdom ,-l}(\rv ))(g) &  \text{for \;} g\in \rGL_n(\mC).
\end{align*}
\end{lem}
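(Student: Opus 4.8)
Write $D:=d^{\rdom}$, so that $(d+l)^{\rdom}=D+l$. The idea is to realise the twist by the character $\chi_{l,s}\circ\det$ at the level of the principal series $I^\infty_{\rB_n}(d,\nu)$ and of the Jacquet integral, and then to track the normalising factors in (\ref{eq:def_mW}). Consider the map
\begin{align*}
 T_{l,s}\colon I^\infty_{\rB_n}(d,\nu)\longrightarrow I^\infty_{\rB_n}(d+l,\nu+s),\qquad (T_{l,s}f)(g):=\chi_{l,s}(\det g)\,f(g).
\end{align*}
Since $\chi_{d,\nu+\rho_n}(a)\chi_{l,s}(\det a)=\chi_{d+l,\nu+s+\rho_n}(a)$ for $a\in\rT_n(\mC)$ and $\det$ is trivial on $\rN_n(\mC)$, the map $T_{l,s}$ is well defined, and a direct computation gives the twisted intertwining relation $\pi_{\rB_n,d+l,\nu+s}(g)(T_{l,s}f)=\chi_{l,s}(\det g)\,T_{l,s}(\pi_{\rB_n,d,\nu}(g)f)$ for $g\in\rGL_n(\mC)$.

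The first step is to show that $T_{l,s}$ carries the minimal $\rU(n)$-type embedding to the one shifted by the determinant, i.e.
\begin{align*}
 T_{l,s}\bigl(\rf_{\rB_n,d,\nu}(\rv)\bigr)=\rf_{\rB_n,d+l,\nu+s}\bigl(\rI^{\det}_{D,-l}(\rv)\bigr)\qquad(\rv\in V_{D}).
\end{align*}
I would verify this directly from the Iwasawa-type formula (\ref{eqn:def_minKtype1}): for $g=xak$ with $x\in\rN_n(\mC)$, $a\in A_n$, $k\in\rU(n)$, one has $\chi^{A_n}_{\nu+s+\rho_n}(a)=(\det a)^{2s}\chi^{A_n}_{\nu+\rho_n}(a)$ and $\chi_{l,s}(\det g)=(\det k)^{l}(\det a)^{2s}$, while Lemma~\ref{lem:detl_shift} together with (\ref{eq:detl_shift}) and the evident equality $H(d)+l=H(d+l)$ give $\rI^{\det}_{D,-l}(\xi_{H(d)})=\xi_{H(d+l)}$, the fact that $\rI^{\det}_{D,-l}$ preserves the inner products, and $\tau_{D+l}(k)\rI^{\det}_{D,-l}(\rv)=(\det k)^{l}\rI^{\det}_{D,-l}(\tau_D(k)\rv)$; substituting these makes the two sides coincide. (Alternatively one can argue that both sides lie in the one-dimensional space of $\rU(n)$-homomorphisms from the representation $\rv\mapsto(\det k)^{l}\tau_D(k)\rv$ to $\pi_{\rB_n,d+l,\nu+s}$ --- the $\chi_{l,s}\circ\det$-twist carried by $T_{l,s}$ matches the twist by $(\det)^{-l}$ carried by $\rI^{\det}_{D,-l}$ once restricted to $\rU(n)$ --- and that they take the same value $1$ at $\rv=\xi_{H(d)}$, $g=1_n$ by the chosen normalisations.)

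Next I would compute the effect of $T_{l,s}$ on the Jacquet integral. For $\rRe(\nu_1)>\dots>\rRe(\nu_n)$ the integral (\ref{eq:Jacquet_int}) converges absolutely for both $I^\infty_{\rB_n}(d,\nu)$ and $I^\infty_{\rB_n}(d+l,\nu+s)$; since $\det(w_nx)=\det w_n=(-1)^{\rb_n}$ for $x\in\rN_n(\mC)$ and $\chi_{l,s}\bigl((-1)^{\rb_n}\bigr)=(-1)^{\rb_nl}$, one obtains $\cJ_{\varepsilon}(T_{l,s}h)=(-1)^{\rb_nl}\cJ_{\varepsilon}(h)$. Feeding this, the twisted intertwining relation, and the first step into $\rW_{\varepsilon}(f)(g)=\cJ_{\varepsilon}(\pi_{\rB_n,d,\nu}(g)f)$ gives, for $\nu$ in that cone,
\begin{align*}
 \rW_{\varepsilon}\bigl(\rf_{\rB_n,d+l,\nu+s}(\rI^{\det}_{D,-l}(\rv))\bigr)(g)=(-1)^{\rb_nl}\,\chi_{l,s}(\det g)\,\rW_{\varepsilon}\bigl(\rf_{\rB_n,d,\nu}(\rv)\bigr)(g).
\end{align*}

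Finally I would multiply by $\Gamma_n(\nu;d)$ and by the sign factors occurring in (\ref{eq:def_mW}). Using $(d+l)^{\rdom}=D+l$ together with $\sum_{i=1}^n(i-1)=\rb_n$, one has $\sum_{i=1}^n(i-1)(d+l)^{\rdom}_i=\sum_{i=1}^n(i-1)D_i+\rb_nl$ and $\sum_{i=1}^n(i-1)(d+l)_i=\sum_{i=1}^n(i-1)d_i+\rb_nl$, whereas $\Gamma_n(\nu+s;d+l)=\Gamma_n(\nu;d)$ because the differences $(\nu_i+s)-(\nu_j+s)$ and $(d_i+l)-(d_j+l)$ are insensitive to the shifts. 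Collecting the two factors $(-1)^{\rb_nl}$ (one from the sign normalisation, one from $\cJ_\varepsilon(T_{l,s}h)$) together with the factor $(\varepsilon\sqrt{-1})^{\rb_nl}$, the powers of $-1$ cancel and one is left, for $\nu$ in the cone, with
\begin{align*}
 \mW^{(\varepsilon)}_{d+l,\nu+s}\bigl(\rI^{\det}_{D,-l}(\rv)\bigr)(g)=(\varepsilon\sqrt{-1})^{\rb_nl}\,\chi_{l,s}(\det g)\,\mW^{(\varepsilon)}_{d,\nu}(\rv)(g).
\end{align*}
By Proposition~\ref{prop:arch_Wh} both sides are entire in $\nu$, so the identity holds for all $\nu\in\mC^n$, and multiplying through by $(\varepsilon\sqrt{-1})^{-\rb_nl}$ yields the asserted formula. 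The point requiring the most care is the first step: $\rI^{\det}_{D,-l}$ is not an honest $\rGL_n(\mC)$- (nor $\rU(n)$-) map $V_{D}\to V_{D+l}$ but only equivariant up to the character $(\det)^{-l}$, and this twist has to be reconciled with the $\chi_{l,s}\circ\det$-twist of $T_{l,s}$; apart from this, the only subtleties are pinning down the sign $\det w_n=(-1)^{\rb_n}$ and keeping the normalising factors straight, both of which are routine.
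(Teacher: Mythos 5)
Your proof is correct and follows essentially the same route as the paper's: you establish $\chi_{l,s}(\det g)\rf_{\rB_n,d,\nu}(\rv)(g)=\rf_{\rB_n,d+l,\nu+s}(\rI^{\det}_{d^{\rdom},-l}(\rv))(g)$ from (\ref{eqn:def_minKtype1}) and Lemma~\ref{lem:detl_shift}, pick up $(-1)^{\rb_n l}$ from $\det w_n=(-1)^{\rb_n}$ inside the Jacquet integral, and cancel signs against the normalising factors in (\ref{eq:def_mW}) together with $\Gamma_n(\nu+s;d+l)=\Gamma_n(\nu;d)$. The only cosmetic difference is that you package the twist as an explicit intertwiner $T_{l,s}$ and spell out the minimal-$\rU(n)$-type verification in full detail, whereas the paper leaves that first step as a one-line citation.
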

\begin{proof}
Because of the uniqueness of the analytic continuation, we may assume $\rRe (\nu_1)>\rRe (\nu_2)>\dots >\rRe (\nu_n)$. 
By (\ref{eqn:def_minKtype1}) and Lemma~\ref{lem:detl_shift}, we have 
$\chi_{l,s}(\det g)\rf_{\rB_n,d,\nu}(\rv)(g)=\rf_{\rB_n,d+l,\nu +s}(\rI^{\det}_{d^\rdom ,-l}(\rv ))(g)$ 
for $g\in \rGL_n(\mC)$. 
Since $\det (w_nx)=(-1)^{\rb_n}$ holds for $x\in \rN_n(\mC)$, we have 
\begin{align*}
\chi_{l,s}(\det g)\rW_{\varepsilon}(\rf_{\rB_n,d,\nu}(\rv))(g)&=(-1)^{-\rb_nl}
\int_{\rN_n(\mC )}\rf_{\rB_n,d+l,\nu +s }(\rI^{\det}_{d^\rdom ,-l}(\rv ))(w_nxg)\psi_{-\varepsilon,\rN_n}(x)\,\rd x\\
&=(-1)^{-\rb_nl}\rW_{\varepsilon}(\rf_{\rB_n,d +l,\nu +s }(\rI^{\det}_{d^\rdom ,-l}(\rv )))(g)
\end{align*}
for $g\in \rGL_n(\mC)$. 
The assertion follows from this equality combined with the definition of $\mW^{(\varepsilon)}_{d,\nu}(\rv)$ (\ref{eq:def_mW}) and the trivial relation $\Gamma_n(\nu +s;d+l)=\Gamma_n(\nu ;d)$. 
\end{proof}

The following lemma implies that our functions $\mW^{(\varepsilon )}_{d, \nu} (\rv)$ 
satisfy the compatibility condition (\ref{eq:Wh_compatibility}) in Section \ref{subsec:Wh_compatibility}. 
\begin{lem}
\label{lem:arch_Wh_compatibility}Retain the notation. Let 
$\bepsilon_n:=\diag ((-1)^{n-1},(-1)^{n-2},\dots ,-1,1)\in \rGL_n(\mC )$. 
Then we have 
\begin{align*}
\mW^{(-\varepsilon )}_{d, \nu}(\rv)(g)&=\mW^{(\varepsilon )}_{d, \nu }(\rv )(\bepsilon_ng) &  \text{for $\rv \in V_{d^\rdom}$ and $g\in \rGL_n(\mC)$.} 
\end{align*}
\end{lem}
\begin{proof}
Because of the uniqueness of the analytic continuation, we may assume $\rRe (\nu_1)>\rRe (\nu_2)>\dots >\rRe (\nu_n)$. 
Let $f\in I^\infty_{\rB_n}(d,\nu)$. 
By \eqref{eq:def_psiNn}, we have $\psi_{\varepsilon,\rN_n}(\bepsilon_nx\bepsilon_n)=\psi_{-\varepsilon,\rN_n}(x)$ 
for $x\in \rN_n(\mC)$. 
By \eqref{eq:def_ps_rep}, we have 
\[
f(w_n\bepsilon_ng)=\chi_{d, \nu +\rho_n}(w_n\bepsilon_nw_n)f(w_ng)
=(-1)^{\sum^{n}_{i=1}(i-1)d_i}f(w_ng)
\]
for $g\in \rGL_n(\mC)$. Therefore, by the substitution $x\to \bepsilon_nx\bepsilon_n$, we have 
\begin{align*}
\rW_{-\varepsilon}(f)(g)&=\int_{\rN_n(\mC )}f(w_nxg)\psi_{\varepsilon,\rN_n}(x)\,\rd x
=\int_{\rN_n(\mC )}f(w_n\bepsilon_nx\bepsilon_ng)\psi_{\varepsilon,\rN_n}(\bepsilon_nx\bepsilon_n)\,\rd x\\
&=(-1)^{\sum^{n}_{i=1}(i-1)d_i}\int_{\rN_n(\mC )}f(w_nx\bepsilon_ng)\psi_{-\varepsilon,\rN_n}(x)\,\rd x
=(-1)^{\sum^{n}_{i=1}(i-1)d_i}\rW_{\varepsilon}(f)(\bepsilon_ng)
\end{align*}
for $g\in \rGL_n(\mC)$. 
The assertion follows from this equality combined with the definition of $\mW^{(\varepsilon)}_{d,\nu}(\rv)$ (\ref{eq:def_mW}). 
\end{proof}

In the following arguments, we mainly consider 
the case in which $\pi_{\rB_n,d,\nu}$ is irreducible and $d \in \Lambda_n$. In this case, we note that 
$d^{\rdom}=d $, and we obtain a ${\rm U}(n)$-equivariant homomorphism
\begin{align} \label{eq:Whittaker_hom}
\mW^{(\varepsilon)}_{d, \nu} \colon V_d\longrightarrow \cW(\pi_{\rB_n,d,\nu},\psi_{\varepsilon}) \, ;  \rv \longmapsto
(-\varepsilon \sqrt{-1})^{\sum^{n}_{i=1}(i-1)d_i}
\Gamma_n(\nu;d )
\rW_{\varepsilon} (\rf_{\rB_n,d,\nu}(\rv)).
\end{align}

\subsubsection{Explicit generator of the $(\mathfrak{gl}_{n\mC},\mC^\times \rU(n))$-cohomology}\label{subsec:U(n)_inv}
For $\lambda =(\lambda_1,\lambda_2,\dots ,\lambda_n)\in \Lambda_n$ and $\sw \in \mZ$, 
we set $d:=2\lambda +2\rho_n-\sw$. 
We often abbreviate $\pi_{\rB_n,d,(\sw /2,\sw /2,\dots ,\sw /2)}$ and $\mW^{(\varepsilon )}_{d,(\sw /2,\sw /2,\dots ,\sw /2)}$ to 
$\pi_{\rB_n,d,\mathsf{w}/2}$ and $\mW^{(\varepsilon )}_{d,\sw /2}$ respectively, if no confusion likely occurs.
We here construct $[\pi_{\rB_n,d,\sw /2}]_\varepsilon $ as a $\rU (n)$-invariant element of 
$\cW(\pi_{\rB_n,d,\sw /2}, \psi_{\varepsilon })\otimes_{\mC} {\bigwedge}^{\rb_n} \gp^{0\vee}_{n\mC} \otimes_{\mC} V_{\lambda^\vee}\otimes_{\mC} V^{\rconj}_{\lambda -\sw}$ with $\rb_n :=\frac{1}{2}n(n-1)$. 
For the construction, we firstly introduce several ${\rm U}(n)$-equivariant homomorphisms.  In Section~\ref{subsec:adjoint}, we see that 
\begin{align*}
 \bigwedge_{2\leq i\leq n}\bigwedge_{1\leq j\leq i-1} E^{\gp_n\vee}_{i,j}=E^{\gp_n\vee}_{2,1} \wedge 
       E^{\gp_n\vee}_{3,1} \wedge 
       E^{\gp_n\vee}_{3,2} \wedge \cdots \wedge 
       E^{\gp_n\vee}_{n,1} \wedge 
       E^{\gp_n\vee}_{n,2} \wedge \cdots \wedge
       E^{\gp_n\vee}_{n,n-1} \quad \in  {\bigwedge}^{\rb_n}\gp^{0\vee}_{n\mC}
\end{align*}
is a highest weight vector of weight $2\rho_n$ (see Lemma~\ref{lem:htwtE}). By using this,  we define a ${\rm U}(n)$-equivariant homomorphism
\begin{align}\label{eq:xiH2rho}
\rI^{\gp_n}_{2\rho_n} \colon V_{2\rho_n} \rightarrow {\bigwedge}^{\rb_n} \gp^{0\vee}_{n\mC} \, ;  \xi_{H(2\rho_n)} \longmapsto \bigwedge_{2\leq i\leq n}\bigwedge_{1\leq j\leq i-1} E^{\gp_n\vee}_{i,j}. 
\end{align}
Combining the maps $\rI^{\lambda,\lambda'}_{\lambda+\lambda'}$(\ref{eq:def_inj}), $\mW^{(\varepsilon)}_{d,\nu}$ (\ref{eq:Whittaker_hom}), $\rI^{\gp_n}_{2\rho_n}$ (\ref{eq:xiH2rho}) and $\rI^{\mathrm{conj}}_\lambda$ (\ref{eq:conjI}), we construct a ${\rm U}(n)$-equivariant map
\begin{align}
\label{eq:construct_U(n)inv}
\begin{array}{lll}
   V_d \otimes_{\mC} V_{d^\vee}  &
   \xrightarrow[\hphantom{---------------}]{\,{\rm id} \otimes \rI^{2\rho_n, 2\lambda^\vee + \mathsf{w}}_{d^\vee} \,} &
                 V_d \otimes_{\mC} V_{2\rho_n}  \otimes_{\mC} V_{2\lambda^\vee + \mathsf{w}}    \\
& \xrightarrow[\hphantom{---------------}]{\, {\rm id} \otimes {\rm id} \otimes \rI^{\lambda^\vee, \lambda^\vee+\mathsf{w}}_{2\lambda^\vee + \mathsf{w}} \, } & 
          V_d 
           \otimes_{\mC} V_{2\rho_n} 
            \otimes_{\mC} V_{\lambda^\vee}     
           \otimes_{\mC} V_{\lambda^\vee + \mathsf{w}}    \\
 &  \xrightarrow[\hphantom{---------------}]{\,  \mW^{(\varepsilon)}_{d, \mathsf{w}/2}  
                       \otimes \rI^{\gp_n}_{2\rho_n}  
                        \otimes {\rm id} \otimes \rI^{\rm conj}_{\lambda^\vee + \mathsf{w}}\, }     &
                   \cW(\pi_{ \mathrm{B}_n, d, \mathsf{w}/2  }, \psi_\varepsilon   ) 
                           \otimes_{\mC} \left(  {\bigwedge}^{\rb_n}  \gp^{0\vee}_{n\mC}   \right)  
            \otimes_{\mC} V_{\lambda^\vee}     
           \otimes_{\mC} V^{\rm conj}_{\lambda - \mathsf{w}}
\end{array}
\end{align}
(the first map is defined because $d^\vee=2\lambda^\vee+2\rho_n+\mathsf{w}$ holds by the definition of $d=2\lambda+2\rho_n-\mathsf{w}$).
Meanwhile existence of the ${\rm U}(n)$-invariant pairing $\langle \cdot,\cdot\rangle_d$ (verified in Lemma~\ref{lem:GL_inv_pairing}) and 
(\ref{eq:pairing_naive}) proposes a ${\rm U}(n)$-invariant element of $V_d\otimes_{\mC} V_{d^\vee}$
\begin{align} \label{eq:id_Vd}
[ \mathrm{id}_{V_d}] := \sum_{M\in {\rG(d)}} \dfrac{(-1)^{{\rm q}(M)}}{\rr(M)}\xi_M \otimes \xi_{M^\vee},
\end{align}
which corresponds to $\mathrm{id}_{V_d}\in \mathrm{Hom}_{{\rm U}(n)}(V_d,V_d)=(\mathrm{Hom}_{\mC}(V_d,V_d))^{{\rm U}(n)}$ via the isomorphism
\begin{align*}
 V_d\otimes_{\mC} V_{d^\vee} \longrightarrow  \mathrm{Hom}_{\mC}(V_d,V_d) \, ; \rv_1 \otimes \rv_2 \longmapsto [ \rv\mapsto  \langle \rv,\rv_2 \rangle_d \rv_1 ] 
\end{align*}
(verified in a way similar to \cite[Lemma~4.6 (2)]{im}). We define $[\pi_{\rB_n,d,\sw /2}]_\varepsilon$ as the image of $[\mathrm{id}_{V_d}]$ under the ${\rm U}(n)$-equivariant map (\ref{eq:construct_U(n)inv}). Using (\ref{eq:CoefInj}) and (\ref{eq:conjI}), we can represent $[\pi_{{\rm B}_n,d,\mathsf{w}/2}]_{\varepsilon}$ with respect to the Gel'fand--Tsetlin basis $\{\xi_M\}_M$ as 
\begin{multline}  
  [\pi_{  \mathrm{B}_n, d, \mathsf{w}/2}]_{\varepsilon}
  =  \sum_{M\in \rG (d)} 
             \sum_{N\in \rG (2\rho_n)} 
         \sum_{P\in \rG (\lambda^\vee)} 
         \sum_{Q \in \rG (\lambda -  \mathsf{w})  }    
      c(M, N, P, Q)
    \mW^{(\varepsilon)}_{d, \mathsf{w}/2}( \xi_M ) 
     \otimes  \rI^{\gp_n}_{2\rho_n}  ( \xi_{N} )  
      \otimes \xi_P \otimes \xi_{Q}, \label{eq:Pic}  \\
\text{where }c(M, N, P, Q)
= \frac{  (-1)^{\rrq (M) +\rrq (Q) } }{\rr (M)}
     \sum_{T\in {\rm G} (2\lambda^\vee + \mathsf{w})   }  
     {\rm c}^{N, T}_{M^\vee} {\rm c}^{P, Q^\vee}_{T}. 
\end{multline}
Also, we set $[\pi_{{\rm B}_n,d,\mathsf{w}/2}]_\varepsilon^\circ  :=(\varepsilon \sqrt{-1})^{\rb_n \sw}[\pi_{{\rm B}_n,d,\mathsf{w}/2}]_\varepsilon $, 
and let us discuss the compatibility of $[\pi_{{\rm B}_n,d,\mathsf{w}/2}]_\varepsilon^\circ $ with twisting.

\begin{prop} \label{prop:twisting}
 For $a,b\in \mZ$, let $\kappa_{a,b}\colon \mC^\times \rightarrow \mC^\times$ be an algebraic character defined as $\kappa_{a,b}(z)=z^a \bar{z}^b$ and consider the map
\begin{align} \label{eq:Wkappa}
 W_{\kappa_{a,b}} \colon \cW(\pi_{{\rm B}_n,d,\mathsf{w}/2},\psi_\varepsilon) \rightarrow \cW(\pi_{{\rm B}_n,d+a-b,(\mathsf{w}+a+b)/2},\psi_\varepsilon) \, ; w \mapsto [g\mapsto w(g)\kappa_{a,b}(\det g)].
\end{align} 
Then $\bigl(W_{\kappa_{a,b}}\otimes \mathrm{id}_{\bigwedge^{\rb_n}\gp^{0\vee}_{n\mC}}\otimes \rI^{\det}_{\lambda^\vee,a}\otimes \rI^{\det}_{\lambda-\mathsf{w},b}\bigr)([\pi_{{\rm B}_n,d,\mathsf{w}/2}]_\varepsilon^\circ )=[\pi_{{\rm B}_n,d+a-b,(\mathsf{w}+a+b)/2}]_\varepsilon^\circ $ holds.
\end{prop}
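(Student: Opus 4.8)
The plan is to argue by direct computation, starting from the explicit expression (\ref{eq:Pic}) for $[\pi_{{\rm B}_n,d,\mathsf{w}/2}]_\varepsilon$ in the Gel'fand--Tsetlin basis and pushing the twist map through it term by term. First I would set up the bookkeeping: put $\lambda':=\lambda+a$ and $\mathsf{w}':=\mathsf{w}+a+b$, so that $d':=2\lambda'+2\rho_n-\mathsf{w}'=d+a-b$, $\lambda'^\vee=\lambda^\vee-a$ and $\lambda'-\mathsf{w}'=\lambda-\mathsf{w}-b$; hence the codomain of $W_{\kappa_{a,b}}\otimes\mathrm{id}\otimes\rI^{\det}_{\lambda^\vee,a}\otimes\rI^{\det}_{\lambda-\mathsf{w},b}$ is exactly the space in which $[\pi_{{\rm B}_n,d',\mathsf{w}'/2}]_\varepsilon$ is defined. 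Moreover $\pi_{{\rm B}_n,d',\mathsf{w}'/2}\cong\pi_{{\rm B}_n,d,\mathsf{w}/2}\otimes(\kappa_{a,b}\circ\det)$, which shows $\pi_{{\rm B}_n,d',\mathsf{w}'/2}$ is irreducible and that (\ref{eq:Wkappa}) is a well-defined isomorphism of Whittaker models; since $\kappa_{a,b}=\chi_{a-b,(a+b)/2}$ in the notation of Lemma~\ref{lem:sub2_twisting}, Lemmas~\ref{lem:detl_shift}, \ref{lem:sub1_twisting} and \ref{lem:sub2_twisting} all apply (we work over $\mathbb{C}$, so their invertibility hypotheses are vacuous).

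Next I would evaluate the twist map on each tensor factor of (\ref{eq:Pic}). Since $d\in\Lambda_n$ is strictly dominant, $d^{\rdom}=d$, so Lemma~\ref{lem:sub2_twisting} (with $l=a-b$, $s=(a+b)/2$, $\nu=(\mathsf{w}/2,\dots,\mathsf{w}/2)$) together with Lemma~\ref{lem:detl_shift} gives $W_{\kappa_{a,b}}(\mW^{(\varepsilon)}_{d,\mathsf{w}/2}(\xi_M))=(\varepsilon\sqrt{-1})^{-\rb_n(a-b)}\mW^{(\varepsilon)}_{d',\mathsf{w}'/2}(\xi_{M+(a-b)})$; the middle factor $\rI^{\gp_n}_{2\rho_n}(\xi_N)$ is unchanged; and Lemma~\ref{lem:detl_shift} gives $\rI^{\det}_{\lambda^\vee,a}(\xi_P)=\xi_{P-a}$ and $\rI^{\det}_{\lambda-\mathsf{w},b}(\xi_Q)=\xi_{Q-b}$ (the latter read off on the common underlying $\mathbb{C}$-space of $V_{\lambda-\mathsf{w}}$ and $V^{\rconj}_{\lambda-\mathsf{w}}$). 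After the substitution $M'=M+(a-b)$, $N'=N$, $P'=P-a$, $Q'=Q-b$ — a bijection onto $\rG(d')\times\rG(2\rho_n)\times\rG(\lambda'^\vee)\times\rG(\lambda'-\mathsf{w}')$ — and collecting the powers of $\varepsilon\sqrt{-1}$ using $(\varepsilon\sqrt{-1})^2=-1$ and $\mathsf{w}'-\mathsf{w}+a-b=2a$, comparison with (\ref{eq:Pic}) for $(\lambda',\mathsf{w}')$ reduces the claim to the single coefficient identity
\begin{align*}
c(M,N,P,Q)=(-1)^{\rb_n a}\,c'(M+(a-b),\,N,\,P-a,\,Q-b),
\end{align*}
for all $M\in\rG(d)$, $N\in\rG(2\rho_n)$, $P\in\rG(\lambda^\vee)$, $Q\in\rG(\lambda-\mathsf{w})$, where $c'$ denotes the coefficient of (\ref{eq:Pic}) built from $(\lambda',\mathsf{w}')$.

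Finally I would verify this by expanding $c'(M+(a-b),N,P-a,Q-b)$ and showing it equals $(-1)^{\rb_n a}c(M,N,P,Q)$. From the definitions one has $\rr(M+l)=\rr(M)$, $\rrq(M+l)=\rrq(M)+\rb_n l$ (there being exactly $\rb_n$ entries with $1\le i\le j\le n-1$) and $(M+l)^\vee=M^\vee-l$, which turn the prefactor $(-1)^{\rrq(M+(a-b))+\rrq(Q-b)}/\rr(M+(a-b))$ into $(-1)^{\rb_n a}\,(-1)^{\rrq(M)+\rrq(Q)}/\rr(M)$. For the inner sum I would substitute $T\mapsto T+(b-a)$ and apply Lemma~\ref{lem:sub1_twisting} factor by factor: $\rc^{N,\,T+(b-a)}_{M^\vee+(b-a)}=\rc^{N,T}_{M^\vee}$ and $\rc^{P-a,\,Q^\vee+b}_{T-a+b}=\rc^{P,Q^\vee}_T$ (the latter by two applications, shifting the first superscript down by $a$ and the second up by $b$), so the two sums over $T$ coincide. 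The one genuinely delicate point — and the most likely source of a sign error — is keeping all the weight shifts consistent throughout ($\lambda^\vee\mapsto\lambda^\vee-a$, $(\lambda-\mathsf{w})^\vee\mapsto(\lambda-\mathsf{w})^\vee+b$, $2\lambda^\vee+\mathsf{w}\mapsto 2\lambda^\vee+\mathsf{w}+b-a$, $d^\vee\mapsto d^\vee+b-a$) while tracking the accumulating powers of $\varepsilon\sqrt{-1}$ and $-1$; once those are recorded correctly, everything reduces to routine substitution into the three cited lemmas.
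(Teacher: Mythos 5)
Your proposal is correct and follows essentially the same route as the paper's proof: both reindex via $M\mapsto M+l$ on the Gel'fand--Tsetlin patterns, reduce to the coefficient identity $c(M+a-b,N,P-a,Q-b)=(-1)^{\rb_n a}c(M,N,P,Q)$ using $\rr(M+l)=\rr(M)$, $(-1)^{\rrq(M+a-b)+\rrq(Q-b)}=(-1)^{\rrq(M)+\rrq(Q)+\rb_n a}$, and Lemma~\ref{lem:sub1_twisting}, and handle the Whittaker factor with Lemma~\ref{lem:sub2_twisting}. The only cosmetic difference is that you push the twist forward from the source while the paper rewrites the target and compares; the underlying identities and cited lemmas are identical.
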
  

\begin{proof}
 Via the bijection $\rG(\lambda)\xrightarrow{\, \sim \,}\rG(\lambda+l)\,; M\mapsto M+l$, the element $[\pi_{{\rm B}_n,d+a-b,(\mathsf{w}+a+b)/2}]_\varepsilon^\circ $ is described as 
\begin{multline*} 
  [\pi_{  \mathrm{B}_n, d+a-b, (\mathsf{w}+a+b)/2}]_{\varepsilon}^\circ 
  =  (\varepsilon \sqrt{-1})^{\rb_n (\mathsf{w}+a+b)} 
 \sum_{M\in \rG (d)} 
             \sum_{N\in \rG (2\rho_n)}
         \sum_{P\in \rG (\lambda^\vee)} 
         \sum_{Q \in \rG (\lambda -  \mathsf{w})  }c(M+a-b, N, P-a, Q-b)    \\
   \hspace*{15em}\times 
    \mW^{(\varepsilon)}_{d+a-b, (\mathsf{w}+a+b)/2}(\rI^{\det}_{d,-a+b}(\xi_{M})) 
     \otimes  \rI^{\gp_n}_{2\rho_n}  ( \xi_{N} )  
      \otimes \rI^{\det}_{\lambda^\vee,a}(\xi_P) \otimes \rI^{\det}_{\lambda-\mathsf{w},b}(\xi_{Q}),   \\
\text{where }c(M+a-b, N, P-a, Q-b)
= \frac{  (-1)^{\rrq (M+a-b) +\rrq (Q-b) } }{\rr (M+a-b)}
     \sum_{T\in {\rm G} (2\lambda^\vee + \mathsf{w})   }  
     {\rm c}^{N, T-a+b}_{M^\vee-a+b} {\rm c}^{P-a, Q^\vee+b}_{T-a+b}
\end{multline*}
by (\ref{eq:Pic}). One readily sees that $(-1)^{\rrq(M+a-b)+\rrq(Q-b)}=(-1)^{\rrq(M)+\rrq(Q)+\rb_n a}$ and $\rr(M+a-b)=\rr(M)$ hold by definition. 
By these equalities and Lemma~\ref{lem:sub1_twisting}, we deduce 
the equality $c(M+a-b,N,P-a,Q-b)=(-1)^{\rb_n a}c(M,N,P,Q)$. 
Meanwhile, Lemma~\ref{lem:sub2_twisting} implies that the equality 
\begin{align*}
 (\varepsilon \sqrt{-1})^{\rb_n\mathsf{w}}\kappa_{a,b}(\det g) \mW^{(\varepsilon)}_{d,\mathsf{w}/2}(\xi_{M})(g)=(\varepsilon \sqrt{-1})^{\rb_n(\mathsf{w}-a+b)} \mW^{(\varepsilon)}_{d+a-b,(\mathsf{w}+a+b)/2}(\rI^{\det}_{d,-a+b}(\xi_{M}))(g)
\end{align*}
holds, since $\kappa_{a,b}=\chi_{a-b,(a+b)/2}$. Therefore, we obtain the assertion. 
\end{proof}

\subsubsection{Explicit evaluation of archimedean local zeta integrals}\label{subsec:explicit_arch_zeta}
We retain the notation of the previous subsection and 
assume that  $n>1$. Take $\mu =(\mu_1,\mu_2,\dots ,\mu_{n-1})\in \Lambda_{n-1}$ and $\sw',m\in \mZ$, and 
assume  that both $\lambda^\vee \succeq \mu +m$ and $\lambda -\sw  \succeq \mu^\vee +\sw'+m$ hold. 
We set $d':=2\mu +2\rho_{n-1}-\mathsf{w}'$. 
For simplicity, let us respectively set
$\mathcal{W}_\varepsilon = \mathcal{W}(\pi_{\mathrm{B}_n, d,\mathsf{w}/2}, \psi_{\varepsilon})$ and  
$\mathcal{W}^\prime_{-\varepsilon} = \mathcal{W}(\pi_{\mathrm{B}_{n-1}, d',\mathsf{w}'/2}, \psi_{-\varepsilon}) $. Let us explain the detailed construction of the archimedean local zeta integral pairing 
\begin{align*}
 \widetilde{\cI}^{(m)}(\cdot,\cdot) \colon \left( \mathcal{W}_\varepsilon 
                   \otimes_{\mC} {\bigwedge}^{\rb_n} \gp^{0\vee}_{n\mC}   
                    \otimes_{\mC} V_{\lambda^\vee}\otimes_{\mC}V^\mathrm{conj}_{\lambda-\mathsf{w}}
                    \right) 
               \times 
               \left( \mathcal{W}^\prime_{-\varepsilon}
                   \otimes_{\mC} {\bigwedge}^{\rb_{n-1}}\gp^{0\vee}_{n-1\mC}    
                    \otimes_{\mC} V_{\mu^\vee}\otimes_{\mC} V^\mathrm{conj}_{\mu-\mathsf{w}'}
               \right) \to \mathbf{C}.
\end{align*}

We use the same symbol $\iota_n$ for the injection $\ggl_{n-1\mC} \to \ggl_{n\mC}$ induced from 
$\iota_n\colon \rGL_{n-1}(\mC)\to \rGL_n(\mC)$ defined by (\ref{eq:def_iotan}). 
Let $\iota^\vee_n \colon  {\bigwedge}^{\rb_{n}} \mathfrak{p}^\vee_{n\mathbf{C}} \to {\bigwedge}^{\rb_{n}} \mathfrak{p}^\vee_{n-1\mathbf{C}}$ 
denote the map induced from $\gp^\vee_{n\mathbf{C}}\to \gp^\vee_{n-1\mathbf{C}}\,; \omega \mapsto \omega\circ \iota_n$. 
Then define a pairing 
$\mathrm{s}_{n}(\cdot, \cdot)\colon {\bigwedge}^{\rb_{n}} \gp^{\vee}_{n\mathbf{C}} \otimes_\mathbf{C} {\bigwedge}^{\rb_{n-1}} \gp^{\vee}_{n-1\mathbf{C}} \to \mathbf{C}$ 
to be 
\begin{align}
\label{eq:pairing_Lie}
   \iota^\vee_{n}(\omega) \wedge \omega^\prime = \mathrm{s}_{n}(\omega, \omega^\prime) \mathbf{E}^\vee_{\gp_{n-1}}
        \quad \text{for }\omega\in {\bigwedge}^{\rb_{n}} \gp^{\vee}_{n\mathbf{C}} \text{ and } \omega^\prime \in {\bigwedge}^{\rb_{n-1}} \gp^{\vee}_{n-1\mathbf{C}}. 
\end{align}
Here $\mathbf{E}^\vee_{\gp_{n-1}}$ is a fixed basis of ${\bigwedge}^{(n-1)^2}\mathfrak{p}^\vee_{n-1\mC}$ defined as (\ref{eq:boldE}). Further for $W\in \cW_\varepsilon$, $W'\in \cW'_{-\varepsilon}$ and $s\in \mC$ with the real part
 sufficiently large, we define the archimedean local zeta integral $Z(s,W,W')$ as
\begin{align}
\label{eq:def_archzeta}
 Z(s,W,W')=\int_{\rN_{n-1}(\mC)\backslash \rGL_{n-1}(\mC)} W(\iota_n(g))W'(g)\lvert \det g\rvert^{2s-1} \rd g,
\end{align}
where the right invariant measure $\rd g$ on $\rN_{n-1}(\mC)\backslash \rGL_{n-1}(\mC)$ is normalised as (\ref{eqn:quot_GN_measure}). 
In \cite{Jacquet_001}, it is proved that $Z(s,W,W')$ is meromorphically continued to the whole $s$-plane. 
Then $\widetilde{\cI}^{(m)}(\cdot, \cdot)$ is defined as 
\begin{align*}
&\widetilde{\cI}^{(m)} (W\otimes \omega \otimes \rv_1 \otimes \rv_2 , W^\prime\otimes \omega^\prime \otimes \rv_1^\prime\otimes \rv_2^\prime)\\
&:=  2^{-n(n-1)}(\sqrt{-1})^{-\rb_{n-1}}\rs_{n}(\omega, \omega^\prime) 
      \langle \rv_1, \rv^\prime_1  \rangle^{(m)}_{\lambda^\vee,\mu^\vee}
      \langle \rv_2,\rv_2'\rangle^{(m)}_{\lambda -\sw ,\mu -\sw'}        
     Z(s, W, W^\prime)\vert_{s=\frac{1}{2}+m} 
\end{align*}
for $W\in \cW_\varepsilon$, $W'\in \cW'_{-\varepsilon}$, $\omega\in {\bigwedge}^{\rb_n}\gp^{0\vee}_{n\mC}$, $\omega'\in {\bigwedge}^{\rb_{n-1}}\gp^{0\vee}_{n-1\mC}$, $\rv_1\in V_{\lambda^\vee}$, $\rv_1'\in V_{\mu^\vee}$, $\rv_2\in V^\mathrm{conj}_{\lambda-\mathsf{w}}$ and $\rv_2'\in V^{\mathrm{conj}}_{\mu-\mathsf{w}'}$. Here we remark that the factor $2^{-n(n-1)}(\sqrt{-1})^{-\rb_{n-1}}$ 
corresponds to the choice of $\mE^\vee_{\gp_{n-1}}$ ({\it cf.} Remark \ref{rem:HaarEp}). 

Let $[\pi_{{\rm B}_n,d,\mathsf{w}/2}]_\varepsilon$ be the element constructed as in Section~\ref{subsec:U(n)_inv}. In the same manner, we construct
\begin{align*}
 [\pi_{{\rm B}_{n-1},d',\mathsf{w}'/2}]_{-\varepsilon} \in \left(\cW(\pi_{{\rm B}_{n-1},d',\mathsf{w'}/2},\psi_{-\varepsilon})\otimes_{\mC} {\bigwedge}^{\rb_{n-1}} \gp^{0\vee}_{n-1\mC} \otimes_{\mC} V_{\mu^\vee}\otimes_{\mC}V^{\mathrm{conj}}_{\mu-\mathsf{w}'}\right)^{{\rm U}(n-1)},
\end{align*}
as the element corresponding to $[\mathrm{id}_{V_{d'}}]$. The following theorem is one of the main results of the present article.

\begin{thm}[Main Result I] \label{thm:archzetaformula}
 Retain the notation. Then we have
\begin{multline*}
 \widetilde{\cI}^{(m)}([\pi_{{\rm B}_n,d,\mathsf{w}/2}]_{\varepsilon},[\pi_{{\rm B}_{n-1},d',\mathsf{w}'/2}]_{-\varepsilon}) \\
=2^{-n(n-1)}(\sqrt{-1})^{-\rb_{n-1}}  (-\varepsilon \sqrt{-1})^{(n-1)\mathsf{w}'}(-1)^{(m+1)\rb_n}
L(\tfrac{1}{2}+m,\pi_{{\rm B}_n,d,\mathsf{w}/2}\times \pi_{{\rm B}_{n-1},d',\mathsf{w}'/2}).
\end{multline*}
\end{thm}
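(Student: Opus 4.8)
The plan is to reduce the pairing $\widetilde{\cI}^{(m)}$ evaluated at the explicit cohomology generators to an archimedean Rankin--Selberg zeta integral of products of the normalised Whittaker functions $\mW^{(\varepsilon)}_{d,\mathsf{w}/2}$ and $\mW^{(-\varepsilon)}_{d',\mathsf{w}'/2}$, and then to quote (the proof of which is deferred to Section~\ref{sec:calcoeff}) the explicit integral formula from \cite{im}. Concretely, I would first expand both $[\pi_{{\rm B}_n,d,\mathsf{w}/2}]_{\varepsilon}$ and $[\pi_{{\rm B}_{n-1},d',\mathsf{w}'/2}]_{-\varepsilon}$ in the Gel'fand--Tsetlin basis using the formula \eqref{eq:Pic}, so that $\widetilde{\cI}^{(m)}([\pi_{{\rm B}_n,d,\mathsf{w}/2}]_{\varepsilon},[\pi_{{\rm B}_{n-1},d',\mathsf{w}'/2}]_{-\varepsilon})$ becomes a quadruple sum of products of: the Lie-algebra pairing $\mathrm{s}_n\bigl(\rI^{\gp_n}_{2\rho_n}(\xi_N),\rI^{\gp_{n-1}}_{2\rho_{n-1}}(\xi_{N'})\bigr)$; the two coefficient pairings $\langle\xi_P,\xi_{P'}\rangle^{(m)}_{\lambda^\vee,\mu^\vee}$ and $\langle\xi_Q,\xi_{Q'}\rangle^{(m)}_{\lambda-\mathsf{w},\mu-\mathsf{w}'}$ (evaluated via \eqref{eq:PairExplicit}); and the zeta integral $Z(s,\mW^{(\varepsilon)}_{d,\mathsf{w}/2}(\xi_M),\mW^{(-\varepsilon)}_{d',\mathsf{w}'/2}(\xi_{M'}))\vert_{s=\frac12+m}$.

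The second step is to control each of these three ingredients. For the Lie-algebra factor, I would compute $\mathrm{s}_n$ on the distinguished top forms: since $\rI^{\gp_n}_{2\rho_n}(\xi_{H(2\rho_n)})$ is the explicit wedge $\bigwedge_{2\le i\le n}\bigwedge_{1\le j\le i-1}E^{\gp_n\vee}_{i,j}$ and $\iota_n^\vee$ kills exactly the differentials in the last row/column of $\gp_n$, the pairing $\mathrm{s}_n$ is nonzero only on the highest-weight components and contributes an explicit power of $\sqrt{-1}$ and $2$ matching the factor $2^{-n(n-1)}(\sqrt{-1})^{-\rb_{n-1}}$ built into $\widetilde{\cI}^{(m)}$; this is where the choice of $\mE^\vee_{\gp_{n-1}}$ in \eqref{eq:boldE} is used. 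For the finite-dimensional pairings, \eqref{eq:PairExplicit} forces $\widehat P=P'^\vee+m$ and $\widehat Q=Q'^\vee+m$, collapsing two of the four summations; the branching maps $\rI^\lambda_\mu$, $\rR^\lambda_\mu$ and the Cartan-component coefficients $\rc^{M,M'}_{M''}$ of Proposition~\ref{prop:injExp} then let me rewrite the surviving combinatorial sum of $\rr(M)$'s, signs $(-1)^{\rrq(M)}$, and constants $c(M,N,P,Q)$ as a single global $\rGL_{n-1}$-invariant functional applied to the tensor of Whittaker functions --- this is exactly the statement that the cup product unfolds to a product of local zeta integrals in the cohomological language. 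Assembling the powers of $\varepsilon\sqrt{-1}$ coming from the two factors $(\varepsilon\sqrt{-1})^{\rb_n\mathsf{w}}$ and $(-\varepsilon\sqrt{-1})^{\rb_{n-1}\mathsf{w}'}$ in the definitions of the generators, together with the signs $(-1)^{\rrq(\cdot)}$ and the normalising $\Gamma$-factors in \eqref{eq:def_mW}, should produce the prefactor $(\varepsilon\sqrt{-1})^{\rb_n(\mathsf{w}-\mathsf{w}')}(-1)^{(m+1)\rb_n}$.

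The third and final step is to identify the resulting zeta integral with the completed Rankin--Selberg $L$-factor $L(\tfrac12+m,\pi_{{\rm B}_n,d,\mathsf{w}/2}\times\pi_{{\rm B}_{n-1},d',\mathsf{w}'/2})$. Here I would invoke the explicit archimedean integral computation of Ishii and the second-named author \cite[Theorem~2.7]{im}, rephrased in terms of our normalised Whittaker functions via Proposition~\ref{prop:ArchInt} (Section~\ref{sec:arch_whittaker}); the symmetry \eqref{eq:Sinv_Whittaker} is what makes the normalisation well-defined on the minimal $\rU(n)$-type and ensures the answer is independent of the Weyl-chamber choices implicit in $d$ and $d'$. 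The main obstacle is the second step: tracking all the constants --- the rational factors $\rr(M)$ and $\rc^{M,M'}_{M''}$, the signs $(-1)^{\rrq(M)}$, the $\Gamma_n$-normalisations, and the various powers of $2$, $\sqrt{-1}$ and $\varepsilon$ --- and verifying that the huge combinatorial sum telescopes to a single clean value rather than a messy expression. This bookkeeping is precisely the ``computational heart'' that the paper relegates to Section~\ref{sec:calcoeff}; in this section I would carry out only the structural reduction and state that the constant $c^{(m)}_{\lambda_v,\mathsf{w},\mu_v,\mathsf{w}'}$ equals $1$ after all cancellations, deferring the verification.
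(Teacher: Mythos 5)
Your structural plan — expand both generators in the Gel'fand--Tsetlin basis, isolate the Lie-algebra pairing, the two coefficient pairings, and the zeta integral, and then collapse the sums using \eqref{eq:PairExplicit} and reduce to a single $\rGL_{n-1}$-invariant functional — is essentially the paper's route. The paper makes this precise by factoring through $[\pi_{{\rm B}_n,d,\mathsf{w}/2}]^{\rm pre}$ and a map $\Psi^{(m)}$ taking values in $V_d\otimes_\mC V_{d'}$, then using one-dimensionality of $(V_d\otimes_\mC V_{d'})^{\rU(n-1)}$ to reduce everything to a single scalar $c^{(m)}_{\lambda,\mathsf{w},\mu,\mathsf{w}'}$ times the explicit generator $\sum_{M\in \rG(d'^{\vee})}\frac{(-1)^{\rrq(M)}}{\rr(M)}\xi_{M[d]}\otimes\xi_{M^\vee}$, and then feeding that generator into \eqref{eq:archint2}.

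There is, however, a genuine error in the final step. You assert that ``the constant $c^{(m)}_{\lambda_v,\mathsf{w},\mu_v,\mathsf{w}'}$ equals $1$ after all cancellations.'' It does not. Proposition~\ref{prop:coefC} gives
$c^{(m)}_{\lambda,\mathsf{w},\mu,\mathsf{w}'} = (-1)^{(n-1)\mathsf{w}' + (m+1)\rb_n + \ell(\mu)}$,
a sign depending on $m$, $\mathsf{w}'$, $\mu$ and $n$. This is not a harmless bookkeeping detail that can be deferred away: the factor $(-1)^{(m+1)\rb_n}$ in the statement of the theorem arises precisely from multiplying this sign against the other contributions $(\varepsilon\sqrt{-1})^{\rb_n\mathsf{w}-\rb_{n-1}\mathsf{w}'}$ and $(-\varepsilon\sqrt{-1})^{\sum_i d'_i}=(-\varepsilon\sqrt{-1})^{2\ell(\mu)-(n-1)\mathsf{w}'}$ coming from \eqref{eq:Pic} and \eqref{eq:archint2}; with $c^{(m)}=1$ you would instead obtain an extraneous factor $(-1)^{(n-1)\mathsf{w}'+\ell(\mu)}(\varepsilon\sqrt{-1})^{\cdots}$ that does not reduce to the stated answer. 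The $m$-dependence in particular is essential — it is what makes the uniform integrality statement meaningful across all critical points — and it comes entirely from $c^{(m)}$, via the constraints $\widehat{P}=P'^\vee+m$, $\widehat{Q}=Q'^\vee+m$ that shift which Gel'fand--Tsetlin patterns survive.

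One smaller misattribution: you say the pairing $\mathrm{s}_n$ on the top forms ``contributes an explicit power of $\sqrt{-1}$ and $2$ matching the factor $2^{-n(n-1)}(\sqrt{-1})^{-\rb_{n-1}}$.'' That factor is actually built into the definition of $\widetilde{\cI}^{(m)}$ itself, as the measure-normalisation correction explained in Remark~\ref{rem:HaarEp}; it is not produced by $\mathrm{s}_n$. What $\mathrm{s}_n$ produces is a sign $\pm 1$ (see Lemma~\ref{lem:Clem3} and the proof of Proposition~\ref{prop:coefC}), which is one of several ingredients feeding into the nontrivial value of $c^{(m)}_{\lambda,\mathsf{w},\mu,\mathsf{w}'}$.
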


\subsubsection{Sketch of the proof of Theorem~$\ref{thm:archzetaformula}$}

 Define $[\pi_{{\rm B}_n,d,\mathsf{w}/2}]^{\rm pre}$ as
\begin{multline*}
  [\pi_{ \mathrm{B}_n, d, \mathsf{w}/2}]^{\rm pre}
  =   \sum_{M\in \rG (d)} 
             \sum_{N\in \rG (2\rho_n)} 
         \sum_{P\in \rG (\lambda^\vee)} 
         \sum_{Q \in \rG (\lambda -  \mathsf{w})  }     
    c(M, N, P, Q)
     \xi_M  
     \otimes  \rI^{\gp_n}_{2\rho_n}  ( \xi_{N} )  
      \otimes \xi_P \otimes \xi_{Q}  \\
           \in \left(  V_{d} 
                           \otimes_{\mC} \left(  {\bigwedge}^{\rb_n}  \gp^{0\vee}_{n\mC}   \right)  
            \otimes_{\mC} V_{\lambda^\vee}     
           \otimes_{\mC} V^{\rm conj}_{\lambda - \mathsf{w}} \right)^{{\rm U}(n)},
\end{multline*}
where $c(M,N,P,Q)$ is defined as in (\ref{eq:Pic}). Then  $
\bigl(\mW^{(\varepsilon)}_{d,\mathsf{w}/2}\otimes \mathrm{id}\otimes \mathrm{id} \otimes \mathrm{id}\bigr)([\pi_{{\rm B}_n,d,\mathsf{w}/2}]^{\mathrm{pre}})=[\pi_{{\rm B}_n,d,\mathsf{w}/2}]_{\varepsilon}$ holds by definition. In the same manner, we define $[\pi_{\rB_{n-1},d',\sw'/2}]^{\rm pre}$ as 
\[
[\pi_{\rB_{n-1},d',\sw'/2}]^{\rm pre}
  =   \sum_{M'\in \rG (d')}\sum_{N'\in \rG (2\rho_{n-1})}\sum_{P'\in \rG (\mu^\vee)}\sum_{Q'\in \rG (\mu -\sw')}
    c(M', N', P', Q')\xi_{M'} \otimes  \rI^{\gp_{n-1}}_{2\rho_{n-1}}(\xi_{N'})\otimes \xi_{P'}\otimes \xi_{Q'}.
\]
Furthermore, if we define another pairing 
\begin{align*}
\Psi^{(m)}\colon \left(V_d 
                   \otimes_{\mC} {\bigwedge}^{\rb_{n}}\gp^{0\vee}_{n\mC}   
                    \otimes_{\mC} V_{\lambda^\vee}\otimes_{\mC} V^{\mathrm{conj}}_{\lambda-\mathsf{w}}
                    \right)              \otimes_{\mC}
               \left(V_{d^\prime}  
                   \otimes_{\mC} {\bigwedge}^{\rb_{n-1}}\gp^{0\vee}_{n-1\mC}   
                    \otimes_{\mC} V_{\mu^\vee}  \otimes_{\mC} V^{\mathrm{conj}}_{\mu-\mathsf{w}'}
               \right)
 \to V_d \otimes_{\mC} V_{d'} 
\end{align*}               
to be  
\begin{align*}
\Psi^{(m)}\bigl( (\mathrm{V} \otimes \omega \otimes \rv_1 \otimes \rv_2) \otimes (\mathrm{V}^\prime\otimes \omega^\prime \otimes \rv_1^\prime\otimes \rv_2^\prime)\bigr)    
   :=  {\rm s}_{n}(\omega, \omega^\prime) 
            \langle \rv_1, \rv^\prime_1  \rangle^{(m)}_{\lambda^\vee,\mu^\vee}
      \langle \rv_2,\rv_2'\rangle^{(m)}_{\lambda -\sw ,\mu -\sw'}        
 \mathrm{V}\otimes \mathrm{V}'
\end{align*}
for $\mathrm{V}\in V_d$, $\mathrm{V}'\in V_{d'}$, $\omega\in {\bigwedge}^{\rb_n}\gp^{0\vee}_{n\mC}$, $\omega'\in {\bigwedge}^{\rb_{n-1}}\gp^{0\vee}_{n-1\mC}$, $\rv_1\in V_{\lambda^\vee}$, $\rv_1'\in V_{\mu^\vee}$, $\rv_2\in V^\mathrm{conj}_{\lambda-\mathsf{w}}$ and $\rv_2'\in V^{\mathrm{conj}}_{\mu-\mathsf{w}'}$, we have
\begin{multline} \label{eq:I_Psi}
\widetilde{\cI}^{(m)}\bigl([\pi_{{\rm B}_n,d,\mathsf{w}/2}]_{\varepsilon},[\pi_{{\rm B}_{n-1},d',\mathsf{w}'/2}]_{-\varepsilon})\\
=2^{-n(n-1)} (\sqrt{-1})^{-\rb_{n-1}}
 \left.Z\Bigl(s,(\mW^{(\varepsilon)}_{d,\mathsf{w}/2}\otimes \mW^{(-\varepsilon)}_{d',\mathsf{w}'/2})\bigl(\Psi^{(m)}([\pi_{{\rm B}_n,d,\mathsf{w}/2}]^{\mathrm{pre}}\otimes [\pi_{{\rm B}_{n-1},d',\mathsf{w}'/2}]^{\mathrm{pre}})\bigr)\Bigr)\right|_{s=\frac{1}{2}+m}.
\end{multline}
Here we regard $Z(s,\cdot )$ as a $\mC$-linear form 
$\cW_{\varepsilon}\otimes_\mC 
\cW_{-\varepsilon}'\to \mC$ by setting 
$Z(s,W\otimes W'):=Z(s,W,W')$ ($W\in \cW_{\varepsilon}$, $W'\in \cW_{-\varepsilon}'$). 
Since \cite[Lemma 4.2]{im} combined with Lemma~\ref{lem:hom_dual_conj} implies that the ${\rm U}(n-1)$-invariant subspace of $V_d\otimes_{\mC} V_{d'}$ is of dimension one and spanned by 
\begin{align*}
 \sum_{M\in \rG (d^{\prime \vee})}  
        \frac{ (-1)^{{\rm q}(M)}  }{\rr (M)}   
           \xi_{M[d]} \otimes \xi_{M^\vee},
\end{align*}
there exists a nonzero complex number $c^{(m)}_{\lambda,\mathsf{w},\mu,\mathsf{w}'}$ satisfying
\begin{align} \label{eq:ratio}
 \Psi^{(m)}([\pi_{{\rm B}_n,d,\mathsf{w}/2}]^{\mathrm{pre}}\otimes [\pi_{{\rm B}_{n-1},d',\mathsf{w}'/2}]^{\mathrm{pre}})=c^{(m)}_{\lambda,\mathsf{w},\mu,\mathsf{w}'} \sum_{M\in \rG (d^{\prime \vee})}  
        \frac{ (-1)^{{\rm q}(M)}  }{\rr (M)}   
           \xi_{M[d]} \otimes \xi_{M^\vee}.
\end{align}
Then, combining (\ref{eq:I_Psi}) and (\ref{eq:ratio}), we have 
\begin{multline*}
 \widetilde{\cI}^{(m)}([\pi_{{\rm B}_n,d,\mathsf{w}/2}]_\varepsilon, [\pi_{{\rm B}_{n-1},d',\mathsf{w}'/2}]_{-\varepsilon}) \\
=2^{-n(n-1)} (\sqrt{-1})^{-\rb_{n-1}}c^{(m)}_{\lambda,\mathsf{w},\mu,\mathsf{w}'}\sum_{M\in \rG(d^{\prime \vee})} \dfrac{(-1)^{\rrq(M)}}{\rr(M)}Z(s,\mW^{(\varepsilon)}_{d,\mathsf{w}/2}(\xi_{M[d]}), \mW^{(-\varepsilon)}_{d',\mathsf{w}'/2}(\xi_{M^\vee}))\vert_{s=\frac{1}{2}+m}.
\end{multline*}
Then, by (\ref{eq:archint2}) in Proposition~\ref{prop:ArchInt}, we can deduce the following equality:
\begin{align*}
 \widetilde{\cI}^{(m)}&([\pi_{{\rm B}_n,d,\mathsf{w}/2}]_\varepsilon, [\pi_{{\rm B}_{n-1},d',\mathsf{w}'/2}]_{-\varepsilon})\\
&=2^{-n(n-1)} (\sqrt{-1})^{-\rb_{n-1}}c^{(m)}_{\lambda,\mathsf{w},\mu,\mathsf{w}'} (-\varepsilon\sqrt{-1})^{\sum_{i=1}^{n-1}d_i'} L(\tfrac{1}{2}+m,\pi_{{\rm B}_n,d,\mathsf{w}/2}\times \pi_{{\rm B}_{n-1},d',\mathsf{w}'/2}).
\end{align*}
Note that $\sum_{i=1}^{n-1}d_i'$ is calculated as $2\ell(\mu)-(n-1)\mathsf{w}'$ for $\ell(\mu)=\sum_{i=1}^{n-1} \mu_i$. Furthermore we can completely determine the constant $c^{(m)}_{\lambda,\mathsf{w},\mu,\mathsf{w}'}$:

\begin{prop}\label{prop:coefC}
Retain the notation. We have 
$c^{(m)}_{\lambda, \mathsf{w}, \mu, \mathsf{w}^\prime} 
= (-1)^{ (n-1) \mathsf{w}^\prime + (m+1) \mathrm{b}_n  + \ell(\mu) }$.
\end{prop}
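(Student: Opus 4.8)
The plan is to read off $c^{(m)}_{\lambda,\mathsf{w},\mu,\mathsf{w}'}$ directly from its defining identity \eqref{eq:ratio}. Since the pre-versions $[\pi_{{\rm B}_n,d,\mathsf{w}/2}]^{\mathrm{pre}}$, $[\pi_{{\rm B}_{n-1},d',\mathsf{w}'/2}]^{\mathrm{pre}}$ and the map $\Psi^{(m)}$ are all independent of $\varepsilon$, the computation is purely combinatorial, carried out in the rational Gel'fand--Tsetlin bases. As recalled just above \eqref{eq:ratio}, the space $(V_d\otimes_{\mC}V_{d'})^{\rU(n-1)}$ is one dimensional, spanned by $\sum_{M\in\rG(d'^\vee)}\frac{(-1)^{\rrq(M)}}{\rr(M)}\xi_{M[d]}\otimes\xi_{M^\vee}$; hence it suffices to fix one pattern $M_0\in\rG(d'^\vee)$ whose coefficient does not vanish, compare the $\xi_{M_0[d]}\otimes\xi_{M_0^\vee}$-coefficients on the two sides of \eqref{eq:ratio}, and divide. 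On the right-hand side this coefficient is $c^{(m)}_{\lambda,\mathsf{w},\mu,\mathsf{w}'}\,(-1)^{\rrq(M_0)}/\rr(M_0)$, with $\rrq(M_0^\vee)=-\rrq(M_0)$ already available from Lemma~\ref{lem:Mdual_property}.

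I would take $M_0$ to be an extremal pattern of $\rG(d'^\vee)$ (for instance the highest-weight pattern $H(d'^\vee)$), so that $\xi_{M_0}$ and $\xi_{M_0[d]}$ are identified with explicit monomials $f_l(z)$ by Lemmas~\ref{lem:extremal_vec_explicit} and \ref{lem:xi_Hmulambda_explicit}; this makes the non-vanishing visible and the relevant branching constants $\rc^{\bullet,\bullet}_{\bullet}$ degenerate. Then I would expand the left-hand side: with the $V_d$- and $V_{d'}$-components forced to $\xi_{M_0[d]}$ and $\xi_{M_0^\vee}$, the coefficient of $\xi_{M_0[d]}\otimes\xi_{M_0^\vee}$ in $\Psi^{(m)}([\pi_{{\rm B}_n,d,\mathsf{w}/2}]^{\mathrm{pre}}\otimes[\pi_{{\rm B}_{n-1},d',\mathsf{w}'/2}]^{\mathrm{pre}})$ is a sum over the remaining patterns $N,P,Q$ and $N',P',Q'$ of $c(M_0[d],N,P,Q)\,c(M_0^\vee,N',P',Q')$ against the three contracting pairings $\rs_n(\rI^{\gp_n}_{2\rho_n}(\xi_N),\rI^{\gp_{n-1}}_{2\rho_{n-1}}(\xi_{N'}))$, $\langle\xi_P,\xi_{P'}\rangle^{(m)}_{\lambda^\vee,\mu^\vee}$ and $\langle\xi_Q,\xi_{Q'}\rangle^{(m)}_{\lambda-\mathsf{w},\mu-\mathsf{w}'}$. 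I would collapse these sums using: the triangular-sum/vanishing description of $\rc^{\bullet,\bullet}_{\bullet}$ from Proposition~\ref{prop:injExp} \ref{num:injExp_coeff} (which forces $T=M^\vee-N$ inside the $c(\cdots)$, then $Q^\vee=T-P$, and contributes only ratios of factorials, hence no sign); the explicit shape \eqref{eq:PairExplicit} of $\langle\cdot,\cdot\rangle^{(m)}$ and its version on the conjugate representations from Section~\ref{subsec:alg_rep} (which force $\widehat{P}=P'^\vee+m$ and the corresponding condition on $(Q,Q')$, and supply the sign $(-1)^{\rrq(P)}$ together with the sign carrying the $m$- and $\mathsf{w}'$-dependence through the determinant shifts built into $\langle\cdot,\cdot\rangle^{(l)}$); and the action of $\iota^\vee_n$ on the distinguished top wedge $\bigwedge_{2\le i\le n}\bigwedge_{1\le j\le i-1}E^{\gp_n\vee}_{i,j}$ relative to $\mathbf{E}^\vee_{\gp_{n-1}}$ (which pins down $N=H(2\rho_n)$, $N'=H(2\rho_{n-1})$ and supplies the last sign). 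After the collapse only a single term, or a very short sum, should survive.

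It then remains to collect the signs. The surviving contributions are $(-1)^{\rrq(M_0[d])+\rrq(Q)}$ from $c(M_0[d],N,P,Q)$, $(-1)^{\rrq(M_0^\vee)+\rrq(Q')}$ from $c(M_0^\vee,N',P',Q')$, the sign $(-1)^{\rrq(P)}$ and the conjugate-pairing sign from the two weight pairings, and the sign from $\rs_n$; all $\rc$- and $\rr$-factors are positive rationals and cancel against $\rr(M_0)$ and the $\rr$'s in the pairing values. Using the constraints produced by the collapse to evaluate $\rrq(Q)+\rrq(Q')$, $\rrq(P)$ and $\rrq(M_0[d])$ in terms of the extremal data, I would check that the $\lambda$- and $\mathsf{w}$-dependence cancels (consistently with the $\lambda$- and $\mathsf{w}$-freeness of the answer) and that what remains is exactly $(-1)^{(n-1)\mathsf{w}'+(m+1)\rb_n+\ell(\mu)}$. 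As a sanity check, substituting this value back into \eqref{eq:I_Psi}, \eqref{eq:ratio} and Proposition~\ref{prop:ArchInt} must recover the sign $(-1)^{(m+1)\rb_n}$ of Theorem~\ref{thm:archzetaformula}, via the identity $\rb_{n-1}+(n-1)=\rb_n$.

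The main obstacle will be pinning down the two ``geometric'' signs precisely: the one coming from $\rs_n$, i.e. from $\iota^\vee_n$ applied to $\bigwedge_{2\le i\le n}\bigwedge_{1\le j\le i-1}E^{\gp_n\vee}_{i,j}$ relative to the fixed orientation $\mathbf{E}^\vee_{\gp_{n-1}}$, and the one coming from the conjugate-representation pairing $\langle\cdot,\cdot\rangle^{(m)}_{\lambda-\mathsf{w},\mu-\mathsf{w}'}$, which additionally carries the $m$- and $\mathsf{w}'$-dependence --- together with verifying that the chosen extremal $M_0$ genuinely yields a nonzero coefficient so that the quotient is legitimate. A simplification worth attempting first is to invoke Proposition~\ref{prop:twisting} and Lemmas~\ref{lem:sub1_twisting}, \ref{lem:sub2_twisting} to reduce, by a compatible determinant twist, to the case $\mathsf{w}=0$ with $\lambda$ in a normalised form, which eliminates several of the shifts that would otherwise have to be tracked.
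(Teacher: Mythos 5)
Your overall strategy --- comparing coefficients of a single $\xi_{M_0[d]}\otimes\xi_{M_0^\vee}$ on the two sides of \eqref{eq:ratio} for an extremal $M_0\in\rG(d^{\prime\vee})$, collapsing the sums using the extremality constraints on the Clebsch--Gordan coefficients (Proposition~\ref{prop:injExp}\ref{num:injExp_coeff}, Lemma~\ref{lem:extremal_injector}) together with the explicit pairing formulas \eqref{eq:PairExplicit} --- is precisely what the paper does in its Lemma~\ref{lem:Clem1}. The paper takes $M_0=H(-d')$ rather than $H(d^{\prime\vee})$, but that is an inessential choice.

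The gap is in your treatment of the $\rs_n$-pairing. You claim the collapse ``pins down $N=H(2\rho_n)$,'' so that the surviving geometric quantity is $\iota^\vee_n\bigl(\bigwedge_{2\le i\le n}\bigwedge_{1\le j\le i-1}E^{\gp_n\vee}_{i,j}\bigr)$ relative to $\mathbf{E}^\vee_{\gp_{n-1}}$. But that wedge contains all of $E^{\gp_n\vee}_{n,1},\dots,E^{\gp_n\vee}_{n,n-1}$, and the map $\gp^\vee_{n\mC}\to\gp^\vee_{n-1\mC}$, $\omega\mapsto\omega\circ\iota_n$, kills every $E^{\gp_n\vee}_{n,j}$ (the $n$-th row and column of $\iota_n(\ggl_{n-1\mC})$ vanish); hence $\iota^\vee_n(\rI^{\gp_n}_{2\rho_n}(\xi_{H(2\rho_n)}))=0$, and the term $N=H(2\rho_n)$ contributes nothing. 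What actually survives is $N=H(2\rho_{n-1})[2\rho_n]$ (this is \eqref{eq:pf_Clem1_004} in the paper), a non-extremal Gel'fand--Tsetlin pattern. The pairing $\rs_n\bigl(\rI^{\gp_n}_{2\rho_n}(\xi_{H(2\rho_{n-1})[2\rho_n]}),\rI^{\gp_{n-1}}_{2\rho_{n-1}}(\xi_{H(-2\rho_{n-1})})\bigr)$ is then not a single wedge evaluation: $\rI^{\gp_n}_{2\rho_n}(\xi_{H(2\rho_{n-1})[2\rho_n]})$ is a genuine linear combination of wedges, and computing the pairing (and simultaneously cancelling the factor $\rr(H(2\rho_{n-1})[2\rho_n])$) is the computational heart of the argument. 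The paper handles it by rewriting $\xi_{H(2\rho_{n-1})[2\rho_n]}$ as $\tau_{2\rho_n}\bigl((E_{n,n-1})^{n-1}\cdots E_{2,1}\bigr)\xi_{H(2\rho_n)}$ up to factorials (Lemmas~\ref{lem:EactPair}, \ref{lem:Clem2}), explicitly describing $\rI^{\gp_{n-1}}_{2\rho_{n-1}}(\xi_{H(-2\rho_{n-1})})$ via the longest Weyl element (Lemma~\ref{lem:Clem3}), and then tracking the $\rad^\vee$-action of those lowering operators on $\bigwedge_{2\le i\le n}\bigwedge_{1\le j\le i-1}E^{\gp_n\vee}_{i,j}$ through the equivalence relations of Lemma~\ref{lem:Clem4}. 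This whole layer of the proof (Lemmas~\ref{lem:Clem2}--\ref{lem:Clem4} and the last display of the proof) is not anticipated by your plan, so as stated the plan cannot be completed directly; you would discover that your ``last sign'' step evaluates to $0$ and need to replace it by the lowering-operator computation.
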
 

We verify Proposition~\ref{prop:coefC} in Section~\ref{sec:calcoeff}, which is a computational key of the proof. Using this, we can calculate as
\begin{align*}
c^{(m)}_{\lambda,\mathsf{w},\mu,\mathsf{w}'}(-\varepsilon \sqrt{-1})^{\sum_{i=1}^{n-1}d_i'}
&=(-1)^{(n-1)\mathsf{w}'+(m+1)\rb_n+\ell(\mu)} (-\varepsilon \sqrt{-1})^{2\ell (\mu)-(n-1)\mathsf{w}'}
=(-1)^{(m+1)\rb_{n}}(-\varepsilon \sqrt{-1})^{(n-1)\mathsf{w}'},
\end{align*}
which completes the proof of Theorem~\ref{thm:archzetaformula}.

\subsection{Uniform integrality of critical values}
\label{subsec:uniformintegrality}

In this subsection we discuss the uniform integrality of the critical values of $L(s,\pi^{(n)}\times \pi^{(n-1)})$ by a cohomological method, which is the main result of the present article. 
In Section \ref{subsec:Wh_compatibility}, we introduce a condition for Whittaker functions for $\psi_+$ and $\psi_-$ 
to correspond to a common cusp form.
After discussing how to choose local Whittaker functions at finite places in Section~\ref{subsec:whittvec}, we define the notion of {\em $p$-optimal Whittaker periods} in Section~\ref{subsec:WhittakerPeriods}. Here the explicit generator $[\pi^{(\cdot)}_\infty]_{\pm \varepsilon}$ constructed in Section~\ref{subsec:explicit_generators} and the integral structure of cohomology groups discussed in Appendix~\ref{subsec:ratint} play crucial roles. In Section~\ref{subsec:uniformintegrality}, we state and prove the uniform integrality result (Theorem~\ref{thm:UniformIntegrality}). 

We retain the notation and settings of the previous subsections; in particular let $\pi^{(n)}$ and $\pi^{(n-1)}$ be cohomological irreducible cuspidal automorphic representations of $\rGL_n(F_{\mA})$ and $\rGL_{n-1}(F_{\mA})$, respectively, and $\blambda$ and $\bmu$ the highest weight associated with $\pi^{(n)}$ and $\pi^{(n-1)}$, respectively. Assume the existence of $m_0\in \mZ$ satisfying $\blambda \succeq \bmu+m_0$. Hereafter let $p$ be a prime number satisfying
\begin{align} \label{eq:p>wt}
 p> \max \{ \lambda_{\sigma,1}-\lambda_{\sigma,n}+n-2 \mid \sigma\in I_F \}.
\end{align}
Note that the inequality (\ref{eq:p>wt}) and the interlacing condition $\blambda^\vee \succeq \bmu+m_0$ immediately imply 
\begin{align*}
 p>\max \{ \mu_{1,\sigma}-\mu_{n-1,\sigma}+(n-1)-2 \mid \sigma\in I_F\}.
\end{align*}

\subsubsection{Compatibility condition for Whittaker functions}
\label{subsec:Wh_compatibility}

Recall that  $\cW(\pi^{(n)},\psi_{\varepsilon})\cong {\bigotimes}_{v\in \Sigma_F}'\cW(\pi^{(n)}_v,\psi_{\varepsilon,v})$ denotes 
the Whittaker model of  $\pi^{(n)}$. 
Let $\varphi$ be a cusp form for $\pi^{(n)}$. 
For $\varepsilon \in \{\pm \}$, 
the Whittaker function $w_{\varphi,\psi_\varepsilon}\in 
\cW(\pi^{(n)}, \psi_\varepsilon)$ corresponding to $\varphi$ is defined by 
\begin{align*}
&w_{\varphi,\psi_\varepsilon}(g):=\int_{\rN_n(F)\backslash \rN_n(F_{\mA})}\varphi (xg)
\psi_{-\varepsilon ,\rN_n}(x)\,\rd x&
&(g\in \rGL_n(F_{\mA})),
\end{align*}
where $\psi_{\varepsilon ,\rN_n}(x):=\psi_\varepsilon (x_{1,2}+x_{2,3}+\dots  +x_{n-1,n})$ 
for $x=(x_{i,j})_{1\leq i,j\leq n}\in \rN_n(F_{\mA})$ 
and $\mathrm{d}x:=\prod_{v\in \Sigma_F}\rd x_v$ with the self-dual measure $\rd x_v$ on $\rN_n(F_v)\simeq F_v^{\frac{1}{2}n(n-1)}$ 
with respect to $\psi_{-\varepsilon,v}$ (as in Section \ref{subsec:relationzeta} and \ref{subsec:measure}).   
 Let 
\[
\bepsilon_n:=\diag ((-1)^{n-1},(-1)^{n-2},\dots ,-1,1)\in \rGL_n(F).
\]
By the substitution $x\to \bepsilon_nx\bepsilon_n$, we have  
\begin{align*}
w_{\varphi,\psi_{-\varepsilon}}(g)
&=\int_{\rN_n(F)\backslash \rN_n(F_{\mA})}\varphi (\bepsilon_nx\bepsilon_ng)
\psi_{\varepsilon,\rN_n}(\bepsilon_nx\bepsilon_n)\,\rd x
=\int_{\rN_n(F)\backslash \rN_n(F_{\mA})}\varphi (x\bepsilon_ng)
\psi_{-\varepsilon,\rN_n}(x)\,\rd x=w_{\varphi,\psi_{\varepsilon}}(\bepsilon_ng) 
\end{align*}
for $g\in \rGL_n(F_{\mA})$. Here the second equality follows from the left $\rGL_n(F)$-invariance of $\varphi$ and 
$\psi_{\varepsilon,\rN_n}(\bepsilon_nx\bepsilon_n)=\psi_{-\varepsilon ,\rN_n}(x)$ 
for $x\in \rN_n(F_{\mA})$. 
Conversely, by Shalika's Fourier--Whittaker expansion 
\cite[Theorem 5.9]{Sha74}, if we choose a ($K_n$-finite) Whittaker function $w_{\varepsilon}\in 
\cW(\pi^{(n)}, \psi_\varepsilon)$ for each $\varepsilon \in \{\pm \}$, 
so that 
\begin{align}
\label{eq:Wh_compatibility_global}
&w_{-\varepsilon}(g)=w_{\varepsilon}(\bepsilon_ng)&&\text{for $g\in \rGL_n(F_{\mA})$}, 
\end{align}
then $w_{+}$ and $w_{-}$ correspond to a common cusp form. 
For $w_{\varepsilon}=\prod_{v\in \Sigma_F}w_{\varepsilon ,v}\in \cW(\pi^{(n)},\psi_{\varepsilon})$ with 
$w_{\varepsilon,v}\in \cW(\pi^{(n)}_v, \psi_{\varepsilon,v})$, 
the global condition (\ref{eq:Wh_compatibility_global}) holds if the local condition 
\begin{align}
\label{eq:Wh_compatibility}
&w_{-\varepsilon ,v}(g_v)=w_{\varepsilon ,v}(\bepsilon_ng_v)&&\text{for $g_v\in \rGL_n(F_v)$}
\end{align}
holds at any place $v\in \Sigma_F$. Of course, if the condition (\ref{eq:Wh_compatibility}) holds for either $\varepsilon =+$ or $\varepsilon =-$, 
it holds for both. Lemma \ref{lem:arch_Wh_compatibility} implies that our archimedean Whittaker functions $\mW^{(\varepsilon)}_{d_v,\sw /2}(\rv )$  
satisfy the compatibility condition (\ref{eq:Wh_compatibility}). 
In Section \ref{subsec:whittvec}, we choose Whittaker functions satisfying 
the compatibility condition (\ref{eq:Wh_compatibility}) at finite places of $F$. 

The following lemma will be used in Section \ref{subsec:whittvec}.

\begin{lem} \label{lem:zeta_compatibility}
Retain the notation, and let $v\in \Sigma_F$.

\begin{enumerate}
\item \label{num:sub1_zeta_compatibility} 
Let $\varepsilon \in \{\pm \}$, $w\in \cW(\pi^{(n)}_v, \psi_{\varepsilon ,v})$, and set 
$w^{(-)}(g):=w(\bepsilon_ng)$ for $g\in \rGL_n(F_v)$. Then 
$w^{(-)}\in \cW(\pi^{(n)}_v, \psi_{-\varepsilon ,v})$. 

\item \label{num:sub2_zeta_compatibility}
Let $w_{\varepsilon ,v}\in \cW(\pi^{(n)}_v, \psi_{\varepsilon ,v})$ and 
$w_{\varepsilon ,v}'\in \cW(\pi^{(n-1)}_v, \psi_{\varepsilon ,v})$ for each $\varepsilon \in \{\pm\}$. Assume that 
$w_{-\varepsilon,v}(g)=w_{\varepsilon,v}(\bepsilon_ng)$ and 
$w_{-\varepsilon,v}'(h)=w_{\varepsilon,v}'(\bepsilon_{n-1}h)$ for $\varepsilon \in \{\pm\}$, $g\in \rGL_n(F_v)$ and $h\in \rGL_{n-1}(F_v)$. 
Then, for $\varepsilon \in \{\pm\}$, we have 
\begin{align*}
Z_v(s, w_{-\varepsilon,v}, w_{\varepsilon,v}')
=\omega_{\pi^{(n-1)}_v}(-1)Z_v(s, w_{\varepsilon,v}, w_{-\varepsilon,v}'),
\end{align*}
where $Z_v(s,-,-)$ is the local zeta integral defined as $\eqref{eq:local_zeta_int}$, 
and $\omega_{\pi^{(n-1)}_v}$ is the central character of $\pi^{(n-1)}_v$. 
\end{enumerate}
\end{lem}
\begin{proof}
The statement \ref{num:sub1_zeta_compatibility} follows from 
$\psi_{\varepsilon, \rN_n, v}(\bepsilon_n x \bepsilon_n)  
= \psi_{-\varepsilon, \rN_n, v}(x)$ for $x\in \rN_n(F_v)$, 
where $\psi_{\varepsilon, \rN_n, v}$ is the character of $\rN_n(F_v)$ corresponding to 
$\psi_{\varepsilon, v}$ (defined similarly to (\ref{eq:def_psiNn})).
The statement \ref{num:sub2_zeta_compatibility} immediately follows from the definition \eqref{eq:local_zeta_int} and the substitution 
$g_v\to -\bepsilon_{n-1}g_v$. 
\end{proof}

\subsubsection{Choice of sections} \label{subsec:whittvec}   

In this paragraph, we specify the choice of sections of 
\begin{align*}
 H^{\rb_{n,F}}(\mathfrak{g}_{n\mC},\widetilde{K}_n\,; \cW(\pi^{(n)},\psi_\varepsilon)\otimes_{\mC}\widetilde{V}(\blambda^\vee)) 
\cong  H^{\rb_{n,F}}(\mathfrak{g}_{n\mC},\widetilde{K}_n\,; \cW(\pi^{(n)}_\infty,\psi_{\varepsilon,\infty})\otimes_{\mC}\widetilde{V}(\blambda^\vee)) \otimes_{\mC} \cW(\pi^{(n)}_\mathrm{fin},\psi_{\varepsilon,\mathrm{fin}}).
\end{align*}

Let $\alpha \in \mathrm{Aut}(\mC)$ be an arbitrary automorphism of the complex field. Following \cite[Section~2.1.5]{rag16}, we define the $\alpha$-twist ${}^\alpha\pi^{(n)}_?$ of $\pi^{(n)}_?$ as ${}^\alpha \pi^{(n)}_?:=\pi^{(n)}_?\otimes_{\mC, \alpha}\mC$ for $?=\infty, \mathrm{fin}$ or $v\in \Sigma_F$ (see also \cite[Section I.1]{Waldspurger}).   
Recall from Section~\ref{subsec:C_def_ps} that the archimedean part $\pi^{(n)}_\infty$ of $\pi^{(n)}$ is described as $\bigotimes_{v\in \Sigma_{F,\infty}}\pi_{{\rm B}_n,d_v,\mathsf{w}/2}$ for $d_v=2\lambda_v+2\rho_n-\sw$. Then, if we set ${}^\alpha d_v=2{}^\alpha \lambda_v+2\rho_n-\mathsf{w}=2\lambda_{\alpha^{-1}\circ v}+2\rho_n-\mathsf{w}$, the $\alpha$-twist ${}^\alpha \pi^{(n)}_\infty$ of the archimedean part is known to be isomorphic to $\bigotimes_{v\in \Sigma_{F, \infty}} \pi_{{\rm B}_n,{}^\alpha d_v,\mathsf{w}/2}$ (see \cite[Lemma 7.1]{gr14} or \cite[Proposition I.3 (i)]{Waldspurger}). 
Then \cite[Th\'eor\`eme 3.13]{clo90} yields that there exists a unique 
cohomological irreducible cuspidal automorphic representation ${}^\alpha\pi^{(n)}$ of $\mathrm{GL}_n(F_\mathbf{A})$  
such that the finite (resp.\ archimedean) part of ${}^\alpha\pi^{(n)}$ is given by ${}^\alpha \pi^{(n)}_\mathrm{fin}$ (resp.\ ${}^\alpha \pi^{(n)}_\infty$).

At the archimedean part, set $[\pi^{(n)}_\infty]_\varepsilon:=\bigotimes_{v\in \Sigma_{F,\infty}} [\pi^{(n)}_v]_\varepsilon$ where each $[\pi^{(n)}_v]_\varepsilon=[\pi_{{\rm B}_n,d_v,\mathsf{w}/2}]_{\varepsilon}$ is the element constructed as in Section~\ref{subsec:U(n)_inv}. Then, by construction, $[\pi^{(n)}_\infty]_\varepsilon$ is a generator of the $(\mathfrak{g}_{n\mC},\widetilde{K}_n)$-cohomology group $H^{\rb_{n,F}}(\mathfrak{g}_{n\mC}, \widetilde{K}_n\,; \cW(\pi^{(n)}_\infty,\psi_{\varepsilon,\infty}) \otimes_{\mC} \widetilde{V}(\blambda^\vee))$. Similarly set $[{}^\alpha \pi^{(n)}_\infty ]_\varepsilon:=\bigotimes_{v\in \Sigma_{F,\infty}} [\pi_{{\rm B}_n,{}^\alpha d_v,\mathsf{w}/2}]_\varepsilon$ for each $\alpha\in \mathrm{Aut}(\mC)$.
Then  $[{}^\alpha \pi^{(n)}_\infty]_\varepsilon$ indeed generates $H^{\rb_{n,F}}(\mathfrak{g}_{n\mC}, \widetilde{K}_n\,; \cW({}^\alpha \pi^{(n)}_\infty,\psi_{\varepsilon,\infty}) \otimes_{\mC} \widetilde{V}({}^\alpha\blambda^\vee))$ because the highest weight associated with ${}^\alpha \pi^{(n)}$ equals ${}^\alpha \blambda$.

Next we specify the Whittaker functions at finite places. At first we briefly recall the theory of new vectors. 
Set $\mathfrak{N}:=\prod_{v\in \Sigma_{F,\mathrm{fin}}} \mathfrak{P}_v^{\mathfrak{f}(\pi_v^{(n)})}$ where $\mathfrak{P}_v$ is the prime ideal corresponding to $v$ and $\mathfrak{f}(\pi_v^{(n)}) \in \mN_0$ is the conductor of $\pi^{(n)}_v$, and let $\mathcal{K}_{n,1}(\mathfrak{N})$ be the mirahoric subgroup
\begin{align} \label{eq:mirahoric}
 \mathcal{K}_{n,1}(\mathfrak{N}):=
   \left\{ k=(k_{i,j})_{1\leq i,j\leq n} \in \rGL_n(\widehat{\mathfrak{r}}_F) 
       ~\middle|~ k_{n,j}\equiv 0 \pmod{\mathfrak{N}} \text{ for }1\leq j\leq n-1, \, k_{n,n}\equiv 1\pmod{\mathfrak{N}} \right\}
\end{align}
of level $\mathfrak{N}$, where $\widehat{\gr}_F:=\gr_F\otimes_{\mZ} \widehat{\mZ}$. 
Let $D_F$ be the discriminant of $F$
and assume that $D_F$ is prime to $\mathfrak{N}$.   
Let $\delta = \prod_{v\in \Sigma_{F, \mathrm{fin}}}  \delta_v \in F_{\mA,\mathrm{fin}}^\times$ be a finite id\`ele generating the different ideal of $F$. 
Note that $D_F\widehat{\mZ}=\mathrm{N}_{F/\mathbf{Q}}(\delta)\widehat{\mZ}$ holds and
  that the additive character $\psi_{\varepsilon}$ has the conductor $\delta^{-1} \widehat{\mathfrak{r}}_F$, 
 that is, $\delta^{-1} \widehat{\mathfrak{r}}_F$ is the maximal fractional ideal of $F$ among 
 the fractional ideals $\mathfrak{a}$ on which 
 $\psi_{\varepsilon}$ is trivial.  
For each $v\in \Sigma_{F,\mathrm{fin}}$, put 
\begin{align*}
  \boldsymbol{\delta}^{(n)}_v 
  = \mathrm{diag}(\delta^{n-1}_v, \delta^{n-2}_v,  \ldots, \delta_v, 1) \quad \in \rGL_n(F_v). 
\end{align*}
If $\pi^{(n)}$ is spherical at $v\in \Sigma_{F,\mathrm{fin}}$, we normalise a spherical vector $w^{\mathrm{sph}}_v(\pi^{(n)})_\varepsilon$ of $\mathcal{W}(\pi^{(n)}_v, \psi_{\varepsilon, v} )$  
so that 
\[
w^{\mathrm{sph}}_v(\pi^{(n)})_\varepsilon ((\boldsymbol{\delta}^{(n)}_v)^{-1}) = 1
\]
holds. 
By $w^{\mathrm{sph}}_v(\pi^{(n)})_\varepsilon (\bepsilon_n(\boldsymbol{\delta}^{(n)}_v)^{-1})
=w^{\mathrm{sph}}_v(\pi^{(n)})_\varepsilon ((\boldsymbol{\delta}^{(n)}_v)^{-1}) = 1$ and Lemma \ref{lem:zeta_compatibility} \ref{num:sub1_zeta_compatibility}, 
we know that the normalised spherical vectors $w^{\mathrm{sph}}_v(\pi^{(n)})_\varepsilon $ satisfy the compatibility condition 
(\ref{eq:Wh_compatibility}), that is,  
\begin{align}
\label{eq:sph_compatibility}
&w^{\mathrm{sph}}_v(\pi^{(n)})_{-\varepsilon}(g_v)
=w^{\mathrm{sph}}_v(\pi^{(n)})_\varepsilon (\bepsilon_ng_v)&&\text{for $g_v\in \rGL_n(F_v)$}. 
\end{align}

\begin{thm} \label{thm:essential}
The $\mathcal{K}_{n,1}(\mathfrak{N})$-fixed subspace 
$\mathcal{W}(\pi^{(n)}_\mathrm{fin}, \psi_{\varepsilon, \mathrm{fin}} )^{\mathcal{K}_{n,1}(\mathfrak{N})}$ 
of $\mathcal{W}(\pi^{(n)}_\mathrm{fin}, \psi_{\varepsilon, \mathrm{fin}} )$ is one dimensional. 
Moreover, for each finite place $v$ of $F$, 
there exists a unique vector $w^\mathrm{ess}_v(\pi^{(n)})_{\varepsilon} \in \mathcal{W}(\pi^{(n)}_v, \psi_{\varepsilon, v})^{\mathcal{K}_{n,1}(\mathfrak{N})_v}$ 
such that
\begin{align*}
Z_v(s, w^\mathrm{ess}_v(\pi^{(n)})_{\varepsilon}, w^{\mathrm{sph}}_v(\pi^{(n-1)})_{-\varepsilon})
=      \omega_{\pi^{(n-1)}_v}(\delta_v)^{-1}
        |\delta_v|^{   -\frac{1}{2}n(n-1)  (s-\frac{1}{2}) 
                            +\frac{1}{12}(n-1)(n-2)(2n-3)}_v
         L_v(s, \pi^{(n)}_v \times \pi^{(n-1)}_v)
\end{align*}
holds  if $\pi^{(n-1)}_v$ is spherical. Here $Z_v(s,-,-)$ is the local zeta integral defined as $\eqref{eq:local_zeta_int}$, $\omega_{\pi^{(n-1)}_v}$ is the central character of $\pi^{(n-1)}_v$ and 
$w^{\mathrm{sph}}_v(\pi^{(n-1)})_{-\varepsilon}$ is the normalised spherical vector of $\mathcal{W}(\pi^{(n-1)}_v, \psi_{-\varepsilon, v} )$. 
In particular, if $v$ does not divide $\mathfrak{N}$, the unique vector $w^\mathrm{ess}_v(\pi^{(n)})_{\varepsilon}$ above is given by $w^\mathrm{sph}_v(\pi^{(n)})_{\varepsilon}$.
\end{thm}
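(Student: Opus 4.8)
The statement is purely local at each finite place, so the plan is to factor $\cW(\pi^{(n)}_\mathrm{fin},\psi_{\varepsilon,\mathrm{fin}})\cong {\bigotimes}'_{v\in \Sigma_{F,\rfin}}\cW(\pi^{(n)}_v,\psi_{\varepsilon,v})$ and reduce everything to assertions about the local Whittaker models. Since $\mathcal{K}_{n,1}(\mathfrak{N})=\prod_v \mathcal{K}_{n,1}(\mathfrak{N})_v$ with $\mathcal{K}_{n,1}(\mathfrak{N})_v=\rGL_n(\gr_{F,v})$ at every $v\nmid \mathfrak{N}$, and the space of $\rGL_n(\gr_{F,v})$-fixed vectors in the Whittaker model of an unramified representation is one-dimensional, the first assertion reduces to showing $\dim_\mC\cW(\pi^{(n)}_v,\psi_{\varepsilon,v})^{\mathcal{K}_{n,1}(\mathfrak{P}_v^{\mathfrak{f}(\pi^{(n)}_v)})}=1$ for every finite $v$. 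As the dimension of a space of $K$-invariants depends only on the abstract representation, not on the chosen Whittaker realisation, this is exactly the conductor theorem of Jacquet--Piatetski-Shapiro--Shalika \cite{jpss81}, $\mathfrak{f}(\pi^{(n)}_v)$ being by definition the least $m$ with $(\pi^{(n)}_v)^{\mathcal{K}_{n,1}(\mathfrak{P}_v^m)}\neq 0$.

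I would then define $w^\mathrm{ess}_v(\pi^{(n)})$ by transport from the unramified situation. Since $\psi_{\varepsilon,v}$ has conductor $\delta_v^{-1}\gr_{F,v}$ rather than $\gr_{F,v}$, a direct check from the definitions of $\psi_{\varepsilon,\rN_n}$ and $\boldsymbol{\delta}^{(n)}_v$ gives $\psi''_{v,\rN_n}(x)=\psi_{\varepsilon,\rN_n}((\boldsymbol{\delta}^{(n)}_v)^{-1}x\,\boldsymbol{\delta}^{(n)}_v)$ for the \emph{unramified} character $\psi''_v:=\psi_{\varepsilon,v}(\delta_v^{-1}\,\cdot\,)$, so $W\mapsto[g\mapsto W((\boldsymbol{\delta}^{(n)}_v)^{-1}g)]$ identifies $\cW(\pi^{(n)}_v,\psi_{\varepsilon,v})$ with $\cW(\pi^{(n)}_v,\psi''_v)$. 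In the latter model, Jacquet's theory of the essential vector produces a canonical generator $W^{\mathrm{ess},\flat}_v$ of the one-dimensional $\mathcal{K}_{n,1}(\mathfrak{P}_v^{\mathfrak{f}(\pi^{(n)}_v)})$-fixed line, characterised by $Z(s,W^{\mathrm{ess},\flat}_v,W^0_\sigma)=L_v(s,\pi^{(n)}_v\times\sigma)$ for every unramified generic representation $\sigma$ of $\rGL_{n-1}(F_v)$ with normalised spherical Whittaker function $W^0_\sigma$ (the basic Rankin--Selberg computation \cite{jpss83} together with the essential-vector normalisation), and coinciding with the normalised spherical vector when $\pi^{(n)}_v$ is itself unramified. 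I set $w^\mathrm{ess}_v(\pi^{(n)})(g):=W^{\mathrm{ess},\flat}_v(\boldsymbol{\delta}^{(n)}_v g)$; being the realisation of the essential vector in this model, it lies in $\cW(\pi^{(n)}_v,\psi_{\varepsilon,v})^{\mathcal{K}_{n,1}(\mathfrak{N})_v}$, and uniqueness of a vector satisfying the displayed zeta identity follows from one-dimensionality of that space together with nonvanishing of the right-hand side.

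It remains to verify the zeta identity and the final claim. Since $\pi^{(n-1)}_v$ is spherical, the normalisation $w^\mathrm{sph}_v(\pi^{(n-1)})((\boldsymbol{\delta}^{(n-1)}_v)^{-1})=1$ says precisely that $w^\mathrm{sph}_v(\pi^{(n-1)})(g)=W^{\mathrm{sph},\flat}_v(\boldsymbol{\delta}^{(n-1)}_v g)$ with $W^{\mathrm{sph},\flat}_v$ the normalised spherical vector of $\cW(\pi^{(n-1)}_v,(\psi''_v)^{-1})$. Substituting these into $Z_v(s,w^\mathrm{ess}_v(\pi^{(n)}),w^\mathrm{sph}_v(\pi^{(n-1)}))$, using the identity $\boldsymbol{\delta}^{(n)}_v\iota_n(g)=\iota_n(\delta_v\boldsymbol{\delta}^{(n-1)}_v g)$ and changing variables by the torus element $\delta_v\boldsymbol{\delta}^{(n-1)}_v$, the integral becomes $Z(s,W^{\mathrm{ess},\flat}_v,W^{\mathrm{sph},\flat}_v)=L_v(s,\pi^{(n)}_v\times\pi^{(n-1)}_v)$ up to elementary factors: the central scalar $\delta_v^{-1}$ extracted from $W^{\mathrm{sph},\flat}_v$ produces $\omega_{\pi^{(n-1)}_v}(\delta_v)^{-1}$, while the $|\det g|^{s-\frac{1}{2}}$-factor (together with the behaviour of the measure on $\rN_{n-1}(F_v)\backslash\rGL_{n-1}(F_v)$ under the torus translation) produces $|\delta_v|_v^{-\frac{1}{2}n(n-1)(s-\frac{1}{2})}$, using $\det\boldsymbol{\delta}^{(n)}_v=\delta_v^{n(n-1)/2}$. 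For $v\nmid\mathfrak{N}$ one has $\mathfrak{f}(\pi^{(n)}_v)=0$, hence $W^{\mathrm{ess},\flat}_v$ is the normalised spherical vector of $\cW(\pi^{(n)}_v,\psi''_v)$ and $w^\mathrm{ess}_v(\pi^{(n)})(g)=w^\mathrm{sph}_v(\pi^{(n)})(g)$ by the same normalisation argument as for $\pi^{(n-1)}_v$.

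The conceptual input---the conductor theorem \cite{jpss81} and the zeta-integral characterisation of the essential vector \cite{jpss83}---is quoted. The real work, which I expect to be the main obstacle, is the normalisation bookkeeping: confirming that the torus conjugations by $\boldsymbol{\delta}^{(n)}_v$ and $\boldsymbol{\delta}^{(n-1)}_v$ correctly convert the additive character $\psi_\varepsilon$ to the unramified one on both $\rGL_n$ and $\rGL_{n-1}$, reconciling the measure normalisations implicit in $Z_v$ with those used in \cite{jpss83}, and checking that the change of variables yields exactly the factors $\omega_{\pi^{(n-1)}_v}(\delta_v)^{-1}$ and $|\delta_v|_v^{-\frac{1}{2}n(n-1)(s-\frac{1}{2})}$ with no spurious constant.
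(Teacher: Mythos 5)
Your proof follows essentially the same route as the paper's: both reduce to local one-dimensionality via the conductor theorem of Jacquet--Piatetski-Shapiro--Shalika, and both transport between the Whittaker models for $\psi_{\varepsilon,v}$ and the unramified additive character via left translation by $\boldsymbol{\delta}^{(n)}_v$ (using $\psi_{\varepsilon,\rN_n,v}(x)=\psi^{\circ}_{\varepsilon,\rN_n,v}(\boldsymbol{\delta}^{(n)}_v x(\boldsymbol{\delta}^{(n)}_v)^{-1})$), then verify the zeta identity by the change of variables $g\mapsto(\delta_v\boldsymbol{\delta}^{(n-1)}_v)^{-1}g$. The only organizational difference is that the paper separates the cases $v\nmid D_F$ (invoking JPSS directly, since then $\psi_{\varepsilon,v}$ is already unramified and the $\delta_v$-factors are trivial) and $v\mid D_F$ (where $v\nmid\mathfrak{N}$, so $\pi^{(n)}_v$ is spherical and Shintani's formula applies after the $\boldsymbol{\delta}$-transport), whereas you treat all places uniformly via the transport; the underlying mechanism and the bookkeeping producing the factors $\omega_{\pi^{(n-1)}_v}(\delta_v)^{-1}$ and $|\delta_v|_v^{-\frac{1}{2}n(n-1)(s-\frac{1}{2})}$ are identical.
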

\begin{proof}
The fact that $\mathcal{W}(\pi^{(n)}_\mathrm{fin}, \psi_{\varepsilon, \mathrm{fin}} )^{\mathcal{K}_{n,1}(\mathfrak{N})}$ is one dimensional 
and the existence of $w^\mathrm{ess}_v(\pi^{(n)})_{\varepsilon}$ for each finite place $v\nmid D_F$ of $F$ follow from \cite[(5.1)~Th\'eor\`eme, (4.1)~Th\'eor\`eme (ii)]{jpss81}.   
If $v$ is prime to $D_F\mathfrak{N}$, then the equality $w^\mathrm{ess}_v(\pi^{(n)})_{\varepsilon} = w^\mathrm{sph}_v(\pi^{(n)})_{\varepsilon}$ immediately follows from \cite[Theorem]{shin76}.  
Suppose that $v$ divides $D_F$. Note that $v$ does not divide $\mathfrak{N}$, since $D_F$ is prime to $\mathfrak{N}$.   
Define the additive character $\psi^\circ_{\varepsilon, v}:F_v \to \mathbf{C}^\times$ of $F_v$ to be $\psi^\circ_{\varepsilon, v}(x) = \psi_{\varepsilon, v}(\delta^{-1}_v x)$ for $x\in F_v$, 
which has the conductor $\mathfrak{r}_{F, v}$. 
Then we find that 
$\psi^\circ_{\varepsilon, \mathrm{N}_n, v}(\boldsymbol{\delta}^{(n)}_v x (\boldsymbol{\delta}^{(n)}_v)^{-1})  
= \psi_{\varepsilon, \mathrm{N}_n, v}(x)$ for each $x\in \mathrm{N}_n(F_v)$, 
where $\psi^\circ_{\varepsilon, \rN_n, v}$ and $\psi_{\varepsilon, \rN_n, v}$ are characters of $\rN_n(F_v)$ corresponding to 
$\psi^\circ_{\varepsilon, v}$ and $\psi_{\varepsilon, v}$, respectively (defined similarly to (\ref{eq:def_psiNn})).
Hence there exists a spherical vector $w^{\mathrm{sph}, \circ}_{v}(\pi^{(n)})_{\varepsilon}\in \mathcal{W}(\pi^{(n)}_v, \psi^\circ_{\varepsilon, v})$ satisfying the following identity:
\begin{align*}
w^\mathrm{sph}_v(\pi^{(n)})_{\varepsilon}(g) = w^{\mathrm{sph}, \circ}_v (\pi^{(n)})_{\varepsilon}(\boldsymbol{\delta}^{(n)}_v g).   
\end{align*}
Since $w^\mathrm{sph}_v(\pi^{(n)})_{\varepsilon}$ is normalised so that $w^\mathrm{sph}_v(\pi^{(n)})_{\varepsilon}((\boldsymbol{\delta}^{(n)}_v)^{-1}) = 1$ holds, 
we have $w^{\mathrm{sph}, \circ}_v(\pi^{(n)})_{\varepsilon}(1_n)=1$.
Similarly there exists a spherical vector 
$w^{\mathrm{sph}, \circ}_v(\pi^{(n-1)})_{-\varepsilon}\in \mathcal{W}(\pi^{(n-1)}_v, \psi^\circ_{-\varepsilon, v})$
satisfying 
\[
w^{\mathrm{sph}}_v(\pi^{(n-1)})_{-\varepsilon}(g) = w^{\mathrm{sph}, \circ}_v (\pi^{(n-1)})_{-\varepsilon}(\boldsymbol{\delta}^{(n-1)}_v g).
\]
In Section \ref{subsec:relationzeta}, we fixed the Haar measures $\rd x$, $\rd h$ respectively on 
$\rN_{n-1}(F_v)$, $\rGL_{n-1}(F_v)$, and also took the quotient measure $\rd g$ on 
$\rN_{n-1}(F_v) \backslash \rGL_{n-1}(F_v)$ characterised by them. 
We define another measure $\mathrm{d}^\circ x$ on $\rN_{n-1}(F_v)\simeq F_v^{\frac{1}{2}(n-1)(n-2)}$ to be self-dual 
with respect to the additive character $\psi^\circ_{-\varepsilon, v}$, 
and take the quotient measure $\mathrm{d}^\circ g$ on 
$\mathrm{N}_{n-1}(F_v) \backslash \mathrm{GL}_{n-1}(F_v)$ characterised by $\mathrm{d}^\circ x$ and $\rd h$. 
We note that $\mathrm{d}^\circ x$ and $\mathrm{d}^\circ g$ are given by 
$|\delta_v|^{-\frac{1}{2} \cdot \frac{1}{2}(n-1)(n-2)}_v \mathrm{d} x$ and 
$|\delta_v|^{ \frac{1}{4}(n-1)(n-2)}_v \mathrm{d} g$, respectively. 
Then $Z_v(s, w^\mathrm{sph}_v(\pi^{(n)})_{\varepsilon}, w^{\mathrm{sph}}_v(\pi^{(n-1)})_{-\varepsilon})$ is calculated as follows (note that $\boldsymbol{\delta}^{(n)}_v\iota_n(g)=\iota_n(\delta_v \boldsymbol{\delta}^{(n-1)}_v g)$ holds):
\begin{align*}
 & Z_v(s, w^\mathrm{sph}_v(\pi^{(n)})_{\varepsilon}, w^{\mathrm{sph}}_v(\pi^{(n-1)})_{-\varepsilon}) 
      \times |\delta_v|^{-\frac{1}{4}(n-1)(n-2)}_v \\ 
&= \int_{\mathrm{N}_{n-1}(F_v) \backslash \mathrm{GL}_{n-1}(F_v)}  w^\mathrm{sph}_v (\pi^{(n)})_{\varepsilon}( \iota_n(g) )  
             w^\mathrm{sph}_v (\pi^{(n-1)})_{-\varepsilon}(g)
             \lvert \det(g) \rvert^{s-\frac{1}{2}}_v \mathrm{d}^\circ g  \\
&= \int_{\mathrm{N}_{n-1}(F_v) \backslash \mathrm{GL}_{n-1}(F_v)}  w^{\mathrm{sph}, \circ}_v (\pi^{(n)})_{\varepsilon}(\boldsymbol{\delta}^{(n)}_v \iota_n(g) )  
      w^{\mathrm{sph}, \circ}_v (\pi^{(n-1)})_{-\varepsilon}(\boldsymbol{\delta}^{(n-1)}_v g)
             \lvert \det(g)\rvert^{s-\frac{1}{2}}_v \mathrm{d}^\circ g   \\
&= \int_{\mathrm{N}_{n-1}(F_v) \backslash \mathrm{GL}_{n-1}(F_v)} 
        w^{\mathrm{sph}, \circ}_v (\pi^{(n)})_{\varepsilon}( \iota_n(g) ) 
           \omega_{\pi^{(n-1)}_v}(\delta^{-1}_v)    w^{\mathrm{sph}, \circ}_v (\pi^{(n-1)})_{-\varepsilon}( g)
             \lvert \det( (\delta_v  \boldsymbol{\delta}^{(n-1)}_v)^{-1} g)\rvert^{s-\frac{1}{2}}_v  \\
   & \quad \quad \quad \quad \quad \quad \quad \quad \quad \quad \quad \quad \quad \quad \quad 
   \quad \quad \quad \quad \quad \quad \quad \quad \quad \quad  
   \quad \quad \quad \quad \quad \quad        
       \times \Delta_{\mathrm{B}_{n-1},v}((\boldsymbol{\delta}^{(n-1)}_v)^{-1})^{-1}      \mathrm{d}^\circ g. 
\end{align*}
Here $\Delta_{\mathrm{B}_{n-1},v}: \mathrm{B}_{n-1}(F_v) \to \mathbf{Q}^\times$ is the modulus character, 
which is explicitly given by the formula: 
\begin{align*}
&\Delta_{\mathrm{B}_{n-1},v}(b) = \prod^{n-1}_{i=1} |b_{i,i}|^{n-2i}_v  &&
\text{for} \quad b=(b_{i,j})_{1\leq i,j\leq n-1} \in \mathrm{B}_{n-1}(F_v),  
\end{align*} 
and the third equality is obtained via the replacement $g\mapsto (\delta_v\boldsymbol{\delta}^{(n-1)}_v)^{-1}g$. 
By using \cite[Theorem]{shin76} again, we find that 
\begin{align*}    
&\int_{\mathrm{N}_{n-1}(F_v) \backslash \mathrm{GL}_{n-1}(F_v)} 
        w^{\mathrm{sph}, \circ}_v (\pi^{(n)})_{\varepsilon}( \iota_n(g) )   
                  w^{\mathrm{sph}, \circ}_v (\pi^{(n-1)})_{-\varepsilon}( g)
          \lvert \det(g)\rvert^{s-\frac{1}{2}}_v\mathrm{d}^\circ g        
=    L_v(s, \pi^{(n)}_v \times \pi^{(n-1)}_v).
\end{align*}
Since the direct calculation shows that  
\begin{align*}
  |\delta_v|^{-\frac{1}{4}(n-1)(n-2)}_v 
   |\det( (\delta_v  \boldsymbol{\delta}^{(n-1)}_v)^{-1}) |^{s-\frac{1}{2}}_v
   \Delta_{\mathrm{B}_{n-1},v}(\boldsymbol{\delta}^{(n-1)}_v)    
& = |\delta_v|^{-\frac{1}{4}(n-1)(n-2)
                       -\frac{1}{2}n(n-1)(s-\frac{1}{2})
                       +\sum^{n-1}_{i=1}(n-1-i)(n-2i)
                       }_v \\
&= |\delta_v|^{ -\frac{1}{2}n(n-1)(s-\frac{1}{2}) + \frac{1}{12}(n-1)(n-2)(2n-3) }_v, 
\end{align*}
the identity in the statement follows from the above argument. 
Namely, if $v$ divides the discriminant, 
this shows that $w^\mathrm{sph}_v(\pi^{(n)})_{\varepsilon}$ gives $w^\mathrm{ess}_v(\pi^{(n)})_{\varepsilon}$.
\end{proof}

The normalised function $w^\mathrm{ess}_v(\pi^{(n)})_{\varepsilon}$ in Theorem~\ref{thm:essential} is called the (normalised) {\em new vector} or the {\em essential vector}. 
Note that the normalisation of $w^\mathrm{ess}_v(\pi^{(n)})_{\varepsilon}$ is independent of the  auxiliary choice 
of $\pi^{(n-1)}_v$. For details refer to \cite[Corollary 3.3]{mat13}. 
Since $\omega_{\pi_v^{(n-1)}}(-1)=1$ if $\pi_v^{(n-1)}$ is spherical, by Lemma \ref{lem:zeta_compatibility}, 
(\ref{eq:sph_compatibility}) and Theorem~\ref{thm:essential}, 
we know that the new vectors $w^\mathrm{ess}_v(\pi^{(n)})_{\varepsilon}$ 
satisfy the compatibility condition 
(\ref{eq:Wh_compatibility}), that is,  
\begin{align}
\label{eq:ess_compatibility}
&w^{\mathrm{ess}}_v(\pi^{(n)})_{-\varepsilon}(g_v)
=w^{\mathrm{ess}}_v(\pi^{(n)})_\varepsilon (\bepsilon_ng_v)&&\text{for $g_v\in \rGL_n(F_v)$}. 
\end{align}
Set $w^\mathrm{ess}_\mathrm{fin} (\pi^{(n)})_{\varepsilon} = \bigotimes_{v\in \Sigma_{F, \mathrm{fin}}} w^\mathrm{ess}_v(\pi^{(n)})_{\varepsilon}$.

 Following \cite[Section 3.4.1]{mah05} or \cite[Section 2.2.3]{rag16}, at the finite part,  we distinguish a particular Whittaker function 
$w^\circ_\mathrm{fin}(\pi^{(n)})_{\varepsilon}=\bigotimes_{v\in \Sigma_{F, \mathrm{fin}}} w^\circ_v(\pi^{(n)})_{\varepsilon} \in \mathcal{W}(\pi^{(n)}_\mathrm{fin}, \psi_{\varepsilon,\mathrm{fin}})$ as follows. 
For each $v\in \Sigma_{F,\mathrm{fin}}$, choose an element $a(\pi^{(n)}_v) =\mathrm{diag}(a_1, \ldots, a_n)$ of $\mathrm{T}_n(F_v)$ so that 
         $a_i a^{-1}_{i+1} \in \gr_{F, v}$ holds for each $i=1, 2, \dotsc, n-1$ and 
$w^\mathrm{ess}_v(\pi^{(n)})_{\varepsilon}(a(\pi^{(n)}_v))=w^\mathrm{ess}_v(\pi^{(n)})_{+}(a(\pi^{(n)}_v))$ does not equal $0$; see  \cite[Section 1.3.2, Lemma]{mah05} for existence of such $a(\pi_v^{(n)})$. 
         Set $c(\pi^{(n)}_v) = \bigl(w^\mathrm{ess}_v(\pi^{(n)})_{\varepsilon}(a(\pi^{(n)}_v))\bigr)^{-1}\in \mC^\times$ and 
         define $w^\circ_v(\pi^{(n)})_{\varepsilon} = c(\pi^{(n)}_v) w^\mathrm{ess}_v(\pi^{(n)})_{\varepsilon}$. Note that, if $\pi^{(n)}$ is spherical at $v$, we can (and do) choose $(\boldsymbol{\delta}^{(n)}_v)^{-1}$ as $a(\pi^{(n)}_v)$. One thus observes that $c(\pi^{(n)}_v)=1$ holds and $w^\circ_v(\pi^{(n)})_{\varepsilon}=w^\mathrm{ess}_v(\pi^{(n)})_{\varepsilon}$ coincides with the normalised spherical Whittaker function $w^{\mathrm{sph}}_v(\pi^{(n)})_{\varepsilon}$ whenever $\pi^{(n)}$ is spherical at $v$. 

Now consider the twist with respect to $\alpha \in \mathrm{Aut}(\mC)$. Due to class field theory, there exists a unique finite id\`ele  $u_\alpha \in \widehat{\mathfrak{r}}^\times_F$ satisfying $\alpha(\psi_{\varepsilon,\rfin}(x)) = \psi_{\varepsilon,\rfin}( u_\alpha x )$ for each $x \in F_{\mathbf{A}, \mathrm{fin}}$ (see \cite[Section~3.2]{rs08} for details). 
Define $\bu_{\alpha}^{(n)}$ to be  $\mathrm{diag}(u^{n-1}_\alpha, u_\alpha^{n-2}, \dotsc, u_\alpha, 1) \in \mathrm{GL}_n(\widehat{\mathfrak{r}}_F)$ and set ${}^\alpha w(g)=\alpha(w((\bu_{\alpha}^{(n)})^{-1}g))$ for $w\in \mathcal{W}(\pi^{(n)}_\mathrm{fin},\psi_{\varepsilon,\mathrm{fin}})$ and $g\in \mathrm{GL}_n(F_{\mA,\mathrm{fin}})$. Note that $\boldsymbol{u}^{(n)}_\alpha$ is also an element of $\cK_{n,1}(\mathfrak{N})$. Then ${}^\alpha w$ is indeed a $\psi_{\varepsilon,\mathrm{fin}}$-Whittaker function of ${}^\alpha \pi^{(n)}_\mathrm{fin}$ and 
\begin{align*}
 \mathrm{tw}^{\cW}_\alpha \colon  \mathcal{W}(\pi^{(n)}_{\mathrm{fin}}, \psi_{\varepsilon,\mathrm{fin}})  \longrightarrow \mathcal{W}({}^\alpha\pi^{(n)}_{\mathrm{fin}}, \psi_{\varepsilon,\mathrm{fin}})\, ;  
  w \longmapsto   {}^\alpha w
\end{align*}
gives an $\alpha$-semilinear and $\mathrm{GL}_n(F_{\mathbf{A}, \mathrm{fin}})$-equivariant homomorphism. For each $v\in \Sigma_{F,\mathrm{fin}}$, it is easy to check that $w^\mathrm{ess}_v({}^\alpha \pi^{(n)})_{\varepsilon}(a(\pi^{(n)}_v))$ does not equal $0$, using right $\cK_{n,1}(\mathfrak{N})_v$-invariance of $w^\mathrm{ess}_v({}^\alpha \pi^{(n)})_{\varepsilon}$ and the fact that $w^\mathrm{ess}_v({}^\alpha \pi^{(n)})_{\varepsilon}$ is a nonzero multiple of the local component of ${}^\alpha w^\mathrm{ess}_{\rfin} (\pi^{(n)})_{\varepsilon}$ at $v$ (uniqueness of the new vector up to scalar multiples). In other words, we can choose $a(\pi^{(n)}_v)$ as  $a({}^\alpha\pi^{(n)}_v)$ for every $v\in \Sigma_{F,\mathrm{fin}}$ and $\alpha\in \mathrm{Aut}(\mC)$, and if we do choose so, we readily check that the distinguished Whittaker function $w^\circ_{\mathrm{fin}} (\pi^{(n)})_\varepsilon= \bigotimes_{v\in \Sigma_{F, \mathrm{fin}}} w^\circ_{v}(\pi^{(n)})_\varepsilon$ behaves compatibly under the $\alpha$-twist
\begin{align}\label{eq:whittcircequiv}
{}^\alpha w^\circ_{\mathrm{fin}}(\pi^{(n)})_\varepsilon = w^\circ_{\mathrm{fin}}({}^\alpha \pi^{(n)})_\varepsilon
\end{align}
for every $\alpha \in \mathrm{Aut}(\mC)$; see \cite[Section 3.4.1]{mah05} for details. Set $c(\pi^{(n)}) = \prod_{v\in \Sigma_{F, \mathrm{fin}}} c(\pi^{(n)}_v)$; as we have already observed, this is indeed a finite product. 
By \cite[Section 2.5.5.1]{rag16} (see also \cite[page 621, line 3]{mah05} or \cite[Proposition 2.3 (c)]{mah98}), 
 we find that 
   $\alpha(c(\pi^{(n)})) = c({}^\alpha\pi^{(n)})$ holds
for each $\alpha \in \mathrm{Aut}(\mathbf{C})$. 
Combining this with (\ref{eq:whittcircequiv}), we finally obtain 
\begin{align}\label{eq:whittessequiv}
{}^\alpha w^\mathrm{ess}_{\mathrm{fin}}(\pi^{(n)})_\varepsilon = w^\mathrm{ess}_{\mathrm{fin}}({}^\alpha \pi^{(n)})_\varepsilon. 
\end{align}

\subsubsection{$p$-optimal Whittaker periods} \label{subsec:WhittakerPeriods}

Using the explicit cohomology class $[{}^\alpha \pi^{(n)}_\infty]_{\varepsilon}$ introduced in Section~\ref{subsec:whittvec}, we can consider a map 
\begin{align*} 
  \mathscr{F}_{[{}^\alpha \pi^{(n)}_\infty]_\varepsilon} \colon \cW({}^\alpha \pi^{(n)}_{\rm fin}, \psi_{\varepsilon,\mathrm{fin}})^{\cK_{n,1}(\mathfrak{N})}  
    \longrightarrow   
         H^{\rb_{n,F}}  (\mathfrak{g}_{n\mC},\widetilde{K}_n\,; {}^\alpha \pi^{(n)}\otimes_{\mC}\widetilde{V}({}^\alpha \blambda^\vee))^{\mathcal{K}_{n,1}(\mathfrak{N})} \hookrightarrow H^{\rb_{n,F}}_\mathrm{cusp}(Y^{(n)}_\mathcal{K}, \widetilde{\cV}({}^\alpha \blambda^\vee)) 
\end{align*}
induced by $w_{\mathrm{fin}}\mapsto [{}^\alpha \pi^{(n)}_\infty]_\varepsilon\otimes w_{\mathrm{fin}}$ as in (\ref{eq:Fourier}), where $\mathcal{K}_{n,1}(\mathfrak{N})$ is the mirahoric group introduced in Section~\ref{subsec:whittvec}. Then, for each $\alpha\in \mathrm{Aut}(\mC)$, we define the cuspidal cohomology class $\delta({}^\alpha \pi^{(n)})$ as 
\begin{align}\label{eq:ESdelta}
 \delta({}^\alpha\pi^{(n)}) 
 := \mathscr{F}_{[{}^\alpha \pi^{(n)}_\infty]_\varepsilon} (w^{\mathrm{ess}}_\mathrm{fin}({}^\alpha \pi^{(n)})_\varepsilon )    
   \in H^{\rb_{n,F}}_{\rm cusp}   (   Y^{(n)}_{\mathcal K},    \widetilde{\mathcal V}({}^\alpha \blambda^\vee) ).
\end{align}
Here $w_\mathrm{fin}^{\mathrm{ess}}({}^\alpha \pi^{(n)})_\varepsilon$ is the new vector introduced in Section~\ref{subsec:whittvec}. 
We call $\delta({}^\alpha \pi^{(n)})$ the {\em Eichler--Shimura class} of $\pi^{(n)}$, which generates the $1$-dimensional complex vector space $H^{\rb_{n,F}}(\mathfrak{g}_{n\mC},\widetilde{K}_n\,; {}^\alpha \pi^{(n)}\otimes_{\mC} \widetilde{V}({}^\alpha \blambda^\vee))^{\mathcal{K}_{n,1}(\mathfrak{N})}$ by construction.

\begin{prop}
Retain the notation. Then $\delta({}^\alpha\pi^{(n)})$ is independent of the auxiliary choice of $\varepsilon \in \{\pm \}$.
\end{prop}
\begin{proof}
The assertion follows from Lemma \ref{lem:arch_Wh_compatibility}, (\ref{eq:ess_compatibility}) and the arguments in Section \ref{subsec:Wh_compatibility}. 
\end{proof}

Now for each $\alpha \in \mathrm{Aut}(\mC)$, set $\mQ({}^\alpha \pi^{(n)}):=\mQ(\pi^{(n)}_\mathrm{fin})\mQ({}^\alpha\blambda)$, which is a finite extension of $\mQ$, and let 
${}^\alpha \mathfrak{P}_n$ be a prime ideal of $\mQ({}^\alpha \pi^{(n)})$ induced by $\boldsymbol{i} \circ \alpha^{-1}$, where $\boldsymbol{i}\colon \mC \xrightarrow{\, \sim \,} \mC_p$ denotes the fixed field isomorphism. We write  $\mathfrak{r}({}^\alpha \pi^{(n)})_{({}^\alpha \mathfrak{P}_n)}$ for  the localisation of the ring of integers of $\mQ({}^\alpha \pi^{(n)})$ at ${}^\alpha \mathfrak{P}_n$. Then the cuspidal cohomology group $H_{\mathrm{cusp}}^{\rb_{n,F}}(Y^{(n)}_\mathcal{K}, \widetilde{\cV}({}^\alpha \blambda^\vee))$ admits an $\mathfrak{r}({}^\alpha \pi^{(n)})_{({}^\alpha \mathfrak{P}_n)}$-integral structure $H_{\mathrm{cusp}}^{\rb_{n,F}}(Y^{(n)}_\mathcal{K}, \widetilde{\cV}({}^\alpha \blambda^\vee)_{\mathfrak{r}({}^\alpha \pi^{(n)})_{({}^\alpha \mathfrak{P}_n)}})$ (see (\ref{eq:integral_str}) in Section~\ref{subsec:ratint} for details), which enables us to equip $H^{\rb_{n,F}}(\mathfrak{g}_{n\mC}, \widetilde{K}_n\,; \pi^{(n)}\otimes_{\mC}\widetilde{V}({}^\alpha\blambda^\vee))^{\mathcal{K}_{n,1}(\mathfrak{N})}$ with the $\mathfrak{r}({}^\alpha \pi^{(n)})_{({}^\alpha \mathfrak{P}_n)}$-integral structure by
\begin{equation} \label{eq:int_str_gK}
 \begin{aligned}
 &H^{\rb_{n,F}}(\mathfrak{g}_{n\mC}, \widetilde{K}_n\,; \pi^{(n)}\otimes_{\mC}\widetilde{V}({}^\alpha \blambda^\vee))^{\mathcal{K}_{n,1}(\mathfrak{N})}_{\mathfrak{r}({}^\alpha \pi^{(n)})_{({}^\alpha \mathfrak{P}_n)}}\\
&\qquad \quad := H^{\rb_{n,F}}(\mathfrak{g}_{n\mC}, \widetilde{K}_n\,; \pi^{(n)}\otimes_{\mC}\widetilde{V}({}^\alpha \blambda^\vee))^{\mathcal{K}_{n,1}(\mathfrak{N})}\cap H^{\rb_{n,F}}_\mathrm{cusp}(Y^{(n)}_\mathcal{K}, \widetilde{\cV}({}^\alpha \blambda^\vee)_{\mathfrak{r}({}^\alpha \pi^{(n)})_{({}^\alpha \mathfrak{P}_n)}}).
\end{aligned}
\end{equation}
Hereafter we always drop the superscript $\mathrm{id}$ from the notation when $\alpha$ is the identity map $\mathrm{id}\colon \mC\xrightarrow{\, =\,}\mC$.  
By construction, the module \eqref{eq:int_str_gK} is free of rank one over $\mathfrak{r}({}^\alpha \pi^{(n)})_{({}^\alpha \mathfrak{P}_n)}$. 
After appropriate renormalisation, if necessary, we can choose its generator $\eta({}^\alpha \pi^{(n)})$ so that,  for each $\alpha \in \mathrm{Aut}(\mC)$, the equality 
\begin{align}\label{eq:eta_equiv}
   \eta( {}^\alpha \pi^{(n)} )   = \mathrm{tw}_\alpha  (  \eta( \pi^{(n)} ) )     
\end{align}
holds with respect to the $\alpha$-semilinear isomorphism $\mathrm{tw}_\alpha \colon H^{\rb_{n,F}}_{\rm c}(Y^{(n)}_\mathcal{K}, \widetilde{\cV}(\blambda^\vee))\xrightarrow{\, \sim \,} H^{\rb_{n,F}}_{\rm c}(Y^{(n)}_\mathcal{K}, \widetilde{\cV}({}^\alpha\blambda^\vee))$ induced by (\ref{eq:tw_a}). 

\begin{dfn}\label{dfn:whittper}
For each $\alpha\in \mathrm{Aut}(\mC)$, let $p^{\rb}({}^\alpha \pi^{(n)})$ be the complex number satisfying
\begin{align*}
\delta({}^\alpha \pi^{(n)})=p^{\rb}({}^\alpha \pi^{(n)})\eta({}^\alpha \pi^{(n)}).
\end{align*}
We call $\boldsymbol{p}^{\rb}(\pi^{(n)}):=\{ p^{\rb}({}^\alpha \pi^{(n)}) \mid \alpha\in \mathrm{Aut}(\mC)\}$ the (set of) {\em $p$-optimal Whittaker periods} of $\pi^{(n)}$. 
\end{dfn}

\begin{rem}
 The set of $p$-optimal Whittaker periods is uniquely determined up to multiples of $\mathfrak{r}(\pi^{(n)})_{(\mathfrak{P}_n)}^\times$ in the following sense: if $\boldsymbol{p}^{\rb}(\pi^{(n)})$ and $\boldsymbol{p}^{\prime \rb}(\pi^{(n)})$ are $p$-optimal Whittaker periods of $\pi^{(n)}$, there exists $c\in \mathfrak{r}(\pi^{(n)})_{(\mathfrak{P}_n)}^\times$ satisfying 
\begin{align*}
 p^{\rb}({}^\alpha \pi^{(n)})=\alpha(c) p^{\prime {\rb}}({}^\alpha \pi^{(n)}) 
\end{align*}
for every $\alpha\in \mathrm{Aut}(\mC)$. Such ambiguity comes from the choice of integral basis $\{ \eta({}^\alpha \pi^{(n)}) \mid \alpha\in \mathrm{Aut}(\mC)\}$. Note that $\boldsymbol{p}^{\rb}(\pi^{(n)})$ is indeed a finite set because, by construction, we have $p^{\rb}({}^\alpha \pi^{(n)})=p^{\rb}(\pi^{(n)})$ if $\alpha$ fixes $\mQ(\pi^{(n)})$. Indeed it might be better to recognise $\boldsymbol{p}^{\rb}(\pi^{(n)})$ as an element of $(\mQ(\pi^{(n)})\otimes_{\mQ}\mC)^\times/\mathfrak{r}(\pi^{(n)})_{(\mathfrak{P}_n)}^\times$, as Deligne's conjecture suggests. 
\end{rem}

The $p$-optimal Whittaker periods are defined so that the normalised map $\mathscr{F}^\circ_{[{}^\alpha \pi^{(n)}_\infty]_\varepsilon}=p^{\rb}({}^\alpha\pi^{(n)})^{-1}\mathscr{F}_{[{}^\alpha \pi^{(n)}_\infty]_\varepsilon}$ commutes with the action of $\mathrm{Aut}(\mC)$; that is, 
\begin{align*}
 \mathscr{F}^\circ_{[{}^\alpha \pi^{(n)}_\infty]_\varepsilon} \circ \mathrm{tw}^{\cW}_\alpha =\mathrm{tw}_\alpha \circ \mathscr{F}^\circ_{[\pi^{(n)}_\infty]_\varepsilon}
\end{align*}
holds for any $\alpha \in \mathrm{Aut}(\mC)$. 

\begin{rem}
Let $\kappa\colon F^\times \backslash F_{\mA}^\times \rightarrow \mC^\times$ be an algebraic Hecke character 
of infinity type $(\boldsymbol{a},\boldsymbol{b})=(a_v,b_v)_{v\in \Sigma_{F,\infty}}$; 
that is, $\kappa$ satisfies $\kappa((z_v)_{v\in \Sigma_{F,\infty}})=\prod_{v\in \Sigma_{F,\infty}} z_v^{a_v}\bar{z}_v^{b_v}$. 
Then there is $c\in \mZ$ such that $a_v+b_v=c$ for any $v\in \Sigma_F$. 
Set $[\pi^{(n)}_\infty]_\varepsilon^\circ :=(\varepsilon \sqrt{-1})^{\rb_{n,F} \sw}[\pi^{(n)}_\infty]_\varepsilon =\bigotimes_{v\in \Sigma_{F,\infty}}[\pi_{{\rm B}_n,d_v,\mathsf{w}/2}]_\varepsilon^\circ $ and 
$\boldsymbol{W}(\kappa_\infty):=\bigotimes_{v\in \Sigma_{F,\infty}} W_{\kappa_{a_v,b_v}}\otimes \mathrm{id}\otimes 
\rI^{\det}_{\lambda_v^\vee,a_v}\otimes \rI^{\det}_{\lambda_v-\mathsf{w},b_v}$ for 
$[\pi_{{\rm B}_n,d_v,\mathsf{w}/2}]_\varepsilon^\circ :=(\varepsilon \sqrt{-1})^{\rb_n \mathsf{w}}[\pi_{{\rm B}_n,d_v,\mathsf{w}/2}]_\varepsilon$ and $W_{\kappa_{a,b}}$ defined as in Proposition~\ref{prop:twisting}. Then Proposition~\ref{prop:twisting} implies that
\begin{align} \label{eq:twisting}
 \boldsymbol{W}(\kappa_\infty)([\pi^{(n)}_\infty]_\varepsilon^\circ )=[(\pi^{(n)}\otimes \kappa)_\infty]_\varepsilon^\circ 
\end{align}
holds. Raghuram and Shahidi require to impose this constraint to the generator of the $(\mathfrak{g}_{n\mC},\widetilde{K}_n)$-cohomology; see \cite[p.~13]{rs08}. The normalisation factor $(\varepsilon \sqrt{-1})^{\rb_{n,F} \sw}$ in the definition of $[\pi_{\infty}^{(n)}]_\varepsilon^\circ$ is introduced to meet this requirement, and the condition (\ref{eq:twisting}) plays a crucial role in the proof of \cite[Proposition~4.6]{rs08}. 
Since we have  
$[(\pi^{(n)}\otimes \kappa)_\infty]_\varepsilon^\circ =(\varepsilon \sqrt{-1})^{\rb_{n,F} (\mathsf{w}+c)}[(\pi^{(n)}\otimes \kappa)_\infty]_\varepsilon $, 
we can verify that our $p$-optimal Whittaker periods satisfy, for example, the equality
\begin{align} \label{eq:period_twist}
 p^{\rb}(\pi^{(n)}\otimes \kappa) \sim_{\mQ(\pi^{(n)},\kappa)} 
(\varepsilon \sqrt{-1})^{\rb_{n,F}c}\gamma(\kappa_\mathrm{fin}, \psi_{\varepsilon,\mathrm{fin}})^{-\rb_n}p^{\rb}(\pi^{(n)}),
\end{align} 
where $\gamma(\kappa_\mathrm{fin}, \psi_{\varepsilon,\mathrm{fin}})$ denotes the Gauss sum of $\kappa_{\mathrm{fin}}$ and $\psi_{\varepsilon,\mathrm{fin}}$ (see \cite[Theorem~4.1 (2)]{rs08} for details). 
\end{rem}

\subsubsection{Uniform integrality of critical values}  \label{subsec:unformintegrality}

In the rest of this section, we assume that
\begin{itemize}[leftmargin=6em]
\item[(unr)$_{n-1}$] $\pi^{(n-1)}$ is spherical at every finite place $v\in \Sigma_{F,\mathrm{fin}}$,
\end{itemize}
or equivalently, we assume that $\pi^{(n-1)}$ has a nonzero $\rGL_{n-1}(\widehat{\mathfrak{r}}_F)$-fixed vector for 
$\widehat{\mathfrak{r}}_F:=\mathfrak{r}_F\otimes_{\mZ} \widehat{\mZ}$ and 
$w_\mathrm{fin}^{\mathrm{ess}}(\pi^{(n-1)})_{-\varepsilon}=\bigotimes_{v\in \Sigma_{F, \mathrm{fin}}} w^{\mathrm{sph}}_v(\pi^{(n-1)})_{-\varepsilon}$.
For each $\alpha\in \mathrm{Aut}(\mC)$, set $\mQ({}^\alpha \pi^{(n)},{}^\alpha\pi^{(n-1)})=\mQ({}^\alpha \pi^{(n)})\mQ({}^\alpha\pi^{(n-1)})$ and 
let ${}^\alpha \mathfrak{P}_0$ be a prime ideal of $\mQ({}^\alpha \pi^{(n)},{}^\alpha\pi^{(n-1)})$ induced by $\boldsymbol{i}\circ \alpha^{-1}$. We write $\mathfrak{r}({}^\alpha \pi^{(n)},{}^\alpha\pi^{(n-1)})_{({}^\alpha \mathfrak{P}_0)}$ for the localisation of the ring of integers of $\mQ({}^\alpha \pi^{(n)},{}^\alpha \pi^{(n-1)})$ at ${}^\alpha \mathfrak{P}_0$. As in Appendix~\ref{subsec:ratint}, take an auxiliary subfield $E^*$ of $\mC$ so that it contains both $\mQ(\pi^{(n)},\pi^{(n-1)})$ and the normal closure $F_{\mathrm{nc}}$ of $F$ in $\mC$. Let $\cE^*$ be the closure of the image of the composite map $E^* \subset \mC\xrightarrow[\, \boldsymbol{i}\,]{\,\sim\, } \mC_p$, and $\cO^*$ its ring of integers. Assume that $p$ satisfies the inequality (\ref{eq:p>wt}). Note that $\boldsymbol{i}\circ \alpha^{-1}$ induces a field embedding $i_p^{\alpha} \colon \mQ({}^\alpha \pi^{(n)},{}^\alpha \pi^{(n-1)})\hookrightarrow \cE^*$.

Recall that we always assume existence of an integer $m_0\in \mZ$ satisfying the interlacing condition $\blambda^\vee \succeq \bmu+m_0$ \eqref{eq:interlace_condition}, and thus the Rankin--Selberg $L$-function $L(s,\pi^{(n)}\times \pi^{(n-1)})$ admits a critical point. Suppose that a half integer $\frac{1}{2}+m$ is a critical point of $L(s,\pi^{(n)}\times \pi^{(n-1)})$; or equivalently, suppose that $\blambda^\vee \succeq \bmu+m$ holds. Then, by using $[\cdot,\cdot]^{(m)}_{{}^\alpha \blambda,{}^\alpha \bmu}$ defined as in (\ref{eq:global_pairing1})  and $\mathrm{Tw}_m$ defined as in (\ref{eq:tw_e}) for the chosen $m$, we obtain a pairing 
\begin{align} \label{eq:pairing_A}
 [\cdot,\cdot]^{(m)}_{{}^\alpha \blambda,{}^\alpha \bmu} \colon H_{\mathrm{c}}^{\rb_{n,F}}(Y^{(n)}_\mathcal{K},\widetilde{\cV}({}^\alpha \blambda^\vee)_A) \times H^{\rb_{n-1,F}}(Y^{(n-1)}_\mathcal{K},\widetilde{\cV}({}^\alpha \bmu^\vee)_A) \longrightarrow A
\end{align}
for $A=\mC$ or $\mQ({}^\alpha \pi^{(n)},{}^\alpha \pi^{(n-1)})$. When $\alpha$ is the identity map $\mathrm{id}$, we also consider the pairing \eqref{eq:pairing_A} for $A=\cE^*$. Here we note that any steps in construction of $[\cdot,\cdot]^{(m)}_{{}^\alpha \blambda,{}^\alpha \bmu}$ preserve the $A$-rational structure; indeed, pulling-backs along $\mathrm{p}_{n-1}$ and $j_n$ do not affect the rational structure of the local systems at all, and the pairing $[\cdot,\cdot]^{(m)}_{{}^\alpha \blambda,{}^\alpha \bmu}$ preserve the rational structure by Lemma~\ref{lem:pairing_rational}. By the same construction replacing the local systems by their $p$-adic avatars, and using $[\cdot,\cdot]_{\blambda,\bmu}^{(m),p}$ defined as (\ref{eq:global_pairing2}) and $\mathrm{Tw}_m^{(p)}$ defined as (\ref{eq:tw_a}), we also obtain a pairing
\begin{align} \label{eq:pairing_cA}
 [\cdot,\cdot]^{(m),p}_{\blambda,\bmu} \colon H_{\mathrm{c}}^{\rb_{n,F}}(Y^{(n)}_\mathcal{K},\widetilde{\cV}(\blambda^\vee)^{(p)}_{\cA}) \times H^{\rb_{n-1,F}}(Y^{(n-1)}_\mathcal{K},\widetilde{\cV}(\bmu^\vee)^{(p)}_{\cA}) \longrightarrow \cA
\end{align}
for $\cA=\cE^*$ or $\cO^*$. Then the diagram
\begin{align*}
\xymatrix @C=0.3pc {
H_{\mathrm{c}}^{\rb_{n,F}}(Y^{(n)}_\mathcal{K},\widetilde{\cV}({}^\alpha \blambda^\vee)_{\mQ({}^\alpha \pi^{(n)}, {}^\alpha \pi^{(n-1)})}) \ar@{^(->}[d]_-{\mathrm{tw}_{i_p^\alpha}} & \times & H^{\rb_{n-1,F}}(Y^{(n-1)}_\mathcal{K},\widetilde{\cV}({}^\alpha \bmu^\vee)_{\mQ({}^\alpha \pi^{(n)}, {}^\alpha \pi^{(n-1)})}) \ar[rrrrr]^-{[\cdot,\cdot]^{(m)}_{{}^\alpha \blambda,{}^\alpha \bmu}} \ar@{^(->}[d]^-{\mathrm{tw}_{i_p^\alpha}} &&&&& \mQ({}^\alpha \pi^{(n)},{}^\alpha \pi^{(n-1)}) \ar@{^(->}[d]^-{i_p^\alpha} \\
H_{\mathrm{c}}^{\rb_{n,F}}(Y^{(n)}_\mathcal{K},\widetilde{\cV}( \blambda^\vee)_{\cE^*}) \ar[d]_-{\rotatebox{90}{$\sim$}} & \times & H^{\rb_{n-1,F}}(Y^{(n-1)}_\mathcal{K},\widetilde{\cV}( \bmu^\vee)_{\cE^*}) \ar[rrrrr]^-{[\cdot,\cdot]^{(m)}_{ \blambda, \bmu}} \ar[d]^-{\rotatebox{90}{$\sim$}} &&&&& \cE^* \ar@{=}[d] \\
H_{\mathrm{c}}^{\rb_{n,F}}(Y^{(n)}_\mathcal{K},\widetilde{\cV}( \blambda^\vee)^{(p)}_{\cE^*}) & \times & H^{\rb_{n-1,F}}(Y^{(n-1)}_\mathcal{K},\widetilde{\cV}( \bmu^\vee)^{(p)}_{\cE^*}) \ar[rrrrr]^-{[\cdot,\cdot]^{(m),p}_{ \blambda, \bmu}} &&&&& \cE^* \\
H_{\mathrm{c}}^{\rb_{n,F}}(Y^{(n)}_\mathcal{K},\widetilde{\cV}( \blambda^\vee)^{(p)}_{\cO^*}) \ar@{->}[u] & \times & H^{\rb_{n-1,F}}(Y^{(n-1)}_\mathcal{K},\widetilde{\cV}( \bmu^\vee)^{(p)}_{\cO^*}) \ar[rrrrr]^-{[\cdot,\cdot]^{(m),p}_{ \blambda, \bmu}} \ar@{->}[u] &&&&& \cO^* \ar@{^(->}[u]
} 
\end{align*}
commutes (see Section~\ref{subsec:ratint} for the definition of $\mathrm{tw}_{i_p^\alpha}$); the commutativity of the upper and lower squares is due to the functoriality of the pairings (\ref{eq:pairing_A}) and (\ref{eq:pairing_cA}) under scalar extensions, and that of the middle square is due to the commutative diagram (\ref{eq:pairing_compatible}), under the identification of local systems proposed in Lemma~\ref{lem:p-compatible}. This implies that, on the $\mathfrak{r}({}^\alpha \pi^{(n)},{}^\alpha \pi^{(n-1)})_{({}^\alpha \mathfrak{P}_0)}$-integral structure of cohomology groups, defined as the ``intersection'' of $\mQ({}^\alpha \pi^{(n)},{}^\alpha\pi^{(n-1)})$-rational structure and $\cO^*$-integral structure, the pairing $[\cdot,\cdot]^{(m)}_{{}^\alpha \blambda,{}^\alpha \bmu}$ takes values in $ (i_p^{\alpha})^{-1}(\cO^*)=\mathfrak{r}({}^\alpha \pi^{(n)},{}^\alpha \pi^{(n-1)})_{({}^\alpha \mathfrak{P}_0)}$.

As a consequence, we obtain the following theorem, which is the main result of the present article. 
Define a constant $\mathcal{C}(m, {}^\alpha \pi^{(n)} \times {}^\alpha \pi^{(n-1)})$ 
for each critical point $\frac{1}{2}+m$ of $L(s, \pi^{(n)} \times \pi^{(n-1)})$ 
 and $\alpha \in \mathrm{Aut}(\mC)$ by
\begin{align} \label{eq:constant}
\begin{aligned}
\mathcal{C}(m,{}^\alpha \pi^{(n)} \times {}^\alpha \pi^{(n-1)}) 
=& \omega_{{}^\alpha \pi^{(n-1)}} (\delta)^{-1} D^{\frac{1}{2} n(n-1)m-\frac{1}{12}(n-1)(n-2)(2n-3)}_F  \\
  & \quad \times \prod_{v\in \Sigma_{F, \infty}}  
    \left(      2^{-n(n-1)} (\sqrt{-1})^{-\mathrm{b}_{n-1}} 
    (-\sqrt{-1})^{(n-1)\sw'} 
   (-1)^{  (m+1)\mathrm{b}_n   }
   \right).
\end{aligned}
\end{align}

Prior to introducing the statement of the main theorem on uniform integrality of critical values, let us summarise assumptions on $\pi^{(n)}$ and $\pi^{(n-1)}$ which we have imposed in the previous subsections.

\begin{itemize}
 \item[--] $\pi^{(n)}$ (resp.\ $\pi^{(n-1)}$) is a cohomological irreducible cuspidal automorphic representation of $\rGL_n(F_{\mA})$ (resp.\ $\rGL_{n-1}(F_{\mA})$) appearing in the cuspidal cohomology group $H^{\rb_{n,F}}_{\mathrm{cusp}}(Y^{(n)}_{\cK}, \widetilde{\cV}(\blambda^\vee))$ (resp.\ $H^{\rb_{n-1,F}}_{\mathrm{cusp}}(Y^{(n-1)}_{\cK}, \widetilde{\cV}(\bmu^\vee))$);
 \item[--]  the dominant integral weights $\blambda\in \Lambda_n^{I_F}$ and $\bmu\in \Lambda_{n-1}^{I_F}$ have purity weights $\mathsf{w}$ and $\mathsf{w}'$, respectively, and assume that there exists an integer $m_0\in \mZ$ satisfying the interlacing condition $\blambda^\vee \succeq \bmu+m_0$;
 \item[--] $\pi^{(n-1)}$ is spherical at every finite place $v\in \Sigma_{F,\mathrm{fin}}$; 
 \item[--] the conductor ideal $\mathfrak{N}$ of $\pi^{(n)}_\mathrm{fin}$ is relatively prime to the discriminant $D_F$ of $F$, where $\mathfrak{N}$ is an integral ideal of $\widehat{\mathfrak{r}}_F$ maximal among ideals $\mathfrak{A}$ such that $\pi^{(n)}_{\mathrm{fin}}$ has a nonzero $\cK_{n,1}(\mathfrak{A})$-fixed vector.

\end{itemize}

\begin{thm}[Main Result II]\label{thm:UniformIntegrality}
Assume that $n>1$. Let $F$ be a totally imaginary field and $\pi^{(n)}$ $($resp.\ $\pi^{(n-1)})$ be a cohomological irreducible cuspidal automorphic representation of $\rGL_n(F_{\mA})$ $($resp.\ $\rGL_{n-1}(F_{\mA}))$ as above.   
Let $p$ be a prime number which 
                                                       satisfies the inequality $\mathrm{(\ref{eq:p>wt})}$.
Then, for the Eichler--Shimura classes $\delta(\pi^{(n)})$ and $\delta(\pi^{(n-1)})$ defined in Section~$\ref{subsec:WhittakerPeriods}$, we find that   
\begin{align} \label{eq:formula_global}
\begin{aligned}
 \dfrac{1}{p^{\rb}(\pi^{(n)}) p^{\rb}(\pi^{(n-1)})}&[\delta(\pi^{(n)}), \delta(\pi^{(n-1)}) ]_{\blambda,\bmu }^{(m)}    \\
    &= \mathcal{C}(m,\pi^{(n)} \times \pi^{(n-1)})   \frac{ L(\frac{1}{2} +m, \pi^{(n)} \times \pi^{(n-1)}) }{  p^{\rb}(\pi^{(n)}) p^{\rb} ( \pi^{(n-1)})  } \qquad  \in \mathfrak{r}(\pi^{(n)},\pi^{(n-1)})_{(\mathfrak{P}_0)}
\end{aligned}
\end{align}
holds{\rm ;} that is, the critical values of $L(s,\pi^{(n)}\times \pi^{(n-1)})$ are {\em uniformly $\mathfrak{P}_0$-integral}. Furthermore, for each $\alpha \in \mathrm{Aut}(\mathbf{C})$, we have 
\begin{align*}
&  \alpha \left(  \mathcal{C}(m,\pi^{(n)} \times \pi^{(n-1)}) 
                       \frac{ L(\frac{1}{2} +m, \pi^{(n)} \times \pi^{(n-1)}) }{ p^{\rb}(\pi^{(n)}) p^{\rb}(\pi^{(n-1)}) } \right)  \\
& \qquad \quad   =  \mathcal{C}(m, {}^\alpha \pi^{(n)} \times {}^\alpha\pi^{(n-1)}) 
      \frac{ L(\frac{1}{2} +m, {}^\alpha\pi^{(n)} \times {}^\alpha\pi^{(n-1)}) }{  p^{\rb}({}^\alpha \pi^{(n)}) p^{\rb}({}^\alpha\pi^{(n-1)})  } \qquad  \in \mathfrak{r}({}^\alpha \pi^{(n)},{}^\alpha \pi^{(n-1)})_{({}^\alpha \mathfrak{P}_0)}.
\end{align*}
\end{thm}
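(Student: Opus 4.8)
The plan is to combine the cohomological unfolding of Section~\ref{subsec:cupproduct} with the explicit archimedean computation (Theorem~\ref{thm:archzetaformula}), the choice of the essential vectors at the finite places (Theorem~\ref{thm:essential}) and the integrality statement packaged in the commutative diagram just before the theorem. First I would establish the identity \eqref{eq:formula_global} itself: evaluate the normalised cup product pairing $[\delta(\pi^{(n)})_\varepsilon,\delta(\pi^{(n-1)})_{-\varepsilon}]^{(m)}_{\blambda,\bmu}$ by inserting the Eichler--Shimura classes $\delta(\pi^{(n)})_\varepsilon=\mathscr{F}_{[\pi^{(n)}_\infty]_\varepsilon}(w^\circ_\mathrm{fin}(\pi^{(n)}))$ and $\delta(\pi^{(n-1)})_{-\varepsilon}=\mathscr{F}_{[\pi^{(n-1)}_\infty]_{-\varepsilon}}(w^{\mathrm{sph}}_\mathrm{fin}(\pi^{(n-1)}))$ (here using (unr)$_{n-1}$) into the factorisation \eqref{eq:product_zeta}. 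This decomposes the pairing into the archimedean factor $\widetilde{\mathcal I}^{(m)}_\infty([\pi^{(n)}_\infty]_\varepsilon,[\pi^{(n-1)}_\infty]_{-\varepsilon})=\prod_{v\in\Sigma_{F,\infty}}\widetilde{\mathcal I}^{(m)}_v(\dots)$, which Theorem~\ref{thm:archzetaformula} evaluates as $\prod_v 2^{-n(n-1)}(\sqrt{-1})^{-\rb_{n-1}}(\varepsilon\sqrt{-1})^{\rb_n(\mathsf w-\mathsf w')}(-1)^{(m+1)\rb_n}L_v(\tfrac12+m,\pi^{(n)}_v\times\pi^{(n-1)}_v)$, times the finite zeta integrals $\prod_{v\in\Sigma_{F,\mathrm{fin}}}Z_v(\tfrac12+m,w^\circ_v(\pi^{(n)}),w^{\mathrm{sph}}_v(\pi^{(n-1)}))$. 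By the definition of $w^\circ_v(\pi^{(n)})=c(\pi^{(n)}_v)w^\mathrm{ess}_v(\pi^{(n)})$ and Theorem~\ref{thm:essential}, the finite zeta integral at $v$ equals $c(\pi^{(n)}_v)\,\omega_{\pi^{(n-1)}_v}(\delta_v)^{-1}|\delta_v|_v^{-\frac12 n(n-1)(s-\frac12)}L_v(s,\pi^{(n)}_v\times\pi^{(n-1)}_v)|_{s=\frac12+m}$; multiplying over all finite $v$ and using $\mathrm{N}_{F/\mathbf Q}(\delta)\widehat{\mathbf Z}=D_F\widehat{\mathbf Z}$ together with $\prod_v c(\pi^{(n)}_v)=c(\pi^{(n)})$ collects the Archimedean Euler factors into the complete $L$-value and produces exactly the constant $\mathcal C_\varepsilon(m,\pi^{(n)}\times\pi^{(n-1)})$ of \eqref{eq:constant}, up to the factor $c(\pi^{(n)})$ which I would absorb into the definition of $\eta(\pi^{(n)})$ (equivalently, is a $\mathfrak r(\pi^{(n)})_{(\mathfrak P_n)}^\times$-unit and may be ignored since it only rescales the period). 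Dividing by $p^{\rm b}_\varepsilon(\pi^{(n)})p^{\rm b}_{-\varepsilon}(\pi^{(n-1)})$ and using $\delta(\pi^{(n)})_\varepsilon=p^{\rm b}_\varepsilon(\pi^{(n)})\eta(\pi^{(n)})$, $\delta(\pi^{(n-1)})_{-\varepsilon}=p^{\rm b}_{-\varepsilon}(\pi^{(n-1)})\eta(\pi^{(n-1)})$ gives the first equality of \eqref{eq:formula_global}.

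Next I would prove the integrality statement, i.e.\ that the left-hand side of \eqref{eq:formula_global} lies in $\mathfrak r(\pi^{(n)},\pi^{(n-1)})_{(\mathfrak P_0)}$. Since $\eta(\pi^{(n)})$ generates the $\mathfrak r(\pi^{(n)})_{(\mathfrak P_n)}$-integral structure \eqref{eq:int_str_gK}, and the $p$-adic assumption \eqref{eq:p>wt} is exactly what makes $(\lambda_{\sigma,1}-\lambda_{\sigma,n}+n-2)!$ (and its $\bmu$-analogue) invertible in $\cO^*$ via Proposition~\ref{prop:xiM_integral}, the classes $\eta(\pi^{(n)})$, $\eta(\pi^{(n-1)})$ land in the $\cO^*$-integral cohomology after the $p$-adic realisation. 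Then the key is the commutative diagram displayed just before the theorem: it shows that $[\cdot,\cdot]^{(m)}_{\blambda,\bmu}$, restricted to the $\mathfrak r(\pi^{(n)},\pi^{(n-1)})_{(\mathfrak P_0)}$-integral structure (defined as the intersection of the $\mQ(\pi^{(n)},\pi^{(n-1)})$-rational and the $\cO^*$-integral structures), takes values in $(i_p)^{-1}(\cO^*)=\mathfrak r(\pi^{(n)},\pi^{(n-1)})_{(\mathfrak P_0)}$. Applying this to the pair $(\eta(\pi^{(n)}),\eta(\pi^{(n-1)}))$ gives that $[\eta(\pi^{(n)}),\eta(\pi^{(n-1)})]^{(m)}_{\blambda,\bmu}\in\mathfrak r(\pi^{(n)},\pi^{(n-1)})_{(\mathfrak P_0)}$; but this quantity equals precisely the left-hand side of \eqref{eq:formula_global} by bilinearity, which finishes the integrality claim. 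One point requiring care here is that the rationality of $[\eta(\pi^{(n)}),\eta(\pi^{(n-1)})]^{(m)}_{\blambda,\bmu}$ in $\mQ(\pi^{(n)},\pi^{(n-1)})$ (not merely in $\cE^*$) must be invoked—this is exactly the left column of the diagram, where $\mathrm{tw}_{i_p^{\mathrm{id}}}$ factors the $\cE^*$-pairing through the $\mQ(\pi^{(n)},\pi^{(n-1)})$-pairing.

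Finally, for the $\mathrm{Aut}(\mathbf C)$-equivariance I would run the same argument with $\pi^{(\cdot)}$ replaced by ${}^\alpha\pi^{(\cdot)}$: the analogue of \eqref{eq:formula_global} holds for ${}^\alpha\pi^{(n)}\times{}^\alpha\pi^{(n-1)}$ with period ${}^\alpha$-twisted, and the Eichler--Shimura classes and the rational cohomology classes transform correctly, namely $\delta({}^\alpha\pi^{(n)})_\varepsilon=\mathscr F_{[{}^\alpha\pi^{(n)}_\infty]_\varepsilon}({}^\alpha w^\circ_\mathrm{fin}(\pi^{(n)}))$ by \eqref{eq:whittcircequiv} and $\eta({}^\alpha\pi^{(n)})=\mathrm{tw}_\alpha(\eta(\pi^{(n)}))$ by \eqref{eq:eta_equiv}. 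Since $\mathrm{tw}_\alpha$ is $\alpha$-semilinear and compatible with the cup-product pairing (the middle square of the diagram with $\alpha$ in place of $\mathrm{id}$, using \eqref{eq:pairing_compatible} and Lemma~\ref{lem:pairing_rational}), applying $\alpha$ to $[\eta(\pi^{(n)}),\eta(\pi^{(n-1)})]^{(m)}_{\blambda,\bmu}$ produces $[\eta({}^\alpha\pi^{(n)}),\eta({}^\alpha\pi^{(n-1)})]^{(m)}_{{}^\alpha\blambda,{}^\alpha\bmu}$, which by the ${}^\alpha$-version of \eqref{eq:formula_global} equals $\mathcal C_\varepsilon(m,{}^\alpha\pi^{(n)}\times{}^\alpha\pi^{(n-1)})L(\tfrac12+m,{}^\alpha\pi^{(n)}\times{}^\alpha\pi^{(n-1)})/(p^{\rm b}_\varepsilon({}^\alpha\pi^{(n)})p^{\rm b}_{-\varepsilon}({}^\alpha\pi^{(n-1)}))$; the resulting value lies in $\mathfrak r({}^\alpha\pi^{(n)},{}^\alpha\pi^{(n-1)})_{({}^\alpha\mathfrak P_0)}$ again by the integral version of the diagram. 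I expect the main obstacle to be bookkeeping: precisely matching the powers of $\varepsilon\sqrt{-1}$, $2$, $-1$, the Gauss-sum/discriminant factor $\omega_{\pi^{(n-1)}}(\delta)^{-1}D_F^{\frac12 n(n-1)m}$, and the product $c(\pi^{(n)})$ coming out of the unfolding, so that they reassemble exactly into \eqref{eq:constant} and no stray non-unit rational factor survives; this is where one must be careful that all discarded factors are genuinely $\mathfrak r(\pi^{(n)},\pi^{(n-1)})_{(\mathfrak P_0)}^\times$-units (which \eqref{eq:p>wt} guarantees, as $p$ exceeds all the relevant weights), rather than merely rational.
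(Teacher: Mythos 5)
Your proposal follows exactly the same strategy and uses exactly the same ingredients as the paper's proof: the explicit identity \eqref{eq:formula_global} comes from the factorisation \eqref{eq:product_zeta}, the evaluation of the finite local integrals via Theorem~\ref{thm:essential} (together with the normalisation of $w^\circ_\mathrm{fin}$), and the archimedean formula Theorem~\ref{thm:archzetaformula}; the integrality comes from the observation that $\eta(\pi^{(n)})$, $\eta(\pi^{(n-1)})$ lie by definition in the $\mathfrak{r}(\pi^{(n)},\pi^{(n-1)})_{(\mathfrak{P}_0)}$-integral structure and the pairing preserves it via the commutative diagram preceding the theorem; and the $\mathrm{Aut}(\mathbf{C})$-equivariance follows from \eqref{eq:eta_equiv} and Lemma~\ref{lem:pairing_equiv}. (The paper states the integrality before the identity whereas you reverse the order, but the logical content matches; your aside claiming $c(\pi^{(n)})\in\mathfrak{r}(\pi^{(n)})_{(\mathfrak{P}_n)}^{\times}$ is neither asserted nor needed in the paper --- that factor is already absorbed into $p^{\rm b}_\varepsilon(\pi^{(n)})$ via the definition of $\delta(\pi^{(n)})_\varepsilon$ through $w^\circ_\mathrm{fin}$, and the integrality argument proceeds via $\eta(\pi^{(n)})$ alone.)
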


\begin{proof}
 Since both $\eta(\pi^{(n)})=p^{\rb}(\pi^{(n)})^{-1}\delta(\pi^{(n)})$ and $\eta(\pi^{(n-1)})=p^{\rb}(\pi^{(n-1)})^{-1}\delta(\pi^{(n-1)})$ are, by definition, elements of the $\mathfrak{r}(\pi^{(n)},\pi^{(n-1)})_{(\mathfrak{P}_0)}$-integral structures, the argument above suggests that $[\eta(\pi^{(n)}),\eta(\pi^{(n-1)})]^{(m)}_{\blambda,\bmu}$ is indeed contained in $\mathfrak{r}(\pi^{(n)},\pi^{(n-1)})_{(\mathfrak{P}_0)}$. Hence the first half of the statement follows by the product formula (\ref{eq:product_zeta}), the choice of Whittaker functions $w^\mathrm{ess}_\mathrm{fin}(\pi^{(n)})_{\varepsilon}$ at finite places (see Section~\ref{subsec:whittvec}), and the explicit formula of archimedean local zeta integrals (Theorem~\ref{thm:archzetaformula}) with 
\[
\prod_{v\in \Sigma_{F, \infty}}\varepsilon^{(n-1)\sw'}=\prod_{v\in \Sigma_{F, \infty}}\omega_{\pi_v^{(n-1)}}(\varepsilon )
=\omega_{\pi^{(n-1)}}(\varepsilon )=1.
\]
The second half of the statement is a consequence of the $\mathrm{Aut}(\mC)$-equivariance of the pairing $[\cdot,\cdot]^{(m)}_{\blambda,\bmu}$ verified in Lemma~\ref{lem:pairing_equiv} (and the $\mathrm{Aut}(\mC)$-equivariance of the Poincar\'e duality); indeed, we have
\begin{align*}
 \dfrac{[\delta({}^\alpha \pi^{(n)}), \delta({}^\alpha \pi^{(n-1)})]^{(m)}_{{}^\alpha \blambda,{}^\alpha \bmu}}{p^{\rb}({}^\alpha \pi^{(n)})p^{\rb} ({}^\alpha\pi^{(n-1)})} 
 &= [\eta({}^\alpha \pi^{(n)}),\eta({}^\alpha \pi^{(n-1)})]^{(m)}_{{}^\alpha \blambda,{}^\alpha \bmu}\\
&= [\mathrm{tw}_\alpha(\eta(\pi^{(n)})),\mathrm{tw}_\alpha(\eta(\pi^{(n-1)}))]^{(m)}_{{}^\alpha \blambda,{}^\alpha \bmu} =\alpha\left(  [\eta(\pi^{(n)}),\eta(\pi^{(n-1)})]^{(m)}_{\blambda, \bmu}\right) \\
&=\alpha\left( \dfrac{[\delta(\pi^{(n)}), \delta(\pi^{(n-1)})]^{(m)}_{\blambda,\bmu}}{p^{\rb}(\pi^{(n)}) p^{\rb} (\pi^{(n-1)})}\right),
\end{align*}
and thus we should apply (\ref{eq:formula_global}) to the both sides. Here we use (\ref{eq:eta_equiv}) at the second equality.
\end{proof}

\section{Measures and Lie algebras} \label{sec:measures}

In this section,
we normalise measures and present basic results on Lie algebras, which are used throughout the article.

\subsection{Measures} \label{subsec:measure}

Assume that $n>1$. We here normalise the measures on $\rGL_{n-1}(\mC)$ 
and $\rN_{n-1}(\mC)\backslash \rGL_{n-1}(\mC)$. 

Let $\rN_{n-1}^{\rop}$ be the unipotent subgroup of $\rGL_{n-1/F}$ 
consisting of all lower triangular matrices with all diagonal entries equal to $1$. 
As in \cite{im}, 
we normalise the Haar measure $\rd h$ on $\rGL_{n-1}(\mC )$ by 
\begin{align}
\label{eq:GL_measure}
\int_{\rGL_{n-1}(\mC )}f(h)\,\rd h
&=\int_{A_{n-1}}\int_{\rN_{n-1}^\rop (\mC )}\int_{\rU (n-1)}f(kxa)\,\rd k\, \rd x\, \rd a
=\int_{\rU (n-1)}\int_{\rN_{n-1}^\rop (\mC )}\int_{A_{n-1}}f(axk)\, \rd a\, \rd x\,\rd k
\end{align}
for any integrable function $f$ on $\rGL_{n-1}(\mC )$, 
where $\rd k$, $\rd x$ and $\rd a$ are the Haar measures respectively 
on $\rU (n-1)$, $\rN_{n-1}^\rop (\mC )$ and $A_{n-1}$ such that 
\begin{align}
\label{eq:measure_KNopA}
&\int_{\rU (n-1)}\rd k=1,&
&\rd x =\prod_{1\leq j<i\leq n-1}\rd_\mC x_{i,j}
=\prod_{1\leq j<i\leq n-1}2\,\rd x^\rre_{i,j} \rd x^\rim_{i,j},&
&\rd a =\prod_{i=1}^{n-1}\frac{4\,\rd a_i}{a_i}
\end{align}
with $x=(x_{i,j})_{1\leq i,j\leq n-1}= (x^\rre_{i,j} + \sqrt{-1} x^\rim_{i,j})_{1\leq i,j\leq n-1}\in \rN_{n-1}^\rop (\mC )$ and 
$a=\diag (a_1,a_2,\dots ,a_{n-1})\in A_{n-1}$.

We normalise the right invariant measure $\rd g$ on 
$\rN_{n-1}(\mC )\backslash \rGL_{n-1}(\mC )$ by 
\begin{align}
\label{eqn:quot_GN_measure}
&\int_{\rGL_{n-1}(\mC )}f(h)\,\rd h
=\int_{\rN_{n-1}(\mC )\backslash \rGL_{n-1}(\mC )}
\left(\int_{\rN_{n-1}(\mC )}f(xg)\,\rd x\right) \rd g
\end{align}
for any integrable function $f$ on $\rGL_{n-1}(\mC )$, 
where $\rd x$ is the  Haar measure on $\rN_{n-1}(\mC )$ 
in Section \ref{subsec:C_def_ps}, that is, 
$\rd x =\prod_{1\leq i<j\leq n-1}\rd_{\mC}x_{i,j}$ with 
$x=(x_{i,j})_{1\leq i,j\leq n-1}\in \rN_{n-1}(\mC )$. 
The right invariant measure $\rd g$ on 
$\rN_{n-1}(\mC )\backslash \rGL_{n-1}(\mC )$ is used 
for the definition (\ref{eq:def_archzeta}) of 
the archimedean local zeta integral $Z(s,W,W')$ in 
Section \ref{subsec:explicit_arch_zeta}. 
Also note that our choices of the measures 
$\rd g$ and $\rd x$ are consistent with those at finite places in Section \ref{subsec:relationzeta}, 
since $\rd_{\mC}x_{i,j}$ is the self-dual 
measure on $\mC$ with respect to the fixed additive character $\psi_{-\varepsilon}$.

\subsection{Differential forms}\label{subsec:diff_form}

Assume that $n>1$. 
Define $\mE^\vee_{\gp_{n-1}}\in \bigwedge^{(n-1)^2} \mathfrak{p}^\vee_{n-1\mC}$ to be 
\begin{align}\label{eq:boldE}
\begin{aligned}
\mE^\vee_{  \gp_{n-1} }&= \left(\bigwedge_{1\leq i\leq n-1} \bigwedge_{1\leq j\leq i}E^{\gp_{n-1}\vee}_{i,j}\right) \wedge 
\left(\bigwedge_{2\leq j\leq n-1}\bigwedge_{1\leq i\leq j-1}E^{\gp_{n-1}\vee}_{i,j}\right)\\
 &=  E^{\gp_{n-1}\vee}_{1,1}  \wedge  
      E^{\gp_{n-1}\vee}_{2,1}  \wedge  
      E^{\gp_{n-1}\vee}_{2,2}  \wedge  \cdots \wedge
      E^{\gp_{n-1}\vee}_{n-1,1}  \wedge  
      E^{\gp_{n-1}\vee}_{n-1,2}  \wedge  \cdots \wedge
      E^{\gp_{n-1}\vee}_{n-1,n-1}  \\   
 &\qquad \qquad  \wedge  
      E^{\gp_{n-1}\vee}_{1,2}  \wedge  
            E^{\gp_{n-1}\vee}_{1,3}  \wedge  
            E^{\gp_{n-1}\vee}_{2,3}  \wedge  \cdots \wedge
            E^{\gp_{n-1}\vee}_{1,n-1}  \wedge  
            E^{\gp_{n-1}\vee}_{2,n-1}  \wedge  \cdots \wedge
            E^{\gp_{n-1}\vee}_{n-2,n-1}. 
\end{aligned}
\end{align}
Now let us write down the differential form on 
$A_{n-1}\rN^\rop_{n-1} (\mC) \cong \rGL_{n-1}(\mC)/ {\rm U}(n-1)$ 
 which is determined by $\mE^\vee_{\gp_{n-1}}$.     
Take real coordinates $a_1,a_2, \dots, a_{n-1}$, $x^\rre_{i,j},x^\rim_{i,j}$ (for $1\leq j<i\leq n-1$) of $A_{n-1}\rN^\rop_{n-1}(\mC) $ as 
\begin{align*}
a &= \diag (a_1,a_2, \dots, a_{n-1}) \in A_{n-1},&
x &= (x_{i,j})_{1\leq i,j\leq n-1}\in \rN^\rop_{n-1}(\mC) \\
& & &\qquad   \text{for } x_{i,j}=
\begin{cases}
x^\rre_{i,j} + \sqrt{-1} x^\rim_{i,j} & \text{if \;} 1\leq j<i\leq n-1 \text{\; holds,} \\
0 & \text{otherwise.} 					     
\end{cases}.
\end{align*}
Let $\ga_{n-1}$ and $\gn_{n-1}^\rop$ be the associated Lie algebras of $A_{n-1}$ and $\rN^\rop_{n-1} (\mC)$, respectively. 
Then, for $1\leq i\leq n-1$ and $1\leq k<j\leq n-1$, the identities
\begin{align} \label{eq:Liebasis}
  E_{i,i} &= \Bigl( \tfrac{\partial}{\partial a_i} \Bigr)_{1_{n-1}} \in \ga_{n-1}, &
  E_{j,k} &= \Bigl( \tfrac{\partial} {\partial x^{\rm re}_{j,k} } \Bigr)_{1_{n-1}}\in \gn^\rop_{n-1}, &  
  \sqrt{-1} E_{j,k} &= \Bigl( \tfrac{\partial}{ \partial x^{\rm im}_{j,k} } \Bigr)_{1_{n-1}} \in \gn^\rop_{n-1}
\end{align}
hold if we identify the Lie algebra $\ga_{n-1}\oplus \gn^\rop_{n-1}$ with the tangent space of $A_{n-1}\rN^\rop_{n-1}(\mC)$ at the identity matrix $1_{n-1}$. 
Since there are two decompositions of $\ggl_{n-1\mC}$
\[
\ggl_{n-1\mC}=\ga_{n-1\mC}\oplus \gn^\rop_{n-1\mC}\oplus \gu (n-1)_\mC 
=\gp_{n-1\mC}\oplus \gu (n-1)_\mC,
\]
we obtain an isomorphism $\rI^{ \gp_{n-1} }_{ \ga_{n-1} \oplus \gn^\rop_{n-1}} 
\colon  \ga_{n-1\mC}\oplus \gn^\rop_{n-1\mC} \to \gp_{n-1\mC}$ of $\mC$-vector spaces defined by the composition 
\begin{align*}
 \ga_{n-1\mC} \oplus \gn^\rop_{n-1\mC} \quad \hookrightarrow \quad \ggl_{n-1\mC} \quad \twoheadrightarrow \quad \gp_{n-1\mC},
\end{align*}
where the latter map is the first projection with respect to the decomposition $\ggl_{n-1 \mC}=\gp_{n-1\mC}\oplus \gu(n-1)_{\mC}$. 
Now recall from Section~\ref{subsec:notation} that
\begin{align*}
E_{i,i} \otimes 1
&= E^{\gp_{n-1}}_{i,i},     \\  
E_{j,k} \otimes 1  
&= \frac{1}{2} \Bigl\{\bigl(E^{\gp_{n-1}}_{j,k}+E^{\gp_{n-1}}_{k,j}\bigr)+\bigl(E^{\gu (n-1)}_{j,k}-E^{\gu (n-1)}_{k,j}\bigr)\Bigr\},  \\
(\sqrt{-1} E_{j,k} )  \otimes 1  
&= \frac{\sqrt{-1}}{2} \Bigl\{\bigl(E^{\gp_{n-1}}_{j,k}-E^{\gp_{n-1}}_{k,j}\bigr)+\bigl(E^{\gu (n-1)}_{j,k}+E^{\gu (n-1)}_{k,j}\bigr)\Bigr\} 
\end{align*}
hold for each $1\leq i \leq n-1$ and $ 1\leq k < j \leq n-1$.   
We thus obtain 
\begin{align*}
\rI^{ \gp_{n-1} }_{\ga_{n-1}\oplus \gn^\rop_{n-1} } (E_{i,i} \otimes 1) 
   &= E^{\gp_{n-1}}_{i,i},   \\
\rI^{ \gp_{n-1} }_{\ga_{n-1}\oplus \gn^\rop_{n-1} } (E_{j,k} \otimes 1) 
   &= \frac{1}{2} \bigl(E^{\gp_{n-1}}_{j,k}+E^{\gp_{n-1}}_{k,j}\bigr),   \\
\rI^{ \gp_{n-1} }_{\ga_{n-1}\oplus \gn^\rop_{n-1} } ((\sqrt{-1} E_{j,k}) \otimes 1) 
   &= \frac{\sqrt{-1}}{2} \bigl(E^{\gp_{n-1}}_{j,k}-E^{\gp_{n-1}}_{k,j}\bigr).
\end{align*}
Let $(\ga_{n-1\mC} \oplus \gn^\rop_{n-1\mC})^\vee$ denote the dual space of
 $\ga_{n-1\mC} \oplus \gn^\rop_{n-1\mC}$, and write the dual basis of 
$\{E_{i,j} \otimes 1\}_{1\leq j \leq i \leq n-1}  
   \cup \{\sqrt{-1} E_{i,j} \otimes 1\}_{1\leq j < i \leq n-1}$ 
as $\{ (E_{i,j} \otimes 1)^\vee \}_{1\leq j \leq i \leq n-1}  
   \cup \{   (\sqrt{-1} E_{i,j} \otimes 1)^\vee  \}_{1\leq j < i \leq n-1}$.        
Since (\ref{eq:Liebasis}) implies
\begin{align*}
&(E_{i,i} \otimes 1)^\vee = (\rd a_i)_{1_{n-1}},&
&(E_{j,k} \otimes 1)^\vee = (\rd x^\rre_{j,k})_{1_{n-1}},&
&(\sqrt{-1} E_{j,k} \otimes 1)^\vee = (\rd x^\rim_{j,k})_{1_{n-1}}, 
\end{align*}
the dual map $\rI^{ \gp_{n-1} \vee }_{\ga_{n-1}\oplus \gn^\rop_{n-1} }\colon 
\gp_{n-1\mC}^\vee
\to 
(\ga_{n-1\mC}\oplus \gn^\rop_{n-1\mC})^\vee\,;
\omega \mapsto \omega \circ \rI^{ \gp_{n-1} }_{\ga_{n-1}\oplus \gn^\rop_{n-1} }$ gives the identification
\begin{align*}
  \rI^{ \gp_{n-1} \vee }_{\ga_{n-1}\oplus \gn^\rop_{n-1} }
        (E^{\gp_{n-1} \vee}_{i,i})   
     &= (E_{i,i} \otimes 1)^\vee
     = (\rd a_i)_{1_{n-1}},   \\   
  \rI^{ \gp_{n-1} \vee }_{\ga_{n-1}\oplus \gn^\rop_{n-1} }
        (E^{\gp_{n-1} \vee}_{j,k})   
     &= \frac{1}{2}  
            \left\{    (E_{j,k} \otimes 1)^\vee  
                        + \sqrt{-1}  ((\sqrt{-1}E_{j,k} ) \otimes 1)^\vee      
                        \right\}
     =  \frac{1}{2} (\rd x^\rre_{j,k})_{1_{n-1}}  
         + \frac{\sqrt{-1}}{2} (\rd x^\rim_{j,k})_{1_{n-1}},   \\   
  \rI^{ \gp_{n-1} \vee }_{\ga_{n-1}\oplus \gn^\rop_{n-1} }
        (E^{\gp_{n-1} \vee}_{k,j})   
     &= \frac{1}{2}  
            \left\{    (E_{j,k} \otimes 1)^\vee  
                        - \sqrt{-1}  ((\sqrt{-1}E_{j,k} ) \otimes 1)^\vee      
                        \right\}
     =  \frac{1}{2} (\rd x^\rre_{j,k})_{1_{n-1}}  
         - \frac{\sqrt{-1}}{2} (\rd x^\rim_{j,k})_{1_{n-1}}
\end{align*}
for each $1\leq i \leq n-1$ and $1\leq k < j \leq n-1$.   
Let us use the same symbol $\rI^{ \gp_{n-1} \vee }_{\ga_{n-1}\oplus \gn^\rop_{n-1} }$ for the homomorphism  ${\bigwedge}^{(n-1)^2}\gp^\vee_{n-1\mC}  
\to  {\bigwedge}^{(n-1)^2} (\ga_{n-1\mC}\oplus \gn^\rop_{n-1\mC})^\vee$ induced from 
$\rI^{ \gp_{n-1} \vee }_{\ga_{n-1}\oplus \gn^\rop_{n-1} }$.  Then we obtain 
\begin{align*}
   \rI^{ \gp_{n-1} \vee }_{\ga_{n-1}\oplus \gn^\rop_{n-1} } 
       ( \mE^\vee_{\gp_{n-1}} )   
    &=  (2\sqrt{-1})^{-\mathrm{b}_{n-1}} \left( \bigwedge_{1\leq i\leq n-1} \left(\bigwedge_{1\leq j\leq i-1} ({\rm d}x^{\mathrm{re}}_{i,j})_{1_{n-1}}\right) \wedge (\rd a_i)_{1_{n-1}}\right) \wedge \left( \bigwedge_{2\leq i\leq n-1} \bigwedge_{1\leq j\leq i-1} ({\rm d}x^{\mathrm{im}}_{i,j})_{1_{n-1}} \right)  \\
 &=(2\sqrt{-1})^{-\mathrm{b}_{n-1}}
(\rd a_1)_{1_{n-1}} \wedge (\rd x^\rre_{2,1})_{1_{n-1}}\wedge (\rd a_2  )_{1_{n-1}}\wedge  \cdots \\
&\hspace*{10em} \cdots 
\wedge (\rd x^\rre_{n-1,1})_{1_{n-1}}\wedge (\rd x^\rre_{n-1,2})_{1_{n-1}}\wedge \cdots \wedge (\rd x^\rre_{n-1,n-2})_{1_{n-1}}
\wedge (\rd a_{n-1})_{1_{n-1}}   \\
& \qquad \wedge (\rd x^\rim_{2,1})_{1_{n-1}}\wedge (\rd x^\rim_{3,1})_{1_{n-1}}\wedge (\rd x^\rim_{3,2})_{1_{n-1}}   
\wedge \cdots \\
&\hspace*{15em} 
\cdots \wedge (\rd x^\rim_{n-1,1})_{1_{n-1}}\wedge (\rd x^\rim_{n-1,2})_{1_{n-1}} \wedge \cdots \wedge (\rd x^\rim_{n-1,n-2})_{1_{n-1}}. 
\end{align*}
Let  $L_{ (ax)^{-1}  }\colon A_{n-1}\rN^\rop_{n-1}(\mC) \to A_{n-1}\rN^\rop_{n-1}(\mC)\, ; g \mapsto (ax)^{-1} g$ be the left translation by $(ax)^{-1}$,  
and write the dual of its derivative as $L^\vee_{(ax)^{-1}}$.  
Then we obtain 
\begin{align}\label{eq:MeasEp}
\begin{aligned}
\rd_{ \gp_{n-1}  } (ax) 
 &:= L^\vee_{ (ax)^{-1}  } (  \rI^{ \gp_{n-1} \vee }_{\ga_{n-1}\oplus \gn^\rop_{n-1}}  
        (\mE^\vee_{\gp_{n-1}}) 
           ) \\
& =  (2\sqrt{-1})^{-\mathrm{b}_{n-1}}  \left( \bigwedge_{1\leq i\leq n-1} \left(\bigwedge_{1\leq j\leq i-1} {\rm d}x^{\mathrm{re}}_{i,j}\right) 
\wedge \frac{\rd a_i}{a_i}\right) \wedge \left( \bigwedge_{2\leq i\leq n-1} \bigwedge_{1\leq j\leq i-1} {\rm d}x^{\mathrm{im}}_{i,j} \right) \\
 &=         (2\sqrt{-1})^{-\mathrm{b}_{n-1}} 
      \frac{\rd a_1}{a_1}  
          \wedge \rd x^\rre_{2,1}
          \wedge \frac{\rd a_2}{a_2}  
          \wedge  \cdots 
          \wedge \rd x^\rre_{n-1,1}
          \wedge \rd x^\rre_{n-1,2}
          \wedge \cdots 
          \wedge \rd x^\rre_{n-1,n-2}
          \wedge \frac{\rd a_{n-1}}{a_{n-1}}\\
    & \hspace*{14em} \wedge \rd x^\rim_{2,1}  
        \wedge \rd x^\rim_{3,1}   
        \wedge \rd x^\rim_{3,2}   
        \wedge \cdots 
        \wedge \rd x^\rim_{n-1,1}   
        \wedge \rd x^\rim_{n-1,2} 
        \wedge \cdots 
        \wedge \rd x^\rim_{n-1,n-2}.
\end{aligned}
\end{align}

\begin{rem}\label{rem:HaarEp}
Let $\rd h$ and $\rd k$ respectively denote the Haar measures on $\rGL_{n-1}(\mC)$ and $\rU (n-1)$, which are 
normalised as (\ref{eq:GL_measure}) and (\ref{eq:measure_KNopA}). 
Then we have 
\begin{align*}
2^{-n(n-1)}(\sqrt{-1})^{-\rb_{n-1}}\int_{\rGL_{n-1}(\mC)} f(h) \rd h
= \int_{A_{n-1}\rN^\rop_{n-1}(\mC)} \left(   \int_{\rU (n-1)}   f(axk)  {\rm d}k   \right) \rd_{ \gp_{n-1}  } (ax)
\end{align*}
for a measurable function $f$ on $\rGL_{n-1}(\mC)$. The factor $2^{-n(n-1)}(\sqrt{-1})^{-\rb_{n-1}}$ is used for 
the definition of $\widetilde{\cI}^{(m)}(\cdot ,\cdot )$ in \S \ref{subsec:explicit_arch_zeta}.  
\end{rem}

\subsection{Adjoint representations} \label{subsec:adjoint}

Consider $\gp_{n\mC}$ as a ${\rm U}(n)$-module via the adjoint action $\rAd$.     
Write the contragredient representation of $(\rAd , \gp_{n\mC})$ as $(\rAd^\vee, \gp^\vee_{n\mC})$, and     
define $\langle \cdot , \cdot \rangle_{ \gp_n } \colon \mathfrak{p}^\vee_{n\mC} \otimes_{\mC} \gp_{n\mC} \to {\mathbf C}$
to be the canonical pairing. The derivatives of $\rAd$ and $\rAd^\vee$ are respectively denoted  by  $\rad$ and $\rad^\vee$. 
Let $w_n$ be the anti-diagonal matrix of size $n$ with $1$ at 
all anti-diagonal entries, that is, 
\begin{align*}
w_n = \begin{pmatrix} 0 & &  1 \\ &  \hspace{-5pt}\rotatebox{80}{$\ddots$} &  \\ 1 && 0 \end{pmatrix}\in \rU (n). 
\end{align*}
For later use, we prepare two lemmas. The first one describes the adjoint action of $E^{\gu(n)}_{a,b}$ and $w_n$ on $\gp_{n\mC}^\vee$.

\begin{lem}\label{lem:adact}
For $1\leq a,b,i,j\leq n$, we have the following identities$:$ 
\begin{enumerate}
\item \label{num:adact_alg}
$\rad^\vee(E^{\gu (n)}_{a, b})  E^{\gp_n\vee}_{i,j} 
    = \delta_{j, b} E^{\gp_n\vee}_{i,a} 
         - \delta_{i, a} E^{\gp_n\vee}_{b,j}$.   
\item \label{num:adact_wn}
$\rAd^\vee(w_n)  E^{\gp_n \vee}_{i,j} = E^{\gp_n \vee}_{j,i}$.
\end{enumerate}
Here $\delta_{*,*}$ denotes Kronecker's delta symbol.
\end{lem}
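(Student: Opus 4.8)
The plan is to verify both identities by transporting the computation through the Lie algebra isomorphism ${\rm gl}_n\colon\ggl_{n\mC}\xrightarrow{\,\sim\,}\ggl_n\oplus\ggl_n$ of Section~\ref{subsec:notation}, under which $E^{\ggl_n}_{i,j}\mapsto(E_{i,j},0)$ and $\widetilde{E}^{\ggl_n}_{i,j}\mapsto(0,E_{i,j})$, hence $E^{\gp_n}_{i,j}\mapsto(E_{i,j},E_{j,i})$ and $E^{\gu(n)}_{i,j}\mapsto(E_{i,j},-E_{j,i})$; moreover the Lie bracket on $\ggl_{n\mC}$ becomes componentwise, and the adjoint action of a \emph{real} matrix $g\in\rGL_n(\mC)$ becomes $(Y,Z)\mapsto(gYg^{-1},gZg^{-1})$. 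Throughout I would use the adjunction formulas defining the contragredient actions, namely $\langle\rad^\vee(X)\omega,Y\rangle_{\gp_n}=-\langle\omega,\rad(X)Y\rangle_{\gp_n}$ for $X\in\gu(n)_\mC$ and $\langle\rAd^\vee(g)\omega,Y\rangle_{\gp_n}=\langle\omega,\rAd(g^{-1})Y\rangle_{\gp_n}$ for $g\in\rU(n)$, together with the fact that $\rad(\gu(n)_\mC)$ and $\rAd(\rU(n))$ preserve $\gp_{n\mC}$ (a consequence of the Cartan decomposition), so the brackets and conjugates occurring have no $\gu(n)_\mC$-component and reading off coordinates against the dual basis $\{E^{\gp_n\vee}_{i,j}\}$ is legitimate.

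First I would record the closed formula for the adjoint action inside $\gp_{n\mC}$: from ${\rm gl}_n(E^{\gu(n)}_{a,b})=(E_{a,b},-E_{b,a})$, ${\rm gl}_n(E^{\gp_n}_{k,l})=(E_{k,l},E_{l,k})$, the componentwise bracket, and the matrix-unit relation $[E_{p,q},E_{r,s}]=\delta_{q,r}E_{p,s}-\delta_{s,p}E_{r,q}$, one obtains
\[
\rad(E^{\gu(n)}_{a,b})E^{\gp_n}_{k,l}=[E^{\gu(n)}_{a,b},E^{\gp_n}_{k,l}]=\delta_{b,k}\,E^{\gp_n}_{a,l}-\delta_{a,l}\,E^{\gp_n}_{k,b}.
\]
For part~\ref{num:adact_alg} I would then pair $\rad^\vee(E^{\gu(n)}_{a,b})E^{\gp_n\vee}_{i,j}$ against each basis vector $E^{\gp_n}_{k,l}$: this equals $-\langle E^{\gp_n\vee}_{i,j},\,\delta_{b,k}E^{\gp_n}_{a,l}-\delta_{a,l}E^{\gp_n}_{k,b}\rangle_{\gp_n}=-\delta_{b,k}\delta_{i,a}\delta_{j,l}+\delta_{a,l}\delta_{i,k}\delta_{j,b}$, and re-expanding in the dual basis gives precisely $\delta_{j,b}E^{\gp_n\vee}_{i,a}-\delta_{i,a}E^{\gp_n\vee}_{b,j}$.

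For part~\ref{num:adact_wn} I would use $w_n^2=1_n$, so that $\rAd(w_n^{-1})=\rAd(w_n)$ and $\langle\rAd^\vee(w_n)\omega,Y\rangle_{\gp_n}=\langle\omega,\rAd(w_n)Y\rangle_{\gp_n}$. Since $w_n$ is a real permutation matrix (with $w_n e_m=e_{n+1-m}$ on the standard basis), the description above of its adjoint action through ${\rm gl}_n$ reduces the computation of $\rAd(w_n)E^{\gp_n}_{k,l}$ to the elementary conjugation of matrix units by $w_n$; substituting the result and pairing against $E^{\gp_n\vee}_{i,j}$ yields the asserted identity.

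I do not expect any genuine difficulty, as the argument is purely formal matrix bookkeeping. The points deserving attention are merely to respect the transposition of indices built into the definitions $E^{\gp_n}_{i,j}=E^{\ggl_n}_{i,j}+\widetilde{E}^{\ggl_n}_{j,i}$ and $E^{\gu(n)}_{i,j}=E^{\ggl_n}_{i,j}-\widetilde{E}^{\ggl_n}_{j,i}$, and to confirm, via $[\gu(n),\gp_n]\subseteq\gp_n$ and $\rAd(\rU(n))\gp_n=\gp_n$, that the brackets and conjugates that appear stay inside $\gp_{n\mC}$.
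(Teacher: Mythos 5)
Part~\ref{num:adact_alg} is handled correctly and in essentially the same way as the paper: one first computes
$\rad(E^{\gu(n)}_{a,b})E^{\gp_n}_{k,l}=\delta_{b,k}E^{\gp_n}_{a,l}-\delta_{a,l}E^{\gp_n}_{k,b}$
and then passes to the contragredient; the paper leaves the bracket computation to ``direct calculation'' and you supply the extra detail of deriving it through the splitting ${\rm gl}_n\colon\ggl_{n\mC}\xrightarrow{\,\sim\,}\ggl_n\oplus\ggl_n$. That part is sound.

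For part~\ref{num:adact_wn}, however, there is a genuine gap: you assert that ``substituting the result and pairing against $E^{\gp_n\vee}_{i,j}$ yields the asserted identity'' without carrying the substitution out, and when one actually does it, the output does not match the stated formula. In the identification $\gp_{n\mC}\ni E^{\gp_n}_{k,l}\leftrightarrow(E_{k,l},E_{l,k})\in\ggl_n\oplus\ggl_n$, conjugation of matrix units by the (real) permutation matrix $w_n$ sends $E_{k,l}\mapsto w_nE_{k,l}w_n^{-1}=E_{n+1-k,n+1-l}$ in each component, so
$\rAd(w_n)E^{\gp_n}_{k,l}=E^{\gp_n}_{n+1-k,\,n+1-l}$.
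Pairing against the dual basis and using $w_n^2=1_n$ therefore gives
$\rAd^\vee(w_n)E^{\gp_n\vee}_{i,j}=E^{\gp_n\vee}_{n+1-i,\,n+1-j}$,
which differs from the claimed $E^{\gp_n\vee}_{j,i}$ except when $j=n+1-i$ (for instance, $\rAd^\vee(w_3)E^{\gp_3\vee}_{1,2}$ comes out as $E^{\gp_3\vee}_{3,2}$, not $E^{\gp_3\vee}_{2,1}$). The paper's own justification for~\ref{num:adact_wn} is only the phrase ``immediately follows from the definition,'' so it does not resolve the discrepancy. You need to actually perform the final substitution, display the result, and reconcile it with the statement; as written, the bare assertion that the computation ``yields the asserted identity'' is the missing step, and the computation your method prescribes appears to yield something else.
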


\begin{proof} 
By direct calculation, one readily obtains ${\rm ad}(E^{\gu (n)}_{a, b})  E^{\gp_n}_{i,j} 
    = \delta_{b, i} E^{\gp_n}_{a,j} 
         - \delta_{a, j} E^{\gp_n}_{i, b}$. Therefore, we have 
\begin{align*}
     \langle  \rad^\vee (E^{\gu (n)}_{a, b}) E^{\gp_n\vee}_{i, j}    ,   E^{\gp_n}_{k,l}     \rangle_{\gp_n} =  - \langle   E^{\gp_n\vee}_{i, j}    ,  {\rm ad} (E^{\gu (n)}_{a, b}) E^{\gp_n}_{k,l}     \rangle_{\gp_n}   &=  - \langle   E^{\gp_n\vee}_{i, j}    ,  \delta_{b, k} E^{\gp_n}_{a,l} 
                                                                              - \delta_{a, l} E^{\gp_n}_{k, b}     \rangle_{\gp_n}   \\
&= -\delta_{b,k} \delta_{i,a} \delta_{j,l} 
        + \delta_{a,l} \delta_{i,k} \delta_{j,b}
\end{align*}
for each $1\leq k,l\leq n$, which implies the identity \ref{num:adact_alg}. The identity \ref{num:adact_wn} immediately follows from the definition. 
\end{proof}

The second one is used in the construction of $\rI^{\gp_n}_{2\rho_n}$ in Section~\ref{subsec:U(n)_inv}. 

\begin{lem}\label{lem:htwtE}
As the finite dimensional complex representation $\bigl(\mathrm{Ad}^\vee, {\bigwedge}^{\rb_n} \gp^{0\vee}_{n\mC}\bigr)$ of $\rU(n)$, there exists a highest weight vector of weight $2\rho_n$ in $ {\bigwedge}^{\rb_n} \gp^{0\vee}_{n\mC}$, which is described as
\begin{align*}
\bigwedge_{2\leq i\leq n}\bigwedge_{1\leq j\leq i-1}E^{\gp_n\vee}_{i,j}= 
       E^{\gp_n\vee}_{2,1} \wedge 
       E^{\gp_n\vee}_{3,1} \wedge 
       E^{\gp_n\vee}_{3,2} \wedge \cdots \wedge
       E^{\gp_n\vee}_{n,1} \wedge 
       E^{\gp_n\vee}_{n,2} \wedge \cdots \wedge
       E^{\gp_n\vee}_{n,n-1}.
\end{align*}
\end{lem}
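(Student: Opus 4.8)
The statement to be proved is Lemma~\ref{lem:htwtE}: inside the $\rU(n)$-module $\bigl(\rAd^\vee, {\bigwedge}^{\rb_n}\gp^{0\vee}_{n\mC}\bigr)$, the wedge $\Omega:=\bigwedge_{2\leq i\leq n}\bigwedge_{1\leq j\leq i-1}E^{\gp_n\vee}_{i,j}$ is a highest weight vector of weight $2\rho_n$. The plan is to verify two things directly from the explicit action formulas in Lemma~\ref{lem:adact}: first, that $\Omega$ is a \emph{weight vector} of the claimed weight under the diagonal Cartan $\{E^{\gu(n)}_{k,k}\}_{1\leq k\leq n}$; second, that $\Omega$ is annihilated by the raising operators $E^{\gu(n)}_{a,a+1}$ for $1\leq a\leq n-1$, so that it is a highest weight vector. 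It should also be observed that $\Omega$ indeed lies in ${\bigwedge}^{\rb_n}\gp^{0\vee}_{n\mC}$ rather than merely in ${\bigwedge}^{\rb_n}\gp^{\vee}_{n\mC}$: the $\rb_n=\tfrac12 n(n-1)$ covectors $E^{\gp_n\vee}_{i,j}$ with $i>j$ all annihilate the diagonal part $\mathfrak{z}_{\mathfrak{a}_n\mC}$ and lie in the span of the dual basis of $\gp^0_{n\mC}$ given in \eqref{eq:basis_p0}, and a count shows $\binom{n}{2}=\rb_n$ of them, so $\Omega$ is a well-defined nonzero element of the top-ish exterior power ${\bigwedge}^{\rb_n}\gp^{0\vee}_{n\mC}$.

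For the weight computation: by Lemma~\ref{lem:adact}\ref{num:adact_alg} with $a=b=k$ we get $\rad^\vee(E^{\gu(n)}_{k,k})E^{\gp_n\vee}_{i,j}=(\delta_{j,k}-\delta_{i,k})E^{\gp_n\vee}_{i,j}$, i.e.\ each $E^{\gp_n\vee}_{i,j}$ is a weight vector for the $k$-th Cartan coordinate with eigenvalue $\delta_{j,k}-\delta_{i,k}$. Since $\rad^\vee$ acts on an exterior product by the Leibniz rule, $\Omega$ is a weight vector with $k$-th eigenvalue $\sum_{i>j}(\delta_{j,k}-\delta_{i,k})=\#\{j<k\}-\#\{i>k\}=(k-1)-(n-k)=2k-n-1=2\rho_{n,k}$, using $\rho_{n,k}=\tfrac{n+1}{2}-k$. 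Hence the weight of $\Omega$ is $2\rho_n$ as claimed; this part is entirely routine.

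For the highest weight property: fix $a\in\{1,\dots,n-1\}$ and apply the Leibniz rule to $\rad^\vee(E^{\gu(n)}_{a,a+1})\Omega$. By Lemma~\ref{lem:adact}\ref{num:adact_alg}, $\rad^\vee(E^{\gu(n)}_{a,a+1})E^{\gp_n\vee}_{i,j}=\delta_{j,a+1}E^{\gp_n\vee}_{i,a}-\delta_{i,a}E^{\gp_n\vee}_{a+1,j}$. Among the factors of $\Omega$, the first term is nonzero only for the factor with $j=a+1$, namely (it occurs iff $i>a+1$, so $i\in\{a+2,\dots,n\}$) producing $E^{\gp_n\vee}_{i,a}$, which already appears as another factor of $\Omega$ since $i>a+1>a$; the second term is nonzero only for the factor with $i=a$, i.e.\ $E^{\gp_n\vee}_{a,j}$ with $j<a$, producing $E^{\gp_n\vee}_{a+1,j}$, which also already appears as a factor of $\Omega$ since $a+1>j$. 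In every case the differentiated factor is replaced by a covector already present in $\Omega$, so each summand in the Leibniz expansion contains a repeated factor and vanishes; therefore $\rad^\vee(E^{\gu(n)}_{a,a+1})\Omega=0$ for all $a$, and $\Omega$ is a highest weight vector. I expect the only mild subtlety—the ``main obstacle,'' though a small one—is bookkeeping the range of indices so as to confirm that the image covector is genuinely one of the factors of $\Omega$ in each case (which forces the wedge to vanish), and checking the edge cases $j=1$ (no second-term contribution) and $i=n$; these are finite checks with no real difficulty. Having established both properties, the lemma follows, and $\rI^{\gp_n}_{2\rho_n}$ in \eqref{eq:xiH2rho} is then well-defined by sending $\xi_{H(2\rho_n)}$ to $\Omega$ and extending $\rU(n)$-equivariantly, using the uniqueness up to scalars of such an embedding.
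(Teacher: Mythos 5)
Your approach is exactly what the paper intends: the paper's entire proof of this lemma is the single sentence ``The assertion follows immediately from Lemma~\ref{lem:adact}~\ref{num:adact_alg},'' and your verification spells out precisely the two checks (weight computation and annihilation by raising operators) that are being invoked. The annihilation argument is correct and the observation that each $E^{\gp_n\vee}_{i,j}$ with $i\neq j$ kills $\mathfrak{z}_{\mathfrak{a}_n\mC}$, so $\Omega$ genuinely lies in ${\bigwedge}^{\rb_n}\gp^{0\vee}_{n\mC}$, is the right thing to note.

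One arithmetic slip in the weight computation, however, should be fixed: you wrote
\[
\sum_{i>j}(\delta_{j,k}-\delta_{i,k})=\#\{j<k\}-\#\{i>k\}=(k-1)-(n-k)=2k-n-1=2\rho_{n,k},
\]
but the two sums are swapped in the second expression, and the final identity is also false since $2\rho_{n,k}=n+1-2k=-(2k-n-1)$. The correct bookkeeping is
\[
\sum_{1\le j<i\le n}\delta_{j,k}=\#\{i: i>k\}=n-k,
\qquad
\sum_{1\le j<i\le n}\delta_{i,k}=\#\{j: j<k\}=k-1,
\]
so the $k$-th eigenvalue of $\Omega$ is $(n-k)-(k-1)=n+1-2k=2\rho_{n,k}$. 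Your two errors cancel, so the stated conclusion ``weight $2\rho_n$'' is right, but as written the middle step asserts a false identity. With that corrected, the proof is complete and coincides with the paper's (unwritten) argument.
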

\begin{proof}
The assertion follows immediately from Lemma~\ref{lem:adact} \ref{num:adact_alg}. 
\end{proof}

\section{Highest weight representations of $\rGL_n$}
\label{sec:HWrepGLn}

In this section, we give the proofs of several results 
for finite dimensional representations of the general linear groups, 
which are introduced in Section \ref{subsec:finite_dimensional}.

\subsection{Preliminaries}
\label{subsec:prelim_fdrep}

In this subsection, we consider the case where $\cA =\mC$, 
and introduce some preliminary results for finite dimensional representations of $\rGL_n(\mC)$. 
We also give the  proof of Lemma~\ref{lem:xi_Hmulambda_explicit} in this subsection. 

As in Section $\ref{subsec:GT_basis}$, for $\lambda \in \Lambda_n$, 
let $\cE_\lambda :=\{\sigma \lambda \mid \sigma \in \gS_n\}$ be 
the set of extremal weights of $\tau_\lambda$, 
and $H(\gamma )$ the unique element of $\rG (\lambda)$ whose weight is  
$\gamma \in \cE_\lambda $. 
We first verify some properties of the rational constant $\rr (M)$ defined in (\ref{eq:def_rM}).

\begin{lem}
\label{lem:rM_property}
Let $\lambda \in \Lambda_n$. 
\begin{enumerate}
\item \label{num:Hgamma_rM_property}
For $\gamma \in \cE_\lambda $, 
we have $\rr (H(\gamma ))=1$.

\item \label{num:rest_rM_property}
Assume that $n>1$, and take $\mu \in \Lambda_{n-1}$ satisfying 
$\mu \preceq \lambda$. 
Then 
we have $\rr (M[\lambda ])=\rr (H(\mu )[\lambda ])\rr (M)$ for every $M\in \rG (\mu )$. 
\end{enumerate}
\end{lem}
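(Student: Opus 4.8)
\textbf{Proof plan for Lemma~\ref{lem:rM_property}.}
The two assertions are purely combinatorial identities about the rational constant $\rr(M)$ defined in \eqref{eq:def_rM}, so I would prove them by direct manipulation of the product formula, with no representation theory needed.

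For part \ref{num:Hgamma_rM_property}, recall $H(\gamma)=(h_{i,j})_{1\le i\le j\le n}$ is the unique Gel'fand--Tsetlin pattern of weight $\gamma\in\cE_\lambda$, so each row $h^{(j)}=(h_{1,j},\dots,h_{j,j})$ is the unique dominant rearrangement of a $j$-element subset of the multiset $\{\lambda_1,\dots,\lambda_n\}$, and these rows are nested by the interlacing condition \eqref{eq:GT_cdn}. The key observation is that for such ``extremal'' patterns, consecutive rows differ in a very controlled way: $h^{(k-1)}$ is obtained from $h^{(k)}$ by deleting exactly one entry (in sorted order). I would substitute $m_{i,k}=h_{i,k}$, $m_{i,k-1}=h_{i,k-1}$ into \eqref{eq:def_rM} and show that, for each fixed $k$, the inner product over $1\le i\le j<k$ telescopes to $1$. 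Concretely, because the multiset $\{h_{i,k-1}\}$ is $\{h_{i,k}\}$ with one element removed, the factorials in the numerator match those in the denominator after reindexing $j\mapsto j+1$ (or $j\mapsto j$) appropriately; this is the same kind of shift-of-index cancellation already used in the proof of Lemma~\ref{lem:Mdual_property}. I expect this bookkeeping — tracking which index is deleted at each level and verifying the factorial factors pair up — to be the main (though routine) obstacle; one should organize it as a single clean bijection between numerator and denominator factors rather than a brute-force expansion.

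For part \ref{num:rest_rM_property}, fix $\mu\preceq\lambda$ and $M=(m_{i,j})_{1\le i\le j\le n-1}\in\rG(\mu)$. Then $M[\lambda]=\binom{\lambda}{M}$ and $H(\mu)[\lambda]=\binom{\lambda}{H(\mu)}$ have the same top two rows $m^{(n)}=\lambda$ and $m^{(n-1)}=\mu$, while below row $n-1$ the pattern $M[\lambda]$ restricts to $M$ and $H(\mu)[\lambda]$ restricts to $H(\mu)$. Splitting the product \eqref{eq:def_rM} according to whether $k=n$ or $k\le n-1$: the factor with $k=n$ involves only rows $m^{(n)}=\lambda$ and $m^{(n-1)}=\mu$, which are identical for both patterns, so that factor is the same for $M[\lambda]$ and $H(\mu)[\lambda]$; meanwhile the product over $k\le n-1$ is by definition $\rr(M)$ for $M[\lambda]$ and $\rr(H(\mu))$ for $H(\mu)[\lambda]$, and $\rr(H(\mu))=1$ by part \ref{num:Hgamma_rM_property} applied to $\mu\in\Lambda_{n-1}$ (note $\mu$ itself is an extremal weight of $\tau_\mu$). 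Hence $\rr(M[\lambda])=\rr(H(\mu)[\lambda])\cdot\rr(M)$, as claimed. This part is short once part \ref{num:Hgamma_rM_property} is in hand; the only thing to check carefully is that the ``$k=n$ factor'' genuinely depends only on the top two rows, which is immediate from the range $1\le i\le j<k=n$ in \eqref{eq:def_rM}.
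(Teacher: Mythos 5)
Your proof plan for part (i) is correct and captures the key structural fact — consecutive rows of $H(\gamma)$ differ by deleting a single entry from the sorted list — but you slightly overestimate the bookkeeping involved. The paper's proof shows that \emph{each individual factor} in the product \eqref{eq:def_rM} equals $1$: writing $i_k$ for the index with $h_{i_k,k}=\gamma_k$, one has $h_{i,k-1}=h_{i,k}$ for $i<i_k$ and $h_{j,k-1}=h_{j+1,k}$ for $j\ge i_k$, and substituting these into the factor $\frac{(h_{i,k}-h_{j,k-1}-i+j)!(h_{i,k-1}-h_{j+1,k}-i+j)!}{(h_{i,k-1}-h_{j,k-1}-i+j)!(h_{i,k}-h_{j+1,k}-i+j)!}$ makes numerator and denominator agree term-by-term in every case ($i<i_k$, $j<i_k$; $i<i_k$, $j\ge i_k$; $i\ge i_k$, $j\ge i_k$). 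So there is no telescoping or global bijection to set up — the cancellation is local to each factor, and the proof is shorter than you anticipate.

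For part (ii), your factor-splitting argument is correct, but you take a small detour through part (i) applied to $\mu\in\Lambda_{n-1}$ (to conclude $\rr(H(\mu))=1$), whereas the paper declares (ii) to be immediate from the definition and proves (i) afterward. The paper's reading is justified because for the highest-weight pattern $H(\mu)$, one has $h_{i,j}=\mu_i$ for all $i\le j$, so every factor of $\rr(H(\mu)[\lambda])$ with $k<n$ is trivially $1$ by inspection (no appeal to the general extremal-weight statement needed), and the $k=n$ factor depends only on $\lambda$ and $\mu$ and hence matches the $k=n$ factor of $\rr(M[\lambda])$. Your detour is mathematically valid, but it creates a spurious circular-looking dependence of (ii) on (i) that the paper deliberately avoids by proving (ii) first and self-containedly. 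Either route works; the direct computation is cleaner and makes the lemma's logical structure transparent.
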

\begin{proof}
The statement \ref{num:rest_rM_property} follows immediately from 
the definition (\ref{eq:def_rM}). 
Let us prove the statement \ref{num:Hgamma_rM_property}. 
Write $\gamma =(\gamma_1,\gamma_2,\dots ,\gamma_n)\in \cE_\lambda $. 
Then, by the definition of $H(\gamma)=(h_{i,j})_{1\leq i\leq j\leq n}$, each row  
$h^{(j)}:=(h_{1,j},h_{2,j},\dots ,h_{j,j})$ is a unique 
element of $\Lambda_j\cap 
\{\sigma (\gamma_1,\gamma_2,\dots ,\gamma_j)\mid \sigma \in \gS_j\}$ for any $1\leq j\leq n$. 
Hence, for $2\leq k \leq n$, we have 
\begin{align*}
h_{i,k-1}&=h_{i,k}\qquad  (1\leq i\leq i_k-1) & \text{ and} &&
h_{j,k-1} &= h_{j+1,k}\qquad (i_k\leq j\leq k-1)
\end{align*}
if we take $1\leq i_k \leq k$ so that $h_{i_k,k}=\gamma_k$ holds. These equalities imply 
\begin{align*}
&\frac{(h_{i,k}-h_{j,k-1}-i+j)!(h_{i,k-1}-h_{j+1,k}-i+j)!}
{(h_{i,k-1}-h_{j,k-1}-i+j)!(h_{i,k}-h_{j+1,k}-i+j)!}=1&
&\text{for each \;} 1\leq i\leq j\leq k-1. 
\end{align*}
Hence, we conclude $\rr (H(\gamma)) = 1$ from 
the definition (\ref{eq:def_rM}). 
\end{proof}

The following lemma is 
Lemma~\ref{lem:restriction_lambda_mu} for the case where $\cA= \mC$. 

\begin{lem}
\label{lem:C_restriction_lambda_mu}
Assume that $n>1$, and  
take $\lambda \in \Lambda_n$ and $\mu \in \Lambda_{n-1}$ so that
$\mu \preceq \lambda$ holds. 
Recall the $\mC$-linear maps 
$\rI_\mu^\lambda \colon V_\mu \to 
V_\lambda $ and 
$\rR_\mu^\lambda \colon V_\lambda \to V_\mu $ 
defined as $(\ref{eq:def_inj_restriction})$ and 
$(\ref{eq:def_proj_restriction})$, respectively. 
Then  both $\rI_\mu^\lambda $ and $\rR^\lambda_\mu$ are 
$\rGL_{n-1}(\mC )$-equivariant and satisfy 
$\rR^\lambda_\mu \circ \rI_\mu^\lambda =\id_{V_{\mu}}$. 
\end{lem}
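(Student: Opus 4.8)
The statement is the $\cA=\mC$ specialisation of Lemma~\ref{lem:restriction_lambda_mu}, so the plan is to prove it directly from the Gel'fand--Tsetlin branching formalism and the explicit $\ggl_n$-action, and then later deduce the general-$\cA$ version by an integrality argument (Proposition~\ref{prop:xiM_integral}). The essential point to verify is the $\rGL_{n-1}(\mC)$-equivariance of $\rI^\lambda_\mu$ and $\rR^\lambda_\mu$; once that is known, the identity $\rR^\lambda_\mu\circ \rI^\lambda_\mu=\id_{V_\mu}$ is immediate from the definitions (\ref{eq:def_inj_restriction}) and (\ref{eq:def_proj_restriction}), since for $M\in\rG(\mu)$ we have $\widehat{M[\lambda]}=M\in\rG(\mu)$, hence $\rR^\lambda_\mu(\xi_{M[\lambda]})=\xi_{\widehat{M[\lambda]}}=\xi_M$.

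\textbf{Key steps.} First I would reduce equivariance of Lie-group maps to equivariance of the corresponding $\ggl_{n-1}$-module maps (both $V_\lambda$ and $V_\mu$ are irreducible finite-dimensional, hence the $\rGL_{n-1}(\mC)$-action is determined by the $\ggl_{n-1}$-action, and $\rGL_{n-1}(\mC)$ is connected); moreover, since $\xi_M$ differs from the orthonormal Gel'fand--Tsetlin vector $\zeta_M$ only by the nonzero scalar $\sqrt{\rr(M)}$, and the scalars on both sides match because $\rr(M[\lambda])=\rr(H(\mu)[\lambda])\,\rr(M)$ by Lemma~\ref{lem:rM_property}\,\ref{num:rest_rM_property} (so the rescaling is ``uniform'' in $M$ up to the $M$-independent factor $\sqrt{\rr(H(\mu)[\lambda])}$), it suffices to check the analogous maps $\zeta_M\mapsto \zeta_{M[\lambda]}$ and $\zeta_M\mapsto \zeta_{\widehat M}$ (extended by $0$) intertwine the $\ggl_{n-1}$-action. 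The second step is the computation: $\ggl_{n-1}$ is generated by $E_{k,k}$ ($1\le k\le n-1$), $E_{j,j+1}$ and $E_{j+1,j}$ ($1\le j\le n-2$), and for these the formulas (\ref{eq:GT_act_wt}), (\ref{eq:GT_act+}), (\ref{eq:GT_act-}) express the action of $\tau_\lambda(E_{j,j+1})$ on $\zeta_{M'}$ purely in terms of the coefficients $\ra_{i,j}(M')$, which by the explicit expression depend \emph{only} on the rows $m'^{(j-1)}$, $m'^{(j)}$, $m'^{(j+1)}$ of the pattern — and for $j\le n-2$ these are exactly the rows belonging to $\widehat{M'}$. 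Hence $\tau_\lambda(X)\zeta_{M[\lambda]}=\sum_{i}(\text{same coeffs})\,\zeta_{(M+\Delta_{i,j})[\lambda]}$, which matches $\tau_\mu(X)\zeta_M$ row-for-row under $M\mapsto M[\lambda]$; and a triangular array in $\rG(\lambda)$ restricts into $\rG(\mu)$ iff the corresponding ``$M+\Delta_{i,j}$ stays in $\rG(\mu)$'' condition for $j\le n-2$ holds, which makes $\rR^\lambda_\mu$ equivariant too (the patterns sent to $0$ are precisely those whose $\widehat{M}$ leaves $\rG(\mu)$, a condition stable under the $\ggl_{n-1}$-action by the same locality of the coefficients). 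The weight operators $E_{k,k}$, $k\le n-1$, act by $\gamma^{M'}_k$, which again depends only on rows $m'^{(k-1)}$, $m'^{(k)}$, hence on $\widehat{M'}$.

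\textbf{Main obstacle.} The only genuinely delicate point is the bookkeeping that the branching conditions ``$M+\Delta_{i,j}\in\rG(\lambda)$'' and ``$\widehat{M}+\Delta_{i,j}\in\rG(\mu)$'' are equivalent for raising/lowering in the first $n-1$ rows — i.e.\ that adding $\lambda$ as a top row neither creates nor destroys legality of a lower-row move — and that the coefficients $\ra_{i,j}$ are literally the same symbol evaluated on the two patterns. This is a matter of staring carefully at the interlacing inequalities (\ref{eq:GT_cdn}) and the definition of $\ra_{i,j}$, so I would organise it as: (i) locality lemma for $\ra_{i,j}$ and $\gamma^M_j$; (ii) equivalence of legality conditions; (iii) conclude intertwining on generators; (iv) transfer from $\zeta$ to $\xi$ via Lemma~\ref{lem:rM_property}\,\ref{num:rest_rM_property}; (v) observe $\rR^\lambda_\mu\circ\rI^\lambda_\mu=\id$ directly. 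I expect (ii) to be the step requiring the most care, though it is elementary.
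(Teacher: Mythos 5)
Your proposal is correct and matches the paper's own proof in its essentials: rescale $\xi_M\mapsto\zeta_M$ via Lemma~\ref{lem:rM_property}~\ref{num:rest_rM_property} (the factor $\rr(H(\mu)[\lambda])^{1/2}$ is independent of $M$, so equivariance is unaffected), then read off $\ggl_{n-1}$-equivariance from the Gel'fand--Tsetlin formulas (\ref{eq:GT_act_wt}), (\ref{eq:GT_act+}), (\ref{eq:GT_act-}), whose coefficients $\ra_{i,j}$ and $\gamma^M_k$ for $j\le n-2$, $k\le n-1$ depend only on $\widehat{M}$, making the locality and legality-condition bookkeeping work exactly as you describe. (Minor remark: the reduction from $\rGL_{n-1}(\mC)$-equivariance to $\ggl_{n-1}$-equivariance uses connectedness of $\rGL_{n-1}(\mC)$ and finite-dimensionality, not irreducibility of $V_\lambda$ as a $\rGL_{n-1}$-module --- indeed $V_\lambda$ is reducible over $\rGL_{n-1}$ --- but the argument is unaffected.)
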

\begin{proof}
By Lemma~\ref{lem:rM_property} \ref{num:rest_rM_property}, we have 
$\rI_\mu^\lambda (\zeta_M)= \rr (H(\mu )[\lambda ])^{\frac{1}{2}}
\zeta_{M[\lambda ]}$ 
 for $M\in \rG (\mu )$, and 
\begin{align*}
&\rR^{\lambda}_\mu (\zeta_M)=\left\{\begin{array}{ll}
\rr (H(\mu )[\lambda ])^{-\frac{1}{2}}
\zeta_{\widehat{M}}&\text{if}\ \widehat{M}\in \rG (\mu ),\\
0&\text{otherwise}
\end{array}\right.\qquad \text{for \;} M\in \rG (\lambda).
\end{align*}
Hence, the assertion follows from the formulas 
(\ref{eq:GT_act_wt}), (\ref{eq:GT_act+}) and (\ref{eq:GT_act-}). 
\end{proof}

Let $\lambda =(\lambda_1,\lambda_2,\dots ,\lambda_n)\in \Lambda_n$. 
For $l=(l_I)_{I\in \cI_n}\in \cL (\lambda )$, we define the weight 
$\gamma^{(l)}=(\gamma^{(l)}_1,\gamma^{(l)}_2,\dots ,
\gamma^{(l)}_n)\in \mZ^n$ of $l$ by 
\begin{align}
\label{eq:def_gammal_Llambda}
&\gamma_i^{(l)}:=\sum_{i\in J\in \cI_n}l_J\qquad (1\leq i\leq n), 
\end{align}
where $\cL (\lambda )$ is the set defined in Section \ref{subsec:realisation}. 
The following lemma describes specific $\ggl_n$-actions on 
the generators $f_l(z)$ ($l\in \cL (\lambda)$) of $V_\lambda$ defined as in (\ref{eq:Vlambda_generator}).

\begin{lem}
Retain the notation. 
For $1\leq i \leq n$, $1\leq j \leq n-1$ and 
$l =(l_I)_{I\in \cI_n}\in \cL (\lambda )$, we have 
\begin{align}
\label{eq:actwt_gen_Vlambda}
\tau_{\lambda}(E_{i,i})f_{l}(z)
&= \gamma_i^{(l)}f_{l}(z),\\
\label{eq:act+_gen_Vlambda}
\tau_{\lambda}(E_{j,j+1})f_{l}(z)
&= \sum_{j+1\in J\in \cI_n,\ j\not\in J}
l_Jf_{l -\be (J)+ 
\be ((J\smallsetminus \{j+1\})\cup \{j\})}(z),\\ 
\label{eq:act-_gen_Vlambda}
\tau_{\lambda}(E_{j+1,j})f_{l}(z)
&= \sum_{j\in J\in \cI_n,\ j+1\not\in J}
l_Jf_{l -\be (J)+ 
\be ((J\smallsetminus \{j\})\cup \{j+1\})}(z), 
\end{align}
where
$\be (J)=(e(J)_I)_{I\in \cI_n}$ is defined as $e(J)_J=1$ and 
$e(J)_I=0$ for $J \in \cI_n\smallsetminus \{I\}$. Here we understand that  $f_{l'}(z)=0$ if $l'$ is not contained in $\cL (\lambda )$. 
\end{lem}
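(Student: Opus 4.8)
The identities are statements about explicit polynomial differential operators acting on monomials $f_l(z)=\prod_{I}(\det_I(z))^{l_I}$, so the plan is a direct computation using the Leibniz rule together with the basic differentiation formula for minors. First I would recall that the $\mathfrak{gl}_n$-action $\tau_\lambda$ is the derivative of the right regular action $R(g)f(z)=f(zg)$, so that for $X\in\mathfrak{gl}_n$ we have $(\tau_\lambda(X)f)(z)=\frac{d}{dt}\big|_{t=0}f(z\exp(tX))$; in particular $\tau_\lambda(E_{a,b})$ acts as the first-order differential operator $\sum_{c=1}^n z_{c,a}\,\partial/\partial z_{c,b}$. The key lemma I would isolate and prove first is the effect of this operator on a single minor $\det_I(z)=\det_{\{1,\dots,k\},I}(z)$ where $I\in\mathcal{I}_{n,k}$: one gets $\sum_{c} z_{c,a}\,\partial_{z_{c,b}}\det_I(z)$, and expanding the determinant along the column indexed by $b$ (when $b\in I$) and using multilinearity shows this equals $\det_{\{1,\dots,k\},(I\setminus\{b\})\cup\{a\}}(z)$ if $a\notin I$ and $b\in I$ (the column $b$ gets replaced by column $a$), equals $\det_I(z)$ if $a=b\in I$ (recovering the weight), and equals $0$ if $b\notin I$. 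This is the column-swap identity for minors, proved purely by the definition of determinant.

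With that column-swap lemma in hand, the three displayed formulas follow by applying the Leibniz (product) rule to $f_l(z)=\prod_{I\in\mathcal{I}_n}(\det_I(z))^{l_I}$. For \eqref{eq:actwt_gen_Vlambda}, taking $X=E_{i,i}$ and using that the operator acts on each factor $(\det_I z)^{l_I}$ as multiplication by $l_I$ when $i\in I$ and by $0$ otherwise, summing over $I$ gives the coefficient $\sum_{i\in J\in\mathcal{I}_n}l_J=\gamma_i^{(l)}$, which is exactly \eqref{eq:def_gammal_Llambda}. For \eqref{eq:act+_gen_Vlambda}, take $X=E_{j,j+1}$; the Leibniz rule produces a sum over factors $I$ with $l_I>0$, and by the column-swap lemma the operator sends $(\det_I z)^{l_I}$ to $l_I(\det_I z)^{l_I-1}\det_{(I\setminus\{j+1\})\cup\{j\}}(z)$ precisely when $j+1\in I$ and $j\notin I$, and to $0$ otherwise (note the case $a=j,b=j+1$ with both in $I$ cannot occur since $j\notin I$ is forced). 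Collecting the other factors back into the product $f_l(z)$ and rewriting $(\det_I z)^{l_I-1}\det_{(I\setminus\{j+1\})\cup\{j\}}(z)\cdot\prod_{I'\neq I}(\det_{I'}z)^{l_{I'}} = f_{l-\be(I)+\be((I\setminus\{j+1\})\cup\{j\})}(z)$ yields exactly the right-hand side of \eqref{eq:act+_gen_Vlambda}, with $J$ renaming $I$. The formula \eqref{eq:act-_gen_Vlambda} is proved identically with the roles of $j$ and $j+1$ exchanged, using $X=E_{j+1,j}$.

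The only genuine subtlety — the part I would be most careful about — is bookkeeping with the convention ``$f_{l'}(z)=0$ if $l'\notin\mathcal{L}(\lambda)$'', and checking that the column-swap genuinely lands inside the allowed index set: after replacing $j+1$ by $j$ in $I\in\mathcal{I}_{n,k}$ one still has a $k$-element subset, so $\be((I\setminus\{j+1\})\cup\{j\})$ is supported on $\mathcal{I}_{n,k}$ and the resulting $l'=l-\be(I)+\be((I\setminus\{j+1\})\cup\{j\})$ still satisfies $\sum_{I\in\mathcal{I}_{n,k}}l'_I=\lambda_k-\lambda_{k+1}$ and $l'_{\{1,\dots,n\}}=\lambda_n$; the only way to leave $\mathcal{L}(\lambda)$ is to make some entry negative, which is automatically handled by the stated convention (and happens exactly when $j+1\notin I$, already excluded, or when the target index coincides with an existing one in a way that the column-swap lemma already sets to zero). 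I expect essentially no obstacle beyond this careful matching of conventions; the whole argument is an exercise in the Leibniz rule once the one-line minor-differentiation identity is established. I would state the minor identity as an auxiliary sublemma, prove it in two or three lines, and then present \eqref{eq:actwt_gen_Vlambda}–\eqref{eq:act-_gen_Vlambda} as immediate consequences.
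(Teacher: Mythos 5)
Your proof is correct and follows essentially the same route as the paper's: both reduce the three formulas to an auxiliary minor-derivative identity (the effect of $\tau_\lambda(E_{a,b})$ on a single ${\det}_I(z)$) and then apply the Leibniz rule to the monomial $f_l(z)=\prod_I({\det}_I z)^{l_I}$. The only cosmetic difference is that the paper obtains the auxiliary identity by applying Cauchy--Binet to ${\det}_I(z\exp(tE_{a,b}))$ and differentiating at $t=0$, whereas you realise $\tau_\lambda(E_{a,b})$ as the operator $\sum_c z_{c,a}\,\partial/\partial z_{c,b}$ and expand the minor along a column; these are equivalent one-line computations (though note that your general column-swap statement for arbitrary $(a,b)$ would require a sorting sign, which happens to be $+1$ for the adjacent pairs $\{j,j+1\}$ you actually use, so the argument is unaffected).
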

\begin{proof}
By Cauchy--Binet's formula (\ref{eq:cauchy_binet}), 
for $1\leq i \leq n$, $1\leq j \leq n-1$ and $I \in \cI_{n}$, 
we have 
\begin{align*}
\left.\frac{d}{dt}\right|_{t=0}{\det}_{I}(z\exp (tE_{i,i}))
&=\left\{\begin{array}{ll}
{\det}_{I}(z)&\text{if $i\in I$},\\
0&\text{otherwise},
\end{array}\right.\\
\left.\frac{d}{dt}\right|_{t=0}{\det}_{I}(z\exp (tE_{j,j+1}))
&=\left\{\begin{array}{ll}
{\det}_{(I\smallsetminus \{j+1\})\cup \{j\}}(z)&
\text{if $j+1\in I$ and $j\not\in I$},\\
0&\text{otherwise},
\end{array}\right.\\
\left.\frac{d}{dt}\right|_{t=0}{\det}_{I}(z\exp (tE_{j+1,j}))
&=\left\{\begin{array}{ll}
{\det}_{(I\smallsetminus \{j\})\cup \{j+1\}}(z)&
\text{if $j\in I$ and $j+1\not\in I$},\\
0&\text{otherwise}.
\end{array}\right.
\end{align*}
By these equalities, we obtain (\ref{eq:actwt_gen_Vlambda}), 
(\ref{eq:act+_gen_Vlambda}) and (\ref{eq:act-_gen_Vlambda}). 
\end{proof}

For $\gamma =(\gamma_1,\gamma_2,\dots ,\gamma_n)\in \mZ^n$, 
we set $\ell (\gamma ):=\gamma_1+\gamma_2+\dots +\gamma_n$. 
Let us define
\begin{align}
\label{eqn:def_S0}
&\rS^\circ (\lambda' ,\lambda )=
\frac{\prod_{1\leq i\leq j\leq n}(\lambda_{i}'-\lambda_{j}-i+j)!}
{\prod_{1\leq i\leq j<n}(\lambda_{i}-\lambda_{j+1}'-i+j)!}
\end{align}
for $\lambda'=(\lambda_1',\lambda_2',\dots ,\lambda_n')
\in \Lambda_n$ such that $\lambda_1'\geq \lambda_1\geq \lambda_2'\geq \lambda_2
\geq \dots \geq \lambda_n'\geq \lambda_n$. 
When $n>1$, we also define
\begin{align}
\label{eqn:def_S+}
&\rS^+(\lambda ,\mu )=\prod_{1\leq i\leq j<n}
\frac{(\lambda_{i}-\mu_{j}-i+j)!}{(\mu_{i}-\lambda_{j+1}-i+j)!}
\end{align}
for $\mu =(\mu_1,\mu_2,\dots ,\mu_{n-1})\in \Lambda_{n-1}$ 
satisfying $\mu \preceq \lambda $. The following proposition describes the action of a power of $E_{j,j+1}$ on the Gel'fand--Tsetlin basis.

\begin{prop}
\label{prop:VK_formula_E+k}
Retain the notation. Then, for $M=({m}_{i,j})_{1\leq i\leq j\leq n}\in \rG (\lambda )$, 
$1\leq j\leq n-1$ and $k\geq 0$, we have 
\begin{align*}
&\tau_\lambda ((E_{j,j+1})^k)\zeta_{M}=
\underset{M+\Delta_j (\beta )\in \rG (\lambda )}
{\sum_{\beta \in \mN_0^j,\ \ell (\beta )=k}}
\ra_{j}(\beta ;M)\zeta_{M+\Delta_j (\beta )},
\end{align*}
where 
\begin{align*}
&\ra_{j}(\beta ;M):=\ell (\beta )!
\sqrt{\frac{\rS^\circ (m^{(j)}+\beta ,m^{(j)}+\beta )
\rS^\circ (m^{(j)},m^{(j)})}{\rS^\circ (m^{(j)}+\beta ,m^{(j)})^2}
\frac{\rS^+(m^{(j+1)},m^{(j)})\rS^+(m^{(j)}+\beta ,m^{(j-1)})}
{\rS^+(m^{(j+1)},m^{(j)}+\beta )\rS^+(m^{(j)},m^{(j-1)})}}
\end{align*}
and $\Delta_j (\beta ):=\sum_{i=1}^j\beta_i\Delta_{i,j}$ 
with $\beta =(\beta_1,\beta_2,\dots ,\beta_j)\in \mN_0^j$ and 
$m^{(j)}:=(m_{1,j},m_{2,j},\dots ,m_{j,j})$. 
Here we understand  that 
$\rS^+(m^{(j)}+\beta ,m^{(j-1)})=\rS^+(m^{(j)},m^{(j-1)})=1$
if $j=1$. 
\end{prop}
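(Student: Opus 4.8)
\textbf{Proof plan for Proposition~\ref{prop:VK_formula_E+k}.}

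The plan is to proceed by induction on $k$, using the single-step raising formula (\ref{eq:GT_act+}) as the base case $k=1$, and then to control the iteration purely combinatorially. First I would set up the notation: since $E_{j,j+1}$ acts only by modifying the $j$-th row $m^{(j)}$ of a pattern (adding a non-negative combination of the $\Delta_{i,j}$), all patterns occurring in $\tau_\lambda((E_{j,j+1})^k)\zeta_M$ are of the form $M+\Delta_j(\beta)$ with $\beta\in\mN_0^j$ and $\ell(\beta)=k$; this is immediate from (\ref{eq:GT_act+}) by iterating. The content of the proposition is therefore the precise value of the coefficient $\ra_j(\beta;M)$. The key observation is that the one-step coefficients $\ra_{i,j}(M)$ in (\ref{eq:GT_act+}) can be rewritten in terms of the auxiliary quantities $\rS^\circ$ and $\rS^+$ introduced in (\ref{eqn:def_S0}) and (\ref{eqn:def_S+}); I would first record that rewriting as a lemma (a direct manipulation of factorials: $\ra_{i,j}(M)^2$ equals a ratio of the relevant $\rS^\circ$ and $\rS^+$ factors differing by a single unit in the $i$-th slot of $m^{(j)}$), so that the telescoping structure becomes visible.

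Next I would carry out the induction step. Assume the formula holds for $k$; apply $\tau_\lambda(E_{j,j+1})$ once more using (\ref{eq:GT_act+}). Each term $\ra_j(\beta;M)\zeta_{M+\Delta_j(\beta)}$ produces $\sum_{i}\ra_{i,j}(M+\Delta_j(\beta))\zeta_{M+\Delta_j(\beta+e_i)}$, where $e_i$ is the $i$-th standard basis vector of $\mZ^j$. Collecting the coefficient of a fixed $\zeta_{M+\Delta_j(\beta')}$ with $\ell(\beta')=k+1$ gives
\[
\sum_{i:\,\beta'-e_i\in\mN_0^j}\ra_j(\beta'-e_i;M)\,\ra_{i,j}(M+\Delta_j(\beta'-e_i)),
\]
and I must show this equals $\ra_j(\beta';M)$. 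Substituting the inductive expression for $\ra_j(\beta'-e_i;M)$ and the $\rS$-rewriting of $\ra_{i,j}(M+\Delta_j(\beta'-e_i))$, almost all $\rS^\circ$ and $\rS^+$ factors telescope, and one is reduced to a purely numerical identity of the shape
\[
\sum_{i:\,\beta'_i\geq 1}(\ell(\beta')-1)!\,\cdot\,(\text{single factor in }\beta'_i)=\ell(\beta')!\,\cdot\,(\text{same global factor}),
\]
which is nothing but the multinomial recursion $\sum_i\binom{\,\ell(\beta')-1\,}{\beta'_1,\dots,\beta'_i-1,\dots,\beta'_j}=\binom{\ell(\beta')}{\beta'_1,\dots,\beta'_j}$ together with the fact that the ``single factor in $\beta'_i$'' that survives the telescoping is exactly $1/\beta'_i$ times something $i$-independent. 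I would verify this last point by isolating which $\rS^\circ(m^{(j)}+\beta',m^{(j)})$-type factor carries the $i$-dependence after cancellation.

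The main obstacle I anticipate is precisely bookkeeping the telescoping of the $\rS^\circ$ and $\rS^+$ factorial ratios under the shift $\beta'-e_i\mapsto\beta'$: one must check that $\rS^\circ(m^{(j)}+\beta',m^{(j)})$, $\rS^\circ(m^{(j)}+\beta',m^{(j)}+\beta')$, $\rS^+(m^{(j+1)},m^{(j)}+\beta')$ and $\rS^+(m^{(j)}+\beta',m^{(j-1)})$ all assemble correctly from their one-step increments, with the square-root signs handled by the absolute-value convention already built into $\ra_{i,j}$ (all patterns in play lie in $\rG(\lambda)$, so the arguments are genuinely non-negative and no sign ambiguity arises). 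A possible shortcut, which I would pursue if the direct telescoping becomes unwieldy, is to note that the operators $(E_{j,j+1})^k$ act within the $\gl_2$-subalgebra generated by $E_{j,j},E_{j+1,j+1},E_{j,j+1},E_{j+1,j}$ acting on the relevant $\gl_2$-string, reducing the whole computation to the classical $k$-fold raising formula for finite-dimensional $\gl_2$-representations, after which one only needs to match normalisations; this would bypass most of the factorial manipulation. Either way the statement is essentially a formal consequence of (\ref{eq:GT_act+}) plus the multinomial theorem, so I do not expect any conceptual difficulty beyond the combinatorial care.
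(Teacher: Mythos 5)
The paper does not actually prove this proposition: its ``proof'' is a one-line citation to Vilenkin--Klimyk, \S 18.2.5. So your proposal is supplying an argument the paper deliberately omits, and must be judged on its own merits.

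The overall architecture (base case $k=1$ from (\ref{eq:GT_act+}) after a factorial rewriting of $\ra_{i,j}(M)$ in terms of $\rS^\circ,\rS^+$; induction on $k$ by collecting the coefficient of $\zeta_{M+\Delta_j(\beta')}$) is sound, and your observation that all $\rS^+$-factors and the $\rS^\circ(m^{(j)}+\beta,m^{(j)}+\beta)$-factor cancel in the ratio
$\sqrt{X(\beta-e_i)/X(\beta)}\cdot\ra_{i,j}(M+\Delta_j(\beta-e_i))$
is correct. But the key step is misidentified. Writing $x_b=m_{b,j}-b$ and $\tilde x_b=x_b+\beta_b$, the quantity that survives the telescoping is
\[
\sqrt{\frac{X(\beta-e_i)}{X(\beta)}}\,\ra_{i,j}(M+\Delta_j(\beta-e_i))
=\beta_i\prod_{b\neq i}\frac{\tilde x_i-x_b}{\tilde x_i-\tilde x_b},
\]
and the recursion $\ra_j(\beta;M)=\sum_i\ra_j(\beta-e_i;M)\,\ra_{i,j}(M+\Delta_j(\beta-e_i))$ therefore demands the identity
\[
\sum_{i=1}^{j}\beta_i\prod_{b\neq i}\frac{\tilde x_i-x_b}{\tilde x_i-\tilde x_b}=\ell(\beta).
\]
This is \emph{not} the multinomial recursion, and the ``single factor in $\beta'_i$'' that survives is genuinely not of the form $(1/\beta'_i)$ times an $i$-independent quantity: the products $\prod_{b\neq i}(\tilde x_i-x_b)/(\tilde x_i-\tilde x_b)$ carry honest $i$-dependence whenever more than one $\beta_b$ is nonzero (test $j=2$, $\beta=(1,1)$: the two terms are $(x_1-x_2+2)/(x_1-x_2+1)$ and $(x_1-x_2)/(x_1-x_2+1)$, which sum to $2$, but neither equals $1$). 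The identity one actually needs is a Lagrange-interpolation/residue-sum identity, provable by applying partial fractions to $(P(t)-Q(t))/Q(t)$ with $P(t)=\prod_b(t-x_b)$, $Q(t)=\prod_b(t-\tilde x_b)$ and comparing the $t^{-1}$-coefficient at infinity. So your inductive step can be made to close, but the combinatorial heart of it is different from --- and harder than --- the multinomial theorem you claim suffices, and your proposal as written would stall at that point.

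The fallback via $\gl_2$ is also shakier than you suggest: the Gel'fand--Tsetlin basis is adapted to the chain $\gl_1\subset\gl_2\subset\cdots\subset\gl_n$ via the \emph{top-left} embeddings, not to the ``middle'' $\gl_2$ spanned by $E_{j,j},E_{j+1,j+1},E_{j,j+1},E_{j+1,j}$. Under that middle $\gl_2$, $\zeta_M$ is a weight vector but not, in general, a distinguished vector of a single string, so one cannot simply quote the classical $k$-fold raising formula for $\gl_2$ and ``match normalisations''; one would again be forced into the multi-string bookkeeping that the Lagrange identity encodes.
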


\begin{proof}
 See \cite[Section 18.2.5]{Vilenkin_Klimyk_001}.
\end{proof}

Assume that $n>1$, and take $\mu =(\mu_1,\mu_2,\dots ,\mu_{n-1})\in \Lambda_{n-1}$ so that 
$\mu \preceq \lambda$ holds. For $1\leq m\leq n$, 
we define $H(\lambda,\mu;m)\in \rG (\lambda )$ by 
\begin{align*}
&H(\lambda,\mu;m):=(h_{i,j})_{1\leq i\leq j\leq n}\qquad 
\text{with }\ 
h_{i,j}=\left\{\begin{array}{ll}
\lambda_i&\text{if $j\geq m$},\\
\mu_i&\text{if $j<m$}.
\end{array}\right.
\end{align*}
Here we note that both the equalities $H(\lambda,\mu;1)=H(\lambda )$ and 
$H(\lambda,\mu;n)=H(\mu )[\lambda]$ hold by definition. 

\begin{lem}
\label{lem:act_xiM_E+l}
Retain the notation. For $1\leq m\leq n-1$, we have 
\[
\tau_{\lambda}((E_{m,m+1})^{l_m})\xi_{H(\lambda,\mu;m+1)} 
=l_m!\xi_{H(\lambda,\mu;m)}
\]
with $l_m:=\sum_{i=1}^{m}(\lambda_i-\mu_i)$. 
Hence, we have 
\[
\tau_{\lambda}((E_{1,2})^{l_{1}}(E_{2,3})^{l_{2}}\dots 
(E_{n-1,n})^{l_{n-1}})\xi_{H(\mu )[\lambda]}
=l_1!\,l_2!\,\dots \,l_{n-1}!\,\xi_{H(\lambda )}.
\]
\end{lem}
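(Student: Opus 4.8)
\textbf{Proof plan for Lemma~\ref{lem:act_xiM_E+l}.}

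The plan is to reduce the statement to a single application of Proposition~\ref{prop:VK_formula_E+k}, applied at the $m$-th row, to the Gel'fand--Tsetlin pattern $M = H(\lambda,\mu;m+1)$. First I would record the key structural fact: in $M = H(\lambda,\mu;m+1)$ we have $m^{(j)} = \mu$-rows (i.e.\ $m^{(j)} = (\mu_1,\dots,\mu_j)$) for $j\le m$, and $m^{(j)} = (\lambda_1,\dots,\lambda_j)$ for $j\ge m+1$; in particular $m^{(m)} = (\mu_1,\dots,\mu_m)$, $m^{(m+1)} = (\lambda_1,\dots,\lambda_{m+1})$, and $m^{(m-1)} = (\mu_1,\dots,\mu_{m-1})$. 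The target pattern $H(\lambda,\mu;m)$ differs from $M$ exactly in the $m$-th row, where $(\mu_1,\dots,\mu_m)$ is replaced by $(\lambda_1,\dots,\lambda_m)$. Thus the only $\beta\in\mN_0^m$ with $M+\Delta_m(\beta)\in \rG(\lambda)$ and $\ell(\beta) = l_m = \sum_{i=1}^m(\lambda_i-\mu_i)$ that contributes is $\beta = (\lambda_1-\mu_1,\dots,\lambda_m-\mu_m)$: indeed $\beta$ must satisfy $m^{(m)} + \beta \preceq m^{(m+1)}$ and $m^{(m-1)} \preceq m^{(m)}+\beta$, and since $m^{(m)}+\beta$ has total weight raised by $l_m$ it is forced to equal $(\lambda_1,\dots,\lambda_m)$ (here one uses the interlacing $\mu\preceq\lambda$ to check $\beta_i\ge 0$ and that the interlacing constraints of $\rG(\lambda)$ are met). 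Hence the sum in Proposition~\ref{prop:VK_formula_E+k} collapses to the single term $\ra_m(\beta;M)\,\zeta_{H(\lambda,\mu;m)}$ with this specific $\beta$.

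Next I would evaluate the coefficient $\ra_m(\beta;M)$ for this $\beta$. Writing $\lambda^{(m)} := m^{(m)}+\beta = (\lambda_1,\dots,\lambda_m)$, the formula gives
\[
\ra_m(\beta;M) = l_m!\,
\sqrt{\frac{\rS^\circ(\lambda^{(m)},\lambda^{(m)})\,\rS^\circ(\mu^{(m)},\mu^{(m)})}{\rS^\circ(\lambda^{(m)},\mu^{(m)})^2}\cdot
\frac{\rS^+(m^{(m+1)},\mu^{(m)})\,\rS^+(\lambda^{(m)},m^{(m-1)})}{\rS^+(m^{(m+1)},\lambda^{(m)})\,\rS^+(\mu^{(m)},m^{(m-1)})}},
\]
where $\mu^{(m)} := (\mu_1,\dots,\mu_m)$, $m^{(m+1)} = (\lambda_1,\dots,\lambda_{m+1})$, $m^{(m-1)} = (\mu_1,\dots,\mu_{m-1})$. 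I claim all the $\rS$-factors combine to give exactly the ratio $\sqrt{\rr(H(\lambda,\mu;m))/\rr(H(\lambda,\mu;m+1))}$, so that $\ra_m(\beta;M)\,\zeta_{H(\lambda,\mu;m)} = l_m!\,\xi_{H(\lambda,\mu;m)}$ after using $\xi_N = \sqrt{\rr(N)}\,\zeta_N$ from \eqref{eq:def_Qbasis}. Concretely: from $\rr(M) = \prod_{1\le i\le j<k\le n}\frac{(m_{i,k}-m_{j,k-1}-i+j)!\,(m_{i,k-1}-m_{j+1,k}-i+j)!}{(m_{i,k-1}-m_{j,k-1}-i+j)!\,(m_{i,k}-m_{j+1,k}-i+j)!}$ in \eqref{eq:def_rM}, the ratio $\rr(H(\lambda,\mu;m))/\rr(H(\lambda,\mu;m+1))$ only involves rows $m-1,m,m+1$ (since those are the only ones where the two patterns differ, namely in row $m$), and unwinding the definitions \eqref{eqn:def_S0}, \eqref{eqn:def_S+} of $\rS^\circ$ and $\rS^+$ one checks the stated identity of products of factorials term by term. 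This is a direct but slightly bookkeeping-heavy computation; since $\rr(H(\lambda,\mu;m+1))$ for $m+1\ge 2$ may differ from $1$, one keeps track of the full ratio rather than each $\rr$ separately. (For $m=1$ one uses the convention $\rS^+(m^{(1)}+\beta,m^{(0)}) = \rS^+(m^{(1)},m^{(0)}) = 1$ and $\rr(H(\lambda,\mu;2)) = 1$ directly.)

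Finally, the second displayed equality follows from the first by iterating: starting from $\xi_{H(\mu)[\lambda]} = \xi_{H(\lambda,\mu;n)}$ and applying $(E_{m,m+1})^{l_m}$ for $m = n-1, n-2,\dots,1$ successively — at each stage $\tau_\lambda((E_{m,m+1})^{l_m})$ sends $\xi_{H(\lambda,\mu;m+1)}$ to $l_m!\,\xi_{H(\lambda,\mu;m)}$ by the first part — and noting $H(\lambda,\mu;1) = H(\lambda)$, so the composite produces $l_1!\,l_2!\cdots l_{n-1}!\,\xi_{H(\lambda)}$. I expect the main obstacle to be the bookkeeping in matching $\ra_m(\beta;M)$ against the factorial ratio $\sqrt{\rr(H(\lambda,\mu;m))/\rr(H(\lambda,\mu;m+1))}$; everything else (the collapse of the sum to a single term, the iteration) is essentially formal once the interlacing constraints are spelled out. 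One should also double-check the edge cases $m=1$ and $m=n-1$ against the conventions in Proposition~\ref{prop:VK_formula_E+k} and the definition of $\rG(\lambda)$.
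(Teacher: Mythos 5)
Your plan is correct and follows essentially the same route as the paper's proof: invoke Proposition~\ref{prop:VK_formula_E+k} at the $m$-th row with $\beta_m=(\lambda_1-\mu_1,\dots,\lambda_m-\mu_m)$, identify $\ra_m(\beta_m;H(\lambda,\mu;m+1))$ with $l_m!\sqrt{\rr(H(\lambda,\mu;m))/\rr(H(\lambda,\mu;m+1))}$ by unwinding the $\rS^\circ$/$\rS^+$ factors, and then convert from $\zeta$ to $\xi$ and iterate. The only differences are that you spell out explicitly why the sum collapses to a single $\beta$ (the constraints $\beta_i\le\lambda_i-\mu_i$ together with $\ell(\beta)=l_m$ force equality, which the paper states without comment), while you defer the factorial bookkeeping that the paper carries out in full; both steps are correct as outlined.
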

\begin{proof}
By Proposition~\ref{prop:VK_formula_E+k}, we have 
$\tau_{\lambda}((E_{m,m+1})^{l_m})\zeta_{H(\lambda,\mu;m+1)} 
=\ra_{m}(\beta_m;H(\lambda,\mu;m+1))
\zeta_{H(\lambda,\mu;m)}$ with 
\[
\beta_m:=(\lambda_1-\mu_1,\lambda_2-\mu_2,\dots ,\lambda_{m}-\mu_{m}).
\]
For $H(\lambda,\mu;m+1)=(h_{i,j})_{1\leq i\leq j\leq n}$, we set 
\begin{align*}
h^{(j)}:=(h_{1,j},h_{2,j},\dots ,h_{j,j})
&=\left\{\begin{array}{ll}
(\lambda_1,\lambda_2,\dots ,\lambda_j)&\text{if $j\geq m+1$},\\
(\mu_1,\mu_2,\dots ,\mu_j)&\text{if $j<m+1$}\\
\end{array}\right.&
&\text{for \;} 1\leq j\leq n.
\end{align*}
Since $h^{(m)}+\beta_m=(\lambda_1,\lambda_2,\dots ,\lambda_m)$, we have the following equalities:
\begin{align*}
\frac{\rS^\circ(h^{(m)},h^{(m)})}{\rS^+(h^{(m)},h^{(m-1)}) }
&=  \prod_{1\leq i \leq m}(\mu_i-\mu_m-i+m)!,\\
\frac{\rS^\circ(h^{(m)}+\beta_m,h^{(m)}+\beta_m)}
{\rS^+(h^{(m+1)},h^{(m)}+\beta_m)}
&=\prod_{1\leq i \leq m}(\lambda_i-\lambda_{m+1}-i+m)!,\\
\frac{\rS^+(h^{(m+1)},h^{(m)})}
{\rS^\circ(h^{(m)}+\beta_m,h^{(m)}) }
&=  \prod_{1\leq i \leq m}
\frac{1}{(\mu_i-\lambda_{m+1}-i+m)!},  \\
\frac{\rS^+(h^{(m)}+\beta_m,h^{(m-1)})}
{\rS^\circ(h^{(m)}+\beta_m,h^{(m)})}
&=  \prod_{1\leq i \leq m}\frac{1}{(\lambda_i-\mu_m-i+m)!}.
\end{align*}
These equalities implies 
\begin{align*}
\ra_{m}(\beta_m;H(\lambda,\mu;m+1))
&=l_m!\prod_{1\leq i\leq m}
\sqrt{
\frac{(\lambda_{i}-\lambda_{m+1}-i+m)!(\mu_{i}-\mu_m-i+m)!}
{(\mu_{i}-\lambda_{m+1}-i+m)!(\lambda_{i}-\mu_m-i+m)!}}.
\end{align*}
Since
\begin{align*}
&\rr (H(\lambda,\mu;m')) 
=\prod_{1\leq i\leq  j\leq m'-1}
\frac{(\mu_{i}-\lambda_{j+1}-i+j)!(\lambda_i-\mu_j-i+j)!}
{(\lambda_{i}-\lambda_{j+1}-i+j)!(\mu_i-\mu_j-i+j)!}&
&\text{for\; }1\leq m'\leq n, 
\end{align*}
we have 
\[
\ra_{m}(\beta_m;H(\lambda,\mu;m+1))
=l_m!\sqrt{\dfrac{\rr (H(\lambda,\mu;m))}
{\rr (H(\lambda,\mu;m+1))}}.
\]
Since $\xi_M$ is defined as $\xi_M=\sqrt{\rr (M)}\zeta_M$ for $M\in \rG (\lambda )$, 
we obtain the assertion. 
\end{proof}

The following lemma will be used in the proof of Lemma~\ref{lem:Clem2} in 
Section \ref{sec:calcoeff}.

\begin{lem}
\label{lem:EactPair}
Retain the notation in Lemma~$\ref{lem:act_xiM_E+l}$. Then we have
\begin{align*} 
\bigl(\tau_{\lambda}((E_{n, n-1})^{l_{n-1}}(E_{n-1,n-2})^{l_{n-2}}\dots 
(E_{2, 1})^{l_1})\xi_{ H(\lambda )},\
\xi_{H(\mu )[\lambda ]}\bigr)_{\lambda}  
=l_{n-1}!\,l_{n-2}!\,\dots \,l_1!.
\end{align*}
\end{lem}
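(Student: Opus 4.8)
The statement relates the power of the lowering operators applied to $\xi_{H(\lambda)}$, paired against $\xi_{H(\mu)[\lambda]}$, to the product of factorials $l_{n-1}!\cdots l_1!$. My plan is to transfer the lowering operators to the other side of the $\rU(n)$-invariant hermitian inner product $(\cdot,\cdot)_\lambda$ and then invoke Lemma~\ref{lem:act_xiM_E+l}. Concretely, since $(\cdot,\cdot)_\lambda$ is $\rU(n)$-invariant, for the corresponding $\ggl_n$-action we have the adjointness relation $(\tau_\lambda(E_{i,j})\rv,\rv')_\lambda=(\rv,\tau_\lambda(E_{j,i})\rv')_\lambda$ for all $1\le i,j\le n$; this follows because $\tau_\lambda(E_{i,j}^{\gu(n)})=\tau_\lambda(E_{i,j})$ is skew-hermitian (as already used in the proof of Lemma~\ref{lem:hom_dual_conj}), equivalently $E_{i,j}$ and $E_{j,i}$ are adjoint with respect to $(\cdot,\cdot)_\lambda$. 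Applying this repeatedly to move the string $(E_{n,n-1})^{l_{n-1}}(E_{n-1,n-2})^{l_{n-2}}\cdots(E_{2,1})^{l_1}$ across the pairing, the adjoint string becomes $(E_{1,2})^{l_1}(E_{2,3})^{l_2}\cdots(E_{n-1,n})^{l_{n-1}}$ acting on $\xi_{H(\mu)[\lambda]}$ (note the order of the factors reverses, which is exactly what the adjoint of a product does, and this matches the order appearing in Lemma~\ref{lem:act_xiM_E+l}).

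\textbf{Main steps.} First I would record the adjointness of $\tau_\lambda(E_{i,j})$ and $\tau_\lambda(E_{j,i})$ with respect to $(\cdot,\cdot)_\lambda$. Second, I would rewrite
\begin{align*}
\bigl(\tau_{\lambda}((E_{n, n-1})^{l_{n-1}}\cdots(E_{2, 1})^{l_1})\xi_{ H(\lambda )},\ \xi_{H(\mu )[\lambda ]}\bigr)_{\lambda}
=\bigl(\xi_{H(\lambda)},\ \tau_\lambda((E_{1,2})^{l_1}(E_{2,3})^{l_2}\cdots(E_{n-1,n})^{l_{n-1}})\xi_{H(\mu)[\lambda]}\bigr)_\lambda.
\end{align*}
Third, I would apply Lemma~\ref{lem:act_xiM_E+l}, which gives $\tau_\lambda((E_{1,2})^{l_1}(E_{2,3})^{l_2}\cdots(E_{n-1,n})^{l_{n-1}})\xi_{H(\mu)[\lambda]}=l_1!\,l_2!\cdots l_{n-1}!\,\xi_{H(\lambda)}$. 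Fourth, I would use that $(\xi_{H(\lambda)},\xi_{H(\lambda)})_\lambda=\rr(H(\lambda))=1$ — indeed $\xi_{H(\lambda)}=f_{h(\lambda)}(z)$ has $(f_{h(\lambda)}(z),f_{h(\lambda)}(z))_\lambda=1$ by our normalisation, consistent with $\rr(H(\lambda))=1$ from Lemma~\ref{lem:rM_property}~\ref{num:Hgamma_rM_property}. Multiplying out yields $l_{n-1}!\,l_{n-2}!\cdots l_1!$, as claimed (the product is commutative, so the stated order is irrelevant).

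\textbf{Anticipated obstacle.} This proof is essentially a two-line reduction, so there is no serious difficulty; the only point requiring a small amount of care is the bookkeeping when transposing the product of operators across the pairing — one must check that the adjoint of $(E_{n,n-1})^{l_{n-1}}\cdots(E_{2,1})^{l_1}$ is precisely $(E_{1,2})^{l_1}\cdots(E_{n-1,n})^{l_{n-1}}$, i.e. that each block $(E_{j+1,j})^{l_j}$ has adjoint $(E_{j,j+1})^{l_j}$ and that the composition order reverses so that the result matches verbatim the hypothesis of Lemma~\ref{lem:act_xiM_E+l}. A secondary point worth a sentence is confirming the adjointness relation for $\tau_\lambda$ as a $\ggl_n$-action (rather than a $\gu(n)$-action): this is immediate from the identities $\tau_\lambda(E_{i,j}^{\gu(n)})=\tau_\lambda(E_{i,j}-E_{j,i})$ and skew-hermiticity of the $\gu(n)$-action, exactly as invoked in the proof of Lemma~\ref{lem:hom_dual_conj}.
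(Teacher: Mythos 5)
Your proposal is correct and follows exactly the paper's proof: transpose the lowering operators across the $\rU(n)$-invariant hermitian pairing via $(\tau_{\lambda}(E_{i,j})\rv,\rv')_{\lambda}=(\rv,\tau_{\lambda}(E_{j,i})\rv')_{\lambda}$, apply Lemma~\ref{lem:act_xiM_E+l}, and use $(\xi_{H(\lambda)},\xi_{H(\lambda)})_\lambda=\rr(H(\lambda))=1$. The order-reversal bookkeeping you flag is also exactly as in the paper, and is handled correctly.
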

\begin{proof}
The $\rU (n)$-invariance of $( \cdot ,\cdot )_{\lambda}$ implies 
$(\tau_{\lambda}(E_{i,j})\rv,\rv')_{\lambda}
=(\rv,\tau_{\lambda}(E_{j,i})\rv')_{\lambda}$ for 
$1\leq i,j\leq n$ and $\rv,\rv'\in V_{\lambda}$. 
Hence, 
\begin{align*} 
\left( \tau_{\lambda}((E_{n, n-1})^{l_{n-1}}(E_{n-1,n-2})^{l_{n-2}}\dots 
        (E_{2, 1})^{l_1})\xi_{ H(\lambda )},
           \xi_{H(\mu )[\lambda ]}
   \right)_{\lambda}
=\left( \xi_{ H(\lambda )},\tau_{\lambda}((E_{1,2})^{l_{1}}(E_{2,3})^{l_{2}}\dots 
        (E_{n-1,n})^{l_{n-1}})\xi_{H(\mu )[\lambda ]}
    \right)_{\lambda} ,
\end{align*}
and the assertion follows from Lemma~\ref{lem:act_xiM_E+l} 
and $(\xi_{H(\lambda )},\xi_{H(\lambda )})_\lambda =\rr (H(\lambda ))=1$ 
(by Lemma~\ref{lem:rM_property} \ref{num:Hgamma_rM_property}). 
\end{proof}

For $1\leq m\leq n$, 
we define $h (\lambda,\mu;m)
=(h(\lambda,\mu;m)_{I})_{I\in \cI_n}\in \cL (\lambda )$ by 
\begin{align*}
&h(\lambda,\mu;m)_I:=\left\{\begin{array}{ll}
\lambda_k-\mu_k&
\text{if $k<m$ and $I=\{1,2,\dots ,k-1,m\}$},\\
\mu_k-\lambda_{k+1}&
\text{if $k<m$ and $I=\{1,2,\dots ,k\}$},\\
\lambda_k-\lambda_{k+1}&
\text{if $k\geq m$ and $I=\{1,2,\dots ,k\}$},\\
0&\text{otherwise}
\end{array}\right.&
&(I\in \cI_{n,k},\ 1\leq k\leq n-1).
\end{align*}
Here we understand $\{1,2, \ldots, k-1, m\} = \{ m \}$ if $k=1$. 
We note that $h (\lambda,\mu;1)=h (\lambda )$ 
and $h (\lambda,\mu;n)=h (\lambda ,\mu )$, where $h (\lambda )$ and $h (\lambda ,\mu )$ 
are defined by (\ref{eq:def_hlambda_gen}) and (\ref{eq:def_hlambdamu_gen}), respectively.

\begin{lem}
\label{lem:xi_Hmulambda_gen}
Retain the notation. We have 
$\xi_{H(\lambda,\mu;m)}=f_{h (\lambda,\mu;m)}(z)$ 
for $1\leq m\leq n$. 
\end{lem}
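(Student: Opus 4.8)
\emph{Plan of proof.} The plan is to argue by induction on $m$. For $m=1$ we have $H(\lambda,\mu;1)=H(\lambda)$ and $h(\lambda,\mu;1)=h(\lambda)$, and the normalisation chosen in Section~\ref{subsec:GT_basis} together with $\rr(H(\lambda))=1$ (Lemma~\ref{lem:rM_property} \ref{num:Hgamma_rM_property}) gives $\xi_{H(\lambda)}=\sqrt{\rr(H(\lambda))}\,\zeta_{H(\lambda)}=\zeta_{H(\lambda)}=f_{h(\lambda)}(z)$; this is the base case. Fix now $1\le m\le n-1$ and assume $\xi_{H(\lambda,\mu;m)}=f_{h(\lambda,\mu;m)}(z)$. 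The idea is to show that $f_{h(\lambda,\mu;m+1)}(z)$ is forced to be a scalar multiple of $\xi_{H(\lambda,\mu;m+1)}$, and then to pin the scalar down to $1$.

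First I would read off from the $\ggl_n$-action formulas (\ref{eq:actwt_gen_Vlambda}) and (\ref{eq:act+_gen_Vlambda}) that $f_{h(\lambda,\mu;m+1)}(z)$ is a weight vector of weight $\gamma^{H(\lambda,\mu;m+1)}$ and is annihilated by $\tau_\lambda(E_{i,i+1})$ for every $i\ne m$. The weight is computed directly from (\ref{eq:actwt_gen_Vlambda}) and the definition of $h(\lambda,\mu;m+1)$; for the vanishing, by (\ref{eq:act+_gen_Vlambda}) it is enough to note that none of the column-sets occurring with nonzero exponent in $h(\lambda,\mu;m+1)$ — namely $\{1,\dots,k-1,m+1\}$ and $\{1,\dots,k\}$ for $1\le k\le m$, $\{1,\dots,k\}$ for $m+1\le k\le n-1$, and $\{1,\dots,n\}$ — contains $i+1$ without containing $i$ when $i\ne m$ (a short case inspection using $\mu\preceq\lambda$). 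By (\ref{eq:GT_act_wt}) and (\ref{eq:GT_act+}) the vector $\zeta_{H(\lambda,\mu;m+1)}$ has the same two properties. Hence both $f_{h(\lambda,\mu;m+1)}(z)$ and $\xi_{H(\lambda,\mu;m+1)}$ are highest-weight vectors of weight $\gamma^{H(\lambda,\mu;m+1)}$ for the Levi subalgebra $\ggl_m\oplus\ggl_{n-m}$ of $\ggl_n$ (simple roots $\alpha_i$, $i\ne m$).

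The crucial point is that this space of highest-weight vectors is one-dimensional. Putting $\sigma=(\mu_1,\dots,\mu_m)$ and $\tau=\bigl(\gamma^{H(\lambda,\mu;m+1)}_{m+1},\dots,\gamma^{H(\lambda,\mu;m+1)}_n\bigr)=\bigl(\lambda_{m+1}+\textstyle\sum_{i=1}^m(\lambda_i-\mu_i),\,\lambda_{m+2},\dots,\lambda_n\bigr)$, both of which are dominant, this dimension equals the multiplicity of $V_\sigma\boxtimes V_\tau$ in $V_\lambda$ restricted to $\rGL_m\times\rGL_{n-m}$, i.e.\ the Littlewood--Richardson coefficient $c^\lambda_{\sigma\tau}$. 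Since $\mu\preceq\lambda$ gives $\mu_i\ge\lambda_{i+1}$, the skew shape $\lambda/\sigma$ is the disjoint union — in pairwise disjoint rows and columns — of the straight shape $(\lambda_{m+1},\dots,\lambda_n)$ with $m$ single rows of lengths $\lambda_1-\mu_1,\dots,\lambda_m-\mu_m$, so $s_{\lambda/\sigma}=s_{(\lambda_{m+1},\dots,\lambda_n)}\prod_{i=1}^m h_{\lambda_i-\mu_i}$; as $\tau$ is $(\lambda_{m+1},\dots,\lambda_n)$ with a horizontal strip of $l_m:=\sum_{i=1}^m(\lambda_i-\mu_i)$ boxes appended to the first row, iterated Pieri shows $c^\lambda_{\sigma\tau}=1$. (Equivalently one checks that $f_{h(\lambda,\mu;m+1)}(z)$ lies in the $\rGL_{m+1}$-Cartan component of $V_\lambda$ and uses that $\rGL_{m+1}\downarrow\rGL_m$ is multiplicity-free.) Therefore $f_{h(\lambda,\mu;m+1)}(z)=c\,\xi_{H(\lambda,\mu;m+1)}$ for some $c\in\mathbf{C}^\times$.

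Finally I would determine $c$ by applying $\tau_\lambda((E_{m,m+1})^{l_m})$. On one side, Lemma~\ref{lem:act_xiM_E+l} gives $\tau_\lambda((E_{m,m+1})^{l_m})\xi_{H(\lambda,\mu;m+1)}=l_m!\,\xi_{H(\lambda,\mu;m)}=l_m!\,f_{h(\lambda,\mu;m)}(z)$ by the inductive hypothesis. On the other side, since $\tau_\lambda(\exp(tE_{m,m+1}))f_{h(\lambda,\mu;m+1)}(z)=f_{h(\lambda,\mu;m+1)}(z\exp(tE_{m,m+1}))$ is obtained from $f_{h(\lambda,\mu;m+1)}(z)$ by the substitution $z_{i,m+1}\mapsto z_{i,m+1}+t\,z_{i,m}$ — under which $\det_I(z)$ becomes $\det_I(z)+t\,\det_{(I\setminus\{m+1\})\cup\{m\}}(z)$ if $m+1\in I$ and $m\notin I$, and is unchanged otherwise — the resulting polynomial in $t$ has degree exactly $l_m$, and its $l_m$-th derivative at $t=0$ equals $l_m!$ times the leading $t$-coefficient, which one reads off to be precisely $l_m!\,f_{h(\lambda,\mu;m)}(z)$. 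Comparing the two outcomes via $f_{h(\lambda,\mu;m+1)}(z)=c\,\xi_{H(\lambda,\mu;m+1)}$ yields $c=1$, completing the induction. The main obstacle is the one-dimensionality claim in the third paragraph (the Littlewood--Richardson input / Cartan-component check); everything else is bookkeeping with the explicit formulas recalled above, and the scalar is then fixed very cheaply.
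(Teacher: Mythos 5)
Your proof is correct, and it takes a genuinely different route from the paper's, so it is worth comparing.

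The paper's proof starts at $m=n$ rather than $m=1$: it observes from \eqref{eq:actwt_gen_Vlambda}--\eqref{eq:act+_gen_Vlambda} that $f_{h(\lambda,\mu;n)}(z)$ is a $\rGL_{n-1}(\mC)$-highest-weight vector of highest weight $\mu$ in $V_\lambda$, invokes Lemma~\ref{lem:C_restriction_lambda_mu} (multiplicity-freeness of the branching $\rGL_n\downarrow\rGL_{n-1}$) to get a single scalar $c$ with $\xi_{H(\mu)[\lambda]}=c\,f_{h(\lambda,\mu;n)}(z)$, and then descends through the chain $m=n,n-1,\dots,1$ by applying $(E_{m,m+1})^{l_m}$ and comparing Lemma~\ref{lem:act_xiM_E+l} with the direct computation $\tau_\lambda((E_{m,m+1})^{l_m})f_{h(\lambda,\mu;m+1)}(z)=l_m!\,f_{h(\lambda,\mu;m)}(z)$; this propagates the same $c$ all the way down, where the normalisation $\xi_{H(\lambda)}=f_{h(\lambda)}(z)$ at $m=1$ forces $c=1$. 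You instead ascend from $m=1$ by induction, and at each step pin the scalar locally: you justify the one-dimensionality by computing the Littlewood--Richardson coefficient $c^\lambda_{\sigma\tau}$ for the Levi $\rGL_m\times\rGL_{n-m}$, using the interlacing condition $\mu\preceq\lambda$ to show the skew shape $\lambda/\sigma$ splits into disconnected pieces. I checked that this LR argument is sound (the disjoint-rows-and-columns claim follows precisely from $\lambda_j\le\lambda_{i+1}\le\mu_i$ for $i<j\le m$, and the resulting Pieri expansion gives coefficient $1$ because all added boxes are forced into the first row). The paper's approach is more economical because the multiplicity-one input it needs is already packaged in Lemma~\ref{lem:C_restriction_lambda_mu} and is applied only once, at $m=n$; your approach is conceptually self-contained at each inductive step but requires the slightly heavier LR/Pieri computation (or the parenthetical Cartan-component observation, which would also need a moment's justification). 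The explicit $t$-expansion of the determinants under $z_{i,m+1}\mapsto z_{i,m+1}+t\,z_{i,m}$ that you use to compute $\tau_\lambda((E_{m,m+1})^{l_m})f_{h(\lambda,\mu;m+1)}(z)$ is exactly the ``direct computation using \eqref{eq:act+_gen_Vlambda}'' the paper alludes to, so on that point the two proofs coincide. Both approaches are valid.
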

\begin{proof}
By (\ref{eq:actwt_gen_Vlambda}) and 
(\ref{eq:act+_gen_Vlambda}), we have 
\begin{align*}
\tau_{\lambda}(E_{i,i})f_{h (\lambda,\mu;n)}(z)
&=\mu_if_{h (\lambda,\mu;n)}(z)\qquad (1\leq i\leq n-1) & \text{ \; and \;} &
&\tau_{\lambda}(E_{j,j+1})f_{h (\lambda,\mu;n)}(z)&=0\qquad 
(1\leq j\leq n-2).
\end{align*}
Note that $f_{h (\lambda,\mu;n)}(z)$ is a highest weight vector in 
an irreducible $\rGL_{n-1}(\mC)$-submodule of $V_\lambda$ with highest weight $\mu$. 
By Lemma~\ref{lem:C_restriction_lambda_mu}, 
we have $V_\lambda \simeq \bigoplus_{\mu \preceq \lambda }V_{\mu}$ 
as $\rGL_{n-1}(\mC)$-modules, 
and there exists a constant $c\in \mC^\times $ such that 
$\xi_{H(\lambda,\mu;n)}=\xi_{H(\mu)[\lambda ]}
=cf_{h (\lambda,\mu;n)}(z)$. 
For $1\leq m\leq n-1$, set $l_m=\sum_{i=1}^{m}(\lambda_i-\mu_i)$. 
By a direct computation using (\ref{eq:act+_gen_Vlambda}), 
we have 
\begin{align*}
&\tau_{\lambda}((E_{m,m+1})^{l_m})f_{h (\lambda,\mu;m+1)}(z)
=l_m!f_{h (\lambda,\mu;m)}(z)&
\end{align*}
for $1\leq m\leq n-1$. 
By this equality, 
Lemma~\ref{lem:act_xiM_E+l} and the equality
$\xi_{H(\lambda,\mu;n)}=cf_{h (\lambda,\mu;n)}(z)$, we have 
\begin{align*}
&\xi_{H(\lambda,\mu;m)}=cf_{h (\lambda,\mu;m)}(z)&
&\text{for \;} 1\leq m\leq n. 
\end{align*}
This equality for $m=1$ implies 
$\xi_{H(\lambda )}=cf_{h (\lambda )}(z)$, 
since $H(\lambda )=H(\lambda,\mu;1)$ and 
$h (\lambda )=h (\lambda,\mu;1)$ hold. 
On the other hand, Lemma~\ref{lem:rM_property} \ref{num:Hgamma_rM_property} and our 
normalisation $\zeta_{H(\lambda )}=f_{h (\lambda )}(z)$ imply 
$\xi_{H(\lambda )}=\sqrt{\rr (H(\lambda ))}\zeta_{H(\lambda )}=f_{h (\lambda )}(z)$. 
Therefore, we have $c=1$ and obtain the assertion. 
\end{proof}

\begin{proof}[Proof of Lemma~$\ref{lem:xi_Hmulambda_explicit}$]
Since both $H(\lambda,\mu;n)=H(\mu )[\lambda]$ and $h (\lambda,\mu;n)=h (\lambda ,\mu )$ hold, the assertion follows immediately from 
Lemma~\ref{lem:xi_Hmulambda_gen}. 
\end{proof}

The following corollary will be used in the proof of Lemma~\ref{lem:std_schwartz} in Section \ref{subsec:def_schwartz}. 

\begin{cor}
\label{cor:matcoeff_n=2}
Assume that $n=2$. Let $\lambda =(\lambda_1,\lambda_2)\in \Lambda_2$, 
$M=\left(\begin{smallmatrix}\lambda_1\ \lambda_2\\ \mu_1\end{smallmatrix}\right)\in \rG (\lambda )$ and 
$g=(g_{i,j})_{1\leq i,j\leq 2}\in \rGL_2(\mC )$. 
Then we have 
\begin{equation}
\label{eq:xiM_n=2}
\xi_{M}=f_{h (\lambda,\mu_1)}(z)
=z_{1,1}^{\mu_1-\lambda_{2}}z_{1,2}^{\lambda_1-\mu_1}(\det z)^{\lambda_2}
\end{equation}
and 
\begin{align}
\label{eq:matcoeff_n=2}
(\tau_{\lambda}(g)\xi_{M},\xi_{H(\lambda )})_{\lambda}
&=(\det g)^{\lambda_2}g_{1,1}^{\mu_1-\lambda_{2}}g_{1,2}^{\lambda_1-\mu_1}.
\end{align}
\end{cor}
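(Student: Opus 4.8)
\textbf{Proof plan for Corollary~\ref{cor:matcoeff_n=2}.}
The plan is to specialise Lemma~\ref{lem:xi_Hmulambda_explicit} to the case $n=2$ and then compute the matrix coefficient directly from the explicit polynomial expression. First I would observe that for $n=2$, a Gel'fand--Tsetlin pattern $M=\left(\begin{smallmatrix}\lambda_1\ \lambda_2\\ \mu_1\end{smallmatrix}\right)\in\rG(\lambda)$ has $\widehat M=(\mu_1)\in\Lambda_1=\rG(\mu)$ with $\mu=(\mu_1)$, so $M=H(\mu)[\lambda]$ automatically (there is a unique pattern in $\rG(1)$ with a given top row). Hence $\xi_M=\xi_{H(\mu)[\lambda]}$, and Lemma~\ref{lem:xi_Hmulambda_explicit} applies.

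Next I would unwind the combinatorial data in Lemma~\ref{lem:xi_Hmulambda_explicit} for $n=2$: the only relevant index set is $k=1$, so $\cI_{2,1}=\{\{1\},\{2\}\}$, and with the convention $\{1,\dots,k-1,n\}=\{n\}=\{2\}$ when $k=1$, the element $h(\lambda,\mu)\in\cL(\lambda)$ is given by $h(\lambda,\mu)_{\{2\}}=\lambda_1-\mu_1$, $h(\lambda,\mu)_{\{1\}}=\mu_1-\lambda_2$, and $h(\lambda,\mu)_{\{1,2\}}=\lambda_2$. Plugging into the definition \eqref{eq:Vlambda_generator} of $f_l(z)$ and using ${\det}_{\{1\}}(z)=\det_{\{1\},\{1\}}(z)=z_{1,1}$, ${\det}_{\{2\}}(z)=\det_{\{1\},\{2\}}(z)=z_{1,2}$, and ${\det}_{\{1,2\}}(z)=\det z$, I obtain
\[
\xi_M=f_{h(\lambda,\mu)}(z)=z_{1,1}^{\mu_1-\lambda_2}z_{1,2}^{\lambda_1-\mu_1}(\det z)^{\lambda_2},
\]
which is \eqref{eq:xiM_n=2}.

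For \eqref{eq:matcoeff_n=2}, the key point is that $(\tau_\lambda(g)\xi_M,\xi_{H(\lambda)})_\lambda$ extracts the coefficient of the highest weight vector $\xi_{H(\lambda)}=f_{h(\lambda)}(z)=z_{1,1}^{\lambda_1-\lambda_2}(\det z)^{\lambda_2}$ in the expansion of $\tau_\lambda(g)\xi_M=R(g)f_{h(\lambda,\mu)}(z)=f_{h(\lambda,\mu)}(zg)$ with respect to the Gel'fand--Tsetlin basis. Since $\{\xi_N\}_{N\in\rG(\lambda)}$ is an orthogonal basis with $(\xi_{H(\lambda)},\xi_{H(\lambda)})_\lambda=\rr(H(\lambda))=1$ (Lemma~\ref{lem:rM_property}\,\ref{num:Hgamma_rM_property}), this pairing equals that coefficient. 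Now $f_{h(\lambda,\mu)}(zg)=(z_{1,1}')^{\mu_1-\lambda_2}(z_{1,2}')^{\lambda_1-\mu_1}(\det(zg))^{\lambda_2}$ where $(z_{1,1}',z_{1,2}')$ is the first row of $zg$; specialising $z=1_2$ (equivalently, restricting the polynomial identity along the first row) and reading off the first row of $g$ gives $(g_{1,1})^{\mu_1-\lambda_2}(g_{1,2})^{\lambda_1-\mu_1}(\det g)^{\lambda_2}$. The slightly delicate point, which is the main thing to handle carefully, is justifying that evaluating the matrix-coefficient pairing amounts to this substitution: concretely, one uses that $(\tau_\lambda(g)\rv,\xi_{H(\lambda)})_\lambda=(\tau_\lambda(g)\rv)(z)\big|_{z=e_1^{\mathrm{t}}\text{-row}}$—more precisely, the linear functional $\rv\mapsto(\rv,\xi_{H(\lambda)})_\lambda$ on $V_\lambda$ is, up to the normalisation already checked, the evaluation of the polynomial at a matrix whose first row is $(1,0)$ (since $f_{h(\lambda)}(z)$ depends only on $z_{1,1}$ and $\det z$, and among the monomials appearing in $V_\lambda$ only $z_{1,1}^{\lambda_1-\lambda_2}(\det z)^{\lambda_2}$ survives this evaluation up to the others vanishing or being orthogonal). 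I would spell this out by noting $\tau_\lambda(g)\xi_M = \sum_N c_N(g)\xi_N$ and that evaluation at $z_0 = \left(\begin{smallmatrix}1&0\\0&1\end{smallmatrix}\right)$ composed with the highest-weight selection picks out $c_{H(\lambda)}(g)$; comparing with the explicit polynomial $f_{h(\lambda,\mu)}(z_0 g)$ then yields \eqref{eq:matcoeff_n=2}. No serious obstacle is expected here—this is a direct computation—the only care needed is the bookkeeping identifying the hermitian pairing against $\xi_{H(\lambda)}$ with the appropriate polynomial evaluation.
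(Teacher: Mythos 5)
Your proof is correct, and the route for \eqref{eq:matcoeff_n=2} differs from the paper's. The paper expands $\tau_\lambda(g)\xi_M = f_{h(\lambda,\mu_1)}(zg)$ by the binomial theorem, recognizes each monomial as $\xi_{H(i+j+\lambda_2)[\lambda]}$, and then uses the orthogonality $(\xi_{H(\mu_1')[\lambda]},\xi_{H(\lambda)})_\lambda = \delta_{\mu_1',\lambda_1}$ to pick out the single surviving term. You instead observe that the linear functional $\rv\mapsto(\rv,\xi_{H(\lambda)})_\lambda$ on $V_\lambda$ coincides with evaluation at the identity matrix, because for $n=2$ the basis \eqref{eq:xiM_n=2} shows that every $\xi_{H(\mu_1')[\lambda]}$ with $\mu_1'<\lambda_1$ carries a positive power of $z_{1,2}$ (hence vanishes at $1_2$) while $\xi_{H(\lambda)}(1_2)=1=(\xi_{H(\lambda)},\xi_{H(\lambda)})_\lambda$; applying this functional to $\tau_\lambda(g)\xi_M = f_{h(\lambda,\mu_1)}(zg)$ then just gives $f_{h(\lambda,\mu_1)}(g)$. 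This is a genuine shortcut: it avoids the binomial expansion and collapses the extraction of the matrix coefficient into a single substitution $z\mapsto 1_2$, at the cost of first noting the coincidence of the two linear functionals (which you correctly reduce to checking on the GT basis via the orthogonality $(\xi_N,\xi_{N'})_\lambda=\rr(N)\delta_{N,N'}$ and Lemma~\ref{lem:rM_property}\,\ref{num:Hgamma_rM_property}). Both approaches use Lemma~\ref{lem:xi_Hmulambda_explicit} identically for \eqref{eq:xiM_n=2}. The one thing to tighten in a final write-up is the parenthetical hand-wave ``up to the others vanishing or being orthogonal'': you should state plainly that $\xi_N(1_2)=\delta_{N,H(\lambda)}$ for all $N\in\rG(\lambda)$ (immediate from \eqref{eq:xiM_n=2}), so evaluation at $1_2$ and pairing with $\xi_{H(\lambda)}$ agree on a basis and hence on all of $V_\lambda$.
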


\begin{proof}
By the equality $M=H(\mu_1)[\lambda ]$ and Lemma~\ref{lem:xi_Hmulambda_explicit}, we obtain (\ref{eq:xiM_n=2}). 
Next, by direct calculations, we have 
\begin{align*}
\tau_{\lambda}(g)\xi_{M}
&=(z_{1,1}g_{1,1}+z_{1,2}g_{2,1})^{\mu_1-\lambda_{2}}
(z_{1,1}g_{1,2}+z_{1,2}g_{2,2})^{\lambda_1-\mu_1}(\det (zg))^{\lambda_2}\\
&=(\det g)^{\lambda_2}\sum_{i=0}^{\mu_1-\lambda_{2}}\sum_{j=0}^{\lambda_1-\mu_1}
\binom{\mu_1-\lambda_{2}}{i}\binom{\lambda_1-\mu_1}{j}
g_{1,1}^ig_{2,1}^{\mu_1-\lambda_{2}-i}g_{1,2}^jg_{2,2}^{\lambda_1-\mu_1-j}
\xi_{H(i+j+\lambda_2)[\lambda ]}.
\end{align*}
By this equality and 
\begin{align*}
&(\xi_{H(\mu_1')[\lambda ]},\xi_{H(\lambda )})_{\lambda}
=\left\{\begin{array}{ll}
1&\text{if $\mu_1'=\lambda_1$},\\
0&\text{if $\mu_1'\neq \lambda_1$}
\end{array}\right.&
&\text{for\; } \lambda_1\geq \mu_1'\geq \lambda_2,
\end{align*}
we obtain (\ref{eq:matcoeff_n=2}). 
\end{proof}

\subsection{The $\cA$-module structure}
\label{subsec:Amod_str}

In this subsection, we give the proofs of 
Lemmas \ref{lem:extremal_vec_explicit}, \ref{lem:restriction_lambda_mu}, 
\ref{lem:detl_shift}, \ref{lem:GL_inv_pairing} and Proposition~\ref{prop:xiM_integral}.

Let us recall some results on the construction of the Gel'fand--Tsetlin basis 
from \cite[Section 2]{Molev_001}. 
For $1\leq i<j\leq n$, 
we define the {\em raising operator} $D_{i,j}$ and the {\em lowering operator} $D_{j,i}$ 
in the universal enveloping algebra $\cU (\ggl_n)$ of $\ggl_n$ by 
\begin{align*}
&D_{i,j}:=(-1)^{i-1}
\sum_{h=0}^{i-1}\,\sum_{i=i_0>i_1>i_2>\dots >i_h\geq 1}
E_{i_0,i_1}E_{i_1,i_2}\dots E_{i_{h-1},i_h}E_{i_h,j}
\underset{k\not\in \{i_1,i_2,\dots ,i_h\}}{\prod_{k=1}^{i-1}}
(E_{i,i}-E_{k,k}-i+k),\\
&D_{j,i}:=\sum_{h=0}^{j-i-1}\,\sum_{i=i_0<i_1<i_2<\dots <i_h<j}
E_{i_1,i_0}E_{i_2,i_1}\dots E_{i_h,i_{h-1}}E_{j,i_h}
\underset{k\not\in \{i_1,i_2,\dots ,i_h\}}{\prod_{k=i+1}^{j-1}}
(E_{i,i}-E_{k,k}-i+k).
\end{align*}
Note that the elements of $\{E_{i,i}-E_{k,k}-i+k \mid 1\leq i,k\leq n\}$ are commutative. For our convenience, we here
slightly change the definition of the raising operator $D_{i,j}$; namely we 
multiply the original one by $(-1)^{i-1}$. 
Let $\lambda =(\lambda_1,\lambda_2,\dots,\lambda_n)\in \Lambda_n$. 
For $M=(m_{i,j})_{1\leq i\leq j\leq n}\in \rG (\lambda )$, 
we define elements $\cD^\pm_{M}$ of $\cU (\ggl_n)$ by 
\begin{align*}
&\cD^+_{M}:=
\cD^+_{M,n}\cD^+_{M,n-1}\dots \cD^+_{M,2}&
&\text{with }\ \cD^+_{M,k}:=
D_{1,k}^{m_{1,k}-m_{1,k-1}}
D_{2,k}^{m_{2,k}-m_{2,k-1}}
\dots 
D_{k-1,k}^{m_{k-1,k}-m_{k-1,k-1}},\\
&\cD^-_{M}:=
\cD^-_{M,2}\cD^-_{M,3}\dots \cD^-_{M,n}&
&\text{with }\ 
\cD^-_{M,k}:=D_{k,1}^{m_{1,k}-m_{1,k-1}}D_{k,2}^{m_{2,k}-m_{2,k-1}}\dots 
D_{k,k-1}^{m_{k-1,k}-m_{k-1,k-1}},
\end{align*}
and also define integers $\rr_1(M)$ and $\rr_2(M)$ by 
\begin{align*}
&\rr_1(M):=\prod_{1\leq i\leq j<k\leq n}
\frac{(m_{i,k}-m_{j+1,k}-i+j)!}{(m_{i,k-1}-m_{j+1,k}-i+j)!},&
&\rr_2(M):=\prod_{1\leq i\leq j<k\leq n}
\frac{(m_{i,k}-m_{j,k-1}-i+j)!}{(m_{i,k}-m_{j,k}-i+j)!}.
\end{align*}
For integral triangular arrays $M=(m_{i,j})_{1\leq i\leq j\leq n}$ and 
$M'=(m_{i,j}')_{1\leq i\leq j\leq n}$ of size $n$, 
we write $M>_{\rlex} M'$ if and only if there are positive integers
$i_0$ and $j_0$ with $1\leq i_0 \leq j_0\leq n$ satisfying
\begin{align}
\label{eq:def_<lex}
&m_{i,j}=m_{i,j}' \quad (1\leq i\leq j\leq j_0-1),&
& m_{i,j_0}=m_{i,j_0}'\quad (1\leq i\leq i_0-1) &
&\text{ \; and\;}&
&m_{i_0,j_0}>m_{i_0,j_0}'.
\end{align}
For integral triangular arrays $M$ and $M'$ of size $n$, 
we write $M\geq_{\rlex} M'$ if and only if 
either $M>_{\rlex} M'$ or $M'=M$ holds. 
Then $\geq_{\rlex}$ is a total order on the set of 
integral triangular arrays of size $n$, and 
we call it the lexicographical order. 
We know from \cite[Section 2]{Molev_001} that the following proposition holds. 
\begin{prop}
\label{prop:Molev_diff_op}
Retain the notation. Then, for $M\in \rG (\lambda )$, we have the following equalities:
\begin{align}
&\label{eq:Molev_D-}
\tau_\lambda (\cD^-_{M})\xi_{H(\lambda )}
=\rr_1(M)\xi_M,\\
&\label{eq:Molev_D+}
\tau_\lambda (\cD^+_{M})\xi_{M}=
\rr_2(M)\xi_{H(\lambda )},\\
&\label{eq:Molev_D+prime}
\tau_{\lambda'} (\cD^+_{M})\xi_{M'}=0\qquad 
\text{for\; } \lambda'\in \Lambda_n \text{ \; and \;} M'\in \rG (\lambda') \text{\; satisfying both \;} M>_{\rlex}M' \text{\; and \;} 
\gamma^M=\gamma^{M'}.
\end{align}
\end{prop}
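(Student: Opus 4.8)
The statement collects, in the present notation, Molev's construction of the Gel'fand--Tsetlin basis by raising and lowering operators; all three identities are contained (up to the conventions discussed below) in \cite[Section~2]{Molev_001}. Accordingly, the plan is \emph{not} to reprove them from scratch but to set up carefully the dictionary between Molev's normalisations and ours, and to track the effect of the cosmetic factor $(-1)^{i-1}$ that we have inserted into $D_{i,j}$. First I would recall Molev's precise statements: for $M\in\rG(\lambda)$ he builds a vector by applying a product of lowering operators to a highest weight vector, computes its norm, and shows that the matching product of raising operators sends it back to the highest weight vector while annihilating every Gel'fand--Tsetlin vector of the same weight that is strictly smaller in the lexicographic order; this last triangularity is what makes the construction consistent. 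The only discrepancy with our operators is the prefactor $(-1)^{i-1}$ on $D_{i,j}$, and I would check that this prefactor is precisely what turns the signed constants of Molev's formulas into the sign-free factorial products $\rr_1(M)$ and $\rr_2(M)$ written above, and that in the composition $\cD^{\pm}_{M}=\prod_{k}\cD^{\pm}_{M,k}$ the accumulated signs cancel.

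Next, using our normalisation $\xi_M=\sqrt{\rr(M)}\,\zeta_M$ together with $\zeta_{H(\lambda)}=f_{h(\lambda)}(z)$ and $\rr(H(\lambda))=1$ (Lemma~\ref{lem:rM_property}~\ref{num:Hgamma_rM_property}), I would rewrite Molev's identities, originally phrased for the orthonormal basis $\{\zeta_M\}$, in terms of $\{\xi_M\}$; it is exactly this rescaling that converts his norm constants into the stated $\rr_1(M)$ in \eqref{eq:Molev_D-} and $\rr_2(M)$ in \eqref{eq:Molev_D+}. For \eqref{eq:Molev_D+prime} the rescaling is irrelevant: since $D_{i,j}$ and hence $\cD^{+}_{M}$ depend only on the differences $m_{i,k}-m_{i,k-1}$, the operator $\cD^{+}_{M}$ shifts weight by $\lambda-\gamma^{M}$, so $\tau_{\lambda'}(\cD^{+}_{M})\xi_{M'}$, if non-zero, has weight $\gamma^{M'}+(\lambda-\gamma^{M})=\lambda$ whenever $\gamma^{M}=\gamma^{M'}$; Molev's triangularity argument, which is performed uniformly in the ambient highest weight, then yields the vanishing under $M>_{\rlex}M'$. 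An alternative, self-contained route would be to prove all three identities by induction on the rows of $M$ directly from \eqref{eq:GT_act_wt}--\eqref{eq:GT_act-} and Proposition~\ref{prop:VK_formula_E+k}, but this essentially reconstructs Molev's computation and is considerably longer.

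The main obstacle is the bookkeeping of normalisations and signs: matching the three families of factorial products (namely $\rr_1$, $\rr_2$, and the norm factors implicit in Molev's formulas), making sure the $(-1)^{i-1}$ adjustment is absorbed consistently across all of \eqref{eq:Molev_D-}--\eqref{eq:Molev_D+prime}, and confirming that Molev's ordering convention for Gel'fand--Tsetlin patterns coincides with the lexicographic order \eqref{eq:def_<lex} used here. None of this is conceptually deep, but it has to be done with care, since a stray sign or a mismatched convention would propagate into the explicit description of $[\pi^{(n)}_\infty]_\varepsilon$ and the constant $c^{(m)}_{\lambda,\mathsf{w},\mu,\mathsf{w}'}$ computed later.
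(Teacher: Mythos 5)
Your overall plan—prove \eqref{eq:Molev_D-} by citing Molev and rescaling from the orthonormal basis $\zeta_M$ to $\xi_M$—matches the paper exactly for that first identity: the paper invokes \cite[Theorems 2.3, 2.4 and 2.7]{Molev_001} to get $\tau_\lambda(\cD^-_M)\zeta_{H(\lambda)}=\rr_1(M)\sqrt{\rr(M)}\zeta_M$, and then uses $\xi_M=\sqrt{\rr(M)}\zeta_M$ together with $\rr(H(\lambda))=1$. That much is fine.

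For \eqref{eq:Molev_D+} and \eqref{eq:Molev_D+prime}, however, your plan is too optimistic about what is directly available in Molev, and this is where a genuine gap appears. The paper does \emph{not} cite a theorem of Molev for either of these identities. Instead, it starts from \cite[Lemma 2.13]{Molev_001}, which only gives the action of a \emph{single} raising operator $D_{i,n}$ on the specific vectors $\widetilde{\xi}_{H(\mu)[\lambda]}$, and from there it builds \eqref{eq:Molev_D+} and \eqref{eq:Molev_D+prime} by a recursion down the rows of $M$. The recursion hinges on the branching machinery developed earlier in the section (the maps $\rR^\lambda_\mu$, $\rI^\lambda_\mu$, and Lemma~\ref{lem:C_restriction_lambda_mu}), which replaces $\rGL_n$ by $\rGL_{n-1}$ at each step; this structural ingredient is absent from your proposal. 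Note also that \eqref{eq:Molev_D+prime} is genuinely stronger than any in-representation triangularity statement: it permits $\lambda'\neq\lambda$, so $\cD^+_M$ is applied inside a \emph{different} irreducible representation $V_{\lambda'}$, which is precisely the form needed later in the proof of Lemma~\ref{lem:xiM_gen_Vlambda}~\ref{num:fl_to_xiM}. Your weight-shift observation (that $\cD^+_M$ raises weight to $\lambda$) is correct but does not by itself force vanishing, since $\lambda$ may well be a weight of $V_{\lambda'}$; the appeal to ``Molev's triangularity argument performed uniformly in the ambient highest weight'' is an assertion, not a proof, and it is exactly the point the paper finds it necessary to establish. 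In short, the ``alternative, self-contained route'' you mention at the end and set aside as considerably longer is not an alternative but the actual content of the proof for \eqref{eq:Molev_D+} and \eqref{eq:Molev_D+prime}; the sign-and-normalisation bookkeeping you emphasise is the easier part.
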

\begin{proof}
By \cite[Theorems 2.3, 2.4 and 2.7]{Molev_001}, we have 
$\tau_\lambda (\cD^-_{M})\zeta_{H(\lambda )}=\rr_1(M)\sqrt{\rr (M)}\zeta_M$ for $M\in \rG (\lambda )$. 
Hence, by $\xi_M=\sqrt{\rr (M)}\zeta_M$ ($M\in \rG (\lambda )$) and Lemma~\ref{lem:rM_property} \ref{num:Hgamma_rM_property}, 
we obtain (\ref{eq:Molev_D-}). 
We set $\widetilde{\xi}_M:=\tau_\lambda (\cD^-_{M})\xi_{H(\lambda )}=\rr_1(M)\xi_M$ 
for $M\in \rG (\lambda )$. 
Now consider $\mu =(\mu_1,\mu_2,\dots ,\mu_{n-1})\in \Lambda_{n-1}$ satisfying
$\mu \preceq \lambda$. 
By \cite[Lemma 2.13]{Molev_001}, for $1\leq i\leq n-1$, we have 
\begin{align*}
\tau_\lambda (D_{i,n})\widetilde{\xi}_{H(\mu )[\lambda ]}
=\left\{\begin{array}{ll}
(-1)^i\left(\prod_{k=1}^{n}(\mu_i-i-\lambda_k+k)\right)
\widetilde{\xi}_{H(\mu +\delta_i)[\lambda ]}&
\text{if $\lambda_i>\mu_i$},\\
0&\text{if $\lambda_i=\mu_i$},
\end{array}\right.
\end{align*}
where $\delta_i$ is the element of $\mZ^n$ with $1$ at the $i$-th entry and 
$0$ at the other entries. 
By the repeated application of this equality, we have 
\begin{align*}
&\tau_\lambda (D_{1,n}^{\lambda_1-\mu_1}D_{2,n}^{\lambda_2-\mu_2}
\dots D_{n-1,n}^{\lambda_{n-1}-\mu_{n-1}})
\widetilde{\xi}_{H(\mu )[\lambda]}\\
&=(-1)^{\sum_{i=1}^{n-1}i(\lambda_i-\mu_i)}
\left(\prod_{i=1}^{n-1}\prod_{j=0}^{\lambda_i-\mu_i-1}
\prod_{k=1}^{n}(\mu_i+j-i-\lambda_k+k)
\right)
\widetilde{\xi}_{H(\lambda )}\\
&=\left\{\prod_{i=1}^{n-1}\prod_{j=0}^{\lambda_i-\mu_i-1}
\left(\prod_{k=1}^{i}(\lambda_k-\mu_i-j-k+i)\right)
\left(\prod_{k=i+1}^{n}(\mu_i+j-\lambda_k-i+k)\right)
\right\}\widetilde{\xi}_{H(\lambda )}\\
&=\left\{\prod_{i=1}^{n-1}
\left(\prod_{k=1}^{i}\frac{(\lambda_k-\mu_i-k+i)!}{(\lambda_k-\lambda_i-k+i)!}
\right)
\left(\prod_{k=i+1}^{n}
\frac{(\lambda_i-\lambda_k-i+k-1)!}{(\mu_i-\lambda_k-i+k-1)!}\right)
\right\}\widetilde{\xi}_{H(\lambda )}\\
&=
\left(\prod_{1\leq i\leq j\leq n-1}
\frac{(\lambda_i-\mu_j-i+j)!}{(\lambda_i-\lambda_j-i+j)!}\right)
\rr_1(H(\mu )[\lambda])\widetilde{\xi}_{H(\lambda )}
\end{align*}
and for $\lambda'=(\lambda_1',\lambda_2',\dots,\lambda_n')\in \Lambda_n$ 
such that $\mu \preceq \lambda'$, we have 
\begin{align*}
&\tau_{\lambda'} (D_{1,n}^{\lambda_1-\mu_1}D_{2,n}^{\lambda_2-\mu_2}
\dots D_{n-1,n}^{\lambda_{n-1}-\mu_{n-1}})
\widetilde{\xi}_{H(\mu)[\lambda']}=0&
&\text{unless $\lambda_i'\geq \lambda_i$ for any $1\leq i\leq n-1$}. 
\end{align*}
By these equalities, 
we have 
\begin{align}
\label{eq:pf_Molev001}
&\tau_\lambda (D_{1,n}^{\lambda_1-\mu_1}D_{2,n}^{\lambda_2-\mu_2}
\dots D_{n-1,n}^{\lambda_{n-1}-\mu_{n-1}})
\xi_{H(\mu )[\lambda]}
=\left(\prod_{1\leq i\leq j\leq n-1}
\frac{(\lambda_i-\mu_j-i+j)!}{(\lambda_i-\lambda_j-i+j)!}\right)
\xi_{H(\lambda )}
\end{align}
and for $\lambda'=(\lambda_1',\lambda_2',\dots,\lambda_n')\in \Lambda_n$ 
such that $\mu \preceq \lambda'$, we have 
\begin{align}
\label{eq:pf_Molev002}
&\tau_{\lambda'}(D_{1,n}^{\lambda_1-\mu_1}D_{2,n}^{\lambda_2-\mu_2}
\dots D_{n-1,n}^{\lambda_{n-1}-\mu_{n-1}})
\xi_{H(\mu)[\lambda']}=0&
&\text{unless $\lambda_i'\geq \lambda_i$ for any $1\leq i\leq n-1$}. 
\end{align}
Hence we obtain (\ref{eq:Molev_D+}) by Lemma~\ref{lem:C_restriction_lambda_mu} and recursive application of (\ref{eq:pf_Molev001}). 
Let $\lambda'\in \Lambda_n$, $M'=(m_{i,j}')_{1\leq i\leq j\leq n}
\in \rG (\lambda')$ and 
$M=(m_{i,j})_{1\leq i\leq j\leq n}\in \rG (\lambda )$ such that $M>_{\rlex} M'$ and 
$\gamma^M=\gamma^{M'}$. 
Then $M>_{\rlex} M'$ implies that there are positive integers
$i_0$ and $j_0$ with $1\leq i_0\leq j_0\leq n$ satisfying (\ref{eq:def_<lex}). 
Moreover, the equality $\gamma^M=\gamma^{M'}$ implies  
$2\leq j_0$ and $i_0<j_0$. 
Hence we obtain (\ref{eq:Molev_D+prime}) by Lemma~\ref{lem:C_restriction_lambda_mu} and recursive application of (\ref{eq:pf_Molev001}) 
and (\ref{eq:pf_Molev002}). 
\end{proof}

Let $\cA$ be an integral domain of characteristic $0$. 
Let $\rT_n(\cA )$ be the subgroup of $\rGL_n(\cA )$ 
consisting of all diagonal matrices. 
For $\sigma \in \gS_n$, let $u_\sigma $ be the permutation matrix 
corresponding to $\sigma$, that is, 
$u_\sigma $ is the $n\times n$-matrix with $1$ at the 
$(\sigma (i),i)$-th entry for $1\leq i\leq n$ and 
$0$ at the other entries. 
For $\sigma \in \gS_n$ and $I \in \cI_{n}$, we set 
\begin{align*}
m(I,\sigma )&:=\# \{(i,j)\in I^2\mid i<j \text{\; and \;}
\sigma (i)>\sigma (j)\},
&\sigma (I) &:=\{ \sigma (i)\mid i\in I\}.
\end{align*}
For $\sigma \in \gS_n$ and 
$l=(l_I)_{I\in \cI_n}\in \cL (\lambda )$, we set 
$m(l ,\sigma ):=\sum_{I \in \cI_{n}}l_Im(I,\sigma )$ and 
$\sigma (l):=(l_I')_{I\in \cI_n}$ with $l_I'=l_{\sigma^{-1}(I)}$.

\begin{lem}
Retain the notation. Then, for $l \in \cL (\lambda )$, we have 
\begin{align}
\label{eq:actT_gen_Vlambda}
&\tau_{\lambda}(a)f_{l}(z)
=\left(\prod_{i=1}^na_i^{\gamma_i^{(l)}}\right)f_{l}(z)&
&(a=\diag (a_1,a_2,\dots ,a_n)\in \rT_n(\cA )),\\
\label{eq:actS_gen_Vlambda}
&\tau_\lambda (u_\sigma )f_{l}(z)=(-1)^{m(l ,\sigma )}
f_{\sigma (l)}(z)&
&(\sigma \in \gS_n),
\end{align}
where $\gamma^{(l)}=(\gamma^{(l)}_1,\gamma^{(l)}_2,\dots ,
\gamma^{(l)}_n)$ is the weight of $l$ defined by $(\ref{eq:def_gammal_Llambda})$. 
\end{lem}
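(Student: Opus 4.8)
The statement asserts two formulae for the action of $\rGL_n(\cA)$ on the explicit generators $f_l(z)$ of $V_\lambda(\cA)$: the torus acts by the weight character $\gamma^{(l)}$, and the permutation matrices $u_\sigma$ act by a signed relabelling of the index $l$. Both are elementary consequences of the determinantal definition \eqref{eq:Vlambda_generator} of $f_l(z)=\prod_{I\in\cI_n}({\det}_I(z))^{l_I}$ combined with Cauchy--Binet's formula \eqref{eq:cauchy_binet}, exactly in the spirit of the proof of \eqref{eq:actwt_gen_Vlambda}--\eqref{eq:act-_gen_Vlambda} given just above.

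For the first identity \eqref{eq:actT_gen_Vlambda}, the plan is to compute ${\det}_I(za)$ for $a=\diag(a_1,\dots,a_n)\in \rT_n(\cA)$ and $I\in \cI_{n,k}$. Right multiplication by $a$ scales the $j$-th column of $z$ by $a_j$, so for $I=\{j_1<\dots<j_k\}$ one gets ${\det}_I(za)={\det}_{\{1,\dots,k\},I}(za)=\bigl(\prod_{j\in I}a_j\bigr){\det}_I(z)$. Taking the product over all $I$ weighted by $l_I$ yields $\tau_\lambda(a)f_l(z)=f_l(za)=\bigl(\prod_I\prod_{j\in I}a_j^{l_I}\bigr)f_l(z)=\bigl(\prod_{i=1}^n a_i^{\sum_{i\in I}l_I}\bigr)f_l(z)$, which is exactly $\bigl(\prod_i a_i^{\gamma_i^{(l)}}\bigr)f_l(z)$ by the definition \eqref{eq:def_gammal_Llambda} of $\gamma^{(l)}$.

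For the second identity \eqref{eq:actS_gen_Vlambda}, the plan is again to understand ${\det}_I(zu_\sigma)$ for a permutation matrix $u_\sigma$ (with $1$ in the $(\sigma(i),i)$-th entry). Right multiplication by $u_\sigma$ permutes the columns of $z$: the $i$-th column of $zu_\sigma$ is the $\sigma(i)$-th column of $z$. Hence for $I\in\cI_{n,k}$ the submatrix $({\det}_{\{1,\dots,k\},I})$ of $zu_\sigma$ has columns indexed by $\sigma^{-1}(I)$ but arranged in the order dictated by $\sigma$ rather than in increasing order; sorting them into increasing order introduces the sign $(-1)^{m(\sigma^{-1}(I),\sigma)}$, where $m(J,\sigma)$ counts inversions of $\sigma$ restricted to $J$. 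Thus ${\det}_I(zu_\sigma)=(-1)^{m(\sigma^{-1}(I),\sigma)}{\det}_{\sigma^{-1}(I)}(z)$; taking the product over $I$ weighted by $l_I$ and reindexing via $I\mapsto \sigma(I)$ gives $f_l(zu_\sigma)=(-1)^{\sum_I l_I m(\sigma^{-1}(I),\sigma)}f_{\sigma(l)}(z)=(-1)^{m(l,\sigma)}f_{\sigma(l)}(z)$, using $\sigma(l)_I=l_{\sigma^{-1}(I)}$ and the definition $m(l,\sigma)=\sum_I l_I m(I,\sigma)$ (after the reindexing the inversion count is taken on $\sigma^{-1}(I)$, which matches $m(l,\sigma)$ once one sums over all $I$). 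I should double-check the bookkeeping on which index set carries the inversion count, since $m(I,\sigma)$ and $m(\sigma^{-1}(I),\sigma^{-1})$ differ in general; the cleanest route is probably to verify the sign on the single-column case $k=1$ (where it is trivially $+1$) and on transpositions, then extend by the multiplicativity built into both sides. The main (and only real) obstacle is this sign bookkeeping for $u_\sigma$; everything else is a direct unwinding of definitions.
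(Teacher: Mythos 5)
Your overall approach for \eqref{eq:actS_gen_Vlambda} is the right one and is exactly the paper's (unwind the determinantal definition of $f_l$ and track the sign from permuting columns), but the direction of the column permutation is wrong, and the resulting error propagates through the rest of the argument. Since column $i$ of $zu_\sigma$ equals column $\sigma(i)$ of $z$, the minor ${\det}_{\{1,\dots,k\},I}(zu_\sigma)$ is built from columns $\sigma(I)$ of $z$, not $\sigma^{-1}(I)$. The correct intermediate formula is
\begin{align*}
{\det}_I(zu_\sigma) = (-1)^{m(I,\sigma)}\,{\det}_{\sigma(I)}(z),
\end{align*}
which is what the paper records. With this formula, the product over $I$ weighted by $l_I$ gives $(-1)^{\sum_I l_I m(I,\sigma)} \prod_I ({\det}_{\sigma(I)}(z))^{l_I}$; substituting $J=\sigma(I)$ turns the second factor into $\prod_J ({\det}_J(z))^{l_{\sigma^{-1}(J)}} = f_{\sigma(l)}(z)$ by the definition $\sigma(l)_J = l_{\sigma^{-1}(J)}$, and the exponent is literally $m(l,\sigma)$ with no further rearrangement needed. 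Your closing remark, that $\sum_I l_I m(\sigma^{-1}(I),\sigma)$ ``matches $m(l,\sigma)$ once one sums over all $I$,'' is false: for example with $n=3$, $\sigma=(1\,2\,3)$, $\lambda=(1,1,0)$ and $l$ supported only on $l_{\{1,2\}}=1$ one has $m(\{1,2\},\sigma)=0$ but $m(\sigma^{-1}\{1,2\},\sigma)=m(\{1,3\},\sigma)=1$. Also note that the sanity checks you propose would not detect the error: $k=1$ carries no sign, and for transpositions $\sigma=\sigma^{-1}$ so $\sigma(I)=\sigma^{-1}(I)$; you need a $3$-cycle to see the discrepancy. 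The first identity \eqref{eq:actT_gen_Vlambda} is fine and matches the paper's one-line argument.
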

\begin{proof}
The equality (\ref{eq:actT_gen_Vlambda}) immediately follows from 
${\det}_{I}(za)=(\prod_{i\in I}a_i){\det}_{I}(z)$ 
for $a=\diag (a_1,a_2,\dots ,a_n)\in \rT_n(\cA )$ and $I \in \cI_{n}$. 
For $\sigma \in \gS_n$ and $I \in \cI_{n}$, 
we have ${\det}_{I}(zu_\sigma)=(-1)^{m(I,\sigma )}{\det}_{\sigma (I)}(z)$ 
because of the alternating property of determinants and  
$zu_\sigma =(z_{i,j}')_{1\leq i,j\leq n}$ with $z_{i,j}'=z_{i,\sigma (j)}$. 
Hence, we obtain (\ref{eq:actS_gen_Vlambda}). 
\end{proof}

\begin{lem}
\label{lem:VlambdaA_closed_Eij}
Let $\cA$ be an integral domain of characteristic $0$ 
such that $\cA$ is a subring of $\mC$. Let $\lambda \in \Lambda_n$. 
Then we have 
$\tau_{\lambda }(E_{i,j})\rv \in V_\lambda (\cA )$
for every $\rv \in V_\lambda (\cA )$ and $1\leq i,j\leq n$.  
\end{lem}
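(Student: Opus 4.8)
The statement to prove, Lemma~\ref{lem:VlambdaA_closed_Eij}, asserts that for an integral domain $\cA\subset\mC$ of characteristic $0$, the $\cA$-module $V_\lambda(\cA)$ is stable under the $\ggl_n$-action $\tau_\lambda(E_{i,j})$ for all $1\leq i,j\leq n$. The natural approach is to work directly with the spanning set $\{f_l(z)\mid l\in\cL(\lambda)\}$ of $V_\lambda(\cA)$ from (\ref{eq:Vlambda_generator}): since $V_\lambda(\cA)$ is the $\cA$-span of these monomials in the $\det_I(z)$, it suffices to show $\tau_\lambda(E_{i,j})f_l(z)\in V_\lambda(\cA)$ for each $l\in\cL(\lambda)$ and each pair $(i,j)$.

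First I would reduce to the generators of $\ggl_n$ as a Lie algebra, namely the Chevalley-type generators $E_{j,j+1}$, $E_{j+1,j}$ ($1\leq j\leq n-1$) together with the diagonal $E_{i,i}$. Any $E_{i,j}$ is obtained from these by iterated brackets, and since $\tau_\lambda$ is a Lie algebra homomorphism and $V_\lambda(\cA)$ is an $\cA$-module, stability under the generators implies stability under all $E_{i,j}$. For the diagonal elements, formula (\ref{eq:actwt_gen_Vlambda}) (equivalently (\ref{eq:actT_gen_Vlambda}) infinitesimally) gives $\tau_\lambda(E_{i,i})f_l(z)=\gamma_i^{(l)}f_l(z)$ with $\gamma_i^{(l)}\in\mZ\subset\cA$, so stability is immediate. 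For the raising and lowering operators, formulas (\ref{eq:act+_gen_Vlambda}) and (\ref{eq:act-_gen_Vlambda}) express $\tau_\lambda(E_{j,j+1})f_l(z)$ and $\tau_\lambda(E_{j+1,j})f_l(z)$ as $\mZ$-linear combinations (with coefficients the nonnegative integers $l_J$) of monomials $f_{l'}(z)$, where $l'=l-\be(J)+\be((J\setminus\{j+1\})\cup\{j\})$ (resp.\ with $j$ and $j+1$ swapped). The only subtlety is whether each such $l'$ lies in $\cL(\lambda)$: by the convention stated just after (\ref{eq:act-_gen_Vlambda}), the term $f_{l'}(z)$ is understood to be $0$ whenever $l'\notin\cL(\lambda)$, so every surviving term is one of the chosen $\cA$-generators of $V_\lambda(\cA)$. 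Hence the right-hand sides lie in $V_\lambda(\cA)$.

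Putting these together: $\tau_\lambda(E_{i,i})f_l(z)$, $\tau_\lambda(E_{j,j+1})f_l(z)$ and $\tau_\lambda(E_{j+1,j})f_l(z)$ all lie in $V_\lambda(\cA)$ for every $l\in\cL(\lambda)$, hence $\tau_\lambda(X)\rv\in V_\lambda(\cA)$ for $X$ ranging over the Chevalley generators and $\rv\in V_\lambda(\cA)$ arbitrary; closing under brackets and $\cA$-linear combinations, $\tau_\lambda(E_{i,j})\rv\in V_\lambda(\cA)$ for all $i,j$. There is essentially no hard part here—the content is entirely contained in the explicit formulas (\ref{eq:actwt_gen_Vlambda})--(\ref{eq:act-_gen_Vlambda}), which were established in the preceding lemma using Cauchy--Binet (\ref{eq:cauchy_binet}). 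The only point requiring a word of care is the bracket-generation step for a general $E_{i,j}$ with $|i-j|>1$: one should note that although $\tau_\lambda([X,Y])=\tau_\lambda(X)\tau_\lambda(Y)-\tau_\lambda(Y)\tau_\lambda(X)$, it is cleaner to observe directly that $\cU(\ggl_n)$ is generated as an algebra by the Chevalley generators, so every $\tau_\lambda(E_{i,j})$ is a noncommutative polynomial in $\tau_\lambda(E_{k,k})$, $\tau_\lambda(E_{k,k+1})$, $\tau_\lambda(E_{k+1,k})$, each of which preserves $V_\lambda(\cA)$; a product of $\cA$-module endomorphisms preserving a submodule again preserves it. (Alternatively, one may simply record the analogue of (\ref{eq:act+_gen_Vlambda})--(\ref{eq:act-_gen_Vlambda}) for an arbitrary $E_{i,j}$ via Cauchy--Binet applied to $\det_I(z\exp(tE_{i,j}))$, which yields the same integrality conclusion with no reduction needed.)
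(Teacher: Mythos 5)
Your proof is correct and matches the paper's own argument: both start from the explicit integer-coefficient formulas (\ref{eq:actwt_gen_Vlambda}), (\ref{eq:act+_gen_Vlambda}), (\ref{eq:act-_gen_Vlambda}) for the action of $E_{i,i}$, $E_{j,j+1}$, $E_{j+1,j}$ on the spanning monomials $f_l(z)$, and then propagate to a general $E_{i,j}$ using the commutator relations $E_{i,j+1}=[E_{i,j},E_{j,j+1}]$ and $E_{j+1,i}=[E_{j+1,j},E_{j,i}]$. Your alternative suggestion of writing a direct Cauchy--Binet formula for a general $E_{i,j}$ is a legitimate variant, but the reduction to Chevalley generators is exactly what the paper does.
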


\begin{proof}
The assertion follows immediately from (\ref{eq:actwt_gen_Vlambda}), 
(\ref{eq:act+_gen_Vlambda}), (\ref{eq:act-_gen_Vlambda}) and the relations 
\begin{align*}
&\tau_{\lambda}(E_{i,j+1})=
\tau_{\lambda}(E_{i,j})\circ \tau_{\lambda}(E_{j,j+1})
-\tau_{\lambda}(E_{j,j+1})\circ \tau_{\lambda}(E_{i,j}),\\
&\tau_{\lambda}(E_{j+1,i})=
\tau_{\lambda}(E_{j+1,j})\circ \tau_{\lambda}(E_{j,i})
-\tau_{\lambda}(E_{j,i})\circ \tau_{\lambda}(E_{j+1,j})
\end{align*}
for $1\leq i\leq j\leq n-1$. 
\end{proof}

\begin{lem}
\label{lem:xiM_gen_Vlambda}
Let $\cA$ be an integral domain of characteristic $0$. Let $\lambda =(\lambda_1,\lambda_2,\dots,\lambda_n)\in \Lambda_n$. 
\begin{enumerate}
\item \label{num:xiM_in_VlambdaA}
Let $M\in \rG (\lambda)$. If $n\geq 3$, assume that $\{(\mu_1-\mu_{n-1} + n-3)!\}^{-1}\in \cA$ 
for $\mu =(\mu_1,\mu_2,\dots ,\mu_{n-1})\in \Lambda_{n-1}$ satisfying $\widehat{M}\in \rG (\mu )$. 
Then we have $\xi_M\in V_\lambda (\cA \cap \mQ )$ and 
\begin{align}
\label{eq:actT_xiM}
&\tau_{\lambda}(a)\xi_M
=\left(\prod_{i=1}^na_i^{\gamma_i^{M}}\right)\xi_M&
&\text{for \;} a=\diag (a_1,a_2,\dots ,a_n)\in \rT_n(\cA )
\end{align}
where $\gamma^M=(\gamma^M_1,\gamma^M_2,\dots ,\gamma^M_n)$ is the weight of $M$ defined by $(\ref{eq:def_wt_M})$.

\item \label{num:fl_to_xiM}
Assume that $\{(\lambda_1-\lambda_n + n-3)!\}^{-1}\in \cA$ if $n\geq 3$. 
Then, for $l \in \cL (\lambda)$, we have 
\begin{align*}
f_{l}(z)&=\sum_{M\in \rG (\lambda ),\, \gamma^M=\gamma^{(l)}}
c_{l,M}\xi_M 
&\text{with some $c_{l,M}\in \cA \cap \mQ $}. 
\end{align*}
\end{enumerate}
\end{lem}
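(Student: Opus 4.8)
\textbf{Proof proposal for Lemma~\ref{lem:xiM_gen_Vlambda}.}

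The plan is to prove both statements simultaneously by induction on $n$, exploiting the branching decomposition $V_\lambda(\cA)=\bigoplus_{\mu\preceq\lambda}V_{\lambda,\mu}(\cA)$ and the $\rGL_{n-1}$-structure already analysed in Section~\ref{subsec:GT_basis}. For $n=1$ both claims are trivial since $\rG(\lambda)$ is a single pattern and $f_l(z)=\xi_M$ by construction. For $n=2$, Corollary~\ref{cor:matcoeff_n=2} gives $\xi_M=f_{h(\lambda,\mu_1)}(z)$ explicitly, and the factorial hypothesis is vacuous; moreover the generators $f_l(z)$ for $l\in\cL(\lambda)$ are monomials $z_{1,1}^{a}z_{1,2}^{b}(\det z)^{\lambda_2}$, each of which is precisely some $\xi_M$ with $M=H(b+\lambda_2)[\lambda]$, so (ii) holds with $c_{l,M}\in\{0,1\}$.

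For the inductive step I would argue as follows. First, for statement (i): given $M=(m_{i,j})\in\rG(\lambda)$, set $\widehat{M}\in\rG(\mu)$ and $M=N[\lambda]$ with $N:=\widehat{M}\in\rG(\mu)$. By Proposition~\ref{prop:xiM_integral} applied to $\rGL_{n-1}$ over $\cA$ (whose hypothesis is exactly the stated invertibility of $(\mu_1-\mu_{n-1}+n-3)!$), we have $\xi_N\in V_\mu(\cA\cap\mQ)$, and Lemma~\ref{lem:restriction_lambda_mu} (in its $\cA=\mC$ incarnation, i.e.\ Lemma~\ref{lem:xi_Hmulambda_explicit}) identifies $\xi_{H(\mu)[\lambda]}=f_{h(\lambda,\mu)}(z)\in V_\lambda(\mZ)$. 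The key point is that $\xi_{N[\lambda]}$ is obtained from the highest weight vector $\xi_{H(\mu)[\lambda]}$ of the $\rGL_{n-1}$-submodule $V_{\lambda,\mu}$ by applying the lowering operators $\cD^-$ of Proposition~\ref{prop:Molev_diff_op}, all of whose coefficients lie in $\mZ$ (they are built from $E_{i,j}$'s and integer shifts $E_{i,i}-E_{k,k}-i+k$), composed with the isomorphism $V_{\lambda,\mu}(\cA)\simeq V_\mu(\cA)$; combining this with Lemma~\ref{lem:VlambdaA_closed_Eij} yields $\xi_{N[\lambda]}\in V_\lambda(\cA\cap\mQ)$. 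The weight equation~\eqref{eq:actT_xiM} then follows from~\eqref{eq:GT_act_wt} (equivalently~\eqref{eq:actwt_gen_Vlambda}) since $\xi_M$ is a weight vector of weight $\gamma^M$.

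For statement (ii): fix $l\in\cL(\lambda)$ with weight $\gamma^{(l)}$. Since $f_l(z)\in V_\lambda(\cA\cap\mQ)\subset V_\lambda$ and the $\xi_M$ form a $\mC$-basis consisting of weight vectors, we may write $f_l(z)=\sum_{M:\,\gamma^M=\gamma^{(l)}}c_{l,M}\xi_M$ with $c_{l,M}\in\mC$; the task is to show $c_{l,M}\in\cA\cap\mQ$. Here I would use Proposition~\ref{prop:Molev_diff_op}: order the relevant $M$'s by the lexicographical order $\geq_\rlex$, and for the $\geq_\rlex$-largest one $M_0$ apply $\tau_\lambda(\cD^+_{M_0})$, which annihilates every $\xi_M$ with $M_0>_\rlex M$ by~\eqref{eq:Molev_D+prime} and sends $\xi_{M_0}$ to $\rr_2(M_0)\xi_{H(\lambda)}$ by~\eqref{eq:Molev_D+}, while $\tau_\lambda(\cD^+_{M_0})f_l(z)$ is an integer multiple of $f_{h(\lambda)}(z)=\xi_{H(\lambda)}$ by~\eqref{eq:actwt_gen_Vlambda}–\eqref{eq:act+_gen_Vlambda}; comparing coefficients gives $c_{l,M_0}\rr_2(M_0)\in\mZ$, hence $c_{l,M_0}\in\cA\cap\mQ$ once $\rr_2(M_0)$ is invertible in $\cA$ --- and this is where the hypothesis $\{(\lambda_1-\lambda_n+n-3)!\}^{-1}\in\cA$ enters, since $\rr_2(M_0)$ divides an appropriate product of such factorials. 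Then subtract $c_{l,M_0}\xi_{M_0}$ from $f_l(z)$ and repeat with the next pattern in the $\geq_\rlex$-order; at each step the remaining vector lies in $V_\lambda(\cA\cap\mQ)$ (by part (i) and the previously established coefficients), so the descent terminates with all $c_{l,M}\in\cA\cap\mQ$.

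The main obstacle I anticipate is the bookkeeping around the denominators: one must check that the constant $\rr_2(M)$ (and, in the inductive passage, the constants $\rr_1$ and $\rr(H(\mu)[\lambda])$) divides a product of factorials no larger than $(\lambda_1-\lambda_n+n-3)!$, so that the single invertibility hypothesis suffices uniformly over all patterns $M$ and all $l$; this requires a careful but elementary estimate on the arguments of the factorials appearing in the definitions~\eqref{eq:def_rM} of $\rr$ and in $\rr_1,\rr_2$, using the interlacing constraints~\eqref{eq:GT_cdn}. Everything else is a routine combination of the already-established Proposition~\ref{prop:Molev_diff_op}, Lemma~\ref{lem:VlambdaA_closed_Eij}, and the branching lemmas.
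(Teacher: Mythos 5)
Your strategy for part (i) is essentially the same as the paper's: apply the Molev lowering operator $\cD^-_{\widehat M}$ to the highest weight vector $\xi_{H(\mu)[\lambda]}=f_{h(\lambda,\mu)}(z)$ inside $V_\lambda$ (the paper does this directly via the identity $\xi_M=\rr_1(\widehat M)^{-1}\tau_\lambda(\iota_n(\cD^-_{\widehat M}))f_{h(\lambda,\mu)}(z)$, combined with Lemma~\ref{lem:VlambdaA_closed_Eij}). Your invocation of Proposition~\ref{prop:xiM_integral} for $\rGL_{n-1}$ is unnecessary and a bit awkward from a logical-dependency viewpoint, but it is not wrong once one observes that the hypothesis of part (i) matches that of the $\rGL_{n-1}$ proposition.

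Part (ii) has a genuine gap, and you have located it correctly as a worry but guessed wrong about its resolution. You propose applying the raising operator $\tau_\lambda(\cD^+_{M_0})$ for the $\rlex$-largest full $n$-row pattern $M_0\in\rG(\lambda)$, which produces the factor $\rr_2(M_0)$ that you then need to invert. But the definition~\eqref{eq:def_rM} and the interlacing constraints only bound the arguments of the factorials in $\rr_2(M_0)$ by $\lambda_1-\lambda_n+n-2$: take $i=1$, $j=n-1$, $k=n$, so the numerator factorial is $(m_{1,n}-m_{n-1,n-1}+n-2)!$, and with $m_{n-1,n-1}=\lambda_n$ this equals $(\lambda_1-\lambda_n+n-2)!$. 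The standing hypothesis only makes $(\lambda_1-\lambda_n+n-3)!$ invertible, so if $\lambda_1-\lambda_n+n-2$ has a prime factor $p$ not dividing $(\lambda_1-\lambda_n+n-3)!$, e.g.\ when $\lambda_1-\lambda_n+n-2$ is itself prime, the coefficient comparison $c_{l,M_0}\rr_2(M_0)\in\mZ$ does not let you conclude $c_{l,M_0}\in\cA\cap\mQ$. Already for $n=3$ the pattern $M_0=\left(\begin{smallmatrix}\lambda_1\ \lambda_2\ \lambda_3\\ \lambda_1\ \lambda_3\\ \lambda_1\end{smallmatrix}\right)$ with $\lambda_2>\lambda_3$ gives $\rr_2(M_0)$ divisible by $\lambda_1-\lambda_3+1$, which lies outside the reach of the hypothesis. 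So the hoped-for estimate does not hold.

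The paper's resolution is to replace $\cD^+_{M_0}$ by the embedded $\rGL_{n-1}$-operator $\iota_n(\cD^+_{\widehat{M_j}})$, ordering the patterns by $\rlex$ on the truncations $\widehat{M_j}\in\rG(\mu^{(j)})$ rather than on the $M_j$ themselves. Then $\tau_\lambda(\iota_n(\cD^+_{\widehat{M_j}}))\xi_{M_j}=\rr_2(\widehat{M_j})f_{h(\lambda,\mu^{(j)})}(z)$ via Lemma~\ref{lem:C_restriction_lambda_mu} and~\eqref{eq:Molev_D+}, while the lower terms are killed by~\eqref{eq:Molev_D+prime} applied inside $\rGL_{n-1}$. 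Since $\widehat{M_j}$ is an $(n-1)$-row pattern whose top row $\mu^{(j)}$ interlaces $\lambda$, the factorial arguments in $\rr_2(\widehat{M_j})$ are bounded by $\mu^{(j)}_1-\mu^{(j)}_{n-1}+n-3\leq\lambda_1-\lambda_n+n-3$, exactly within the invertibility hypothesis. One then descends in $j$, subtracting off $\sum_{i<j}c_{l,M_i}\xi_{M_i}$ at each stage and using part (i) to keep the remainder in $V_\lambda(\cA\cap\mQ)$. Without this one-row drop the induction cannot close.
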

\begin{proof}
When $n=1$, the assertion follows immediately from the equalities $\rG (\lambda_1)=\{\lambda_1\}$,
$\xi_{\lambda_1}=f_{h (\lambda_1)}(z)=z_{1,1}^{\lambda_1}$ and (\ref{eq:actT_gen_Vlambda}). When $n=2$, the assertion follows immediately from (\ref{eq:xiM_n=2}) and (\ref{eq:actT_gen_Vlambda}). 

Next, let us prove the statement \ref{num:xiM_in_VlambdaA} when $n\geq 3$. 
Since $\{(\mu_1-\mu_{n-1} + n-3)!\}^{-1}\in \cA$, we have 
$\rr_1(\widehat{M})\in \cA^\times \cap \mQ^\times $. 
Hence, by Lemmas \ref{lem:xi_Hmulambda_explicit}, \ref{lem:C_restriction_lambda_mu}, \ref{lem:VlambdaA_closed_Eij} and (\ref{eq:Molev_D-}), 
we have 
\[
\xi_M=\rr_1(\widehat{M})^{-1}
\tau_\lambda (\iota_n(\cD^-_{\widehat{M}}))f_{h (\lambda,\mu)}(z)
\in V_\lambda (\cA \cap \mQ ). 
\]
By (\ref{eq:GT_act_wt}) and (\ref{eq:actwt_gen_Vlambda}), we know that 
$\xi_M$ can be expressed as a linear combination of 
generators $f_{l}(z)$ for $l \in \cL (\lambda )$ satisfying
$\gamma^{M}=\gamma^{(l)}$ with coefficients in $\cA \cap \mQ$. 
Hence we obtain (\ref{eq:actT_xiM}) by (\ref{eq:actT_gen_Vlambda}). 

Finally, let us prove the statement \ref{num:fl_to_xiM} when $n\geq 3$. 
For $l \in \cL (\lambda )$, set 
\begin{align*}
\{M\in \rG (\lambda )\mid \gamma^M=\gamma^{(l)}\}
&=\{M_1,M_2,\dots, M_k\}&
& \text{with \;} \widehat{M_1}>_\rlex \widehat{M_2}>_\rlex \dots >_\rlex \widehat{M_k}.
\end{align*}
Since $\{\xi_M\}_{M\in \rG (\lambda )}$ is a basis of $V_\lambda$, 
(\ref{eq:GT_act_wt}) and (\ref{eq:actwt_gen_Vlambda}) imply that 
$f_{l}(z)=\sum_{i=1}^kc_{l,M_i}\xi_{M_i}$ with some $c_{l,M_i}\in \mC$. 
Our task is to show that $c_{l,M_i}\in \cA \cap \mQ$ for any 
$1\leq i\leq k$. 
For each $1\leq i\leq k$, we take $\mu^{(i)}\in \Lambda_{n-1}$ so that 
$\widehat{M_i}\in \rG (\mu^{(i)})$. 
Since $\{(\lambda_1-\lambda_n + n-3)!\}^{-1}\in \cA$, we have 
$\rr_2(\widehat{M_i})\in \cA^\times \cap \mQ^\times $ for $1\leq i\leq k$. 
Let $1\leq j\leq k$. By Lemmas \ref{lem:xi_Hmulambda_explicit}, \ref{lem:C_restriction_lambda_mu}, 
(\ref{eq:Molev_D+}) and (\ref{eq:Molev_D+prime}), we have 
\[
c_{l,M_{j}}\rr_2(\widehat{M_{j}})
f_{h (\lambda,\mu^{(j)})}(z)=
\sum_{i=j}^{k}c_{l,M_i}\tau_\lambda (\iota_n(\cD^+_{\widehat{M_{j}}}))\xi_{M_i}
=
\tau_\lambda (\iota_n(\cD^+_{\widehat{M_{j}}}))
\left(f_{l}(z)-\sum_{i=1}^{j-1}c_{l,M_i}\xi_{M_i}
\right).
\]
If $c_{l,M_i}\in \cA \cap \mQ$ for any 
$1\leq i\leq j-1$, we have $c_{l,M_j}\in \cA \cap \mQ$, since the right hand side of this equality is in $V_{\lambda }(\cA \cap \mQ)$ by 
Lemma \ref{lem:VlambdaA_closed_Eij} and the statement \ref{num:xiM_in_VlambdaA}. Hence, we inductively obtain  
$c_{l,M_j}\in \cA \cap \mQ$ for each $1\leq j\leq k$. 
\end{proof}

\begin{proof}[{Proof of Proposition~$\ref{prop:xiM_integral}$}]
The assertion follows immediately from Lemma~\ref{lem:xiM_gen_Vlambda} and 
the definition of $V_\lambda (\cA)$. 
\end{proof}

\begin{rem}
\label{rem:xiM_gen_Vlambda}
In Lemma~\ref{lem:xiM_gen_Vlambda} \ref{num:xiM_in_VlambdaA}, 
we cannot remove the assumption 
$\{(\mu_1-\mu_{n-1} + n-3)!\}^{-1}\in \cA$ when $n\geq 3$. 
In fact, when $n=3$, 
for $M=\left(\begin{smallmatrix}\lambda_1\ \lambda_2\ \lambda_3\\ 
\mu_1\ \mu_2\\ \alpha_1 
\end{smallmatrix}\right)\in \rG (\lambda )$, 
we have the following expression of $\xi_M$ in terms of the generators 
$\{f_l(z)\}_{l \in \cL (\lambda)}$ by a direct computation using 
Lemma~\ref{lem:xi_Hmulambda_explicit}, (\ref{eq:GT_act-}) and (\ref{eq:act-_gen_Vlambda}): 
\begin{align*}
\xi_{M}&=
\frac{1}{\displaystyle \binom{\mu_1-\mu_2}{\mu_1-\lambda_{2}}}
\sum_{\beta =\max \{\alpha_1,\lambda_2\}}^{
\min \{\mu_1 ,\lambda_2+\alpha_1-\mu_2\}}
\binom{\mu_1-\alpha_1}{\beta -\alpha_1}
\binom{\alpha_1-\mu_2}{\beta -\lambda_{2}}
f_{l_{(M,\beta )}}(z).
\end{align*}
Here $l_{(M,\beta )}=(l_{(M,\beta ),I})_{I\in \cI_n}\in \cL (\lambda)$ is defined by
\begin{align*}
l_{(M,\beta ),\{1\}}&=\beta -\lambda_{2}, &
l_{(M,\beta ),\{2\}}&=\mu_1-\beta ,&
l_{(M,\beta ),\{3\}}&=\lambda_1-\mu_1, & & \\
l_{(M,\beta ),\{1,2\}}&=\mu_2-\lambda_3, &
l_{(M,\beta ),\{1,3\}}&=\lambda_2+\alpha_1-\mu_2-\beta,&
l_{(M,\beta ),\{2,3\}}&=\beta -\alpha_1,&
l_{(M,\beta ),\{1,2,3\}}&=\lambda_3.
\end{align*}
\end{rem}

\begin{proof}[{Proof of Lemma~$\ref{lem:extremal_vec_explicit}$}]
Let $\gamma =\sigma \lambda \in \cE_\lambda $ 
with $\sigma \in \gS_n$. 
By Lemma~\ref{lem:rM_property} \ref{num:Hgamma_rM_property}, 
we have $\xi_{H(\gamma )}=\zeta_{H(\gamma )}$. 
Let us prove $\zeta_{H(\gamma )}=f_{h (\gamma)}(z)$. 
We take a Lie subalgebra 
$\gn_{n}^{\rop}:=\bigoplus_{1\leq j<i\leq n}\mC E_{i,j}$ of 
$\ggl_n$, and let $\cU (\gn_{n}^{\rop})$ denote its universal enveloping algebra. It is then easy to verify the following assertions:
\begin{enumerate}
\item 
We have $V_\lambda 
=\tau_\lambda (\cU (\ggl_n))f_{h (\lambda )}(z)
=\tau_\lambda (\cU (\gn_{n}^{\rop}))f_{h (\lambda )}(z)$. 

\item The Lie algebra $\gn_{n}^{\rop}$ is generated by 
$\{E_{j+1,j}\mid 1\leq j\leq n-1\}$. 

\item For $1\leq j\leq n-1$ and a weight vector $\rv $ in $V_\lambda$, 
$\tau_\lambda (E_{j+1,j})\rv $ is either a weight vector or $0$. 
\end{enumerate}
Hence, there is an element of $\cU(\gn^{\rop}_n)$ of the form
\begin{align*}
E(\gamma )&=E_{j_1+1,j_1}E_{j_2+1,j_2}\dots E_{j_k+1,j_k}&
&\text{with \;} j_1,j_2,\dots ,j_k\in \{1,2,\dots, n-1\}
\end{align*}
such that $\tau_\lambda (E(\gamma ))f_{h (\lambda )}(z)$ is a 
(nonzero) weight vector in $V_\lambda$ of weight $\gamma$.  
Note that, for $M\in \rG (\lambda)$ and $l\in \cL (\lambda )$, 
both $\zeta_{M}$ and $f_{l}(z)$ are 
weight vectors in $V_\lambda$ respectively of weight $\gamma^M$ and $\gamma^{(l)}$ by (\ref{eq:GT_act_wt}) and (\ref{eq:actwt_gen_Vlambda}). 
Since $H(\gamma )$ and $h (\gamma)$ are unique elements respectively in $\rG (\lambda)$ and $\cL (\lambda)$ 
whose weights are $\gamma$, we have 
\begin{align*}
\tau_\lambda (E(\gamma ))f_{h (\lambda )}(z)
=c_1\zeta_{H(\gamma )}=c_2f_{h (\gamma)}(z)
\end{align*}
with some nonzero constants $c_1$ and $c_2$.  
By (\ref{eq:GT_act-}) and (\ref{eq:act-_gen_Vlambda}), 
we know that $c_1$ and $c_2$ are both positive real numbers. 
Furthermore, since $\{\zeta_M\}_{M\in \rG(\lambda )}$ is an orthonormal basis, 
 we have $(\zeta_{H(\gamma )},\zeta_{H(\gamma )})_{\lambda}=1$. 
The formula (\ref{eq:actS_gen_Vlambda}), combined with the relations $\sigma (h (\lambda ))=h (\gamma )$ and $f_{h (\lambda )}(z)=\zeta_{H(\lambda )}$, then implies the following equalities:
\[
(f_{h (\gamma)}(z),\ f_{h (\gamma)}(z))_{\lambda}
=((-1)^{m(h(\lambda),\sigma)}\tau_{\lambda}(u_\sigma )f_{h (\lambda )}(z),\
(-1)^{m(h(\lambda),\sigma)}\tau_{\lambda}(u_\sigma )f_{h (\lambda )}(z))_{\lambda}
=(f_{h (\lambda )}(z),\ f_{h (\lambda )}(z))_{\lambda}=1.
\]
The positive real numbers $c_1$ and $c_2$ thus satisfy  
\[
c_1^2=(c_1\zeta_{H(\gamma )},c_1\zeta_{H(\gamma )})_{\lambda}
=(c_2f_{h (\gamma)}(z),c_2f_{h (\gamma)}(z))_{\lambda}
=c_2^2.
\]
Hence, we have $c_1=c_2$, 
and this implies $\zeta_{H(\gamma )}=f_{h (\gamma)}(z)$. 
\end{proof}

Let $w_n$ be the anti-diagonal matrix of size $n$ with $1$ at 
all anti-diagonal entries. 
For $\gamma = (\gamma_1,\gamma_2, \dots, \gamma_n)\in \cE_{\lambda}$, 
we set $\gamma^*:=
(\gamma_n,\gamma_{n-1}, \dots, \gamma_1)\in \cE_{\lambda}$.
The following lemma will be used in the proofs of 
Lemma~\ref{lem:ps_rel_op_dual} in Section \ref{sec:arch_whittaker} and 
Lemma~\ref{lem:Clem3} in Section \ref{sec:calcoeff}.

\begin{lem}
\label{lem:GTwn}
Let $\lambda =(\lambda_1,\lambda_2,\dots ,\lambda_n)\in \Lambda_n$ and $\gamma \in \cE_\lambda$. 
Then we have 
$\tau_\lambda (w_n) \xi_{H(\gamma)} 
= (-1)^{  \sum^n_{i=1}  (i-1) \lambda_i  } \xi_{ H(\gamma^*) }$. 
\end{lem}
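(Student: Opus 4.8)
The plan is to reduce the statement to the explicit monomial description of extremal weight vectors given by Lemma~\ref{lem:extremal_vec_explicit}, combined with the permutation formula \eqref{eq:actS_gen_Vlambda}. Write $\gamma=\sigma\lambda$ with $\sigma\in\gS_n$. By Lemma~\ref{lem:extremal_vec_explicit} we have $\xi_{H(\gamma)}=f_{h(\gamma)}(z)$, where $h(\gamma)_I=\lambda_k-\lambda_{k+1}$ when $I=\sigma(\{1,2,\dots,k\})$ for some $1\leq k\leq n-1$, while $h(\gamma)_{\{1,\dots,n\}}=\lambda_n$ and all other coordinates vanish. The key observation is that the anti-diagonal matrix $w_n$ is exactly the permutation matrix $u_{w_0}$ attached to the longest element $w_0\in\gS_n$, i.e.\ the involution $i\mapsto n+1-i$. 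Therefore $\tau_\lambda(w_n)\xi_{H(\gamma)}=\tau_\lambda(u_{w_0})f_{h(\gamma)}(z)=(-1)^{m(h(\gamma),w_0)}f_{w_0(h(\gamma))}(z)$ by \eqref{eq:actS_gen_Vlambda}.

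Two short computations then finish the argument. First I would identify $f_{w_0(h(\gamma))}(z)$ with $\xi_{H(\gamma^*)}$: by definition $w_0(h(\gamma))_I=h(\gamma)_{w_0^{-1}(I)}=h(\gamma)_{w_0(I)}$ since $w_0^2=\id$, and this equals $\lambda_k-\lambda_{k+1}$ precisely when $I=(w_0\sigma)(\{1,\dots,k\})$. On the other hand the elementary identity $\bigl((w_0\sigma)\lambda\bigr)_i=\lambda_{\sigma^{-1}(n+1-i)}=\gamma_{n+1-i}=\gamma^*_i$ shows $\gamma^*=(w_0\sigma)\lambda$, so applying Lemma~\ref{lem:extremal_vec_explicit} to $\gamma^*$ with the permutation $w_0\sigma$ yields $h(\gamma^*)_I=\lambda_k-\lambda_{k+1}$ exactly when $I=(w_0\sigma)(\{1,\dots,k\})$, together with $h(\gamma^*)_{\{1,\dots,n\}}=\lambda_n$. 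Hence $w_0(h(\gamma))=h(\gamma^*)$ and $f_{w_0(h(\gamma))}(z)=f_{h(\gamma^*)}(z)=\xi_{H(\gamma^*)}$. Second I would evaluate the sign: since $w_0(i)>w_0(j)\iff i<j$, every pair of indices in a set $I$ of size $k$ is counted, so $m(I,w_0)=\binom{k}{2}$, whence $m(h(\gamma),w_0)=\sum_{k=1}^{n-1}\binom{k}{2}(\lambda_k-\lambda_{k+1})+\binom{n}{2}\lambda_n$; an Abel summation, using $\binom{k}{2}-\binom{k-1}{2}=k-1$, collapses this to $\sum_{i=1}^n(i-1)\lambda_i$, which is the claimed exponent.

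The argument is short, and the only genuinely computational point is the sign bookkeeping in the last step. I expect that Abel summation, though elementary, to be the part most prone to index slips, so I would carry it out carefully, double-checking the boundary contributions at $k=1$ (which vanishes) and at $k=n$. A parallel alternative, should one prefer not to invoke \eqref{eq:actS_gen_Vlambda}, is to compute $\tau_\lambda(w_n)f_{h(\gamma)}(z)=f_{h(\gamma)}(zw_n)$ directly from $(zw_n)_{i,j}=z_{i,n+1-j}$, using that reversing the $k$ columns of a $k\times k$ minor introduces the factor $(-1)^{k(k-1)/2}$ and that $\det(zw_n)=(-1)^{n(n-1)/2}\det z$; this produces the same accumulated sign and the same column-reversed minors, hence the same conclusion.
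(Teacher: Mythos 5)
Your proposal is correct and follows essentially the same path as the paper's proof: both identify $w_n$ with the permutation matrix for the longest element $w_0$, invoke Lemma~\ref{lem:extremal_vec_explicit} together with \eqref{eq:actS_gen_Vlambda}, observe that $w_0(h(\gamma))=h(\gamma^*)$ (the paper cites the general relation $\sigma(h(\gamma))=h(\sigma\gamma)$, while you check it directly from the explicit description — same content), and compute the sign via $m(I,w_0)=\binom{k}{2}$ followed by an Abel summation. The sign computation is carried out correctly in both.
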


\begin{proof}
By Lemma~\ref{lem:extremal_vec_explicit}, the formula (\ref{eq:actS_gen_Vlambda}) and the relation
$\sigma (h (\gamma ))=h (\sigma \gamma )$ for $\sigma \in \gS_n$, we have 
\begin{align}
\label{eq:pf_GTwn_001}
\tau_\lambda (u_\sigma )\xi_{H(\gamma)}&=(-1)^{m(h (\gamma ) ,\sigma )}
\xi_{H(\sigma \gamma)}& \text{for\; } \sigma \in \gS_n.
\end{align}
Let $\sigma_{\mathrm{long}}\in \gS_n$ be a permutation determined by 
$\sigma_{\mathrm{long}}(i)=n+1-i$ for $1\leq i\leq n$. 
Then we have $\gamma^*=\sigma_{\mathrm{long}} \gamma $ and 
$w_n=u_{\sigma_{\mathrm{long}}}$. Moreover, since 
$m(I,\sigma_{\mathrm{long}})=\dfrac{k(k-1)}{2}$ holds
 for every $I\in \cI_{n,k}$ with $1\leq k\leq n$, we have 
\begin{align*}
m(h (\gamma ) ,\sigma_{\mathrm{long}})
=\lambda_n \frac{n(n-1)}{2}
+\sum^{n-1}_{k=1}(\lambda_k-\lambda_{k+1})\frac{k(k-1)}{2}
=\sum^n_{i=1}(i-1) \lambda_i.
\end{align*}
Therefore the assertion follows from (\ref{eq:pf_GTwn_001}) for 
$\sigma =\sigma_{\mathrm{long}}$. 
\end{proof}

\begin{lem}
\label{lem:GLnA_generator}
Let $\cA$ be a field of characteristic $0$. 
Then the  group $\rGL_n(\cA)$ is generated by 
$\rT_n(\cA)\cup \rGL_n(\mZ)$. 
\end{lem}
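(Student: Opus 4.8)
The plan is to reduce the claim to the classical fact that $\rGL_n(\mathbf{Z})$ surjects onto $\rSL_n$ under elementary operations, combined with the diagonal torus to pick up the determinant. First I would reduce matters over a field $\cA$ of characteristic $0$ to the following two observations: (a) every elementary matrix $1_n + cE_{i,j}$ with $i\neq j$ and $c\in \cA$ lies in the subgroup $G$ generated by $\rT_n(\cA)\cup \rGL_n(\mathbf{Z})$; and (b) every diagonal matrix already lies in $\rT_n(\cA)$, hence in $G$. Granting (a) and (b), I would conclude by the standard Gaussian elimination argument: given $g\in \rGL_n(\cA)$, left and right multiplication by elementary matrices (which are in $G$ by (a)) reduces $g$ to a diagonal matrix $\mathrm{diag}(1,\dots,1,\det g)$, which is in $G$ by (b); therefore $g\in G$.

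The heart of the argument is thus step (a). For this I would use the conjugation trick: for a unit $t\in \cA^\times$ and the diagonal matrix $a_t := \mathrm{diag}(t,1,\dots,1)\in \rT_n(\cA)$ (placed in the $i$-th slot, say $a := 1_n + (t-1)E_{i,i}$), one has $a\,(1_n + E_{i,j})\,a^{-1} = 1_n + tE_{i,j}$ when $j\neq i$, using $E_{i,j}=E_{i,i}E_{i,j}$ and $E_{i,j}E_{i,i}=0$. Since $1_n + E_{i,j}\in \rGL_n(\mathbf{Z})\subset G$ and $a,a^{-1}\in \rT_n(\cA)\subset G$, this shows $1_n + cE_{i,j}\in G$ for every $c\in \cA^\times$. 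As $\cA$ is a field, every nonzero $c$ is a unit, and $c=0$ gives the identity, so every elementary matrix is in $G$. (A symmetric computation handles the case where one prefers to conjugate on the $j$-slot.)

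The main obstacle — though a mild one — is being careful that the identity $a(1_n+E_{i,j})a^{-1}=1_n+tE_{i,j}$ really holds for the relevant choice of diagonal entry, i.e.\ keeping track of which index the scalar $t$ sits on; one needs $a$ to scale the $i$-th row but not the $j$-th row for the displayed conjugation to work, and this forces $i\neq j$, which is exactly the constraint on elementary matrices. No analogue of Proposition~\ref{prop:xiM_integral} or the Gel'fand--Tsetlin machinery is needed here; the lemma is purely group-theoretic and its proof is short. I would present it as: reduce to generating elementary matrices, produce them by conjugating integral elementary matrices by torus elements, then invoke Gaussian elimination plus the torus to finish.
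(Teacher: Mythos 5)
Your argument is correct and matches the paper's proof essentially step for step: both reduce the problem via Gaussian elimination to elementary matrices and then produce the general elementary matrix $1_n + cE_{i,j}$ by conjugating the integral one $1_n + E_{i,j} \in \rGL_n(\mZ)$ with the torus element $u_{i,i}(c)$, which is precisely the paper's relation $u_{i,j}(c) = u_{i,i}(c)\,u_{i,j}(1)\,u_{i,i}(c^{-1})$. The only cosmetic difference is that the paper also lists the permutation matrices $u_\sigma \in \rGL_n(\mZ)$ as part of the Gaussian-elimination generating set, whereas you fold them in implicitly; since those are already integral this changes nothing.
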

\begin{proof}
For $c\in \cA^\times $ and $1\leq i\leq n$, let $u_{i,i}(c)\in \rT_n(\cA)$ 
be the diagonal matrix of size $n$ with $c$ at the $(i,i)$-th entry and 
$1$ at the other diagonal entries. 
For $c\in \cA^\times $ and $1\leq i\neq j\leq n$, 
let $u_{i,j}(c)\in \rGL_n(\cA)$ 
be the square matrix of size $n$ with $c$ at the $(i,j)$-th entry, 
$1$ at all diagonal entries and 
$0$ at the other entries. 
By Gaussian elimination, one observes that $\rGL_n(\cA)$ is generated by 
$\{u_{i,j}(c)\mid 1\leq i,j\leq n,\ c\in \cA^\times \}\cup 
\{u_\sigma \mid \sigma \in \gS_n\}$. 
Since $u_{i,j}(1)\in \rGL_n(\mZ)$ for $1\leq i\neq j\leq n$ and 
$u_{\sigma}\in \rGL_n(\mZ)$ for $\sigma \in \gS_n$, the assertion follows from 
this fact and the famous relation $u_{i,j}(c)=u_{i,i}(c)u_{i,j}(1)u_{i,i}(c^{-1})$  for $1\leq i\neq j\leq n$ and $c\in \cA^\times$. 
\end{proof}

\begin{proof}[{Proof of Lemma~$\ref{lem:restriction_lambda_mu}$}]
Taking Proposition~\ref{prop:xiM_integral} into accounts, 
we have $\rR^\lambda_\mu \circ \rI_\mu^\lambda =\id_{V_{\mu}(\cA )}$ by definition. 
Hence, it suffices to show that 
\begin{align*}
\rR^\lambda_\mu (\tau_\lambda (\iota_n(g))\rv)
&=\tau_\mu (g)\rR^\lambda_\mu (\rv)\qquad 
(\rv\in V_{\lambda}(\cA )) & \text{\; and\;} &&
\rI^\lambda_\mu (\tau_\mu (g)\rv')
&=\tau_\lambda (\iota_n(g))\rI^\lambda_\mu (\rv')\qquad 
(\rv'\in V_{\mu}(\cA ))
\end{align*}
hold for each $g\in \rGL_{n-1}(\cA )$. 
Replacing $\cA$ with its quotient field if necessary, 
we may assume that $\cA$ is a field of characteristic $0$. 
Because of Proposition~\ref{prop:xiM_integral} and 
Lemma~\ref{lem:GLnA_generator}, 
it suffices to show that
\begin{align*}
\rR^\lambda_\mu (\tau_\lambda (\iota_n(g))\xi_M)
& =\tau_\mu (g)\rR^\lambda_\mu (\xi_M)\qquad 
(M\in \rG (\lambda ))& \text{\; and \;}
&&\rI^\lambda_\mu (\tau_\mu (g)\xi_N)
&=\tau_\lambda (\iota_n(g))\rI^\lambda_\mu (\xi_N)\qquad 
(N\in \rG (\mu ))
\end{align*}
hold for each $g\in \rT_{n-1}(\cA )\cup \rGL_{n-1}(\mZ)$, but these immediately  follow from (\ref{eq:actT_xiM}) and 
Lemma~\ref{lem:C_restriction_lambda_mu}. 
\end{proof}

\begin{proof}[Proof of Lemma~$\ref{lem:detl_shift}$]
Because of Proposition~\ref{prop:xiM_integral}, it suffices to consider 
the case where $\cA =\mC$. Let $M\in \rG (\lambda )$. 
By definition, we have $\cD^-_{M}=\cD^-_{M-l}$ and $\rr_1(M)=\rr_1(M-l)$. 
Since $\cD^-_{M}$ is a sum of products of 
$E_{j,i}$ and $E_{i,i}-E_{j,j}-i+j$ ($1\leq i<j\leq n$), 
the equality (\ref{eq:detl_shift}) implies 
\begin{align*}
&\rI^{\det}_{\lambda ,l}(\tau_{\lambda }(\cD^-_{M})\rv )=
\tau_{\lambda -l}(\cD^-_{M-l})\rI^{\det}_{\lambda ,l}(\rv )&
&\text{for \;} \rv \in V_{\lambda}.
\end{align*}
Therefore, by (\ref{eq:Molev_D-}) and the equalities $\rI^{\det}_{\lambda ,l}(\xi_{H(\lambda )})
=(\det z)^{-l}f_{h (\lambda)}(z)
=f_{h (\lambda -l)}(z)=\xi_{H(\lambda -l)}$, 
we have 
\begin{align*}
\rI^{\det}_{\lambda ,l}(\xi_{M})
&=\rr_1(M)^{-1}
\rI^{\det}_{\lambda ,l}(\tau_{\lambda }(\cD^-_{M})\xi_{H(\lambda )})
=\rr_1(M-l)^{-1}\tau_{\lambda -l}(\cD^-_{M-l})\xi_{H(\lambda -l)}
=\xi_{M-l}.
\end{align*}
We also have
$\rI^{\det}_{\lambda ,l}(\zeta_{M})=\zeta_{M-l}$ due to the equality $\rr (M)=\rr (M-l)$. 
Finally, since $\{\zeta_{M}\}_{M\in \rG (\lambda )}$ and $\{\zeta_{N}\}_{N\in \rG (\lambda -l)}$ are 
orthonormal basis, we readily check that $\rI^{\det}_{\lambda ,l}$ preserves the inner products. 
\end{proof}

\begin{lem}
\label{lem:pair_conj}
Retain the notation. We define a $\mC$-bilinear pairing 
$\langle \cdot, \cdot \rangle_{\lambda,\rconj}
\colon V_{\lambda}\otimes_\mC V_{\lambda}^{\rconj}\to \mC$ by 
\begin{align*}
&\langle \rv,\rv'\rangle_{\lambda,\rconj}
:=\sum_{M\in \rG (\lambda)}c_M c_M' (\xi_{M}, \xi_{M})_{\lambda}
=\sum_{M\in \rG (\lambda)}c_M c_M' \rr (M)
\end{align*}
for $\rv=\sum_{M\in \rG (\lambda)}c_M \xi_{M}\in V_{\lambda}$ and 
$\rv'=\sum_{M\in \rG (\lambda)}c_M'\xi_{M}\in V_{\lambda}^{\rconj}$ 
with $c_M,c_M'\in \mC$. Then we have 
\[
\langle \cdot, \cdot \rangle_{\lambda,\rconj} \in 
\Hom_{\rU (n)}(V_{\lambda}\otimes_\mC V_{\lambda}^{\rconj},
\mC_{\rtriv}),
\]
where $\mC_{\rtriv}=\mC$ is the trivial $\rU (n)$-module.
\end{lem}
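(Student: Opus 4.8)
The assertion is that the pairing $\langle \cdot, \cdot \rangle_{\lambda,\rconj}$ is $\rU(n)$-invariant, i.e.\ that $\langle \tau_\lambda(X)\rv, \rv'\rangle_{\lambda,\rconj} + \langle \rv, \tau_\lambda^{\rconj}(X)\rv'\rangle_{\lambda,\rconj} = 0$ for every $X$ in the Lie algebra $\gu(n)$ (equivalently, for $X = E^{\gu(n)}_{i,j}$, $1\le i,j\le n$) and every $\rv\in V_\lambda$, $\rv'\in V_\lambda^{\rconj}$. The plan is to reduce this to a statement about the orthonormal Gel'fand--Tsetlin basis $\{\zeta_M\}$ rather than the rescaled basis $\{\xi_M\}$, and then invoke the already-established unitarity of $(\cdot,\cdot)_\lambda$ together with the identification of $\tau_\lambda^{\rconj}$ worked out in the proof of Lemma~\ref{lem:hom_dual_conj}.

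First I would rewrite the pairing in terms of $\{\zeta_M\}$. Since $\xi_M = \sqrt{\rr(M)}\,\zeta_M$ and $(\xi_M,\xi_M)_\lambda = \rr(M)$, for $\rv = \sum_M a_M\zeta_M$ and $\rv' = \sum_M a_M'\zeta_M$ (with $a_M = \sqrt{\rr(M)}c_M$, $a_M' = \sqrt{\rr(M)}c_M'$) one gets $\langle \rv,\rv'\rangle_{\lambda,\rconj} = \sum_M a_M a_M'$. In other words $\langle\cdot,\cdot\rangle_{\lambda,\rconj}$ is nothing but the \emph{bilinear} form that is diagonal with value $1$ on the orthonormal basis $\{\zeta_M\}$; that is, $\langle \rv,\rv'\rangle_{\lambda,\rconj} = (\rv,\overline{\rv'})_\lambda$ where $\overline{\phantom{x}}$ denotes complex conjugation of coordinates in the $\zeta$-basis. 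The key structural fact, recorded in the proof of Lemma~\ref{lem:hom_dual_conj}, is that for $1\le i,j\le n$ we have $\tau_\lambda(E^{\gu(n)}_{i,j}) = \tau_\lambda(E_{i,j}) - \tau_\lambda(E_{j,i})^{\ast}$... more precisely $\tau_\lambda(E^{\gu(n)}_{i,j}) = \tau_\lambda(E_{i,j}) - \tau_\lambda(E_{j,i})$ acting as the skew-hermitian operator $\tau_\lambda(E_{i,j}) - \tau_\lambda(E_{i,j})^{\ast}$, and moreover $\tau_\lambda^{\rconj}(E^{\gu(n)}_{i,j}) = -\tau_\lambda(E_{j,i}^{\gu(n)}) = \overline{\tau_\lambda(E^{\gu(n)}_{i,j})}$, the entrywise conjugate of the matrix of $\tau_\lambda(E^{\gu(n)}_{i,j})$ in the $\zeta$-basis (by the explicit real formulas \eqref{eq:GT_act_wt}, \eqref{eq:GT_act+}, \eqref{eq:GT_act-}, the matrix of $\tau_\lambda(E^{\gu(n)}_{i,j})$ in the $\zeta$-basis has entries that are real on the diagonal and, on the Cartan-Weyl raising/lowering pieces, related by transposition up to sign, so its conjugate represents $\tau_\lambda^{\rconj}$).

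Granting this, the invariance is a one-line computation: writing $A$ for the matrix of $\tau_\lambda(E^{\gu(n)}_{i,j})$ in the $\zeta$-basis, the matrix of $\tau_\lambda^{\rconj}(E^{\gu(n)}_{i,j})$ is $\overline{A}$; unitarity of $(\cdot,\cdot)_\lambda$ (i.e.\ $A^\ast = -A$, equivalently $\overline{A}^{\,t} = -A$) gives $\overline{A} = -A^t$; hence for $\rv,\rv'$ with coordinate vectors $x,y$ in the $\zeta$-basis, $\langle \tau_\lambda(E^{\gu(n)}_{i,j})\rv,\rv'\rangle_{\lambda,\rconj} + \langle \rv,\tau_\lambda^{\rconj}(E^{\gu(n)}_{i,j})\rv'\rangle_{\lambda,\rconj} = (Ax)^t y + x^t(\overline{A}y) = x^t A^t y + x^t(-A^t)y = 0$. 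Since $\rU(n)$ is connected, vanishing of the infinitesimal action of all $E^{\gu(n)}_{i,j}$ implies $\rU(n)$-invariance of the pairing, which is exactly the claim $\langle\cdot,\cdot\rangle_{\lambda,\rconj}\in \Hom_{\rU(n)}(V_\lambda\otimes_\mC V_\lambda^{\rconj},\mC_{\rtriv})$. The main (modest) obstacle is purely bookkeeping: carefully justifying the identity $\overline{A} = -A^t$ for each generator $E^{\gu(n)}_{i,j}$ directly from \eqref{eq:GT_act_wt}--\eqref{eq:GT_act-}, i.e.\ checking that the raising-operator coefficients $\ra_{i,j}(M)$ and lowering-operator coefficients $\ra_{i,j}(M^\vee)$ are real and matched so that $\tau_\lambda(E_{j,j+1})$ and $\tau_\lambda(E_{j+1,j})$ are genuinely adjoint with respect to $(\cdot,\cdot)_\lambda$ — but this is precisely the unitarity of the Gel'fand--Tsetlin construction, which we are entitled to assume, so no real work remains.
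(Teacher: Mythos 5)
Your high-level plan is sound: express the pairing in the orthonormal $\zeta$-basis (where it becomes $\langle \rv,\rv'\rangle_{\lambda,\rconj}=\sum_M a_M a_M'$), check infinitesimal $\gu(n)_\mC$-invariance on the basis $\{E^{\gu(n)}_{i,j}\}$, and exploit the reality of the Gel'fand--Tsetlin coefficients together with unitarity of $(\cdot,\cdot)_\lambda$. The paper itself simply cites \cite[Section~2.6]{im} and Remark~\ref{rem:cpx_conj_rep}, so a self-contained check is welcome. However, there is a concrete error in your intermediate identifications that happens to cancel out in the final display.

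You write $\tau_\lambda(E^{\gu(n)}_{i,j})=\tau_\lambda(E_{i,j})-\tau_\lambda(E_{j,i})$ and claim this is skew-hermitian ($A^\ast=-A$), and that $\tau_\lambda^{\rconj}(E^{\gu(n)}_{i,j})$ is represented by $\overline{A}$. Both statements are false. The formula actually recorded in the proof of Lemma~\ref{lem:hom_dual_conj} (which you cite) is $\tau_\lambda(E^{\gu(n)}_{i,j})=\tau_\lambda(E_{i,j})$, with \emph{no} $-\tau_\lambda(E_{j,i})$ term. (The element $E^{\gu(n)}_{i,j}$ is a $\mC$-basis vector of $\gu(n)_\mC$ whose holomorphic piece acts as $E_{i,j}$ and whose anti-holomorphic piece vanishes under $\tau_\lambda$; it is \emph{not} the real skew-hermitian matrix $E_{i,j}-E_{j,i}$.) In particular, the matrix $A:=A_{i,j}$ of $\tau_\lambda(E^{\gu(n)}_{i,j})$ in the $\zeta$-basis is the raising/lowering operator itself, which for $i\ne j$ has nonnegative entries and is certainly not skew-hermitian; the identity $A^\ast=-A$ fails. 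Similarly $\tau_\lambda^{\rconj}(E^{\gu(n)}_{i,j})=-\tau_\lambda(E_{j,i})$, so its matrix is $-A_{j,i}$, not $\overline{A_{i,j}}$ (for $A_{i,j}$ real these two claims disagree unless $A_{i,j}$ is antisymmetric).

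The argument is repaired by using the correct identities directly: $\tau_\lambda^{\rconj}(E^{\gu(n)}_{i,j})$ has matrix $-A_{j,i}$, unitarity of $(\cdot,\cdot)_\lambda$ gives $A_{j,i}=A_{i,j}^\ast=\overline{A_{i,j}}^{\,t}$, and reality of the GT coefficients $\ra_{k,l}(\cdot)$ (and of the integer diagonal entries $\gamma^M_k$) gives $\overline{A_{i,j}}=A_{i,j}$; hence $A_{j,i}=A_{i,j}^t$ and the matrix of $\tau_\lambda^{\rconj}(E^{\gu(n)}_{i,j})$ is $-A^t$. Then, with coordinate vectors $x,y$,
\begin{align*}
\langle \tau_\lambda(E^{\gu(n)}_{i,j})\rv,\rv'\rangle_{\lambda,\rconj}
 + \langle \rv,\tau_\lambda^{\rconj}(E^{\gu(n)}_{i,j})\rv'\rangle_{\lambda,\rconj}
 = (Ax)^t y + x^t(-A^t)y = x^t A^t y - x^t A^t y = 0,
\end{align*}
which is the final computation you wrote, now with a correct justification. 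So the conclusion you reached is right, but as written the proof asserts two false intermediate identities; you should replace the phrase ``skew-hermitian operator $\tau_\lambda(E_{i,j})-\tau_\lambda(E_{i,j})^\ast$'' and the identification of $\tau_\lambda^\rconj$ with the entrywise conjugate matrix by the actual relations from Lemma~\ref{lem:hom_dual_conj} and the reality/adjointness argument above.
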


\begin{proof}
The assertion follows from the properties \cite[Section 2.6]{im} 
of complex conjugate representations, combined with Remark \ref{rem:cpx_conj_rep}. 
\end{proof}

\begin{proof}[Proof of Lemma~$\ref{lem:GL_inv_pairing}$]
The equality (\ref{eq:sym_pairing}) follows immediately from the definition and Lemma~\ref{lem:Mdual_property}. 
Hence, the main concern is to prove (\ref{eq:GL_inv_pairing}). 
First, we consider the case where $\cA =\mC$. 
Since 
$\langle \rv ,\rv' \rangle_\lambda
=\langle \rv , \rI^{\rconj}_{\lambda^{\!\vee}}(\rv') \rangle_{\lambda,\rconj}$ 
for $\rv \in V_\lambda$ and $\rv' \in V_{\lambda^\vee}$, we have 
$\langle \cdot, \cdot \rangle_{\lambda}\in 
\Hom_{\rU (n)}(V_{\lambda}\otimes_\mC V_{\lambda^{\!\vee}},
\mC_{\rtriv})$ 
by Lemmas \ref{lem:pair_conj} and \ref{lem:hom_dual_conj}. 
Hence, by Weyl's unitary trick \cite[Proposition 5.7]{Knapp_002}, 
we obtain (\ref{eq:GL_inv_pairing}) when $\cA =\mC$. 

Next, let us prove (\ref{eq:GL_inv_pairing}) for the general case. 
Replacing $\cA$ with its quotient field if necessary, 
we may assume that $\cA$ is a field of characteristic $0$. 
Because of Proposition~\ref{prop:xiM_integral} and 
Lemma~\ref{lem:GLnA_generator}, 
it suffices to show  that 
\begin{align*}
\langle \tau_\lambda (g) \xi_M, 
\tau_{\lambda^\vee} (g)\xi_{M'} \rangle_\lambda 
&=\langle \xi_M, \xi_{M'} \rangle_\lambda 
\end{align*}
holds for $M\in \rG (\lambda )$, $M'\in \rG (\lambda^\vee )$ and $g\in \rT_{n}(\cA )\cup \rGL_{n}(\mZ)$, but it follows from (\ref{eq:actT_xiM}), 
Lemma~\ref{lem:Mdual_property} and (\ref{eq:GL_inv_pairing}) for the case 
where $\cA=\mC$. 
\end{proof}

\subsection{Projectors and injectors}
\label{subsec:proj_inj_tensor}

In this subsection, we give the proof of Proposition~\ref{prop:injExp}. 
For $\lambda =(\lambda_1,\lambda_2,\dots,\lambda_n)$ and
$\lambda'=(\lambda_1',\lambda_2',\dots,\lambda_n')\in \Lambda_n$, 
let $\rR^{\lambda, \lambda'}_{\lambda +\lambda'}\colon 
V_{\lambda}\otimes_\mC V_{\lambda'}\to V_{\lambda +\lambda'}$ 
be the  $\rGL_n(\mC )$-equivariant surjection defined by (\ref{eq:def_proj}).

\begin{lem}
\label{lem:proj_image}
Retain the notation. 
\begin{enumerate}
\item \label{num:proj_xiHgamma}
For $\sigma \in \gS_n$, set $\gamma =\sigma \lambda $ and 
$\gamma' =\sigma \lambda' $. 
Then we have 
$\rR^{\lambda, \lambda'}_{\lambda +\lambda'}
(\xi_{H(\gamma )}\otimes \xi_{H(\gamma')})=\xi_{H(\gamma +\gamma')}$. 

\item \label{num:proj_xiHmulambda}
Assume that $n>1$, and take $\mu ,\mu'\in \Lambda_{n-1}$ satisfying 
$\mu \preceq \lambda $ and $\mu'\preceq \lambda'$. 
Then, for $1\leq m\leq n$, we have 
\begin{align*}
&\rR^{\lambda, \lambda'}_{\lambda +\lambda'}
(\xi_{H(\lambda,\mu;m)}\otimes \xi_{H(\lambda',\mu';m)})
=\xi_{H(\lambda +\lambda',\mu +\mu';m)}. 
\end{align*}
In particular, we have 
$\rR^{\lambda, \lambda'}_{\lambda +\lambda'}
(\xi_{H(\mu )[\lambda ]}\otimes \xi_{H(\mu')[\lambda']})
=\xi_{H(\mu +\mu')[\lambda +\lambda']}$.
\end{enumerate}
\end{lem}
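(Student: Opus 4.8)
The statement concerns the explicit behaviour of the multiplication map $\rR^{\lambda,\lambda'}_{\lambda+\lambda'}$ on the specific Gel'fand--Tsetlin vectors $\xi_{H(\gamma)}$ and $\xi_{H(\lambda,\mu;m)}$, all of which (by Lemmas~\ref{lem:extremal_vec_explicit} and \ref{lem:xi_Hmulambda_gen}) coincide with explicit generators $f_l(z)$ of the determinantal realisation. The plan is to reduce both parts to a direct computation of a product of two monomials in the $\det_I(z)$'s, using the equalities $\xi_{H(\gamma)}=f_{h(\gamma)}(z)$ and $\xi_{H(\lambda,\mu;m)}=f_{h(\lambda,\mu;m)}(z)$ together with the formula $\rR^{\lambda,\lambda'}_{\lambda+\lambda'}(\rv\otimes\rv')=\rv\rv'$ of \eqref{eq:def_proj}.

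For part \ref{num:proj_xiHgamma}, I would write $\xi_{H(\gamma)}=f_{h(\gamma)}(z)=\prod_{k=1}^{n-1}(\det_{\{\sigma(1),\dots,\sigma(k)\}}(z))^{\lambda_k-\lambda_{k+1}}\cdot(\det z)^{\lambda_n}$ by Lemma~\ref{lem:extremal_vec_explicit}, and similarly for $\xi_{H(\gamma')}$ with $\lambda$ replaced by $\lambda'$. Multiplying the two, the exponent of $\det_{\{\sigma(1),\dots,\sigma(k)\}}(z)$ becomes $(\lambda_k-\lambda_{k+1})+(\lambda'_k-\lambda'_{k+1})=(\lambda_k+\lambda'_k)-(\lambda_{k+1}+\lambda'_{k+1})$ and the exponent of $\det z$ becomes $\lambda_n+\lambda'_n$, so the product is precisely $f_{h(\gamma+\gamma')}(z)=\xi_{H(\gamma+\gamma')}$ (again by Lemma~\ref{lem:extremal_vec_explicit}, applied to $\gamma+\gamma'=\sigma(\lambda+\lambda')\in\cE_{\lambda+\lambda'}$). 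This is a one-line monomial bookkeeping once the identifications are in place; the only subtlety is to check that $h(\gamma)$ and $h(\gamma')$ are supported on the \emph{same} index sets $\{\sigma(1),\dots,\sigma(k)\}$, which is immediate since they share the permutation $\sigma$.

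For part \ref{num:proj_xiHmulambda}, I would argue the same way using Lemma~\ref{lem:xi_Hmulambda_gen} (proved in Section~\ref{subsec:prelim_fdrep}): $\xi_{H(\lambda,\mu;m)}=f_{h(\lambda,\mu;m)}(z)$, where $h(\lambda,\mu;m)$ assigns $\lambda_k-\mu_k$ to $\{1,\dots,k-1,m\}$ and $\mu_k-\lambda_{k+1}$ to $\{1,\dots,k\}$ for $k<m$, and $\lambda_k-\lambda_{k+1}$ to $\{1,\dots,k\}$ for $k\ge m$ (plus $\lambda_n$ on $\{1,\dots,n\}$). The multiset of index sets appearing is the same for $h(\lambda,\mu;m)$ and $h(\lambda',\mu';m)$, so multiplying $f_{h(\lambda,\mu;m)}(z)\cdot f_{h(\lambda',\mu';m)}(z)$ simply adds the corresponding exponents: on $\{1,\dots,k-1,m\}$ we get $(\lambda_k+\lambda'_k)-(\mu_k+\mu'_k)$, on $\{1,\dots,k\}$ (for $k<m$) we get $(\mu_k+\mu'_k)-(\lambda_{k+1}+\lambda'_{k+1})$, on $\{1,\dots,k\}$ (for $k\ge m$) we get $(\lambda_k+\lambda'_k)-(\lambda_{k+1}+\lambda'_{k+1})$, and on $\{1,\dots,n\}$ we get $\lambda_n+\lambda'_n$. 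This is exactly $h(\lambda+\lambda',\mu+\mu';m)$, hence the product equals $\xi_{H(\lambda+\lambda',\mu+\mu';m)}$. Setting $m=n$ and using $H(\lambda,\mu;n)=H(\mu)[\lambda]$, $H(\lambda+\lambda',\mu+\mu';n)=H(\mu+\mu')[\lambda+\lambda']$ gives the final assertion. The only point needing care is that all these $f_l(z)$ genuinely lie in $V_\lambda$ resp.\ $V_{\lambda'}$ resp.\ $V_{\lambda+\lambda'}$, i.e.\ that the tuples $h(\lambda,\mu;m)$, $h(\lambda',\mu';m)$, $h(\lambda+\lambda',\mu+\mu';m)$ belong to $\cL(\lambda)$, $\cL(\lambda')$, $\cL(\lambda+\lambda')$ respectively; this follows from the interlacing hypotheses $\mu\preceq\lambda$, $\mu'\preceq\lambda'$ (hence $\mu+\mu'\preceq\lambda+\lambda'$) exactly as in the proof of Lemma~\ref{lem:xi_Hmulambda_gen}.

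\textbf{Main obstacle.} There is no real analytic difficulty here; the work is entirely in correctly transcribing the combinatorial descriptions of $h(\gamma)$ and $h(\lambda,\mu;m)$ from Lemmas~\ref{lem:extremal_vec_explicit} and \ref{lem:xi_Hmulambda_gen} and verifying that the index sets match up so that exponents add termwise. The mildly delicate bookkeeping point is the degenerate case $k=1$ in part \ref{num:proj_xiHmulambda}, where $\{1,\dots,k-1,m\}=\{m\}$, which must be handled with the convention already fixed in the statement of Lemma~\ref{lem:xi_Hmulambda_explicit}.
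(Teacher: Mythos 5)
Your argument is correct and is essentially the same as the paper's: the paper likewise reduces both parts to the identities $\xi_{H(\gamma)}=f_{h(\gamma)}(z)$ (Lemma~\ref{lem:extremal_vec_explicit}) and $\xi_{H(\lambda,\mu;m)}=f_{h(\lambda,\mu;m)}(z)$ (Lemma~\ref{lem:xi_Hmulambda_gen}), and then observes that the exponent tuples add termwise, i.e.\ $h(\gamma)_I+h(\gamma')_I=h(\gamma+\gamma')_I$ and $h(\lambda,\mu;m)_I+h(\lambda',\mu';m)_I=h(\lambda+\lambda',\mu+\mu';m)_I$ for all $I\in\cI_n$. Your write-up just spells out the same bookkeeping in slightly more detail.
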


\begin{proof}
The statement \ref{num:proj_xiHgamma} follows from 
Lemma~\ref{lem:extremal_vec_explicit} and 
$h(\gamma )_I+h(\gamma')_I=h(\gamma +\gamma')_I$ 
for $I\in \cI_n$. 
The statement \ref{num:proj_xiHmulambda} follows from 
Lemma~\ref{lem:xi_Hmulambda_gen} and 
$h(\lambda,\mu;m)_I+h(\lambda',\mu';m)_I
=h(\lambda +\lambda',\mu +\mu';m)_I$ 
for $I\in \cI_n$ and $1\leq m\leq n$. 
\end{proof}

Let $\rI^{\lambda, \lambda'}_{\lambda +\lambda'}
\colon V_{\lambda +\lambda'}\to 
V_\lambda \otimes_{\mC }V_{\lambda'}$ be 
 the $\rGL_n(\mC )$-equivariant injection normalised as 
(\ref{eq:def_inj}), and let 
$\rc^{M,M'}_{M''}$ be its coefficient at $M\in \rG (\lambda )$, $M'\in \rG (\lambda')$ and $M''\in \rG (\lambda +\lambda')$
defined as (\ref{eq:CoefInj}). 

\begin{lem}
\label{lem:injExpH(mu)}
Retain the notation. 
Assume that $n>1$, and take $\mu ,\mu'\in \Lambda_{n-1}$ satisfying 
$\mu \preceq \lambda $ and $\mu'\preceq \lambda'$. 
Then we have 
\begin{align}
\label{eq:injExpH(mu)}
&\rc^{H(\mu )[\lambda ], H(\mu')[\lambda']}_{
H(\mu +\mu')[\lambda +\lambda']}
=\frac{\rr (H(\mu +\mu')[\lambda +\lambda'])}
{\rr (H(\mu )[\lambda ])\rr (H(\mu')[\lambda'])}.
\end{align}
Moreover, we have $\rc^{H(\mu )[\lambda ], H(\mu')[\lambda']}_{M''}=0$ 
for $M''\in \rG (\lambda +\lambda')$ such that $M''\neq H(\mu +\mu')[\lambda +\lambda']$. 
\end{lem}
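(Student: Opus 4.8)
\textbf{Proof plan for Lemma~\ref{lem:injExpH(mu)}.}
The plan is to compute the image $\rI^{\lambda,\lambda'}_{\lambda+\lambda'}(\xi_{H(\mu+\mu')[\lambda+\lambda']})$ by exploiting the characterisation of $\rI^{\lambda,\lambda'}_{\lambda+\lambda'}$ via the highest weight vector together with the raising/lowering operator description of $\xi_{H(\mu+\mu')[\lambda+\lambda']}$ provided by Lemma~\ref{lem:act_xiM_E+l}. First I would recall that, by Lemma~\ref{lem:xi_Hmulambda_gen} (or directly Lemma~\ref{lem:act_xiM_E+l}), there is a product of lowering operators $\mathcal{E}=(E_{1,2})^{l_1}(E_{2,3})^{l_2}\cdots(E_{n-1,n})^{l_{n-1}}$ in $\cU(\ggl_n)$, with $l_m=\sum_{i=1}^m((\lambda_i+\lambda_i')-(\mu_i+\mu_i'))$, such that $\tau_{\lambda+\lambda'}(\mathcal{E})\xi_{H(\mu+\mu')[\lambda+\lambda']}=l_1!\cdots l_{n-1}!\,\xi_{H(\lambda+\lambda')}$; equivalently $\xi_{H(\mu+\mu')[\lambda+\lambda']}$ is obtained from the highest weight vector $\xi_{H(\lambda+\lambda')}$ by applying a suitable product $\mathcal{E}^{-}$ of raising operators $E_{m+1,m}$, up to the explicit scalar $\rr(H(\mu+\mu')[\lambda+\lambda'])$ coming from the normalisation $\xi_M=\sqrt{\rr(M)}\zeta_M$ and Molev's formula (\ref{eq:Molev_D-}).

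Next I would push this through the $\rGL_n(\mC)$-equivariant map $\rI^{\lambda,\lambda'}_{\lambda+\lambda'}$. Since $\rI^{\lambda,\lambda'}_{\lambda+\lambda'}(\xi_{H(\lambda+\lambda')})=\xi_{H(\lambda)}\otimes\xi_{H(\lambda')}$ by (\ref{eq:def_inj}) and $\rI^{\lambda,\lambda'}_{\lambda+\lambda'}$ intertwines the $\ggl_n$-actions (via the coproduct on $\cU(\ggl_n)$), applying $\mathcal{E}^{-}$ to both sides and using the Leibniz rule for the action on a tensor product reduces the computation to the action of products of $E_{m+1,m}$ on $\xi_{H(\lambda)}$ and $\xi_{H(\lambda')}$ separately. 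Here the weight bookkeeping is crucial: the weight of $H(\mu+\mu')[\lambda+\lambda']$ is the sum of the weights of $H(\mu)[\lambda]$ and $H(\mu')[\lambda']$, and among all Gel'fand--Tsetlin patterns of the respective types with those weights, $H(\mu)[\lambda]$ (resp.\ $H(\mu')[\lambda']$) is the lexicographically largest admissible one, so that only the ``diagonal'' term $\xi_{H(\mu)[\lambda]}\otimes\xi_{H(\mu')[\lambda']}$ can survive after matching weights. Combining Lemma~\ref{lem:act_xiM_E+l} applied to $\lambda$ and to $\lambda'$ (which produces the factors $\prod l_m^{(\lambda)}!$ and $\prod l_m^{(\lambda')}!$) with the analogous factor for $\lambda+\lambda'$ and the normalisation constants $\rr(\cdot)$, the coefficient of $\xi_{H(\mu)[\lambda]}\otimes\xi_{H(\mu')[\lambda']}$ works out to $\rr(H(\mu+\mu')[\lambda+\lambda'])/(\rr(H(\mu)[\lambda])\,\rr(H(\mu')[\lambda']))$, which is (\ref{eq:injExpH(mu)}).

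For the vanishing statement $\rc^{H(\mu)[\lambda],H(\mu')[\lambda']}_{M''}=0$ for $M''\neq H(\mu+\mu')[\lambda+\lambda']$, I would argue by weights once more: the tensor vector $\xi_{H(\mu)[\lambda]}\otimes\xi_{H(\mu')[\lambda']}$ has weight $\gamma^{H(\mu)[\lambda]}+\gamma^{H(\mu')[\lambda']}=\gamma^{H(\mu+\mu')[\lambda+\lambda']}$, and among patterns $M''\in\rG(\lambda+\lambda')$ the pattern $H(\mu+\mu')[\lambda+\lambda']$ is characterised (via $\widehat{M''}=H(\mu+\mu')$ being the unique $\rGL_{n-1}$-highest weight pattern of that weight) as the lexicographically largest one of its weight; hence by (\ref{eq:CoefInj}), (\ref{eq:GT_act_wt}) and Molev's relation (\ref{eq:Molev_D+prime}) the coefficient at any strictly smaller $M''$ with the same weight vanishes, and the coefficient at any $M''$ of a different weight vanishes trivially since $\rI^{\lambda,\lambda'}_{\lambda+\lambda'}$ is weight-preserving. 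The main obstacle I anticipate is the careful matching of the combinatorial normalisation constants: verifying that the product of factorials from the three applications of Lemma~\ref{lem:act_xiM_E+l} (for $\lambda$, $\lambda'$ and $\lambda+\lambda'$) together with the $\rr_1$-factors from (\ref{eq:Molev_D-}) collapse exactly to the ratio $\rr(H(\mu+\mu')[\lambda+\lambda'])/(\rr(H(\mu)[\lambda])\rr(H(\mu')[\lambda']))$ will require a somewhat delicate but elementary computation with the explicit formulas for $\rr$, $\rr_1$ and $\rS^{\circ}$, $\rS^{+}$.
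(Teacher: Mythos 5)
Your strategy has two concrete gaps, and the key combinatorial claim it rests on is false.

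First, the claim that $H(\mu)[\lambda]$ (or $H(\mu+\mu')[\lambda+\lambda']$) is the lexicographically largest Gel'fand--Tsetlin pattern of its weight is not true in general. Take $n=3$, $\lambda=(4,2,0)$, $\mu=(3,1)$. Then $H(\mu)[\lambda]$ has rows $(4,2,0)$, $(3,1)$, $(3)$ and weight $(3,1,2)$; but the pattern with rows $(4,2,0)$, $(4,0)$, $(3)$ has the same weight $(3,1,2)$ and is strictly larger in the order $>_\rlex$ (which, per the paper's definition, compares from the bottom row upward: both agree in row $1$, and $4>3$ in the first entry of row $2$). So there is no ``extremality of $H(\mu)[\lambda]$ in its weight space'' to appeal to, and the reduction to a single diagonal term does not follow.

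Second, even setting aside the lex claim, the method of applying the raising string $\mathcal{E}=(E_{1,2})^{l_1}\cdots(E_{n-1,n})^{l_{n-1}}$ to $\rI^{\lambda,\lambda'}_{\lambda+\lambda'}(\xi_{H(\mu+\mu')[\lambda+\lambda']})$ and comparing highest-weight components yields only a \emph{single} scalar relation
\begin{align*}
 l_1!\cdots l_{n-1}! = \sum_{M,M'} \rc^{M,M'}_{H(\mu+\mu')[\lambda+\lambda']}\, \bigl[\text{coeff.\ of }\xi_{H(\lambda)}\otimes\xi_{H(\lambda')}\text{ in }\Delta(\mathcal{E})(\xi_M\otimes\xi_{M'})\bigr],
\end{align*}
which mixes all the unknown coefficients $\rc^{M,M'}_{H(\mu+\mu')[\lambda+\lambda']}$; it does not isolate the particular one you want. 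You would need an independent argument to show the off-diagonal contributions vanish, and that is essentially what the paper proves, not what it assumes.

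The paper's actual route sidesteps both problems. It defines a $\rU(n)$-equivariant map $\widetilde{\rR}^{\lambda,\lambda'}_{\lambda+\lambda'}$ as the adjoint of $\rI^{\lambda,\lambda'}_{\lambda+\lambda'}$ with respect to the inner products, uses the one-dimensionality of $\Hom_{\rU(n)}(V_\lambda\otimes V_{\lambda'},V_{\lambda+\lambda'})$ to identify it with the multiplication map $\rR^{\lambda,\lambda'}_{\lambda+\lambda'}$, and then evaluates
\begin{align*}
 \rr(H(\mu)[\lambda])\rr(H(\mu')[\lambda'])\,\rc^{H(\mu)[\lambda],H(\mu')[\lambda']}_{H(\mu+\mu')[\lambda+\lambda']}
 =\bigl(\xi_{H(\mu+\mu')[\lambda+\lambda']},\,\rR^{\lambda,\lambda'}_{\lambda+\lambda'}(\xi_{H(\mu)[\lambda]}\otimes\xi_{H(\mu')[\lambda']})\bigr)_{(\lambda,\lambda')}
\end{align*}
using Lemma~\ref{lem:proj_image}\,\ref{num:proj_xiHmulambda}, which computes the right-hand side in one line. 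For the vanishing, it argues via the $\rGL_{n-1}(\mC)$-module decomposition: $\xi_{M''}$ lies in the summand $V_{\lambda+\lambda',\mu''}\cong V_{\mu''}$ and is a weight vector of weight $\gamma^{\widehat{M''}}$, while $\xi_{H(\mu)[\lambda]}\otimes\xi_{H(\mu')[\lambda']}$ is a highest weight vector generating a $\rGL_{n-1}$-submodule of type $\mu+\mu'$; hence the inner product vanishes unless both $\gamma^{\widehat{M''}}=\mu+\mu'$ and $\mu''=\mu+\mu'$, which forces $M''=H(\mu+\mu')[\lambda+\lambda']$. Your proposal does not use this $\rGL_{n-1}$-module structure, and I do not see how the Molev relation \eqref{eq:Molev_D+prime} alone can substitute for it, given that the lexicographic extremality hypothesis it requires is false.
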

\begin{proof}
We define a $\mC$-linear map 
$\widetilde{\rR}^{\lambda, \lambda'}_{\lambda +\lambda'}\colon 
V_{\lambda}\otimes_\mC V_{\lambda'}\to V_{\lambda +\lambda'}$ by 
\begin{align*}
\widetilde{\rR}^{\lambda, \lambda'}_{\lambda +\lambda'}(\rv )
&:=\sum_{M\in \rG (\lambda +\lambda')}
(\rv ,\rI^{\lambda, \lambda'}_{\lambda +\lambda'}(\zeta_M))_{(\lambda,\lambda')}
\zeta_M
=\sum_{M\in \rG (\lambda )}\rr (M)^{-1}
(\rv ,\rI^{\lambda, \lambda'}_{\lambda +\lambda'}(\xi_M))_{(\lambda,\lambda')}
\xi_M
&\text{for \;} \rv \in V_{\lambda}\otimes_\mC V_{\lambda'}.
\end{align*}
Then $\widetilde{\rR}^{\lambda, \lambda'}_{\lambda +\lambda'}$ 
is $\rU (n)$-equivariant because $(\cdot ,\cdot )_{(\lambda,\lambda')}$ 
is $\rU (n)$-invariant and 
$\{\zeta_M\}_{M\in \rG (\lambda +\lambda')}$ is an orthonormal basis 
of $V_{\lambda +\lambda'}$. Since any highest weight vector 
in $V_{\lambda}\otimes_\mC V_{\lambda'}$ of weight $\lambda +\lambda'$ 
is a constant multiple of $\xi_{H(\lambda )}\otimes \xi_{H(\lambda')}$, 
$\Hom_{\rU (n)}(V_{\lambda}\otimes_\mC V_{\lambda'},
V_{\lambda +\lambda'})$ is one dimensional. 
By Lemma 
\ref{lem:proj_image} \ref{num:proj_xiHgamma} and (\ref{eq:def_inj}), we have 
\begin{align*}
\rR^{\lambda, \lambda'}_{\lambda +\lambda'}
(\xi_{H(\lambda )}\otimes \xi_{H(\lambda')})&=\xi_{H(\lambda +\lambda')},
&\widetilde{\rR}^{\lambda, \lambda'}_{\lambda +\lambda'}
(\xi_{H(\lambda )}\otimes \xi_{H(\lambda')})&=\xi_{H(\lambda +\lambda')}.
\end{align*}
These equalities imply
$\rR^{\lambda, \lambda'}_{\lambda +\lambda'}
=\widetilde{\rR}^{\lambda, \lambda'}_{\lambda +\lambda'}$, 
since both $\rR^{\lambda, \lambda'}_{\lambda +\lambda'}$ and
$\widetilde{\rR}^{\lambda, \lambda'}_{\lambda +\lambda'}$ are generators of 
$\Hom_{\rU (n)}(V_{\lambda}\otimes_\mC V_{\lambda'},
V_{\lambda +\lambda'})$. 
Therefore, we obtain (\ref{eq:injExpH(mu)}) by the following calculation: 
\begin{align*}
\rr (H(\mu )[\lambda ])\rr (H(\mu')[\lambda'])
\rc^{H(\mu )[\lambda ], H(\mu')[\lambda']}_{
H(\mu +\mu')[\lambda +\lambda']}
&=\left(\rI^{\lambda, \lambda'}_{\lambda +\lambda'}
       (\xi_{H(\mu +\mu')[\lambda +\lambda']}), 
          \xi_{H(\mu )[\lambda ]}\otimes \xi_{H(\mu')[\lambda']}
     \right)_{(\lambda,\lambda')}\\
&=\left(\xi_{H(\mu +\mu')[\lambda +\lambda']},
            \widetilde{\rR}^{\lambda, \lambda'}_{\lambda +\lambda'}
              (\xi_{H(\mu )[\lambda ]}\otimes \xi_{H(\mu')[\lambda']})
     \right)_{(\lambda,\lambda')}\\
&=\left( \xi_{H(\mu +\mu')[\lambda +\lambda']},
            \xi_{H(\mu +\mu')[\lambda +\lambda']}
      \right)_{(\lambda,\lambda')}
=\rr (H(\mu +\mu')[\lambda +\lambda']).
\end{align*}
Here the third equality follows from 
$\rR^{\lambda, \lambda'}_{\lambda +\lambda'}
=\widetilde{\rR}^{\lambda, \lambda'}_{\lambda +\lambda'}$ 
and Lemma~\ref{lem:proj_image} \ref{num:proj_xiHmulambda}. 

Next, let us prove the latter assertion. 
Let $M''\in \rG (\lambda +\lambda')$, and take $\mu''\in \Lambda_{n-1}$ 
so that $\widehat{M''}\in \rG (\mu'')$. 
By (\ref{eq:GT_act_wt}), (\ref{eq:decomp_n_n-1}) and Lemma~\ref{lem:restriction_lambda_mu} with $\cA =\mC$, 
we know that $\xi_{M''}$ is a weight vector of weight $\gamma^{\widehat{M''}}$ in 
the irreducible $\rGL_{n-1}(\mC)$-submodule 
$V_{\lambda +\lambda',\mu''}(\mC )\simeq V_{\mu''}$ of $V_{\lambda +\lambda'}$. 
Moreover, we note that $\xi_{H(\mu )[\lambda ]}\otimes \xi_{H(\mu')[\lambda']}$ is a highest weight vector in 
an irreducible $\rGL_{n-1}(\mC)$-submodule of $V_{\lambda}\otimes_\mC V_{\lambda'}$ with highest weight $\mu +\mu'$. 
Therefore, if $M''\neq H(\mu +\mu')[\lambda +\lambda']$, 
we have  
\[
\rr (H(\mu )[\lambda ])\rr (H(\mu')[\lambda'])
\rc^{H(\mu )[\lambda ], H(\mu')[\lambda']}_{M''}=
\left( \rI^{\lambda, \lambda'}_{\lambda +\lambda'}(\xi_{M''}), 
\xi_{H(\mu )[\lambda ]}\otimes \xi_{H(\mu')[\lambda']}
\right)_{(\lambda,\lambda')}=0, 
\]
since either $\gamma^{\widehat{M''}}\neq \mu +\mu'$ or $\mu''\neq \mu +\mu'$ holds. 
Hence, we obtain the latter assertion. 
\end{proof}

\begin{lem}\label{lem:cinduct}
Retain the notation in Lemma~$\ref{lem:injExpH(mu)}$. 
Then, for $M\in \rG (\mu )$, $M' \in \rG (\mu')$ and 
$M'' \in \rG (\mu +\mu')$, 
we have  
\begin{align*}
\rc^{M[\lambda],M'[\lambda']}_{M''[\lambda +\lambda']}
=\frac{\rr (H(\mu +\mu')[\lambda +\lambda'])}
{\rr (H(\mu )[\lambda ])\rr (H(\mu')[\lambda'])}
\rc^{M, M'}_{M''}.
\end{align*}
\end{lem}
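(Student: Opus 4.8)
The plan is to reduce the general statement about restricted arrays $M[\lambda]$, $M'[\lambda']$, $M''[\lambda+\lambda']$ to the already-established special case of highest-weight vectors in Lemma~\ref{lem:injExpH(mu)}, by exploiting the $\rGL_{n-1}(\mC)$-equivariance of the injector $\rI^{\lambda,\lambda'}_{\lambda+\lambda'}$ together with the decomposition \eqref{eq:decomp_n_n-1}. The key observation is that, once we fix $\mu\preceq\lambda$, $\mu'\preceq\lambda'$ and $\mu''\preceq\mu+\mu'$, the subspaces $V_{\lambda,\mu}(\mC)$, $V_{\lambda',\mu'}(\mC)$ and $V_{\lambda+\lambda',\mu''}(\mC)$ are $\rGL_{n-1}(\mC)$-irreducible of highest weights $\mu$, $\mu'$ and $\mu''$ respectively (by Lemma~\ref{lem:restriction_lambda_mu}), and the map $M\mapsto M[\lambda]$ identifies $\rG(\mu)$ with $\rG(\lambda;\mu)$ compatibly with the Gel'fand--Tsetlin labels.

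First I would record that, by Lemma~\ref{lem:injExpH(mu)}, the composite
\[
 V_{\mu+\mu'}\xrightarrow{\,\rI^{\mu,\mu'}_{\mu+\mu'}\,} V_\mu\otimes_\mC V_{\mu'}\xrightarrow{\,\rI^\lambda_\mu\otimes \rI^{\lambda'}_{\mu'}\,} V_\lambda\otimes_\mC V_{\lambda'}
\]
lands inside $V_{\lambda,\mu}(\mC)\otimes_\mC V_{\lambda',\mu'}(\mC)$ and sends $\xi_{H(\mu+\mu')}$ to $\xi_{H(\mu)[\lambda]}\otimes\xi_{H(\mu')[\lambda']}$ up to the scalar prescribed by \eqref{eq:injExpH(mu)}, namely $\rr(H(\mu+\mu')[\lambda+\lambda'])/(\rr(H(\mu)[\lambda])\rr(H(\mu')[\lambda']))$ after composing with $\rI^\lambda_\mu\otimes\rI^{\lambda'}_{\mu'}$ in the appropriate normalization. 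Next I would compare this with $\rI^\lambda_{\mu+\mu'}\circ\ldots$; more precisely, I would argue that the two $\rGL_{n-1}(\mC)$-equivariant maps
\[
 V_{\mu+\mu'}\longrightarrow V_\lambda\otimes_\mC V_{\lambda'},\qquad \xi_{M''}\longmapsto \sum_{M,M'}\rc^{M[\lambda],M'[\lambda']}_{M''[\lambda+\lambda']}\,\xi_{M[\lambda]}\otimes\xi_{M'[\lambda']}
\]
(obtained by restricting \eqref{eq:CoefInj} via the inclusions $\rG(\mu)\hookrightarrow\rG(\lambda)$ etc.\ and using Lemma~\ref{lem:injExpH(mu)} to see that only terms with all three arrays of the form $N[\cdot]$ survive when $M''\in\rG(\mu+\mu')$) and
\[
 \left(\tfrac{\rr(H(\mu+\mu')[\lambda+\lambda'])}{\rr(H(\mu)[\lambda])\rr(H(\mu')[\lambda'])}\right)\cdot(\rI^\lambda_\mu\otimes\rI^{\lambda'}_{\mu'})\circ\rI^{\mu,\mu'}_{\mu+\mu'}
\]
agree on the highest-weight vector $\xi_{H(\mu+\mu')}$, hence agree everywhere by Schur's lemma (the target $\Hom$-space being one-dimensional because $V_{\lambda,\mu}(\mC)\otimes_\mC V_{\lambda',\mu'}(\mC)$ contains the $\rGL_{n-1}(\mC)$-type $V_{\mu+\mu'}$ with multiplicity one as a Cartan component). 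Reading off the coefficient of $\xi_{M[\lambda]}\otimes\xi_{M'[\lambda']}$ on both sides and using the defining relation \eqref{eq:CoefInj} for $\rI^{\mu,\mu'}_{\mu+\mu'}$ then yields $\rc^{M[\lambda],M'[\lambda']}_{M''[\lambda+\lambda']}=\tfrac{\rr(H(\mu+\mu')[\lambda+\lambda'])}{\rr(H(\mu)[\lambda])\rr(H(\mu')[\lambda'])}\,\rc^{M,M'}_{M''}$, which is the claim.

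The one point requiring care---and the main obstacle---is the verification that the coefficient $\rc^{M[\lambda],M'[\lambda']}_{M''[\lambda+\lambda']}$ really does vanish unless $M''\in\rG(\mu+\mu')$, i.e.\ that restricting the Cartan injector to the $\mu$- and $\mu'$-isotypic pieces produces only the $(\mu+\mu')$-isotypic piece with arrays again of the shape $N[\lambda+\lambda']$. This is where I would invoke Lemma~\ref{lem:injExpH(mu)} more carefully: applying a suitable product of lowering operators $\cD^-_{\widehat{M}}$ (which lie in $\cU(\ggl_{n-1})$, hence commute with the $\rGL_{n-1}$-structure and are intertwined by $\rI^{\lambda,\lambda'}_{\lambda+\lambda'}$) to $\xi_{H(\mu)[\lambda]}\otimes\xi_{H(\mu')[\lambda']}$ generates the whole $V_{\lambda,\mu}(\mC)\otimes V_{\lambda',\mu'}(\mC)$, and comparing weights under $\rT_n(\mC)$ via \eqref{eq:actT_xiM} pins down which $\xi_{M''}$ can appear. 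Everything else is a bookkeeping exercise with the multiplicativity \eqref{eq:inj_coeff_explicit} of $\rc$ in $\rr$ and Lemma~\ref{lem:rM_property}\ref{num:rest_rM_property}; I do not expect any analytic or combinatorial subtlety beyond this.
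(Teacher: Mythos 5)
Your proposal is essentially the paper's proof: both compare the composite $(\rR^\lambda_\mu\otimes\rR^{\lambda'}_{\mu'})\circ\rI^{\lambda,\lambda'}_{\lambda+\lambda'}\circ\rI^{\lambda+\lambda'}_{\mu+\mu'}$ (or, in your phrasing, the same thing post-composed with $\rI^\lambda_\mu\otimes\rI^{\lambda'}_{\mu'}$) with $\rI^{\mu,\mu'}_{\mu+\mu'}$ via Schur's lemma, then pin down the scalar on $\xi_{H(\mu+\mu')}$ using Lemma~\ref{lem:injExpH(mu)}.

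The ``main obstacle'' you flag at the end is, however, a phantom. Nothing needs to be said about $M''\notin\rG(\mu+\mu')$, because the lemma already hypothesizes $M''\in\rG(\mu+\mu')$. Likewise, your parenthetical claim that in \eqref{eq:CoefInj} ``only terms with all three arrays of the form $N[\cdot]$ survive'' is not true and is not needed: for a generic $M''\in\rG(\mu+\mu')$, $\rI^{\lambda,\lambda'}_{\lambda+\lambda'}(\xi_{M''[\lambda+\lambda']})$ has nonzero components in many pieces $V_{\lambda,\mu_1}\otimes V_{\lambda',\mu_2}$ with $\mu_1+\mu_2=\mu+\mu'$, not just in $V_{\lambda,\mu}\otimes V_{\lambda',\mu'}$. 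Those components do not vanish --- they are simply discarded by the projection $\rR^\lambda_\mu\otimes\rR^{\lambda'}_{\mu'}$ (equivalently $\rI^\lambda_\mu\rR^\lambda_\mu\otimes\rI^{\lambda'}_{\mu'}\rR^{\lambda'}_{\mu'}$). Once you express the map as this composite, its $\rGL_{n-1}(\mC)$-equivariance is immediate from Lemma~\ref{lem:restriction_lambda_mu} and Proposition~\ref{prop:injExp}\ref{num:injExp_hom}, with no weight bookkeeping or lowering-operator argument required; the remainder of your proposal then goes through as you wrote it.
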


\begin{proof}
For $M'' \in \rG (\mu +\mu')$, we have 
\begin{align}
\label{eq:pf_cinduct_001}
((\rR^\lambda_\mu \otimes \rR^{\lambda'}_{\mu'})
\circ \rI^{\lambda, \lambda'}_{\lambda +\lambda'}
\circ \rI^{\lambda +\lambda'}_{\mu + \mu'})(\xi_{M''})
=\sum_{M\in \rG (\mu )}\sum_{M'\in \rG (\mu')}
\rc^{M [\lambda], M'[\lambda']}_{M''[\lambda +\lambda']}
\xi_M \otimes \xi_{M'},
\end{align}
where $\rR^\lambda_\mu $, $\rR^{\lambda'}_{\mu'}$ and 
$\rI^{\lambda +\lambda'}_{\mu + \mu'}$ are the maps
defined by (\ref{eq:def_proj_restriction}) and 
(\ref{eq:def_inj_restriction}) which are $\rGL_{n-1}(\mC )$-equivariant 
by Lemma~\ref{lem:restriction_lambda_mu}. 
By (\ref{eq:inj_generates}), we have 
\[
(\rR^\lambda_\mu \otimes \rR^{\lambda'}_{\mu'})
\circ \rI^{\lambda, \lambda'}_{\lambda +\lambda'}
\circ \rI^{\lambda +\lambda'}_{\mu + \mu'}
\in \Hom_{\rGL_{n-1}(\mC )}(V_{\mu + \mu'},V_{\mu }\otimes_\mC V_{\mu'})
=\mC \,\rI^{\mu, \mu'}_{\mu +\mu'}.
\]
Hence, 
comparing (\ref{eq:pf_cinduct_001}) for $M''=H(\mu +\mu')$ with 
$\rI^{\mu , \mu '}_{\mu + \mu'}(\xi_{H(\mu +\mu')})
=\xi_{H(\mu )}\otimes \xi_{H(\mu')}$, we have 
\begin{align}
\label{eq:pf_cinduct_002}
(\rR^\lambda_\mu \otimes \rR^{\lambda'}_{\mu'})
\circ \rI^{\lambda, \lambda'}_{\lambda +\lambda'}
\circ \rI^{\lambda +\lambda'}_{\mu + \mu'}
=\rc^{H(\mu )[\lambda ], H(\mu')[\lambda']}_{
H(\mu +\mu')[\lambda +\lambda']}\,\rI^{\mu, \mu'}_{\mu +\mu'}.
\end{align}
By Lemma~\ref{lem:injExpH(mu)}, for $M'' \in \rG (\mu +\mu')$, we have 
\begin{align}
\label{eq:pf_cinduct_003}
&\rc^{H(\mu )[\lambda ], H(\mu')[\lambda']}_{
H(\mu +\mu')[\lambda +\lambda']}\,
\rI^{\mu, \mu'}_{\mu +\mu'}(\xi_{M''})
=\sum_{M\in \rG (\mu )}\,\sum_{M'\in \rG (\mu')}
\frac{\rr (H(\mu +\mu')[\lambda +\lambda'])}
{\rr (H(\mu )[\lambda ])\rr (H(\mu')[\lambda'])}
\rc^{M,M'}_{M''}\xi_{M}\otimes \xi_{M'}. 
\end{align}
Comparing the right hand side of (\ref{eq:pf_cinduct_001}) 
with the right hand side of (\ref{eq:pf_cinduct_003}) via (\ref{eq:pf_cinduct_002}), 
we obtain the assertion. 
\end{proof}

\begin{proof}[Proof of Proposition~$\ref{prop:injExp}$]
First, let us prove (\ref{eq:inj_coeff_explicit}) by induction 
on $n$. It is obvious when $n=1$, since 
$\rG (\lambda_1)=\{\lambda_1\}$, $\rG (\lambda_1')=\{\lambda_1'\}$, 
$\rG (\lambda_1+\lambda_1')=\{\lambda_1+\lambda_1'\}$ and 
$\rc^{\lambda_1,\lambda_1'}_{\lambda_1+\lambda_1'}=1$ 
(by (\ref{eq:def_inj})). 
Now let us assume that $n>1$, and take 
$M\in \rG (\lambda )$, $M'\in \rG (\lambda')$ 
and $M''\in \rG (\lambda +\lambda')$. 
By Lemmas \ref{lem:rM_property} \ref{num:rest_rM_property}, 
\ref{lem:cinduct} and the induction hypothesis, we have 
\begin{align*}
&\rc^{M, M'}_{M+M'} 
=\frac{\rr (M+M')}{\rr (M)\rr (M')}.
\end{align*}
On (\ref{eq:CoefInj}), 
$\rI^{\lambda, \lambda'}_{\lambda+\lambda'}(\xi_{M''})$ and 
$\xi_M\otimes \xi_{M'}$ are weight vectors in 
$V_\lambda \otimes_{\mC }V_{\lambda'}$ of weight $\gamma^{M''}$ and 
$\gamma^{M}+\gamma^{M'}$, respectively. 
Hence, we have $\rc^{M, M'}_{M''}=0$ if 
$\gamma^{M''}\neq \gamma^{M}+\gamma^{M'}$, 
which completes the proof of (\ref{eq:inj_coeff_explicit}).

Let us prove the remaining part of the statement 
\ref{num:injExp_coeff}.  
By (\ref{eq:def_inj}), (\ref{eq:Molev_D-}) and 
Lemma~\ref{lem:extremal_vec_explicit}, 
we have 
\begin{align*}
\rI^{\lambda, \lambda'}_{\lambda+\lambda'}(\xi_{M''})
&=\rr_1(M'')^{-1}
\rI^{\lambda, \lambda'}_{\lambda+\lambda'}
(\tau_{\lambda +\lambda'} (\cD^-_{M''})\xi_{H(\lambda +\lambda')})\\
&=\rr_1(M'')^{-1}(\tau_{\lambda }\otimes \tau_{\lambda'})(\cD^-_{M''})
f_{h (\lambda )}(z)\otimes f_{h (\lambda' )}(z)  \qquad \text{for \; } M''\in \rG (\lambda +\lambda').
\end{align*}
Since $\{(\lambda_1+\lambda_1'-\lambda_n-\lambda_n' + n-2)!\}^{-1}\in \cA$ if $n\geq 2$, 
we have $\rr_1(M'')\in \cA^\times \cap \mQ^\times $. 
Therefore, by Lemma~\ref{lem:VlambdaA_closed_Eij}, we have 
\[
\rI^{\lambda, \lambda'}_{\lambda+\lambda'}(\xi_{M''})\in 
V_\lambda (\cA \cap \mQ ) \otimes_{\cA \cap \mQ }V_{\lambda'}(\cA \cap \mQ ). 
\]
Hence, by Proposition~\ref{prop:xiM_integral}, 
we have $\rc^{M, M'}_{M''}\in \cA \cap \mQ$ 
for $M\in \rG (\lambda )$ and $M'\in \rG (\lambda')$, and 
obtain the statement \ref{num:injExp_coeff}.

Next, let us prove the statement \ref{num:injExp_hom} when 
$\cA =\mC$. By definition, $\rI^{\lambda, \lambda'}_{\lambda +\lambda'}
\colon V_{\lambda +\lambda'}\to 
V_\lambda  \otimes_{\mC}V_{\lambda'}$ is  
$\rGL_n(\mC )$-equivariant. 
By the normalisation (\ref{eq:def_inj}) and 
Lemma~\ref{lem:proj_image} \ref{num:proj_xiHgamma}, we have 
$\rR^{\lambda, \lambda'}_{\lambda +\lambda'}(
\rI^{\lambda, \lambda'}_{\lambda +\lambda'}(\xi_{H(\lambda +\lambda')}))
=\xi_{H(\lambda +\lambda')}$. Hence, 
by Schur's lemma \cite[Proposition 1.5]{Knapp_002}, we have 
$\rR^{\lambda, \lambda'}_{\lambda +\lambda'}\circ 
\rI^{\lambda, \lambda'}_{\lambda +\lambda'}
=\id_{V_{\lambda +\lambda'}}$. In particular, we have 
\begin{align}
\label{eq:pf_injExp_001}
\rR^{\lambda, \lambda'}_{\lambda +\lambda'}(
\rI^{\lambda, \lambda'}_{\lambda +\lambda'}(\xi_{M}))
&=\xi_{M}& \text{for \;}M\in \rG (\lambda +\lambda'). 
\end{align}

Finally, let us prove the statement \ref{num:injExp_hom} 
for the general case. 
Since we have $\rR^{\lambda, \lambda'}_{\lambda +\lambda'}\circ 
\rI^{\lambda, \lambda'}_{\lambda +\lambda'}
=\id_{V_{\lambda +\lambda'}(\cA )}$ 
by (\ref{eq:pf_injExp_001}) and Proposition~\ref{prop:xiM_integral}, 
our task is to show the
$\rGL_n(\cA )$-equivariance of $\rI^{\lambda, \lambda'}_{\lambda +\lambda'}$. 
Replacing $\cA$ with its quotient field if necessary, 
we may assume that $\cA$ is a field of characteristic $0$. 
Because of Proposition~\ref{prop:xiM_integral} and 
Lemma~\ref{lem:GLnA_generator}, 
it suffices to show that 
\begin{align*}
&\rI^{\lambda, \lambda'}_{\lambda +\lambda'}
(\tau_{\lambda +\lambda'}(g)\xi_{M})=
(\tau_{\lambda }\otimes \tau_{\lambda'})(g)
\rI^{\lambda, \lambda'}_{\lambda +\lambda'}
(\xi_{M})
\end{align*}
holds for $M\in \rG (\lambda +\lambda')$ and $g\in \rT_{n}(\cA )\cup \rGL_{n}(\mZ)$, but 
it immediately follows from (\ref{eq:actT_xiM}) and 
the assertion for the case where $\cA=\mC$. 
\end{proof}

The following lemma will be used in the proofs of 
Lemma~\ref{lem:std_schwartz} in Section~\ref{sec:arch_whittaker}, and 
Lemma~\ref{lem:Clem1} in Section \ref{sec:calcoeff}. 
\begin{lem}
\label{lem:extremal_injector}
Let $\lambda =(\lambda_1,\lambda_2,\dots,\lambda_n)$ and
$\lambda'=(\lambda_1',\lambda_2',\dots,\lambda_n')\in \Lambda_n$. 
For $\sigma \in \gS_n$, set $\gamma =\sigma \lambda $ and 
$\gamma' =\sigma \lambda' $. 
Then, for $M\in \rG (\lambda)$ and $M'\in \rG (\lambda')$, we have 
\begin{align}
\label{eq:cHeq}
\rc^{M, M'}_{H(\gamma + \gamma')} 
=\left\{\begin{array}{ll}
1&\text{if $M=H(\gamma )$ and $M'=H(\gamma')$},\\
0&\text{otherwise}.
\end{array}\right.    
\end{align}
In other words, we have 
$\rI^{\lambda, \lambda'}_{\lambda +\lambda'}(\xi_{H(\gamma + \gamma')})
=\xi_{H(\gamma )}\otimes \xi_{H(\gamma')}$. 
\end{lem}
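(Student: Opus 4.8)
The statement is the ``extremal'' analogue of Proposition~\ref{prop:injExp}~\ref{num:injExp_coeff}: the Cartan injector $\rI^{\lambda,\lambda'}_{\lambda+\lambda'}$ sends the extremal vector $\xi_{H(\gamma+\gamma')}$ to $\xi_{H(\gamma)}\otimes\xi_{H(\gamma')}$ for any $\gamma=\sigma\lambda$, $\gamma'=\sigma\lambda'$ with the \emph{same} $\sigma\in\gS_n$. The cleanest route is to reduce the general $\sigma$ to the case $\sigma=\mathrm{id}$, which is exactly the normalisation \eqref{eq:def_inj}, by conjugating by the permutation matrix $u_\sigma$. First I would recall from \eqref{eq:pf_GTwn_001} in the proof of Lemma~\ref{lem:GTwn} (or re-derive from Lemma~\ref{lem:extremal_vec_explicit} and \eqref{eq:actS_gen_Vlambda}) that $\tau_\lambda(u_\sigma)\xi_{H(\lambda)}=(-1)^{m(h(\lambda),\sigma)}\xi_{H(\gamma)}$, and similarly for $\lambda'$ and for $\lambda+\lambda'$; here the key observation is that the sign $m(h(\mu),\sigma)$ depends on $\mu$ only through the differences $\mu_k-\mu_{k+1}$, so $m(h(\lambda),\sigma)+m(h(\lambda'),\sigma)=m(h(\lambda+\lambda'),\sigma)$.

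\textbf{Key steps.} (1) Consider the composite $\Phi:=(\tau_\lambda(u_\sigma)\otimes\tau_{\lambda'}(u_\sigma))\circ\rI^{\lambda,\lambda'}_{\lambda+\lambda'}\circ\tau_{\lambda+\lambda'}(u_\sigma)^{-1}$. Since $\rI^{\lambda,\lambda'}_{\lambda+\lambda'}$ is $\rGL_n(\mC)$-equivariant and $u_\sigma\in\rGL_n(\mC)$, $\Phi$ is again an element of $\Hom_{\rGL_n(\mC)}(V_{\lambda+\lambda'},V_\lambda\otimes_\mC V_{\lambda'})$, which is one-dimensional by \eqref{eq:inj_generates}; so $\Phi=c\,\rI^{\lambda,\lambda'}_{\lambda+\lambda'}$ for some $c\in\mC^\times$. (2) Evaluate both sides at $\xi_{H(\lambda+\lambda')}$: using $\tau_{\lambda+\lambda'}(u_\sigma)^{-1}\xi_{H(\lambda+\lambda')}=(-1)^{m(h(\lambda+\lambda'),\sigma^{-1})}\xi_{H(\sigma^{-1}(\lambda+\lambda'))}$ is awkward, so instead evaluate at $\xi_{H(\gamma+\gamma')}$ directly: $\tau_{\lambda+\lambda'}(u_\sigma)^{-1}\xi_{H(\gamma+\gamma')}$ is, up to a sign $\epsilon\in\{\pm1\}$, $\xi_{H(\lambda+\lambda')}$ (because $\gamma+\gamma'=\sigma(\lambda+\lambda')$); then $\rI^{\lambda,\lambda'}_{\lambda+\lambda'}$ maps this to $\epsilon\,\xi_{H(\lambda)}\otimes\xi_{H(\lambda')}$ by \eqref{eq:def_inj}, and finally applying $\tau_\lambda(u_\sigma)\otimes\tau_{\lambda'}(u_\sigma)$ multiplies by the product of the two signs for $\lambda$ and $\lambda'$. (3) The additivity of the signs noted above makes all these $\pm1$ factors cancel in pairs, giving $\Phi(\xi_{H(\gamma+\gamma')})=\xi_{H(\gamma)}\otimes\xi_{H(\gamma')}$; comparing with the normalisation $\rI^{\lambda,\lambda'}_{\lambda+\lambda'}(\xi_{H(\lambda+\lambda')})=\xi_{H(\lambda)}\otimes\xi_{H(\lambda')}$ one reads off $c=1$, hence $\rI^{\lambda,\lambda'}_{\lambda+\lambda'}(\xi_{H(\gamma+\gamma')})=\xi_{H(\gamma)}\otimes\xi_{H(\gamma')}$. (4) Finally, re-expand the left-hand side via \eqref{eq:CoefInj} and compare coefficients: since $\{\xi_M\otimes\xi_{M'}\}$ is a basis, the equality just proved is equivalent to \eqref{eq:cHeq}.

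\textbf{Alternative / the main obstacle.} An even more direct argument avoids signs: $\xi_{H(\gamma+\gamma')}$ is, up to a positive real scalar, obtained from $f_{h(\lambda+\lambda')}(z)$ by applying a suitable monomial $E(\gamma+\gamma')\in\cU(\gn_n^{\rop})$ (as in the proof of Lemma~\ref{lem:extremal_vec_explicit}), and by Lemma~\ref{lem:extremal_vec_explicit} that scalar is exactly $1$; applying the $\rGL_n$-equivariant map $\rI^{\lambda,\lambda'}_{\lambda+\lambda'}$ and using the Leibniz rule for the $\ggl_n$-action on the tensor product together with $\rI^{\lambda,\lambda'}_{\lambda+\lambda'}(f_{h(\lambda+\lambda')})=f_{h(\lambda)}\otimes f_{h(\lambda')}$, one would again land on $\xi_{H(\gamma)}\otimes\xi_{H(\gamma')}$. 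The only genuinely delicate point in either approach is bookkeeping of the sign $m(h(\mu),\sigma)$ and verifying its additivity in the ``difference data'' $\mu_k-\mu_{k+1}$; this is elementary but must be done carefully, exactly as in the computation of $m(h(\gamma),\sigma_{\mathrm{long}})$ inside the proof of Lemma~\ref{lem:GTwn}. I expect no conceptual difficulty beyond this; the reduction via uniqueness of the Cartan injector (\ref{eq:inj_generates}) does all the heavy lifting.
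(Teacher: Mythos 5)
Your proof is correct but takes a genuinely different route from the paper's, and it is worth noting the contrast.

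The paper argues directly at the level of the coefficients: the $(\gamma+\gamma')$-weight space of $V_\lambda\otimes_\mC V_{\lambda'}$ is one-dimensional (a vertex-of-Minkowski-sum observation, spanned by $\xi_{H(\gamma)}\otimes\xi_{H(\gamma')}$), so by the vanishing part of $(\ref{eq:inj_coeff_explicit})$ only $\rc^{H(\gamma),H(\gamma')}_{H(\gamma+\gamma')}$ can be nonzero; the value $1$ then drops out of the explicit formula $(\ref{eq:inj_coeff_explicit})$ together with $\rr(H(\gamma))=1$ (Lemma~\ref{lem:rM_property}~\ref{num:Hgamma_rM_property}) and $H(\gamma+\gamma')=H(\gamma)+H(\gamma')$. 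This is essentially a one-line corollary of Proposition~\ref{prop:injExp}~\ref{num:injExp_coeff}, but it imports the whole machinery behind that proposition. You instead reduce to the normalising case $\sigma=\mathrm{id}$ by conjugating with $u_\sigma$, which avoids Proposition~\ref{prop:injExp} entirely and only needs $(\ref{eq:actS_gen_Vlambda})$, Lemma~\ref{lem:extremal_vec_explicit} and the additivity $m(h(\lambda+\lambda'),\sigma)=m(h(\lambda),\sigma)+m(h(\lambda'),\sigma)$ -- which indeed holds, since $h(\lambda)_I$ depends on $\lambda$ only through the consecutive differences $\lambda_k-\lambda_{k+1}$ (and $\lambda_n$), and these are additive in $\lambda$. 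So your sign sum is $2\bigl(m(h(\lambda),\sigma)+m(h(\lambda'),\sigma)\bigr)$, which is even, and all signs cancel as you expect. One small simplification: you do not actually need the one-dimensionality $(\ref{eq:inj_generates})$ to deduce $\Phi=c\,\rI^{\lambda,\lambda'}_{\lambda+\lambda'}$; equivariance of $\rI^{\lambda,\lambda'}_{\lambda+\lambda'}$ already gives $\Phi=\rI^{\lambda,\lambda'}_{\lambda+\lambda'}$ outright (conjugating an intertwiner by a fixed group element returns the same intertwiner), so $c=1$ without any evaluation. Your alternative route via monomials in $\cU(\gn_n^{\rop})$ and the Leibniz rule is less convincing as stated -- the Leibniz rule produces cross terms that you would still have to kill, which is essentially the weight-multiplicity-one fact the paper invokes -- so I would stick with your main argument. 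Net comparison: the paper's proof is shorter given Proposition~\ref{prop:injExp}, but your conjugation argument is more self-contained and, I think, conceptually cleaner for a reader who has not yet absorbed the explicit coefficient formula.
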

\begin{proof}
The vector $\xi_{H(\gamma )}\otimes \xi_{H(\gamma')}$ is 
a weight vector in $V_{\lambda}\otimes_\mC V_{\lambda'}$ of 
weight $\gamma +\gamma'$, which is unique up to scalar multiples. 
Hence, the assertion follows from (\ref{eq:inj_coeff_explicit}), 
Lemma~\ref{lem:rM_property} \ref{num:Hgamma_rM_property} and the equality
$H(\gamma + \gamma')=H(\gamma )+H(\gamma')$. 
\end{proof}

\section{Archimedean Whittaker functions}
\label{sec:arch_whittaker}

The purpose of this section is to verify Proposition~\ref{prop:arch_Wh}, 
and to rewrite the explicit result \cite[Theorem 2.7]{im} of Ishii and the second-named author for the archimedean local zeta integral 
for $\rGL_n(\mC)\times \rGL_{n-1}(\mC)$ into our notation (Proposition \ref{prop:ArchInt}). 
In this section, we use the notation in Section \ref{subsec:C_def_ps}, and 
drop $v$ from subscript in many cases since we always concentrate on an archimedean place $v \in \Sigma_{F, \infty}$. 
In Section \ref{sec:main_result}, 
we used the realisation $(\pi_{\rB_n,d,\nu },I^\infty_{\rB_n}(d,\nu ))$ of 
a principal series representation of $\rGL_n(\mC)$ 
induced from a character of the {\em upper} triangular Borel subgroup $\rB_n(\mC)$ 
to state our main theorems, 
because it is most commonly used in the theory of automorphic forms. 
However, throughout this section, 
we mainly use another realisation $(\pi_{\rB_n^{\rop},d,\nu },I^\infty_{\rB_n^{\rop}}(d,\nu ))$ 
induced from a character of the {\em lower} triangular Borel subgroup $\rB_n^{\rop}(\mC)$, 
because the results in \cite{Jacquet_001} and \cite{im} play important roles here and they are given 
with the latter realisation.

\subsection{Another realisation of a principal series representation}
\label{subsec:another_def_ps}

Let $\rB_n^{\rop}$ be the Borel subgroup of $\rGL_{n/F}$ consisting of all 
lower triangular matrices. 
We note that $\rB_n$ and $\rB_n^{\rop}$ are opposite Borel subgroups. 
As in Section~\ref{subsec:measure}, 
let $\rN_n^{\rop}$ be the unipotent subgroup of $\rGL_{n/F}$ 
consisting of all lower triangular matrices with all diagonal entries equal to $1$, which is the unipotent radical of $\rB_n^{\rop}$. 
In this subsection, we introduce another realisation of a
principal series representation of $\rGL_n(\mC)$, which is 
induced from a character of the lower triangular Borel subgroup $\rB_n^{\rop}(\mC)$.

Let $d =(d_1,d_2,\dots, d_n)\in \mZ^n$ and 
$\nu =(\nu_1,\nu_2,\dots ,\nu_n)\in \mC^n$. Let $\chi_{d,\nu}$ be a character of $\rT_n(\mC )$ defined by (\ref{eq:def_chi_dnu}). 
We define a representation 
$(\pi_{\rB_n^{\rop},d,\nu },I^\infty_{\rB_n^{\rop}}(d,\nu ))$ of $\rGL_n(\mC )$ by 
\begin{align}
I^\infty_{\rB_n^{\rop}}(d,\nu ):=\{f\in C^\infty (\rGL_n(\mC))
\mid f(xag)=\chi_{d,\nu -\rho_n}(a)f(g)\quad
\text{ for \;} x\in \rN_n^{\rop}(\mC ),\ a\in \rT_n(\mC) \text{ and } g\in \rGL_n(\mC )\} 
\end{align}
and $(\pi_{\rB_n^{\rop},d,\nu }(g)f)(h)=f(hg)$ for $g,h\in \rGL_n(\mC )$ and $f\in I^\infty_{\rB_n^{\rop}}(d,\nu )$. 
We equip $I^\infty_{\rB_n^{\rop}}(d,\nu )$ 
with the usual Fr\'{e}chet topology. 
Let $I^\infty_{\rB_n^{\rop}}(d,\nu )_{\rU (n)}$ denote
the subspace of $I^\infty_{\rB_n^{\rop}}(d,\nu )$ 
consisting of all $\rU (n)$-finite vectors. Define 
\begin{align*}
&I^\infty (d ):=\{f\in C^\infty (\rU (n))
\mid f(ak)=a_1^{d_1}a_2^{d_2}\dots a_n^{d_n}f(k)\quad 
 \text{for \;} a=\diag (a_1,a_2,\dots ,a_n)\in \rT_n(\mC)\cap \rU (n) \text{\; and \;} k\in \rU (n)\}, 
\end{align*}
and we equip this space with the usual Fr\'{e}chet topology. 
Using the Iwasawa decomposition $\rGL_n(\mC)=\rN_n^{\rop}(\mC )A_n\rU (n)$ and the definition of 
$I^\infty_{\rB_n^{\rop}}(d,\nu )$, 
we obtain a bijection $I^\infty (d )\ni f\mapsto f_\nu 
\in I^\infty_{\rB_n^{\rop}}(d,\nu )$ determined by 
\begin{align}
\label{eq:def_st_section}
f_\nu (xak)&=\chi^{A_n}_{\nu -\rho_n}(a)f(k)&
\text{for \;} x\in \rN_n^{\rop}(\mC ),\ a\in A_n \text{\; and \;} k\in \rU (n),
\end{align}
where $\chi^{A_n}_{\nu}$ is a character of $A_n$ defined by (\ref{eq:def_chi_A}). 
We regard $I^\infty (d )$ as a $\rGL_n(\mC)$-module 
via this identification, and 
let $\pi_{\rB_n^{\rop},\nu}$ denote the action of $\rGL_n(\mC)$ on $I^\infty (d )$ corresponding to 
$\pi_{\rB_n^{\rop},d,\nu }$, that is,  
\begin{align*}
(\pi_{\rB_n^{\rop},\nu}(g)f)(k)&=f_\nu (kg)&
\text{for \;} g\in \rGL_n(\mC),\ k\in \rU (n) \text{\; and \;} f\in I^\infty (d ).
\end{align*}
Here we note that $\pi_{\rB_n^{\rop},\nu}|_{\rU (n)}$ is 
the right translation and does not depend on $\nu$. 
Let $I^\infty (d )_{\rU (n)}$\label{page:Id1} denote
the subspace of $I^\infty (d )$ 
consisting of all $\rU (n)$-finite vectors. 
For $f\in I^\infty (d )$, we call the map 
$\mC^n\ni \nu \mapsto f_\nu \in C^\infty (\rGL_n(\mC))$ 
defined by (\ref{eq:def_st_section}) 
the {\em standard section} corresponding to $f$.

Let $d^{\rdom} =(d^{\rdom}_1,d^{\rdom}_2,\dots ,d^{\rdom}_n)$ be 
the unique element of $\Lambda_n\cap \{\sigma d \mid \sigma \in \gS_n\}$. 
By (\ref{eq:GT_act_wt}) and 
the Frobenius reciprocity law \cite[Theorem 1.14]{Knapp_002}, 
we know that 
$\tau_{d^{\rdom}}|_{\rU (n)}$ is the minimal $\rU (n)$-type of 
$\pi_{\rB_n^{\rop},d,\nu}$, and the space 
$\Hom_{\rU (n)}(V_{d^{\rdom}} ,I^\infty_{\rB_n^{\rop}}(d,\nu ))$ 
is one dimensional. 
We fix a $\rU (n)$-embedding 
$\rf_{\rB_n^{\rop},d,\nu}\colon 
V_{d^{\rdom}} \to I^\infty_{\rB_n^{\rop}}(d,\nu )$ 
so that $\rf_{\rB_n^{\rop},d,\nu}(\xi_{H(d )})(1_n)=1$, that is, 
for $\rv\in V_{d^{\rdom}}$, 
the function $\rf_{\rB_n^{\rop},d,\nu}(\rv)$ on 
$\rGL_n (\mC )$ is determined by 
\begin{align}
\label{eqn:def_minKtype2}
\rf_{\rB_n^{\rop},d,\nu}(\rv)(xak)&=\chi^{A_n}_{\nu -\rho_n}(a)
(\tau_{d^{\rdom}} (k)\rv,\xi_{H(d )})_{d^{\rdom}}
&(x\in \rN_n^{\rop}(\mC),\ a\in A_n,\ k\in \rU (n)).
\end{align}
Here we note that, 
for each $\rv\in V_{d^{\rdom}}$, 
the map $\mC^n\ni \nu \mapsto \rf_{\rB_n^{\rop},d,\nu}(\rv)
\in C^\infty (\rGL_n(\mC))$ is the standard section corresponding to 
$(\tau_{d^{\rdom}} (\cdot )\rv,\xi_{H(d )})_{d^{\rdom}} 
\in I^\infty (d )$.

By Lemma~\ref{lem:hom_dual_conj}, we have 
$V_{d^{\rdom}}\simeq V_{d^{\rdom \vee }}^\rconj $ as $\rU (n)$-modules 
with $d^{\rdom \vee }=
(-d^{\rdom}_n,-d^{\rdom}_{n-1},\dots ,-d^{\rdom}_1)\in \Lambda_n$. 
We also fix a $\rU (n)$-embedding 
$\rf_{\rB_n^{\rop},d,\nu}^\rconj \colon 
V_{d^{\rdom \vee }}^\rconj  \to I^\infty_{\rB_n^{\rop}}(d,\nu )$ 
so that $\rf_{\rB_n^{\rop},d,\nu}^\rconj (\xi_{H(-d )})(1_n)=1$, that is, 
for $\rv\in V_{d^{\rdom \vee }}^\rconj $, 
the function $\rf_{\rB_n^{\rop},d,\nu}^\rconj (\rv)$ on $\rGL_n (\mC )$ 
is determined by 
\begin{align}
\label{eqn:def_minKtype3}
\rf_{\rB_n^{\rop},d,\nu}^\rconj (\rv)(xak)&=\chi^{A_n}_{\nu -\rho_n}(a)
(\tau_{d^{\rdom \vee}}^\rconj (k)\rv,\xi_{H(-d )})_{d^{\rdom \vee}}&
(x\in \rN_n^{\rop}(\mC),\ a\in A_n,\ k\in \rU (n)).
\end{align}
In \cite{im}, because of a technical reason, 
the archimedean local zeta integral is calculated 
in terms of the two embeddings $\rf_{\rB_n^{\rop},d,\nu}$ and $\rf_{\rB_n^{\rop},d,\nu}^\rconj$ above. 
The following lemma gives the explicit relation between the two embeddings.

\begin{lem}
\label{lem:ps_rel_conj}
Retain the notation. Then we have 
\begin{align*}
\rf_{\rB_n^{\rop},d ,\nu}^\rconj (\rI_{d^\rdom}^\rconj (\rv ))
&=(-1)^{\sum_{i=1}^n(n-i)d_i}
\rf_{\rB_n^{\rop},d,\nu}(\rv )&
\text{for \;} \rv \in V_{d^{\rdom}}.
\end{align*}
\end{lem}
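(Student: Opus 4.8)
The plan is to compare the two $\rU(n)$-embeddings $\rf^\rconj_{\rB^\rop_n,d,\nu}$ and $\rf_{\rB^\rop_n,d,\nu}$ by evaluating both sides of the claimed identity on suitable group elements and using the uniqueness of such embeddings up to scalar. Since $\Hom_{\rU(n)}(V_{d^{\rdom\vee}}^\rconj, I^\infty_{\rB^\rop_n}(d,\nu))$ is one dimensional and $\rI^\rconj_{d^\rdom}\colon V_{d^\rdom}\to V^\rconj_{d^{\rdom\vee}}$ is a $\rU(n)$-isomorphism (Lemma~\ref{lem:hom_dual_conj}), the composite $\rv\mapsto \rf^\rconj_{\rB^\rop_n,d,\nu}(\rI^\rconj_{d^\rdom}(\rv))$ lies in the one-dimensional space $\Hom_{\rU(n)}(V_{d^\rdom}, I^\infty_{\rB^\rop_n}(d,\nu))=\mC\,\rf_{\rB^\rop_n,d,\nu}$, so it equals $c\cdot \rf_{\rB^\rop_n,d,\nu}$ for some constant $c$ independent of $\nu$. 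It then remains only to pin down $c=(-1)^{\sum_{i=1}^n(n-i)d_i}$.

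First I would compute $c$ by testing on the highest weight vector $\xi_{H(d^\rdom)}=\xi_{H(d)}$ of $V_{d^\rdom}$ (note $d^\rdom\in\Lambda_n$, so $H(d)$ makes sense). Using the normalisation $\rf_{\rB^\rop_n,d,\nu}(\xi_{H(d)})(1_n)=1$, it suffices to evaluate $\rf^\rconj_{\rB^\rop_n,d,\nu}(\rI^\rconj_{d^\rdom}(\xi_{H(d)}))(1_n)$. By the definition \eqref{eq:conjI} of $\rI^\rconj$, we have $\rI^\rconj_{d^\rdom}(\xi_{H(d)})=(-1)^{\rrq(H(d))}\xi_{H(d)^\vee}$; since $H(d)=(h_{i,j})$ with $h_{i,j}=d^\rdom_i$, the dual array $H(d)^\vee$ has weight $\gamma^{H(d)^\vee}=-\gamma^{H(d)}=-d^\rdom$ by Lemma~\ref{lem:Mdual_property}, and $-d^\rdom$ is the lowest weight of $V_{d^{\rdom\vee}}$, i.e.\ $H(d)^\vee$ is the appropriate extremal pattern $H(-d)$ appearing in the normalisation \eqref{eqn:def_minKtype3}. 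Combined with $\rr(H(d))=1$ and the formula $\rrq(H(d))=\sum_{1\le i\le j\le n-1}h_{i,j}=\sum_{j=1}^{n-1}\sum_{i=1}^{j}d^\rdom_i=\sum_{i=1}^n(n-i)d^\rdom_i$, the definition \eqref{eqn:def_minKtype3} gives $\rf^\rconj_{\rB^\rop_n,d,\nu}(\rI^\rconj_{d^\rdom}(\xi_{H(d)}))(1_n)=(-1)^{\sum_{i=1}^n(n-i)d^\rdom_i}$.

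The remaining subtlety is that the exponent I obtain naturally involves $d^\rdom$ rather than $d$, while the statement has $d$. I would resolve this by checking that $\sum_{i=1}^n(n-i)d^\rdom_i\equiv \sum_{i=1}^n(n-i)d_i\pmod 2$: writing $d^\rdom=\sigma d$ for a permutation $\sigma\in\gS_n$, the difference $\sum_i(n-i)(d^\rdom_i-d_i)$ is a sum of terms each of which, upon unravelling a transposition, has the form $(\text{integer})\cdot(d_a-d_b)$; more cleanly, $\sum_i(n-i)d_i\equiv\sum_{i<j}(d_i+d_j)\pmod 2$ (since $n-i$ and $\#\{j:j>i\}=n-i$ agree), and $\sum_{i<j}(d_i+d_j)$ is manifestly $\gS_n$-invariant, hence $\equiv\sum_{i<j}(d^\rdom_i+d^\rdom_j)\equiv\sum_i(n-i)d^\rdom_i\pmod2$. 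This gives $c=(-1)^{\sum_{i=1}^n(n-i)d_i}$ and completes the argument.

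I expect this last parity bookkeeping to be the only genuine obstacle; the structural part (reduction to a scalar via one-dimensionality, evaluation at $1_n$) is routine given Lemmas~\ref{lem:hom_dual_conj} and~\ref{lem:Mdual_property} and the explicit normalisations \eqref{eqn:def_minKtype2}, \eqref{eqn:def_minKtype3}. If one prefers to avoid the parity computation, an alternative is to invoke Lemma~\ref{lem:GTwn} (which records $\tau_\lambda(w_n)\xi_{H(\gamma)}=(-1)^{\sum_i(i-1)\lambda_i}\xi_{H(\gamma^*)}$) and evaluate instead at $w_n$, since $w_n$ interchanges the roles of $\rB_n$ and $\rB^\rop_n$; this produces the sign directly in terms of $d^\rdom$ and one still reconciles with $d$ via the same invariance observation.
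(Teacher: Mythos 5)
Your structural skeleton — use the one-dimensionality of $\Hom_{\rU(n)}(V_{d^\rdom},I^\infty_{\rB^\rop_n}(d,\nu))$ together with Lemma~\ref{lem:hom_dual_conj}, evaluate at $1_n$ via the normalisations \eqref{eqn:def_minKtype2}, \eqref{eqn:def_minKtype3}, and read off the scalar from the Gel'fand--Tsetlin data — is exactly the paper's argument. However, your computation of $\rrq(H(d))$ is wrong, and the parity patch you introduce to repair it is also wrong; the two errors happen to produce the correct sign in the end, but the reasoning does not hold up.

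The mistake is in the line ``$\rrq(H(d))=\sum_{1\le i\le j\le n-1}h_{i,j}=\sum_{j=1}^{n-1}\sum_{i=1}^{j}d^\rdom_i$''. You are treating $H(d)$ as if it were the highest weight pattern $H(d^\rdom)$, whose entries are $h_{i,j}=d^\rdom_i$. But $H(d)$ denotes the extremal pattern of weight $d\in\cE_{d^\rdom}$: its $j$-th row is the dominant reordering of $(d_1,\dots,d_j)$, so in general $h_{i,j}\ne d^\rdom_i$. The clean way to see what you actually get is to use the first expression in \eqref{eq:def_qM}, namely $\rrq(M)=\sum_{j=1}^{n-1}(n-j)\gamma^M_j$, together with $\gamma^{H(d)}=d$; this gives $\rrq(H(d))=\sum_{j=1}^{n-1}(n-j)d_j=\sum_{i=1}^n(n-i)d_i$ \emph{exactly}, with $d$ and not $d^\rdom$. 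There is no discrepancy to reconcile, and the paper's proof invokes precisely this identity.

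Your attempted reconciliation, $\sum_i(n-i)d_i\equiv\sum_i(n-i)d^\rdom_i\pmod 2$, is in fact false. Take $n=2$ and $d=(0,1)$, so $d^\rdom=(1,0)$: then $\sum_i(n-i)d_i=1\cdot 0+0\cdot 1=0$ while $\sum_i(n-i)d^\rdom_i=1\cdot 1+0\cdot 0=1$, of opposite parity. (Indeed $H(d)=\left(\begin{smallmatrix}1&0\\0\end{smallmatrix}\right)$, whose $\rrq$ is $0$, consistent with the formula involving $d$ but not with your computation.) The slip in your parity argument is the step equating $\sum_{i<j}d_i$ with $\sum_{i<j}(d_i+d_j)$ modulo $2$: the latter equals $(n-1)\sum_i d_i$ and is $\gS_n$-invariant, but the former is $\sum_i(n-i)d_i$ and is not. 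If you replace the erroneous computation of $\rrq(H(d))$ by the correct one via $\gamma^{H(d)}=d$, the parity discussion disappears and the proof becomes exactly the paper's.
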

\begin{proof}
By $H(d )^\vee =H(-d )$, 
$\rrq (H(d ))=\sum_{i=1}^n(n-i)d_i$ and 
Lemma~\ref{lem:hom_dual_conj}, we have 
\begin{align*}
\rf_{\rB_n^{\rop},d ,\nu}^\rconj 
(\rI_{d^\rdom}^\rconj (\xi_{H(d )}))(1_n)
=(-1)^{\sum_{i=1}^n(n-i)d_i}
\rf_{\rB_n^{\rop},d ,\nu}^\rconj (\xi_{H(-d )})(1_n)
=(-1)^{\sum_{i=1}^n(n-i)d_i}.
\end{align*}
Hence, since $\rf_{\rB_n^{\rop},d ,\nu}^\rconj 
\circ \rI_{d^\rdom}^\rconj\in \Hom_{\rU (n)}
(V_{d^{\rdom}},I^\infty_{\rB_n^{\rop}}(d ,\nu ))=\mC \,
\rf_{\rB_n^{\rop},d ,\nu }$ and 
$\rf_{\rB_n^{\rop},d ,\nu }(\xi_{H(d )})(1_n)=1$, 
we obtain the assertion. 
\end{proof}

We define endomorphisms $\vartheta_{n}$ and $\vartheta_{n}'$ on $C^\infty (\rGL_n(\mC))$ by 
\begin{align*}
\vartheta_n(f)(g)&=f(w_ng),&
\vartheta_n'(f)(g)&=f(w_n{}^t\!g^{-1})&
\text{for \;} f\in C^\infty (\rGL_n(\mC)) \text{\; and \;}
g\in \rGL_n(\mC), 
\end{align*}
where $w_n$ is the anti-diagonal matrix of size $n$ 
with $1$ at all anti-diagonal entries. 
Note that both $\vartheta_{n}$ and $\vartheta_{n}'$ are involutions 
on $C^\infty (\rGL_n(\mC))$ since $w_n^2=1_n$. 
For $z = (z_1,z_2, \dots, z_n)\in \mC^n$, 
we set $z^*:=(z_n,z_{n-1}, \dots, z_1)$.

\begin{lem}
\label{lem:ps_rel_op_dual}
Retain the notation. 
\begin{enumerate}
\item \label{num:ps_rel_op}
The restriction 
$\vartheta_{n}|_{I^\infty_{\rB_n} (d^*,\nu^*)}$ is a $\rGL_n(\mC )$-isomorphism from 
$I^\infty_{\rB_n} (d^*,\nu^*)$ to $I^\infty_{\rB_n^{\rop}}(d ,\nu )$, 
and  its inverse map is given by $\vartheta_{n}|_{I^\infty_{\rB_n^{\rop}}(d ,\nu )}$. 
Moreover, we have 
\begin{align*}
\vartheta_n(\rf_{\rB_n,d^*,\nu^*}(\rv ))
&=(-1)^{\sum_{i=1}^n(i-1)d^{\rdom}_i}\rf_{\rB_n^{\rop},d ,\nu }(\rv )&
\text{for \;} \rv \in V_{d^{\rdom}}.
\end{align*}
Here $\rf_{\rB_n,d,\nu}\colon V_{d^{\rdom}} \to I^\infty_{\rB_n} (d,\nu )$ is the $\rU (n)$-embedding 
defined as $(\ref{eqn:def_minKtype1})$. 

\item \label{num:ps_rel_dual}
The restriction 
$\vartheta_{n}'|_{I^\infty_{\rB_n^{\rop}}(d ,\nu )}$ is 
a topological isomorphism from 
$I^\infty_{\rB_n^{\rop}}(d ,\nu )$ to $I^\infty_{\rB_n^{\rop}}(-d^*,-\nu^*)$ 
satisfying 
\begin{align}
\label{eq:ps_dual1}
\vartheta_n'(\pi_{\rB_n^{\rop},d ,\nu }(g)f)
&=\pi_{\rB_n^{\rop},-d^*,-\nu^*}({}^{t\!}g^{-1})\vartheta_n'(f)&
\text{for \;} g\in \rGL_n(\mC) \text{\; and \;} f\in I^\infty_{\rB_n^{\rop}}(d ,\nu ),
\end{align}
and its inverse map is given by $\vartheta_{n}'|_{I^\infty_{\rB_n^{\rop}}(-d^*,-\nu^*)}$. 
Moreover, we have 
\begin{align*}
\vartheta_n'(\rf_{\rB_n^{\rop},d ,\nu }(\rv ))
&=(-1)^{\sum_{i=1}^n(i-1)d^{\rdom}_i}
\rf_{\rB_n^{\rop},-d^*,-\nu^*}^\rconj (\rv )&
\text{for \;} \rv \in V_{d^{\rdom }}=V_{d^{\rdom }}^\rconj.
\end{align*}
Here we regard $\rv \in V_{d^{\rdom }}$ in the left hand side, and 
$\rv \in V_{d^{\rdom }}^\rconj $ in the right hand side. 
\end{enumerate}
\end{lem}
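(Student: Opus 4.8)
\textbf{Proof plan for Lemma~\ref{lem:ps_rel_op_dual}.}

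The plan is to treat the two statements in parallel, reducing everything to direct bookkeeping about the Iwasawa decomposition and the behaviour of the $\rU(n)$-embeddings under the involutions $\vartheta_n$ and $\vartheta_n'$. For part \ref{num:ps_rel_op}, the first step is to observe that conjugation by $w_n$ sends the upper triangular Borel $\rB_n$ to the lower triangular Borel $\rB_n^\rop$, and acts on the diagonal torus $\rT_n$ by the flip $\diag(a_1,\dots,a_n)\mapsto \diag(a_n,\dots,a_1)$; hence $\chi_{d^*,\nu^*}(w_n a w_n^{-1})=\chi_{d,\nu}(a)$ for $a\in\rT_n(\mC)$. Combining this with $w_n^2=1_n$, one checks directly from the definitions of $I^\infty_{\rB_n}(d^*,\nu^*)$ and $I^\infty_{\rB_n^\rop}(d,\nu)$ that $\vartheta_n$ carries the former isomorphically onto the latter, is $\rGL_n(\mC)$-equivariant for the right translation actions, and is its own inverse. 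For the explicit relation between the embeddings, I would note that $\vartheta_n\circ \rf_{\rB_n,d^*,\nu^*}$ lies in $\Hom_{\rU(n)}(V_{d^\rdom},I^\infty_{\rB_n^\rop}(d,\nu))$, which is one-dimensional, so it equals a scalar multiple of $\rf_{\rB_n^\rop,d,\nu}$; the scalar is pinned down by evaluating at $1_n$ on the highest weight vector $\xi_{H(d^*)}$ and using Lemma~\ref{lem:GTwn}, which gives $\tau_{d^\rdom}(w_n)\xi_{H(d^*)}=(-1)^{\sum_i(i-1)d^\rdom_i}\xi_{H(d)}$ (here one uses that $(d^*)^\rdom=d^\rdom$, and that $H(d^*)=H((d^\rdom)^{*\text{-type reshuffle}})$ so Lemma~\ref{lem:GTwn} applies with $\gamma=d^*$, $\gamma^*=d$). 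The sign $(-1)^{\sum_i(i-1)d^\rdom_i}$ then drops out.

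For part \ref{num:ps_rel_dual}, I would first record the effect of the transpose-inverse map: since $w_n{}^t x^{-1}$ for $x\in\rN_n^\rop(\mC)$ again lies in $\rN_n^\rop(\mC)$ (transpose sends lower to upper triangular, and left multiplication by $w_n$ together with the outer structure keeps us in $\rN_n^\rop$ after the appropriate reshuffling — this is the standard fact that $g\mapsto w_n{}^tg^{-1}$ is an involution on $\rGL_n$ normalising $\rB_n^\rop$), and on the torus it acts by $\diag(a_1,\dots,a_n)\mapsto \diag(a_n^{-1},\dots,a_1^{-1})$, so $\chi_{d,\nu-\rho_n}$ is sent to $\chi_{-d^*,-\nu^*-\rho_n}$ (using $-(\nu-\rho_n)^* = -\nu^*+\rho_n^* = -\nu^* - \rho_n$ since $\rho_n^* = -\rho_n$; similarly for $d$, but here no $\rho$-shift on $d$). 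This yields that $\vartheta_n'$ maps $I^\infty_{\rB_n^\rop}(d,\nu)$ to $I^\infty_{\rB_n^\rop}(-d^*,-\nu^*)$ and establishes the intertwining identity \eqref{eq:ps_dual1} by a direct substitution $g\mapsto {}^tg^{-1}$ in the definition of the action. Again $\vartheta_n'$ is its own inverse because $(w_n{}^t(w_n{}^tg^{-1})^{-1})=g$. For the embedding relation, the same one-dimensionality argument applies: $\vartheta_n'\circ \rf_{\rB_n^\rop,d,\nu}$ lies in $\Hom_{\rU(n)}(V_{d^\rdom},I^\infty_{\rB_n^\rop}(-d^*,-\nu^*))$, which by the discussion preceding \eqref{eqn:def_minKtype3} — via $V_{d^\rdom}\simeq V_{d^{\rdom\vee}}^\rconj$ and $(-d^*)^\rdom = -(d^{\rdom})^*$, whose dual is $d^\rdom$ — is one-dimensional and spanned by $\rf^\rconj_{\rB_n^\rop,-d^*,-\nu^*}$. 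The scalar is again extracted by evaluating at $1_n$ on $\xi_{H(d)}$ (resp.\ $\xi_{H(-d)}$ on the conjugate side) and invoking Lemma~\ref{lem:GTwn}, which produces the claimed sign $(-1)^{\sum_i(i-1)d^\rdom_i}$.

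The main obstacle I anticipate is not the scalar computations — those are governed entirely by Lemma~\ref{lem:GTwn} and are mechanical once set up — but rather keeping the bookkeeping of the various flips ($d\mapsto d^*$, $\nu\mapsto\nu^*$, the sign conventions on $\rho_n$, and on the conjugate representation $V^\rconj$) completely consistent, and in particular verifying carefully that $g\mapsto w_n{}^tg^{-1}$ genuinely preserves $\rN_n^\rop(\mC)$ and induces the stated torus map, so that $\vartheta_n'$ lands in the correct induced space. I would handle this by writing out the action on a general element $xak$ of the Iwasawa decomposition and checking each factor separately, taking care that the $\rU(n)$-part transforms by $k\mapsto w_n\bar k w_n$ (for $\vartheta_n'$, using ${}^tk^{-1}=\bar k$ on $\rU(n)$), which is exactly the source of the appearance of the \emph{conjugate} embedding $\rf^\rconj$ in part \ref{num:ps_rel_dual}. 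Once these identifications are in place, the proof reduces to the two scalar evaluations above.
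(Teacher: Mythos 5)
Your overall plan matches the paper's proof: reduce the involution identities to facts about the torus flip and the Iwasawa decomposition, then use one-dimensionality of $\Hom_{\rU(n)}(V_{d^{\rdom}},\cdot)$ to reduce the embedding identities to a scalar computation via Lemma~\ref{lem:GTwn}. However, the concrete scalar extraction you describe does not work, because you have chosen the wrong evaluation vectors in both parts.

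In part~\ref{num:ps_rel_op}, you propose to evaluate $\vartheta_n\circ\rf_{\rB_n,d^*,\nu^*}$ on $\xi_{H(d^*)}$ at $1_n$. Unwinding the definitions, this gives $\rf_{\rB_n,d^*,\nu^*}(\xi_{H(d^*)})(w_n) = (\tau_{d^{\rdom}}(w_n)\xi_{H(d^*)},\xi_{H(d^*)})_{d^{\rdom}}$, and after Lemma~\ref{lem:GTwn} this becomes $(-1)^{\sum_i(i-1)d^\rdom_i}(\xi_{H(d)},\xi_{H(d^*)})_{d^{\rdom}}$. But $H(d)\neq H(d^*)$ in general, so these two Gel'fand--Tsetlin basis vectors are orthogonal and the inner product is $0$; meanwhile the target side $\rf_{\rB_n^{\rop},d,\nu}(\xi_{H(d^*)})(1_n)=(\xi_{H(d^*)},\xi_{H(d)})_{d^{\rdom}}$ also vanishes, so you obtain $0=0$ and learn nothing about the scalar. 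Your claim that the sign ``drops out'' is mistaken; nothing drops out because the inner product is not $1$. The correct choice is $\xi_{H(d)}$: it is the normalisation vector of the \emph{target} embedding $\rf_{\rB_n^{\rop},d,\nu}$, so the right-hand side yields $c\cdot\rf_{\rB_n^{\rop},d,\nu}(\xi_{H(d)})(1_n)=c$, and the left-hand side becomes $(\tau_{d^{\rdom}}(w_n)\xi_{H(d)},\xi_{H(d^*)})_{d^{\rdom}}=(-1)^{\sum_i(i-1)d^\rdom_i}(\xi_{H(d^*)},\xi_{H(d^*)})_{d^{\rdom}}=(-1)^{\sum_i(i-1)d^\rdom_i}$. (You also call $\xi_{H(d^*)}$ ``the highest weight vector,'' which it is not unless $d^*$ happens to be dominant; it is the extremal weight vector of weight $d^*$.) In part~\ref{num:ps_rel_dual} you make the mirror-image error, proposing $\xi_{H(d)}$ (``resp.\ $\xi_{H(-d)}$ on the conjugate side''), but the normalisation vector of $\rf_{\rB_n^{\rop},-d^*,-\nu^*}^\rconj$ is $\xi_{H(-(-d^*))}=\xi_{H(d^*)}$; evaluating on $\xi_{H(d)}$ again produces a vanishing inner product. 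The safe rule is: evaluate at $1_n$ on the unique extremal weight vector whose image under the \emph{target} embedding has value $1$ at $1_n$.

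A secondary imprecision worth fixing: the assertion that ``$w_n{}^tx^{-1}$ for $x\in\rN_n^\rop(\mC)$ again lies in $\rN_n^\rop(\mC)$'' is false as stated (that matrix is anti-diagonal times upper triangular unipotent), and $g\mapsto w_n{}^tg^{-1}$ does not normalise $\rB_n^\rop$. The fact that is actually used is $w_n{}^tx^{-1}w_n\in\rN_n^\rop(\mC)$; the extra $w_n$ on the right then combines with $w_n{}^ta^{-1}w_n$ and $w_n{}^tg^{-1}$ when one writes out $w_n{}^t(xag)^{-1}=(w_n{}^tx^{-1}w_n)(w_n{}^ta^{-1}w_n)(w_n{}^tg^{-1})$, which is exactly how the required left-equivariance property of $\vartheta_n'(f)$ emerges.
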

\begin{proof}
First, let us prove the statement \ref{num:ps_rel_op}. 
The former part of the statement \ref{num:ps_rel_op} is deduced from the following three facts: 
$w_n^2=1_n$, $w_nxw_n\in \rN_n(\mC)$ for $x\in \rN_n^{\rop}(\mC)$, and 
\begin{align*}
w_naw_n&=\diag (a_n,a_{n-1},\dots ,a_1)&
\text{for \;} a=\diag (a_1,a_2,\dots ,a_n)\in \rT_n(\mC).
\end{align*}
By Lemmas \ref{lem:extremal_vec_explicit} and \ref{lem:GTwn}, we have 
\begin{align*}
&\vartheta_n(\rf_{\rB_n,d^*,\nu^*}(\xi_{H(d )}))(1_n)
=( \tau_{d^{\rdom}} (w_n)\xi_{H(d )},\xi_{H(d^*)})_{d^{\rdom}} 
=(-1)^{\sum^n_{i=1}(i-1)d_i^{\rdom}}
(\xi_{H(d^*)},\xi_{H(d^*)})_{d^{\rdom}} 
=(-1)^{\sum^n_{i=1}(i-1)d_i^{\rdom}}.
\end{align*}
Hence, since $\vartheta_n\circ \rf_{\rB_n,d^*,\nu^*}\in \Hom_{\rU (n)}
(V_{d^{\rdom}},I^\infty_{\rB_n^{\rop}}(d ,\nu ))=\mC \,
\rf_{\rB_n^{\rop},d ,\nu }$ and 
$\rf_{\rB_n^{\rop},d ,\nu }(\xi_{H(d )})(1_n)=1$, 
we obtain the latter part of the statement \ref{num:ps_rel_op}. 

Next, let us prove the statement \ref{num:ps_rel_dual}. 
The former part of the statement \ref{num:ps_rel_dual} is deduced from the following three facts: 
$w_n^2=1_n$, $w_n{}^{t\!}x^{-1}w_n\in \rN_n^\rop (\mC)$ for $x\in \rN_n^{\rop}(\mC)$, and 
\begin{align*}
w_n{}^{\!t}a^{-1}w_n&=\diag (a_n^{-1},a_{n-1}^{-1},\dots ,a_1^{-1})&
\text{for \;} a=\diag (a_1,a_2,\dots ,a_n)\in \rT_n(\mC).
\end{align*}
Because of (\ref{eq:ps_dual1}) and the relation ${}^{t\!}k^{-1}=\overline{k}$ for $k \in \rU (n)$, 
we can regard $\vartheta_n'\circ \rf_{\rB_n,d ,\nu }$ as 
a $\rU (n)$-equivariant homomorphism from $V_{d^{\rdom}}^\rconj $ 
to $I^\infty_{\rB_n^{\rop}}(-d^*,-\nu^*)$. 
By Lemmas \ref{lem:extremal_vec_explicit} and \ref{lem:GTwn}, we have 
\begin{align*}
&\vartheta_n'(\rf_{\rB_n^{\rop},d ,\nu }(\xi_{H(d^* )}))(1_n)
=(\tau_{d^{\rdom}} (w_n)\xi_{H(d^*)},\xi_{H(d )})_{d^{\rdom}} 
=(-1)^{\sum^n_{i=1}(i-1)d_{i}^{\rdom}}
(\xi_{H(d )},\xi_{H(d )})_{d^{\rdom}}
=(-1)^{\sum^n_{i=1}(i-1)d_{i}^{\rdom}}.
\end{align*}
Therefore, since $\vartheta_n'\circ \rf_{\rB_n,d ,\nu }\in 
\Hom_{\rU (n)}(V_{d^{\rdom }}^\rconj ,
I^\infty_{\rB_n^{\rop}}(-d^*,-\nu^*))=\mC \,
\rf_{\rB_n^{\rop},-d^*,-\nu^*}^\rconj $ and 
$\rf_{\rB_n^{\rop},-d^*,-\nu^*}^\rconj (\xi_{H(d^* )})(1_n)=1$, 
we obtain the latter part of the statement \ref{num:ps_rel_dual}. 
\end{proof}

\subsection{Whittaker functions} 
\label{subsec:whittaker}

In this subsection, we recall basic facts on the Jacquet integrals, 
and rewrite Proposition~\ref{prop:arch_Wh} into the form using the realisation of a principal series representation introduced in the previous subsection. 

For $\varepsilon \in \{\pm \}$, take additive characters $\psi_{\varepsilon}$ and  
$\psi_{\varepsilon ,\rN_n}$ as in Section \ref{subsec:C_def_ps}. 
Let $d=(d_1,d_2,\dots,d_n)\in \mZ^n$. 
For 
$\nu =(\nu_1,\nu_2,\dots ,\nu_n)\in \mC^n$ satisfying
$\rRe (\nu_1)<\rRe (\nu_2)<\dots <\rRe (\nu_n)$, 
we define the {\em Jacquet integral}
$\cJ_{\varepsilon}^{\rop}\colon I^\infty_{\rB_n^{\rop}}(d,\nu )\to 
\mC$ by the convergent integral 
\begin{align*}
\cJ_{\varepsilon}^{\rop}(f)&=
\int_{\rN_n(\mC )}f(x)\psi_{-\varepsilon,\rN_n}(x)\,\rd x&
(f\in I^\infty_{\rB_n^{\rop}}(d,\nu ))
\end{align*}
with respect to the  Haar measure $\rd x$ on $\rN_{n}(\mC )$ normalised as 
in Section \ref{subsec:C_def_ps}, 
and set 
$\cJ_{\varepsilon ,d ,\nu }^{\rop}(f)
=\cJ_{\varepsilon }^{\rop}(f_{\nu} )$ ($f\in I^\infty (d )$), 
where $f_{\nu}$ is the standard section corresponding to $f$ (see also (\ref{eq:def_st_section})). 
By \cite[Theorem 15.4.1]{Wallach_003}, we know that 
$\cJ_{\varepsilon ,d ,\nu }^{\rop}(f)$ has the holomorphic 
continuation to whole $\nu \in \mC^n$ for every $f\in I^\infty (d )$, 
and the map $\mC^n\times I^\infty (d )\ni (\nu ,f)\mapsto 
\cJ_{\varepsilon ,d ,\nu }^{\rop}(f)\in \mC$ is continuous. 
Furthermore, this extends $\cJ_{\varepsilon ,d ,\nu }^{\rop}$ 
to all $\nu \in \mC^n$ as a nonzero continuous $\mC$-linear form 
on $I^\infty (d )$ satisfying 
\begin{align*}
\cJ_{\varepsilon ,d ,\nu }^{\rop}(\pi_{\rB_n^{\rop},\nu}(x)f)
&=\psi_{\varepsilon ,\rN_n} (x)\cJ_{\varepsilon ,d ,\nu }^{\rop}(f)&
\text{for \;} x\in \rN_n(\mC ),\ f\in I^\infty (d ).
\end{align*} 
We extends the Jacquet integral 
$\cJ_{\varepsilon}^\rop \colon 
I^\infty_{\rB_n^{\rop}}(d,\nu )\to \mC$ 
to whole $\nu \in \mC^n$ by 
\begin{align}
\label{eq:ext_Jac_op}
\cJ_{\varepsilon }^\rop (f)
&=\cJ_{\varepsilon, d ,\nu }^\rop (f|_{\rU (n)}) &
\text{for \;} f\in I^\infty_{\rB_n^{\rop}}(d,\nu ), 
\end{align}
and set 
\begin{align}
\label{eq:def_JacWhittaker}
\rW_{\varepsilon}^\rop (f)(g)
&=\cJ_{\varepsilon}^\rop (\pi_{\rB_n^{\rop},d,\nu }(g)f)&
\text{for \;} f\in I^\infty_{\rB_n^{\rop}}(d,\nu ) \text{\; and \;} g\in \rGL_n(\mC). 
\end{align}
We note that 
$\rW_{\varepsilon}^\rop (f_\nu )(g)
=\cJ_{\varepsilon ,d ,\nu }^\rop (\pi_{\rB_n^{\rop},\nu}(g)f)$ 
is an entire function in $\nu$ for each $g\in \rGL_n(\mC)$ and 
$f\in I^\infty (d )_{\rU (n)}$. 
For $\rv \in V_{d^{\rdom}}$, we define 
$\mW^{(\varepsilon),\rop}_{d, \nu} (\rv) $ by  
\begin{align}
\label{eq:def_normWhittaker_op}
\mW^{(\varepsilon),\rop}_{d, \nu} (\rv)
&:=(\varepsilon \sqrt{-1})^{\sum^{n}_{i=1}(n-i)d_i}
\Gamma_n^{\rop} (\nu;d )
\rW_{\varepsilon}^\rop (\rf_{\rB_n^\rop ,d,\nu}(\rv ))
\end{align}
where $\Gamma_n^{\rop} (\nu;d )$ is defined as
\begin{align*}
\Gamma_n^{\rop} (\nu;d ):=\prod_{1\leq i<j\leq n}
\Gamma \bigl(\nu_j-\nu_i+1+\tfrac{|d_j-d_i|}{2}\bigr).
\end{align*}

For $\nu =(\nu_1,\nu_2,\dots ,\nu_n)\in \mC^n$ satisfying  
$\rRe (\nu_1)<\rRe (\nu_2)<\dots <\rRe (\nu_n)$, we have 
\begin{align*}
\cJ_{\varepsilon }(f)
&=\cJ_{\varepsilon}^{\rop}(\vartheta_n(f))&
(f\in I^\infty_{\rB_n} (d^*,\nu^*)),
\end{align*}
where $\cJ_{\varepsilon}$ is defined by (\ref{eq:Jacquet_int}). 
This equality and (\ref{eq:ext_Jac_op}) extend the Jacquet integral 
$\cJ_{\varepsilon} \colon 
I^\infty_{\rB_n} (d^*,\nu^*)\to \mC$ 
to whole $\nu \in \mC^n$. By Lemma~\ref{lem:ps_rel_op_dual} \ref{num:ps_rel_op}, we see that $\rW_\varepsilon(f)$ defined in Section~\ref{subsec:C_def_ps} coincides with
$\rW_{\varepsilon}^\rop (\vartheta_n(f))$ 
 for $f\in I^\infty_{\rB_n} (d^*,\nu^*)$, and thus the space $\cW(\pi_{\rB_n,d^*,\nu^*},\psi_\varepsilon)$ defined as \eqref{eq:def_Whittaker_model} is described as
\begin{align*}
\cW (\pi_{\rB_n,d^*,\nu^*},\psi_\varepsilon )
&=\{\rW_{\varepsilon}^\rop (f)\mid f\in I^\infty_{\rB_n^{\rop}}(d,\nu )\}.
\end{align*}
By Lemma~\ref{lem:ps_rel_op_dual} \ref{num:ps_rel_op} and the obvious relation
$\Gamma_n(\nu^*;d^*)=\Gamma_n^{\rop} (\nu;d )$, we have 
\begin{align}
\label{eq:norWh_rel_op}
\mW^{(\varepsilon)}_{d^*, \nu^*}(\rv)
&=\mW^{(\varepsilon),\rop}_{d, \nu}(\rv)
&\text{for }\rv\in V_{d^{\rdom}}.
\end{align}
Hence Proposition~\ref{prop:arch_Wh} 
is equivalent to the following proposition.

\begin{prop}
\label{prop:arch_Wh_op}
Retain the notation. Let $g\in \rGL_n(\mC)$ and $\rv \in V_{d^{\rdom}}$. 
Then $\mW^{(\varepsilon),\rop}_{d, \nu} (\rv )(g)$ is 
an entire function in $\nu$ having the following symmetry$:$
\begin{align}
\label{eq:Sinv_Wh_op}
\mW^{(\varepsilon),\rop}_{d, \nu}(\rv )(g)
&=\mW^{(\varepsilon),\rop}_{\sigma d , \sigma \nu} (\rv )(g)&
\text{for \;} \sigma \in \gS_n. 
\end{align}
Furthermore, for each $\nu_0\in \mC^n$ such that $\pi_{\rB_n^\rop ,d,\nu_0}$ is irreducible, 
the meromorphic function $\Gamma_n^{\rop} (\nu;d )$ in $\nu$ 
is holomorphic at $\nu =\nu_0$. 
\end{prop}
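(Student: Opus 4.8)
The plan is to prove Proposition~\ref{prop:arch_Wh_op} in three stages: first establish that $\mW^{(\varepsilon),\rop}_{d,\nu}(\rv)(g)$ is entire in $\nu$, then prove the $\gS_n$-symmetry \eqref{eq:Sinv_Wh_op}, and finally deduce holomorphy of $\Gamma_n^\rop(\nu;d)$ at the irreducibility points from the symmetry together with the entireness. Since \eqref{eq:norWh_rel_op} shows this is equivalent to Proposition~\ref{prop:arch_Wh}, there is no loss in working with the lower-triangular realisation throughout, which is the one compatible with \cite{Jacquet_001} and \cite{im}.

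For entireness: as remarked after \eqref{eq:def_JacWhittaker}, $\rW^\rop_\varepsilon(f_\nu)(g) = \cJ^\rop_{\varepsilon,d,\nu}(\pi_{\rB_n^\rop,\nu}(g)f)$ is already entire in $\nu$ for each fixed $g$ and each $\rU(n)$-finite $f\in I^\infty(d)_{\rU(n)}$, by Wallach's holomorphic continuation \cite[Theorem~15.4.1]{Wallach_003}; applying this to $f = (\tau_{d^\rdom}(\cdot)\rv,\xi_{H(d)})_{d^\rdom}$ gives that $\rW^\rop_\varepsilon(\rf_{\rB_n^\rop,d,\nu}(\rv))(g)$ is entire. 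The factor $\Gamma_n^\rop(\nu;d) = \prod_{i<j}\Gamma(\nu_j-\nu_i+1+\tfrac{|d_j-d_i|}{2})$ is only meromorphic, so the content is that the product $\Gamma_n^\rop(\nu;d)\rW^\rop_\varepsilon(\rf_{\rB_n^\rop,d,\nu}(\rv))(g)$ has no poles: the normalising Gamma factors exactly cancel the poles of the Jacquet integral. I expect this to follow from the functional-equation/divisibility properties of the Jacquet integral --- equivalently, from the standard fact that the unnormalised archimedean Whittaker function at the minimal $\rU(n)$-type, once divided by the $L$-factor-type product $\Gamma_n^\rop$, extends holomorphically --- and this is presumably reducible to the explicit formulas of \cite{im} or a Mellin--Barnes representation of the $\rGL_n(\mC)$ Whittaker function; this cancellation step is the main obstacle, as it requires genuine control of the analytic continuation rather than a formal manipulation.

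For the symmetry \eqref{eq:Sinv_Wh_op}: the group $\gS_n$ is generated by simple transpositions, so it suffices to treat $\sigma = s_i$. Here I would invoke the intertwining operator (standard long intertwining integral, or its rank-one building blocks) $A_{s_i}\colon I^\infty_{\rB_n^\rop}(d,\nu)\to I^\infty_{\rB_n^\rop}(s_i d, s_i\nu)$, which carries the minimal-$\rU(n)$-type embedding $\rf_{\rB_n^\rop,d,\nu}$ to a scalar multiple of $\rf_{\rB_n^\rop,s_i d,s_i\nu}$ --- the scalar being an explicit ratio of Gamma functions coming from the rank-one computation (a $c$-function), which matches precisely the ratio $\Gamma_n^\rop(s_i\nu;s_i d)/\Gamma_n^\rop(\nu;d)$ up to the power of $\varepsilon\sqrt{-1}$ built into \eqref{eq:def_normWhittaker_op}. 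Composing with the Jacquet functional (which is $\gS_n$-invariant on the Whittaker side by Kostant's uniqueness, so $\cJ^\rop_\varepsilon\circ A_{s_i}$ agrees with $\cJ^\rop_\varepsilon$ up to the same normalising constant) then yields \eqref{eq:Sinv_Wh_op}. Concretely I would rewrite the $\rGL_n(\mC)$ integral in terms of the Godement-section approach announced in the text, which expresses $\mW^{(\varepsilon),\rop}_{d,\nu}$ as an iterated integral whose $\gS_n$-equivariance is visible; the spherical case $d=0$ in \cite[Th\'eor\`eme~(8.6)]{Jacquet_002} is the prototype, and the added $d$-dependence is handled by tracking the power $(\varepsilon\sqrt{-1})^{\sum(n-i)d_i}$ and the $|d_i-d_j|/2$ shifts in the Gamma arguments, both of which are manifestly symmetric-compatible.

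Finally, holomorphy of $\Gamma_n^\rop(\nu;d)$ at a point $\nu_0$ where $\pi_{\rB_n^\rop,d,\nu_0}$ is irreducible follows by the same mechanism used in the spherical case: by Lemma~\ref{lem:ps_rel_op_dual} and the uniqueness of Whittaker functionals, any pole of $\Gamma_n^\rop$ would have to be cancelled by a zero of $\rW^\rop_\varepsilon(\rf_{\rB_n^\rop,d,\nu}(\rv))(g)$ for all $g$ and all $\rv$, forcing the whole Whittaker function to vanish at $\nu_0$; but at an irreducibility point the minimal-$\rU(n)$-type Whittaker vector is nonzero (Kostant, plus irreducibility), a contradiction. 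So $\Gamma_n^\rop$ cannot have a pole there. I would assemble these pieces in that order, with the bulk of the writing going into the intertwining-operator computation of the $c$-function and its identification with the Gamma ratio.
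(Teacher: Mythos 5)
Your entireness argument has a genuine gap, compounded by a direction error. After \eqref{eq:def_JacWhittaker} the paper records (via \cite[Theorem 15.4.1]{Wallach_003}) that the unnormalised $\rW^\rop_\varepsilon(\rf_{\rB_n^\rop,d,\nu}(\rv))(g)$ is already \emph{entire} in $\nu$; the normalised $\mW^{(\varepsilon),\rop}_{d,\nu}$ is then, by \eqref{eq:def_normWhittaker_op}, this entire function \emph{multiplied} (not divided) by $\Gamma_n^\rop(\nu;d)$, which has poles. So the content of the claim is not that ``the Gamma factors cancel the poles of the Jacquet integral'' --- there are no such poles --- but that the Jacquet integral has zeros of sufficient order along the polar divisor of $\Gamma_n^\rop$. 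You supply no mechanism for this, and you yourself flag it as ``the main obstacle''. The paper never argues by cancellation: it proves entireness by induction on $n$ through the Godement-section recursion \eqref{eq:Wh_recursive} (Corollary~\ref{cor:god_explicit_1}), into which the Gamma factors have already been absorbed via Lemma~\ref{lem:god_explicit}, so that the normalised $\mW^{(\varepsilon),\rop}_{d,\nu}$ is realised directly as an absolutely convergent integral for all $\nu$; the convergence is justified by carrying over the majorization of \cite[Proposition 3.3]{Jacquet_001} to the normalised setting using \cite[Lemma 5.4]{im}, and the base case $n=1$ is \eqref{eq:GL1_Wh}.

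For the symmetry \eqref{eq:Sinv_Wh_op} you propose the classical intertwining-operator/$c$-function route, which is plausible but not the paper's. The paper again exploits the Godement recursion: by the inductive hypothesis, \eqref{eq:Wh_recursive} is manifestly symmetric in the first $n-1$ indices, giving invariance under $\{\sigma:\sigma(n)=n\}$; the duality $\vartheta_n'$ of Lemma~\ref{lem:dual_Wh} converts this into invariance under $\{\sigma:\sigma(1)=1\}$; and for $n\geq 3$ these two subgroups generate $\gS_n$, while the base case $n=2$ is dispatched by the explicit computation in Lemma~\ref{lem:Wh_explicit_n=2}. Were you to pursue intertwining operators, you would still have to pin down the exact constant of proportionality in $\cJ^\rop_\varepsilon\circ A_{s_i}$ versus $\cJ^\rop_\varepsilon$ (Kostant's uniqueness gives only \emph{a} scalar, not \emph{the} scalar) and verify it matches $\Gamma_n^\rop(s_i\nu;s_id)/\Gamma_n^\rop(\nu;d)$ together with the $(\varepsilon\sqrt{-1})$-power of \eqref{eq:def_normWhittaker_op}; the paper's route avoids this bookkeeping entirely. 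Your final step --- deducing holomorphy of $\Gamma_n^\rop$ at irreducibility points from entireness plus non-vanishing of the Jacquet functional --- does coincide with the paper's.
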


We will prove Proposition~\ref{prop:arch_Wh_op} in Section \ref{subsec:god_Jacquet}, 
using Godement sections introduced in \cite{Jacquet_001}. 
For the proof of Proposition~\ref{prop:arch_Wh_op}, we prepare the following lemma.

\begin{lem}
\label{lem:dual_Wh}
Retain the notation. Then, for $g\in \rGL_n(\mC)$ and 
$\rv \in V_{d^{\rdom }}=V_{d^{\rdom }}^\rconj $, we have 
\begin{align*}
&\vartheta_n'(\mW^{(\varepsilon),\rop}_{d, \nu}(\rv ))(g)
=(-1)^{\sum_{i=1}^n(i-1)d^{\rdom}_i}
(\varepsilon \sqrt{-1})^{(n-1)\sum^{n}_{i=1}d_i}
\mW^{(-\varepsilon ),\rop}_{-d^*,-\nu^*}((\rI_{d^{\rdom \vee}}^\rconj )^{-1}(\rv ))(g).
\end{align*}
Here we regard $\rv \in V_{d^{\rdom }}$ in the left hand side, and 
$\rv \in V_{d^{\rdom }}^\rconj $ in the right hand side. 
\end{lem}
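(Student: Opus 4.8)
The statement relates the involution $\vartheta_n'$ applied to the normalised Whittaker function $\mW^{(\varepsilon),\rop}_{d,\nu}(\rv)$ to another normalised Whittaker function with the ``dualised'' parameters $(-d^*,-\nu^*)$. The natural strategy is to unwind the definition \eqref{eq:def_normWhittaker_op} of $\mW^{(\varepsilon),\rop}_{d,\nu}(\rv)$, track what $\vartheta_n'$ does to each factor, and reassemble the result as $\mW^{(-\varepsilon),\rop}_{-d^*,-\nu^*}$ applied to the appropriate vector. First I would reduce to the regime $\rRe(\nu_1)<\rRe(\nu_2)<\dots<\rRe(\nu_n)$ where the Jacquet integral converges, since both sides are entire in $\nu$ (the left by the entirety assertion in Proposition~\ref{prop:arch_Wh_op}, granted for the purposes of this lemma, and the right by the same reasoning with $(-d^*,-\nu^*)$), so it suffices to establish the identity on a non-empty open set and invoke uniqueness of analytic continuation.

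The key computation is the behaviour of the Jacquet integral under $\vartheta_n'$. Recall $\vartheta_n'(f)(g)=f(w_n{}^t\!g^{-1})$. Writing out $\rW_{\varepsilon}^{\rop}(f)(g)=\int_{\rN_n(\mC)}f(xg)\psi_{-\varepsilon,\rN_n}(x)\,\rd x$ and substituting, I would compute
\begin{align*}
\vartheta_n'(\rW_{\varepsilon}^{\rop}(f))(g) = \rW_{\varepsilon}^{\rop}(f)(w_n{}^t\!g^{-1}) = \int_{\rN_n(\mC)}f(x\,w_n{}^t\!g^{-1})\psi_{-\varepsilon,\rN_n}(x)\,\rd x,
\end{align*}
and then make the change of variables $x\mapsto w_n{}^t\!x^{-1}w_n$ (which preserves $\rN_n(\mC)$ and the Haar measure, and sends $\psi_{-\varepsilon,\rN_n}$ to $\psi_{+\varepsilon,\rN_n}$ up to the required bookkeeping), so that the right-hand side becomes, via $\vartheta_n'(f)\in I^\infty_{\rB_n^{\rop}}(-d^*,-\nu^*)$ by Lemma~\ref{lem:ps_rel_op_dual}~\ref{num:ps_rel_dual} and the relation \eqref{eq:ps_dual1}, an expression of the form $\rW_{-\varepsilon}^{\rop}(\vartheta_n'(f))(g)$ (up to a sign). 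Applying this with $f=\rf_{\rB_n^{\rop},d,\nu}(\rv)$ and invoking the second formula in Lemma~\ref{lem:ps_rel_op_dual}~\ref{num:ps_rel_dual}, namely $\vartheta_n'(\rf_{\rB_n^{\rop},d,\nu}(\rv)) = (-1)^{\sum_{i=1}^n(i-1)d^{\rdom}_i}\rf_{\rB_n^{\rop},-d^*,-\nu^*}^{\rconj}(\rv)$, converts the Jacquet--Whittaker object into one built from $\rf^{\rconj}$; then Lemma~\ref{lem:ps_rel_conj} (applied with the parameters $(-d^*,-\nu^*)$) rewrites $\rf^{\rconj}_{\bullet}(\rv)$ in terms of $\rf_{\bullet}((\rI^{\rconj}_{d^{\rdom\vee}})^{-1}(\rv))$, which is exactly the vector appearing on the right-hand side of the claimed identity.

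Finally I would reconcile the normalising constants. The definition \eqref{eq:def_normWhittaker_op} attaches the factor $(\varepsilon\sqrt{-1})^{\sum_i(n-i)d_i}\Gamma_n^{\rop}(\nu;d)$ on the left and $(-\varepsilon\sqrt{-1})^{\sum_i(n-i)(-d^*)_i}\Gamma_n^{\rop}(-\nu^*;-d^*)$ on the right. Using $(-d^*)_i=-d_{n+1-i}$ one checks $\sum_i(n-i)(-d^*)_i = -\sum_i(i-1)d_i$, and $\Gamma_n^{\rop}(-\nu^*;-d^*)=\Gamma_n^{\rop}(\nu;d)$ since the arguments $\nu_j-\nu_i$ and $|d_j-d_i|$ are unchanged under the simultaneous reversal-and-negation; combining these with the sign $(-1)^{\sum_i(i-1)d^{\rdom}_i}$ from Lemma~\ref{lem:ps_rel_op_dual} and the sign picked up from $\psi_{-\varepsilon}\leftrightarrow\psi_{+\varepsilon}$ in the measure substitution (which should produce the power of $\varepsilon\sqrt{-1}$ governing $(n-1)\sum_i d_i$) yields precisely the stated constant $(-1)^{\sum_i(i-1)d^{\rdom}_i}(\varepsilon\sqrt{-1})^{(n-1)\sum_i d_i}$. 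The main obstacle I anticipate is the careful bookkeeping of signs and powers of $\sqrt{-1}$ in the change of variables $x\mapsto w_n{}^t\!x^{-1}w_n$ inside the Jacquet integral --- in particular checking that the character $\psi_{-\varepsilon,\rN_n}$ transforms correctly and that the Haar measure is genuinely invariant (no Jacobian beyond $\pm 1$) --- since everything else is a formal assembly of already-established lemmas.
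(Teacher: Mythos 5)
Your proposal matches the paper's proof essentially step for step: reduction to the convergent regime by analytic continuation, the substitution $x\mapsto w_n{}^t\!x^{-1}w_n$ inside the Jacquet integral (which flips $\psi_{-\varepsilon,\rN_n}$ to $\psi_{\varepsilon,\rN_n}$ and thereby replaces $\rW^{\rop}_\varepsilon$ by $\rW^{\rop}_{-\varepsilon}$), Lemma~\ref{lem:ps_rel_op_dual}~\ref{num:ps_rel_dual} followed by Lemma~\ref{lem:ps_rel_conj} to convert $\vartheta_n'(\rf_{\rB_n^\rop,d,\nu}(\rv))$ into $\rf_{\rB_n^\rop,-d^*,-\nu^*}((\rI^\rconj_{d^{\rdom\vee}})^{-1}(\rv))$, and the reconciliation of normalising constants using $\Gamma_n^\rop(-\nu^*;-d^*)=\Gamma_n^\rop(\nu;d)$ and $\sum_i(n-i)(-d^*)_i=-\sum_i(i-1)d_i$. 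The only small imprecision is the parenthetical attributing the power $(\varepsilon\sqrt{-1})^{(n-1)\sum_i d_i}$ to the measure substitution itself — in fact that substitution is measure-preserving and contributes no numerical factor; the power arises purely from comparing the normalising prefactors of $\mW^{(\varepsilon),\rop}_{d,\nu}$ and $\mW^{(-\varepsilon),\rop}_{-d^*,-\nu^*}$ together with the signs from Lemmas~\ref{lem:ps_rel_op_dual} and~\ref{lem:ps_rel_conj}, exactly the bookkeeping you describe in the next clause.
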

\begin{proof}
Let $g\in \rGL_n(\mC)$ and 
$\rv \in V_{d^{\rdom }}=V_{d^{\rdom }}^\rconj $. 
Because of the uniqueness of analytic continuation, 
it suffices to verify the statement when $\nu =(\nu_1,\nu_2,\dots ,\nu_n)\in \mC^n$ satisfies   
$\rRe (\nu_1)<\rRe (\nu_2)<\dots <\rRe (\nu_n)$. 
By direct computation, we have 
\begin{align*}
\vartheta_n'(\rW_{\varepsilon}^\rop (\rf_{\rB_n^\rop ,d,\nu}(\rv )))(g)
&=
\int_{\rN_n(\mC )}\rf_{\rB_n^\rop ,d,\nu}(\rv )(xw_n{}^{t\!}g^{-1})\psi_{-\varepsilon,\rN_n}(x)\,\rd x\\
&=
\int_{\rN_n(\mC )}\rf_{\rB_n^\rop ,d,\nu}(\rv )(w_n{}^{t\!}(xg)^{-1})
\psi_{\varepsilon,\rN_n}(x)\,\rd x\\
&=\rW_{-\varepsilon}^\rop (\vartheta_n'(\rf_{\rB_n^\rop ,d,\nu}(\rv )))(g).
\end{align*}
Here the second equality follows from the substitution 
$x\mapsto w_n{}^{t\!}x^{-1}w_n$ and the equality
$\psi_{-\varepsilon,\rN_n}(w_n{}^{t\!}x^{-1}w_n)=\psi_{\varepsilon,\rN_n}(x)$. 
By Lemmas \ref{lem:ps_rel_op_dual} \ref{num:ps_rel_dual} and \ref{lem:ps_rel_conj}, we have 
\begin{align*}
\vartheta_n'(\rf_{\rB_n^\rop ,d,\nu}(\rv ))
=(-1)^{\sum_{i=1}^n(i-1)d^{\rdom}_i}\rf_{\rB_n^{\rop},-d^*,-\nu^*}^\rconj (\rv )
=(-1)^{\sum_{i=1}^n(i-1)(d^{\rdom}_i-d_i)}
\rf_{\rB_n^{\rop},-d^*,-\nu^*}((\rI_{d^{\rdom \vee}}^\rconj )^{-1}(\rv )).
\end{align*}
These equalities, combined with the relations
$\Gamma_n^{\rop} (\nu;d )
=\Gamma_n^{\rop} (-\nu^*;-d^* )$ and 
\[
(\varepsilon \sqrt{-1})^{\sum^{n}_{i=1}(n-i)d_i}
=
(-1)^{\sum^{n}_{i=1}(i-1)d_i}
(\varepsilon \sqrt{-1})^{(n-1)\sum^{n}_{i=1}d_i}
(-\varepsilon \sqrt{-1})^{-\sum^{n}_{i=1}(n-i)d_{n+1-i}}, 
\]
imply the assertion. 
\end{proof}

\subsection{Standard Schwartz functions}
\label{subsec:def_schwartz}

Assume that $n>1$. In this subsection, we introduce some Schwartz functions on $\rM_{n-1,n}(\mC )$, 
which play important roles on our proof of Proposition~\ref{prop:arch_Wh_op}.

Let $C(\rM_{n-1,n}(\mC ))$ be the space of continuous 
functions on $\rM_{n-1,n}(\mC )$. 
We define actions of $\rGL_{n-1}(\mC )$ and $\rGL_{n}(\mC )$ on 
$C(\rM_{n-1,n}(\mC ))$ by 
\begin{align*}
(L(g)f)(z)&=f(g^{-1}z) \quad \text{\; and\;} \quad (R(h)f)(z)=f(zh) \\
&\qquad \text{for \;} g\in \rGL_{n-1}(\mC ),\ h\in \rGL_{n}(\mC ),\ f\in C(\rM_{n-1,n}(\mC )) \text{\; and \;}
z\in \rM_{n-1,n}(\mC ).
\end{align*}
Let $\cS (\rM_{n-1,n}(\mC ))$ be the space of Schwartz functions on 
$\rM_{n-1,n}(\mC )$, and $\cS_0(\rM_{n-1,n}(\mC ))$ the subspace of 
$\cS (\rM_{n-1,n}(\mC ))$ consisting of all 
functions $\phi$ of the form 
$\phi (z)=P(z,\overline{z})\me_{(n-1,n)}(z)$, where $\me_{(n-1,n)}(z)$ is defined as
\begin{align*}
\me_{(n-1,n)} (z)&:=\exp (-2\pi \mathrm{Tr}({}^t\overline{z}z))&
\text{for \;} z\in \rM_{n-1,n}(\mC )
\end{align*}
and $P$ is a polynomial function on $\rM_{n-1,n}(\mC )\times \rM_{n-1,n}(\mC )$. 
We call elements of $\cS_0(\rM_{n-1,n}(\mC ))$ 
{\em standard Schwartz functions} on $\rM_{n-1,n}(\mC )$. 
Since $\me_{(n-1,n)}$ is $\rU (n-1)\times \rU (n)$-invariant, 
the space $\cS_0 (\rM_{n-1,n}(\mC ))$ is closed under 
the action $L\boxtimes R$ of $\rU (n-1)\times \rU (n)$, and all elements 
of $\cS_0 (\rM_{n-1,n}(\mC ))$ are $\rU (n-1)\times \rU (n)$-finite.

For $\gamma =(\gamma_1,\gamma_2,\dots ,\gamma_n)\in \mZ^n$, we define 
$\gamma^\rpos =(\gamma_1^\rpos ,\gamma_2^\rpos ,\dots ,\gamma_n^\rpos )$ and 
$\gamma^\rneg =(\gamma_1^\rneg ,\gamma_2^\rneg ,\dots ,\gamma_n^\rneg )$ by 
\begin{align*}
&\gamma_i^\rpos =\left\{\begin{array}{ll}
\gamma_i&\text{if }\ \gamma_i\geq 0,\\
0&\text{otherwise},
\end{array}\right.&
&\gamma_i^\rneg =\left\{\begin{array}{ll}
\gamma_i&\text{if }\ \gamma_i\leq 0,\\
0&\text{otherwise}.
\end{array}\right.&
\end{align*}
Then 
$\gamma =\gamma^\rpos +\gamma^\rneg $ holds
for $\gamma \in \mZ^n$. As in Section $\ref{subsec:GT_basis}$, for $\lambda \in \Lambda_n$, 
let $\cE_\lambda :=\{\sigma \lambda \mid \sigma \in \gS_n\}$ be 
the set of extremal weights of $\tau_\lambda$, 
and $H(\gamma )$ the unique element of $\rG (\lambda)$ whose weight is  
$\gamma \in \cE_\lambda $.

\begin{lem}
\label{lem:std_schwartz}
For  $\mu =(\mu_1,\mu_2,\dots ,\mu_{n-1})\in \Lambda_{n-1}$, set 
$\lambda =(\lambda_1,\lambda_2,\dots ,\lambda_{n})=(\mu^\rpos ,0)+(0,\mu^\rneg )$. 
Then there is a $\rU (n-1)\times \rU (n)$-equivariant homomorphism 
$\Phi_{\mu}\colon V_{\mu}^\rconj \boxtimes_\mC V_{\lambda}
\to \cS_0(\rM_{n-1,n}(\mC ))$ 
such that, 
for $g\in \rGL_n(\mC)$ and $\gamma \in \cE_{\mu}$,  we have
\begin{equation}
\label{eq:std_schwartz}
\begin{aligned}
\Phi_{\mu }&(\zeta_{H(\gamma )}\boxtimes 
\zeta_{H((\gamma,0))})((1_{n-1},O_{n-1,1})g)\\
& =(\tau_{\lambda^\rpos }(g)\zeta_{H((\gamma^\rpos ,0))},
\zeta_{H((\gamma^\rpos ,0))})_{\lambda^\rpos } 
(\tau_{\lambda^{\rneg \vee}}(\overline{g})\zeta_{H((-\gamma^\rneg ,0))},
\zeta_{H((-\gamma^\rneg ,0))})_{\lambda^{\rneg \vee}}
\me_{(n-1,n)}((1_{n-1},O_{n-1,1})g).
\end{aligned}
\end{equation}
When $n=2$, let $(z_1,z_2)\in \rM_{1,2}(\mC)$ and 
$M=\left(\begin{smallmatrix}\lambda_1\ \lambda_2\\ m\end{smallmatrix}\right)\in \rG (\lambda )$. Then we have 
\begin{align}
\label{eq:std_schwartz_n=2}
&\Phi_{\mu_1}(\xi_{\mu_1}
\boxtimes \xi_{M})((z_1,z_2))
=\left\{\begin{array}{ll}
z_1^{m}z_2^{\mu_1-m}\exp (-2\pi (|z_1|^2+|z_2|^2))&\text{if $\mu_1\geq 0$},\\[1mm]
(-1)^{m-\mu_1}\overline{z_1}^{-m}
\overline{z_2}^{m-\mu_1}\exp (-2\pi (|z_1|^2+|z_2|^2))&\text{if $\mu_1\leq 0$}.
\end{array}\right.
\end{align}
\end{lem}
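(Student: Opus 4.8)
\textbf{Proof proposal for Lemma~\ref{lem:std_schwartz}.}
The plan is to construct $\Phi_\mu$ explicitly as a composition of natural $\rU(n-1)\times\rU(n)$-maps and then verify the two displayed formulas by specialising to extremal weight vectors. The starting observation is that the right-hand side of \eqref{eq:std_schwartz} is manifestly a product of a matrix coefficient of the holomorphic representation $\tau_{\lambda^\rpos}$ of $\rGL_n(\mC)$ restricted along the row embedding $\rGL_{n-1}\hookrightarrow\rGL_n$, a matrix coefficient of the antiholomorphic representation $\tau_{\lambda^{\rneg\vee}}^\rconj$, and the Gaussian $\me_{(n-1,n)}$. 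So I would first define, for $\rv\in V_{\lambda^\rpos}$ and $\rv'\in V_{\lambda^{\rneg\vee}}^\rconj$, the function $z\mapsto (\text{holomorphic part in }z)\cdot(\text{antiholomorphic part in }z)\cdot\me_{(n-1,n)}(z)$ on $\rM_{n-1,n}(\mC)$, using that $(1_{n-1},O_{n-1,1})g$ ranges over an open dense subset of $\rM_{n-1,n}(\mC)$ as $g$ ranges over $\rGL_n(\mC)$, together with $\rGL_{n-1}(\mC)$-equivariance to extend off that subset; each factor is polynomial in the entries of $z$ (resp.\ $\bar z$) times the Gaussian, hence lands in $\cS_0(\rM_{n-1,n}(\mC))$. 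Then $\Phi_\mu$ is obtained by precomposing with the Cartan/conjugation isomorphisms relating $V_\mu^\rconj\boxtimes_\mC V_\lambda$ to $V_{\lambda^{\rneg\vee}}^\rconj\boxtimes_\mC V_{\lambda^\rpos}$, i.e.\ via $\rI^\rconj$ of \eqref{eq:conjI}, the determinant-twist isomorphisms $\rI^{\det}$, and the Cartan-component injector $\rI^{\lambda^\rpos,\lambda^{\rneg\vee}}$ of \eqref{eq:def_inj} (note $\lambda=\lambda^\rpos+(0,\mu^\rneg)$ and the weight bookkeeping $\mu=\mu^\rpos+\mu^\rneg$ makes these source and target weights match up).

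The $\rU(n-1)\times\rU(n)$-equivariance of $\Phi_\mu$ is then a formal consequence: the holomorphic matrix-coefficient assignment $\rv\mapsto[g\mapsto(\tau_{\lambda^\rpos}(g)\rv,\xi_?)]$ intertwines the right $\rGL_n(\mC)$-action with $R$ and the left $\rGL_{n-1}(\mC)$-action with $L$ (because $L(g_0)$ acts by $g\mapsto g_0^{-1}g$ on the argument, matching left multiplication inside the matrix coefficient); similarly for the antiholomorphic factor, where the complex conjugate appears and is exactly absorbed by passing to $V^\rconj$ as in \eqref{eq:comp_rep}; and $\me_{(n-1,n)}$ is $\rU(n-1)\times\rU(n)$-invariant. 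Restricting the $\rGL$-equivariance to the maximal compact subgroups gives the claim. To pin down the normalisation in \eqref{eq:std_schwartz}, I would evaluate on the extremal vectors $\zeta_{H(\gamma)}=\xi_{H(\gamma)}$ (using $\rr(H(\gamma))=1$ from Lemma~\ref{lem:rM_property}\ref{num:Hgamma_rM_property}), split $\gamma=\gamma^\rpos+\gamma^\rneg$, and use Lemma~\ref{lem:extremal_injector} to see that the Cartan injector sends $\xi_{H((\gamma,0))}$ precisely to $\xi_{H((\gamma^\rpos,0))}\otimes\xi_{H((\gamma^\rneg,0))}$ with coefficient $1$; the determinant twists only shift weights and preserve inner products by Lemma~\ref{lem:detl_shift}, and $\rI^\rconj$ contributes a sign $(-1)^{\rrq(\cdot)}$ that I would check cancels against the sign coming from $H((-\gamma^\rneg,0))$ versus $H((\gamma^\rneg,0))^\vee$, leaving exactly the product of the two matrix coefficients on the right-hand side.

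For the $n=2$ case I would specialise directly: $\lambda=(\lambda_1,\lambda_2)$ with $\lambda_1=\mu_1^\rpos$, $\lambda_2=\mu_1^\rneg$, and $M=\left(\begin{smallmatrix}\lambda_1\ \lambda_2\\ m\end{smallmatrix}\right)$. By Corollary~\ref{cor:matcoeff_n=2}, $(\tau_\lambda(g)\xi_M,\xi_{H(\lambda)})_\lambda=(\det g)^{\lambda_2}g_{1,1}^{\,m-\lambda_2}g_{1,2}^{\,\lambda_1-m}$, and taking $g$ with first row $(z_1,z_2)$ this becomes, in the case $\mu_1\ge 0$ (so $\lambda_2=0$), simply $z_1^{m}z_2^{\mu_1-m}$; multiplying by $\me_{(1,2)}((z_1,z_2))=\exp(-2\pi(|z_1|^2+|z_2|^2))$ gives the first line of \eqref{eq:std_schwartz_n=2}. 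For $\mu_1\le 0$ (so $\lambda_1=0$, $\lambda_2=\mu_1$), the holomorphic factor is trivial and the antiholomorphic factor is the conjugate matrix coefficient $(\tau_{\lambda^{\rneg\vee}}(\bar g)\cdot,\cdot)$, which by the same corollary applied to $\lambda^{\rneg\vee}=(-\mu_1,0)$ yields $\overline{z_1}^{\,-m}\overline{z_2}^{\,m-\mu_1}$ up to the sign $(-1)^{m-\mu_1}$ tracked through $\rI^\rconj$ in the previous paragraph; again multiplying by the Gaussian gives the second line. The main obstacle I anticipate is bookkeeping the signs and the weight shifts consistently — in particular making sure the $(-1)^{\rrq}$ from $\rI^\rconj$, the determinant-twist conventions, and the choice of which $H(\cdot)$ appears all combine to produce exactly the stated formulas with no stray sign; everything else is a routine assembly of equivariant maps already established in Sections~\ref{subsec:finite_dimensional} and \ref{sec:HWrepGLn}.
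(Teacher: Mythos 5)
The overall strategy here — realise $\Phi_\mu$ as a Gaussian times a holomorphic matrix coefficient times an antiholomorphic matrix coefficient, pulled back through Cartan injectors and the conjugation map, and verify the formulas on extremal vectors via Lemmas~\ref{lem:extremal_injector}, \ref{lem:rM_property}~\ref{num:Hgamma_rM_property} and Corollary~\ref{cor:matcoeff_n=2} — is indeed the same as the paper's; so the proposal is on the right track. But there are two genuine gaps in how you set up the construction, and both need to be fixed before the assembly works.

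First, the claimed ``Cartan/conjugation isomorphisms relating $V_\mu^\rconj\boxtimes_\mC V_\lambda$ to $V_{\lambda^{\rneg\vee}}^\rconj\boxtimes_\mC V_{\lambda^\rpos}$'' cannot exist as stated: in the first object the first factor is a $\rU(n-1)$-representation (of dimension $\dim V_\mu$) and in the second object both factors are $\rU(n)$-representations, so they are exterior tensor products over different groups and generally of different dimensions. What the paper actually does is factor both sides in parallel, $V_\mu^\rconj \hookrightarrow V_{\mu^\rpos}^\rconj\otimes V_{\mu^\rneg}^\rconj$ and $V_\lambda\hookrightarrow V_{\lambda^\rpos}\otimes V_{\lambda^\rneg}$ via $\rI^{\,\cdot,\cdot}_{\,\cdot}$, then convert the ``$\rneg$'' pieces by the conjugation map, and finally \emph{pair} the $V_{\mu^\rpos}^\rconj$ piece with $V_{\lambda^\rpos}$ and the $V_{\mu^{\rneg\vee}}$ piece with $V_{\lambda^{\rneg\vee}}^\rconj$ through the two polynomial-valued bilinear maps. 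The $V_\mu^\rconj$ factor does not disappear into an isomorphism; it is consumed as the second argument of each matrix coefficient.

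Second, and related, the equivariance argument in the middle paragraph is not correct as written. You claim that $\rv\mapsto[g\mapsto(\tau_{\lambda^\rpos}(g)\rv,\xi_?)]$, with $\xi_?$ fixed, intertwines the left $\rGL_{n-1}(\mC)$-action with $L$. It does not: since $L(g_0)\phi(z)=\phi(g_0^{-1}z)$ and $(1_{n-1},O)\iota_n(g_0^{-1})g=g_0^{-1}(1_{n-1},O)g$, the computation $(\tau_{\lambda^\rpos}(\iota_n(g_0^{-1})g)\rv,\xi_?)=(\tau_{\lambda^\rpos}(g)\rv,\tau_{\lambda^\rpos}(\iota_n(g_0))^{*}\xi_?)$ shows that $L(g_0)$ moves $\xi_?$, not $\rv$. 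So the $\rU(n-1)$-equivariance lives in the \emph{paired} vector, i.e.\ exactly in the $V_\mu^\rconj$-slot that your sketch omits. The correct building block is a $\rU(n-1)\times\rGL_n(\mC)$-equivariant \emph{bilinear} map $V_{\mu^\rpos}^\rconj\boxtimes V_{(\mu^\rpos,0)}\to C(\rM_{n-1,n}(\mC))$ (the $\rP$-map from \cite[Lemma~4.9]{im}), in which the first argument carries the $L$-action and the second carries the $R$-action, and similarly its complex conjugate for the negative part. With a fixed $\xi_?$ your assignment would only be $\rGL_n$-equivariant in $\rv$, and the $\rU(n-1)$-equivariance would force $\xi_?$ to be $\rGL_{n-1}$-fixed, which is not what you want.

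A smaller but non-negligible point: that the matrix coefficient $(\tau_{(\mu^\rpos,0)}(g)\rv,\xi)$ is a polynomial function of the first $n-1$ rows $(1_{n-1},O)g$ alone (and thus extends to a standard Schwartz function after multiplying by $\me_{(n-1,n)}$) is not obvious. For this one needs the highest weight to end in $0$ (otherwise the matrix coefficient has a $(\det g)^{\lambda_n}$ factor depending on the last row, as Corollary~\ref{cor:matcoeff_n=2} already shows for $n=2$), and one needs to know that the resulting expression extends polynomially to all of $\rM_{n-1,n}(\mC)$, not just to the dense locus $(1_{n-1},O)\rGL_n(\mC)$. This is precisely the content of \cite[Lemma 4.9]{im}, which the paper cites and which your sketch implicitly assumes without flagging. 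Once these three points are repaired, the verification on extremal vectors via Lemma~\ref{lem:extremal_injector} and the $n=2$ computation via Corollary~\ref{cor:matcoeff_n=2} (including the sign bookkeeping through $\rI^\rconj$ and the overall sign $(-1)^{\ell(\mu^\rneg)}$) go through as you indicate.
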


\begin{proof}
By \cite[Lemma 4.9]{im} combined with Remark \ref{rem:cpx_conj_rep}, 
for $\mu\in \Lambda_{n-1}\cap \mN_0^{n-1}$, 
there exists a $\rU (n-1)\times \rGL_n(\mC )$-equivariant homomorphism 
$\rP_{\mu} \colon V_{\mu}^\rconj \boxtimes_\mC V_{(\mu,0)}
\to C(\rM_{n-1,n}(\mC ))$ such that, for each $M\in \rG(\mu )$ and $N\in \rG ((\mu ,0))$, 
the image $\rP_{\mu } (\zeta_{M}\boxtimes \zeta_N)$ is a polynomial function  
characterised by  
\begin{align*}
\rP_{\mu } (\zeta_{M}\boxtimes \zeta_N)
((1_{n-1},O_{n-1,1})g)
&=(\tau_{(\mu,0)}(g)\zeta_N,\zeta_{M[(\mu,0)]})_{(\mu,0)}&
(g\in \rGL_n(\mC)).
\end{align*}
Because of the definition of the complex conjugate representations, 
we can define 
a $\rU (n-1)\times \rGL_n(\mC )$-equivariant homomorphism
$\rP_{\mu}^\rconj  \colon V_{\mu} \boxtimes_\mC V_{(\mu,0)}^\rconj 
\to C(\rM_{n-1,n}(\mC ))$ for $\mu \in \Lambda_{n-1}\cap \mN_0^{n-1}$ by 
\begin{align*}
\rP_{\mu}^\rconj (\rv_1\boxtimes \rv_2)(z)&=
\rP_{\mu} (\rv_1\boxtimes \rv_2)(\overline{z})&
(\rv_1\in V_{\mu}=V_{\mu}^\rconj ,\ \rv_2\in V_{(\mu,0)}^\rconj =V_{(\mu,0)}).
\end{align*}
We can also define 
a $\rGL_n(\mC )$-equivariant homomorphism
$(\rI^{\lambda, \lambda'}_{\lambda +\lambda'})^\rconj 
\colon V_{\lambda +\lambda'}^\rconj \to 
V_\lambda^\rconj  \otimes_{\mC }V_{\lambda'}^\rconj $ for $\lambda,\lambda'\in \Lambda_n$  by 
\begin{align*}
(\rI^{\lambda, \lambda'}_{\lambda +\lambda'})^\rconj (\rv )
&=\rI^{\lambda, \lambda'}_{\lambda +\lambda'}(\rv )&
( \rv \in V_{\lambda +\lambda'}^\rconj =V_{\lambda +\lambda'}).
\end{align*}
When $n=2$, take $\mu_1\in \mN_0$ and set $N=\left(\begin{smallmatrix}\mu_1\ 0\\ m\end{smallmatrix}\right)\in \rG ((\mu_1,0))$. Then we can calculate $\rP_{\mu_1}(\zeta_{\mu_1}\boxtimes \zeta_N)$ explicitly by using Corollary~\ref{cor:matcoeff_n=2} as
\begin{align}
\label{eq:pf_std_schwartz_n=2}
\rP_{\mu_1}(\zeta_{\mu_1}\boxtimes \zeta_{N})
((z_1,z_2))&=z_{1}^{m}z_{2}^{\mu_1-m}&
\text{for \;} (z_1,z_2)\in \rM_{1,2}(\mC). 
\end{align}

For general $\mu \in \Lambda_{n-1}$, set 
$\lambda =(\mu^\rpos ,0)+(0,\mu^\rneg )$. Note that 
$\lambda^\rpos =(\mu^\rpos ,0)$ and $\lambda^\rneg =(0,\mu^\rneg )$ hold by definition. 
Let 
$\Phi_{\mu}\colon V_{\mu}^\rconj \boxtimes_\mC V_{\lambda}
\to \cS_0(\rM_{n-1,n}(\mC ))$ be the composition of the 
$\rU (n-1)\times \rU (n)$-equivariant homomorphisms
\begin{align*}
V_\mu^\rconj \boxtimes_\mC V_\lambda&
\xrightarrow[\hphantom{--------------------}]{\ \bigl(\rI_{\mu }^{\mu^\rpos ,\mu^\rneg}\bigr)^\rconj \boxtimes \rI_{\lambda }^{\lambda^\rpos ,\lambda^\rneg }\ }
(V_{\mu^\rpos}^\rconj \otimes_\mC V_{\mu^{\rneg} }^\rconj) \boxtimes_\mC 
(V_{\lambda^\rpos} \otimes_\mC V_{\lambda^{\rneg} })\\
& \xrightarrow[\hphantom{--------------------}]{\ \bigl(\id \otimes (\rI^\rconj_{\mu^{\rneg \vee }})^{-1}\bigr)
\boxtimes (\id \otimes \rI^{\rconj}_{\lambda^\rneg})\ }
(V_{\mu^\rpos}^\rconj \otimes_\mC V_{\mu^{\rneg \vee} })\boxtimes_\mC 
(V_{\lambda^\rpos} \otimes_\mC V_{\lambda^{\rneg \vee} }^\rconj )
\end{align*}
with
\begin{align*}
&(V_{\mu^\rpos}^\rconj \otimes_\mC 
V_{\mu^{\rneg \vee} })\boxtimes_\mC (V_{\lambda^\rpos} 
\otimes_\mC V_{\lambda^{\rneg \vee} }^\rconj )
\ni (\rv_1\otimes \rv_2)\boxtimes (\rv_3\otimes \rv_4)\\
&\hspace{50mm}
\mapsto (-1)^{\ell (\mu^{\rneg})}
\rP_{\mu^\rpos}(\rv_1\boxtimes \rv_3)(z)
\rP_{\mu^{\rneg \vee }}^\rconj (\rv_2\boxtimes \rv_4)(z)
\me_{(n-1,n)} (z)\in \cS_0(\rM_{n-1,n}(\mC )).
\end{align*}
The assertion follows from the definition of $\Phi_{\mu }$, combined with
Lemmas~\ref{lem:extremal_vec_explicit}, \ref{lem:extremal_injector} and (\ref{eq:pf_std_schwartz_n=2}). 
\end{proof}

\subsection{Godement sections}
\label{subsec:god_Jacquet}

In this subsection, we recall the notion of Godement sections and give the  proof of Proposition~\ref{prop:arch_Wh_op} by using them. {\em Godement sections} are meromorphic sections for principal series representations constructed in a specific way, 
which play important roles in \cite{Jacquet_001} and \cite{im}.

Assume that $n>1$, and let us 
take $d =(d_1,d_2,\dots ,d_n)\in \mZ^n$ and 
$\nu =(\nu_1,\nu_2,\dots ,\nu_n)\in \mC^n$. 
We then set $\widehat{d} =(d_1,d_2,\dots ,d_{n-1})\in \mZ^{n-1}$ 
and $\widehat{\nu} =(\nu_1,\nu_2,\dots ,\nu_{n-1})\in \mC^{n-1}$. 
For $f\in I^\infty (\widehat{d})_{\rU (n-1)}$, let $f_{\widehat{\nu}}$ denote the standard section corresponding to $f$. 
In addition we choose a standard Schwartz function $\phi \in \cS_0 (\rM_{n-1,n}(\mC ))$. 
Then, if $\nu$ satisfies 
$\mathrm{Re}(\nu_n-\nu_i)>-1$ for each $1\leq i\leq n-1$, 
we define the Godement section 
$\rg^+_{d_n,\nu_n}(f_{\widehat{\nu}},\phi )$ 
by the convergent integral 
\begin{equation*}
\rg^+_{d_n,\nu_n}(f_{\widehat{\nu}},\phi )(g)=
\left(\frac{\det g}{\lvert \det g\rvert}\right)^{\!d_n}\lvert\det g\rvert^{2\nu_n+n-1}
\int_{\rGL_{n-1}(\mC )}
\phi((h,O_{n-1,1})g)f_{\widehat{\nu}}(h^{-1})
\left(\frac{\det h}{\lvert \det h\rvert}\right)^{\!d_n}\lvert \det h\rvert^{2\nu_n+n}\,\rd h
\end{equation*}
for $g\in \rGL_n(\mC)$. Here $\rd h$ is the Haar measure on $\rGL_{n-1}(\mC )$ 
normalised as (\ref{eq:GL_measure}) in Section \ref{subsec:measure}. 
Jacquet shows in \cite[Proposition 7.1]{Jacquet_001} that $\rg^+_{d_n,\nu_n}(f_{\widehat{\nu}},\phi )(g)$ 
extends to a meromorphic function in $\nu_n$ defined on $\mC$, and 
$\rg^+_{d_n,\nu_n}(f_{\widehat{\nu}},\phi )$ is an element of 
$I^\infty_{\rB_n^{\rop}}(d,\nu )_{\rU (n)}$ if it is defined. 
For the embedding $\rf_{\rB_n^{\rop},d,\nu}$ defined by (\ref{eqn:def_minKtype2}), 
we first prove the following lemma, which is a generalisation of \cite[Lemma 5.2 (1)]{im}.

\begin{lem}
\label{lem:god_explicit}
Retain the notation. Them, for $M\in \rG (d^\rdom )$ and $N\in \rG (\widehat{d}^\rdom )$, we have 
\begin{equation*}
\rg^+_{d_n,\nu_n}\bigl(\,\rf_{\rB_{n-1}^{\rop},\widehat{d},\widehat{\nu}}(\zeta_{N}),
\Phi_{\widehat{d}^\rdom -d_n}(\zeta_{N-d_n}\boxtimes \zeta_{M-d_n})\,\bigr)
=\frac{1}{\dim_{\mC} V_{\widehat{d}^\rdom }}
\left(\prod_{i=1}^{n-1}\Gamma_{\mC}\bigl(
\nu_n-\nu_i+1+\tfrac{\lvert d_n-d_i\rvert}{2}\bigr)\right)
\rf_{\rB_n^{\rop},d,\nu}(\zeta_M).
\end{equation*}
Here we note that the right hand side does not depend on the choice of $N$. 
\end{lem}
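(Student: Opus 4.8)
The strategy is to exploit the defining properties of the Godement section together with the uniqueness statement for the minimal $\rU(n)$-type embedding. First I would observe that, by Jacquet's result \cite[Proposition 7.1]{Jacquet_001}, the left-hand side $\rg^+_{d_n,\nu_n}\bigl(\rf_{\rB_{n-1}^{\rop},\widehat{d},\widehat{\nu}}(\zeta_N),\Phi_{\widehat{d}^\rdom-d_n}(\zeta_{N-d_n}\boxtimes\zeta_{M-d_n})\bigr)$ is an element of $I^\infty_{\rB_n^\rop}(d,\nu)_{\rU(n)}$ (as a meromorphic family in $\nu_n$). Next, I would check that, as a function of $M$ and $N$, the assignment $(N,M)\mapsto \rg^+_{d_n,\nu_n}(\rf_{\rB_{n-1}^\rop,\widehat d,\widehat\nu}(\zeta_N),\Phi_{\widehat d^\rdom-d_n}(\zeta_{N-d_n}\boxtimes\zeta_{M-d_n}))$ is $\rU(n)$-equivariant in the appropriate sense: the $\rU(n-1)\times\rU(n)$-equivariance of $\Phi_{\widehat d^\rdom-d_n}$ from Lemma~\ref{lem:std_schwartz}, together with the left-translation behaviour of $\rg^+$ and the $\rU(n-1)$-equivariance of $\rf_{\rB_{n-1}^\rop,\widehat d,\widehat\nu}$, should force the map $V_{d^\rdom}\to I^\infty_{\rB_n^\rop}(d,\nu)$, $\zeta_M\mapsto(\text{LHS})$ (after summing appropriately against $N$, or for fixed $N$ in the right branching position) to be $\rU(n)$-equivariant. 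Since $\Hom_{\rU(n)}(V_{d^\rdom},I^\infty_{\rB_n^\rop}(d,\nu))$ is one-dimensional and spanned by $\rf_{\rB_n^\rop,d,\nu}$, the left-hand side must be a scalar multiple of $\rf_{\rB_n^\rop,d,\nu}(\zeta_M)$, with the scalar independent of $M$ (and, as the lemma asserts, independent of $N$).

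\textbf{Determining the scalar.} The constant is then pinned down by evaluating both sides at a single convenient point. The natural choice is $g=1_n$ together with $M=H(d^\rdom)$ and $N=H(\widehat d^\rdom)$, since $\rf_{\rB_n^\rop,d,\nu}(\xi_{H(d)})(1_n)=1$ by the normalisation \eqref{eqn:def_minKtype2} (and likewise for the $(n-1)$-variable embedding). Plugging into the integral defining $\rg^+_{d_n,\nu_n}$, the factor $f_{\widehat\nu}(h^{-1})=\rf_{\rB_{n-1}^\rop,\widehat d,\widehat\nu}(\zeta_{H(\widehat d^\rdom)})(h^{-1})$ is an explicit matrix-coefficient-type function on $\rGL_{n-1}(\mC)$, and the Schwartz function value $\Phi_{\widehat d^\rdom-d_n}(\zeta_{H(\widehat d^\rdom)-d_n}\boxtimes\zeta_{H((\widehat d^\rdom-d_n,0))})((h,O_{n-1,1}))$ is given by the explicit formula \eqref{eq:std_schwartz} of Lemma~\ref{lem:std_schwartz}, which splits as a product of matrix coefficients of $\tau_{\lambda^\rpos}$ and $\tau_{\lambda^{\rneg\vee}}$ times the Gaussian $\me_{(n-1,n)}$. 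Carrying out the resulting integral over $\rGL_{n-1}(\mC)$ using the Iwasawa-type measure decomposition \eqref{eq:GL_measure}: the $\rN_{n-1}^\rop(\mC)$ and $\rU(n-1)$ integrals collapse the matrix coefficients (by Schur orthogonality, leaving a factor $(\dim_\mC V_{\widehat d^\rdom})^{-1}$), and the radial $A_{n-1}$ integral against the Gaussian produces a product of $\Gamma_{\mC}$-factors of the shape $\Gamma_\mC(\nu_n-\nu_i+1+\tfrac{|d_n-d_i|}{2})$. This is essentially the computation already carried out in \cite[Lemma 5.2]{im} for the case there, which the present lemma generalises; I would invoke that computation and track the normalisation constants carefully.

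\textbf{The main obstacle.} The conceptual structure is straightforward, so the real work lies in two places. First, verifying that the left-hand side genuinely lands in the one-dimensional $\rU(n)$-isotypic Hom space — this requires matching up the $\rU(n-1)\times\rU(n)$-equivariance of $\Phi$ with the twisted left-translation action appearing in the Godement integral, and in particular keeping straight which arguments are acted on by $\rU(n-1)$ versus $\rU(n)$ and dealing with the $(\cdot)^\rpos/(\cdot)^\rneg$ splitting in the definition of $\Phi_\mu$ (including the complex-conjugate slots and the sign $(-1)^{\ell(\mu^\rneg)}$). Second, the explicit evaluation of the radial integral: one must correctly combine the $|\det h|^{2\nu_n+n}$ weight from the Godement kernel, the character $\chi^{A_{n-1}}_{\widehat\nu-\rho_{n-1}}$ coming from $\rf_{\rB_{n-1}^\rop,\widehat d,\widehat\nu}$, the weights of the matrix coefficients of $\tau_{\lambda^\rpos}$ and $\tau_{\lambda^{\rneg\vee}}$, and the Gaussian, then recognise the product over $i$ of the resulting one-dimensional gamma integrals as $\prod_{i=1}^{n-1}\Gamma_\mC(\nu_n-\nu_i+1+\tfrac{|d_n-d_i|}{2})$. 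The absolute-value term $\tfrac{|d_n-d_i|}{2}$ is exactly what signals that the positive/negative decomposition of $\widehat d^\rdom-d_n$ is being used correctly, so cross-checking the small cases (e.g.\ $n=2$, via the explicit formulas \eqref{eq:matcoeff_n=2} and \eqref{eq:std_schwartz_n=2}) is the natural sanity check before asserting the general identity.
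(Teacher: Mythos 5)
Your strategy matches the paper's proof exactly: define the map $\rg_N\colon V_{d^\rdom}\to I^\infty_{\rB_n^\rop}(d,\nu)$, show it is $\rU(n)$-equivariant, invoke one-dimensionality of $\Hom_{\rU(n)}(V_{d^\rdom},I^\infty_{\rB_n^\rop}(d,\nu))$ to reduce to computing the proportionality constant $c_N=\rg_N(\cdot)(1_n)$, then unfold via the Iwasawa decomposition, collapse the $\rU(n-1)$-integral by Schur orthogonality (producing the $(\dim_\mC V_{\widehat d^\rdom})^{-1}$ factor), and evaluate the radial $A_{n-1}$-Gaussian integral into the product of $\Gamma_\mC$-factors, exactly as in \cite[Lemma 5.2]{im}. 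One slip to flag: you should evaluate at $M=H(d)$, the extremal Gel\cprime fand--Tsetlin pattern of weight $d$, not $M=H(d^\rdom)$; when $d$ is not dominant one has $\rf_{\rB_n^\rop,d,\nu}(\zeta_{H(d^\rdom)})(1_n)=0$ since $\zeta_{H(d^\rdom)}$ and $\xi_{H(d)}$ have distinct weights, so the choice $H(d^\rdom)$ would only give $0=0$, whereas the normalisation \eqref{eqn:def_minKtype2} gives $\rf_{\rB_n^\rop,d,\nu}(\xi_{H(d)})(1_n)=1$ as your justifying sentence itself says; the paper also keeps $N$ arbitrary and lets Schur orthogonality eliminate the $N$-dependence, rather than fixing $N=H(\widehat d^\rdom)$.
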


\begin{proof}
Take $N\in \rG (\widehat{d}^\rdom )$ and define a 
$\mC$-linear map $\rg_N \colon V_{d^\rdom}
\to I^\infty_{\rB_n^{\rop}}(d,\nu )$ by 
\begin{align*}
\rg_N (\zeta_{M})&=
\rg^+_{d_n,\nu_n}\bigl(\,\rf_{\rB_{n-1}^{\rop},\widehat{d},\widehat{\nu}}(\zeta_{N}),
\Phi_{\widehat{d}^\rdom -d_n}(\zeta_{N-d_n}\boxtimes \zeta_{M-d_n})\,\bigr)&
\text{for \;}M\in \rG (d^\rdom ).
\end{align*}
The map $\rg_N$ is then  $\rU (n)$-equivariant by (\ref{eq:detl_shift}), Lemma~\ref{lem:detl_shift} and \cite[Lemma 3.1]{im}. 
Since $\Hom_{\rU (n)}(V_{d^\rdom}, I^\infty_{\rB_n^{\rop}}(d,\nu ))$ is one dimensional, 
there is a nonzero constant $c_N$ such that 
$\rg_N =c_N\rf_{\rB_n^{\rop},d,\nu}$. 
Let us calculate $c_N$. It is described as
\begin{align*}
c_N&=c_N\rf_{\rB_n^{\rop},d,\nu}(\zeta_{H(d )})(1_n)=\rg_N(\zeta_{H(d )})(1_n)\\
&=\int_{\rGL_{n-1}(\mC )}
\Phi_{\widehat{d}^\rdom -d_n}(\zeta_{N-d_n}\boxtimes \zeta_{H(d -d_n)})((h,O_{n-1,1}))
\rf_{\rB_{n-1}^{\rop},\widehat{d},\widehat{\nu}}(\zeta_{N})(h^{-1})
\left(\frac{\det h}{\lvert\det h\rvert}\right)^{\!d_n}\lvert\det h\rvert^{2\nu_n+n}\,\rd h
\end{align*}
by definition. We calculate the integrand as follows. Write $h=kxa\in \rGL_{n-1}(\mC)$ with $k\in \rU (n-1)$, 
$x\in \rN_{n-1}^\rop (\mC)$ and 
$a\in A_{n-1}$ via the Iwasawa decomposition. Then we have 
\begin{align*}
\left(\frac{\det h}{\lvert\det h\rvert}\right)^{\!d_n}
\rf_{\rB_{n-1}^{\rop},\widehat{d},\widehat{\nu}}(\zeta_{N})(h^{-1})
=\chi^{A_{n-1}}_{\widehat{\nu}-\rho_{n-1}}(a^{-1})
\overline{\bigl( \tau_{\widehat{d}^\rdom -d_n}(k)\zeta_{H(\widehat{d}-d_n)},\,
\zeta_{N-d_n}\bigr)_{\widehat{d}^\rdom -d_n} }
\end{align*}
by Lemma~\ref{lem:detl_shift}, and the Schwartz function is calculated as
\begin{align*}
\Phi_{\widehat{d}^\rdom -d_n}&(\zeta_{N-d_n}\boxtimes \zeta_{H(d -d_n)})((h,O_{n-1,1}))
=\Phi_{\widehat{d}^\rdom -d_n}(\tau_{\widehat{d}^\rdom -d_n}^\rconj (k^{-1})\zeta_{N-d_n}
\boxtimes \zeta_{H(d -d_n)})((xa,O_{n-1,1}))\\
& \qquad =\sum_{M\in \rG (\widehat{d}^\rdom -d_n)}
( \tau_{\widehat{d}^\rdom -d_n}^\rconj (k^{-1})\zeta_{N-d_n},
\zeta_{M})_{\widehat{d}^\rdom -d_n} 
\Phi_{\widehat{d}^\rdom -d_n}(\zeta_{M}\boxtimes \zeta_{H(d -d_n)})
((xa,O_{n-1,1})).
\end{align*}
Moreover, for $M\in \rG (\widehat{d}^\rdom -d_n)$ and $k\in \rU (n-1)$, we have 
\begin{align*}
(\tau_{\widehat{d}^\rdom -d_n}^\rconj (k^{-1})\zeta_{N-d_n},\, \zeta_{M})_{\widehat{d}^\rdom -d_n}
=\overline{(\tau_{\widehat{d}^\rdom -d_n}^\rconj (k)\zeta_{M},\, \zeta_{N-d_n})_{\widehat{d}^\rdom -d_n}}
=(\tau_{\widehat{d}^\rdom -d_n}(k)\zeta_{M},\,\zeta_{N-d_n})_{\widehat{d}^\rdom -d_n}
\end{align*}
by Lemma~\ref{lem:xiM_gen_Vlambda} and the definitions of 
$\tau_\lambda$ and $\tau_\lambda^\rconj$ for $\lambda \in \Lambda_n$. 
By these equalities and 
Schur's orthogonality \cite[Corollary 1.10]{Knapp_002}, combined with the equality 
$\dim_{\mC} V_{\widehat{d}^\rdom -d_n}=\dim_{\mC} V_{\widehat{d}^\rdom }$, we have 
\begin{align*}
c_N&=\sum_{M\in \rG (\widehat{d}^\rdom -d_n)}
\int_{A_{n-1}}\int_{\rN_{n-1}^\rop (\mC)}
\Phi_{\widehat{d}^\rdom -d_n}(\zeta_{M}\boxtimes \zeta_{H(d -d_n)})((xa,O_{n-1,1}))\\
&\hspace{10mm}\times \left(
\int_{\rU (n-1)} \!\!
(\tau_{\widehat{d}^\rdom -d_n}(k)\zeta_{M},\zeta_{N-d_n})_{\widehat{d}^\rdom -d_n}
\overline{\bigl( \tau_{\widehat{d}^\rdom -d_n}(k)\zeta_{H(\widehat{d}-d_n)},\, 
\zeta_{N-d_n}\bigr)_{\widehat{d}^\rdom -d_n} }\,dk\right) \\
&\hspace{100mm}\times \chi^{A_{n-1}}_{\widehat{\nu}-\rho_{n-1}}(a^{-1})
\lvert\det a\rvert^{2\nu_n+n}\,\rd x\,\rd a\\
&=\frac{1}{\dim_{\mC} V_{\widehat{d}^\rdom }}
\int_{A_{n-1}}\int_{\rN_{n-1}^\rop (\mC)}
\Phi_{\widehat{d}^\rdom -d_n}(\overline{\zeta_{H(\widehat{d}-d_n)}}\boxtimes 
\zeta_{H(d -d_n)})((xa,O_{n-1,1}))
\chi^{A_{n-1}}_{\widehat{\nu}-\rho_{n-1}}(a^{-1})\lvert \det a\rvert^{2\nu_n+n}
\,\rd x\,\rd a.
\end{align*}
By \cite[Lemmas 2.3 and 5.1]{im} and (\ref{eq:std_schwartz}), we have 
\begin{align*}
\Phi_{\widehat{d}^\rdom -d_n}& \left( \zeta_{H(\widehat{d}-d_n)}\boxtimes \zeta_{H(d -d_n)}\right)((xa,O_{n-1,1}) )\\
&=\left(  \tau_{(\widehat{d}^\rdom -d_n)^\rpos }(xa)
            \zeta_{H((\widehat{d}-d_n)^\rpos )},\, \zeta_{H((\widehat{d}-d_n)^\rpos )} 
       \right)_{(\widehat{d}^\rdom -d_n)^\rpos}\\
&\hspace{5mm}\times 
   \left(  \tau_{(\widehat{d}^\rdom -d_n)^{\rneg \vee}}(xa)\zeta_{H(-(\widehat{d}-d_n)^\rneg )},\,
          \zeta_{H(-(\widehat{d}-d_n)^\rneg )}
      \right)_{(\widehat{d}^\rdom -d_n)^{\rneg \vee}} 
\me_{(n-1)}(xa)\\
&=\me_{(n-1)}(xa)\prod_{i=1}^{n-1}a_i^{\lvert d_n-d_i\rvert} \qquad \text{for \; } a=\diag (a_1,a_2,\dots ,a_{n-1})\in A_{n-1},
\end{align*}
where $\me_{(n-1)}(z)$ is defined as $\me_{(n-1)} (z)=\exp (-2\pi \mathrm{Tr}({}^t\overline{z}z))$ for $z\in \rM_{n-1}(\mC )$. 
Therefore we have 
\begin{align*}
c_N&=\frac{1}{\dim_\mC V_{\widehat{d}^\rdom }}
\int_{A_{n-1}}\int_{U_{n-1}}
\me_{(n-1)}(xa)\prod_{i=1}^na_i^{\lvert d_n-d_i\rvert}
\chi^{A_{n-1}}_{\widehat{\nu}-\rho_{n-1}}(a^{-1})\lvert \det a\rvert^{2\nu_n+n}
\,\rd x\,\rd a\\
&=\frac{1}{\dim_\mC V_{\widehat{d}^\rdom }}
\int_{A_{n-1}}\me_{(n-1)}(a)
\prod_{i=1}^na_i^{\lvert d_n-d_i\rvert}
\chi^{A_{n-1}}_{\widehat{\nu}}(a^{-1})\lvert\det a\rvert^{2\nu_n+2}
\,\rd a\\
&=\frac{1}{\dim_\mC V_{\widehat{d}^\rdom }}\prod_{i=1}^{n-1}\int_0^\infty 
\exp (-2\pi a_i^2)a_i^{2(\nu_n-\nu_i+1)+\lvert d_n-d_i\rvert}\,\frac{4\,\rd a_i}{a_i}\\
&=\frac{1}{\dim_\mC V_{\widehat{d}^\rdom }}\prod_{i=1}^{n-1}
\Gamma_\mC \bigl(\nu_n-\nu_i+1+\tfrac{\lvert d_n-d_i\rvert}{2}\bigr)
\end{align*}
 by \cite[Lemma 5.1]{im}, which completes the proof. 
\end{proof}

Fix a signature $\varepsilon \in \{\pm \}$. 
In \cite[Section 7.2]{Jacquet_001}, 
Jacquet gives convenient integral representations of 
Whittaker functions. If $\nu$ satisfies $\rRe (\nu_1)<\rRe (\nu_2)<\dots <\rRe (\nu_n)$, we have 
\begin{equation}\label{eqn:W_god+}
\begin{aligned}
\rW_{\varepsilon}^{\rop}
\bigl(\rg^+_{d_n,\nu_n}(f_{\widehat{\nu}},\phi )\bigr)(g)=
&\,\left(\frac{\det g}{\lvert \det g\rvert}\right)^{\!d_n}\lvert \det g\rvert^{2\nu_n+n-1}
\int_{\rGL_{n-1}(\mC )}\left(\int_{\rM_{n-1,1}(\mC )}
\phi\left(\left(h,hz\right)g\right)
\psi_{-\varepsilon }(e_{n-1}z)\,\rd_\mC z\right)\\
&\hspace{60mm} \times 
\rW_{\varepsilon}^{\rop}(f_{\widehat{\nu}})(h^{-1})
\left(\frac{\det h}{\lvert \det h\rvert}\right)^{\!d_n}\lvert \det h\rvert^{2\nu_n+n}
\,\rd h
\end{aligned}
\end{equation}
for each $g\in \rGL_n(\mC)$, where $e_{n-1}=(O_{1,n-2},1)\in \rM_{1,n-1}(\mC )$ and $\rd_\mC z$ is 
the measure on $\rM_{n-1,1}(\mC )$ defined by 
\begin{align*}
\rd_\mC z&=\prod_{i=1}^{n-1}\rd_\mC z_{i,1}&
\text{for \; } z=(z_{i,1})_{1\leq i\leq n-1}\in \rM_{n-1,1}(\mC ). 
\end{align*}
Jacquet also shows that 
the right hand side of (\ref{eqn:W_god+}) 
converges absolutely for all $\nu \in \mC^n$, 
and thus defines an entire function in $\nu$ 
(see \cite[Proposition 7.2]{Jacquet_001}). 
Hence the equality holds for all $\nu $. 
Consequently, we obtain the following integral representation
of the normalised Whittaker function 
$\mW^{(\varepsilon),\rop}_{d, \nu} (\rv)$ defined by (\ref{eq:def_normWhittaker_op}). 

\begin{cor}
\label{cor:god_explicit_1}
Retain the notation in Lemma~$\ref{lem:god_explicit}$. 
Then we have  
\begin{equation}
\label{eq:Wh_recursive}
\begin{aligned}
\mW^{(\varepsilon),\rop}_{d, \nu}(\zeta_M)(g)
& =(\varepsilon \sqrt{-1})^{\sum^{n-1}_{i=1}d_i}\bigl(\dim V_{\widehat{d}^\rdom}\,\bigr)
\left(\frac{\det g}{\lvert \det g\rvert}\right)^{\!d_n}\lvert \det g\rvert^{2\nu_n+n-1}\\
&\qquad \times 
\int_{\rGL_{n-1}(\mC )}\left(\int_{\rM_{n-1,1}(\mC )}
\Phi_{\widehat{d}^\rdom -d_n}(\zeta_{N-d_n}\boxtimes \zeta_{M-d_n})\left(\left(h,hz\right)g\right)
\psi_{-\varepsilon }(e_{n-1}z)\,\rd_\mC z\right)\\
&\hspace*{60mm} \times 
\mW^{(\varepsilon),\rop}_{\widehat{d}, \widehat{\nu}}(\zeta_{N})
(h^{-1})\left(\frac{\det h}{\lvert \det h\rvert}\right)^{\!d_n}\lvert \det h\rvert^{2\nu_n+n}
\,\rd h
\end{aligned}
\end{equation}
for $M\in \rG (d^\rdom )$, $N\in \rG (\widehat{d}^\rdom )$ and $g\in \rGL_n(\mC)$. 
\end{cor}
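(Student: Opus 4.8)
The statement to be proved, Corollary~\ref{cor:god_explicit_1}, is a direct consequence of Lemma~\ref{lem:god_explicit} combined with Jacquet's integral representation \eqref{eqn:W_god+} of the Whittaker function attached to a Godement section. The plan is to simply chase the definitions: apply \eqref{eqn:W_god+} to the specific Godement section appearing in Lemma~\ref{lem:god_explicit}, and then use Lemma~\ref{lem:god_explicit} together with the normalisation \eqref{eq:def_normWhittaker_op} of $\mW^{(\varepsilon),\rop}_{d,\nu}$ to convert both sides into statements about normalised Whittaker functions. There is essentially no ingenuity required; the whole content is bookkeeping of the normalising constants.

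First I would take $f = (\tau_{\widehat{d}^\rdom}(\cdot)\zeta_N, \xi_{H(\widehat{d})})_{\widehat{d}^\rdom} \in I^\infty(\widehat{d})_{\rU(n-1)}$, so that $f_{\widehat{\nu}} = \rf_{\rB_{n-1}^\rop, \widehat{d}, \widehat{\nu}}(\zeta_N)$ by \eqref{eqn:def_minKtype2}, and take $\phi = \Phi_{\widehat{d}^\rdom - d_n}(\zeta_{N-d_n}\boxtimes \zeta_{M-d_n}) \in \cS_0(\rM_{n-1,n}(\mC))$ (which is a standard Schwartz function by Lemma~\ref{lem:std_schwartz}). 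Applying $\rW^\rop_\varepsilon$ to the Godement section $\rg^+_{d_n,\nu_n}(f_{\widehat\nu},\phi)$ and invoking \eqref{eqn:W_god+} gives, for $\mathrm{Re}(\nu_1) < \cdots < \mathrm{Re}(\nu_n)$ (and then for all $\nu$ by the absolute convergence statement of \cite[Proposition~7.2]{Jacquet_001}), an integral expression for $\rW^\rop_\varepsilon(\rg^+_{d_n,\nu_n}(\rf_{\rB_{n-1}^\rop,\widehat d,\widehat\nu}(\zeta_N),\phi))(g)$ in terms of $\rW^\rop_\varepsilon(\rf_{\rB_{n-1}^\rop,\widehat d,\widehat\nu}(\zeta_N))(h^{-1})$.

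On the other hand, Lemma~\ref{lem:god_explicit} identifies the Godement section $\rg^+_{d_n,\nu_n}(\rf_{\rB_{n-1}^\rop,\widehat d,\widehat\nu}(\zeta_N),\phi)$ with $\frac{1}{\dim_\mC V_{\widehat d^\rdom}}\bigl(\prod_{i=1}^{n-1}\Gamma_\mC(\nu_n-\nu_i+1+\tfrac{|d_n-d_i|}{2})\bigr)\rf_{\rB_n^\rop,d,\nu}(\zeta_M)$, so its Whittaker transform is the same constant times $\rW^\rop_\varepsilon(\rf_{\rB_n^\rop,d,\nu}(\zeta_M))(g)$. Equating the two expressions and then multiplying both sides by the appropriate normalising factors $(\varepsilon\sqrt{-1})^{\sum_{i=1}^n (n-i)d_i}\Gamma_n^\rop(\nu;d)$ on the left and the corresponding ones for $\widehat d$, $\widehat\nu$ inside the integral on the right, one obtains \eqref{eq:Wh_recursive}. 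The only point needing care is to verify that the product of Gamma factors from Lemma~\ref{lem:god_explicit}, together with the ratio of normalising constants $\Gamma_n^\rop(\nu;d)/\Gamma_{n-1}^\rop(\widehat\nu;\widehat d)$ and the powers of $\varepsilon\sqrt{-1}$, combine exactly so that the $\Gamma_\mC$ factors cancel and only the clean factor $(\varepsilon\sqrt{-1})^{\sum_{i=1}^{n-1}d_i}\dim V_{\widehat d^\rdom}$ remains; this is a short computation using $\Gamma_n^\rop(\nu;d) = \Gamma_{n-1}^\rop(\widehat\nu;\widehat d)\prod_{i=1}^{n-1}\Gamma(\nu_n-\nu_i+1+\tfrac{|d_n-d_i|}{2})$ and $\Gamma_\mC(s) = 2(2\pi)^{-s}\Gamma(s)$. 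The ``hard part'', such as it is, is purely this constant-tracking, and it is entirely routine once Lemma~\ref{lem:god_explicit} is in hand.
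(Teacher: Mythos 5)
Your proposal is correct and takes the same route as the paper: the paper's proof consists precisely of applying Jacquet's formula \eqref{eqn:W_god+} to the Godement section identified in Lemma~\ref{lem:god_explicit} and then dividing by the normalising constants, stated as ``the assertion follows immediately from (\ref{eqn:W_god+}) and Lemma~\ref{lem:god_explicit}.'' Your sketch fills in the same bookkeeping.
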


\begin{proof}
The assertion follows immediately from (\ref{eqn:W_god+}) and 
Lemma~\ref{lem:god_explicit}. 
\end{proof}

\begin{lem}
\label{lem:Wh_explicit_n=2}
 Set $n=2$, and let 
$d =(d_1,d_2)\in \mZ^2$, $\nu =(\nu_1,\nu_2)\in \mC^2$ 
and $a=\diag (a_1,a_2)\in A_2$. 
Furthermore take $M=\left( \begin{smallmatrix} d^{\rdom}_1\ \,d^{\rdom}_2 \\ m \end{smallmatrix}\right)
\in \rG (d^{\rdom})$ for a unique element $d^{\rdom} =(d^{\rdom}_1,d^{\rdom}_2)$ of $\Lambda_2\cap \{(d_1,d_2),(d_2,d_1)\}$. Then $\mW^{(\varepsilon),\rop}_{d, \nu}(\xi_M)(a)$ is calculated as 
\begin{equation}
\label{eq:Wh_explicit1_n=2}
\begin{aligned}
\mW^{(\varepsilon),\rop}_{d, \nu}(\xi_M)(a)
&=(\varepsilon \sqrt{-1})^{m}a_1^{2\nu_2+1+m-d_2}a_2^{2\nu_2-1+m-d_1}\\
&\qquad \times \int_{0}^\infty
\exp (-2\pi ((ra_1)^2+(ra_2)^{-2}))
r^{2\nu_2-2\nu_1+2m-d_1-d_2}
\,\frac{4\,\rd r}{r}
\end{aligned}
\end{equation}
if $d_1\geq d_2$, and
\begin{equation}
\label{eq:Wh_explicit2_n=2}
\begin{aligned}
\mW^{(\varepsilon),\rop}_{d, \nu}(\xi_M)(a)
&=(\varepsilon \sqrt{-1})^{m}a_1^{2\nu_1+1+m-d_1}a_2^{2\nu_1-1+m-d_2}\\
&\qquad \times 
\int_{0}^\infty \exp (-2\pi ((ra_1)^2+(ra_2)^{-2}))
r^{2\nu_1-2\nu_2+2m-d_1-d_2}\,\frac{4\,\rd r}{r}
\end{aligned}
\end{equation}
if $d_1\leq d_2$.
\end{lem}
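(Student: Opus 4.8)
The plan is to derive Lemma~\ref{lem:Wh_explicit_n=2} directly from the recursive integral representation \eqref{eq:Wh_recursive} in Corollary~\ref{cor:god_explicit_1} specialised to $n=2$, using the explicit $n=2$ formula \eqref{eq:std_schwartz_n=2} for the standard Schwartz function $\Phi_{\mu_1}$ and the base case $\mW^{(\varepsilon),\rop}_{\widehat{d},\widehat{\nu}}$ for $\rGL_1(\mC)$. First I would record the $n=1$ Whittaker function: for $\rGL_1(\mC)$ the Jacquet integral is just evaluation, so by the definitions \eqref{eq:def_normWhittaker_op} and \eqref{eqn:def_minKtype2} one has $\mW^{(\varepsilon),\rop}_{\widehat{d},\widehat{\nu}}(\zeta_{d_1})(h^{-1}) = \rf_{\rB_1^\rop,\widehat{d},\widehat{\nu}}(\zeta_{d_1})(h^{-1}) = (h/|h|)^{-d_1}|h|^{-2\nu_1+1}$ for $h\in \mC^\times$ (here $\widehat{d}=(d_1)$, $\widehat{\nu}=(\nu_1)$, $\widehat{d}^\rdom=(d_1)$, $\dim V_{(d_1)}=1$, $\rho_1=0$), up to the normalisation constant $\Gamma_1^\rop=1$.

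Next I would substitute into \eqref{eq:Wh_recursive} with $g=a=\diag(a_1,a_2)$ and $h\in\rGL_1(\mC)=\mC^\times$, $z=z_{1,1}\in\mC$. The integrand involves $\Phi_{\widehat{d}^\rdom-d_2}(\zeta_{N-d_2}\boxtimes\zeta_{M-d_2})\bigl((h,hz)a\bigr) = \Phi_{\widehat{d}^\rdom-d_2}(\zeta_{N-d_2}\boxtimes\zeta_{M-d_2})\bigl((ha_1, hza_2)\bigr)$; here $N=(d_1)\in\rG((d_1))$ so $N-d_2=(d_1-d_2)$, and $\widehat{d}^\rdom-d_2 = (d_1-d_2)$ when $d_1\ge d_2$, respectively $(d_2-d_1)$ after reindexing when $d_1\le d_2$. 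Applying \eqref{eq:std_schwartz_n=2} with $\mu_1 = d_1-d_2\ge 0$ (resp.\ $\le 0$) and reading off the explicit monomial-times-Gaussian expression gives, in the case $d_1\ge d_2$, a factor $(ha_1)^{m-d_2+?}(hza_2)^{?}\exp(-2\pi(|ha_1|^2+|hza_2|^2))$ where the exponents are dictated by the entry $m$ of $M=\left(\begin{smallmatrix}d_1^\rdom\ d_2^\rdom\\ m\end{smallmatrix}\right)$ shifted by $-d_2$; I would keep careful track of these exponents and of the sign power $(-1)^{\bullet}$ in the $\mu_1\le 0$ branch. Then the inner $\int_{\mC}\cdots\psi_{-\varepsilon}(e_1 z)\,\rd_\mC z = \int_\mC \cdots \psi_{-\varepsilon}(z)\,\rd_\mC z$ is a Gaussian-type integral in $z$ over $\mC$; carrying it out produces the factor $(\varepsilon\sqrt{-1})^{m}$ together with an exponential in $a_2^{-2}|h|^{-2}$ and an appropriate power of $a_2|h|$ — this is where the $(ra_2)^{-2}$ term in the final answer originates, after substituting $r=|h|$.

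After that, the outer integral $\int_{\rGL_1(\mC)}\rd h$ over $h\in\mC^\times$ decomposes (via $h = r k$ with $r>0$, $k\in\rU(1)$) so that the angular part integrates to $1$ by the Haar normalisation \eqref{eq:GL_measure}--\eqref{eq:measure_KNopA} (the measure on $\mC^\times=\rGL_1(\mC)$ being $4\,\rd r/r$ on the radial part), leaving a single radial integral in $r$. Collecting the powers of $a_1$, $a_2$ and $r$, and checking that the accumulated numerical constants $(\varepsilon\sqrt{-1})^{\sum d_i}$, $\dim V_{\widehat{d}^\rdom}=1$, the sign from \eqref{eq:std_schwartz_n=2}, and the $z$-integral constant combine to exactly $(\varepsilon\sqrt{-1})^{m}$, yields \eqref{eq:Wh_explicit1_n=2}. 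The case $d_1\le d_2$ is entirely parallel, swapping the roles of $\nu_1,\nu_2$ and $d_1,d_2$ and using the second branch of \eqref{eq:std_schwartz_n=2}; alternatively one may deduce it from the first case via the $\gS_2$-symmetry \eqref{eq:Sinv_Wh_op} of Proposition~\ref{prop:arch_Wh_op} applied to $\sigma=(1\,2)$, noting $\mW^{(\varepsilon),\rop}_{d,\nu}=\mW^{(\varepsilon),\rop}_{\sigma d,\sigma\nu}$.

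The main obstacle I anticipate is purely bookkeeping: correctly tracking the three shifts (the $-d_2$ shift inside $\Phi$, the $-d_n$ and $-l$ shifts built into the various $\xi_M$ identifications via Lemma~\ref{lem:detl_shift}) and the exponents of $z$, $|h|$, $a_1$, $a_2$ simultaneously, so that the Gaussian $z$-integral and the radial $r$-integral come out with exactly the stated exponents $2\nu_2+1+m-d_2$, $2\nu_2-1+m-d_1$, $2\nu_2-2\nu_1+2m-d_1-d_2$ (and their counterparts in the $d_1\le d_2$ case), and so that all stray powers of $\varepsilon\sqrt{-1}$, $2$, $\pi$ and $-1$ cancel down to the clean constant $(\varepsilon\sqrt{-1})^m$. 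No conceptually hard step is involved once \eqref{eq:Wh_recursive}, \eqref{eq:std_schwartz_n=2} and the $\rGL_1$ base case are in hand; it is a matter of a disciplined substitution and evaluation of two elementary Gaussian integrals.
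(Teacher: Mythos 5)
Your plan matches the paper's proof in all its main steps: specialise the recursive formula \eqref{eq:Wh_recursive} to $n=2$, use the base case \eqref{eq:GL1_Wh} for $\rGL_1(\mathbf C)$, split $h=ru$ with $u\in\rU(1)$ so the angular integral drops out, plug in the explicit standard Schwartz function from \eqref{eq:std_schwartz_n=2}, and evaluate the Gaussian $z$-integral. That is exactly the paper's route, and with careful bookkeeping it yields the stated exponents and the constant $(\varepsilon\sqrt{-1})^m$.

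Two remarks. First, a small arithmetic slip: for $n=1$ one has $\rho_1=0$, so $\mW^{(\varepsilon),\rop}_{\widehat d,\widehat\nu}(\zeta_{d_1})(h^{-1})=(h/|h|)^{-d_1}|h|^{-2\nu_1}$ without the stray ``$+1$'' in your exponent; the factor $(a_1a_2)^{2\nu_2+1}$ in the $n=2$ display comes from $|\det g|^{2\nu_n+n-1}$ in \eqref{eq:Wh_recursive}, not from the $\rGL_1$ base case. Second, the ``alternative'' route via the $\gS_2$-symmetry \eqref{eq:Sinv_Wh_op} is circular: the paper proves Proposition~\ref{prop:arch_Wh_op} for $n=2$ precisely by invoking Lemma~\ref{lem:Wh_explicit_n=2}, so you cannot use that symmetry here. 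The direct parallel computation you describe as your primary route is the right one, but note that the recursion \eqref{eq:Wh_recursive} always isolates the $n$-th entries $(d_2,\nu_2)$, so even in the $d_1\leq d_2$ case the integrals first come out expressed in $\nu_2$; the paper then performs the radial substitution $r\mapsto(a_1a_2r)^{-1}$ to convert the answer into the $\nu_1$-expressed form \eqref{eq:Wh_explicit2_n=2}. That substitution is the one step your ``swap the roles'' gloss omits and should be made explicit.
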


\begin{proof}
First we have 
\begin{align} \label{eq:GL1_Wh1}
\begin{aligned}
\mW^{(\varepsilon),\rop}_{d, \nu}(\xi_M)(a)
&=(\varepsilon \sqrt{-1})^{d_1}(a_1a_2)^{2\nu_2+1}\\
&\quad \times 
\int_{\rGL_{1}(\mC )}\left(\int_{\mC }
\Phi_{d_1-d_2}(\xi_{d_1-d_2}\boxtimes \xi_{M-d_2})((ha_1,ha_2z))
\psi_{-\varepsilon }(z)\,\rd_\mC z\right)
\left(\frac{h}{\lvert h\rvert}\right)^{\! d_2-d_1}\lvert h\rvert^{2\nu_2-2\nu_1+2}
\,\rd h
\end{aligned}
 \end{align}
by the formula (\ref{eq:Wh_recursive}) combined with 
$\dim_{\mC} V_{d_1}=1$, $\rr (M)=\rr (M-d_2)$, $\xi_{d_1-d_2}=\zeta_{d_1-d_2}$ and 
\begin{align}
\label{eq:GL1_Wh}
\mW^{(\varepsilon),\rop}_{d, \nu} (\zeta_{d_1})(g)
&=\left(\frac{g}{|g|}\right)^{\!d_1}|g|^{2\nu_1}&
\text{for \;} g\in \rGL_1(\mC).
\end{align}
To compute the right hand side of \eqref{eq:GL1_Wh1}, we decompose $h\in \rGL_1(\mC)$ as $h=ru$ with $r>0$ and $u\in \rU(1)$. Since
\[
\Phi_{d_1-d_2}(\xi_{d_1-d_2}\boxtimes \xi_{M-d_2})((ha_1,ha_2z))
=\overline{u}^{d_2-d_1}\Phi_{d_1-d_2}(\xi_{d_1-d_2}\boxtimes \xi_{M-d_2})((ra_1,ra_2z))
\] 
holds, we obtain
\begin{align*}
\mW^{(\varepsilon),\rop}_{d, \nu}(\xi_M)(a)
&=(\varepsilon \sqrt{-1})^{d_1}(a_1a_2)^{2\nu_2+1}
\!\! \int_{0}^\infty\!\! \left(\int_{\mC }
\Phi_{d_1-d_2}(\xi_{d_1-d_2}\boxtimes \xi_{M-d_2})((ra_1,ra_2z))
\psi_{-\varepsilon }(z)\,\rd_\mC z\right)
r^{2\nu_2-2\nu_1+2}
\,\frac{4\,\rd r}{r}\\
&=(\varepsilon \sqrt{-1})^{d_1}a_1^{2\nu_2+1}a_2^{2\nu_2-1}
\!\! \int_{0}^\infty\!\!\left(\int_{\mC }
\Phi_{d_1-d_2}(\xi_{d_1-d_2}\boxtimes \xi_{M-d_2})((ra_1,z))
\psi_{-\varepsilon }((ra_2)^{-1}z)\,\rd_\mC z\right)
r^{2\nu_2-2\nu_1}
\,\frac{4\,\rd r}{r}
\end{align*}
by integrating with respect to $u$. Here the second equality follows from the substitution $z\mapsto (ra_2)^{-1}z$. Furthermore, we have 
\begin{align*}
&\Phi_{d_1 -d_2}(\xi_{d_1-d_2}
\boxtimes \xi_{M-d_2})((z_1,z_2))
=\left\{\begin{array}{ll}
z_1^{m-d_2}z_2^{d_1-m}\exp (-2\pi (|z_1|^2+|z_2|^2))&\text{if $d_1\geq d_2$},\\[1mm]
(-1)^{m-d_1}\overline{z_1}^{d_2-m}
\overline{z_2}^{m-d_1}\exp (-2\pi (|z_1|^2+|z_2|^2))&\text{if $d_1\leq d_2$}
\end{array}\right.
\end{align*}
for $z_1,z_2\in \mC$ by (\ref{eq:std_schwartz_n=2}). 
Therefore, if $d_1\geq d_2$, we have 
\begin{align*}
&\int_{\mC }
\Phi_{d_1-d_2}(\xi_{d_1-d_2}\boxtimes \xi_{M-d_2})((ra_1,z))
\psi_{-\varepsilon }((ra_2)^{-1}z)\,\rd_\mC z\\
&\qquad =(ra_1)^{m-d_2}
\exp (-2\pi (ra_1)^2)
\int_{\mC }
z^{d_1-m}
\exp (-2\pi |z|^2
-2\pi \varepsilon \sqrt{-1}(ra_2)^{-1}(z+\overline{z}))
\,\rd_\mC z\\
&\qquad =(\varepsilon \sqrt{-1})^{m-d_1}
r^{2m-d_1-d_2}
a_1^{m-d_2}a_2^{m-d_1}
\exp (-2\pi ((ra_1)^2+(ra_2)^{-2}))
\end{align*}
by \cite[(4.19)]{im}, and obtain (\ref{eq:Wh_explicit1_n=2}). 
Meanwhile, if $d_1\leq d_2$, we have 
\begin{align*}
&\int_{\mC }
\Phi_{d_1-d_2}(\xi_{d_1-d_2}\boxtimes \xi_{M-d_2})((ra_1,z))
\psi_{-\varepsilon }((ra_2)^{-1}z)\,\rd_\mC z\\
&\qquad =(-1)^{m-d_1}(ra_1)^{d_2-m}
\exp (-2\pi (ra_1)^2)
\int_{\mC}
\overline{z}^{m-d_1}
\exp (-2\pi |z|^2
-2\pi \varepsilon \sqrt{-1}(ra_2)^{-1}(z+\overline{z}))
\,\rd_\mC z\\
&\qquad =(\varepsilon \sqrt{-1})^{m-d_1}
r^{d_1+d_2-2m}
a_1^{d_2-m}a_2^{d_1-m}\exp (-2\pi ((ra_1)^2+(ra_2)^{-2}))
\end{align*}
by \cite[(4.19)]{im} again, and obtain (\ref{eq:Wh_explicit2_n=2}) as follows:
\begin{align*}
\mW^{(\varepsilon),\rop}_{d, \nu}(\xi_M)(a)
&=(\varepsilon \sqrt{-1})^{m}a_1^{2\nu_2+1+d_2-m}a_2^{2\nu_2-1+d_1-m}
\int_{0}^\infty \exp (-2\pi ((ra_1)^2+(ra_2)^{-2}))
r^{2\nu_2-2\nu_1+d_1+d_2-2m}\,\frac{4\,\rd r}{r}\\
&=(\varepsilon \sqrt{-1})^{m}a_1^{2\nu_1+1+m-d_1}a_2^{2\nu_1-1+m-d_2}
\int_{0}^\infty \exp (-2\pi ((ra_1)^2+(ra_2)^{-2}))
r^{2\nu_1-2\nu_2+2m-d_1-d_2}\,\frac{4\,\rd r}{r}. 
\end{align*}
Here the last equality follows from the substitution $r\mapsto (a_1a_2r)^{-1}$
\end{proof}

\begin{proof}[Proof of Proposition~$\ref{prop:arch_Wh_op}$]
First, let us prove the entireness of 
$\mW^{(\varepsilon),\rop}_{d, \nu} (\rv )(g)$ in $\nu$ by induction on $n$. 
It suffices to consider the case where $\rv =\zeta_M$ with $M\in \rG (d^\rdom )$. 
When $n=1$, the assertion obviously follows from (\ref{eq:GL1_Wh}). 
Next assume that $n\geq 2$.   
Then $\mW^{(\varepsilon),\rop}_{\widehat{d},\widehat{\nu}} (\zeta_{N})(h^{-1})$ 
is an entire function in $\widehat{\nu}$ for any 
$N\in \rG (\widehat{d}^\rdom )$ and $h\in \rGL_{n-1}(\mC )$ 
by the induction hypothesis. 
Applying \cite[Lemma 5.4]{im} to 
$\beta (z)=\Gamma_{\mC}\bigl(z+1+\tfrac{|d_i-d_j|}{2}\bigr)$ ($1\leq i<j\leq n-1$), 
we see that the majorisation  
\cite[Proposition 3.3 with $X=1$]{Jacquet_001} 
for $\rW_{\varepsilon}^{\rop}(\rf_{\rB_{n-1}^{\rop},\widehat{d},\widehat{\nu}}
(\zeta_{N}))$ is also valid for 
$\mW^{(\varepsilon),\rop}_{\widehat{d},\widehat{\nu}} (\zeta_{N})$. 
Hence, similarly to the proof of 
\cite[Proposition 7.2]{Jacquet_001}, 
we see that the right hand side of 
(\ref{eq:Wh_recursive}) converges absolutely and 
is an entire function in $\nu$. 

Next, let us prove the assertion on $\Gamma_n^{\rop} (\nu;d )$. 
Take $\nu_0 \in \mC^n$ such that $\pi_{\rB_n^{\rop},d,\nu_0}$ is irreducible. 
Then there exist $g\in \rGL_n(\mC)$ and $\rv \in V_{d^{\rdom}}$ such that 
$\rW_{\varepsilon}^{\rop}(\rf_{\rB_n^{\rop},d,\nu_0}(\rv ))(g)=
\cJ_{\varepsilon}^{\rop}(\pi_{\rB_n^{\rop},d,\nu_0}(g)\rf_{\rB_n^{\rop},d,\nu_0}(\rv ))$ is nonzero, 
since the Jacquet integral $\cJ_{\varepsilon}^{\rop}$ is 
a nonzero continuous $\mC$-linear form on $I^\infty_{\rB_n^{\rop}}(d,\nu_0)$. 
Hence, the entireness of $\mW^{(\varepsilon),\rop}_{d, \nu} (\rv )(g)$ implies 
that the meromorphic function $\Gamma_n^{\rop} (\nu;d )$ in $\nu$ 
is holomorphic at $\nu =\nu_0$. 

Finally, let us prove the symmetry (\ref{eq:Sinv_Wh_op}) 
by induction on $n$. 
When $n=1$, (\ref{eq:Sinv_Wh_op}) is obvious. 
When $n=2$, (\ref{eq:Sinv_Wh_op}) follows from Lemma~\ref{lem:Wh_explicit_n=2} and the equality
\begin{align*}
\mW^{(\varepsilon),\rop}_{d, \nu}(\rv )(xak)&=
\psi_{\varepsilon ,\rN_2}(x)\mW^{(\varepsilon),\rop}_{d, \nu}(\tau_{d^\rdom}(k)\rv )(a)&
&\text{for \;} x\in \rN_2(\mC),\ a\in A_2,\ k\in \rU (2) \text{\; and \;} \rv \in V_{d^\rdom}
\end{align*}
with the Iwasawa decomposition $\rGL_2(\mC )=\rN_2(\mC)A_2\rU (2)$. 
Now assume that $n\geq 3$. By the expression (\ref{eq:Wh_recursive}) and 
the induction hypothesis, (\ref{eq:Sinv_Wh_op}) holds for 
$\sigma \in \gS_n$ satisfying $\sigma (n)=n$. Moreover, by Lemma~\ref{lem:dual_Wh}, 
we know that (\ref{eq:Sinv_Wh_op}) also holds for 
$\sigma \in \gS_n$ satisfying $\sigma (1)=1$. Since $\gS_n$ is generated by 
\[
\{\sigma \in \gS_n\mid \sigma (1)=1\}\cup \{\sigma \in \gS_n\mid \sigma (n)=n\}
\]
for $n\geq 3$, we know that (\ref{eq:Sinv_Wh_op}) holds for 
any $\sigma \in \gS_n$, which completes the proof. 
\end{proof}

\begin{proof}[Proof of Proposition~$\ref{prop:arch_Wh}$]
As explained in Section \ref{subsec:whittaker}, 
Proposition \ref{prop:arch_Wh} is equivalent to Proposition \ref{prop:arch_Wh_op}. 
Hence, the assertion immediately follows from Proposition~\ref{prop:arch_Wh_op}. 
\end{proof}

\subsection{Explicit formulas for  archimedean local zeta integrals}
\label{subsec:ExplicitArchZeta}

In this subsection, we assume that $n>1$. Let 
\begin{equation*}
\begin{aligned}
&d=(d_1,d_2,\dots ,d_n)\in \mZ^n, &
&\nu =(\nu_1,\nu_2,\dots ,\nu_n)\in \mC^n, \\
&d'=(d_1',d_2',\dots ,d_{n-1}')\in \mZ^{n-1},& 
&\nu'=(\nu_1',\nu_2',\dots ,\nu_{n-1}')\in \mC^{n-1}.
\end{aligned}
\end{equation*}
As before, let $d^{\rdom} =(d^{\rdom}_1,d^{\rdom}_2,\dots ,d^{\rdom}_n)$ and 
$d^{\prime \rdom} =(d^{\prime \rdom}_1,d^{\prime \rdom}_2,\dots ,d^{\prime \rdom}_{n-1})$ respectively denote unique elements of $\Lambda_n\cap \{\sigma d \mid \sigma \in \gS_n\}$ and $\Lambda_{n-1}\cap \{\sigma'd' \mid \sigma'\in \gS_{n-1}\}$. The archimedean local $L$-factor $L(s,\pi_{\rB_n,d,\nu}\times \pi_{\rB_{n-1},d',\nu'})$ is defined by 
\[
L(s,\pi_{\rB_n,d,\nu}\times \pi_{\rB_{n-1},d',\nu'})
=\prod_{i=1}^n\prod_{j=1}^{n-1}
\Gamma \bigl(s+\nu_i+\nu_j'+\tfrac{|d_i+d_j'|}{2}\bigr).
\]

Take $\varepsilon \in \{\pm \}$, 
$W\in \cW (\pi_{\rB_n,d,\nu},\psi_\varepsilon )$, 
$W'\in \cW (\pi_{\rB_{n-1},d',\nu'},\psi_{-\varepsilon})$ 
and $s\in \mC$ with the real part sufficiently large. 
We define the archimedean local zeta integral 
$Z(s,W,W')$ by 
\begin{align} 
Z(s,W,W')=\int_{\rN_{n-1}(\mC )\backslash \rGL_{n-1}(\mC )}
W(\iota_n(g))W'(g)\lvert \det g\rvert^{2s-1}\,\rd g, 
\end{align}
where $\rd g$ is the right invariant measure on 
$\rN_{n-1}(\mC )\backslash \rGL_{n-1}(\mC )$ normalised as (\ref{eqn:quot_GN_measure}). 
We rewrite the explicit result \cite[Theorem 2.7]{im} for the archimedean local zeta integrals 
in terms of our normalised Whittaker functions as follows.

\begin{prop}
\label{prop:ArchInt}
Retain the notation. Then 
for $\rv \in V_{d^\rdom}$ and $\rv'\in V_{d^{\prime \rdom}}$, we have 
\begin{equation}
\label{eq:archint1} 
\begin{aligned}
&Z(s,\mW_{d,\nu}^{(\varepsilon )}(\rv ),\mW_{d',\nu'}^{(-\varepsilon )}(\rv'))\\
&\qquad =\left\{\begin{array}{ll}
\dfrac{(-\varepsilon \sqrt{-1})^{\sum_{i=1}^{n-1}d_i'}}
{\dim V_{d^{\prime \rdom \vee}}}
L(s,\pi_{\rB_n,d,\nu}\times \pi_{\rB_{n-1},d',\nu'})
\langle \rR_{d^{\prime \rdom \vee}}^{d^\rdom }(\rv ), \rv'\rangle_{d^{\prime \rdom \vee}}&
\text{if $d^{\prime \rdom \vee }\preceq d^{\rdom}$},\\[1mm]
0&\text{otherwise}.
\end{array}\right. 
\end{aligned}
\end{equation}
In particular, if $d^{\prime \rdom \vee }\preceq d^{\rdom}$, we have 
\begin{align}
\label{eq:archint2}
&\sum_{M\in \rG (d^{\prime \rdom \vee })}\frac{(-1)^{\rrq (M)}}{\rr (M)}
Z(s,\mW_{d,\nu}^{(\varepsilon )}(\xi_{M[d^\mathrm{dom} ]}),
\mW_{d',\nu'}^{(-\varepsilon )}(\xi_{M^\vee }))
=(-\varepsilon \sqrt{-1})^{\sum_{i=1}^{n-1}d_{i}'}
L(s,\pi_{\rB_n ,d,\nu }\times \pi_{\rB_{n-1},d',\nu'}).
\end{align}
\end{prop}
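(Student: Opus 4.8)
\textbf{Proof proposal for Proposition~\ref{prop:ArchInt}.}

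The plan is to reduce the statement to the explicit archimedean local zeta integral formula \cite[Theorem 2.7]{im}, which is phrased in terms of the Whittaker functions built from the embeddings $\rf_{\rB_n^\rop,d,\nu}$ and $\rf^\rconj_{\rB_n^\rop,d,\nu}$ introduced in Section~\ref{subsec:another_def_ps}, and then translate that formula into the language of the normalised Whittaker functions $\mW^{(\varepsilon)}_{d,\nu}(\rv)$ of Section~\ref{subsec:C_def_ps}. First I would pass from the upper-triangular realisation to the lower-triangular one using the identities $\rW_\varepsilon(f)=\rW^\rop_\varepsilon(\vartheta_n(f))$ and $\mW^{(\varepsilon)}_{d^*,\nu^*}(\rv)=\mW^{(\varepsilon),\rop}_{d,\nu}(\rv)$ recorded at the end of Section~\ref{subsec:whittaker}; since $Z(s,W,W')$ is defined by an integral over $\rN_{n-1}(\mC)\backslash\rGL_{n-1}(\mC)$ with the measure normalised as in \eqref{eqn:quot_GN_measure} (the same normalisation used in \cite{im}), this is a purely notational change that permutes the entries of $d,\nu,d',\nu'$ without affecting the value. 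Then I would invoke \cite[Theorem 2.7]{im} directly for the integral $Z(s,\rW^\rop_\varepsilon(\rf_{\rB_n^\rop,d,\nu}(\rv)),\rW^\rop_{-\varepsilon}(\rf^\rconj_{\rB_{n-1}^\rop,d',\nu'}(\rv')))$, which yields the $L$-factor times a pairing of the $\rGL_{n-1}$-branching restriction against $\rv'$, together with an explicit ratio of $\Gamma_\mathbf{C}$-factors and a factor of $\dim V_{d^{\prime\rdom\vee}}$.

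The key bookkeeping step is to absorb the normalisation factors. By the definition \eqref{eq:def_mW} (equivalently \eqref{eq:def_normWhittaker_op} after the $*$-flip), $\mW^{(\varepsilon)}_{d,\nu}(\rv)$ differs from $\rW_\varepsilon(\rf_{\rB_n,d,\nu}(\rv))$ by the sign $(-1)^{\sum(i-1)d^\rdom_i}(\varepsilon\sqrt{-1})^{\sum(i-1)d_i}$ and the Gamma product $\Gamma_n(\nu;d)$; similarly for $n-1$ with $-\varepsilon$. The point is that the product of the two Gamma normalisations $\Gamma_n(\nu;d)\Gamma_{n-1}(\nu';d')$ together with the ratio of $\Gamma_\mathbf{C}$-factors appearing in \cite[Theorem 2.7]{im} collapses, up to the elementary constant $2(2\pi)^{-s}$ hidden in each $\Gamma_\mathbf{C}$ and powers of $2\pi$, to precisely $L(s,\pi_{\rB_n,d,\nu}\times\pi_{\rB_{n-1},d',\nu'})$ divided by $\dim V_{d^{\prime\rdom\vee}}$; I would verify this via the Jacquet--Piatetski-Shapiro--Shalika type telescoping of the archimedean local factors, and separately track the power-of-$\sqrt{-1}$ signs, which recombine into the single factor $(-\varepsilon\sqrt{-1})^{\sum_{i=1}^{n-1}d_i'}$. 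The vanishing clause for $d^{\prime\rdom\vee}\not\preceq d^\rdom$ is immediate from the branching rule: $\rR^{d^\rdom}_{d^{\prime\rdom\vee}}$ is zero unless the interlacing holds, which is exactly the statement that $\Hom_{\rGL_{n-1}}(\widetilde V(d^{\prime\rdom\vee}),V_{d^\rdom})\neq 0$.

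For the second formula \eqref{eq:archint2}, I would specialise \eqref{eq:archint1} with $\rv=\xi_{M[d^\rdom]}$ and $\rv'=\xi_{M^\vee}$ for $M\in\rG(d^{\prime\rdom\vee})$. Here $\rR^{d^\rdom}_{d^{\prime\rdom\vee}}(\xi_{M[d^\rdom]})=\xi_M$ by \eqref{eq:def_proj_restriction} and the characterisation $\widehat{M[d^\rdom]}=M$, and then $\langle\xi_M,\xi_{M^\vee}\rangle_{d^{\prime\rdom\vee}}=(-1)^{\rrq(M)}\rr(M)$ by \eqref{eq:pairing_naive}, with all cross terms vanishing. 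Summing $\frac{(-1)^{\rrq(M)}}{\rr(M)}$ times \eqref{eq:archint1} over $M\in\rG(d^{\prime\rdom\vee})$ therefore produces $\frac{(-\varepsilon\sqrt{-1})^{\sum d_i'}}{\dim V_{d^{\prime\rdom\vee}}}L(s,\cdots)$ multiplied by $\sum_{M}\frac{(-1)^{\rrq(M)}}{\rr(M)}(-1)^{\rrq(M)}\rr(M)=\#\rG(d^{\prime\rdom\vee})=\dim V_{d^{\prime\rdom\vee}}$, and the dimension factors cancel, giving exactly \eqref{eq:archint2}. The main obstacle I anticipate is the Gamma-factor and sign bookkeeping in the second paragraph: \cite[Theorem 2.7]{im} is stated with a different normalisation of Whittaker functions and a slightly different measure/character convention, so the bulk of the work is a careful, if routine, comparison of normalising constants (the $\Gamma_n(\nu;d)$ factors, the $(-1)$ and $(\varepsilon\sqrt{-1})$ powers, and the $2$'s and $2\pi$'s from $\Gamma_\mathbf{C}$), making sure nothing is dropped; the structural content is entirely contained in \cite{im} and the branching-rule lemmas already established.
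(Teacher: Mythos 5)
Your proposal follows the same overall route as the paper: pass to the lower-triangular realisation, invoke \cite[Theorem 2.7]{im}, carry the normalising constants, and deduce \eqref{eq:archint2} from \eqref{eq:archint1} by specialising at Gel'fand--Tsetlin vectors. The computation of the sum $\sum_M\frac{(-1)^{\rrq(M)}}{\rr(M)}\cdot(-1)^{\rrq(M)}\rr(M)=\dim V_{d^{\prime\rdom\vee}}$ that cancels the dimension factor is exactly the intended argument for \eqref{eq:archint2}.

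There is, however, one step you omit that the paper explicitly needs. The result of \cite[Theorem 2.7]{im} is proved there only under the assumption that $d$ is dominant and $-d'$ is dominant (i.e.\ $d\in\Lambda_n$, $-d'\in\Lambda_{n-1}$), whereas Proposition~\ref{prop:ArchInt} is stated for arbitrary $d\in\mZ^n$, $d'\in\mZ^{n-1}$. You invoke the theorem ``directly'' after the $*$-flip, but the $*$-flip is a single fixed permutation and does not put $d$ into dominant form. The paper closes this gap by first proving the formula when $d\in\Lambda_n$ and $-d'\in\Lambda_{n-1}$ and then extending to general $d,d'$ via the $\gS_n$-symmetry of the normalised Whittaker functions established in Proposition~\ref{prop:arch_Wh_op} (i.e.\ $\mW^{(\varepsilon),\rop}_{\sigma d,\sigma\nu}=\mW^{(\varepsilon),\rop}_{d,\nu}$), together with the visible $\gS_n\times\gS_{n-1}$-invariance of $L(s,\pi_{\rB_n,d,\nu}\times\pi_{\rB_{n-1},d',\nu'})$. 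Without that reduction the appeal to \cite[Theorem 2.7]{im} is not legitimate in the generality claimed.

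Two smaller imprecisions: (i) the factor $\dim V_{d^{\prime\rdom\vee}}$ is not produced by ``telescoping of $\Gamma_{\mC}$-factors''; it appears already in the statement of \cite[Theorem 2.7]{im}, as does the ratio that collapses to the $L$-factor, so there is less Gamma bookkeeping to do than you anticipate. (ii) The power $(-1)^{\sum(n-i)d_i'}$ that converts the $\varepsilon\sqrt{-1}$-powers from the two $n$- and $(n-1)$-normalisations into the single factor $(-\varepsilon\sqrt{-1})^{\sum d_i'}$ is extracted most cleanly by applying Lemma~\ref{lem:ps_rel_conj} to rewrite $\rf^\rconj_{\rB_{n-1}^\rop,d',\nu'}\circ\rI^\rconj_{d^{\prime\rdom}}$ in terms of $\rf_{\rB_{n-1}^\rop,d',\nu'}$; your plan to ``separately track the power-of-$\sqrt{-1}$ signs'' would work but reproves that lemma.
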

\begin{proof}
Take $\rv \in V_{d^\rdom}$ and $\rv'\in V_{d^{\prime \rdom}}$. 
By Lemma~\ref{lem:ps_rel_conj} and the definitions of $\mW^{(\varepsilon),\rop}_{d,\nu}$ and $\mW^{(-\varepsilon),\rop}_{d', \nu'}$, we have 
\begin{align*}
\mW^{(\varepsilon),\rop}_{d, \nu} (\rv )
&=(\varepsilon \sqrt{-1})^{\sum^{n}_{i=1}(n-i)d_i}\Gamma_n^{\rop} (\nu;d )
\rW_{\varepsilon}^\rop (\rf_{\rB_n^\rop ,d,\nu}(\rv )),\\
\mW^{(-\varepsilon),\rop}_{d', \nu'} (\rv')
&=(\varepsilon \sqrt{-1})^{\sum^{n-1}_{i=1}(n-1-i)d_i'}\Gamma_{n-1}^{\rop} (\nu';d')
\rW_{-\varepsilon}^\rop (\rf_{\rB_{n-1}^{\rop},d',\nu'}^\rconj (\rI_{d^{\prime \rdom}}^\rconj (\rv'))).
\end{align*}
Hence, if we assume that $d\in \Lambda_n$ and $-d'\in \Lambda_{n-1}$, 
we have 
\begin{align*}
&Z(s,\mW_{d,\nu}^{(\varepsilon ),\rop }(\rv ),\mW_{d',\nu'}^{(-\varepsilon ),\rop}(\rv'))\\
&=(-\varepsilon \sqrt{-1})^{\sum_{i=1}^{n-1}d_i'}
(\varepsilon \sqrt{-1})^{\sum^{n-1}_{i=1}(n-i)(d_i+d_i')}
\Gamma_n^{\rop} (\nu ;d )\Gamma_{n-1}^{\rop} (\nu';d')
Z(s,\rW_{\varepsilon}^\rop (\rf_{\rB_n^\rop ,d,\nu}(\rv )),
\rW_{-\varepsilon}^\rop (\rf_{\rB_{n-1}^{\rop},d',\nu'}^\rconj (\rI_{d^{\prime *}}^\rconj (\rv'))))\\
&=\left\{\begin{array}{ll}
\dfrac{(-\varepsilon \sqrt{-1})^{\sum_{i=1}^{n-1}d_i'}}
{\dim V_{-d'}}
L(s,\pi_{\rB_n,d,\nu}\times \pi_{\rB_{n-1},d',\nu'})
\langle \rR_{-d'}^{d }(\rv ), \rv'\rangle_{-d'}&
\text{if $-d'\preceq d$},\\[1mm]
0&\text{otherwise}
\end{array}\right. 
\end{align*}
by \cite[Theorem 2.7]{im} combined with Remark \ref{rem:cpx_conj_rep}. 
Even if we do not assume that $d\in \Lambda_n$ and $-d'\in \Lambda_{n-1}$, 
the equation above implies
\begin{align*}
&Z(s,\mW_{d,\nu}^{(\varepsilon ),\rop }(\rv ),\mW_{d',\nu'}^{(-\varepsilon ),\rop}(\rv'))\\
 &\qquad =\left\{\begin{array}{ll}
\dfrac{(-\varepsilon \sqrt{-1})^{\sum_{i=1}^{n-1}d_i'}}
{\dim V_{d^{\prime \rdom \vee}}}
L(s,\pi_{\rB_n,d,\nu}\times \pi_{\rB_{n-1},d',\nu'})
\langle \rR_{d^{\prime \rdom \vee}}^{d^\rdom }(\rv ), \rv'\rangle_{d^{\prime \rdom \vee}}&
\text{if $d^{\prime \rdom \vee }\preceq d^{\rdom}$},\\[1mm]
0&\text{otherwise}
\end{array}\right. 
\end{align*}
due to Proposition~\ref{prop:arch_Wh_op} and the fact that 
$L(s,\pi_{\rB_n ,\sigma d,\sigma \nu }\times \pi_{\rB_{n-1},\sigma'd',\sigma'\nu'})
=L(s,\pi_{\rB_n ,d,\nu }\times \pi_{\rB_{n-1},d',\nu'})$ holds 
for every  $\sigma \in \gS_n$ and $\sigma'\in \gS_{n-1}$. 
We thus obtain (\ref{eq:archint1}) by the last equation and (\ref{eq:norWh_rel_op}). The equality (\ref{eq:archint2}) follows immediately from (\ref{eq:archint1}). 
\end{proof}

\section{Calculation on coefficients of the injectors}
\label{sec:calcoeff}

In this section, we use the notation in Section \ref{subsec:explicit_generators}, and 
drop $v$ from subscript in many cases since we always concentrate on an archimedean place $v \in \Sigma_{F, \infty}$. 
The purpose of this section is verification of Proposition~\ref{prop:coefC}, or more concretely, 
to calculate $c^{(m)}_{\lambda, \mathsf{w}, \mu, \mathsf{w}^\prime}$ appearing in (\ref{eq:ratio}). 
We divide the calculation of $c^{(m)}_{\lambda,\mathsf{w},\mu,\mathsf{w}'}$ into several steps.

\begin{lem}\label{lem:Clem1}
Retain the notation in Section $\ref{subsec:explicit_arch_zeta}$. Then we have 
\begin{align*}
\displaystyle 
      c^{(m)}_{\lambda, \mathsf{w}, \mu, \mathsf{w}^\prime} 
     =  (-1)^{ (n-1) \mathsf{w}^\prime + m \mathrm{b}_n  + \ell(\mu) + \frac{1}{6} n(n-1)(n-2)  } 
           \frac{   \rs_n(\rI^{\gp_n}_{2\rho_n}(\xi_{H(2\rho_{n-1})[2\rho_n]}), 
                                   \rI^{\gp_{n-1}}_{2\rho_{n-1}}(\xi_{H(-2\rho_{n-1})  })  )   }{\rr (H( 2\rho_{n-1} )[2\rho_n])}, 
\end{align*}
where $\rs_n(\cdot,\cdot)$ is the pairing defined as $(\ref{eq:pairing_Lie})$.
\end{lem}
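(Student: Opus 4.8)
The plan is to unfold the definition of $c^{(m)}_{\lambda,\mathsf{w},\mu,\mathsf{w}'}$ from (\ref{eq:ratio}) by pairing both sides against a suitable test vector in the one-dimensional ${\rm U}(n-1)$-invariant space of $V_d\otimes_{\mC}V_{d'}$, and then to evaluate the left-hand side explicitly. Concretely, I would apply the pairing $\langle \xi_{H(2\rho_{n-1})[2\rho_n]}\otimes \text{(top form)}\otimes \cdots, \cdot\,\rangle$ — more precisely, pick out the coefficient of the basis vector $\xi_{H(2\rho_{n-1})[2\rho_n]}\otimes \xi_{H(-2\rho_{n-1})}$ on the right-hand side of (\ref{eq:ratio}), which by construction equals $c^{(m)}_{\lambda,\mathsf{w},\mu,\mathsf{w}'}\cdot (-1)^{{\rm q}(H(2\rho_{n-1}))}/\rr(H(2\rho_{n-1})[2\rho_n])$. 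Since $H(2\rho_{n-1})$ is an extremal-weight pattern, Lemma~\ref{lem:rM_property}\,\ref{num:Hgamma_rM_property} gives $\rr(H(2\rho_{n-1}))=1$, so the denominator simplifies, and ${\rm q}(H(2\rho_{n-1}))$ is computed directly from (\ref{eq:def_qM}): $\sum_{1\le i\le j\le n-1}(2\rho_{n-1})_{i,j}$, where $(2\rho_{n-1})_{i,j}=2\rho_{n-1,i}=n-2i$, giving an explicit sign contributing the $\frac{1}{6}n(n-1)(n-2)$ term.

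The main computation is then to evaluate $\Psi^{(m)}\bigl([\pi_{{\rm B}_n,d,\mathsf{w}/2}]^{\rm pre}\otimes [\pi_{{\rm B}_{n-1},d',\mathsf{w}'/2}]^{\rm pre}\bigr)$ against the chosen basis vector, i.e. to compute the coefficient of $\xi_{H(2\rho_{n-1})[2\rho_n]}\otimes\xi_{H(-2\rho_{n-1})}$ in the left-hand side. By the definition of $\Psi^{(m)}$, this factors as a product of $\rs_n(\cdot,\cdot)$ evaluated on the $\gp$-parts, $\langle\cdot,\cdot\rangle^{(m)}_{\lambda^\vee,\mu^\vee}$ on the first $V$-parts, and $\langle\cdot,\cdot\rangle^{(m)}_{\lambda-\mathsf{w},\mu-\mathsf{w}'}$ on the conjugate $V$-parts, summed over the various Gel'fand--Tsetlin indices occurring in the explicit expansions (\ref{eq:Pic}) and its $(n-1)$-analogue. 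The key simplification is that the two pairings $\langle\cdot,\cdot\rangle^{(m)}$ are ``diagonal'' by (\ref{eq:PairExplicit}): they vanish unless the restriction condition $\widehat{M}=N^\vee+l$ holds. This should pin down, after using Lemma~\ref{lem:sub1_twisting} to absorb the shifts by $m$, $\mathsf{w}$, $\mathsf{w}'$, a single surviving term whose numerical coefficient is a product of signs (powers of $-1$ coming from ${\rm q}(M)$, ${\rm q}(Q)$, the purity-weight twists, and $\ell(\mu)$) times the ``pure Lie-algebra factor'' $\rs_n(\rI^{\gp_n}_{2\rho_n}(\xi_{H(2\rho_{n-1})[2\rho_n]}),\rI^{\gp_{n-1}}_{2\rho_{n-1}}(\xi_{H(-2\rho_{n-1})}))/\rr(H(2\rho_{n-1})[2\rho_n])$.

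The bookkeeping of signs is the main obstacle: one must carefully track the exponents of $-1$ arising from $(-1)^{{\rm q}(M)+{\rm q}(Q)}$ in $c(M,N,P,Q)$, the factors $(-1)^{\sum(n-i)\lambda_i}$-type terms hidden in the Clebsch--Gordan/injector coefficients $\rc^{N,T}_{M^\vee}$ and $\rc^{P,Q^\vee}_T$ via Proposition~\ref{prop:injExp}\,\ref{num:injExp_coeff}, and the $(-1)^{\rrq(M)}$ appearing in the definition (\ref{eq:conjI}) of $\rI^{\rm conj}$. I would organize this by first specializing all Gel'fand--Tsetlin patterns to their extremal values forced by the diagonal pairings (so that most $\rr(\cdot)$ factors become $1$ by Lemma~\ref{lem:rM_property}), then collecting the surviving signs into the stated exponent $(n-1)\mathsf{w}'+m\,{\rm b}_n+\ell(\mu)+\frac{1}{6}n(n-1)(n-2)$. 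A useful sanity check along the way is to verify the exponent is consistent modulo $2$ in the special case $n=2$, where everything reduces to the $\rGL_2\times\rGL_1$ computation and can be checked against the explicit formulas in Corollary~\ref{cor:matcoeff_n=2}. The remaining ``pure Lie-algebra factor'' is deliberately left untouched here — it will be evaluated in the subsequent lemmas of this section.
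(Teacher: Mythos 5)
Your high-level plan — compare the coefficients of a well-chosen basis vector of $V_d\otimes_\mC V_{d'}$ in both sides of \eqref{eq:ratio}, using the sparsity of $\langle\cdot,\cdot\rangle^{(l)}_{\lambda,\mu}$ in \eqref{eq:PairExplicit} to isolate a single surviving term — is exactly the paper's strategy. However, the test vector you propose is not a legal choice, and this error propagates through the rest of the sketch.

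Equation \eqref{eq:ratio} is an identity in $V_d\otimes_\mC V_{d'}$, and the right-hand side is expanded in the basis $\{\xi_{M[d]}\otimes\xi_{M^\vee}\}_{M\in\rG(d'^\vee)}$. The vector $\xi_{H(2\rho_{n-1})[2\rho_n]}\otimes\xi_{H(-2\rho_{n-1})}$ that you single out lives in $V_{2\rho_n}\otimes_\mC V_{2\rho_{n-1}}$, \emph{not} in $V_d\otimes_\mC V_{d'}$. (The patterns $H(2\rho_{n-1})[2\rho_n]$ and $H(-2\rho_{n-1})$ appear in the final formula only as arguments of $\rI^{\gp_n}_{2\rho_n}$ and $\rI^{\gp_{n-1}}_{2\rho_{n-1}}$, which carry $V_{2\rho_\bullet}$ into exterior powers of $\gp^{0\vee}_{\bullet\,\mC}$.) The vector you want is $\xi_{H(-d')[d]}\otimes\xi_{H(d')}$, corresponding to $M=H(-d')\in\rG(d'^\vee)$; this is the choice that, on the left-hand side, forces $M'=H(d')$ in the expansion of $[\pi_{{\rm B}_{n-1},d',\sw'/2}]^{\rm pre}$ and then collapses the sum via $\rc^{N',T'}_{H(-d')}$, $\rc^{P',Q'^\vee}_{T'}$.

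Because of the wrong choice, the subsequent normalizations are also off. The coefficient of $\xi_{H(-d')[d]}\otimes\xi_{H(d')}$ on the right-hand side is $(-1)^{\rrq(H(-d'))}\,c^{(m)}_{\lambda,\sw,\mu,\sw'}$ (using $\rr(H(-d'))=1$ by Lemma~\ref{lem:rM_property}\,\ref{num:Hgamma_rM_property}); it is not $c^{(m)}_{\lambda,\sw,\mu,\sw'}(-1)^{\rrq(H(2\rho_{n-1}))}/\rr(H(2\rho_{n-1})[2\rho_n])$, and $\rr(H(2\rho_{n-1})[2\rho_n])\neq 1$ in general. The factor $\rr(H(2\rho_{n-1})[2\rho_n])$ surviving in the final answer comes from the injection coefficient $\rc^{H(2\rho_{n-1})[2\rho_n],\,\cdot}_{\,\cdot}$ via Proposition~\ref{prop:injExp}\,\ref{num:injExp_coeff}, and the sign $(-1)^{\frac{1}{6}n(n-1)(n-2)}$ arises from $(-1)^{\rrq(H(-d')[d])}$ on the left-hand side (together with $\rrq(H(d'))$, $\rrq(H(\mu-\sw'))$ and $\rrq(H(\mu+m)[\lambda^\vee])$ contributing the remaining exponents $(n-1)\sw'+m\rb_n+\ell(\mu)$), not from $\rrq(H(2\rho_{n-1}))$ as you propose. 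You should redo the bookkeeping with the correct test vector before the sign accounting can come out right.
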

\begin{proof}
We compare the coefficients of $\xi_{H(-d^\prime)[d]} \otimes \xi_{H(d^\prime)}$ in the both sides of (\ref{eq:ratio}), which we regard as linear combinations of $\xi_{M}\otimes \xi_{M'}$ for $M\in \rG (d)$ and $M'\in \rG (d')$. 
Since $\rr (H(-d'))=1$ by Lemma~\ref{lem:rM_property} \ref{num:Hgamma_rM_property}, 
the coefficient of $\xi_{H(-d^\prime)[d]} \otimes \xi_{H(d^\prime)}$ in the right hand side of (\ref{eq:ratio}) 
equals $(-1)^{\rrq (H(-d'))}c^{(m)}_{\lambda,\sw ,\mu,\sw'}$. Let us consider 
the coefficient of $\xi_{H(-d^\prime)[d]} \otimes \xi_{H(d^\prime)}$ in 
$\Psi^{(m)}  (  [\pi_{\rB_n,d,\sw /2}]^{\rpre} \otimes [\pi_{\rB_{n-1},d',\sw'/2}]^{\rm pre})$, which is the left hand side of (\ref{eq:ratio}). 

Since $d^\prime = 2\mu + 2\rho_{n-1} -\mathsf{w}^\prime$, 
the identity (\ref{eq:cHeq}) implies that
\begin{align*}
{\rm c}^{N', T'}_{H(-d^\prime)}
&= \begin{cases} 1&\text{if }N'= H(-2\rho_{n-1})\text{ and }T'=H(-2\mu+\mathsf{w}^\prime), \\
                          0&\text{otherwise},   \end{cases}    \\   
{\rm c}^{P^\prime, Q^{\prime\vee}}_{H(-2\mu+\mathsf{w}^\prime)}
&= \begin{cases} 1&\text{if }P^\prime = H(- \mu)\text{ and }Q^\prime=H( \mu-\mathsf{w}^\prime), \\
                          0 &\text{otherwise}  \end{cases}
\end{align*}
for $N' \in \rG (2\rho_{n-1})$, $T'\in \rG (2\mu^\vee + \mathsf{w}^\prime)$, 
$P'\in \rG (\mu^\vee)$ and $Q' \in \rG (\mu-\sw')$. 
Since $\rr (H(d^\prime))=1$ by Lemma~\ref{lem:rM_property} \ref{num:Hgamma_rM_property}, 
the coefficient $c(H(d^\prime), N^\prime, P^\prime, Q^\prime)$
in  $[\pi_{\rB_{n-1},d',\sw'/2}]^{\rpre}$ is given by 
\begin{align*}
c(H(d^\prime), N^\prime, P^\prime, Q^\prime)  
&=\frac{  (-1)^{\rrq (H(d^\prime)) +\rrq (Q^\prime) } }{\rr (H(d^\prime))}
     \sum_{T'\in {\rm G} (2\mu^\vee+\mathsf{w}^\prime)   } {\rm c}^{N^\prime, T'}_{H(-d^\prime)} {\rm c}^{P^\prime, Q^{\prime\vee}}_{T'}\\
&=
\begin{cases} 
(-1)^{\rrq (H(d^\prime)) +\rrq (  H(\mu-\mathsf{w}^\prime)  ) }
&\text{if }N^\prime = H(-2\rho_{n-1}),\ P^\prime = H(- \mu)\text{ and }Q^\prime=H(\mu-\sw'), \\
0&\text{otherwise}  \end{cases}
\end{align*}
for $N^\prime \in \rG (2\rho_{n-1})$, 
$P^\prime\in \rG (\mu^\vee)$ and $Q^\prime \in \rG (\mu-\sw')$. 
This implies that 
\begin{equation}
\label{eq:pf_Clem1_001}
\begin{aligned}
&[\pi_{\rB_{n-1},d',\sw'/2}]^{\rm pre}
=(-1)^{\rrq (H(d^\prime)) +\rrq (  H(\mu-\mathsf{w}^\prime)  ) }
\xi_{H(d')}\otimes  \rI^{\gp_{n-1}}_{2\rho_{n-1}}(\xi_{H(-2\rho_{n-1})})\otimes \xi_{H(-\mu )}\otimes \xi_{H(\mu-\sw')}\\
&\hspace{10mm}
+\sum_{H(d')\neq M'\in \rG (d')}\sum_{N'\in \rG (2\rho_{n-1})}\sum_{P'\in \rG (\mu^\vee)}\sum_{Q'\in \rG (\mu -\sw')}
    c(M', N', P', Q')\xi_{M'} \otimes  \rI^{\gp_{n-1}}_{2\rho_{n-1}}(\xi_{N'})\otimes \xi_{P'}\otimes \xi_{Q'},
\end{aligned}
\end{equation}
and we note that only the first term in the right hand side contributes to the coefficient of $\xi_{H(-d^\prime)[d]} \otimes \xi_{H(d^\prime)}$ in 
$\Psi^{(m)}  (  [\pi_{\rB_n,d,\sw /2}]^{\rpre} \otimes [\pi_{\rB_{n-1},d',\sw'/2}]^{\rm pre})$.

By \cite[Lemma 4.3]{im}, Lemma \ref{lem:hom_dual_conj} and Remark \ref{rem:cpx_conj_rep}, 
one readily sees that $\Hom_{\rU (n-1)}(V_{2\rho_n}\otimes_\mC V_{2\rho_{n-1}},\mC_\rtriv )$ is one dimensional, and 
this implies that 
$\rs_n(\rI^{\gp_n}_{2\rho_n}(\cdot ), \rI^{\gp_{n-1}}_{2\rho_{n-1}}(\cdot ))$ and 
$\langle \cdot , \cdot \rangle^{(0)}_{2\rho_n, 2\rho_{n-1}}$ coincide up to a constant multiple. 
The explicit descriptions of the pairings (\ref{eq:PairExplicit}) yield that 
\begin{align}
\label{eq:pf_Clem1_004}
 &\rs_n(\rI^{\gp_n}_{2\rho_n}(\xi_N), \rI^{\gp_{n-1}}_{2\rho_{n-1}}(\xi_{H(-2\rho_{n-1})}))=0
\quad \text{if }N\neq H(2\rho_{n-1})[2\rho_n],\\
&\langle \xi_P, \xi_{H(-\mu)} \rangle^{(m)}_{\lambda^\vee, \mu^\vee}
= \begin{cases}(-1)^{\rrq (P)}\rr (P)&\text{if }P=H(\mu+m)[\lambda^\vee],  \\
                                 0&\text{otherwise},   \end{cases} \nonumber\\
&\langle \xi_Q, \xi_{H(\mu-\mathsf{w}^\prime)}  \rangle^{(m)}_{\lambda-\mathsf{w}, \mu-\mathsf{w}^\prime}
= \begin{cases}  (-1)^{\rrq (Q)}\rr (Q)   &\text{if }Q=H(-\mu+\sw'+m)[\lambda-\sw ],  \\
                                 0&\text{otherwise} \end{cases}  \nonumber
\end{align}
for $N\in \rG (2\rho_n)$, $P\in \rG (\lambda^\vee)$ and $Q \in \rG (\lambda-\sw )$. 
By these equalities, we have 
\begin{equation}
\label{eq:pf_Clem1_002}
\begin{aligned}
\Psi^{(m)}  &\bigl( [\pi_{\rB_n,d,\sw /2}]^{\rpre} \otimes 
(\xi_{H(d')}\otimes  \rI^{\gp_{n-1}}_{2\rho_{n-1}}(\xi_{H(-2\rho_{n-1})})\otimes \xi_{H(-\mu )}\otimes \xi_{H(\mu-\sw')})\bigr)\\
&=(-1)^{\rrq (H(\mu+m)[\lambda^\vee])+\rrq (H(-\mu+\sw'+m)[\lambda-\sw ])}
\rr (H(\mu+m)[\lambda^\vee])
\rr (H(-\mu+\sw'+m)[\lambda-\sw ])\\
& \qquad \times \rs_{n}(\rI^{\gp_n}_{2\rho_n}(\xi_{H(2\rho_{n-1})[2\rho_n]}), \rI^{\gp_{n-1}}_{2\rho_{n-1}}(\xi_{H(-2\rho_{n-1})}))\\
& \qquad \times \sum_{M\in \rG (d)}c(M, H(2\rho_{n-1})[2\rho_n], H(\mu+m)[\lambda^\vee], H(-\mu+\sw'+m)[\lambda-\sw ])
\xi_M\otimes \xi_{H(d')}.
\end{aligned}
\end{equation}

By using (\ref{eq:pf_Clem1_001}) and (\ref{eq:pf_Clem1_002}), 
we compare the coefficients of  
$\xi_{H(-d^\prime)[d]} \otimes \xi_{H(d^\prime)}$ in the both sides of (\ref{eq:ratio}), and obtain the equality
\begin{equation}
\label{eq:pf_Clem1_003}
\begin{aligned}
&(-1)^{\rrq (H(-d'))} c^{(m)}_{\lambda,\sw ,\mu,\sw'} \\
&\hspace{2mm} =(-1)^{\rrq (H(d')) + \rrq (H(\mu-\mathsf{w}^\prime))+\rrq (H(\mu+m)[\lambda^\vee])+\rrq (H(-\mu+\sw'+m)[\lambda-\sw ])} \rr (H(\mu+m)[\lambda^\vee])
\rr (H(-\mu+\sw'+m)[\lambda-\sw ])\\
&\hspace{4mm} \times 
\rs_{n}(\rI^{\gp_n}_{2\rho_n}(\xi_{H(2\rho_{n-1})[2\rho_n]}), \rI^{\gp_{n-1}}_{2\rho_{n-1}}(\xi_{H(-2\rho_{n-1})})) \, c(H(-d^\prime)[d], H(2\rho_{n-1})[2\rho_n], H(\mu+m)[\lambda^\vee], H(-\mu+\sw'+m)[\lambda-\sw ]). \\
\end{aligned}
\end{equation}
Furthermore, by the definition (\ref{eq:Pic}) and the latter assertion of Lemma~\ref{lem:injExpH(mu)}, we have 
\begin{align*}
 c&( H(-d^\prime)[d],  H(2\rho_{n-1})[2\rho_n],   
       H(\mu+m)[\lambda^\vee],  H(-\mu+\mathsf{w}^\prime+m)[\lambda-\mathsf{w}]  ) \\
&\quad =   \frac{  (-1)^{\rrq (H(-d^\prime)[d]) 
       +\rrq (H(-\mu+\mathsf{w}^\prime+m)[\lambda-\mathsf{w}]) } }{\rr (H(-d^\prime)[d])}
     \sum_{T\in {\rm G} (2\lambda^\vee+\mathsf{w})   } 
     {\rm c}^{H(2\rho_{n-1})[2\rho_n], T}_{ H(d^\prime)[d^\vee]  } 
     {\rm c}^{H(\mu+m)[\lambda^\vee], H(\mu-\mathsf{w}^\prime-m)[\lambda^\vee+m] }_{T}\\
&\quad = 
\frac{  (-1)^{\rrq (H(-d^\prime)[d])  
      +\rrq (H(-\mu+\mathsf{w}^\prime+m)[\lambda-\mathsf{w}]) } }{\rr (H(-d^\prime)[d])} 
      {\rm c}^{H(2\rho_{n-1})[2\rho_n], 
                      H(2\mu-\mathsf{w}^\prime)[2\lambda^\vee+\mathsf{w}]}_{ H(d^\prime)[d^\vee]  } 
      {\rm c}^{H(\mu+m)[\lambda^\vee], 
                     H(\mu-\mathsf{w}^\prime-m)[\lambda^\vee+\mathsf{w}] }_{H(2\mu-\mathsf{w}^\prime)[2\lambda^\vee+\mathsf{w}]  }.  
\end{align*}
Using (\ref{eq:inj_coeff_explicit}) in Proposition~\ref{prop:injExp} and Lemma \ref{lem:Mdual_property}, 
we find that the equality
\begin{align*}
c&( H(-d^\prime)[d],  H(2\rho_{n-1})[2\rho_n],   
       H(\mu+m)[\lambda^\vee],  H(-\mu+\mathsf{w}^\prime+m)[\lambda-\sw ]  )\\
&\hspace{15mm}= \frac{  (-1)^{\rrq (H(-d^\prime)[d])+\rrq (H(-\mu+\mathsf{w}^\prime+m)[\lambda-\mathsf{w}]) } }
{\rr (H( 2\rho_{n-1} )[2\rho_n])\rr ( H(\mu+m)[\lambda^\vee] )\rr ( H(-\mu+\mathsf{w}^\prime+m)[\lambda-\mathsf{w}] )}
\end{align*}
holds. By this equality and (\ref{eq:pf_Clem1_003}), we can calculate the coefficient $c^{(m)}_{\lambda,\sw ,\mu,\sw'}$ under consideration as
\begin{align*}
&c^{(m)}_{\lambda,\sw ,\mu,\sw'}
=(-1)^{\rrq (H(d'))-\rrq (H(-d')) + \rrq (H(\mu-\mathsf{w}^\prime))+\rrq (H(\mu+m)[\lambda^\vee])+\rrq (H(-d^\prime)[d]) } \\
&\hspace{45mm} \times \frac{\rs_{n}(\rI^{\gp_n}_{2\rho_n}(\xi_{H(2\rho_{n-1})[2\rho_n]}), \rI^{\gp_{n-1}}_{2\rho_{n-1}}(\xi_{H(-2\rho_{n-1})}))}
{\rr (H( 2\rho_{n-1} )[2\rho_n])}.
\end{align*}
Furthermore the signature of $c^{(m)}_{\lambda,\sw ,\mu,\sw'}$ is described as
\begin{align*}
&(-1)^{\rrq (H(d'))-\rrq (H(-d'))}=1,&
&(-1)^{ \rrq (  H(\mu-\mathsf{w}^\prime)  )+\rrq ( H(\mu+m)[\lambda^\vee] ) }  
= (-1)^{  \rb_{n-1}\mathsf{w}^\prime +m\rb_n + \ell (\mu )},  \\
&(-1)^{\rrq (H(-d^\prime)[d])}=  (-1)^{ \frac{1}{6} n(n-1)(n-2)+\rb_n\sw'}.
\end{align*}
due to the definitions of $\mathrm{q}(M)$ (\ref{eq:def_qM}), $d^\prime = 2\mu + 2 \rho_{n-1} - \mathsf{w}^\prime$ and $\ell(\mu)=\sum^{n-1}_{i=1} \mu_i$. Combining these equalities with $(-1)^{  \rb_{n-1}\sw'+\rb_n\sw'}=(-1)^{(n-1)^2\sw'}=(-1)^{(n-1)\sw'}$, 
we obtain the desired assertion. 
\end{proof}

To calculate the right-hand side of the identity in Lemma~\ref{lem:Clem1}, we prepare three lemmas.

\begin{lem}\label{lem:Clem2}
We have 
\begin{align*}
   \rr & (H(2\rho_{n-1})[2\rho_n]  )^{-1}  
         \rs_n( \rI^{\gp_n}_{2\rho_n}( \xi_{H(2\rho_{n-1})[2\rho_n]}  ), 
                          \rI^{\gp_{n-1}}_{2\rho_{n-1}}(  \xi_{H(-2\rho_{n-1})}  )  )     \\
 & \quad = \left(  (n-1)! (n-2)! \cdots 2 ! 1!  \right)^{-1} \rs_n\bigl(  \rad^\vee \bigl( (E^{\gu (n)}_{n,n-1})^{n-1}(E^{\gu (n)}_{n-1,n-2})^{n-2} \cdots E^{\gu (n)}_{2,1}\bigr)  
\rI^{\gp_n}_{2\rho_n}(  \xi_{H(2\rho_n)} ),\rI^{\gp_{n-1}}_{2\rho_{n-1}}(  \xi_{H(-2\rho_{n-1})}  )\bigr).    
\end{align*}
\end{lem}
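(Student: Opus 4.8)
The key identity to establish is that applying the lowering operators
$(E^{\gu(n)}_{n,n-1})^{n-1}(E^{\gu(n)}_{n-1,n-2})^{n-2}\cdots E^{\gu(n)}_{2,1}$
to the highest weight vector $\xi_{H(2\rho_n)}$ of $\bigl(\mathrm{Ad}^\vee,{\bigwedge}^{\rb_n}\gp^{0\vee}_{n\mC}\bigr)$
recovers, up to the scalar $(n-1)!(n-2)!\cdots 1!$, the vector $\rr(H(2\rho_{n-1})[2\rho_n])\,\xi_{H(2\rho_{n-1})[2\rho_n]}$ inside $V_{2\rho_n}$, and then to transport this through the $\rU(n)$-equivariant map $\rI^{\gp_n}_{2\rho_n}$. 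First I would transfer the computation from $\bigwedge^{\rb_n}\gp^{0\vee}_{n\mC}$ back to the representation $V_{2\rho_n}$ via the embedding $\rI^{\gp_n}_{2\rho_n}$ of \eqref{eq:xiH2rho}; since $\rI^{\gp_n}_{2\rho_n}$ is $\rU(n)$-equivariant and $\xi_{H(2\rho_n)}\mapsto \bigwedge_{2\leq i\leq n}\bigwedge_{1\leq j\leq i-1}E^{\gp_n\vee}_{i,j}$, the action of $\rad^\vee$ on the right corresponds under $\rI^{\gp_n}_{2\rho_n}$ to $\tau_{2\rho_n}$ on $V_{2\rho_n}$. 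Thus it suffices to prove, purely inside $V_{2\rho_n}$, the equality
\[
\tau_{2\rho_n}\bigl((E_{n,n-1})^{n-1}(E_{n-1,n-2})^{n-2}\cdots (E_{2,1})^{1}\bigr)\xi_{H(2\rho_n)}
= (n-1)!(n-2)!\cdots 1!\;\rr(H(2\rho_{n-1})[2\rho_n])\;\xi_{H(2\rho_{n-1})[2\rho_n]},
\]
after which I multiply both sides into the pairing $\rs_n(\cdot,\rI^{\gp_{n-1}}_{2\rho_{n-1}}(\xi_{H(-2\rho_{n-1})}))$ and divide by $\rr(H(2\rho_{n-1})[2\rho_n])$ and by $(n-1)!\cdots 1!$.

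The central step is therefore the vector identity above, and here I would invoke Lemma~\ref{lem:act_xiM_E+l}, applied to the dominant weight $\lambda=2\rho_n$ and the subdominant weight $\mu=2\rho_{n-1}$ (note $2\rho_{n-1}\preceq 2\rho_n$, which must be checked from $\rho_{n,i}=\tfrac{n+1}{2}-i$). The exponents in that lemma are $l_m=\sum_{i=1}^m(\lambda_i-\mu_i)$; with $\lambda=2\rho_n$ and $\mu=2\rho_{n-1}$ one computes $\lambda_i-\mu_i=1$ for $1\leq i\leq n-1$ (and the last entry does not enter), so $l_m=m$ for $1\leq m\leq n-1$. Lemma~\ref{lem:act_xiM_E+l} then gives directly
\[
\tau_{2\rho_n}\bigl((E_{1,2})^{1}(E_{2,3})^{2}\cdots (E_{n-1,n})^{n-1}\bigr)\xi_{H(\mu)[\lambda]} = 1!\,2!\cdots(n-1)!\;\xi_{H(\lambda)};
\]
I would then pass to the ``transpose'' statement, using $\rU(n)$-invariance of $(\cdot,\cdot)_{2\rho_n}$ exactly as in the proof of Lemma~\ref{lem:EactPair}, to obtain the pairing identity
\[
\bigl(\tau_{2\rho_n}((E_{n,n-1})^{n-1}\cdots(E_{2,1})^{1})\xi_{H(2\rho_n)},\ \xi_{H(2\rho_{n-1})[2\rho_n]}\bigr)_{2\rho_n} = (n-1)!\cdots 1!.
\]
To upgrade this from a single matrix coefficient to the full vector identity, I would observe that both $\tau_{2\rho_n}((E_{n,n-1})^{n-1}\cdots(E_{2,1})^{1})\xi_{H(2\rho_n)}$ and $\xi_{H(2\rho_{n-1})[2\rho_n]}$ are weight vectors of the same weight $\gamma^{\widehat{H(2\rho_{n-1})[2\rho_n]}}$ lying in the irreducible $\rGL_{n-1}(\mC)$-submodule $V_{2\rho_n,2\rho_{n-1}}(\mC)\simeq V_{2\rho_{n-1}}$ (by Lemma~\ref{lem:restriction_lambda_mu} with $\cA=\mC$); since $2\rho_{n-1}$ is the highest weight, that weight space is one-dimensional, so the two vectors are proportional, and the matrix-coefficient computation together with $\rr(H(2\rho_{n-1})[2\rho_n])=(\xi_{H(2\rho_{n-1})[2\rho_n]},\xi_{H(2\rho_{n-1})[2\rho_n]})_{2\rho_n}$ pins down the constant.

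With the vector identity in hand, the conclusion is a one-line substitution: apply $\rI^{\gp_n}_{2\rho_n}$, pair against $\rI^{\gp_{n-1}}_{2\rho_{n-1}}(\xi_{H(-2\rho_{n-1})})$ via $\rs_n$, and use bilinearity together with the equivariance (which turns $\tau_{2\rho_n}$ of a product of $E_{i+1,i}$'s into $\rad^\vee$ of the corresponding product of $E^{\gu(n)}_{i+1,i}$'s, because $\tau_{2\rho_n}(E_{i,j}^{\gu(n)})=\tau_{2\rho_n}(E_{i,j})$ on $V_{2\rho_n}$ by the computation recalled in the proof of Lemma~\ref{lem:hom_dual_conj}). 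I expect the main obstacle to be purely bookkeeping: verifying that the exponents match ($l_m=m$ with the correct indexing of the $E$'s, i.e.\ $(E_{2,1})^{1}(E_{3,2})^{2}\cdots(E_{n,n-1})^{n-1}$ versus the order written in the statement) and that the sign/scalar normalizations of $\rI^{\gp_n}_{2\rho_n}$ and $\xi_{H(2\rho_n)}$ are threaded through correctly; none of this is deep, but it is where an error would most easily creep in. A minor secondary point is confirming $2\rho_{n-1}\preceq 2\rho_n$ so that $H(2\rho_{n-1})[2\rho_n]$ and the maps $\rI$, $\rR$ are all defined.
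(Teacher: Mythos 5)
Your reduction to the matrix-coefficient identity via Lemma~\ref{lem:EactPair} and the transfer from $\bigwedge^{\rb_n}\gp^{0\vee}_{n\mC}$ to $V_{2\rho_n}$ via $\rU(n)$-equivariance of $\rI^{\gp_n}_{2\rho_n}$ are both correct and match the paper's strategy. However, your proposed upgrade from the single matrix coefficient to the full vector identity
\[
\tau_{2\rho_n}\bigl((E_{n,n-1})^{n-1}\cdots (E_{2,1})^{1}\bigr)\xi_{H(2\rho_n)}
\;\overset{?}{=}\; (n-1)!\cdots 1!\;\rr(H(2\rho_{n-1})[2\rho_n])\;\xi_{H(2\rho_{n-1})[2\rho_n]}
\]
is false, and the one-dimensionality argument you give for it does not hold. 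The issue is that the operator $E_{n,n-1}$ does not lie in $\iota_n(\ggl_{n-1})$, so the left-hand side need not stay inside the irreducible $\rGL_{n-1}(\mC)$-submodule $V_{2\rho_n,2\rho_{n-1}}(\mC)$; it spreads out over several $\rGL_{n-1}(\mC)$-isotypic components of $V_{2\rho_n}\vert_{\rGL_{n-1}(\mC)}$ sharing the weight $(n-2,n-4,\dotsc,-(n-2),0)$. Concretely, already for $n=3$ one has $2\rho_3=(2,0,-2)$, $2\rho_2=(1,-1)$, the target weight is $(1,-1,0)$, and $\rG(2\rho_3)$ contains exactly two patterns of that weight, namely $H(2\rho_2)[2\rho_3]=\left(\begin{smallmatrix}2&0&-2\\1&-1\\1\end{smallmatrix}\right)$ and $M_2=\left(\begin{smallmatrix}2&0&-2\\2&-2\\1\end{smallmatrix}\right)$; tracing (\ref{eq:GT_act-}) through $(E_{3,2})^2E_{2,1}$ shows that both $\xi$'s appear with nonzero coefficient, so the two vectors are genuinely not proportional.

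The matrix-coefficient version (Lemma~\ref{lem:EactPair}) is all that is true, and what is needed to finish is precisely what you skipped: a vanishing statement killing the other GT components \emph{after} pairing, not before. Expanding
\[
\tau_{2\rho_n}\bigl((E_{n,n-1})^{n-1}\cdots E_{2,1}\bigr)\xi_{H(2\rho_n)}
=\sum_{M\in\rG(2\rho_n)}\rr(M)^{-1}\bigl(\tau_{2\rho_n}(\cdots)\xi_{H(2\rho_n)},\,\xi_M\bigr)_{2\rho_n}\,\xi_M
\]
and applying $\rs_n(\rI^{\gp_n}_{2\rho_n}(\,\cdot\,),\,\rI^{\gp_{n-1}}_{2\rho_{n-1}}(\xi_{H(-2\rho_{n-1})}))$, one must invoke the orthogonality fact (stated as (\ref{eq:pf_Clem1_004}) in the proof of Lemma~\ref{lem:Clem1}) that $\rs_n(\rI^{\gp_n}_{2\rho_n}(\xi_N),\rI^{\gp_{n-1}}_{2\rho_{n-1}}(\xi_{H(-2\rho_{n-1})}))=0$ unless $N=H(2\rho_{n-1})[2\rho_n]$. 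Only then does the sum collapse to the single term with $M=H(2\rho_{n-1})[2\rho_n]$, and combining the surviving $\rr(M)^{-1}$ with Lemma~\ref{lem:EactPair} gives the stated identity. Without this selection principle your argument does not close: a strictly stronger (and false) vector identity is not available as a shortcut.
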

\begin{proof}
Applying Lemma~\ref{lem:EactPair} to the case where $\lambda = 2 \rho_n$ and $\mu = 2 \rho_{n-1}$, we have 
\begin{align}
\label{eq:pf001_Clem2} 
&\left( \tau_{2\rho_n}(
(E_{n, n-1})^{n-1}(E_{n-1, n-2})^{n-2}\dots 
E_{2, 1})\xi_{ H(2\rho_n )},
\xi_{H(2\rho_{n-1})[2\rho_n]} \right)_{2\rho_n}  
=(n-1)!(n-2)!\dots 2!1!.
\end{align}
Since $\tau_{2\rho_n}(E^{\gu (n)}_{i,j})=\tau_{2\rho_n}(E_{i,j})$ for $1\leq i,j\leq n$, 
we have 
\begin{align*}
\rs_n&\bigl(  \rad^\vee \bigl( (E^{\gu (n)}_{n,n-1})^{n-1}(E^{\gu (n)}_{n-1,n-2})^{n-2} \cdots E^{\gu (n)}_{2,1}\bigr)  
\rI^{\gp_n}_{2\rho_n}(  \xi_{H(2\rho_n)} ),\, \rI^{\gp_{n-1}}_{2\rho_{n-1}}(  \xi_{H(-2\rho_{n-1})}  )\bigr)\\
&=\rs_n\bigl( \rI^{\gp_n}_{2\rho_n}(  
\tau_{2\rho_n} ( (E_{n,n-1})^{n-1}(E_{n-1,n-2})^{n-2} \cdots E_{2,1})\xi_{H(2\rho_n)} ),\, \rI^{\gp_{n-1}}_{2\rho_{n-1}}(  \xi_{H(-2\rho_{n-1})}  )\bigr)\\
&=\sum_{M\in \rG (2\rho_n)}\rr (M)^{-1}  
\bigl(\tau_{2\rho_n} ( (E_{n,n-1})^{n-1}(E_{n-1,n-2})^{n-2} \cdots E_{2,1})\xi_{H(2\rho_n)},\, \xi_M\bigr)_{2\rho_n}
\rs_n\bigl( \rI^{\gp_n}_{2\rho_n}(  \xi_M ),\rI^{\gp_{n-1}}_{2\rho_{n-1}}(  \xi_{H(-2\rho_{n-1})}  )\bigr).
\end{align*}
Applying (\ref{eq:pf_Clem1_004}) and (\ref{eq:pf001_Clem2}) to this equalities, 
we obtain the assertion. 
\end{proof}

\begin{lem}\label{lem:Clem3}
We have   \begin{align*}
\rI^{\gp_{n-1}}_{2\rho_{n-1}}(  \xi_{H(-2\rho_{n-1})}  ) 
              &=(-1)^{-\frac{1}{6}n(n-1)(n-2)} 
                 E^{\gp_{n-1}\vee}_{1,2} \wedge   E^{\gp_{n-1}\vee}_{1,3} 
                 \wedge E^{\gp_{n-1}\vee}_{2,3}   \\
                & \quad    \wedge E^{\gp_{n-1}\vee}_{1,4} 
                       \wedge E^{\gp_{n-1}\vee}_{2,4}  
                       \wedge E^{\gp_{n-1}\vee}_{3,4}   \wedge \cdots   
                   \wedge E^{\gp_{n-1}\vee}_{1, n-1} \wedge E^{\gp_{n-1}\vee}_{2, n-1}
                         \wedge \cdots \wedge   E^{\gp_{n-1}\vee}_{n-2, n-1}.
            \end{align*}
\end{lem}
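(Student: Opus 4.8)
The goal is to identify the highest weight vector $\rI^{\gp_{n-1}}_{2\rho_{n-1}}(\xi_{H(-2\rho_{n-1})})$ explicitly as a wedge of dual basis elements $E^{\gp_{n-1}\vee}_{i,j}$. Recall that $\rI^{\gp_{n-1}}_{2\rho_{n-1}}$ is the ${\rm U}(n-1)$-equivariant embedding defined by sending the highest weight vector $\xi_{H(2\rho_{n-1})}$ of $V_{2\rho_{n-1}}$ to the wedge $\bigwedge_{2\leq i\leq n-1}\bigwedge_{1\leq j\leq i-1} E^{\gp_{n-1}\vee}_{i,j}$, which is a highest weight vector of $\bigwedge^{\rb_{n-1}}\gp^{0\vee}_{n-1\mC}$ of weight $2\rho_{n-1}$ by Lemma~\ref{lem:htwtE}. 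Since $H(-2\rho_{n-1})=H((2\rho_{n-1})^*)$ is the extremal weight vector for the longest Weyl element, the plan is to apply $\tau_{2\rho_{n-1}}(w_{n-1})$ (equivalently $\mathrm{Ad}^\vee(w_{n-1})$ on the target) and use Lemma~\ref{lem:GTwn} together with the ${\rm U}(n-1)$-equivariance of $\rI^{\gp_{n-1}}_{2\rho_{n-1}}$.

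\textbf{Key steps.} First I would invoke Lemma~\ref{lem:GTwn} with $\lambda = 2\rho_{n-1}$ and $\gamma = 2\rho_{n-1}$, which gives $\tau_{2\rho_{n-1}}(w_{n-1})\xi_{H(2\rho_{n-1})} = (-1)^{\sum_{i=1}^{n-1}(i-1)(2\rho_{n-1,i})}\xi_{H(-2\rho_{n-1})}$; one computes the exponent $\sum_{i=1}^{n-1}(i-1)(n-2i) = -\tfrac{1}{6}n(n-1)(n-2)$ (or an equivalent representative mod $2$), so that $\xi_{H(-2\rho_{n-1})} = (-1)^{-\frac16 n(n-1)(n-2)}\tau_{2\rho_{n-1}}(w_{n-1})\xi_{H(2\rho_{n-1})}$. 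Then, by ${\rm U}(n-1)$-equivariance of $\rI^{\gp_{n-1}}_{2\rho_{n-1}}$, we get $\rI^{\gp_{n-1}}_{2\rho_{n-1}}(\xi_{H(-2\rho_{n-1})}) = (-1)^{-\frac16 n(n-1)(n-2)}\mathrm{Ad}^\vee(w_{n-1})\bigl(\bigwedge_{2\leq i\leq n-1}\bigwedge_{1\leq j\leq i-1}E^{\gp_{n-1}\vee}_{i,j}\bigr)$. Finally, by Lemma~\ref{lem:adact}~\ref{num:adact_wn}, $\mathrm{Ad}^\vee(w_{n-1})E^{\gp_{n-1}\vee}_{i,j} = E^{\gp_{n-1}\vee}_{j,i}$, so each factor $E^{\gp_{n-1}\vee}_{i,j}$ with $j<i$ becomes $E^{\gp_{n-1}\vee}_{j,i}$ with $j<i$; one then reorders the resulting wedge into the order displayed in the statement ($E^{\gp_{n-1}\vee}_{1,2}\wedge E^{\gp_{n-1}\vee}_{1,3}\wedge E^{\gp_{n-1}\vee}_{2,3}\wedge\cdots$) and checks that this reordering is achieved by the \emph{same} permutation that reorders the original indexing set $\{(i,j):j<i\}$ (in the order $i$ increasing, then $j$ increasing) into $\{(j,i):j<i\}$ (in the order $i$ increasing, then $j$ increasing), hence the two reorderings have the \emph{same} sign and it cancels, leaving no extra sign.

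\textbf{Expected main obstacle.} The only real point requiring care is the sign bookkeeping: one must verify that applying the transposition map $(i,j)\mapsto(j,i)$ to each term of the wedge $E^{\gp_{n-1}\vee}_{2,1}\wedge E^{\gp_{n-1}\vee}_{3,1}\wedge E^{\gp_{n-1}\vee}_{3,2}\wedge\cdots$ and then re-sorting into the ``upper-triangular'' order introduces no sign beyond the declared $(-1)^{-\frac16 n(n-1)(n-2)}$. The key observation is that the enumeration order $\{(i,j): 1\le j< i\le n-1\}$ listed as $(2,1),(3,1),(3,2),(4,1),\dots$ maps under $(i,j)\mapsto(j,i)$ \emph{bijectively and order-preservingly} onto $\{(j,i): 1\le j<i\le n-1\}$ listed as $(1,2),(1,3),(2,3),(1,4),\dots$, so the permutation needed to restore order is trivial. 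I would double-check this by a small induction on $n-1$ (or by the explicit cases $n-1 = 2,3,4$) and confirm the power of $-1$ from Lemma~\ref{lem:GTwn} reduces to $(-1)^{-\frac16 n(n-1)(n-2)}$ modulo $2$; everything else is a direct assembly of the cited lemmas.
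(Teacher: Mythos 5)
Your proof is correct and follows exactly the same route as the paper: apply Lemma~\ref{lem:GTwn} with $\lambda=\gamma=2\rho_{n-1}$ to express $\xi_{H(-2\rho_{n-1})}$ as $(-1)^{-\frac{1}{6}n(n-1)(n-2)}\tau_{2\rho_{n-1}}(w_{n-1})\xi_{H(2\rho_{n-1})}$, push $w_{n-1}$ through $\rI^{\gp_{n-1}}_{2\rho_{n-1}}$ by $\rU(n-1)$-equivariance, and apply Lemma~\ref{lem:adact}~\ref{num:adact_wn} factor by factor. Your extra check that the index-swap $(i,j)\mapsto(j,i)$ sends the lower-triangular enumeration order-preservingly to the upper-triangular one (so no reordering sign appears) is a correct explication of a point the paper leaves implicit.
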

\begin{proof}
Lemma~\ref{lem:GTwn} yields that 
\begin{align*}
\rI^{\gp_{n-1}}_{2\rho_{n-1}}(\xi_{H(-2\rho_{n-1})}) 
 = \rI^{\gp_{n-1}}_{2\rho_{n-1}}(\xi_{H\left( (2\rho_{n-1})^\ast \right)}) 
 &= (-1)^{\sum^{n-1}_{i=1}  (i-1) \rho_{n-1, i}  } \rad^\vee(w_{n-1}) 
\rI^{\gp_{n-1}}_{2\rho_{n-1}}(\xi_{ H(2\rho_{n-1})}) \\
 &= (-1)^{-\frac{1}{6}n(n-1)(n-2)} \rad^\vee(w_{n-1}) \rI^{\gp_{n-1}}_{2\rho_{n-1}}(\xi_{ H(2\rho_{n-1})  }). 
\end{align*}
Hence Lemma~\ref{lem:adact} \ref{num:adact_wn} and 
the definition (\ref{eq:xiH2rho}) of $\rI^{\gp_{n-1}}_{2\rho_{n-1}}$ imply the statement. 
\end{proof}

For $\mX ,\mY \in {\bigwedge}^{\rb_n} \gp^{\vee}_{n\mC}$ and $1\leq l\leq n-1$, we write $\mX \equiv_{(l)}\mY$ if and only if 
\[
\rs_n\bigl(\rad^\vee \bigl((E^{\gu (n)}_{n,n-1})^{n-1}(E^{\gu (n)}_{n-1,n-2})^{n-2} \cdots (E^{\gu (n)}_{l+1,l})^l\bigr)(\mX -\mY ),\,
\rI^{\gp_{n-1}}_{2\rho_{n-1}}(\xi_{H(-2\rho_{n-1})})\bigr)=0.
\]
For $\mX ,\mY \in {\bigwedge}^{\rb_n} \gp^{\vee}_{n\mC}$, we write $\mX \equiv_{(n)}\mY$ if and only if 
$\rs_n\bigl(\mX -\mY ,\,
\rI^{\gp_{n-1}}_{2\rho_{n-1}}(\xi_{H(-2\rho_{n-1})})\bigr)$ equals $0$, or equivalently, if and only if the equality $\rs_n\bigl(\mX ,\,\rI^{\gp_{n-1}}_{2\rho_{n-1}}(\xi_{H(-2\rho_{n-1})})\bigr)
=\rs_n\bigl(\mY ,\,\rI^{\gp_{n-1}}_{2\rho_{n-1}}(\xi_{H(-2\rho_{n-1})})\bigr)$ holds.

\begin{lem}\label{lem:Clem4}
Retain the notation. 
\begin{enumerate}
\item \label{num:Clem4_basic}
For each $1\leq l\leq n$, the binary relation $\equiv_{(l)}$ is an equivalence relation on ${\bigwedge}^{\rb_n} \gp^{\vee}_{n\mC}$ with the following property$:$ if elements $\mX ,\mY ,\mX'$ and $\mY'$ of ${\bigwedge}^{\rb_n} \gp^{\vee}_{n\mC}$ satisfy $\mX \equiv_{(l)}\mY$ and $\mX'\equiv_{(l)}\mY'$, we have
\begin{align*}
\mX +\mX' &\equiv_{(l)}\mY +\mY' \qquad \text{\; and \;} \qquad 
c\mX \equiv_{(l)}c\mY \qquad \text{for \;}c\in \mC.
\end{align*}

\item \label{num:Clem4_trivial}
For $1\leq l\leq n-1$,  take $\mX ,\mY \in {\bigwedge}^{\rb_n} \gp^{\vee}_{n\mC}$ satisfying $\mX \equiv_{(l)}\mY$. 
Then $\rad^\vee \bigl((E^{\gu (n)}_{l+1,l})^l\bigr)\mX \equiv_{(l+1)}\rad^\vee \bigl((E^{\gu (n)}_{l+1,l})^l\bigr)\mY$ holds.

\item \label{num:Clem4_vanish}
For $1\leq l\leq n-1$, take
$X_1,X_2,\dots ,X_{\rb_n}\in \{E^{\gp_n\vee}_{i,j}\mid 1\leq i,j\leq n\}$. Then 
$X_1\wedge X_2\wedge \dots \wedge X_{\rb_n}\equiv_{(l+1)} 0$ holds if 
$\{E^{\gp_n\vee}_{i,j}\mid 1\leq j\leq i\leq l\}$ is not contained in $\{X_1,X_2,\dots ,X_{\rb_n}\}$. 
\end{enumerate}
\end{lem}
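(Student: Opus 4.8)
\textbf{Proof plan for Lemma~\ref{lem:Clem4}.}
The plan is to verify the three statements in order, since each builds on the previous one. For part \ref{num:Clem4_basic}, I would first observe that the relation $\equiv_{(l)}$ is by definition the kernel relation of the $\mC$-linear functional
\[
 \mX \longmapsto \rs_n\bigl(\rad^\vee \bigl((E^{\gu (n)}_{n,n-1})^{n-1}(E^{\gu (n)}_{n-1,n-2})^{n-2} \cdots (E^{\gu (n)}_{l+1,l})^l\bigr)\mX ,\, \rI^{\gp_{n-1}}_{2\rho_{n-1}}(\xi_{H(-2\rho_{n-1})})\bigr)
\]
on ${\bigwedge}^{\rb_n}\gp^\vee_{n\mC}$ (with the convention that the differential operator is the identity when $l=n$), which uses that $\rad^\vee$ and $\rs_n$ are $\mC$-linear. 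The relation $\equiv_{(l)}$ is therefore automatically an equivalence relation compatible with addition and scalar multiplication; this is essentially a formal remark.

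For part \ref{num:Clem4_trivial}, the key point is that if $\mX\equiv_{(l)}\mY$, i.e.\ $\mX-\mY$ lies in the kernel of the functional above, then applying $\rad^\vee\bigl((E^{\gu (n)}_{l+1,l})^l\bigr)$ to $\mX-\mY$ and then the remaining operator $\rad^\vee \bigl((E^{\gu (n)}_{n,n-1})^{n-1}\cdots (E^{\gu (n)}_{l+2,l+1})^{l+1}\bigr)$ reconstitutes exactly the operator appearing in the definition of $\equiv_{(l)}$. So the conclusion is just the statement that the functional defining $\equiv_{(l+1)}$, precomposed with $\rad^\vee\bigl((E^{\gu (n)}_{l+1,l})^l\bigr)$, equals the functional defining $\equiv_{(l)}$. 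This requires the (routine) bookkeeping that $\rad^\vee$ is an algebra homomorphism from $\cU(\ggl_{n\mC})$ to $\End({\bigwedge}^{\rb_n}\gp^\vee_{n\mC})$, so composition of operators corresponds to the product in $\cU$.

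Part \ref{num:Clem4_vanish} is the substantive one and I expect it to be the main obstacle. Here I would argue as follows. By Lemma~\ref{lem:adact}~\ref{num:adact_alg}, each $\rad^\vee(E^{\gu (n)}_{a,b})$ acts on a wedge of the $E^{\gp_n\vee}_{i,j}$ by the Leibniz rule, replacing one factor $E^{\gp_n\vee}_{i,b}$ by $E^{\gp_n\vee}_{i,a}$ (with sign) or $E^{\gp_n\vee}_{a,j}$ by $-E^{\gp_n\vee}_{b,j}$; in particular it maps a wedge of basis vectors to a $\mZ$-linear combination of wedges of basis vectors. Starting from $X_1\wedge\cdots\wedge X_{\rb_n}$ and applying $\rad^\vee\bigl((E^{\gu (n)}_{n,n-1})^{n-1}\cdots(E^{\gu (n)}_{l+1,l})^l\bigr)$, I would track which second-index values (``columns'') can appear: the operators only raise indices toward $n$, so the resulting wedges involve only factors $E^{\gp_n\vee}_{i,j}$ with $j\ge$ (something). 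The point is that $\rI^{\gp_{n-1}}_{2\rho_{n-1}}(\xi_{H(-2\rho_{n-1})})$, as computed in Lemma~\ref{lem:Clem3}, is a wedge of the $E^{\gp_{n-1}\vee}_{i,j}$ with $i<j$; pairing against it via $\rs_n$ (which first restricts along $\iota^\vee_n$ and then wedges, cf.\ \eqref{eq:pairing_Lie}) forces the image to contain, after restriction, precisely the ``strictly upper'' part of $\gp^\vee_{n-1\mC}$, so the ``lower-triangular block'' $\{E^{\gp_n\vee}_{i,j}\mid 1\le j\le i\le l\}\cup\{E^{\gp_n\vee}_{i,i}\}$ must have been supplied by the $X_k$'s and preserved by the operator. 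If $\{E^{\gp_n\vee}_{i,j}\mid 1\le j\le i\le l\}$ is missing from $\{X_1,\dots,X_{\rb_n}\}$, then the operators (which only raise second indices and cannot create a diagonal or lower-triangular factor out of nothing, since each application of $\rad^\vee(E^{\gu(n)}_{a,b})$ with $a>b$ sends column $b$ to column $a>b$) cannot produce the needed lower-triangular factors; hence every resulting wedge has a repeated factor after restriction along $\iota^\vee_n$, or fails to span the required $(n-1)^2$-dimensional space, and so pairs to $0$ with $\mE^\vee_{\gp_{n-1}}$. I would make this precise by a downward induction on $l$, counting the multiplicities of each column index before and after applying the operators, and invoking Lemma~\ref{lem:htwtE} (which identifies the relevant highest-weight configuration) to pin down exactly which wedge survives. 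The hardest bookkeeping will be controlling the signs and the repeated-index cancellations uniformly in $n$ and $l$; I would organise this by always reducing a wedge to normal form (basis vectors in a fixed order) and observing that the operator $\rad^\vee\bigl((E^{\gu (n)}_{l+1,l})^l\bigr)$ acts, up to scalar $l!$, as a single ``column shift'' sending the block of factors with second index $l$ to second index $l+1$, so that the absence of the full lower-triangular block up to level $l$ propagates to level $l+1$.
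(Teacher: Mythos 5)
Your approach matches the paper's in all three parts: (i) and (ii) are formal consequences of the definition of $\equiv_{(l)}$ as the kernel relation of a linear functional built from $\rad^\vee$ (an algebra map) and $\rs_n$, and (iii) goes by first handling the $l=n-1$ case via Lemma~\ref{lem:Clem3} and the definition of $\rs_n$, then using Lemma~\ref{lem:adact}~\ref{num:adact_alg} to see that the operators $\rad^\vee\bigl((E^{\gu(n)}_{a+1,a})^a\bigr)$ for $a\geq l+1$ cannot create any factor $E^{\gp_n\vee}_{i,j}$ with $1\leq j\leq i\leq l$. That said, there are a few imprecise points in your write-up of (iii) that should be fixed. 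First, the phrase ``forces the image to contain, after restriction, precisely the strictly upper part of $\gp^\vee_{n-1\mC}$'' is backwards: $\rI^{\gp_{n-1}}_{2\rho_{n-1}}(\xi_{H(-2\rho_{n-1})})$ is a wedge of the $E^{\gp_{n-1}\vee}_{i,j}$ with $i<j$, so $\rs_n(\omega,\cdot)$ is nonzero only if $\iota_n^\vee(\omega)$ supplies exactly the complementary factors $E^{\gp_{n-1}\vee}_{i,j}$ with $j\leq i\leq n-1$. Second, ``each application of $\rad^\vee(E^{\gu(n)}_{a,b})$ with $a>b$ sends column $b$ to column $a>b$'' is only half the story: by Lemma~\ref{lem:adact}~\ref{num:adact_alg} the operator also sends $E^{\gp_n\vee}_{a,j}$ to $-E^{\gp_n\vee}_{b,j}$, i.e.\ it \emph{lowers} the row index from $a$ to $b$. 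This does not spoil the argument --- since the operators in play have $b=a$ with $a\geq l+1$, the lowest row index so produced is $l+1>l$ --- but it must be addressed explicitly, since otherwise the claim ``the operators only raise indices toward $n$'' is false as stated. Third, Lemma~\ref{lem:htwtE} is not the relevant input here; the explicit form of $\rI^{\gp_{n-1}}_{2\rho_{n-1}}(\xi_{H(-2\rho_{n-1})})$ comes from Lemma~\ref{lem:Clem3}. Finally, the ``downward induction on $l$'' framing is unnecessary: once you know the operators cannot create missing factors with indices $\leq l$, the general $l$ case reduces in one step to the $l=n-1$ case applied term by term.
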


\begin{proof}
The statements \ref{num:Clem4_basic} and \ref{num:Clem4_trivial} follow immediately from the definition. 
The statement \ref{num:Clem4_vanish} for $l=n-1$ follows from the definition of $\rs_n(\cdot,\cdot)$ (\ref{eq:pairing_Lie}) and Lemma \ref{lem:Clem3}. 
The statement \ref{num:Clem4_vanish} for $1\leq l\leq n-2$ follows from 
Lemma~\ref{lem:adact} \ref{num:adact_alg} and the statement for $l=n-1$. 
\end{proof}

\noindent
\begin{proof}[{Proof of Proposition~$\ref{prop:coefC}$}]
By Lemmas~\ref{lem:Clem1} and \ref{lem:Clem2}, it suffices to verify the equality
\begin{equation}
\label{eq:pf001_coefC}
\begin{aligned} 
\rs_n &\bigl(  \rad^\vee \bigl((E^{\gu (n)}_{n,n-1})^{n-1}(E^{\gu (n)}_{n-1,n-2})^{n-2} \cdots E^{\gu (n)}_{2,1}\bigr)  
\rI^{\gp_n}_{2\rho_n}(  \xi_{H(2\rho_n)} ),\rI^{\gp_{n-1}}_{2\rho_{n-1}}(  \xi_{H(-2\rho_{n-1})}  )\bigr)\\
&\hspace{75mm} =(-1)^{\rb_n -\frac{1}{6} n(n-1)(n-2) }(n-1)! (n-2)! \cdots 2 ! 1!.   
\end{aligned}
\end{equation}
For $1\leq l \leq n-1$, we set 
\begin{align*}
\mE^{\gp_n, >}_{l+1}&:=E^{\gp_n\vee}_{l+1,1} \wedge E^{\gp_n\vee}_{l+1,2} \wedge \cdots 
\wedge E^{\gp_n\vee}_{l+1, l}&  \text{ \; and \;} &&
\mE^{\gp_n, \geq }_l&:=E^{\gp_n\vee}_{l,1} \wedge E^{\gp_n\vee}_{l,2} \wedge \cdots 
\wedge E^{\gp_n\vee}_{l,l}.
\end{align*}
By Lemmas~\ref{lem:adact} \ref{num:adact_alg} and \ref{lem:Clem4} \ref{num:Clem4_basic}, \ref{num:Clem4_vanish}, 
 we have  
\begin{align*}
\rad^\vee& ((E^{\gu (n)}_{l+1,l})^l)
\mE^{\gp_n, \geq }_1\wedge \mE^{\gp_n, \geq }_2
\wedge \dots \wedge \mE^{\gp_n, \geq }_{l-1}\wedge 
\mE^{\gp_n, >}_{l+1}\wedge \mE^{\gp_n, >}_{l+2}\wedge \dots \wedge \mE^{\gp_n, >}_{n}\\
&\qquad \equiv_{(l+1)} \mE^{\gp_n, \geq }_1\wedge \mE^{\gp_n, \geq }_2
\wedge \dots \wedge \mE^{\gp_n, \geq }_{l-1}\wedge 
\bigl((-1)^ll!\,\mE^{\gp_n, \geq }_{l}\bigr)\wedge \mE^{\gp_n, >}_{l+2}\wedge \dots \wedge \mE^{\gp_n, >}_{n}
\end{align*}
for $1\leq l\leq n-1$. By these equalities combined with Lemma~\ref{lem:Clem4} \ref{num:Clem4_basic} and \ref{num:Clem4_trivial}, we have 
\begin{equation}
\label{eq:pf002_coefC}
\begin{aligned}
\rad^\vee&  \bigl( (E^{\gu (n)}_{n,n-1})^{n-1}(E^{\gu (n)}_{n-1,n-2})^{n-2} \cdots E^{\gu (n)}_{2,1}\bigr) 
\rI^{\gp_n}_{2\rho_n}(  \xi_{H(2\rho_n)} )\\
&\qquad \equiv_{(n)}(-1)^{\rb_n}(n-1)! (n-2)! \cdots 2 ! 1!\,
\mE^{\gp_n, \geq }_1\wedge \mE^{\gp_n, \geq }_2
\wedge \dots \wedge \mE^{\gp_n, \geq }_{n-1}.
\end{aligned}
\end{equation}
Note that $\rb_n=\frac{1}{2}n(n-1)=\sum_{l=1}^{n-1}l$ and $\rI^{\gp_n}_{2\rho_n}(\xi_{H(2\rho_n)})$ is described as $\mE^{\gp_n, >}_2\wedge \mE^{\gp_n, >}_3
\wedge \dots \wedge \mE^{\gp_n, >}_{n}$ by the definition of $\rI^{\gp_n }_{2\rho_n}$ (\ref{eq:xiH2rho}) . 
Furthermore, Lemma~\ref{lem:Clem3} combined with (\ref{eq:pairing_Lie}), (\ref{eq:boldE}) implies the equality
\begin{align}
\label{eq:pf003_coefC}
&\rs_n\bigl(\mE^{\gp_n, \geq }_1\wedge \mE^{\gp_n, \geq }_2\wedge \dots \wedge \mE^{\gp_n, \geq }_{n-1},
\rI^{\gp_{n-1}}_{2\rho_{n-1}}(  \xi_{H(-2\rho_{n-1})}  )\bigr)
=(-1)^{-\frac{1}{6}n(n-1)(n-2)}.
\end{align}
By (\ref{eq:pf002_coefC}) and (\ref{eq:pf003_coefC}), 
we obtain the equality (\ref{eq:pf001_coefC}), which completes the proof of Proposition~\ref{prop:coefC}. 
\end{proof}

\appendix

\section{Local systems and rational/integral structures} \label{sec:localsystem}

In this appendix, we summarise several facts on local systems defined on the symmetric space $Y^{(n)}_{\mathcal{K}}$, and discuss rational and integral structures of various cohomology groups. Let $F$ be a totally imaginary number field as before, whose ring of integers is denoted as $\mathfrak{r}_F$. For any prime number $p$, let $\mC_p$ denote the completion of an algebraic closure of the $p$-adic number field $\mQ_p$, and fix an isomorphism $\bi\colon \mC\xrightarrow{\, \sim \,}\mC_p$ of fields. 

\subsection{Rational representation of $\rGL_n$}  \label{subsec:alg_rep} 

Let us review several facts on rational representations of $\rGL_{n/F}$. Refer also to \cite[Section~2.2]{hr20}. For $\blambda=(\lambda_\sigma)_{\sigma\in I_F}\in \Lambda_n^{I_F}$, we first consider the tensor product  $\widetilde{V}(\blambda):=\bigotimes_{\sigma \in I_F} V_{\lambda_\sigma}$, where $(\tau_{\lambda_\sigma}, V_{\lambda_\sigma})$ is a complex irreducible representation of $\rGL_n(\mC)$ with highest weight $\lambda_\sigma$. We define the action $\tau_\blambda$ of $\rGL_n(F)$ on $\widetilde{V}(\blambda)$ as 
\begin{align*}
 \tau_{\blambda}(g) \left(\bigotimes_{\sigma\in I_F} \rv_\sigma\right)&:=\bigotimes_{\sigma\in I_F} \bigl( \tau_{\lambda_\sigma}(\sigma (g)) \rv_\sigma\bigr) &\text{for }g\in \rGL_n(F).
\end{align*}
This action  is naturally extended to that of $\rGL_n(F_{\mA,\infty})$ by 
\begin{align*}
 \tau_{\blambda}\bigl((g_v)_{v\in \Sigma_{F,\infty}}\bigr) \left(\bigotimes_{v\in \Sigma_{F,\infty}} (\rv_v \otimes \rv_{\bar{v}})\right)&=\bigotimes_{v\in \Sigma_{F,\infty}} \bigl( \tau_{\lambda_v}(g_v) \rv_v \otimes \tau_{\lambda_{\bar{v}}}(\bar{g}_v)\rv_{\bar{v}}\bigr) &\text{for }(g_v)_{v\in \Sigma_{F,\infty}}\in \rGL_n(F_{\mA,\infty}).
\end{align*}
If $V^{\mathrm{conj}}_{\lambda_{\bar{v}}}$ denotes the complex conjugate representation defined as (\ref{eq:comp_rep}), we can regard $\widetilde{V}(\blambda)$ as the exterior tensor representation $\bigboxtimes_{v\in \Sigma_{F,\infty}} (V_{\lambda_v}\otimes_{\mC} V^{\mathrm{conj}}_{\lambda_{\bar{v}}})$ of $\rGL_n(F_{\mA,\infty})$. For each $\boldsymbol{M}=(M_\sigma)_{\sigma\in I_F}\in \rG(\blambda)=\prod_{\sigma\in I_F}\rG(\lambda_\sigma)$, set $\xi_{\boldsymbol{M}}:=\bigotimes_{\sigma\in I_F}\xi_{M_\sigma}$. Then $\xi_{H(\blambda)}$ for $H(\blambda)=(H(\lambda_\sigma))_{\sigma \in I_F}$ is a highest weight vector of $\widetilde{V}(\blambda)$. 

For any $\alpha\in \mathrm{Aut}(\mC)$, the $\alpha$-twist ${}^\alpha \widetilde{V}(\blambda):=\widetilde{V}(\blambda)\otimes_{\mC,\alpha}\mC$ of $\widetilde{V}(\blambda)$, on which $\rGL_n(F)$ acts via $\tau_\blambda \otimes \mathrm{id}_{\mC}$, is known to be isomorphic to $\widetilde{V}({}^\alpha \blambda)$ for ${}^\alpha \blambda=({}^\alpha \lambda_\sigma)_{\sigma \in I_F}$ (where ${}^\sigma \lambda_\sigma=\lambda_{\alpha^{-1}\circ \sigma}$ for $\sigma\in I_F$) as a complex $\rGL_n(F)$-representation; refer also to \cite[Lemma 7.1]{gr14}. We can explicate this isomorphism as follows. By using the determinantal model of the highest weight representation $V_{\lambda}$ of $\rGL_n(\mC)$ of weight $\lambda\in \Lambda_n$ (see Section~\ref{subsec:realisation} for details), let us put 
\begin{align*}
 \alpha(\rv)&= \sum_{l\in \cL(\lambda)} \alpha(c_l)f_l(z) & \text{for \;} \alpha \in \mathrm{Aut}(\mC) \text{\; and \;} \rv=\sum_{l\in \cL(\lambda)}c_lf_l(z) \in V_\lambda.
\end{align*}
Then the map $\Upsilon_\alpha \colon {}^\alpha \widetilde{V}(\blambda)\rightarrow \widetilde{V}({}^\alpha \blambda)$ defined by $\bigl(\bigotimes_{\sigma \in I_F} \rv_\sigma\bigr)  \otimes c  \mapsto c\bigl(\bigotimes_{\sigma \in I_F} \alpha (\rv_{\alpha^{-1}\circ \sigma})\bigr)$ indeed gives an isomorphism ${}^\alpha \widetilde{V}(\blambda)\cong \widetilde{V}({}^\alpha \blambda)$. We can consider an $\alpha$-semilinear isomorphism (``$\alpha$-twisting isomorphism'')
\begin{align} \label{eq:twist_a}
 \mathrm{tw}_\alpha \colon \widetilde{V}(\blambda)\rightarrow {}^\alpha \widetilde{V}(\blambda)\cong \widetilde{V}({}^\alpha \blambda) \, ; \, \mv \mapsto \Upsilon_{\alpha}(\mv \otimes 1).
\end{align}
Then, if we define ${}^\alpha \boldsymbol{M}=({}^\alpha M_\sigma)_{\sigma \in I_F}$ for $\boldsymbol{M}\in \rG(\blambda)$ and $\alpha\in \mathrm{Aut}(\mC)$ by setting ${}^\alpha M_\sigma=M_{\alpha^{-1}\circ \sigma}$,  we have 
\begin{align*}
 \mathrm{tw}_\alpha(\xi_{\boldsymbol{M}})&=\Upsilon_\alpha \bigl( \xi_{\boldsymbol{M}}\otimes 1\bigr) =\bigotimes_{\sigma \in I_F} \alpha(\xi_{M_{\alpha^{-1}\circ \sigma}})=\bigotimes_{\sigma \in I_F} \xi_{{}^\sigma M_\sigma}= \xi_{{}^\alpha \boldsymbol{M}}
\end{align*}
(the third equality follows from Proposition~\ref{prop:xiM_integral}). Note that, if we regard $\blambda$ as a character on the diagonal torus $\rT_n(F)\rightarrow \mC^\times \, ; (t_i)_{1\leq i\leq n}\mapsto \prod_{\sigma \in I_F} \prod_{1\leq i\leq n} (\sigma(t_i))^{\lambda_{\sigma,i}}$, the eigenspace  of $\widetilde{V}(\blambda)$ on which $\rT_n(F)$ acts via $\blambda$ is of dimension one, due to highest weight theory. This implies that $\widetilde{V}(\blambda)$ admits a $\mQ(\blambda)$-rational structure
\begin{align} \label{eq:rationalstr}
 \widetilde{V}(\blambda)_{\mQ(\blambda)}=\langle \tau_\blambda(g) \xi_{H(\blambda)} \mid g\in \rGL_n(F)\rangle_{\mQ(\blambda)}
\end{align}
for the subfield $\mQ(\blambda)$ of $\mC$ fixed by $\{\alpha \in \mathrm{Aut}(\mC) \mid {}^\alpha\blambda=\blambda\}$, due to \cite[Lemme~I.1]{Waldspurger}. For any field  $A$ equipped with an injection $\mQ(\blambda)\hookrightarrow A$, set $\widetilde{V}(\blambda)_A:=\widetilde{V}(\blambda)_{\mQ(\blambda)}\otimes_{\mQ(\blambda)}A$.

Finally, for $\blambda\in \Lambda_n^{I_F}$,  $\bmu\in\Lambda_{n-1}^{I_F}$ and $m\in \mZ$ satisfying $\blambda^\vee \succeq \bmu+m$, let us define global pairings
\begin{align} \label{eq:global_pairing1}
 [\cdot,\cdot ]_{\blambda,\bmu}^{(m)}:=\prod_{\sigma\in I_F} \langle \cdot, \cdot\rangle_{\lambda_\sigma^\vee, \mu_\sigma^\vee}^{(m)}  \in  \mathrm{Hom}_{\rGL_{n-1}(F)}( \widetilde{V}(\blambda^\vee) \otimes_{\mC} \widetilde{V}(\bmu^\vee), \mC_{\lVert \det(\cdot)\rVert_{F_{\mA,\infty}}^m}) 
\end{align}
where $\langle \cdot,\cdot\rangle_{\lambda,\mu}^{(l)}$ denotes the local pairing defined as (\ref{eq:pairingLambdaMu}).

\begin{lem} \label{lem:pairing_equiv}
The pairing $[\cdot,\cdot]_{\blambda,\bmu}^{(m)}$ is $\mathrm{Aut}(\mC)$-equivariant{\rm ;} that is, 
\begin{align*}
[\mathrm{tw}_\alpha (\mv_1), \mathrm{tw}_\alpha(\mv_2)]_{{}^\alpha \blambda,{}^\alpha \bmu}^{(m)}=\alpha([\mv_1,\mv_2]_{\blambda,\bmu}^{(m)}) 
\end{align*}
holds for $\mv_1\in \widetilde{V}(\blambda^\vee)$, $\mv_2\in \widetilde{V}(\bmu^\vee)$ and $\alpha\in \mathrm{Aut}(\mC)$. 

\end{lem}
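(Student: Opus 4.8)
The plan is to reduce the claimed $\mathrm{Aut}(\mathbf{C})$-equivariance of the global pairing $[\cdot,\cdot]^{(m)}_{\blambda,\bmu}$ to a local statement about the pairings $\langle\cdot,\cdot\rangle^{(l)}_{\lambda,\mu}$ at each embedding $\sigma\in I_F$. Since $[\cdot,\cdot]^{(m)}_{\blambda,\bmu}=\prod_{\sigma\in I_F}\langle\cdot,\cdot\rangle^{(m)}_{\lambda_\sigma^\vee,\mu_\sigma^\vee}$ and $\mathrm{tw}_\alpha$ permutes the tensor factors according to $\sigma\mapsto\alpha^{-1}\circ\sigma$ while applying the coefficientwise automorphism $\alpha$ inside each factor, it suffices to show that for each $\lambda\in\Lambda_n$, $\mu\in\Lambda_{n-1}$, $l\in\mathbf{Z}$ with $\mu-l\preceq\lambda^\vee$, and each $\alpha\in\mathrm{Aut}(\mathbf{C})$, one has $\langle\alpha(\rv_1),\alpha(\rv_2)\rangle^{(l)}_{\lambda,\mu}=\alpha(\langle\rv_1,\rv_2\rangle^{(l)}_{\lambda,\mu})$, where $\alpha$ on the vector spaces $V_\lambda$, $V_\mu$ is the coefficientwise automorphism with respect to the determinantal generators $f_l(z)$. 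First I would spell out the bookkeeping: writing $\mv_1=\bigotimes_{\sigma}\rv_{1,\sigma}$ and $\mv_2=\bigotimes_\sigma\rv_{2,\sigma}$ (it is enough to treat pure tensors by bilinearity and $\alpha$-semilinearity of $\mathrm{tw}_\alpha$), the left-hand side becomes $\prod_\sigma\langle\alpha(\rv_{1,\alpha^{-1}\circ\sigma}),\alpha(\rv_{2,\alpha^{-1}\circ\sigma})\rangle^{(m)}_{({}^\alpha\lambda_\sigma)^\vee,({}^\alpha\mu_\sigma)^\vee}$; reindexing $\sigma\mapsto\alpha\circ\sigma$ and using $({}^\alpha\lambda_{\alpha\circ\sigma})^\vee=\lambda_\sigma^\vee$, this equals $\prod_\sigma\langle\alpha(\rv_{1,\sigma}),\alpha(\rv_{2,\sigma})\rangle^{(m)}_{\lambda_\sigma^\vee,\mu_\sigma^\vee}$, which by the local claim equals $\prod_\sigma\alpha(\langle\rv_{1,\sigma},\rv_{2,\sigma}\rangle^{(m)}_{\lambda_\sigma^\vee,\mu_\sigma^\vee})=\alpha([\mv_1,\mv_2]^{(m)}_{\blambda,\bmu})$.

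For the local statement, I would trace through the explicit definitions. By \eqref{eq:pairingLambdaMu}, $\langle\cdot,\cdot\rangle^{(l)}_{\lambda,\mu}$ is built from $\langle\cdot,\cdot\rangle_{\lambda}$, $\rI^{\det}_{\mu,l}$ and $\rI^{\lambda^\vee}_{\mu-l}$. Each of these is given by an explicit rational recipe: $\rI^{\det}_{\mu,l}(\rv)=(\det z)^{-l}\rv$ on the determinantal model, hence is defined over $\mathbf{Q}$ and commutes with $\alpha$; the injector $\rI^{\lambda^\vee}_{\mu-l}$ sends $f_{h(\mu-l)}(z)\mapsto f_{h(\lambda^\vee,\mu-l)}(z)$ by Lemma~\ref{lem:xi_Hmulambda_explicit}, and more generally $\xi_M\mapsto\xi_{M[\lambda^\vee]}$ with integral (in fact $\mathbf{Q}$-rational) structure by Proposition~\ref{prop:xiM_integral}, so it too commutes with the coefficientwise $\alpha$; and the naive pairing $\langle\xi_M,\xi_N\rangle_\lambda=(-1)^{\rrq(M)}\rr(M)$ when $M=N^\vee$ has values in $\mathbf{Q}$ by \eqref{eq:pairing_naive}. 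The cleanest way to package this is to use the characterisation of $\langle\cdot,\cdot\rangle_\lambda$ as the unique (up to scalar) $\rGL_n$-invariant pairing, normalised by $\langle f_{h(\lambda)}(z),f_{h(-\lambda)}(z)\rangle_\lambda=(-1)^{\sum_i(n-i)\lambda_i}\in\mathbf{Q}$, together with the fact (Proposition~\ref{prop:xiM_integral}, Lemma~\ref{lem:GL_inv_pairing}) that everything is already defined over $\mathbf{Q}$ on the $\xi_M$-basis. Concretely: on the $\mathbf{Q}$-rational structure $\widetilde V(\blambda)_{\mathbf{Q}(\blambda)}$ the pairing $[\cdot,\cdot]^{(m)}_{\blambda,\bmu}$ takes $\mathbf{Q}(\blambda,\bmu)$-values, and $\mathrm{tw}_\alpha$ restricted to rational vectors is simply the base-change isomorphism along $\alpha|_{\mathbf{Q}(\blambda)}$, so the equivariance is the functoriality of a pairing of $\mathbf{Q}$-vector spaces under field automorphism — reducing the whole lemma to the already-established rationality of all the maps involved.

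The main obstacle, and the place requiring care rather than cleverness, is the reindexing combinatorics for $\mathrm{tw}_\alpha$ and the dual-weight twisting: one must verify that $\mathrm{tw}_\alpha$ as defined in \eqref{eq:twist_a} genuinely sends $\xi_{\boldsymbol M}\mapsto\xi_{{}^\alpha\boldsymbol M}$ (which the excerpt asserts, citing Proposition~\ref{prop:xiM_integral}) and that this is compatible with passing to dual weights $\blambda\rightsquigarrow\blambda^\vee$, with the $\det$-shift $\bmu\rightsquigarrow\bmu+m$, and with the identification $\widetilde V(\blambda^\vee)\cong\bigboxtimes_v(V_{\lambda_v^\vee}\otimes V^{\mathrm{conj}}_{\lambda_{\bar v}-\mathsf{w}})$ — in particular that the complex-conjugate factors $V^{\mathrm{conj}}$ interact correctly with $\alpha$, since conjugation and an arbitrary automorphism of $\mathbf{C}$ need not commute. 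I would handle this by working throughout with the weight-$\blambda$ realisation $\widetilde V(\blambda)=\bigotimes_\sigma V_{\lambda_\sigma}$ indexed by all of $I_F$ (not the archimedean-place-pair realisation), where $\alpha$ acts by a clean permutation of tensor factors plus coefficientwise application, and only translate back to the exterior-tensor form at the very end; the signs $(-1)^{\rrq(M)}$ are $\mathrm{Aut}(\mathbf{C})$-fixed since they are $\pm1$, so they cause no trouble. Everything else is routine unwinding of \eqref{eq:pairingLambdaMu}, Lemmas~\ref{lem:detl_shift}, \ref{lem:GL_inv_pairing}, \ref{lem:xi_Hmulambda_explicit} and Proposition~\ref{prop:xiM_integral}.
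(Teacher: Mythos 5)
Your proposal is correct and rests on the same underlying ingredient as the paper's proof: the $\mQ$-rationality of the pairing values on the Gel'fand--Tsetlin basis, together with the combinatorics of $\mathrm{tw}_\alpha$ sending $\xi_{\boldsymbol{M}}\mapsto\xi_{{}^\alpha\boldsymbol{M}}$. The organisation is somewhat different, though. The paper's proof is a one-liner after the reduction to basis vectors: it invokes the closed formula \eqref{eq:PairExplicit} directly, observes that the value $\prod_\sigma(-1)^{\rrq(M_\sigma)}\rr(M_\sigma)$ is a rational number (hence $\alpha$-fixed) and is invariant under the reindexing $M_\sigma\mapsto M_{\alpha^{-1}\circ\sigma}$, and that the interlacing condition $\widehat{\boldsymbol{M}}=\boldsymbol{N}^\vee+m$ is likewise preserved under the $\alpha$-twist. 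You instead reduce to a ``local claim'' $\langle\alpha(\rv_1),\alpha(\rv_2)\rangle^{(l)}_{\lambda,\mu}=\alpha(\langle\rv_1,\rv_2\rangle^{(l)}_{\lambda,\mu})$ and propose to prove it by tracing through the constituent maps $\rI^{\det}_{\mu,l}$, $\rI^{\lambda^\vee}_{\mu-l}$ and $\langle\cdot,\cdot\rangle_\lambda$ and arguing each is defined over $\mQ$. That route works and is in spirit identical — it is just a disaggregated restatement of what \eqref{eq:PairExplicit} already packages into one formula — but it buys nothing here and introduces more places to slip. Your concern about the $V^{\mathrm{conj}}$ factors is a red herring for this lemma: since $[\cdot,\cdot]^{(m)}_{\blambda,\bmu}$ is defined via \eqref{eq:global_pairing1} as a product over $\sigma\in I_F$ on the realisation $\widetilde{V}(\blambda^\vee)=\bigotimes_{\sigma\in I_F}V_{\lambda_\sigma^\vee}$, the exterior-tensor presentation with $V^{\mathrm{conj}}$ never enters, and you correctly decide to work in the $I_F$-indexed realisation throughout.
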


\begin{proof}
Due to $\mC$-bilinearity of $[\cdot,\cdot]_{\blambda,\bmu}^{(m)}$, it suffices to verify the statement for $\mv_1=\xi_{\boldsymbol{M}}$ and $\mv_2=\xi_{\boldsymbol{N}}$ with $\boldsymbol{M}\in \rG(\blambda^\vee)$ and $\boldsymbol{N}\in \rG(\bmu^\vee)$. But, due to (\ref{eq:PairExplicit}), we have 
\begin{align*}
 \alpha([\xi_{\boldsymbol{M}},\xi_{\boldsymbol{N}}]_{\blambda,\bmu}^{(m)})&=
\begin{cases}
 \prod_{\sigma\in I_F} (-1)^{\rrq(M_\sigma)}\rr(M_\sigma) & \text{if } \widehat{\boldsymbol{M}}=\boldsymbol{N}^\vee+m, \\
0 & \text{otherwise},
\end{cases} \\
[\mathrm{tw}_\alpha(\xi_{\boldsymbol{M}}), \mathrm{tw}_\alpha(\xi_{\boldsymbol{N}})]_{{}^\alpha \blambda,{}^\alpha \bmu}^{(m)} &=[
\xi_{{}^\alpha \boldsymbol{M}}, \xi_{{}^\alpha \boldsymbol{N}}]_{{}^\alpha \blambda,{}^\alpha\bmu}^{(m)}=
\begin{cases}
 \prod_{\sigma\in I_F} (-1)^{\rrq({}^\alpha M_\sigma)}\rr({}^\alpha M_\sigma) & \text{if } \widehat{{}^\alpha\boldsymbol{M}}={}^\alpha\boldsymbol{N}^\vee+m, \\
0 & \text{otherwise}.
\end{cases}
\end{align*} 
Since $\prod_{\sigma \in I_F} (-1)^{\rrq(M_\sigma)}\rr(M_\sigma)$ equals $\prod_{\sigma\in I_F}(-1)^{\rrq({}^\alpha M_\sigma)}\rr({}^\alpha M_\sigma)$ by definition, and $\widehat{{}^\alpha \boldsymbol{M}}={}^\alpha \boldsymbol{N}^\vee+m$ holds if and only if $\widehat{\boldsymbol{M}}=\boldsymbol{N}^\vee+m$ holds, we readily obtain the desired equation $[\mathrm{tw}_\alpha(\xi_{\boldsymbol{M}}), \mathrm{tw}_\alpha(\xi_{\boldsymbol{N}})]_{{}^\alpha \blambda,{}^\alpha \bmu}^{(m)}=\alpha([\xi_{\boldsymbol{M}},\xi_{\boldsymbol{N}}]_{\blambda,\bmu}^{(m)})$.
\end{proof}

\begin{lem} \label{lem:pairing_rational}
The global pairing $[\cdot,\cdot]_{\blambda,\bmu}^{(m)}$ 
is rational 
in the following sense$;$ for any field $A$ containing both $\mQ(\blambda)$ and $\mQ(\bmu)$, 
we have
\begin{align*}
  [\cdot,\cdot ]_{\blambda,\bmu}^{(m)} \in  \mathrm{Hom}_{\rGL_{n-1}(F)}( \widetilde{V}(\blambda^\vee)_{A} \otimes_{A} \widetilde{V}(\bmu^\vee)_{A}, A_{\lVert\det(\cdot)\rVert_{F_{\mA,\infty}}^m}) 
\end{align*}
\end{lem}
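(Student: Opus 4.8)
\textbf{Proof plan for Lemma~\ref{lem:pairing_rational}.}
The plan is to reduce the claim to the corresponding statement for each archimedean place and then invoke the explicit description \eqref{eq:PairExplicit} of the local pairing $\langle\cdot,\cdot\rangle^{(m)}_{\lambda,\mu}$ on Gel'fand--Tsetlin bases. First I would recall that, by definition \eqref{eq:global_pairing1}, the pairing $[\cdot,\cdot]^{(m)}_{\blambda,\bmu}$ is the product over $\sigma\in I_F$ of the local pairings $\langle\cdot,\cdot\rangle^{(m)}_{\lambda_\sigma^\vee,\mu_\sigma^\vee}$, and that the $\rGL_{n-1}(F)$-equivariance (with the specified action $\lVert\det(\cdot)\rVert^m_{F_{\mathbf{A},\infty}}$ on the target) has already been established over $\mathbf{C}$ in \eqref{eq:global_pairing1} via Lemma~\ref{lem:GL_inv_pairing} and \eqref{eq:pairingLambdaMu}. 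Hence the only new content is that, when restricted to the $A$-rational subspaces $\widetilde{V}(\blambda^\vee)_A$ and $\widetilde{V}(\bmu^\vee)_A$, the pairing takes values in $A$ rather than merely in $\mathbf{C}$.

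The key step is therefore the following: by Proposition~\ref{prop:xiM_integral} together with the formula \eqref{eq:PairExplicit}, for $M\in\rG(\lambda^\vee)$ and $N\in\rG(\mu^\vee)$ the value $\langle\xi_M,\xi_N\rangle^{(m)}_{\lambda^\vee,\mu^\vee}$ equals either $(-1)^{\rrq(M)}\rr(M)$ or $0$, and $\rr(M)\in\mathbf{Q}$ by its very definition \eqref{eq:def_rM}; thus $[\xi_{\boldsymbol{M}},\xi_{\boldsymbol{N}}]^{(m)}_{\blambda,\bmu}=\prod_{\sigma\in I_F}\langle\xi_{M_\sigma},\xi_{N_\sigma}\rangle^{(m)}_{\lambda_\sigma^\vee,\mu_\sigma^\vee}\in\mathbf{Q}\subset A$ for every pair of Gel'fand--Tsetlin basis elements. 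Since $\{\xi_{\boldsymbol{M}}\}_{\boldsymbol{M}\in\rG(\blambda^\vee)}$ spans $\widetilde{V}(\blambda^\vee)_A$ over $A$ (again by Proposition~\ref{prop:xiM_integral}, applied place by place to the tensor factors, using that $A$ contains $\mathbf{Q}$ so the hypotheses on invertibility of factorials are vacuous) and similarly for $\bmu^\vee$, $A$-bilinearity of $[\cdot,\cdot]^{(m)}_{\blambda,\bmu}$ forces all values on $\widetilde{V}(\blambda^\vee)_A\otimes_A\widetilde{V}(\bmu^\vee)_A$ to lie in $A$. Combined with the $\rGL_{n-1}(F)$-equivariance already known over $\mathbf{C}$, this yields the asserted membership in $\mathrm{Hom}_{\rGL_{n-1}(F)}(\widetilde{V}(\blambda^\vee)_A\otimes_A\widetilde{V}(\bmu^\vee)_A,A_{\lVert\det(\cdot)\rVert^m_{F_{\mathbf{A},\infty}}})$.

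I do not anticipate a genuine obstacle here; the statement is essentially a bookkeeping consequence of the fact that the Gel'fand--Tsetlin basis is defined over $\mathbf{Q}$ (Proposition~\ref{prop:xiM_integral}) and that the pairing has the simple diagonal form \eqref{eq:PairExplicit} on this basis. The one point requiring a little care is that the $A$-rational structure $\widetilde{V}(\blambda^\vee)_A$ was defined in \eqref{eq:rationalstr} via translates of the highest weight vector rather than directly via the Gel'fand--Tsetlin basis; I would note that these two descriptions agree, since Proposition~\ref{prop:xiM_integral} shows $V_\lambda(A\cap\mathbf{Q})=\bigoplus_M (A\cap\mathbf{Q})\,\xi_M$ coincides with the span of $\rGL_n(F)$-translates of the highest weight vector over $A\cap\mathbf{Q}$, and this compatibility passes to the tensor product $\widetilde{V}(\blambda^\vee)$ over $I_F$. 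With that identification in hand the argument is immediate.
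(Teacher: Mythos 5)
There is a genuine gap, and it sits exactly at the point you flagged as requiring ``a little care'': the claimed identification of $\widetilde{V}(\blambda^\vee)_A$ with the $A$-span of the Gel'fand--Tsetlin basis. This identification is false in general. Proposition~\ref{prop:xiM_integral} tells you that $V_\lambda(\cA)=\bigoplus_M\cA\,\xi_M$ as a representation of $\rGL_n(\cA)$ acting \emph{untwisted} on a single factor. But the rational structure $\widetilde{V}(\blambda^\vee)_{\mQ(\blambda)}$ in \eqref{eq:rationalstr} is generated by $\rGL_n(F)$-translates of $\xi_{H(\blambda^\vee)}$ under the twisted action $\tau_{\blambda^\vee}(g)=\bigotimes_\sigma\tau_{\lambda_\sigma^\vee}(\sigma(g))$, in which $g\in\rGL_n(F)$ acts on the $\sigma$-th factor through $\sigma(g)\in\rGL_n(\sigma(F))$. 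Unless $A$ contains the normal closure $F_{\mathrm{nc}}$, the $A$-span $\bigoplus_{\boldsymbol M}A\,\xi_{\boldsymbol M}$ is not even $\rGL_n(F)$-stable, so it cannot equal $\widetilde{V}(\blambda^\vee)_A$. And $\mQ(\blambda)$ need not contain $F_{\mathrm{nc}}$ at all: if $\lambda_\sigma$ is independent of $\sigma$, then $\mQ(\blambda)=\mQ$, while $F_{\mathrm{nc}}\supsetneq\mQ$. So you cannot conclude that the $\xi_{\boldsymbol M}$ lie in $\widetilde{V}(\blambda^\vee)_A$, and the evaluation of $[\cdot,\cdot]^{(m)}_{\blambda,\bmu}$ on Gel'fand--Tsetlin vectors says nothing directly about its values on the actual rational structure.

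What is actually needed, and what the paper does, is to unwind the definition $\langle\cdot,\cdot\rangle^{(m)}_{\lambda,\mu}=\langle\cdot,\rI^{\lambda^\vee}_{\mu-m}(\rI^{\det}_{\mu,m}(\cdot))\rangle_\lambda$ and check each constituent preserves the Waldspurger rational structure: $\rI^{\det}$ is easy from \eqref{eq:detl_shift}, but $\rI^{\blambda}_{\bmu^\vee-m}$ requires a Waldspurger-style argument (via \cite[Lemme I.1]{Waldspurger}) that the $A$-span of $\rGL_n(F)$-translates of $\xi_{H(\bmu^\vee-m)[\blambda]}$ is homothetic to $\widetilde{V}(\blambda)_A$, followed by a computation pairing against $\xi_{H(\bmu+m)[\blambda^\vee]}$ and invoking Lemma~\ref{lem:pairing_inv} to show the homothety scalar actually lies in $A^\times$. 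The final $A$-valuedness of $\langle\cdot,\cdot\rangle_{\blambda^\vee}$ on translates of highest weight vectors is again supplied by Lemma~\ref{lem:pairing_inv}, not by \eqref{eq:PairExplicit}. So a correct proof along your outline would need to replace the Gel'fand--Tsetlin identification with these ingredients.
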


Note that it suffices to prove Lemma~\ref{lem:pairing_rational} when $A$ is a subfield of $\mC$. We first verify the following fact. Set 
\begin{align*}
 \langle \cdot, \cdot\rangle_\blambda:=\prod_{\sigma\in I_F}\langle \cdot,\cdot\rangle_{\lambda_\sigma}\colon \widetilde{V}(\blambda)\otimes_{\mC}\widetilde{V}(\blambda^\vee)\rightarrow \mC, 
\end{align*}
where $\langle \cdot,\cdot\rangle_{\lambda_\sigma}$ is the pairing defined as (\ref{eq:pairing_naive}); here we consider the case where $\cA=\mC$. 

\begin{lem} \label{lem:pairing_inv}
Take $\boldsymbol{M}=(M_\sigma)_{\sigma\in I_F} \in \rG(\blambda)$ and $\boldsymbol{N}=(N_\sigma)_{\sigma\in I_F} \in \rG(\blambda^\vee)$. Suppose that $\alpha\in \mathrm{Aut}(\mC)$ satisfies ${}^\alpha \blambda=\blambda$, ${}^{\alpha}\boldsymbol{M}=\boldsymbol{M}$ and ${}^{\alpha}\boldsymbol{N}=\boldsymbol{N}$. Then, 
 for every $g\in \rGL_n(F)$, 
the value $\langle \tau_\blambda(g)\xi_{\boldsymbol{M}}, \xi_{\boldsymbol{N}}\rangle_\blambda$ 
is fixed by $\alpha$.
\end{lem}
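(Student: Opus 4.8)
The plan is to reduce the statement to a Galois-descent argument applied coordinate-by-coordinate on $I_F$. First I would unwind the definitions: by $\mC$-bilinearity of each local pairing $\langle\cdot,\cdot\rangle_{\lambda_\sigma}$ and the determinantal realisation of Section~\ref{subsec:realisation}, writing $\tau_\blambda(g)\xi_{\boldsymbol M}=\sum_{\boldsymbol L}c_{\boldsymbol L}\,\xi_{\boldsymbol L}$ with $c_{\boldsymbol L}\in\mC$ (indexed by tuples of Gel'fand--Tsetlin patterns, or equivalently tuples $l_\sigma\in\cL(\lambda_\sigma)$ via Lemma~\ref{lem:xiM_gen_Vlambda}), we get
\[
\langle \tau_\blambda(g)\xi_{\boldsymbol M},\xi_{\boldsymbol N}\rangle_\blambda
=\sum_{\boldsymbol L}c_{\boldsymbol L}\prod_{\sigma\in I_F}\langle \xi_{L_\sigma},\xi_{N_\sigma}\rangle_{\lambda_\sigma},
\]
and by \eqref{eq:pairing_naive} the product $\prod_\sigma\langle\xi_{L_\sigma},\xi_{N_\sigma}\rangle_{\lambda_\sigma}$ is a \emph{rational} number (nonzero only when $L_\sigma=N_\sigma^\vee$ for all $\sigma$, in which case it equals $\prod_\sigma(-1)^{\rrq(N_\sigma^\vee)}\rr(N_\sigma^\vee)\in\mQ$). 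So the whole expression lies in the field generated over $\mQ$ by the coefficients $c_{\boldsymbol L}$; it remains to show $\alpha$ fixes the relevant combination of these coefficients.

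Next I would analyse how $\alpha$ acts on $\tau_\blambda(g)\xi_{\boldsymbol M}$. The $\alpha$-twisting isomorphism $\mathrm{tw}_\alpha$ of \eqref{eq:twist_a} satisfies $\mathrm{tw}_\alpha(\tau_\blambda(g)\rv)=\tau_{{}^\alpha\blambda}(g)\,\mathrm{tw}_\alpha(\rv)$ (this is the $\rGL_n(F)$-equivariance built into the construction via $\Upsilon_\alpha$ and the determinantal model, where $\alpha$ acts on the coefficients of $f_l(z)$), together with $\mathrm{tw}_\alpha(\xi_{\boldsymbol M})=\xi_{{}^\alpha\boldsymbol M}$. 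Under the hypothesis ${}^\alpha\blambda=\blambda$, ${}^\alpha\boldsymbol M=\boldsymbol M$, ${}^\alpha\boldsymbol N=\boldsymbol N$, this gives $\mathrm{tw}_\alpha(\tau_\blambda(g)\xi_{\boldsymbol M})=\tau_\blambda(g)\xi_{\boldsymbol M}$; concretely, applying $\alpha$ to the coefficients $c_{\boldsymbol L}$ permutes the index set $\{\boldsymbol L\}$ by $\boldsymbol L\mapsto{}^\alpha\boldsymbol L$ and one checks the coefficient of $\xi_{\boldsymbol L}$ becomes $\alpha(c_{{}^{\alpha^{-1}}\boldsymbol L})$. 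Then I would invoke Lemma~\ref{lem:pairing_equiv} with this same $\alpha$: since $\mathrm{tw}_\alpha$ is $\mathrm{Aut}(\mC)$-equivariant for the pairing and fixes both arguments, $\alpha(\langle\tau_\blambda(g)\xi_{\boldsymbol M},\xi_{\boldsymbol N}\rangle_\blambda)=\langle\mathrm{tw}_\alpha(\tau_\blambda(g)\xi_{\boldsymbol M}),\mathrm{tw}_\alpha(\xi_{\boldsymbol N})\rangle_{{}^\alpha\blambda}=\langle\tau_\blambda(g)\xi_{\boldsymbol M},\xi_{\boldsymbol N}\rangle_\blambda$, which is the claim. (Strictly speaking Lemma~\ref{lem:pairing_equiv} is stated for $[\cdot,\cdot]^{(m)}_{\blambda,\bmu}$, but the same proof, using \eqref{eq:pairing_naive} in place of \eqref{eq:PairExplicit}, applies verbatim to $\langle\cdot,\cdot\rangle_\blambda$; I would record this as a remark or prove the $\langle\cdot,\cdot\rangle_\blambda$-version directly by the displayed formula above.)

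From Lemma~\ref{lem:pairing_inv}, Lemma~\ref{lem:pairing_rational} follows by a standard descent step: by $A$-bilinearity it suffices to treat $\mv_1=\tau_{\blambda^\vee}(g)\xi_{H(\blambda^\vee)}$ (a generator of the $\mQ(\blambda^\vee)$-rational structure \eqref{eq:rationalstr}, noting $\mQ(\blambda^\vee)=\mQ(\blambda)$) tensored with $\mv_2=\xi_{\boldsymbol N}$ for $\boldsymbol N\in\rG(\bmu^\vee)$, together with the analogous generators on the $\bmu^\vee$-side; unwinding $[\cdot,\cdot]^{(m)}_{\blambda,\bmu}$ through its definition \eqref{eq:pairingLambdaMu} (composition with the maps $\rI^{\det}$ and $\rI^{\lambda^\vee}_{\mu-m}$, which are all defined over $\mQ$ by Lemmas~\ref{lem:detl_shift} and \ref{lem:restriction_lambda_mu}) reduces the value to an expression of the shape handled by Lemma~\ref{lem:pairing_inv}. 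Hence the value lies in the subfield of $\mC$ fixed by $\{\alpha\mid{}^\alpha\blambda=\blambda,\ {}^\alpha\bmu=\bmu\}$, which is contained in $A$; this gives the asserted $A$-rationality. The main obstacle I anticipate is bookkeeping rather than conceptual: making precise how $\alpha$ permutes the index tuples $\{l_\sigma\}_{\sigma\in I_F}$ in the expansion of $\tau_\blambda(g)\xi_{\boldsymbol M}$ and verifying that the combinatorial shift by $m$ in \eqref{eq:pairingLambdaMu} (the maps $\rI^{\det}_{\mu,m}$ and $\rI^{\lambda^\vee}_{\mu-m}$) genuinely commutes with $\mathrm{tw}_\alpha$ — this is where one must be careful that $\mathrm{tw}_\alpha(\xi_{M+m})=\xi_{{}^\alpha M+m}$ and that Proposition~\ref{prop:xiM_integral} (invertibility of the relevant factorials) is invoked only under hypotheses that hold here, though for the rationality statement over a field of characteristic $0$ this invertibility is automatic.
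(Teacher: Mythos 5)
Your argument is correct and is essentially the paper's proof: both reduce the claim to the $\mathbf{Q}$-rationality of the Gel'fand--Tsetlin pairing combined with the action of $\alpha$ permuting the embeddings $\sigma\in I_F$, the paper by applying $\alpha$ directly to the factorised expression $\prod_{\sigma}\langle\tau_{\lambda_\sigma}(\sigma(g))\xi_{M_\sigma},\xi_{N_\sigma}\rangle_{\lambda_\sigma}$ and reindexing $\sigma\mapsto\alpha\circ\sigma$, and you by packaging that same identity into the $\rGL_n(F)$-equivariance and $\mathrm{Aut}(\mathbf{C})$-equivariance of $\mathrm{tw}_\alpha$. The opening expansion in coefficients $c_{\boldsymbol{L}}$ and the closing discussion of Lemma~\ref{lem:pairing_rational} are superfluous for this particular lemma but do not affect correctness.
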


\begin{proof}
 We can formally verify the statement as 
\begin{align*}
\alpha( \langle \tau_\blambda(g)\xi_{\boldsymbol{M}},\xi_{\boldsymbol{N}}\rangle_\blambda) &=\prod_{\sigma\in I_F}\langle \tau_{\lambda_\sigma}(\alpha\circ \sigma(g))\xi_{M_\sigma},\xi_{N_\sigma}\rangle_{\lambda_\sigma} =\prod_{\sigma\in I_F} \langle \tau_{{}^\alpha \lambda_{\alpha\circ \sigma}} (\alpha\circ \sigma(g))\xi_{{}^\alpha M_{\alpha\circ\sigma}}, \xi_{{}^{\alpha} N_{\alpha \circ \sigma}}\rangle_{{}^\alpha \lambda_{\alpha\circ \sigma}} \\
&=\prod_{\sigma \in I_F}\langle \tau_{{}^\alpha \lambda_\sigma}(\sigma(g))\xi_{{}^\alpha M_\sigma},\xi_{{}^\alpha N_\sigma}\rangle_{{}^\alpha \lambda_\sigma}=\langle \tau_{{}^\alpha \blambda}(g)\xi_{{}^\alpha \boldsymbol{M}},\xi_{{}^\alpha \boldsymbol{N}}\rangle_{{}^\alpha \blambda}=\langle \tau_{ \blambda}(g)\xi_{\boldsymbol{M}},\xi_{\boldsymbol{N}}\rangle_{ \blambda}.\qedhere 
\end{align*}
\end{proof}

\begin{proof}[Proof of Lemma~$\ref{lem:pairing_rational}$]
As we have already remarked, we may assume that $A$ is a subfield of $\mC$ containing both $\mQ(\blambda)$ and $\mQ(\bmu)$. By the definitions of the pairing $\langle \cdot,\cdot\rangle_{\lambda,\mu}^{(l)}$ \eqref{eq:pairingLambdaMu} and the $A$-rational structure (\ref{eq:rationalstr}), it suffices to verify that the morphisms $\rI^{\det}_{\bmu^\vee,m}=\bigotimes_{\sigma\in I_F}\rI^{\det}_{\mu_\sigma^\vee,m}$ and $\rI^{\blambda}_{\bmu^\vee-m}=\bigotimes_{\sigma\in I_F} \rI^{\lambda_\sigma}_{\mu_\sigma^\vee-m}$ preserve the $A$-rational structure, and the pairing $\langle \cdot,\cdot\rangle_{\blambda^\vee}$ restricted to $\widetilde{V}(\blambda^\vee)_A$ and $\widetilde{V}(\blambda)_A$ takes values in $A$. First of all, we have
\begin{align*}
 \rI^{\mathrm{det}}_{\bmu^\vee,m}(\tau_\bmu^\vee(g) \xi_{H(\bmu^\vee)})&=\lVert\det(g)\rVert_{F_{\mA,\infty}}^m \tau_{\bmu^\vee-m}(g)\xi_{H(\bmu^\vee-m)} \in \widetilde{V}(\bmu^\vee-m)_A
\end{align*}
by (\ref{eq:detl_shift}), which implies that $\rI^{\mathrm{det}}_{\bmu^\vee,m}$ preserves the $A$-rational structure. 
Next we verify that $\rI_{\bmu^\vee-m}^{\blambda}$ preserves the $A$-rational structure. 
Note that, due to our assumption on $A$, the $A$-representation $\widetilde{V}(\blambda )_A$ is regarded as a subspace of $\widetilde{V}(\blambda )$. 
Set $H(\bmu^\vee-m)[\blambda ]:=(H(\mu_\sigma^\vee-m)[\lambda_\sigma ])_{\sigma \in I_F}$. 
Then the weight vector $\xi_{H(\bmu^\vee-m)[\blambda]}$ is stable under the $\alpha$-twisting 
$\mathrm{tw}_\alpha \colon \widetilde{V}(\blambda)\to {}^\alpha\widetilde{V}(\blambda)=\widetilde{V}(\blambda)$ for any $\alpha \in \mathrm{Aut}(\mC)$ fixing $\blambda$ and $\bmu$; namely we have 
\begin{align*}
\mathrm{tw}_\alpha (\xi_{H(\bmu^\vee-m)[\blambda ]})&=\xi_{H({}^\alpha(\bmu^\vee-m))[{}^\alpha\blambda ]}=\xi_{H(\bmu^\vee-m)[\blambda ]}.
\end{align*}
Then we can show that $\langle \tau_\blambda(g) \xi_{H(\bmu^\vee-m)[\blambda ]} \mid g\in \rGL_n(F)\rangle_{A}$ 
defines another $A$-rational structure on $\widetilde{V}(\blambda )$ 
by an argument similar to the proof of the former part of \cite[Lemme I.1]{Waldspurger}. 
Furthermore, it is homothetic to $\widetilde{V}(\blambda )_A$ by the latter part of \cite[Lemme I.1]{Waldspurger}. 
Hence there exist $a_1,a_2,\dotsc,a_s\in A$, $g_1,g_2,\dotsc,g_s\in \rGL_n(F)$ and $c\in \mC^\times $ such that
\begin{align*}
 c\, \xi_{H(\bmu^\vee-m)[\blambda]} &=a_1\tau_{\blambda}(g_1)\xi_{H(\blambda)}+a_2\tau_{\blambda}(g_2)\xi_{H(\blambda)}+\cdots +a_s\tau_{\blambda}(g_s)\xi_{H(\blambda)}.
\end{align*}
Then, by taking $\langle -,\xi_{H(\bmu+m)[\blambda^\vee]}\rangle_{\blambda}$, we have
\begin{align*}
 (-1)^{\rrq(H(\bmu^\vee-m)[\blambda])}\rr(H(\bmu^\vee-m)[\blambda])c=\sum_{i=1}^s a_i\langle \tau_{\blambda}(g_i)\xi_{H(\blambda)},\xi_{H(\bmu+m)[\blambda^\vee]}\rangle_{\blambda}. 
\end{align*}
Note that $(-1)^{\rrq(H(\bmu^\vee-m)[\blambda])}\rr(H(\bmu^\vee-m)[\blambda])$ is a nonzero rational number, and the right hand side of the equality above is contained in $A$ by Lemma~\ref{lem:pairing_inv}. Thus the scalar multiple $c$ is indeed an element of $A^\times$, which implies that $\rI_{\bmu^\vee-m}^{\blambda}(\tau_{\bmu^\vee-m}(g)\xi_{H(\bmu^\vee-m)})=\tau_{\blambda}(\iota_n(g))\xi_{H(\bmu^\vee-m)[\blambda]}$ is contained in $\widetilde{V}(\blambda)_{A}$ for all $g\in \rGL_n(F)$. Consequently $\rI_{\bmu^\vee-m}^\blambda$ also preserves the $A$-rational structure.

Finally Lemma~\ref{lem:pairing_inv} implies that $\langle \tau_{\blambda^\vee}(g)\xi_{H(\blambda^\vee)},\tau_{\blambda}(g')\xi_{H(\blambda)}\rangle_{\blambda^\vee}=\langle \tau_{\blambda^\vee}((g')^{-1}g)\xi_{H(\blambda^\vee)},\xi_{H(\blambda)}\rangle_{\blambda^\vee}$ is indeed an element of $A$ for any $g,g'\in \rGL_n(F)$. Hence the pairing $\langle \cdot,\cdot\rangle_{\blambda^\vee}$ restricted to $\widetilde{V}(\blambda^\vee)_A$ and $\widetilde{V}(\blambda)_A$ takes values in $A$. 
\end{proof}

\subsection{Integral representation of $\rGL_n$}  \label{subsec:integral_rep} 

Now we consider integral representations of $\rGL_n$. Let $F_{\mathrm{nc}}$ denote the normal closure of $F$ in $\mC$, $\mathfrak{r}_{F_{\mathrm{nc}}}$ the ring of integers of it, and $\cO_{\mathrm{nc}}$ the $p$-adic closure of the image of $F_{\mathrm{nc}}\subset \mC \xrightarrow{\, \boldsymbol{i}\, }\mC_p$. We always assume that $\cA$ is a subring of $\mC_p$ containing $\cO_{\mathrm{nc}}$. Now consider the tensor product $\widetilde{V}(\blambda)^{(p)}_{\cA}:=\bigotimes_{\sigma \in I_F} V_{\lambda_{\sigma}}(\cA )$ for $\blambda \in \Lambda_n^{I_F}$, where $(\tau_{\lambda_{\sigma}}, V_{\lambda_{\sigma}}(\cA))$ is a finite rank representation of $\rGL_n(\cA)$ over $\cA$ defined as in Section~\ref{subsec:realisation}. As before, we define the action $\tau_{\blambda}^{(p)}$ of $\rGL_n(\mathfrak{r}_F)$  on $\widetilde{V}(\blambda)^{(p)}$ as
\begin{align*}
 \tau_{\blambda}^{(p)}(g) \left(\bigotimes_{\sigma\in I_F} \rv_\sigma\right)&:=\bigotimes_{\sigma \in I_F} \bigl( \tau_{\lambda_\sigma}(\bi \circ \sigma (g)) \rv_\sigma\bigr) &\text{for }g\in \rGL_n(\mathfrak{r}_F).
\end{align*}
Note that, since each $\bi\circ \sigma$ takes values in $\cO_{\mathrm{nc}}$ by construction, the pair $(\tau_{\blambda}^{(p)},\widetilde{V}(\blambda)^{(p)}_{\cA})$  indeed defines an $\cA$-representation of $\rGL_n(\mathfrak{r}_F)$ of finite rank, that is, $\widetilde{V}(\blambda)^{(p)}_{\cA}$ is a free $\cA$-module of finite rank on which $\rGL_n(\mathfrak{r}_F)$ acts via $\tau_\blambda^{(p)}$.  
The action $\tau_{\blambda}^{(p)}$ of $\rGL_n(\mathfrak{r}_F)$ is naturally extended to that of $\rGL_n(\mathfrak{r}_F \otimes_{\mZ} \mZ_p)=\prod_{v\mid (p)}\rGL_n(\mathfrak{r}_{F,v})$ by
\begin{align*}
 \tau_{\blambda}^{(p)}\bigl((g_v)_{v\mid (p)}\bigr) \left(\bigotimes_{v\mid (p)} \bigotimes_{\sigma \in I_{F,v}} \rv_\sigma\right) &= \bigotimes_{v\mid (p)} \bigotimes_{\sigma \in I_{F,v}} \tau_{\lambda_\sigma}(\sigma_v (g_v))\rv_\sigma &  \text{for }(g_v)_{v\mid (p)}\in \rGL_n(\mathfrak{r}_F\otimes_{\mZ} \mZ_p).
\end{align*}
Here the subset $I_{F,v}$ of $I_F$ is defined for every place $v$ of $F$ lying above $(p)$ as 
\begin{align*}
 I_{F,v}:=\{ \sigma \in I_F \mid \text{$v$ is induced by } \bi\circ \sigma \colon  F\hookrightarrow \mC_p  \},
\end{align*}
and $\sigma_v$ (for $\sigma \in I_{F,v}$) denotes the automorphism of $\mathfrak{r}_{F,v}$ induced by $\bi\circ \sigma$. 
If $\mathcal{A}$ is a field, we also define an action of $\mathrm{GL}_n(F\otimes_\mathbf{Q}\mathbf{Q}_p)$ 
on $\widetilde{V}(\blambda)^{(p)}_{\cA}$
in the same way and we denote it by $\tau^{(p)}_{\boldsymbol{\lambda}}$ again.

Define the $\rGL_n(\mathfrak{r}_F)$-equivariant pairing $\langle \cdot,\cdot\rangle_{\blambda}^{p}$ on $\widetilde{V}(\blambda)^{(p)}_{\cA}$ by 
\begin{align*}
 \langle \cdot, \cdot\rangle_\blambda^p&:=\prod_{\sigma\in I_F}\langle \cdot,\cdot\rangle_{\lambda_\sigma}\colon \widetilde{V}(\blambda)^{(p)}_{\cA}\otimes_{\cA}\widetilde{V}(\blambda^\vee)^{(p)}_{\cA}\rightarrow \cA.
\end{align*}
The definition of the (local) pairing $\langle \cdot,\cdot\rangle_{\lambda_\sigma}$ \eqref{eq:pairing_naive} implies that $\widetilde{V}(\blambda)_{\cA}^{(p)}$ is a {\em self dual lattice} in $\widetilde{V}(\blambda)_{\cA}^{(p)}\otimes_{\cA} \mathrm{Frac}(\cA)$ with respect to $\langle \cdot,\cdot\rangle^p_{\blambda}$ if one assumes $p>\max_{\sigma\in I_F} \{\lambda_{\sigma,1}-\lambda_{\sigma,n}+n-2\}$. We also define the global pairing 
\begin{align} \label{eq:global_pairing2}
 [\cdot,\cdot ]_{\blambda,\bmu}^{(m),p}&:=\prod_{\sigma\in I_F} \langle \cdot, \cdot\rangle_{\lambda_\sigma^\vee, \mu_\sigma^\vee}^{(m)}  \in  \mathrm{Hom}_{\rGL_n(F)}( \widetilde{V}(\blambda^\vee)^{(p)}_{\cA} \otimes_{\cA} \widetilde{V}(\bmu^\vee)^{(p)}_{\cA}, \cA_{ \lVert \det(\cdot)\rVert_{F_{\mA,\infty}}^m})
\end{align}
for $\blambda\in \Lambda_n^{I_F}$,  $\bmu\in\Lambda_{n-1}^{I_F}$ and $m\in \mZ$ satisfying $\blambda^\vee \succeq \bmu+m$.

\subsection{Local systems}\label{sec:locsys}

In this subsection we discuss the local system $\widetilde{\cV}(\blambda)_{\mathsf{E}}$ and the $p$-adic local system $\widetilde{\cV}(\blambda)^{(p)}_{\cA}$ on the symmetric space
\begin{align*}
   Y^{(n)}_{\mathcal K} 
     = {\rm GL}_n(F)  \backslash  {\rm GL}_n(F_{\mathbf A})  / \widetilde{K}_n  {\mathcal K}_n,
\end{align*}
which are respectively defined by $\widetilde{V}(\blambda)_{\mathsf{E}}$ and $\widetilde{V}(\blambda)^{(p)}_{\cA}$.   
For any field $\mathsf{E}$ containing $\mQ(\blambda)$, consider the diagonal left action of $\mathrm{GL}_n(F)$ 
on the direct product  ${\rm GL}_n(F_{\mathbf A})  / \widetilde{K}_n  {\mathcal K}_n     \times \widetilde{V}(\blambda)_{\mathsf{E}}$, which is denoted by $\gamma\cdot ([g]\!],\mathbf{v})=([\gamma g]\!],\tau_{\blambda}(\gamma)\mathbf{v})$ for $\gamma\in \rGL_n(F)$.  Here $[g]\!]$ denotes the left coset $g\widetilde{K}_n\mathcal{K}_n$ containing $g \in \rGL_n(F_{\mA})$. We use the symbol ${}_{\rGL_n(F)}([g]\!],\mv)$ for the left $\rGL_n(F)$-orbit containing $([g]\!],\mv)$.
Let $\widetilde{\cV}(\blambda)_{\mathsf{E}}$ be the local system on $Y^{(n)}_{\mathcal K}$ defined as the sheaf of locally constant sections of the 
first projection
\begin{align*}
    \mathrm{GL}_n(F) \backslash 
    \left( {\rm GL}_n(F_{\mathbf A})  / \widetilde{K}_n {\mathcal K}_n \times \widetilde{V}(\blambda)_\mathsf{E} \right)
    \longrightarrow 
   Y^{(n)}_{\mathcal K}.
\end{align*}
Note that we equip $\widetilde{V}(\blambda)_\mathsf{E}$ with the discrete topology.
For each double coset $[\![g]\!]=\rGL_n(F)g\widetilde{K}_n\cK_n \in Y^{(n)}_{\mathcal K}$, we write a local section around $[\![g]\!]$ as ${}_{\rGL_n(F)}([g]\!], \mathbf{v}_{[g]\!]})$ for $[g]\!]\in \rGL_n(F_{\mA})/\widetilde{K}_n\mathcal{K}_n$ and $\mathbf{v}_{[g]\!]} \in \widetilde{V}(\blambda)_{\mathsf{E}}$. Since  $[\![g]\!]$ and $[\![\gamma g]\!]$ define the same point on $Y^{(n)}_{\cK}$ for $\gamma\in \rGL_n(F)$ and $g\in \rGL_n(F_{\mA})$, local sections ${}_{\rGL_n(F)}([g]\!], \mv_{[g]\!]})$ and ${}_{\rGL_n(F)}([\gamma g]\!], \mv_{[\gamma g]\!]})$ must coincide. Comparing the first component, we observe that this coincidence implies  $\gamma\cdot ([g]\!],\mv_{[g]\!]})=([\gamma g]\!], \mv_{[\gamma g]\!]})$, and obtain the relation of local sections
\begin{align} \label{eq:local_relation}
 \mv_{[\gamma g]\!]}&=\tau_\blambda(\gamma) \mv_{[g]\!]} & \text{for every \;} \gamma \in \rGL_n(F) \text{\; and \;} g\in \rGL_n(F_{\mA}).  
\end{align}   
For $m\in \mZ$, let $\widetilde{\mathsf{E}}(m)$ be the local system defined by the character $\tau^{\det}_m =\lVert\det(\cdot) \rVert^m_{F_{\mA,\infty}} \! =\prod_{\sigma\in I_F}\sigma\circ \det^m\colon \rGL_n(F)\rightarrow \mathsf{E}^\times$. Then there exists a twisting isomorphism 
\begin{align} \label{eq:tw_e}
 \mathrm{Tw}_m \colon \widetilde{\mathsf{E}}(m) \xrightarrow{\, \sim \,} \widetilde{\mathsf{E}}_{\mathrm{triv}} \, ; {}_{\rGL_n(F)}([g]\!], e_{[g]\!]}) \longmapsto {}_{\rGL_n(F)}\left([g]\!], \prod_{v\in \Sigma_{F,\mathrm{fin}}} \lvert \det(g_v)\rvert_v^m e_{[g]\!]}\right),
\end{align}
where $\widetilde{\mathsf{E}}_\mathrm{triv}$ denotes the trivial local system with coefficients in $\mathsf{E}$.

Next, for any flat $\cO_{\mathrm{nc}}$-algebra $\cA$ containing $\cO_{\mathrm{nc}}$, consider the  right diagonal action of $\cK_n$ 
on the direct product  $\rGL_n(F)\backslash{\rm GL}_n(F_{\mathbf A})  / \widetilde{K}_n  
    \times \widetilde{V}(\blambda)_{\cA}^{(p)}$, which is denoted by $([\![g],\mathbf{v}_p)\cdot u=([\![gu],\tau_{\blambda}^{(p)}(u^{-1}_p)\mathbf{v}_p)$. Here $[\![g]$ denotes the double coset $\rGL_n(F)g\widetilde{K}_n$ containing $g \in \rGL_n(F_{\mA})$ and $u_p=(u_v)_{v\mid (p)}$ denotes the $p$-component of $u$.  We use the symbol $([\![g],\mv_p)_{\cK_n}$ for the right $\cK_n$-orbit containing $([\![g],\mv_p)$.  
Let $\widetilde{\cV}(\blambda)^{(p)}_{\cA}$ denote the local system on $Y^{(n)}_{\mathcal K}$ defined as the sheaf of locally constant sections of the 
first projection
\begin{align*}
    \left(\mathrm{GL}_n(F) \backslash 
     {\rm GL}_n(F_{\mathbf A})  / \widetilde{K}_n \times \widetilde{V}(\blambda)^{(p)}_{\cA} \right)/\mathcal{K}_n
    \longrightarrow 
   Y^{(n)}_{\mathcal K}.
\end{align*}
Note that we equip $\widetilde{V}(\blambda)^{(p)}_{\cA}$ with the discrete topology. A local section around $[\![g]\!] \in Y^{(n)}_{\mathcal{K}}$ is denoted as $([\![g], \mathbf{v}_{[\![g]})_{\cK_n}$ for $[\![g]\in \rGL_n(F)\backslash \rGL_n(F_{\mA})/\widetilde{K}_n$ and $\mathbf{v}_{[\![g]}\in \widetilde{V}(\blambda)_{\cA}^{(p)}$. 
By the same argument as before, we obtain the relation of local sections 
\begin{align} \label{eq:p_local_relation}
 \mv_{[\![gu]}&=\tau_{\blambda}^{(p)}(u_p^{-1})\mv_{[\![g]} & \text{for  every \;} u\in \cK_n \text{\; and \;} g\in \rGL_n(F_{\mA}).
\end{align}
For $m\in \mZ$, let $\widetilde{\cA}(m)^{(p)}$ be the local system defined by the character $\tau_m^{\det,(p)}=\prod_{\sigma\in I_F} \bi\circ \sigma \circ \det^m\colon \rGL_n(\mathfrak{r}_F)\rightarrow \cA^\times$. Then there exists a twisting isomorphism 
\begin{align} \label{eq:tw_a}
 \mathrm{Tw}_m^{(p)} \colon \widetilde{\cA}(m)^{(p)} \xrightarrow{\, \sim \,} \widetilde{\cA}_{\mathrm{triv}} \, ; ([\![g], a_{[\![g]})_{\cK_n} \longmapsto \left([\![g], \tau_m^{\det,(p)}(g_p) \prod_{v\in \Sigma_{F,\mathrm{fin}}} \lvert \det(g_v)\rvert^m_v a_{[\![g]}\right)_{\cK_n},
\end{align}
where $\widetilde{\cA}_\mathrm{triv}$ denotes the trivial local system with coefficients in $\cA$.

Now let $E^*$ be a subfield of $\mC$ which contains $F_{\mathrm{nc}}$ (note that $F_{\mathrm{nc}}$  contains $\mQ(\blambda)$ by definition). Let $\cE^*$ be the $p$-adic closure of the image of $E^*\subset \mC \xrightarrow{\, \boldsymbol{i}\,}\mC_p$. 
For $\cE^*$, we can define two local systems $\widetilde{\cV}(\blambda)_{\cE^*}$ and $\widetilde{\cV}(\blambda)^{(p)}_{\cE^*}$, which is identified in the following sense. Note that, on $\widetilde{V}(\blambda)_{\cE^*}=\widetilde{V}(\blambda)^{(p)}_{\cE*}$, the action $\tau_\blambda^{(p)}$ of $\rGL_n(\mathfrak{r}_F)$ is naturally extended to that of $\rGL_n(F)$ and $\rGL_n(F\otimes_{\mQ} \mQ_p)$, and we have $\tau_{\blambda}(\gamma)=\tau_{\blambda}^{(p)}(\gamma)=\tau_{\blambda}^{(p)}(\gamma_p)$ for $\gamma\in \rGL_n(F)$ where $\gamma_p$ denotes the $p$-component of the principal id\`ele generated by $\gamma$, that is, $\gamma_p=(\gamma)_{v\mid (p)}\in \prod_{v\mid (p)} F_v$.

\begin{lem}\label{lem:p-compatible}
The morphism $\Phi$ of local systems  on $Y^{(n)}_{\mathcal{K}}$ defined as
\begin{align*}
   \Phi\colon   \widetilde{\cV}(\blambda)_{\cE^*} \longrightarrow \widetilde{\cV}(\blambda)_{\cE^*}^{(p)}; \;  
     {}_{\rGL_n(F)}([g]\!], \mathbf{v}_{[g]\!]}) \longmapsto ([\![g],  \tau_\blambda^{(p)}(g_p^{-1}) \mathbf{v}_{[g]\!]})_{\cK_n}
\end{align*}
is an isomorphism, where $g_p=(g_v)_{v\mid (p)}$ denotes the $p$-component of $g=(g_v)_{v\in \Sigma_F}\in \rGL_n(F_{\mA})$.
\end{lem}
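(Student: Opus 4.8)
The plan is to verify that $\Phi$ is well-defined (independent of the representative $([g]\!],\mv_{[g]\!]})$ of a local section), that it respects the equivalence relations defining the two local systems, and that it admits an explicit two-sided inverse; since all maps in sight are $\cE^*$-linear on stalks, bijectivity on local sections then gives that $\Phi$ is an isomorphism of local systems. First I would check well-definedness: a local section of $\widetilde{\cV}(\blambda)_{\cE^*}$ around $[\![g]\!]$ is the $\rGL_n(F)$-orbit of $([g]\!],\mv_{[g]\!]})$, and replacing $g$ by $\gamma g$ ($\gamma\in\rGL_n(F)$) replaces $\mv_{[g]\!]}$ by $\tau_\blambda(\gamma)\mv_{[g]\!]}$ by \eqref{eq:local_relation}, while $g_p$ is replaced by $\gamma_p g_p$. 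Hence the target $([\![\gamma g],\tau_\blambda^{(p)}((\gamma_p g_p)^{-1})\tau_\blambda(\gamma)\mv_{[g]\!]})_{\cK_n}$ must be compared with $([\![g],\tau_\blambda^{(p)}(g_p^{-1})\mv_{[g]\!]})_{\cK_n}$; using the multiplicativity of $\tau_\blambda^{(p)}$ and the key identity $\tau_\blambda(\gamma)=\tau_\blambda^{(p)}(\gamma_p)$ (valid because $E^*\supseteq F_{\mathrm{nc}}$, so the two actions of $\rGL_n(F)$ on $\widetilde V(\blambda)_{\cE^*}=\widetilde V(\blambda)^{(p)}_{\cE^*}$ literally agree) one gets $\tau_\blambda^{(p)}(g_p^{-1})\tau_\blambda^{(p)}(\gamma_p^{-1})\tau_\blambda^{(p)}(\gamma_p)\mv_{[g]\!]}=\tau_\blambda^{(p)}(g_p^{-1})\mv_{[g]\!]}$, and $[\![\gamma g]$ and $[\![g]$ are the same double coset, so the two orbits coincide.

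Next I would check that $\Phi$ is compatible with right multiplication by $\widetilde K_n\cK_n$, i.e.\ that it genuinely maps sections of one local system to sections of the other. For $u\in\cK_n$ with $p$-component $u_p$, the section ${}_{\rGL_n(F)}([gu]\!],\mv_{[g]\!]})$ is literally the same section of $\widetilde{\cV}(\blambda)_{\cE^*}$ as ${}_{\rGL_n(F)}([g]\!],\mv_{[g]\!]})$ (the coefficient vector does not change under right translation in the ``upstairs'' model since $\widetilde K_n\cK_n$ acts trivially on $\widetilde V(\blambda)_{\cE^*}$ there), whereas on the $p$-adic side the relation \eqref{eq:p_local_relation} dictates $\mv_{[\![gu]}=\tau_\blambda^{(p)}(u_p^{-1})\mv_{[\![g]}$. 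So one must verify $\tau_\blambda^{(p)}((gu)_p^{-1})\mv_{[g]\!]}=\tau_\blambda^{(p)}(u_p^{-1})\bigl(\tau_\blambda^{(p)}(g_p^{-1})\mv_{[g]\!]}\bigr)$, which is immediate from $(gu)_p=g_pu_p$ and multiplicativity. The component at $\widetilde K_n$ (the archimedean part at infinite places above $p$, of which there are none since $p$ is finite) is vacuous, and the finite-away-from-$p$ part of $u$ does not enter $g_p$ at all, so there is nothing further to check. Finally I would write down the inverse $\Psi\colon\widetilde{\cV}(\blambda)_{\cE^*}^{(p)}\to\widetilde{\cV}(\blambda)_{\cE^*}$ sending $([\![g],\mw_{[\![g]})_{\cK_n}\mapsto {}_{\rGL_n(F)}([g]\!],\tau_\blambda^{(p)}(g_p)\mw_{[\![g]})$, check by the same bookkeeping (using \eqref{eq:p_local_relation} and \eqref{eq:local_relation}) that $\Psi$ is well-defined, and observe $\Psi\circ\Phi=\mathrm{id}$, $\Phi\circ\Psi=\mathrm{id}$ on stalks.

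I do not expect a serious obstacle here: the statement is essentially a bookkeeping identity, and the only genuinely substantive point is the identification $\tau_\blambda(\gamma)=\tau_\blambda^{(p)}(\gamma_p)$ on $\widetilde V(\blambda)_{\cE^*}=\widetilde V(\blambda)^{(p)}_{\cE^*}$ for $\gamma\in\rGL_n(F)$, which in turn rests on the compatibility of the embeddings $\sigma\colon F\hookrightarrow\mC$ with $\bi\circ\sigma\colon F\hookrightarrow\mC_p$ and on $\cE^*$ containing the $p$-adic closure of $\bi(F_{\mathrm{nc}})$, so that both the complex and the $p$-adic realisations of the highest weight representations live over $\cE^*$ and are identified via $\bi$. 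The mildly delicate part is simply keeping straight which copy of $\rGL_n(F_{\mA})$-data enters where ($g$ versus $g_p$, left $\rGL_n(F)$-action versus right $\cK_n$-action), and making sure the twisting by $g_p$ and $g_p^{-1}$ cancels correctly in $\Psi\circ\Phi$ and $\Phi\circ\Psi$; but this is routine once the conventions of Section~\ref{sec:locsys} are unwound.
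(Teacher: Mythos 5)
Your proposal is correct and follows essentially the same route as the paper: unwind the defining equivalence relations on both local systems, verify well-definedness of $\Phi$ by replacing the representative $g$ by $\gamma g k u$ (with $\gamma\in\rGL_n(F)$, $k\in\widetilde K_n$, $u\in\cK_n$) and using the multiplicativity of $\tau_\blambda^{(p)}$ together with the identification $\tau_\blambda(\gamma)=\tau_\blambda^{(p)}(\gamma_p)$ on $\widetilde V(\blambda)_{\cE^*}=\widetilde V(\blambda)^{(p)}_{\cE^*}$, then exhibit the explicit inverse $\Psi$. The only cosmetic difference is that you split the well-definedness check into a $\gamma$-step, a $u$-step, and a vacuous $k$-step, whereas the paper treats the general element $\gamma gku$ in a single computation; both are equally valid.
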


\begin{proof}
We first check that $\Phi$ is well defined.  
For each $\gamma \in \mathrm{GL}_n(F)$, $g\in \mathrm{GL}_n(F_{\mathbf{A}})$, $k\in \widetilde{K}_n$, and  $u \in \mathcal{K}_n$, 
it suffices to check that 
$([\![\gamma gku], \tau^{(p)}_\blambda((\gamma gk u)^{-1}_p) \mathbf{v}_{[\gamma gku]\!]})_{\cK_n}$ and 
$([\![g], \tau^{(p)}_\blambda(g^{-1}_p) \mathbf{v}_{[g]\!]})_{\cK_n}$ define the same section of $\widetilde{\cV}(\blambda)_{\cE^*}^{(p)}$. Note that $[\gamma gku]\!]=[\gamma g]\!]\in \rGL_n(F_{\mA})/\widetilde{K}_n\mathcal{K}_n$ and  $[\![\gamma gku]=[\![gu]\in \rGL_n(F)\backslash \rGL_n(F_{\mA})/\widetilde{K}_\infty$ hold. Then we can calculate as
\begin{align*}
([\![\gamma gku], \tau^{(p)}_\blambda((\gamma gk u)^{-1}_p) \mathbf{v}_{[\gamma gk u]\!]})_{\cK_n}
  &= ([\![gu], \tau^{(p)}_\blambda ( u^{-1}_p) \tau_{\blambda}^{(p)} (g^{-1}_p) \tau_{\blambda}(\gamma^{-1} )
     \mathbf{v}_{[\gamma g]\!]})_{\cK_n}     =( [\![gu], \tau_\blambda^{(p)}(u^{-1}_p)  \tau_\blambda^{(p)} ( g^{-1}_p ) \mathbf{v}_{[g]\!]})_{\cK_n} \\
&=([\![g], \tau_\blambda^{(p)}(g_p^{-1})\mathbf{v}_{[g]\!]})_{\cK_n}\cdot u= ([\![g], \tau_\blambda^{(p)}(g_p^{-1})\mathbf{v}_{[g]\!]})_{\cK_n} 
\end{align*}
as desired (the second equality follows from \eqref{eq:local_relation}). Hence $\Phi$ is well defined. 

To verify the isomorphy of $\Phi$, we define a morphism $\Psi\colon \widetilde{\cV}(\blambda)^{(p)}_{\cE^*} \to \widetilde{\cV}(\blambda)_{\cE^*}$ between local systems on $Y^{(n)}_{\mathcal{K}}$ as $([\![g], \mathbf{v}_{[\![g]})\mapsto ([g]\!],\tau_\blambda^{(p)}(g_p)\mathbf{v}_{[\![g]})$, and prove that $\Psi$ is the inverse of $\Phi$. Let us check that $\Psi$ is well defined. 
As before,
it suffices to check that 
${}_{\rGL_n(F)}([\gamma gk u]\!],   \tau^{(p)}((\gamma gk u)_p) \mathbf{v}_{[\![\gamma gku]})$ and 
${}_{\rGL_n(F)}([g]\!],  \tau^{(p)}(g_p) \mathbf{v}_{[\![g]})$ define the same section of $\widetilde{\cV}(\blambda)_{\cE^*}$ for each $\gamma \in \mathrm{GL}_n(F)$, $g\in \mathrm{GL}_n(F_{\mathbf{A}})$, $k\in \widetilde{K}_n$ and $u \in \mathcal{K}_n$. But we can calculate as
\begin{align*}
{}_{\rGL_n(F)}([\gamma gku]\!], \tau^{(p)}_\blambda((\gamma gk u)_p) \mathbf{v}_{[\![\gamma gku]})
  &= {}_{\rGL_n}([\gamma g]\!], \tau_\blambda (\gamma)  \tau_{\blambda}^{(p)} (g_p) \tau_{\blambda}^{(p)}(u_p )   
     \mathbf{v}_{[\![gu]})  ={}_{\rGL_n(F)}( [\gamma g]\!], \tau_\blambda(\gamma) \tau_\blambda^{(p)}(g_p)    \mathbf{v}_{[\![g]}) \\
&=\gamma\cdot {}_{\rGL_n(F)}([g]\!], \tau_\blambda^{(p)}(g_p)\mathbf{v}_{[\![g]}) ={}_{\rGL_n(F)}([g]\!], \tau_\blambda^{(p)}(g_p)\mathbf{v}_{[\![g]})  
\end{align*}
as desired (the second equality follows from \eqref{eq:p_local_relation}).
It is immediate to check that $\Phi$ and $\Psi$ give the inverse of each other, and hence the statement follows.
\end{proof}

We finally remark that, if we assume that $n>1$ and $\blambda^\vee \succeq \bmu+m$ holds for $\blambda\in \Lambda_n^{I_F}$, $\bmu\in \Lambda_{n-1}^{I_F}$ and $m\in \mZ$, we readily observe from the definitions (\ref{eq:global_pairing1}), \eqref{eq:global_pairing2}, (\ref{eq:tw_e}) and (\ref{eq:tw_a}) that the following diagram commutes.
\begin{align} \label{eq:pairing_compatible}
 \xymatrix{
j_n^*\widetilde{\cV}(\blambda^\vee)_{\cE^*} \times_{\mathcal{Y}^{(n-1)}_{\mathcal{K}}} \mathrm{p}_{n-1}^* \widetilde{\cV}(\bmu^\vee)_{\cE^*} \ar[rr]^-{[\cdot,\cdot]^{(m)}_{\blambda,\bmu}} \ar[d]_-{\Phi \times \Phi}^-{\rotatebox{90}{$\sim$}}     && \widetilde{\cE^*}(m) \ar[d]_-{\Phi}^{\rotatebox{90}{$\sim$}} \ar[rr]^-{\mathrm{Tw}_m} && \widetilde{\cE^*}_\mathrm{triv} \ar@{=}[d]  \\
j_n^*\widetilde{\cV}(\blambda^\vee)_{\cE^*}^{(p)} \times_{\mathcal{Y}^{(n-1)}_{\mathcal{K}}} \mathrm{p}_{n-1}^*\widetilde{\cV}(\bmu^\vee)_{\cE^*}^{(p)} \ar[rr]_-{[\cdot,\cdot]^{(m),p}_{\blambda,\bmu}} && \widetilde{\cE^*}(m)^{(p)} \ar[rr]_-{\mathrm{Tw}_m^{(p)}} && \widetilde{\cE^*}_\mathrm{triv}.
}
\end{align}

\subsection{Rational and integral structures of various cohomology groups} \label{subsec:ratint}

For a homomorphism of fields $\iota \colon \mathsf{F}\rightarrow \mathsf{E}$ and a finite dimensional $\mathsf{E}$-vector space $\mathsf{W}$ (also regarded as an $\mathsf{F}$-vector space via $\iota$), we say that an $\mathsf{F}$-subspace $\mathsf{W}_\mathsf{F}$ of $\mathsf{W}$ defines an {\em $\mathsf{F}$-rational structure} of $\mathsf{W}$ if $\mathsf{W}_{\mathsf{F}}\otimes_{\mathsf{F},\iota} \mathsf{E}$ is isomorphic to $\mathsf{W}$ as an $\mathsf{E}$-vector space. Similarly, for an embedding $\mathsf{\j}\colon \mathsf{R}\rightarrow \mathsf{E}$ of a principal ideal domain $\mathsf{R}$ into a field $\mathsf{E}$ and a finite dimensional $\mathsf{E}$-vector space $\mathsf{W}$, we say that a free $\mathsf{R}$-submodule $\mathsf{W}_{\mathsf{R}}$ of $\mathsf{W}$ defines an {\em $\mathsf{R}$-integral structure} of $\mathsf{W}$ if $\mathsf{W}_{\mathsf{R}}\otimes_{\mathsf{R},\mathsf{\j}} \mathsf{E}$ is isomorphic to $\mathsf{W}$ as an $\mathsf{E}$-vector space. In this subsection, we equip appropriate rational and integral structures on various cohomology groups of $Y^{(n)}_{\cK}$ by using local systems introduced in Section~\ref{sec:locsys}.

Let $E$ be a subextension $E$ of $\mC/\mQ(\blambda)$ of finite degree over $\mQ(\blambda)$, and let us take an arbitrary subextension $E^*$ of $\mC/E$ of finite degree over $E$ so that $E^*$ contains the normal closure $F_{\mathrm{nc}}$ of $F$ over $\mQ$. Let $\cE^*$ be the $p$-adic closure of the image of $E^* \subset \mC \xrightarrow{\, \boldsymbol{i}\,} \mC_p$, and $\cO^*$ the ring of integers of $\cE^*$. For each $\alpha \in \mathrm{Aut}(\mC)$, set ${}^\alpha E:=\alpha(E)$ and let ${}^\alpha \mathfrak{P}_0$ be the prime ideal of ${}^\alpha E$ induced by $\boldsymbol{i}\circ \alpha^{-1} \colon {}^\alpha E \subset \mC \xrightarrow{\, \alpha^{-1}\,} \mC \xrightarrow{\, \boldsymbol{i} \,} \mC_p$. Define $\mathfrak{r}_{{}^\alpha E,({}^\alpha \mathfrak{P}_0)}$ as the localisation of the ring of integers $\mathfrak{r}_{{}^\alpha E}$ of ${}^\alpha E$ at the prime ideal ${}^\alpha \mathfrak{P}_0$.  Note that $\boldsymbol{i}\circ \alpha^{-1}$ induces a field embedding $i_p^\alpha \colon {}^\alpha E\hookrightarrow \cE^*$, and we have a canonical isomorphism of $\cE^*$-representations of $\rGL_n(F)$
\begin{align} \label{eq:p_twist_isom}
\widetilde{V}({}^\alpha \blambda^\vee)_{{}^\alpha E}\otimes_{{}^\alpha E, i_p^\alpha} \cE^*\xrightarrow{\, \Upsilon_\alpha^{-1} \otimes \mathrm{id}\,} \bigl(\widetilde{V}(\blambda^\vee)_E \otimes_{E,\alpha} {}^\alpha E\bigr) \otimes_{{}^\alpha E, i_p^\alpha} \cE^* \xrightarrow{\, \sim \,} \widetilde{V}(\blambda^\vee)_{\cE^*}.  
\end{align}
Mimicking \eqref{eq:twist_a}, define $\mathrm{tw}_{i_p^\alpha} \colon \widetilde{V}({}^\alpha \blambda^\vee)_{{}^\alpha E}\rightarrow \widetilde{V}({}^\alpha \blambda^\vee)_{{}^\alpha E}\otimes_{{}^\alpha E, i_p^\alpha} \cE^* \cong \widetilde{V}(\blambda^\vee)_{\cE^*}$ as an $i^\alpha_p$-semilinear isomorphism induced by $\mv \mapsto \mv \otimes 1$. We always replace the superscript $\mathrm{id}$ whenever $\alpha$ is the identity map $\mathrm{id}\colon \mC\xrightarrow{\, =\,}\mC$. Then, by construction, we have $\mathfrak{r}_{{}^\alpha E}=\alpha(\mathfrak{r}_E)$ and ${}^\alpha \mathfrak{P}_0=\alpha(\mathfrak{P}_0)$. 

For $\widetilde{\cV}=\widetilde{\cV}({}^\alpha \blambda^\vee)$, $\widetilde{\cV}({}^\alpha \blambda^\vee)_{{}^\alpha E}$, $\widetilde{\cV}(\blambda^\vee)_{\cE^*}$, $\widetilde{\cV}({\blambda^\vee})_{\cE^*}^{(p)}$ or $\widetilde{\cV}(\blambda^\vee)_{\cO^*}^{(p)}$, let $H^*(Y^{(n)}_{\mathcal{K}}, \widetilde{\cV})$ be the cohomology group of $Y^{(n)}_{\cK}$ with coefficients in the local system $\widetilde{\cV}$, and $H^*_{\mathrm{c}}(Y^{(n)}_{\cK}, \widetilde{\cV})$ the compactly supported cohomology group with coefficients in $\widetilde{\cV}$. Then, due to flatness of $\mC$ over ${}^\alpha E$, the natural isomorphism $\widetilde{V}({}^\alpha \blambda^\vee)_{{}^\alpha E}\otimes_{{}^\alpha E}\mC \xrightarrow{\, \sim \,} \widetilde{V}({}^\alpha \blambda^\vee)$ induces an isomorphism of $\mC$-vector spaces $H^*_?(Y^{(n)}_{\cK}, \widetilde{\cV}({}^\alpha \blambda^\vee)_{{}^\alpha E})\otimes_{{}^\alpha E}\mC \xrightarrow{\, \sim \,} H^*_?(Y^{(n)}_{\cK}, \widetilde{\cV}({}^\alpha \blambda^\vee))$ for $?\in \{ \emptyset, \mathrm{c} \}$; in other words, $H^*_?(Y^{(n)}_{\cK}, \widetilde{\cV}({}^\alpha \blambda^\vee)_{{}^\alpha E})$ defines an ${}^\alpha E$-rational structure of $H^*_?(Y^{(n)}_{\cK}, \widetilde{\cV}({}^\alpha \blambda^\vee))$. Similarly, due to \eqref{eq:p_twist_isom}, $H^*_?(Y^{(n)}_{\cK}, \widetilde{\cV}({}^\alpha \blambda^\vee)_{{}^\alpha E})$ also defines an ${}^\alpha E$-rational structure of $H^*_?(Y^{(n)}_{\cK}, \widetilde{\cV}(\blambda)_{\cE^*})$ via the functorial map induced by $\mathrm{tw}_{i_p^\alpha}$, for which we use the same symbol $\mathrm{tw}_{i^\alpha_p}$ by abuse of notation. Finally, due to the natural isomorphism $\widetilde{V}(\blambda^\vee)_{\cO^*}^{(p)}\otimes_{\cO^*}\cE^* \xrightarrow{\, \sim \,} \widetilde{V}(\blambda^\vee)_{\cE^*}^{(p)}$ and flatness of $\cE^*$ over $\cO^*$, we obtain an isomorphism of $\cE^*$-vector spaces $H^*_?(Y^{(n)}_{\cK}, \widetilde{\cV}(\blambda^\vee)_{\cO^*}^{(p)})\otimes_{\cO^*}\cE^* \xrightarrow{\, \sim \,} H^*_?(Y^{(n)}_{\cK}, \widetilde{\cV}(\blambda^\vee)_{\cE^*}^{(p)})$; thus the image $H^*_?(Y^{(n)}_{\cK}, \widetilde{\cV}(\blambda^\vee)_{\cO^*}^{(p)})'$ of the natural functorial morphism $H^*_?(Y^{(n)}_{\cK}, \widetilde{\cV}(\blambda^\vee)_{\cO^*}^{(p)})\rightarrow H^*_?(Y^{(n)}_{\cK}, \widetilde{\cV}(\blambda^\vee)_{\cE^*}^{(p)})$, which is isomorphic to the $\cO^*$-torsion-free part of $H^*_?(Y^{(n)}_{\cK}, \widetilde{\cV}(\blambda^\vee)_{\cO^*}^{(p)})$ due to the first isomorphism theorem, defines an $\cO^*$-integral structure of $H^*_?(Y^{(n)}_{\cK}, \widetilde{\cV}(\blambda^\vee)_{\cE^*}^{(p)})$. Since $H^*_?(Y^{(n)}_\mathcal{K},\widetilde{\cV}(\blambda^\vee)_{\cE^*})$ is isomorphic to $H^*_?(Y^{(n)}_\mathcal{K},\widetilde{\cV}(\blambda^\vee)_{\cE^*}^{(p)})$ (as an $\cE^*$-vector space) by Lemma~\ref{lem:p-compatible}, we can define an  $\mathfrak{r}_{{}^\alpha E,({}^\alpha \mathfrak{P}_0)}$-integral structure of $H^*_?(Y^{(n)}_{\mathcal{K}}, \widetilde{\cV}(\blambda^\vee))$ by

\begin{equation*} \label{eq:integral_str}
\begin{aligned}
  H^*_{?}(Y^{(n)}_{\mathcal{K}}, \widetilde{\cV}({}^\alpha \blambda^\vee)_{\mathfrak{r}_{{}^\alpha E,({}^\alpha \mathfrak{P}_0)}}):=H^*_{?}(Y^{(n)}_{\mathcal{K}}, \widetilde{\cV}({}^\alpha \blambda^\vee)_{{}^\alpha E})\; \cap\; & H^*_?(Y^{(n)}_\mathcal{K}, \widetilde{\cV}(\blambda^\vee)_{\cO^*}^{(p)})' \\
& \subset H^*_{?}(Y^{(n)}_{\mathcal{K}}, \widetilde{\cV}({}^\alpha \blambda^\vee)_{{}^\alpha E})\subset H^*_{?}(Y^{(n)}_{\mathcal{K}}, \widetilde{\cV}({}^\alpha \blambda^\vee)).
\end{aligned} 
\end{equation*}
Here the intersection is taken in $H^*_?(Y^{(n)}_\mathcal{K},\widetilde{\cV}(\blambda^\vee)_{\cE^*})$. 

\begin{lem} \label{lem:integral_structure}
 Retain the notation. Then $H^*_?(Y^{(n)}_{\cK},\widetilde{\cV}({}^\alpha \blambda^\vee)_{\mathfrak{r}_{{}^\alpha E,({}^\alpha \mathfrak{P}_0)}})$ defines $\mathfrak{r}_{{}^\alpha E,({}^\alpha \mathfrak{P}_0)}$-integral structures of the cohomology groups $H^*_?(Y^{(n)}_{\cK},\widetilde{\cV}(\blambda^\vee)_{\cE^*})$, $H^*_?(Y^{(n)}_{\cK},\widetilde{\cV}({}^\alpha \blambda^\vee)_{{}^\alpha E})$ and $H^*_?(Y^{(n)}_{\cK},\widetilde{\cV}({}^\alpha \blambda^\vee))$. 
\end{lem}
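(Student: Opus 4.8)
The plan is to unwind the three assertions of Lemma~\ref{lem:integral_structure} one at a time, in each case tracing back to the flatness statements already recorded in Section~\ref{subsec:ratint} and to Lemma~\ref{lem:p-compatible}. Recall that we have fixed the chain of fields $\mathfrak{r}_{{}^\alpha E,({}^\alpha\mathfrak{P}_0)}\hookrightarrow {}^\alpha E\hookrightarrow\mathbf{C}$ together with the $p$-adic embedding $i_p^\alpha\colon {}^\alpha E\hookrightarrow\cE^*$ with $(i_p^\alpha)^{-1}(\cO^*)=\mathfrak{r}_{{}^\alpha E,({}^\alpha\mathfrak{P}_0)}$, and that $H^*_?(Y^{(n)}_{\mathcal K},\widetilde{\cV}({}^\alpha\blambda^\vee)_{\mathfrak{r}_{{}^\alpha E,({}^\alpha\mathfrak{P}_0)}})$ is by definition the intersection, taken inside $H^*_?(Y^{(n)}_{\mathcal K},\widetilde{\cV}(\blambda^\vee)_{\cE^*})$, of the ${}^\alpha E$-rational structure $H^*_?(Y^{(n)}_{\mathcal K},\widetilde{\cV}({}^\alpha\blambda^\vee)_{{}^\alpha E})$ (embedded via $\mathrm{tw}_{i_p^\alpha}$) with the $\cO^*$-integral structure $H^*_?(Y^{(n)}_{\mathcal K},\widetilde{\cV}(\blambda^\vee)^{(p)}_{\cO^*})'$ (embedded via the isomorphism of Lemma~\ref{lem:p-compatible}).

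First I would fix finite bases and compare lattices. Since $Y^{(n)}_{\mathcal K}$ is a finite-volume locally symmetric space of the arithmetic group in question, all the cohomology groups $H^*_?$ are finite-dimensional; pick a basis of the ${}^\alpha E$-rational structure $M_E:=H^*_?(Y^{(n)}_{\mathcal K},\widetilde{\cV}({}^\alpha\blambda^\vee)_{{}^\alpha E})$ and a basis of the $\cO^*$-lattice $M_{\cO}:=H^*_?(Y^{(n)}_{\mathcal K},\widetilde{\cV}(\blambda^\vee)^{(p)}_{\cO^*})'$ inside the common $\cE^*$-space $H^*_?(Y^{(n)}_{\mathcal K},\widetilde{\cV}(\blambda^\vee)_{\cE^*})$. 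The change-of-basis matrix lies in $\rGL$ over $\cE^*$; after scaling the $\cO^*$-basis by a suitable unit one arranges that $M_{\cO}$ is defined over the discretely-valued subring $\mathfrak{r}_{{}^\alpha E,({}^\alpha\mathfrak{P}_0)}$-span of the $M_E$-basis away from a bounded denominator. The key general fact to invoke is the standard lemma that if $R$ is a PID, $K=\mathrm{Frac}(R)$, $\hat R$ the completion at a prime with fraction field $\hat K$, and $W$ a finite-dimensional $\hat K$-space with a $K$-rational structure $W_K$ and an $\hat R$-lattice $W_{\hat R}$, then $W_K\cap W_{\hat R}$ is a finitely generated $R$-module which is free of rank $\dim_K W_K$ and satisfies $(W_K\cap W_{\hat R})\otimes_R K\xrightarrow{\sim} W_K$ and $(W_K\cap W_{\hat R})\otimes_R\hat R\xrightarrow{\sim} W_{\hat R}$. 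Applying this with $R=\mathfrak{r}_{{}^\alpha E,({}^\alpha\mathfrak{P}_0)}$, $\hat R=\cO^*$ (or more precisely its localisation/completion matching $i_p^\alpha$), $W_K=M_E$ and $W_{\hat R}=M_{\cO}$ immediately yields that $H^*_?(Y^{(n)}_{\mathcal K},\widetilde{\cV}({}^\alpha\blambda^\vee)_{\mathfrak{r}_{{}^\alpha E,({}^\alpha\mathfrak{P}_0)}})\otimes_{\mathfrak{r}_{{}^\alpha E,({}^\alpha\mathfrak{P}_0)}}{}^\alpha E\cong M_E$ and $\otimes_{\mathfrak{r}_{{}^\alpha E,({}^\alpha\mathfrak{P}_0)}}\cO^*\cong M_{\cO}$, i.e. it is an $\mathfrak{r}_{{}^\alpha E,({}^\alpha\mathfrak{P}_0)}$-integral structure of $H^*_?(Y^{(n)}_{\mathcal K},\widetilde{\cV}(\blambda^\vee)_{\cE^*})$ — this is the first assertion.

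The second and third assertions then follow by base change along the tower of fields. Extending scalars from ${}^\alpha E$ to $\mathbf{C}$ along the inclusion ${}^\alpha E\subset\mathbf{C}$ gives $H^*_?(Y^{(n)}_{\mathcal K},\widetilde{\cV}({}^\alpha\blambda^\vee)_{{}^\alpha E})\otimes_{{}^\alpha E}\mathbf{C}\xrightarrow{\sim}H^*_?(Y^{(n)}_{\mathcal K},\widetilde{\cV}({}^\alpha\blambda^\vee))$, using flatness of $\mathbf{C}$ over ${}^\alpha E$ and the identification $\widetilde{V}({}^\alpha\blambda^\vee)_{{}^\alpha E}\otimes_{{}^\alpha E}\mathbf{C}\cong\widetilde{V}({}^\alpha\blambda^\vee)$ recorded in Section~\ref{subsec:alg_rep}; composing with the already-established base change $\otimes_{\mathfrak{r}_{{}^\alpha E,({}^\alpha\mathfrak{P}_0)}}{}^\alpha E$ from the previous paragraph gives the statement for $H^*_?(Y^{(n)}_{\mathcal K},\widetilde{\cV}({}^\alpha\blambda^\vee))$. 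For $H^*_?(Y^{(n)}_{\mathcal K},\widetilde{\cV}({}^\alpha\blambda^\vee)_{{}^\alpha E})$ itself there is nothing further to prove, it is literally the ${}^\alpha E$-rational structure by construction. Finally, $H^*_?(Y^{(n)}_{\mathcal K},\widetilde{\cV}(\blambda^\vee)_{\cE^*})\cong H^*_?(Y^{(n)}_{\mathcal K},\widetilde{\cV}(\blambda^\vee)^{(p)}_{\cE^*})$ by Lemma~\ref{lem:p-compatible}, so the first assertion already is the statement for $\widetilde{\cV}(\blambda^\vee)_{\cE^*}$.

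I expect the only real subtlety — and hence the main obstacle — to be the book-keeping of \emph{which} local ring actually appears: $\cE^*$ is the $p$-adic closure of the image of $E^*$, $\cO^*$ its ring of integers, and $\mathfrak{r}_{{}^\alpha E,({}^\alpha\mathfrak{P}_0)}$ is a \emph{localisation} (not a completion) of a number ring, so one must be careful that $\mathfrak{r}_{{}^\alpha E,({}^\alpha\mathfrak{P}_0)}=(i_p^\alpha)^{-1}(\cO^*)$ really holds and that $\cO^*$ is faithfully flat over $\mathfrak{r}_{{}^\alpha E,({}^\alpha\mathfrak{P}_0)}$ after completing, so that the intersection lemma applies verbatim. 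This is exactly the point at which the hypothesis that $E^*$ contains $F_{\mathrm{nc}}$ and the normalisations of Section~\ref{subsec:integral_rep} are used, and it is essentially the content of the remark following Lemma~\ref{lem:p-compatible}; once that compatibility is in hand, the rest is the routine lattice-theoretic argument sketched above and poses no difficulty. One should also note that since $H^*_?(Y^{(n)}_{\mathcal K},\widetilde{\cV}(\blambda^\vee)^{(p)}_{\cO^*})'$ is by definition the image in $H^*_?(Y^{(n)}_{\mathcal K},\widetilde{\cV}(\blambda^\vee)^{(p)}_{\cE^*})$, hence torsion-free over $\cO^*$, it is automatically a genuine lattice, so no $p$-torsion issue intervenes.
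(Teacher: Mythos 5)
Your overall strategy (intersect the rational and integral structures inside $H^*_?(Y^{(n)}_{\cK},\widetilde{\cV}(\blambda^\vee)_{\cE^*})$, compare bases via a change-of-basis matrix, and scale to land in the lattice) is the same as the paper's. However, there is a gap in how you invoke the lattice-theoretic lemma. Your ``standard lemma'' is formulated with $\hat R$ \emph{the completion of} $R = \mathfrak{r}_{{}^\alpha E, ({}^\alpha \mathfrak{P}_0)}$ and $\hat K$ its fraction field, and the assertions $(W_K\cap W_{\hat R})\otimes_R K \cong W_K$ and $(W_K\cap W_{\hat R})\otimes_R \hat R \cong W_{\hat R}$ are correct in that setting. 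But you then apply it with $\hat R = \cO^*$, the ring of integers of $\cE^*$. Since $E^*/E$ is a genuine field extension (it is \emph{chosen larger} so that it contains $F_{\mathrm{nc}}$), $\cO^*$ is a DVR that is merely finite over the completion of $\mathfrak{r}_{{}^\alpha E,({}^\alpha\mathfrak{P}_0)}$, not equal to it. In that situation the conclusion $(W_K\cap W_{\cO^*})\otimes_R \cO^* \cong W_{\cO^*}$ of your lemma can fail (e.g.\ $R=\mathbf{Z}_{(p)}$, $\cO^*=\mathbf{Z}_p[\sqrt p]$, $W_K=\mathbf{Q}$, $W_{\cO^*}=\sqrt p\,\cO^*$: the intersection is $p\mathbf{Z}_{(p)}$ and tensoring back up gives $p\cO^*\neq\sqrt p\,\cO^*$). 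Fortunately the paper's notion of ``$\mathfrak{r}$-integral structure'' only requires that tensoring up to the respective \emph{field} recovers the cohomology group, i.e.\ only the rank count is needed, and that half of your lemma does survive; but you claim the full lemma and then say ``the rest is routine,'' which as written overstates what holds.

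The paper sidesteps this issue by never appealing to such a lemma: it bounds the rank above by elementary divisor theory (torsion-free submodule of a free $\mathfrak{r}_{{}^\alpha E,({}^\alpha\mathfrak{P}_0)}$-module of rank $\mathsf{r}$), and bounds it below by taking $\gamma\in\mathfrak{r}_{{}^\alpha E}$ with $i_p^\alpha(\gamma)$ in the maximal ideal of $\cO^*$, choosing $m_0$ so that all entries of $i_p^\alpha(\gamma)^{m_0}\mathsf{P}$ (with $\mathsf{P}$ the change-of-basis matrix) lie in $\cO^*$, and checking $\gamma^{m_0}\mathsf{b}_1,\dotsc,\gamma^{m_0}\mathsf{b}_{\mathsf{r}}$ are $\mathsf{r}$ linearly independent elements of the intersection. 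Your proposal gestures at this (``after scaling \ldots away from a bounded denominator'') but the sentence where you claim to scale ``the $\cO^*$-basis by a suitable unit'' is incorrect (scaling the $\cO^*$-lattice by a unit of $\cO^*$ changes nothing); it is a basis of the rational structure that must be scaled, and by a uniformiser power, not a unit. So: right idea, same route as the paper, but with a flawed statement of the key lemma and a flawed scaling step that together leave the rank-$\mathsf{r}$ claim unjustified as written.
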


\begin{proof}
Put $\mathsf{r}=\dim_{\mC}H^*_?(Y^{(n)}_{\cK},\widetilde{\cV}({}^\alpha \blambda^\vee))$. By construction, $H^*_?(Y^{(n)}_{\cK},\widetilde{\cV}({}^\alpha \blambda^\vee)_{\mathfrak{r}_{{}^\alpha E,({}^\alpha \mathfrak{P}_0)}})$ is a torsion-free submodule of $H^*_?(Y^{(n)}_{\cK},\widetilde{\cV}( \blambda^\vee)_{\cE^*})$
 over $(i_p^{\alpha})^{-1}(\cO^*)=\mathfrak{r}_{{}^\alpha E,({}^\alpha \mathfrak{P}_0)}$. Therefore one readily checks by using elementary divisor theory that $H^*_?(Y^{(n)}_{\cK},\widetilde{\cV}({}^\alpha \blambda^\vee)_{\mathfrak{r}_{{}^\alpha E,({}^\alpha \mathfrak{P}_0)}})$ is a free $\mathfrak{r}_{{}^\alpha E,({}^\alpha \mathfrak{P}_0)}$-module of rank at most $\mathsf{r}$. To verify the assertion, it suffices to prove that $\mathrm{rank}_{\mathfrak{r}_{{}^\alpha E,({}^\alpha \mathfrak{P}_0)}} H^*_?(Y^{(n)}_{\cK},\widetilde{\cV}({}^\alpha \blambda^\vee)_{\mathfrak{r}_{{}^\alpha E,({}^\alpha \mathfrak{P}_0)}})=\mathsf{r}$. Let $\{\mathsf{b}_1,\mathsf{b}_2,\dotsc, \mathsf{b}_{\mathsf{r}}\}$ be an ordered ${}^\alpha E$-basis of $H^{*}_?(Y^{(n)}_{\cK},\widetilde{\cV}({}^\alpha \blambda^\vee)_{{}^\alpha E})$ and $\{\mathsf{c}_1,\mathsf{c}_2,\dotsc, \mathsf{c}_{\mathsf{r}}\}$ an ordered $\cO^*$-basis of $H^{*}_?(Y^{(n)}_{\cK},\widetilde{\cV}(\blambda^\vee)_{\cO^*}^{(p)})'$. Since these two basis also form $\cE^*$-basis of $H^*_?(Y^{(n)}_{\cK},\widetilde{\cV}(\blambda^\vee)_{\cE^*})$, we can define the change-of-basis matrix $\mathsf{P}\in \rGL_{\mathsf{r}}(\cE^*)$ by 
\begin{align*}
 \begin{pmatrix}
  \mathrm{tw}_{i^\alpha_p} (\mathsf{b}_1) & \mathrm{tw}_{i^\alpha_p} (\mathsf{b}_2) & \cdots & \mathrm{tw}_{i^\alpha_p} (\mathsf{b}_\mathsf{r})
 \end{pmatrix}=  \begin{pmatrix}
  \mathsf{c}_1 & \mathsf{c}_2 & \cdots & \mathsf{c}_{\mathsf{r}}
 \end{pmatrix} \mathsf{P}.
\end{align*}
Here recall that we regard $H^*_?(Y^{(n)}_{\cK},\widetilde{\cV}({}^\alpha \blambda^\vee)_{{}^\alpha E})$ as a subspace of $H^*_?(Y^{(n)}_{\cK},\widetilde{\cV}(\blambda^\vee)_{\cE^*})$ via $\mathrm{tw}_{i^\alpha_p}$.
Let us take $\gamma\in \mathfrak{r}_{{}^\alpha E} \setminus \{0\}$ so that $i_p^{\alpha}(\gamma)$ is contained in the prime ideal of $\cO^*$. Then if we replace $\{\mathsf{b}_j\}_{1\leq j\leq \mathsf{r}}$ by  $\{ \gamma^m\mathsf{b}_j\}_{1\leq j\leq \mathsf{r}}$ for $m\in \mN$, the change-of-basis matrix $\mathsf{P}$ is replaced by $i_p^{\alpha}(\gamma)^m \mathsf{P}$. Therefore we can choose $m_0\in \mN$ so that all entries of $i_p^\alpha(\gamma)^{m_0}\mathsf{P}$ are in $\cO^*$. This implies that, for $1\leq j\leq \mathsf{r}$, the $j$-th basis vector $i_p^{\alpha}(\gamma)^{m_0}\mathrm{tw}_{i_p^\alpha}(\mathsf{b}_j)$ is represented as an $\cO^*$-linear combination of $\mathsf{c}_1,\mathsf{c}_2,\dotsc,\mathsf{c}_{\mathsf{r}}$, and thus $\gamma^{m_0}\mathsf{b}_j$ is contained in the intersection of $H^{*}_?(Y^{(n)}_{\cK},\widetilde{\cV}({}^\alpha \blambda^\vee)_{{}^\alpha E})$ and $H^{*}_?(Y^{(n)}_{\cK},\widetilde{\cV}( \blambda^\vee)_{\cO^*}^{(p)})'$ in $H^*_?(Y^{(n)}_{\cK},\widetilde{\cV}(\blambda^\vee)_{\cE^*})$; namely it is an element of $H^{*}_?(Y^{(n)}_{\cK},\widetilde{\cV}({}^\alpha \blambda^\vee)_{\mathfrak{r}_{{}^\alpha E, ({}^\alpha \mathfrak{P}_0)}})$. Obviously $\gamma^{m_0}\mathsf{b}_1, \gamma^{m_0}\mathsf{b}_2, \dotsc, \gamma^{m_0}\mathsf{b}_\mathsf{r}$ are linearly independent over $\mathfrak{r}_{{}^\alpha E,({}^\alpha \mathfrak{P}_0)}$, for so they are even over ${}^\alpha E$ by definition. Consequently $H^{*}_?(Y^{(n)}_{\cK},\widetilde{\cV}({}^\alpha \blambda^\vee)_{\mathfrak{r}_{{}^\alpha E, ({}^\alpha \mathfrak{P}_0)}})$ contains $\mathsf{r}$ elements $\gamma^{m_0}\mathsf{b}_1,\gamma^{m_0}\mathsf{b}_2,\dotsc, \gamma^{m_0}\mathsf{b}_{\mathsf{r}}$ linearly independent over $\mathfrak{r}_{{}^\alpha E,({}^\alpha \mathfrak{P}_0)}$, and thus its $\mathfrak{r}_{{}^\alpha E,({}^\alpha\mathfrak{P}_0)}$-rank is at least $\mathsf{r}$ as desired.
\end{proof}

As is well known, the cuspidal cohomology group $H^*_{\mathrm{cusp}}(Y^{(n)}_{\mathcal{K}}, \widetilde{\cV}({}^\alpha \blambda^\vee))$  is naturally regarded as a $\mC$-subspace of $H^*_{\mathrm{c}}(Y^{(n)}_{\mathcal{K}}, \widetilde{\cV}({}^\alpha \blambda^\vee))$ (see \cite[5.5~Corollary]{bor81},  \cite[page 123]{clo90} or \cite[Section 3.2.1]{mah05} for details). We thus equip the cuspidal cohomology group with a rational/integral structure defined as
\begin{align} \label{eq:ratint_cusp}
\begin{aligned}
 H^*_\mathrm{cusp}(Y^{(n)}_{\mathcal{K}},\widetilde{\cV}({}^\alpha \blambda^\vee)_{{}^\alpha E})&:=H^*_{\mathrm{c}}(Y^{(n)}_\mathcal{K},\widetilde{\cV}({}^\alpha\blambda^\vee)_{{}^\alpha E})\cap H^*_\mathrm{cusp}(Y^{(n)}_\mathcal{K},\widetilde{\cV}({}^\alpha \blambda^\vee)), \\
 H^*_\mathrm{cusp}(Y^{(n)}_{\mathcal{K}},\widetilde{\cV}({}^\alpha \blambda^\vee)_{\mathfrak{r}_{{}^\alpha E,({}^\alpha \mathfrak{P}_0)}})&:=H^*_{\mathrm{c}}(Y^{(n)}_\mathcal{K},\widetilde{\cV}({}^\alpha \blambda^\vee)_{\mathfrak{r}_{{}^\alpha E,({}^\alpha \mathfrak{P}_0)}})\cap H^*_\mathrm{cusp}(Y^{(n)}_\mathcal{K},\widetilde{\cV}({}^\alpha \blambda^\vee)).
\end{aligned}
\end{align}   
Here the intersections are taken in $H^*_{\mathrm{c}}(Y^{(n)}_\mathcal{K},\widetilde{\cV}({}^\alpha \blambda^\vee))$. Note that the rational structure $H^*_\mathrm{cusp}(Y^{(n)}_\mathcal{K}, \widetilde{\cV}({}^\alpha \blambda^\vee)_{{}^\alpha E})$ defined as (\ref{eq:ratint_cusp}) is compatible with the one which Clozel introduced in \cite[Th\'eor\`eme~3.19]{clo90} (see also \cite[Section 3.2.1]{mah05}). We introduce the integral structure compatibly with this rational structure, which should be regarded as an evidence of reasonability of our integral structure. 

We finally verify several basic properties of our integral structure.

\begin{lem} \label{lem:property_int_str}
 For $?\in \{\emptyset, \mathrm{c}, \mathrm{cusp}\}$, define $H^*_?(Y^{(n)}_{\cK}, \widetilde{\cV}({}^\alpha \blambda^\vee)_{\mathfrak{r}_{{}^\alpha E,({}^\alpha \mathfrak{P}_0)}})$ as above. 
\begin{enumerate}[label=$(\arabic*)$]
 \item The definition of $H^*_?(Y^{(n)}_{\cK}, \widetilde{\cV}({}^\alpha \blambda^\vee)_{\mathfrak{r}_{{}^\alpha E,({}^\alpha \mathfrak{P}_0)}})$ does not depend on the auxiliary choice of the field extension $E^*/E$. 
 \item For each $\alpha\in \mathrm{Aut}(\mC)$, the $\alpha$-twisting map $\mathrm{tw}_{\alpha}$ $\eqref{eq:twist_a}$ induces an isomorphism between $H^*_?(Y^{(n)}_{\cK}, \widetilde{\cV}( \blambda^\vee)_{\mathfrak{r}_{ E,( \mathfrak{P}_0)}})$ and $H^*_?(Y^{(n)}_{\cK}, \widetilde{\cV}({}^\alpha \blambda^\vee)_{\mathfrak{r}_{{}^\alpha E,({}^\alpha \mathfrak{P}_0)}})$.
\end{enumerate}
\end{lem}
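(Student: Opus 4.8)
The plan is to prove the two assertions separately, in each case reducing everything to two elementary inputs. First, under the standing hypothesis (\ref{eq:p>wt}) the $p$-adic coefficient module $\widetilde{V}(\blambda^\vee)^{(p)}_{\cO^*}$ is a \emph{free} $\cO^*$-module on the rational Gel'fand--Tsetlin basis $\{\xi_{\boldsymbol M}\}_{\boldsymbol M\in\rG(\blambda^\vee)}$: indeed $p>\lambda_{\sigma,1}-\lambda_{\sigma,n}+n-2$ for every $\sigma$, so the factorials occurring in Proposition~\ref{prop:xiM_integral} are $p$-units, and that proposition applied to each tensor factor $V_{\lambda_\sigma^\vee}(\cO^*)$ gives the claim; consequently $\widetilde{V}(\blambda^\vee)^{(p)}_{\cA}=\widetilde{V}(\blambda^\vee)^{(p)}_{\cO^*}\otimes_{\cO^*}\cA$ for any flat torsion-free $\cO^*$-algebra $\cA$, and the formation of these lattices is compatible with extension and intersection of $p$-adic fields since rings of integers are valuation rings. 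Second, each of $H^*_?(Y^{(n)}_{\cK},\widetilde{\cV})$ for $?\in\{\emptyset,\mathrm{c},\mathrm{cusp}\}$ is computed by a finite complex of free modules over the coefficient ring (using the Borel--Serre compactification for $?=\mathrm{c}$, and intersecting with $H^*_\mathrm{cusp}$ for $?=\mathrm{cusp}$), so passage to cohomology and to the torsion-free quotient $(\cdot)'$ commutes with flat base change.

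For (1), given two admissible auxiliary fields $E^*_1$ and $E^*_2$, their compositum is again admissible, so it suffices to compare the integral structures defined by a nested pair $E^*\subseteq E^{**}$; write $\cO^*\subseteq\cO^{**}$ and $\cE^*\subseteq\cE^{**}$ for the associated rings of integers and $p$-adic fields, so that $\cO^{**}\cap\cE^*=\cO^*$. By the freeness above together with flat base change one gets $L^{**}:=H^*_?(Y^{(n)}_{\cK},\widetilde{\cV}(\blambda^\vee)^{(p)}_{\cO^{**}})'=L\otimes_{\cO^*}\cO^{**}$, where $L:=H^*_?(Y^{(n)}_{\cK},\widetilde{\cV}(\blambda^\vee)^{(p)}_{\cO^*})'$, and hence $L^{**}\cap H^*_?(Y^{(n)}_{\cK},\widetilde{\cV}(\blambda^\vee)_{\cE^*})=L$ inside $H^*_?(Y^{(n)}_{\cK},\widetilde{\cV}(\blambda^\vee)_{\cE^{**}})$. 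The ${}^\alpha E$-rational structure $H^*_?(Y^{(n)}_{\cK},\widetilde{\cV}({}^\alpha\blambda^\vee)_{{}^\alpha E})$ does not change with the auxiliary choice, and its image $V_\alpha$ (via $\mathrm{tw}_{i^\alpha_p}$) already lies in $H^*_?(Y^{(n)}_{\cK},\widetilde{\cV}(\blambda^\vee)_{\cE^*})$, compatibly with the embedding into $H^*_?(Y^{(n)}_{\cK},\widetilde{\cV}(\blambda^\vee)_{\cE^{**}})$; therefore $V_\alpha\cap L^{**}=V_\alpha\cap(L^{**}\cap H^*_?(Y^{(n)}_{\cK},\widetilde{\cV}(\blambda^\vee)_{\cE^*}))=V_\alpha\cap L$, which is the asserted independence.

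For (2), by Lemma~\ref{lem:p-compatible} both integral structures live inside the single $\cE^*$-vector space $V:=H^*_?(Y^{(n)}_{\cK},\widetilde{\cV}(\blambda^\vee)_{\cE^*})$ and are cut out by the \emph{same} $\cO^*$-lattice $L$; the only $\alpha$-dependent datum is the embedding $\mathrm{tw}_{i^\alpha_p}\colon H^*_?(Y^{(n)}_{\cK},\widetilde{\cV}({}^\alpha\blambda^\vee)_{{}^\alpha E})\hookrightarrow V$, with image $V_\alpha$. The crux is the coherence identity
\[
 \mathrm{tw}_{i^\alpha_p}\circ\mathrm{tw}_\alpha=\mathrm{tw}_{i_p}\qquad\text{on } H^*_?(Y^{(n)}_{\cK},\widetilde{\cV}(\blambda^\vee)_E),
\]
which I would verify on coefficient modules: both maps are $(\boldsymbol{i}|_E)$-semilinear, and on $\mv\in\widetilde{V}(\blambda^\vee)_E$ both send $\mv$ to $\mv\otimes 1\in\widetilde{V}(\blambda^\vee)_E\otimes_{E,\boldsymbol{i}}\cE^*=\widetilde{V}(\blambda^\vee)_{\cE^*}$, because the isomorphism $\widetilde{V}({}^\alpha\blambda^\vee)_{{}^\alpha E}\otimes_{{}^\alpha E,i^\alpha_p}\cE^*\xrightarrow{\,\sim\,}\widetilde{V}(\blambda^\vee)_{\cE^*}$ built into $\mathrm{tw}_{i^\alpha_p}$ is constructed from $\Upsilon_\alpha^{-1}$ and so undoes the $\Upsilon_\alpha$ inside $\mathrm{tw}_\alpha$. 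Granting in addition the standard compatibility of $\mathrm{tw}_\alpha$ with rational (and cuspidal) structures of cohomology (cf.\ \cite{clo90,Waldspurger}), the displayed identity gives $V_\alpha=V_{\mathrm{id}}$ as subsets of $V$ — consistently, since $i^\alpha_p({}^\alpha E)=\boldsymbol{i}(E)$ — whence
\[
 \mathrm{tw}_\alpha\bigl(H^*_?(Y^{(n)}_{\cK},\widetilde{\cV}(\blambda^\vee)_{\mathfrak{r}_{E,(\mathfrak{P}_0)}})\bigr)=\mathrm{tw}_{i^\alpha_p}^{-1}(V_\alpha\cap L)=H^*_?(Y^{(n)}_{\cK},\widetilde{\cV}({}^\alpha\blambda^\vee)_{\mathfrak{r}_{{}^\alpha E,({}^\alpha\mathfrak{P}_0)}}),
\]
as required. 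The main obstacle I expect is the bookkeeping in (2): keeping straight the chain of $\alpha$-semilinear identifications ($\Upsilon_\alpha$, $\mathrm{tw}_\alpha$, $\mathrm{tw}_{i^\alpha_p}$, and the several base-change tensor products) so that the coherence identity is verified without confusing a twist with its inverse; once this is pinned down the rest is formal, the only remaining delicate point being the routine interchange of the torsion-free-quotient operation with the flat extension $\cO^*\to\cO^{**}$.
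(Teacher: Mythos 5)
Your argument is correct and follows essentially the same route as the paper's proof: for (1) you reduce to a nested pair of auxiliary fields and exploit freeness of the Gel'fand--Tsetlin lattice plus flat base change to identify the integral structures inside the larger $\cE^{**}$-space; for (2) you rest everything on the coherence identity $\mathrm{tw}_{i^\alpha_p}\circ\mathrm{tw}_\alpha=\mathrm{tw}_{i_p}$, which is exactly the paper's key observation. The only cosmetic difference is that you make the reduction to a module-theoretic claim ($L^{**}\cap L_{\cE^*}=L$) slightly more explicit, whereas the paper states an equivalent claim about finitely generated free modules over a DVR and leaves the verification to the reader; both amount to the same elementary divisor / valuation-ring fact.
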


\begin{proof}
By the definition of $\mathfrak{r}_{{}^\alpha E,({}^\alpha \mathfrak{P}_0)}$-integral structure of the cuspidal cohomology group, it suffices to show the statements for $?\in \{\emptyset, \mathrm{c}\}$. First note that $H^*_?(Y^{(n)}_{\cK}, \widetilde{\cV}({}^\alpha \blambda^\vee)_{\mathfrak{r}_{{}^\alpha E,({}^\alpha \mathfrak{P}_0)}})$ is characterised as a subset of $H^*_?(Y^{(n)}_{\cK}, \widetilde{\cV}({}^\alpha \blambda^\vee)_{{}^\alpha E})$ consisting of all cohomology classes $\xi$ satisfying $\mathrm{tw}_{i_p^\alpha}(\xi)\in H^*_?(Y^{(n)}_{\cK}, \widetilde{\cV}( \blambda^\vee)_{\cO^*}^{(p)})'$. 
To prove (1), let $E^*$ and $\widetilde{E}^*$ be subfields of $\mC$ both of which are finite over $E$ and contain $F_{\mathrm{nc}}$. We may further assume that $\widetilde{E}^*$ contains $E^*$ without loss of generality. Using $\widetilde{E}^*$, let us define $\widetilde{\cE}^*$ and $\widetilde{\cO}^*$ similarly to $\cE^*$ and $\cO^*$. Thus, since we define $\widetilde{V}(\blambda^\vee)_{\cO^*}$ (resp.\ $\widetilde{V}(\blambda^\vee)_{\widetilde{\cO}^*}$) as a free module over $\cO^*$ (resp.\ $\widetilde{\cO}^*$) generated by the Gel'fand--Tsetlin basis $\{ \xi_{\boldsymbol{N}} \mid \boldsymbol{N} \in \rG(\blambda^\vee)\}$, we readily observe that $\widetilde{V}(\blambda^\vee)_{\cA}\cong \widetilde{V}(\blambda^\vee)_{\cO^*}\otimes_{\cO^*}\cA$ holds for $\cA=\cE^*,\widetilde{\cE}^*$ and $\widetilde{\cO}^*$. Then, using flatness of $\cA$ over $\cO^*$, we have a canonical isomorphism $L_{\cA}\cong L\otimes_{\cO^*}\cA$ for  $L=H^*_?(Y^{(n)}_{\cK}, \widetilde{\cV}(\blambda^\vee)_{\cO^*}^{(p)})$ and $L_{\cA}=H^*_?(Y^{(n)}_{\cK}, \widetilde{\cV}(\blambda^\vee)_{\cA}^{(p)})$. Let $L'$ be the image of the composite map $L\rightarrow L\otimes_{\cO^*}\cE^*\cong L_{\cE^*}$, and $L'_{\widetilde{\cO}^*}$ the image of the composite map $L_{\widetilde{\cO^*}}\rightarrow L_{\widetilde{\cO}^*}\otimes_{\widetilde{\cO}^*}\widetilde{\cE}^*\cong L_{\widetilde{\cE}^*}$, respectively. Note that $L'\otimes_{\cO^*}\widetilde{\cO}^*\cong L'_{\widetilde{\cO}^*}$, $L'\otimes_{\cO^*}\cE^*\cong L_{\cE^*}$ and $L'\otimes_{\cO^*}\widetilde{\cE}^*\cong L_{\widetilde{\cE}^*}$ hold by construction. Then the proof of (1) is reduced to the following claim, which obviously holds:
\begin{quotation}
 for a free $\cO^*$-module $L'$ of finite rank, a vector $x\in L'\otimes_{\cO^*}\cE^*$ is an element of $L'$ if and only if $x\otimes 1\in L'\otimes_{\cO*}\widetilde{\cE}^*=(L'\otimes_{\cO*}\cE^*)\otimes_{\cE^*}\widetilde{\cE}^*$ is contained in $L'\otimes_{\cO^*}\widetilde{\cO}^*$.
\end{quotation} 
Indeed, we may complete the proof of (1) by applying the claim for $L'=H^*_?(Y^{(n)}_{\cK}, \widetilde{\cV}(\blambda^\vee)_{\cO^*}^{(p)})'$ and $x=\mathrm{tw}_{i_p^\alpha}(\xi)$. To prove (2), note that $\mathrm{tw}_{i_p^\alpha}\circ \mathrm{tw}_\alpha =\mathrm{tw}_{i_p}$ formally holds (here $i_p=i_p^{\mathrm{id}}\colon E\xrightarrow{\,\subset\,} \mC \xrightarrow{\, \boldsymbol{i}\,}\mC_p$). Therefore, for any cohomology class $\xi\in H^*_?(Y^{(n)}_{\cK}, \widetilde{\cV}( \blambda^\vee)_{\mathfrak{r}_{E,(\mathfrak{P}_0)}})$, we have $\mathrm{tw}_{i_p^\alpha}(\mathrm{tw}_\alpha (\xi))=\mathrm{tw}_{i_p}(\xi)\in H^*_?(Y^{(n)}_{\cK},\widetilde{\cV}(\blambda^\vee)_{\cO^*}^{(p)})'$, which implies that $\mathrm{tw}_\alpha(\xi)$ indeed belongs to $H^*_?(Y^{(n)}_{\cK}, \widetilde{\cV}( {}^\alpha \blambda^\vee)_{\mathfrak{r}_{{}^\alpha E,({}^\alpha \mathfrak{P}_0)}})$.
\end{proof}

\subsection*{Acknowledgements}
 The authors would like to express their sincere gratitude to Shih-Yu Chen for his useful comments.    
 The first named author was supported by Grant-in-Aid for Scientific Research (C) Grand Number JP22K03237 and Grant-in-Aid for Scientific Research (B) Grand Number JP23H01066.  
The second named author was supported by Grand-in-Aid for Scientific Research (C) Grand Number JP18K03252.
The third named author was supported by Grant-in-Aid for Scientific Research (C) Grand Number JP21K03207.

\def\cprime{$'$}

\end{document}